\numberwithin{figure}{chapter}
\newtheorem*{theorem*}{Theorem}
\newtheorem{theorem}{Theorem}[chapter]
\newtheorem{proposition}{Proposition}[chapter]
\newtheorem{lemma}{Lemma}[chapter]
\theoremstyle{definition}
\newtheorem{definition}{Definition}[chapter]
\newtheorem{example}{Example}[chapter]
\theoremstyle{remark}
\newtheorem{remark}{Remark}[chapter]
\numberwithin{section}{chapter}
\numberwithin{equation}{chapter}
\newcommand{\N}{\mathbb{N}}
\newcommand{\Z}{\mathbb{Z}}
\newcommand{\R}{\mathbb{R}}
\newcommand{\C}{\mathbb{C}}
\newcommand{\rmd}{\mathrm{d}}
\newcommand{\rmh}{\mathrm{h}}
\newcommand{\rml}{\mathrm{l}}
\newcommand{\rmr}{\mathrm{r}}
\newcommand{\rmt}{\mathrm{t}}
\newcommand{\rmv}{\mathrm{v}}
\newcommand{\rmI}{\mathrm{I}}
\newcommand{\rmV}{\mathrm{V}}
\newcommand{\bbA}{\mathbb{A}}
\newcommand{\bbB}{\mathbb{B}}
\newcommand{\bbD}{\mathbb{D}}
\newcommand{\bbF}{\mathbb{F}}
\newcommand{\bbH}{\mathbb{H}}
\newcommand{\bbI}{\mathbb{I}}
\newcommand{\bbJ}{\mathbb{J}}
\newcommand{\bbM}{\mathbb{M}}
\newcommand{\bbP}{\mathbb{P}}
\newcommand{\bbS}{\mathbb{S}}
\newcommand{\bbV}{\mathbb{V}}
\newcommand{\bbW}{\mathbb{W}}
\newcommand{\bbX}{\mathbb{X}}
\newcommand{\bbY}{\mathbb{Y}}
\DeclareRobustCommand{\bbf}{\mathbin{\text{\raisebox{-0.125pt}{\includegraphics[height=\heightof{$\mathbf{f}$}]{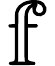}}}}}
\DeclareRobustCommand{\bbeta}{\mathbin{\text{\raisebox{-2.5pt}{\includegraphics[height=\heightof{$\Gamma$}]{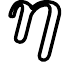}}}}}
\DeclareRobustCommand{\bbDelta}{\mathbin{\text{\includegraphics[height=\heightof{$\mathbf{\Delta}$}]{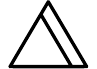}}}}
\DeclareRobustCommand{\bbGamma}{\mathbin{\text{\includegraphics[height=\heightof{$\mathbf{\Gamma}$}]{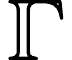}}}}
\DeclareRobustCommand{\bbSigma}{\mathbin{\text{\includegraphics[height=\heightof{$\mathbf{\Sigma}$}]{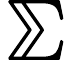}}}}
\newcommand{\bfc}{\mathbf{c}}
\newcommand{\bff}{\mathbf{f}}
\newcommand{\bfh}{\mathbf{h}}
\newcommand{\bfi}{\mathbf{i}}
\newcommand{\bfj}{\mathbf{j}}
\newcommand{\bfk}{\mathbf{k}}
\newcommand{\bfl}{\mathbf{l}}
\newcommand{\bfm}{\mathbf{m}}
\newcommand{\bfn}{\mathbf{n}}
\newcommand{\bfp}{\mathbf{p}}
\newcommand{\bfq}{\mathbf{q}}
\newcommand{\bfx}{\mathbf{x}}
\newcommand{\bfy}{\mathbf{y}}
\newcommand{\bfA}{\mathbf{A}}
\newcommand{\bfB}{\mathbf{B}}
\newcommand{\bfC}{\mathbf{C}}
\newcommand{\bfF}{\mathbf{F}}
\newcommand{\bfH}{\mathbf{H}}
\newcommand{\bfI}{\mathbf{I}}
\newcommand{\bfJ}{\mathbf{J}}
\newcommand{\bfM}{\mathbf{M}}
\newcommand{\bfP}{\mathbf{P}}
\newcommand{\bfT}{\mathbf{T}}
\newcommand{\bfalpha}{\boldsymbol{\alpha}}
\newcommand{\bfepsilon}{\boldsymbol{\varepsilon}}
\newcommand{\bfiota}{\boldsymbol{\iota}}
\newcommand{\bflambda}{\boldsymbol{\lambda}}
\newcommand{\bfmu}{\boldsymbol{\mu}}
\newcommand{\bfrho}{\boldsymbol{\rho}}
\newcommand{\bfsigma}{\boldsymbol{\sigma}}
\newcommand{\bftau}{\boldsymbol{\tau}}
\newcommand{\bfchi}{\boldsymbol{\chi}}
\newcommand{\bfGamma}{\mathbf{\Gamma}}
\newcommand{\bfDelta}{\mathbf{\Delta}}
\newcommand{\bfTheta}{\boldsymbol{\Theta}}
\newcommand{\bfLambda}{\mathbf{\Lambda}}
\newcommand{\bfXi}{\mathbf{\Xi}}
\newcommand{\bfPi}{\boldsymbol{\Pi}}
\newcommand{\bfSigma}{\mathbf{\Sigma}}
\newcommand{\bfOmega}{\boldsymbol{\Omega}}
\newcommand{\calA}{\mathcal{A}}
\newcommand{\calC}{\mathcal{C}}
\newcommand{\calD}{\mathcal{D}}
\newcommand{\calE}{\mathcal{E}}
\newcommand{\calG}{\mathcal{G}}
\newcommand{\calK}{\mathcal{K}}
\newcommand{\calL}{\mathcal{L}}
\newcommand{\calU}{\mathcal{U}}
\newcommand{\calV}{\mathcal{V}}
\newcommand{\scrI}{\mathscr{I}}
\newcommand{\frakg}{\mathfrak{g}}
\newcommand{\frakh}{\mathfrak{h}}
\newcommand{\frakn}{\mathfrak{n}}
\newcommand{\frakS}{\mathfrak{S}}
\newcommand{\bcalC}{\mathbcal{C}}
\newcommand{\id}{\mathrm{id}}
\newcommand{\tr}{\mathrm{tr}}
\newcommand{\lk}{\mathrm{lk}}
\newcommand{\im}{\operatorname{im}}
\newcommand{\ind}{\operatorname{ind}}
\newcommand{\sgn}{\operatorname{sgn}}
\newcommand{\lev}{\smash{\stackrel{\leftarrow}{\mathrm{ev}}}}
\newcommand{\lcoev}{\smash{\stackrel{\longleftarrow}{\mathrm{coev}}}}
\newcommand{\rev}{\smash{\stackrel{\rightarrow}{\mathrm{ev}}}}
\newcommand{\rcoev}{\smash{\stackrel{\longrightarrow}{\mathrm{coev}}}}
\DeclareMathOperator{\Exists}{\exists}
\DeclareMathOperator{\Forall}{\forall}
\DeclareMathOperator{\ttimes}{\tilde{\times}}
\DeclareMathOperator{\tsqcup}{\tilde{\sqcup}}
\DeclareMathOperator{\cappr}{\smallfrown}
\DeclareMathOperator{\cuppr}{\smallsmile}
\DeclareMathOperator{\sqtimes}{\scaleobj{0.8}{\boxtimes}}
\DeclareMathOperator{\csqtimes}{\hat{\scaleobj{0.8}{\boxtimes}}}
\DeclareMathOperator{\disjun}{\sqcup}
\newcommand{\bdots}{\reflectbox{$\ddots$}}
\newcommand{\sltwo}{\mathfrak{sl}_2}
\newcommand{\GL}{\mathrm{GL}}
\newcommand{\Hom}{\mathrm{Hom}}
\newcommand{\bbHom}{\mathbb{H}\mathrm{om}}
\newcommand{\End}{\mathrm{End}}
\newcommand{\Vect}{\mathrm{Vect}}
\newcommand{\bbVect}{\mathbb{V}\mathrm{ect}}
\newcommand{\Rib}{\mathcal{R}}
\newcommand{\op}{\mathrm{op}}
\newcommand{\Man}{\mathrm{Man}}
\newcommand{\cobbE}{\hat{\mathbb{A}}^Z}
\newcommand{\cobfE}{\hat{\mathbf{A}}}
\newcommand{\WRT}{\mathrm{WRT}}
\newcommand{\CGP}{\mathrm{CGP}}
\newcommand{\gltqft}{\bbV^Z_{\calC}}
\newcommand\restr[3][0]{{\raisebox{-#1pt}{$\left. \raisebox{#1pt}{$#2$} \vphantom{\big|} \right|_{#3}$}}}
\newcommand{\two}{\langle 2 \rangle}
\DeclareRobustCommand{\one}{\mathbin{\text{\includegraphics[height=\heightof{$\mathbf{1}$}]{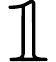}}}}
\DeclareRobustCommand{\bbnabla}{\mathbin{\text{\reflectbox{\raisebox{\depth}{\scalebox{1}[-1]{\includegraphics[height=\heightof{$\mathbf{\Delta}$}]{bbDelta.pdf}}}}}}}
\DeclareRobustCommand{\fourth}{\mathbin{\text{\includegraphics[height=\heightof{$\prime$}]{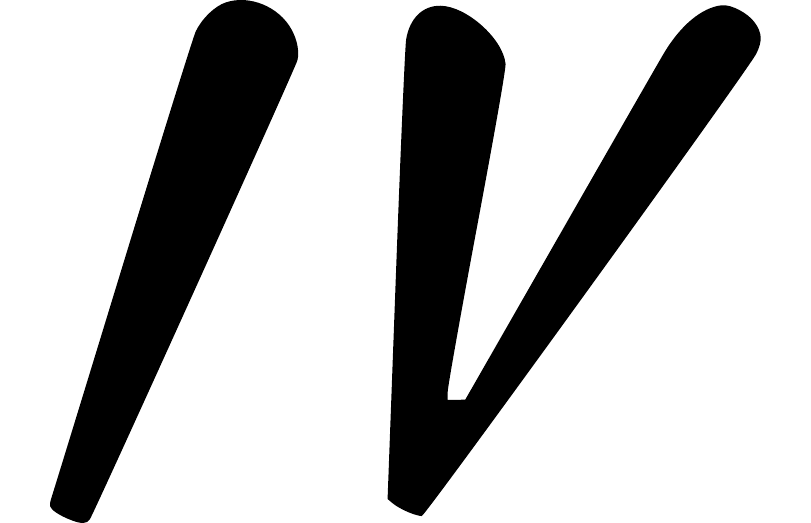}}}}
\DeclareRobustCommand{\fifth}{\mathbin{\text{\includegraphics[height=\heightof{$\prime$}]{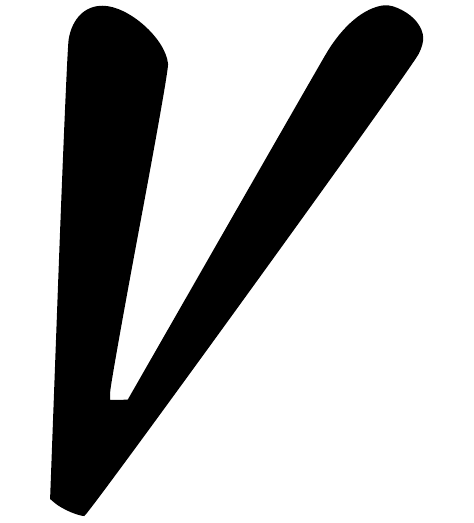}}}}
\newcommand{\PGr}{Z}
\newcommand{\bfCob}{\mathbf{Cob}}
\newcommand{\bfadCob}{\mathbf{Cob}^{\mathbf{ad}}}
\newcommand{\rmadCob}{\mathrm{Cob}^{\mathrm{ad}}}
\newcommand{\bfCat}{\mathbf{Cat}}
\newcommand{\coCat}{\hat{\mathbf{C}}\mathbf{at}}
\newcommand{\twoCat}{\mathbf{2Cat}}
\newcommand{\smtwoCat}{\mathbf{2Cat^{sm}}}
\newcommand{\cmpl}{\bfC}
\newcommand{\grcmpl}{\bfC}
\newcommand{\trbr}{\bftau}
\newcommand{\twobr}{\bfc}
\newcommand{\gmmbr}{\bfc^{\gamma}}
\newcommand{\cobbr}{\bftau}
\newcommand{\Kar}{\mathrm{Kar}}
\newcommand{\Mat}{\mathrm{Mat}}
\newcommand{\Deg}{\mathrm{Deg}}
\newcommand{\sqbinom}[2]{\left[ \begin{matrix} #1 \\ #2 \end{matrix} \right]}
\newcommand{\Proj}{\mathrm{Proj}}
\newcommand{\Inj}{\mathrm{Inj}}
\newcommand{\adSk}{\check{\mathcal{S}}}
\newcommand{\bbProj}{\mathbb{P}\mathrm{roj}}
\newcommand{\rmCol}{\mathrm{Col}}
\newcommand{\subalign}[1]{
  \vcenter{
    \Let@ \restore@math@cr \default@tag
    \baselineskip\fontdimen10 \scriptfont\tw@
    \advance\baselineskip\fontdimen12 \scriptfont\tw@
    \lineskip\thr@@\fontdimen8 \scriptfont\thr@@
    \lineskiplimit\lineskip
    \ialign{\hfil$\m@th\scriptstyle##$&$\m@th\scriptstyle{}##$\crcr
      #1\crcr
    }
  }
}
\def\clap#1{\hbox to 0pt{\hss#1\hss}}
\def\mathllap{\mathpalette\mathllapinternal}
\def\mathrlap{\mathpalette\mathrlapinternal}
\def\mathclap{\mathpalette\mathclapinternal}
\def\mathllapinternal#1#2{%
\llap{$\mathsurround=0pt#1{#2}$}}
\def\mathrlapinternal#1#2{%
\rlap{$\mathsurround=0pt#1{#2}$}}
\def\mathclapinternal#1#2{%
\clap{$\mathsurround=0pt#1{#2}$}}
\begin{document}

\frontmatter

\title{Non-Semisimple Extended Topological Quantum Field Theories}


\author{Marco De Renzi}
\address{Institut de Mathématiques de Jussieu -- Paris Rive Gauche, UMR 7586, Université Paris Diderot -- Paris 7, Bâtiment Sophie Germain, Boite Courrier 7012, 75205 Paris Cedex 13, France}
\curraddr{Department of Mathematics, Faculty of Science and Engineering, Waseda University, 3-4-1 \={O}kubo, Shinjuku-ku, Tokyo, 169-8555, Japan}
\email{m.derenzi@kurenai.waseda.jp}
\thanks{The author would like to thank Christian Blanchet, Bertrand Patureau-Mirand, Nathan Geer, François Costantino, and Jun Murakami for many fruitful discussions and useful comments.}


\subjclass[2010]{Primary 57R56; Secondary 18D10}

\keywords{Extended Topological Quantum Field Theories, Non-Semisimple Categories, Quantum Invariants, 2-Categories}


\begin{abstract}
 We develop the general theory for the construction of \textit{Extended Topological Quantum Field Theories} (\textit{ETQFTs}) associated with the Costantino-Geer-Patureau quantum invariants of closed 3-manifolds. In order to do so, we introduce \textit{relative modular categories}, a class of ribbon categories which are modeled on representations of unrolled quantum groups, and which can be thought of as a non-semisimple analogue to modular categories. Our approach exploits a 2-categorical version of the universal construction introduced by Blanchet, Habegger, Masbaum, and Vogel. The 1+1+1-EQFTs thus obtained are realized by symmetric monoidal 2-functors which are defined over non-rigid 2-categories of \textit{admissible} cobordisms decorated with colored ribbon graphs and cohomology classes, and which take values in 2-categories of complete graded linear categories. In particular, our construction extends the family of graded 2+1-TQFTs defined for the unrolled version of quantum $\sltwo$ by Blanchet, Costantino, Geer, and Patureau to a new family of graded ETQFTs. The non-semisimplicity of the theory is witnessed by the presence of non-semisimple graded linear categories associated with \textit{critical} 1-manifolds.
\end{abstract}

\maketitle

\tableofcontents

%

\chapter*{Preface}

In their 2014 paper \cite{CGP14} Costantino, Geer, and Patureau developed the general theory for the construction of a new class of non-semisimple quantum invariants of closed 3-manifolds of Witten-Reshetikhin-Turaev type. Their work is based on surgery presentations, and it exploits some rather complicated algebraic structures: if $G$ and $\PGr$ are abelian groups called the \textit{structure group} and the \textit{periodicity group} respectively, and if $X$ is a ``small'' subset of $G$ called the \textit{critical set}, then their machinery provides a quantum invariant $\CGP_{\calC}$ of decorated closed 3-manifolds for every \textit{pre-modular\footnote{In \cite{CGP14} the authors use the term \textit{modular} instead of \textit{pre-modular}. Our change of terminology is motivated by the semisimple theory, where quantum invariants are defined for any non-degerate pre-modular category, and modularity is only needed in order to prove symmetric monoidality of functorial extensions.} $G$-category $\calC$ relative to $(\PGr,X)$} satisfying a certain non-degeneracy condition. Such relative pre-modular categories, introduced here in Definition \ref{D:relative_pre-modular_category}, are ribbon categories with three important features: they carry a $G$-structure, they have finiteness properties only up to the action of $\PGr$ and, more importantly, they are not necessarily semisimple, with the deviation from semisimplicity being measured by the set $X \subset G$. The Costantino-Geer-Patureau invariants are defined for closed 3-manifolds $M$ equipped with $\calC$-colored ribbon graphs $T$ and cohomology classes $\omega$ with $G$-coefficients, but not for arbitrary ones: indeed, $T$ and $\omega$ should satisfy a certain crucial \textit{admissibility} condition, which is introduced here in Section \ref{S:admissible_cobordisms}. A small survey of the construction and of some of its applications can be found in \cite{D15}.

An explicit family of examples was obtained by Costantino, Geer, and Patureau using certain non-\-de\-gen\-er\-ate relative pre-modular categories of finite-dimensional complex-weight representations of the so called \textit{unrolled quantum group} $U_q^H \sltwo$ when $q$ is a primitive $r$-th root of unity for even $r \geqslant 4$ not a multiple of $8$. The structure group for these categories is $\C / 2 \Z$, with critical set $\Z / 2 \Z$, and their periodicity group is isomorphic to $\Z$. This family of examples extends the multivariable Alexander polynomial, the Kashaev invariants, and the Akutsu-Deguchi-Ohtsuki invariants to framed colored links in arbitrary 3-manifolds. A very important phenomenon in this theory is the appearance of the abelian Reidemeister torsion, which can be recovered by the $\sltwo$ Costantino-Geer-Patureau invariants at the level $r=4$. This distinguishes dramatically the non-semisimple theory from the semisimple one, as the relationship between the Reidemeister torsion and the standard Witten-Reshetikhin-Turaev invariants remains a great open question, part of the famous asymptotic conjecture of Witten. In particular, the Costantino-Geer-Patureau invariants can be used to recover the whole classification of lens spaces, of which there exist infinitely many pairs which are not homeomorphic, and yet cannot be distinguished using any semisimple quantum invariant.

In \cite{BCGP16} Blanchet, Costantino, Geer, and Patureau extended this family of $\sltwo$ quantum invariants to a family of non-semisimple $\Z$-graded \textit{Topological Quantum Field Theories}, or \textit{TQFTs} for short, consisting in symmetric monoidal functors with sources given by categories of admissible decorated cobordisms, and with targets given by categories of $\Z$-graded vector spaces. Remark that the periodicity group of $U_q^H \sltwo$-modules appears here as the grading group for vector spaces. This is not random. Furthermore, when the level $r$, which is the order of the root of unity $q$, is a multiple of 4, then the braiding on $\Z$-graded vector spaces is the super-symmetric one. This family of $\Z$-graded TQFTs has surprising new properties which produce unprecedented results: indeed, if we consider the induced representations of mapping class groups of surfaces, the actions of Dehn twists along non-separating simple closed curves always have infinite order, something that never happened in the semisimple theory.

\section*{Main results}

\textit{Extended TQFTs}, usually abbreviated as \textit{ETQFTs}, are symmetric monoidal 2-functors from 2-categories of cobordisms to linear 2-categories. They provide localizations of TQFTs, meaning that they allow for computations of state spaces with cut-and-paste methods, while further enriching their structure, thus providing tools for deeper applications. It is therefore quite natural to ask the following:
\begin{enumerate}
 \item Is it possible to upgrade the family of $\Z$-graded TQFTs associated with unrolled quantum $\sltwo$ to a new family of non-semisimple $\Z$-graded ETQFTs?
 \item Can we find conditions for a non-degenerate pre-modular $G$-category $\calC$ relative to $(\PGr,X)$ under which $\CGP_{\calC}$ extends to a $\PGr$-graded ETQFT?
\end{enumerate}
The goal of this memoir is to give a positive answer to both of these questions. Indeed, we prove that if $\calC$ is a non-degenerate relative pre-modular category, then we only need to make a small additional requirement in order to reach our goal. We call this condition \textit{relative modularity}, we introduce it in Definition \ref{D:relative_modular_category}, and we draw analogies with the standard modularity condition coming from the semisimple theory. Since in particular this property is met by all categories of finite-dimensional weight representations of $U_q^H \sltwo$, our construction applies to all of the Blanchet-Costantino-Geer-Patureau $\Z$-graded TQFTs. Our ETQFTs will then be given by symmetric monoidal 2-functors defined over 2-categories of decorated cobordisms satisfying a certain admissibility condition, and taking values in 2-categories of complete $\PGr$-graded linear categories. We state here our main result.

\begin{theorem*}\label{main}
 If $\calC$ is a modular $G$-category relative to $(\PGr,X)$, then $\CGP_{\calC}$ extends to a $\PGr$-graded ETQFT
 \[
  \cobbE_{\calC} : \bfadCob_{\calC} \rightarrow \coCat_{\Bbbk}^{\PGr}.
 \]
\end{theorem*}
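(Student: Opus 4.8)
The plan is to realize $\cobbE_\calC$ as the output of the 2-categorical universal construction fed with the closed 3-manifold invariant $\CGP_\calC$ attached to $\calC$. The only place where relative modularity, as opposed to the non-degeneracy already built into the definition of $\CGP_\calC$, enters is the verification of symmetric monoidality, in direct analogy with the semisimple dichotomy recalled in the Preface. Accordingly I would split the argument into three stages: defining the prospective 2-functor on cells and checking 2-functoriality, identifying its target with $\coCat_\Bbbk^\PGr$, and upgrading it to a symmetric monoidal 2-functor.

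For the first stage one proceeds exactly as prescribed by the 2-categorical universal construction. An admissible decorated 1-manifold $\Gamma$ is sent to the $\Bbbk$-linear category whose objects are the admissible decorated surfaces $\Sigma$ with $\partial\Sigma\cong\Gamma$ --- that is, the 1-cells $\emptyset\to\Gamma$ of $\bfadCob_\calC$ --- and whose space of morphisms $\Sigma\to\Sigma'$ is the universal quotient of the free $\Bbbk$-module on admissible decorated 3-cobordisms $M$ with $\partial M\cong\overline\Sigma\cup_\Gamma\Sigma'$ by the kernel of the pairing $\langle M,N\rangle:=\CGP_\calC(\overline M\cup N)$, the gluing being taken along $\overline\Sigma\cup_\Gamma\Sigma'$; composition is gluing of 3-cobordisms along surfaces, and $\id_\Sigma$ is the class of the cylinder over $\Sigma$. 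An admissible decorated surface $S\colon\Gamma_0\to\Gamma_1$ is sent to the functor $\Sigma\mapsto\Sigma\cup_{\Gamma_0}S$, with the effect on morphisms given by gluing $S\times[0,1]$, and an admissible decorated 3-cobordism $W\colon S_0\Rightarrow S_1$ to the natural transformation with component at $\Sigma$ the class of $(\Sigma\times[0,1])\cup W$. It is precisely the admissibility conditions of Section \ref{S:admissible_cobordisms} that make every closed 3-manifold appearing in these pairings admissible, so that $\CGP_\calC$ may be evaluated on it, while the non-rigidity of $\bfadCob_\calC$ is harmless here because the recipe never invokes duals. Checking that the assignment respects identities, both compositions and the interchange law then reduces, after closing up, to diffeomorphisms of admissible closed 3-manifolds together with the naturality, surgery invariance and multiplicativity of $\CGP_\calC$ established earlier; and by construction $\cobbE_\calC$ returns $\CGP_\calC$ on closed 3-manifolds, viewed as 2-endomorphisms of $\id_{\cobbE_\calC(\emptyset)}$, and recovers the underlying $\PGr$-graded 2+1-TQFT $\gltqft$ on closed surfaces and their 3-cobordisms.

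The second stage is to check that each $\cobbE_\calC(\Gamma)$ really belongs to $\coCat_\Bbbk^\PGr$. The $\PGr$-grading on morphism spaces is induced by the periodicity-group action on the cohomological decorations: decorations differing by an element of $\PGr$ yield 3-cobordisms differing by a $\PGr$-shift, so sorting the generators of the free modules accordingly refines every morphism space into a $\PGr$-graded module, and over all degrees this is exactly the state space that $\gltqft$ assigns to the closed surface $\overline\Sigma\cup_\Gamma\Sigma'$. Finite-dimensionality of each graded piece follows, as in the construction of $\gltqft$, from a decomposition of the gluing surface into elementary decorated pieces, which provides a finite spanning family in each degree, together with non-degeneracy of the pairing, which bounds the dimension. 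Completeness --- the existence of the relevant $\PGr$-indexed direct sums and the splitting of idempotents --- is then forced by passing to the appropriate completion of the Karoubi envelope, which is exactly why the target is $\coCat_\Bbbk^\PGr$ and not the 2-category of ordinary linear categories; the critical 1-manifolds, namely those whose decoration meets $X$, are precisely the ones for which this completion fails to be semisimple.

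The third stage, symmetric monoidality, is where the real work lies and where relative modularity becomes indispensable. Multiplicativity of $\CGP_\calC$ under disjoint union provides the comparison data between $\cobbE_\calC(\Gamma_0\sqcup\Gamma_1)$ and $\cobbE_\calC(\Gamma_0)\boxtimes\cobbE_\calC(\Gamma_1)$, for $\boxtimes$ the completed Deligne-type product of $\coCat_\Bbbk^\PGr$, together with the identification of $\cobbE_\calC(\emptyset)$ with the monoidal unit and the required coherence 2-cells; the substantive point is that this comparison is an \emph{equivalence}. Full faithfulness is equivalent to the monoidality of $\gltqft$ on closed surfaces, $\gltqft(A\sqcup B)\cong\gltqft(A)\otimes\gltqft(B)$, and essential surjectivity to the statement that every surface bounding $\Gamma_0\sqcup\Gamma_1$ becomes, inside $\cobbE_\calC(\Gamma_0\sqcup\Gamma_1)$, a $\PGr$-graded sum of split surfaces $\Sigma_0\sqcup\Sigma_1$; both require the colored ribbon graphs and admissible cohomology classes living near the cutting circles to span the relevant morphism spaces and to pair non-degenerately on them. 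This is precisely the step which, in the semisimple setting, forces pre-modularity to be strengthened to modularity, i.e.\ to the invertibility of the $S$-matrix, and which non-degeneracy alone does not cover. Here that role is played by the relative modularity condition of Definition \ref{D:relative_modular_category}: it supplies the generically semisimple family of colors --- the analogue of the Kirby color in the surgery formula --- needed to resolve such cut decompositions, with the associated $S$-type pairing non-degenerate in each $\PGr$-degree, whence the comparison functors are fully faithful and essentially surjective degree by degree. Granting this, assembling the coherence isomorphisms and verifying the symmetric monoidal 2-functor axioms is routine bookkeeping; I expect this last stage, and not anything in the first two, to be the genuine obstacle.
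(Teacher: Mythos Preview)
Your three-stage outline matches the paper's architecture, but there is a genuine gap in stage two, and it propagates into stage three.

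You write that the $\PGr$-grading on morphism spaces ``is induced by the periodicity-group action on the cohomological decorations: decorations differing by an element of $\PGr$ yield 3-cobordisms differing by a $\PGr$-shift''. This is not how the grading arises, and the sentence is not quite well-typed: the cohomology classes take values in $G$, not in $\PGr$, and $\PGr$ acts on $\calC$ via the free realization $\sigma$, not on the $G$-colorings. More importantly, the naive extended universal construction $\bfA_\calC$ lands in \emph{ungraded} linear categories, and its failure to be monoidal is already visible at the unit: the linear category $\bfA_\calC(\varnothing)$ is not Morita equivalent to $\Bbbk$ but to its full subcategory on a $\PGr$-indexed family of decorated $2$-spheres $\{\bbS^2_k\mid k\in\PGr\}$. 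There is no intrinsic grading on the hom spaces waiting to be ``sorted out''; one has to manufacture it. The paper does this by exhibiting an explicit free $\PGr$-action on each $\bfA_\calC(\bbGamma)$ --- right translation by $\bbS^2_k$ under disjoint union --- and then passing to the $\PGr$-graded extension of each category with respect to this action. A degree-$k$ morphism $\bbSigma\to\bbSigma'$ is, by definition, an ungraded morphism $\bbSigma\to\bbSigma'\disjun\bbS^2_{-k}$. Only after this step does $\bbA_\calC(\varnothing)$ become $\PGr$-Morita equivalent to $\Bbbk$, and only then can one hope for the comparison functors $\bfmu_{\bbGamma,\bbGamma'}$ to be equivalences. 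Your stage three therefore starts from a functor that is not yet the right one.

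A secondary point: relative modularity is not used only at the monoidality step. It already enters in the index-$1$ surgery axiom (the self-connecting case of Proposition~\ref{P:surgery_axioms}), which underlies the connection and domination lemmas you implicitly invoke in stage two, and it is essential in the computation showing that $\Hom(\bbS^2_{i,j,k},\bbS^2_{i,j,k'})$ has dimension $\delta_{ij}\delta_{kk'}$ --- without which the $\PGr$-action by spheres would not be free and the graded extension would not behave correctly. So the dependence on modularity is woven through the construction rather than confined to the final axiom check.
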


Unlike the non-semisimple ETQFTs constructed by Kerler and Lyubashenko in \cite{KL01}, the 2-functor $\cobbE_{\calC}$ is defined also for non-connected surfaces. This is possible thanks to a key ingredient which was missing from earlier attempts at non-semisimple construction: modified traces. The theory, which was developed in \cite{GPT09}, \cite{GKP11}, \cite{GKP13}, and \cite{GPV13} by Geer and Patureau together with a number of collaborators, and which is further studied in various different contexts such as \cite{GR17}, \cite{BBG18}, and \cite{H18}, plays a crucial role in our setting, as well as in other recent non-semisimple constructions such as \cite{BBG17}, \cite{DGP17}, \cite{BGPR18}, and \cite{CGPT18}. In our case, the $2$-functor $\cobbE_{\calC}$ can be described in terms of the relative modular category $\calC$ which is used as a building block: connected objects of $\smash{\bfadCob_{\calC}}$ are mapped to complete $\PGr$-graded linear categories which, up to equivalence, are given by $\PGr$-graded extensions of homogeneous subcategories of projective objects of $\calC$. Furthermore, generating 1-morphisms of $\smash{\bfadCob_{\calC}}$ can be translated into $\PGr$-graded linear functors between these $\PGr$-graded linear categories, and they can be interpreted as algebraic structures on them. This description presents us immediately with a new phenomenon: the 2-functor $\cobbE_{\calC}$ is fully monoidal with respect to disjoint union, but the images of certain objects of the domain 2-category are in general non-semisimple. This result may seem to contradict Theorem 3 of \cite{BDSV15}, which can roughly be stated as follows: not only every modular category $\calC$ determines a 1+1+1-ETQFT featuring $\calC$ as circle category, but these are essentially all the possibilities. In particular, images of closed 1-manifolds under 1+1+1-ETQFTs are necessarily semisimple. This apparent incongruity can in fact be explained as follows: all the objects of the cobordism 2-category considered in \cite{BDSV15} are fully dualizable, and semisimplicity is a consequence of this property. On the other hand, in the definition of the admissible cobordism 2-category $\smash{\bfadCob_{\calC}}$ we forbid some non-admissible cobordisms by removing them from the categories of morphisms. This produces a non-rigid 2-category featuring some \textit{critical} non-dualizable objects, thus making room for non-semisimplicity in the theory.

\section*{Where to find a quantum invariant and a TQFT inside an ETQFT}

We claimed that the symmetric monoidal 2-functor $\cobbE_{\calC}$ extends the quantum invariant $\CGP_{\calC}$, but what does this mean? The idea comes from the analogy with the following well-known property: every 2+1-TQFT yields a quantum invariant of closed 3-manifolds. Here is a recipe to read it off directly. A closed 3-manifold can be interpreted in a unique way as a cobordism between two copies of the empty surface. Therefore, a TQFT associates with it a linear endomorphism of a certain vector space which, as a consequence of monoidality, is isomorphic to the base field $\Bbbk$, the unit for the tensor product on vector spaces. Such a map is just a product with some fixed number in $\Bbbk$. Since morphisms of cobordism categories are given by isomorphism classes of cobordisms, this number actually depends only on the diffeomorphism class of the closed 3-manifold, so we can intepret it as a quantum invariant. 
This picture directly generalizes to the level of 2-categories, so that every 1+1+1-ETQFT yields a 2+1-TQFT. The procedure that recovers it is completely analogous. A closed surface can be interpreted in a unique way as a cobordism between two copies of the empty 1-manifold. Therefore, an ETQFT associates with it a linear endofunctor of a certain linear category which, as a consequence of monoidality, is equivalent to the linear category of finite-dimensional vector spaces over $\Bbbk$, which is in turn  equivalent to the unit for the complete tensor product of complete linear categories. Such a functor is, up to equivalence of linear categories, just a tensor product with some fixed vector space, which we can interpret as a state space of a TQFT. 
Everything extends word by word to the $\PGr$-graded case: every $\PGr$-graded 2+1-TQFT yields a quantum invariant of closed 3-manifolds, and every $\PGr$-graded 1+1+1-ETQFT yields a $\PGr$-graded 2+1-TQFT. In particular, hidden behind the higher structure of the symmetric monoidal 2-functor $\cobbE_{\calC}$, we still have a recipe which associates a number with every admissible decorated closed 3-manifold and a $\PGr$-graded vector space with every admissible decorated closed surface. In this sense, our construction recovers the quantum invariants of \cite{CGP14}, and generalizes the TQFTs of \cite{BCGP16} to all relative modular categories.

\section*{Outline of the construction}

The definition of the Witten-Reshetikhin-Turaev quantum invariants of closed 3-manifolds contained in \cite{RT91} makes use of surgery presentations which are colored with certain finite-dimensional simple modules over the \textit{restricted} version of quantum $\sltwo$ at an even root of unity. A famous result by Turaev, which first appeared in \cite{T94}, generalizes this construction by characterizing its building blocks: non-degenerate pre-modular categories. This allows us to associate with every such category $\calC$ a quantum invariant $\WRT_{\calC}$ of decorated closed 3-manifolds, where this time surgery presentations are colored with objects of the indexing category, rather than representations of quantum $\sltwo$. It has been known for some time that these quantum invariants can be extended to ETQFTs every time the category $\calC$ is modular. One possible way to construct these semisimple ETQFTs is to plug $\WRT_{\calC}$ into a machinery which is derived from \cite{BHMV95}, and which is called the \textit{extended universal construction}. This operation produces a 2-functor denoted
\[
 \cobfE_{\calC} : \bfCob_{\calC} \rightarrow \coCat_{\Bbbk}.
\]
Here $\bfCob_{\calC}$ is a symmetric monoidal 2-category of decorated cobordisms of dimension 1+1+1, and $\coCat_{\Bbbk}$ is the symmetric monoidal 2-category of complete linear categories. It can then be shown that $\cobfE_{\calC}$ is symmetric monoidal if, and in fact only if, as follows from Theorem 3 of \cite{BDSV15}, $\calC$ is modular.

The idea of this memoir is to try and adapt the same procedure to the non-semisimple setting of Costantino, Geer, and Patureau. Their construction, which is again based on surgery presentations, provides a quantum invariant $\CGP_{\calC}$ of admissible decorated closed 3-manifolds for every non-degenerate pre-modular $G$-category $\calC$ relative to $(\PGr,X)$. By means of the extended universal construction we can extend $\CGP_{\calC}$ to a 2-functor
\[
 \cobfE_{\calC} : \bfadCob_{\calC} \rightarrow \coCat_{\Bbbk}.
\]
In order to do this, we have to construct a suitable symmetric monoidal 2-category $\smash{\bfadCob_{\calC}}$ of admissible cobordisms decorated with $\calC$-colored ribbon graphs and cohomology classes with $G$-coefficients. However, unlike in the semisimple case, $\cobfE_{\calC}$ is not an ETQFT on the nose, and its deviation from monoidality can be parametrized by means of the periodicity group $\PGr$. Nevertheless, we can define a $\PGr$-graded extension
\[
 \cobbE_{\calC} : \bfadCob_{\calC} \rightarrow \coCat_{\Bbbk}^{\PGr}
\]
of $\cobfE_{\calC}$ with target the symmetric monoidal 2-category of complete $\PGr$-graded linear categories. The idea is to replace the image of an object of $\smash{\bfadCob_{\calC}}$ under $\cobfE_{\calC}$, which is a complete linear category, with a suitably defined complete $\PGr$-graded linear category obtained by integrating the obstruction to monoidality into its structure. We can then show that the resulting 2-functor $\cobbE_{\calC}$ is indeed symmetric monoidal, and thus provides a $\PGr$-graded ETQFT. While functoriality comes for free from the extended universal construction, there is a price to pay when it comes to characterizing the result. Indeed, some work is required in order to figure out the explicit relationship between the input relative modular category $\calC$ and the image of the associated 2-functor $\cobbE_{\calC}$. Nevertheless, the description is for the most part extremely clear, with images of connected 1-dimensional manifolds corresponding to natural subcategories of $\calC$, and with images of generating 2-dimensional cobordisms corresponding to meaningful functors between them.

\section*{Structure of the exposition}

The memoir is organized as follows: we begin by introducing our main ingredient, relative modular categories, in Chapter \ref{Ch:relative_modular_categories}; We devote Chapter \ref{Ch:admissible_cobordisms} to the detailed construction of the symmetric monoidal 2-category $\bfadCob_{\calC}$ of admissible cobordisms, which we will then use as domain for our non-semisimple ETQFTs; In Chapter \ref{Ch:extenstion_of_CGP_invariants} we review the construction of the Costantino-Geer-Patureau invariant $\CGP_{\calC}$, and we apply the extended universal construction to extend it to $2$-functor ${\hat{\bfA}_{\calC} : \bfadCob_{\calC} \rightarrow \coCat_{\Bbbk}}$; In Chapter \ref{Ch:combinatorial_topological_properties} we study how the behaviour of $\CGP_{\calC}$ under combinatorial and topological operations affects the properties of the $2$-functor $\hat{\bfA}_{\calC}$; In Chapter \ref{Ch:graded_extensions} we show $\hat{\bfA}_{\calC}$ is not an ETQFT in general, we carefully analyze its deviation from monoidality, and we upgrade the construction by defining the $2$-functor ${\cobbE_{\calC} : \bfadCob_{\calC} \rightarrow \coCat_{\Bbbk}^{\PGr}}$; Our main result is contained in Chapter \ref{Ch:symmetric_monoidality}, where we show that $\cobbE_{\calC}$ is indeed a symmetric monoidal $2$-functor, and that it contains a generalized version of the Blanchet-Costantino-Geer-Patureau TQFT; Chapter \ref{Ch:characterization_of_image} is devoted to the explicit description, in terms of the relative modular category $\calC$, of the $\PGr$-graded linear categories and of the $\PGr$-graded linear functors associated with generating objects and 1-morphisms of $\smash{\bfadCob_{\calC}}$ respectively; Appendix \ref{A:unrolled_quantum_groups} builds an explicit family of examples of relative modular categories coming from the representation theory of unrolled quantum groups at roots of unity, which can be plugged into our machinery to produce concrete examples of graded ETQFTs; Appendix \ref{A:cobordisms_with_corners}  fixes the notation we use for manifolds and cobordisms with corners; Appendix \ref{A:Maslov} generalizes the theory of Lagrangian subspaces and Maslov indices to the case of surfaces with boundary; Appendix \ref{A:symmetric_monoidal} gathers definitions and results concerning $2$-categories and symmetric monoidal structures; Appendix \ref{A:co_lin_&_gr_lin_cat} contains definitions and properties of complete linear categories and of complete $\PGr$-graded linear categories.

\aufm{Marco De Renzi}


\mainmatter

%
%
%

\chapter{Relative modular categories}\label{Ch:relative_modular_categories}

This chapter is devoted to the definition of the algebraic structures which are going to play the lead role in our construction: relative modular categories. These are a non-semisimple analogue to modular categories and, since their definition involves a lot of different ingredients, we first set the ground by recalling concepts like ribbon linear categories, group structures, group actions, and projective traces.

\section{Pivotal and ribbon linear categories}

Before starting, a quick remark about conventions: by appealing to Theorem \ref{T:coherence_for_2-cat}, we tacitly assume every monoidal category ${\calC}$ we consider is strict. However, monoidal functors ${F : \calC \rightarrow \calC'}$ between monoidal categories will still be equipped with coherence data, given by isomorphisms ${\varepsilon : \one' \rightarrow F(\one)}$ and natural transformations ${\mu : \otimes' \circ F \times F \Rightarrow F \circ \otimes}$. Throughout this memoir, ${\Bbbk}$ denotes an algebraically closed field, although we actually only need elements of $\Bbbk$ to admit square roots.

A \textit{linear category} is a ${\Vect_{\Bbbk}}$-enriched category, where ${\Vect_{\Bbbk}}$ is the monoidal category of vector spaces over ${\Bbbk}$, and a \textit{linear functor} between linear categories is a ${\Vect_{\Bbbk}}$-enriched functor. A linear category is \textit{additive} if it admits a zero object ${0 \in \calC}$, and if every pair of objects ${V,V' \in \calC}$ admits a direct sum, which we specify by fixing the choice of a linear functor ${\oplus : \calC \times \calC \rightarrow \calC}$. A \textit{monoidal linear category} is a linear category equipped with a monoidal structure whose tensor unit is given by a simple object ${\one \in \calC}$, and whose tensor product is given by a bilinear functor ${\otimes : \calC \times \calC \rightarrow \calC}$. A good reference for additive and monoidal categories is provided by Sections 1.2 and 2.1 of \cite{EGNO15}.

A \textit{pivotal linear category} is a rigid monoidal linear category $\calC$ together with a strict monoidal linear functor $D : \calC^{\op} \rightarrow \calC$ specifying duals of objects and morphisms, and with a monoidal natural isomorphism $\varphi : D \circ D \Rightarrow \id_{\calC}$ called the \textit{pivotal structure}. For every object $V \in \calC$ and every morphism $f \in \Hom_{\calC}(V,V')$ we use the notation $V^* := D(V)$ and $f^* := D(f)$, and we specify left and right evaluation and coevaluation morphisms
\begin{gather*}
 \lev_V \in \Hom_{\calC}(V^* \otimes V,\one), \quad \lcoev_V \in \Hom_{\calC}(\one,V \otimes V^*), \\
 \rev_V \in \Hom_{\calC}(V \otimes V^*,\one), \quad \rcoev_V \in \Hom_{\calC}(\one,V^* \otimes V)\phantom{,}
\end{gather*}
satisfying
\begin{align*}
 f^* &= (\lev_{V'} \otimes \id_{V^*}) \circ (\id_{V'^*} \otimes f \otimes \id_{V^*}) \circ (\id_{V'^*} \otimes \lcoev_V) \\
 &= (\id_{V^*} \otimes \rev_{V'}) \circ (\id_{V^*} \otimes f \otimes \id_{V'^*}) \circ (\rcoev_V \otimes \id_{V'^*}).
\end{align*}
If $c$ is a braiding on a pivotal linear category $\calC$, then we define the \textit{twist of $\calC$} to be the natural isomorphism $\vartheta : \id_{\calC} \Rightarrow \id_{\calC}$ associating with every object $V \in \calC$ the morphism
\[
 \vartheta_V := ({\id_{V}} \otimes {\rev_V}) \circ (c_{V,V} \otimes {\id_{V^*}}) \circ ({\id_V} \otimes {\lcoev_{V}}).
\]
The twist satisfies
\[
 \vartheta_{V \otimes V'} = c_{V',V} \circ c_{V,V'} \circ (\vartheta_V \otimes \vartheta_{V'})
\]
for all objects ${V,V' \in \calC}$, and we say the braiding ${c}$ is \textit{compatible with the pivotal structure} if ${\vartheta}$ satifies ${(\vartheta_V)^* = \vartheta_{V^*}}$ for every object ${V \in \calC}$. A \textit{ribbon linear category} is a pivotal linear category together with a compatible braiding. See Sections 4.7 and 8.10 of \cite{EGNO15} for a reference about pivotal and ribbon categories.

\section{Group structures and ribbon graphs}\label{S:group_structures}

In order to introduce group structures, let us fix an abelian group ${G}$. A \textit{$G$-structure} on an additive monoidal linear category ${\calC}$ is an equivalence
\[
 \calC \cong \bigoplus_{g \in G} \calC_g
\]
for a family ${\{ \calC_g \mid g \in G \}}$ of full subcategories of ${\calC}$ satisfying:
\begin{enumerate}
 \item $V \in \calC_g$, $V' \in \calC_{g'}$ $\Rightarrow$ $V \otimes V' \in \calC_{g + g'}$;
 \item $V \in \calC_g$, $V' \in \calC_{g'}$, $g \neq g'$ $\Rightarrow$ $\Hom_{\calC}(V,V') = 0$.
\end{enumerate}
An additive monoidal linear category ${\calC}$ equipped with a ${G}$-structure is called a \textit{$G$-category}\footnote{A $G$-category is usually called a $G$-graded category, see for instance \cite{TV12}, \cite{GP13}, and \cite{CGP14}. We change terminology here because we will make extensive use of the term \textit{graded category} in the enriched sense, with grading appearing on morphisms rather than objects.}. For every ${g \in G}$ the subcategory ${\calC_g}$ is called the \textit{homogeneous subcategory of index ${g}$}. If $V$ is an object of $\calC_g$, then 
we say it is a \textit{homogeneous object of index $g$}. Analogously, if $f$ is a morphism of $\calC_g$, then 
we say it is a \textit{homogeneous morphism of index $g$}. We say a ${G}$-structure on an additive pivotal linear category ${\calC}$ is \textit{compatible with the pivotal structure} if ${V^* \in \calC_{- g}}$ for every ${V \in \calC_g}$. A \textit{pivotal ${G}$-category} is then an additive pivotal linear category equipped with a compatible ${G}$-structure, and a \textit{ribbon ${G}$-category} is a pivotal ${G}$-category which is ribbon.

When working with ribbon ${G}$-categories, we will use a version of the category ${\Rib_{\calC}}$ of ${\calC}$-colored ribbon graphs which is adapted to group structures: the \textit{category ${\Rib_{\calC}^G}$ of ${G}$-homogeneous ${\calC}$-colored ribbon graphs} is the subcategory of ${\Rib_{\calC}}$ whose objects ${(\underline{\varepsilon},\underline{V})}$ are finite sequences ${\left( (\varepsilon_1,V_1), \ldots, (\varepsilon_k,V_k) \right)}$ where ${\varepsilon_i \in \{ +, - \}}$ is a sign and ${V_i}$ is a homogeneous object of ${\calC}$ for every integer ${1 \leqslant i \leqslant k}$, and whose morphisms ${T : (\underline{\varepsilon},\underline{V}) \rightarrow (\underline{\varepsilon'},\underline{V'})}$ are isotopy classes of ${\calC}$-colored ribbon graphs inside ${D^2 \times I}$ from ${P_{(\underline{\varepsilon},\underline{V})}}$ to ${P_{(\underline{\varepsilon'},\underline{V'})}}$ whose ${\calC}$-coloring is ${G}$-homogeneous, meaning that colors of edges of ${T}$ are given by homogeneous objects of ${\calC}$. Here ${P_{(\underline{\varepsilon},\underline{V})}}$ and ${P_{(\underline{\varepsilon'},\underline{V'})}}$ are the standard ${\calC}$-colored ribbon sets inside ${D^2}$ associated with the objects ${(\underline{\varepsilon},\underline{V})}$ and ${(\underline{\varepsilon'},\underline{V'})}$ of ${\Rib_{\calC}^G}$, where the term ribbon set simply denotes a discrete set of oriented framed marked points, which is what you get when you intersect transversely a ribbon graph inside a 3-manifold with a surface. We denote with ${F_{\calC}}$ the restriction of the Reshetikhin-Turaev functor associated with ${\calC}$ to the category ${\Rib_{\calC}^G}$. 
Remark that $F_{\calC}$ induces an equivalence relation, called \textit{skein equivalence}, on formal linear combinations of morphisms of $\Rib_{\calC}^G$, which we still interpret as morphisms of $\Rib_{\calC}^G$. More precisely, if $T_1,\ldots,T_m,T'_1,\ldots,T'_{m'} : (\underline{\varepsilon},\underline{V}) \rightarrow (\underline{\varepsilon'},\underline{V'})$ are morphisms of ${\Rib_{\calC}^G}$ satisfying
\[
 \sum_{i=1}^m \alpha_i \cdot F_{\calC}(T_i) = \sum_{i'=1}^{m'} \alpha'_{i'} \cdot F_{\calC}(T'_{i'}),
\]
for some coefficients $\alpha_1,\ldots,\alpha_m,\alpha'_1,\ldots,\alpha'_{m'} \in \Bbbk$, and if we set
\[
 T := \sum_{i=1}^m \alpha_i \cdot T_i, \quad T' := \sum_{i'=1}^{m'} \alpha'_{i'} \cdot T'_{i'},
\]
then we say $T$ is \textit{skein equivalent} to $T'$, and we write $T \doteq T'$.

\section{Group actions and group realizations}\label{S:group_actions}

In order to define relative modular categories we will need a special kind of group action. Let us fix for this section an abelian group $\PGr$, and let us denote with $\PGr$ also the discrete category over $\PGr$. This category has a natural monoidal structure, with tensor unit given by the identity element of $\PGr$, and with tensor product given by the operation of $\PGr$. An \textit{action of $\PGr$ on a linear category $\calC$} is a monoidal functor $R : \PGr \rightarrow \End_{\Bbbk}(\calC)$, where $\End_{\Bbbk}(\calC)$ denotes the category of linear endofunctors of $\calC$, which is a monoidal category with tensor unit given by the identity functor of $\calC$, and with tensor product given by composition. If $R$ is an action of $\PGr$ on $\calC$, then we denote with $R^k$ the linear endofunctor $R(k) \in \End_{\Bbbk}(\calC)$ for every $k \in \PGr$. Remark that actions preserve properties of objects of $\calC$ like simplicity: indeed, for every element $k \in \PGr$ and for all objects $V,V' \in \calC$, the linear map $R^k_{V,V'}$ from $\Hom_{\calC}(V,V')$ to $\Hom_{\calC}(R^k(V),R^k(V'))$ is invertible. Its inverse is built using the action of $-k \in \PGr$ and the structural morphisms $\varepsilon : \id_{\calC} \Rightarrow R^0$ and $\mu_{-k,k} : R^{-k} \circ R^k \Rightarrow R^{-k+k}$ of $R$, and it maps every $f \in \Hom_{\calC}(R^k(V),R^k(V'))$ to
\[
 \varepsilon_{V'}^{-1} \circ ( \mu_{-k,k} )_{V'} \circ R^{-k}_{R^k(V),R^k(V')}(f) \circ ( \mu_{-k,k}^{-1} )_V \circ \varepsilon_V \in \Hom_{\calC}(V,V').
\]
An action of $\PGr$ on a linear category $\calC$ is \textit{free} if it induces a free action on the set of isomorphism classes of simple objects of $\calC$. Remark that if $\calC$ is a monoidal linear category, then every monoidal functor $F : \PGr \rightarrow \calC$ yields an action $R^F$ of $\PGr$ on $\calC$ determined by the right translation linear endofunctors $R^{F(k)} \in \End_{\Bbbk}(\calC)$ mapping every object $V$ of $\calC$ to $V \otimes F(k)$, and every morphism $f$ of $\calC$ to $f \otimes \id_{F(k)}$ for every $k \in \PGr$.

A \textit{realization of $\PGr$ in a ribbon linear category $\calC$} is a monoidal functor $\sigma : \PGr \rightarrow \calC$ satisfying $\vartheta_{\sigma(k)} = \id_{\sigma(k)}$ for every $k \in \PGr$. For every realization $\sigma$ of $\PGr$ in $\calC$ we denote with $\sigma(\PGr)$ the set of objects $\{ \sigma(k) \in \calC \mid k \in \PGr \}$. Let us collect some properties of group realizations: first of all, for every $k \in \PGr$ we have
\begin{align*}
 \dim_{\calC}(\sigma(k))^2 
 &= \dim_{\calC}(\sigma(-k)) \dim_{\calC}(\sigma(k)) \\
 &= \dim_{\calC}(\sigma(-k) \otimes \sigma(k)) \\
 &= \dim_{\calC}(\sigma(0)) \\
 &= 1.
\end{align*}
Furthermore, for every $k \in \PGr$ we have
\begin{align*}
 \dim_{\calC}(\sigma(k))
 &= \tr_{\calC}(\id_{\sigma(k)}) \\
 &= \tr_{\calC}(\vartheta_{\sigma(k)}) \\
 &= \tr_{\calC}(c_{\sigma(k),\sigma(k)}).
\end{align*}
A realization $\sigma$ of $\PGr$ in a ribbon linear category $\calC$ is \textit{free} if the associated action $R^{\sigma}$ of $\PGr$ on $\calC$ by right translation linear endofunctors is free.

\section{Projective traces and ambidextrous objects}

The last ingredient for the definition of relative modular categories is the most important one, and the one that was crucially missing from earlier attempts at non-semisimple constructions. In this section, we will recall the notion of a trace on the ideal of projective objects in a ribbon linear category, which is central in the theory of renormalized topological invariants developed by Geer and Patureau together with many collabrators. We say an object ${V}$ of a category ${\calC}$ is a \textit{retract} of another object ${V'}$ of ${\calC}$ if there exist morphisms ${f \in \Hom_{\calC}(V,V')}$ and ${f' \in \Hom_{\calC}(V',V)}$ satisfying ${f' \circ f = \id_V}$. A subcategory ${\calC'}$ of ${\calC}$ is said to be \textit{closed under retraction} if every retract of every object of ${\calC'}$ is an object of ${\calC'}$. Moreover a subcategory ${\calC'}$ of a monoidal category ${\calC}$ is said to be \textit{absorbent} if for all objects ${V}$ of ${\calC}$ and ${V'}$ of ${\calC'}$ the tensor products ${V \otimes V'}$ and ${V' \otimes V}$ are objects of ${\calC'}$. An \textit{ideal} ${\scrI}$ of a monoidal category ${\calC}$ is then a full subcategory of ${\calC}$ which is absorbent and closed under retraction. If ${\calC}$ is a pivotal category, then the full subcategory ${\Proj(\calC)}$ of projective objects of ${\calC}$ is an ideal of ${\calC}$, and it coincides with the full subcategory ${\Inj(\calC)}$ of injective objects. Moreover, thanks to Lemma 17 of \cite{GPV13}, if ${V}$ is a projective object of ${\calC}$ with epic evaluation then ${\Proj(\calC)}$ coincides with the ideal ${\scrI_{V}}$ generated by ${V}$, that is the smallest ideal of ${\calC}$ containing ${V}$.

The \textit{right partial trace} of an endomorphism ${f \in \End_{\calC}(V \otimes V')}$ is the endomorphism ${\tr_{\rmr}(f) \in \End_{\calC}(V)}$ given by
\[
 \tr_{\rmr}(f) := ({\id_V} \otimes {\rev_{V'}}) \circ (f \otimes {\id_{V'^*}}) \circ ({\id_V} \otimes {\lcoev_{V'}}).
\]
A \textit{trace ${\rmt}$ on the ideal of projective objects ${\Proj(\calC)}$} of a ribbon linear category ${\calC}$, sometimes also called a \textit{projective trace on ${\calC}$}, is a family 
\[
 \rmt := \{ \rmt_V : \End_{\calC}(V) \rightarrow \C \mid V \in \Proj(\calC) \}
\]
of linear maps satisfying:
\begin{enumerate}
 \item \textit{Ciclicity}: ${\rmt_{V}(f' \circ f) = \rmt_{V'}(f \circ f')}$ for all objects ${V,V'}$ of ${\Proj(\calC)}$ and for all morphisms ${f \in \Hom_{\calC}(V,V')}$ and ${f' \in \Hom_{\calC}(V',V)}$;
 \item \textit{Partial trace}: ${\rmt_{V \otimes V'} (f)= \rmt_V(\tr_{\rmr}(f))}$ for all objects ${V}$ of ${\Proj(\calC)}$ and ${V'}$ of ${\calC}$ and for every morphism ${f \in \End_{\calC}(V \otimes V')}$.
\end{enumerate}
Thanks to the braiding of ${\calC}$, a projective trace also satisfies ${\rmt_{V \otimes V'} (f) = \rmt_{V'}(\tr_{\rml}(f))}$ for all objects ${V}$ of ${\calC}$ and ${V'}$ of ${\Proj(\calC)}$ and for every morphism ${f \in \End_{\calC}(V \otimes V')}$, where the endomorphism ${\tr_{\rml}(f) \in \End_{\calC}(V')}$, called the \textit{left partial trace} of ${f}$, is given by
\[
 \tr_{\rml}(f) := ({\lev_V} \otimes {\id_{V'}}) \circ ({\id_{V^*}} \otimes f) \circ ({\rcoev_V} \otimes {\id_{V'}}).
\]
See Figure \ref{F:partial_traces} for a graphical representation of partial trace operations. Remark that our convention differs from the one used in \cite{T94} for what concerns orientations of edges. This is done in order to have a coherent notation for the orientation induced on the boundary throughout the exposition.

\begin{figure}[hbtp]\label{F:partial_traces}
 \centering
 \includegraphics{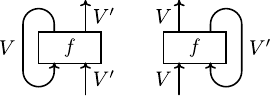}
 \caption{Partial traces $\tr_{\rml}(f)$ and $\tr_{\rmr}(f)$ of a morphism ${f \in \End_{\calC}(V \otimes V')}$.}
\end{figure}

A trace ${\rmt}$ on ${\Proj(\calC)}$ is \textit{non-zero} if there exists an endomorphism ${f}$ of a projective object ${V}$ satisfying ${\rmt_V(f) \neq 0}$, and it is \textit{non-degenerate} if
\[
 \rmt_V(\cdot \circ \cdot) : \Hom_{\calC}(V',V) \otimes \Hom_{\calC}(V,V') \rightarrow \Bbbk
\]
is a non-degenerate pairing for every object ${V}$ in ${\Proj(\calC)}$ and for every object ${V'}$ in ${\calC}$. Every trace ${\rmt}$ on ${\Proj(\calC)}$ determines a \textit{projective dimension} defined as ${\rmd(V) := \rmt_V(\id_V)}$ for every ${V \in \calC}$.

The existence of projective traces is ensured by the existence of a special kind of objects: if ${\calC}$ is a ribbon linear category then we say a simple object ${V}$ of ${\calC}$ is \textit{ambidextrous} if
\[
 \tr_{\rml}(f) = \tr_{\rmr}(f)
\]
for every ${f \in \End_{\calC}(V \otimes V)}$. If ${\calC}$ is a ribbon linear category and if ${V}$ is a projective ambidextrous object of ${\calC}$ then, thanks to Theorem 3.3.2 of \cite{GKP11}, there exists a unique trace ${\rmt}$ on ${\Proj(\calC)}$ which satisfies ${\rmd(V) = 1}$. Moreover, it follows from Section 5 of \cite{GPV13} that if ${\rmt}$ is a non-zero trace on ${\Proj(\calC)}$ then ${\rmd(V) \neq 0}$ for every simple projective object ${V}$ of ${\calC}$ whose left evaluation is an epimorphism. We call such a ${V}$ an \textit{object with epic evaluation}. Corollary 3.2.1 of \cite{GKP13} gives a condition ensuring the existence of ambidextrous objects. We also remark that sometimes traces on more general ideals exist, but we will not focus on them in this memoir, as we will crucially exploit properties of projective objects in our construction.

\section{Relative pre-modular categories}\label{S:relative_pre-modular_categories}

Relative pre-modular categories are a non-semisimple version of pre-modular categories carrying both a group structure and a group action. However, in order for them to be manageable, we need to be able to control their non-semisimplicity. In particular, we want their generic homogeneous subcategory with respect to the group structure to be semisimple, with the non-semisimple part of the category localized in a set of critical indices which has to be somewhat small, in a sense to be made precise. Furthermore, although we do not ask for generic homogeneous subcategories to admit a finite number of isomorphism classes of simple objects, as this does not happen in significant examples, we want them to admit a finite number of orbits of such isomorphism classes with respect to the group action. Before explaning the meaning of all this in further detail, let us start by quickly fixing our terminology. We say a set ${D = \{ V_i \in \calC \mid  i \in \rmI \}}$ of objects of a linear category ${\calC}$ is a \textit{dominating set} if for every ${V \in \calC}$ there exist ${V_{i_1}, \ldots, V_{i_m} \in D}$ and ${r_j \in \Hom_{\calC}(V_{i_j},V)}$ and ${s_j \in \Hom_{\calC}(V,V_{i_j})}$ for every integer ${1 \leqslant j \leqslant m}$ satisfying 
\[
 \id_V = \sum_{j=1}^m r_j \circ s_j,
\]
in which case we also say \textit{${D}$ dominates ${\calC}$}. Furthermore, we say ${D}$ is \textit{completely reduced} if $\dim_{\C} \Hom_{\calC}(V_i,V_j) = \delta_{ij}$
for all ${i,j \in \rmI}$. A \textit{semisimple category} is then a linear category ${\calC}$ together with a completely reduced dominating set. Next, we say a subset ${X}$ of an abelian group ${G}$ is \textit{small symmetric} if:
\begin{enumerate}
 \item ${X = -X}$;
 \item ${\displaystyle G \not\subset \bigcup_{i=1}^m (g_i + X)}$ for all ${m \in \N}$ and all ${g_1, \ldots, g_m \in G}$.
\end{enumerate}
If ${G}$ is an abelian group and ${X \subset G}$ is a small symmetric subset then all elements in ${G \smallsetminus X}$ are called \textit{generic}, and all elements in ${X}$ are called \textit{critical}. We are now ready to give our first definition, which is insipired by Definition 2 of \cite{GP13} and by Definition 4.2 of \cite{CGP14}.

\begin{definition}\label{D:relative_pre-modular_category}
 If ${G}$ and ${\PGr}$ are abelian groups, and if ${X \subset G}$ is a small symmetric subset, then a \textit{pre-modular ${G}$-category relative to ${(\PGr,X)}$} is a ribbon linear category ${\calC}$ together with:
 \begin{enumerate}
  \item A compatible ${G}$-structure on ${\calC}$;
  \item A free realization ${\sigma : \PGr \rightarrow \calC_0}$;
  \item A non-zero trace ${\rmt}$ on ${\Proj(\calC)}$.
 \end{enumerate}
 These data satisfy the following conditions:
 \begin{enumerate}
  \item \textit{Generic semisimplicity}: For every ${g \in G \smallsetminus X}$ there exists a finite ordered set ${\Theta(\calC_g) = \{ V_i \in \calC_g \mid i \in \rmI_g \}}$ of objects with epic evaluation such that
  \[
   \Theta(\calC_g) \otimes \sigma(\PGr) := \{ V_i \otimes \sigma(k) \mid i \in \rmI_g, k \in \PGr \}
  \]
  is a completely reduced dominating set for ${\calC_g}$;
  \item \textit{Compatibility}: There exists a bilinear map ${\psi : G \times \PGr \rightarrow \Bbbk^*}$ such that 
  \[
   c_{\sigma(k),V} \circ c_{V,\sigma(k)} = \psi(g,k) \cdot \id_{V \otimes \sigma(k)}
  \]
  for every ${g \in G}$, for every ${V \in \calC_g}$, and for every ${k \in \PGr}$.
 \end{enumerate}
\end{definition}

If $\calC$ is a pre-modular $G$-category relative to $(\PGr,X)$, then $G$ is called the \textit{structure group}, $\PGr$ is called the \textit{periodicity group}, and $X$ is called the \textit{critical set} of ${\calC}$. Bilinearity of $\psi$ means
\begin{gather*}
 \psi(0,k) = 1, \quad \psi(g+g',k) = \psi(g,k) \psi(g',k), \\
 \psi(g,0) = 1, \quad \psi(g,k+k') = \psi(g,k) \psi(g,k')\phantom{,}
\end{gather*}
for all $g,g' \in G$ and all $k,k' \in \PGr$. In the trivial case $G = \PGr = \{ 0 \}$ and $X = \varnothing$ a pre-modular ${G}$-category relative to ${(\PGr,X)}$ is just a pre-modular category, as explained in Section \ref{S:relation_with_semisimple_theory}.

Let us discuss Definition \ref{D:relative_pre-modular_category} by comparing it with Definition 4.2 of \cite{CGP14}. First of all, remark we are asking the non-zero trace $\rmt$ to be defined on the ideal $\Proj(\calC)$, instead of an arbitrary one, because, as we already mentioned earlier, we will rely on specific properties of projective objects in order to perform our construction. Here is the very first example, which we will use multiple times.

\begin{remark}\label{R:epic_evaluations}
 The requirement about epic evaluations in Definition \ref{D:relative_pre-modular_category} has the following direct consequence: if $U \in \Proj(\calC)$ and if $V_i \in \Theta(\calC_g)$, then the morphism $\id_U \otimes \rev_{V_i}$ is still epic, because $U$ is dualizable. Therefore, since $U$ is projective, there exists a section $s_{U,i}$ of $\id_U \otimes \rev_{V_i}$, which is a morphism in $\Hom_{\calC}(U,U \otimes V_i \otimes V_i^*)$ satisfying $(\id_U \otimes \rev_{V_i}) \circ s_{U,i} = \id_U$.
\end{remark}

This observation has a number of consequences.

\begin{proposition}\label{P:non-degeneracy_of_trace}
 If $\calC$ is a pre-modular $G$-category relative to $(\PGr,X)$ then the trace $\rmt$ on $\Proj(\calC)$ is non-degenerate.
\end{proposition}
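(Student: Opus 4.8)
The plan is to establish non-degeneracy of $\rmt$ by reducing it to the generic semisimplicity axiom, exploiting the fact that $\Proj(\calC)$ is generated as an ideal by the dominating objects $V_i \in \Theta(\calC_g)$ for $g$ generic. Concretely, I would first fix an object $V \in \Proj(\calC)$ and an arbitrary $V' \in \calC$, and aim to show that for every nonzero $f \in \Hom_{\calC}(V,V')$ there exists $f' \in \Hom_{\calC}(V',V)$ with $\rmt_V(f' \circ f) \neq 0$ (and symmetrically). The natural first reduction is to the case where $V'$ is also projective: since $f \neq 0$ and $\Proj(\calC)$ is an ideal closed under retraction, one can precompose/postcompose with maps into projective objects, or use that $\id_{V'}$ factors through a projective cover, to replace $V'$ by a projective object without losing the pairing property. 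So it suffices to treat $V, V' \in \Proj(\calC)$.

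Next I would use the classification of indecomposable projectives via the dominating sets $\Theta(\calC_g)$. Pick any generic $g \in G \smallsetminus X$ and any $V_i \in \Theta(\calC_g)$; by Lemma 17 of \cite{GPV13} recalled in the excerpt, $\Proj(\calC) = \scrI_{V_i}$, so every projective object is a retract of something of the form $V_i \otimes W$. Using the retract structure together with property (1) of a projective trace (cyclicity) and property (2) (partial trace), the non-degeneracy of $\rmt_V(\cdot \circ \cdot)$ on $\Hom_{\calC}(V',V) \otimes \Hom_{\calC}(V,V')$ reduces to non-degeneracy of the pairing on Hom-spaces built out of $V_i$. Here Remark \ref{R:epic_evaluations} is the key technical input: the section $s_{U,i}$ of $\id_U \otimes \rev_{V_i}$ lets one split off a copy of $V_i \otimes V_i^*$ from any projective $U$, and combined with property (2) of the trace this turns a computation of $\rmt$ on $U$ into a computation involving $\rmt$ on $V_i$ tensored with partial traces. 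Since $\rmd(V_i) = \rmt_{V_i}(\id_{V_i}) \neq 0$ — because $V_i$ has epic evaluation and $\rmt$ is non-zero, by the result from Section 5 of \cite{GPV13} quoted in the excerpt — one gets a nonzero pairing on the relevant building blocks, and hence non-degeneracy propagates.

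The main obstacle I anticipate is the bookkeeping in the reduction step: showing that non-degeneracy of the pairing for the "atomic" pieces $V_i$ genuinely forces non-degeneracy for an arbitrary pair $(V,V')$ with $V$ projective. One must carefully track how a nonzero $f \in \Hom_{\calC}(V,V')$ decomposes under the dominating set — writing $\id_{V'} = \sum_j r_j \circ s_j$ with $r_j, s_j$ factoring through objects $V_{i_j} \otimes \sigma(k_j)$ (or their retracts) — and then produce an explicit $f'$ by composing the appropriate $r_j, s_j$ with the section maps $s_{U,i}$ and the evaluation/coevaluation morphisms, so that $\rmt_V(f' \circ f)$ collapses via properties (1) and (2) to a nonzero multiple of $\rmd(V_{i_j})$ for some $j$ with $s_j \circ f \circ (\text{something}) \neq 0$. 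The action of $\PGr$ by the free realization $\sigma$ is harmless here since $R^k$ is an invertible functor on Hom-spaces and $\sigma(k)$ is invertible under $\otimes$ up to the scalar computations already recorded ($\dim_{\calC}(\sigma(k))^2 = 1$), so the $\sigma(\PGr)$-factors can be absorbed. Once the atomic case is in hand, assembling these pieces is routine but requires attention to which summand survives.
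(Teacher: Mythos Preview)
Your proposal assembles the right ingredients --- the sections $s_{U,i}$ from Remark~\ref{R:epic_evaluations}, generic semisimplicity, and the fact that $\rmd(V_i) \neq 0$ --- but the strategy has a gap and is more indirect than necessary.

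The gap is your first reduction, to $V'$ projective. You invoke ``precompose/postcompose with maps into projective objects'' or ``$\id_{V'}$ factors through a projective cover,'' but neither is available: $\calC$ is not assumed to have enough projectives or projective covers, and nothing in the hypotheses guarantees that a nonzero $f : V \to V'$ stays nonzero after composing with some map $V' \to P$ to a projective $P$. Even granting this reduction, your decomposition plan runs into trouble: the dominating sets $\Theta(\calC_g) \otimes \sigma(\PGr)$ only cover $\calC_g$ for $g$ generic, so if $V'$ has critical index you would need the non-reduced dominating set of Remark~\ref{R:critical_fusion}, and the ``atomic'' non-degeneracy you want to propagate is never actually isolated and proved.

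The paper's argument avoids both issues by never decomposing $U$ or $U'$ and never requiring $U'$ to be projective. Given nonzero $f : U \to U'$ with $U$ projective, it dualizes to a morphism $\one \to U^* \otimes U'$ via $(\id_{U^*} \otimes f) \circ \rcoev_U$, then tensors with $\id_{V_i}$ to obtain a morphism $V_i \to U^* \otimes U' \otimes V_i$ living in the \emph{semisimple} category $\calC_g$. The section $s_{U,i}$ is used precisely to check this morphism is nonzero. Now the key point: since $V_i$ is simple and $\calC_g$ is semisimple, any nonzero morphism out of $V_i$ is split mono, so there is a retraction $r_{f,i}$. From $r_{f,i}$ one writes down an explicit $f' \in \Hom_{\calC}(U',U)$, and a short computation with cyclicity and the partial trace property yields $\rmt_U(f' \circ f) = \rmt_{V_i}(\id_{V_i}) = \rmd(V_i) \neq 0$. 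The move you were missing is that tensoring with $V_i$ does two things at once: it lands you in the semisimple sector \emph{and} makes the source a simple object, so the retraction comes for free and no bookkeeping is required.
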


\begin{proof}
 Let us consider a homogeneous projective object $U$, a homogeneous object $U'$ of the same index, and a non-zero morphism $f \in \Hom_{\calC}(U,U')$ between them. We claim that
 \[
  ((\id_{U^*} \otimes f) \circ \rcoev_U) \otimes \id_{V_i} \in \Hom_{\calC}(V_i,U^* \otimes U' \otimes V_i)
 \]
 is a non-zero morphism of $\calC_g$ for every $V_i \in \Theta(\calC_g)$. Indeed, it can be written as
 \[
  \left( \id_{U^*} \otimes \left( f \circ (\id_U \otimes \rev_{V_i}) \right) \otimes \id_{V_i} \right) \circ \left( \lcoev_U \otimes \id_{V_i} \otimes \rcoev_{V_i} \right),
 \]
 which is the image of $f \circ (\id_U \otimes \rev_{V_i})$ under the linear isomorphism between the morphism spaces $\Hom_{\calC}(U \otimes V_i \otimes V_i^*,U')$ and $\Hom_{\calC}(V_i,U^* \otimes U' \otimes V_i)$ induced by the pivotal structure. But $f \circ (\id_U \otimes \rev_{V_i})$ is a non-zero morphism because
 \[
  f \circ (\id_U \otimes \rev_{V_i}) \circ s_{U,i} = f
 \]
 for every section $s_{U,i}$ of $\id_U \otimes \rev_{V_i}$. This proves our claim. Now, since $\calC_g$ is semisimple and $V_i$ is simple, there exists a retraction $r_{f,i}$ of ${\left( (\id_{U^*} \otimes f) \circ \rcoev_U \right) \otimes \id_{V_i}}$, which is a morphism in $\Hom_{\calC}(U^* \otimes U' \otimes V_i,V_i)$ satisfying
 \[
  r_{f,i} \circ \left( \left( (\id_{U^*} \otimes f) \circ \rcoev_U \right) \otimes \id_{V_i} \right) = \id_{V_i}.
 \]
 Let $f' \in \Hom_{\calC}(U',U)$ denote the morphism
 \[
  ( \id_U \otimes \rev_{V_i} ) \circ ( \id_U \otimes r_{f,i} \otimes \id_{V_i^*} ) \circ ( \lcoev_U \otimes \id_{U'} \otimes \lcoev_{V_i} ).
 \]
 Using first ciclicity and then left and right partial trace properties, the three endomorphisms represented in Figure \ref{F:proof_non-degeneracy_of_trace} have the same trace.
 \begin{figure}[t]\label{F:proof_non-degeneracy_of_trace}
  \centering
  \includegraphics{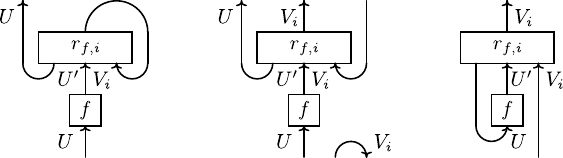}
  \caption{Endomorphisms of $\calC$ sharing the same trace.}
 \end{figure}
 This means precisely
 \begin{align*}
  &\rmt_U ( f' \circ f ) = \rmt_U \left( ( \id_U \otimes \rev_{V_i} ) \circ ( \id_U \otimes r_{f,i} \otimes \id_{V_i^*} ) \circ ( \lcoev_U \otimes f \otimes \lcoev_{V_i} ) \right) \\
  &\hspace{\parindent} = \rmt_{U \otimes V_i \otimes V_i^*} \left( ( \id_U \otimes r_{f,i} \otimes \id_{V_i^*} ) \circ ( \lcoev_U \otimes f \otimes \lcoev_{V_i} ) \circ ( \id_U \otimes \rev_{V_i} ) \right) \\
  &\hspace{\parindent} = \rmt_{V_i} \left( \tr_{\rml} \left( \tr_{\rmr} \left( ( \id_U \otimes r_{f,i} \otimes \id_{V_i^*} ) \circ ( \lcoev_U \otimes f \otimes \lcoev_{V_i} ) \circ ( \id_U \otimes \rev_{V_i} ) \right) \right) \right) \\
  &\hspace{\parindent} = \rmt_{V_i} \left( r_{f,i} \circ \left( \left( (\id_{U^*} \otimes f) \circ \rcoev_U \right) \otimes \id_{V_i} \right) \right) \\
  &\hspace{\parindent} = \rmt_{V_i} ( \id_{V_i} ) \\
  &\hspace{\parindent} \neq 0. \qedhere
 \end{align*}
\end{proof}

A second direct consequence of Remark \ref{R:epic_evaluations} is that we can do without condition (8) in Definition 4.2 of \cite{CGP14}, as explained in Remark 4.4 of \cite{CGP14}. Also, another observation: our notion of free realization is actually different from the one given in \cite{CGP14}, because we are asking nothing of the braiding. However, the compatibility condition in Definition \ref{D:relative_pre-modular_category} implies
\[
 c_{\sigma(k'),\sigma(k)} \circ c_{\sigma(k),\sigma(k')} = \id_{\sigma(k) \otimes \sigma(k')}
\]
for all $k,k' \in \PGr$. Thus, when we restrict the free realization $\sigma$ to the subcategory $\PGr_+$ of $\PGr$ whose set of objects is the kernel of the homomorphism ${\dim_{\calC}} \circ \sigma : \PGr \rightarrow \Z^*$, we indeed obtain a free realization in the sense of Section 4.3 of \cite{CGP14}. Of course, we loose nothing by having this free realization as part of the action of a larger group, which is what we are doing here. Remark this means that for every $V \in \Proj(\calC)$ and every $k \in \PGr$ we have
\[
 \rmd(V \otimes \sigma(k)) = \rmd(V) \dim_{\calC}(\sigma(k)).
\]

\begin{remark}\label{R:critical_fusion}
 We can fix, for every critical $x \in X$, a generic $g_x \in G \smallsetminus X$ satisfying $x + g_x \in G \smallsetminus X$, which always exists because $X$ is small. We can also fix some $i_x \in \rmI_{g_x}$, which specifies a projective simple object $V_x := V_{i_x} \in \Theta(\calC_{g_x})$. Then the linear category $\Proj(\calC_x)$ is dominated by the set 
 \[
  \Theta(\calC_{x + g_x}) \otimes \sigma(\PGr) \otimes \{ V^*_x \} 
  := \left\{ V_i \otimes \sigma(k) \otimes V^*_x \mid i \in \rmI_{x + g_x}, k \in \PGr \right\}.
 \]
 In other words, if $V \in \Proj(\calC_x)$, then there exist $V_{i_1}, \ldots, V_{i_m} \in \Theta(\calC_{x + g_x})$ and $s_j \in \Hom_{\calC}(V,V_{i_j} \otimes \sigma(k_j) \otimes V^*_x)$ and $r_j \in \Hom_{\calC}(V_{i_j} \otimes \sigma(k_j) \otimes V^*_x,V)$ for some $k_j \in \PGr$ and for every integer 
 $1 \leqslant j \leqslant m$ satisfying
 \[
  \id_V = \sum_{j=1}^m r_j \circ s_j.
 \]
 Indeed, since $V$ is projective, then, thanks to Remark \ref{R:epic_evaluations}, there exists a section $s_{V,x} \in \Hom_{\calC}(V,V \otimes V_x \otimes V_x^*)$ of the epimorphism $\id_V \otimes \rev_{V_x}$. Thus, since $V \otimes V_x$ has index $x + g_x$, and since $\calC_{x + g_x}$ is semisimple and dominated by $\Theta(\calC_{x + g_x}) \otimes \sigma(\PGr)$, the claim follows. Remark however that in this case the dominating set of objects $\Theta(\calC_{x + g_x}) \otimes \sigma(\PGr) \otimes \{ V^*_x \}$ is not completely reduced.
\end{remark}

\section{Main definition}\label{S:main_definition}

Relative modular categories are relative pre-modular categories satisfying a strong non-degeneracy condition. Their definition requires some preliminary work. If $\calC$ is a pre-modular $G$-category relative to $(\PGr,X)$ then the associated \textit{Kirby color of index $g \in G \smallsetminus X$} is the formal linear combination of objects
\[
 \Omega_g := \sum_{i \in \rmI_g} \rmd(V_i) \cdot V_i.
\]
If $T$ is a $G$-homogeneous $\calC$-colored ribbon graph, and if $K \subset D^2 \times I$ is a framed knot disjoint from $T$, the operation of labeling $K$ with $\Omega_g$ for some generic index $g \in G \smallsetminus X$ produces a formal linear combination of $G$-homogeneous $\calC$-colored ribbon graphs which we still interpret as a $G$-homogeneous $\calC$-colored ribbon graph, whose image under the Rehetikhin-Turaev functor $F_{\calC}$ is obtained by expanding linearly. Remark that here, as well as everywhere else in this memoir, we tacitly assume all knots and links to be equipped with a fixed orientation.

Let us pause for a moment to make an important remark. Because of our definition of free realization, which does not require categorical dimensions of objects in the image to be equal to $1$, what we call a Kirby color is not a Kirby color in the sense Definition 4.6 of \cite{CGP14}. Indeed, if we wanted to follow the original notion, we would need to consider representatives for orbits of isomorphism classes of simple objects of $\calC_g$ under the action of $\PGr_+ := \left\{ k \in \PGr \mid \dim_{\calC}(\sigma(k)) = 1 \right\}$, which is a subgroup of index at most $2$ in $\PGr$. In other words, we would need to consider, for every $g \in G \smallsetminus X$, the linear combination
\[
 \tilde{\Omega}_g := \sum_{[k] \in \PGr/\PGr_+} \dim_{\calC}(\sigma(k)) \sum_{i \in \rmI_g} \rmd(V_i) \cdot V_i \otimes \sigma(k).
\]
However, we claim that, for the purposes of our construction, the original Kirby color $\tilde{\Omega}_g$ is actually equivalent to $\left| \PGr/\PGr_+ \right| \cdot \Omega_g$. In order to make this claim precise, we need a preliminary definition which is inspired by Section 5.2 of \cite{CGP14}, so let $T$ be a $G$-homogeneous $\calC$-colored ribbon graph, let $K \subset D^2 \times I$ be a $G$-homogeneous $\calC$-colored framed knot disjoint from $T$, and let $\varSigma_K \subset D^2 \times I$ be a Seifert surface for a parallel copy of $K$ determined by the framing. If we suppose $\varSigma_K$ is transverse to $T \cup K$, and if for every point $p \in \varSigma_K \cap (T \cup K)$ we denote with $\varepsilon_p \in \{ +,- \}$ the sign of the intersection of $\varSigma_K$ and $T \cup K$ at $p$, and with $g_p \in G$ the index of the color of the edge of $T \cup K$ intersecting $\varSigma_K$ at $p$, then we can set
\[
 \lk^G (K,T \cup K) := \sum_{p \in \varSigma_K \cap (T \cup K)} \varepsilon_p g_p \in G.
\]

\begin{lemma}\label{L:equivalence_of_Kirby_colors}
 If $T$ is a $G$-homogeneous $\calC$-colored ribbon graph, if $K \subset D^2 \times I$ is an $\Omega_g$-colored framed knot disjoint from $T$, if $\tilde{K} \subset D^2 \times I$ is obtained from $K$ by replacing the color $\Omega_g$ with $\tilde{\Omega}_g$, and if $\lk^G(K,T \cup K) = 0$, then
 \[
  \left| \PGr/\PGr_+ \right| \cdot T \cup K \doteq T \cup \tilde{K}.
 \]
\end{lemma}

\begin{proof}
 The proof is analogous to the one of Lemma 5.8 of \cite{CGP14}. Indeed, suppose that $k \in \PGr$ determines a non-trivial generator $[k] \in \PGr/\PGr_+$, so that
 \[
  \tilde{\Omega}_g = \sum_{i \in \rmI_g} \rmd(V_i) \cdot (V_i - V_i \otimes \sigma(k)).
 \]
 Then, up to skein equivalence, we have $T \cup \tilde{K} \doteq T \cup K - T \cup K \cup K'$, where $K'$ is a $\sigma(k)$-colored parallel copy of $K$ determined by the framing. Now the compatibility condition between the $G$-structure and the $\PGr$-action of $\calC$ allows us to turn every undercrossing of the $\sigma(k)$-colored framed knot $K'$ with the rest of the tangle into an overcrossing, and the price we need to pay for this operation is a coefficient $\psi(\lk^G(K,T \cup K),k) = 1$. By applying the same strategy to self-crossings of the $\sigma(k)$-colored component we can turn it into an unknot with framing either 0 or 1. The result now follows from $\tr_{\calC}(\vartheta_{\sigma(k)}) = \dim_{\calC}(\sigma(k)) = -1$.
\end{proof}

\begin{remark}\label{R:equivalence_of_Kirby_colors}
 As a consequence of Lemma \ref{L:equivalence_of_Kirby_colors}, our Kirby colors have all the properties of the original ones. In particular, they satisfy Lemma 5.9 of \cite{CGP14}, which implies the handle-slide invariance of the corresponding graphical calculus. The reason why the technical hypothesis of Lemma \ref{L:equivalence_of_Kirby_colors} is not restrictive is explained in Remark 5.6 of \cite{CGP14}. See also Proposition 6.20 of \cite{BCGP16} for the analogous statement in the special case of unrolled quantum $\sltwo$. 
\end{remark}

 Next, for a pre-modular $G$-category $\calC$ relative to $(\PGr,X)$ there exist constants $\Delta_-,\Delta_+ \in \Bbbk$, called the \textit{negative} and the \textit{positive stabilization coefficient} respectively, realizing the skein equivalences of Figure \ref{F:stabilization_coefficients}. 
\begin{figure}[b]\label{F:stabilization_coefficients}
 \centering
 \includegraphics{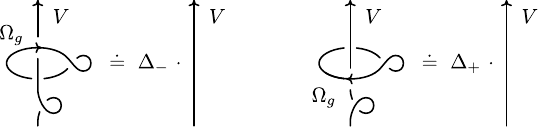}
 \caption{Skein equivalences defining $\Delta_-$ and $\Delta_+$.}
\end{figure}
Observe that $\Delta_-$ and $\Delta_+$ do not depend neither on $V \in \calC_g$, nor on $g \in G \smallsetminus X$, as proved in Lemma 5.10 of \cite{CGP14}. We call \textit{negative} and \textit{positive stabilization along a $V$-colored edge} the operations that replace the right-hand sides with the left-hand sides of the first and of the second skein equivalence respectively.

\begin{definition}\label{D:non-degeneracy}
 A pre-modular $G$-category relative to $(\PGr,X)$ is \textit{non-\-de\-gen\-er\-ate} if $\Delta_-\Delta_+ \neq 0$.
\end{definition}

Non-degenerate relative pre-modular categories give rise to quantum invariants of admissible closed $3$-manifolds, as explained in Section \ref{S:CGP_invariants}. What we need in order to extend these invariants to ETQFTs is a slightly stronger non-degeneracy condition, which provides the main definition of this memoir.

\begin{definition}\label{D:relative_modular_category}
 A \textit{modular $G$-category relative to $(\PGr,X)$} is a pre-modular $G$-category relative to $(\PGr,X)$ which admits a \textit{relative modularity parameter} $\zeta \in \Bbbk^*$ realizing the skein equivalence of Figure \ref{F:relative_modularity} for all $g,h \in G \smallsetminus X$ and for all $i,j \in \rmI_g$.
\end{definition}

\begin{figure}[hbtp]\label{F:relative_modularity}
 \centering
 \includegraphics{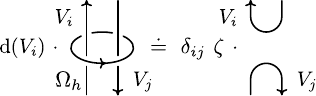}
 \caption{Relative modularity condition.}
\end{figure}

There are many examples of relative modular categories. The main family comes from representations of unrolled quantum groups at roots of unity. An explicit construction can be found in Appendix \ref{A:unrolled_quantum_groups}, as well as in references within. The next result shows the non-degeneracy condition of Definition \ref{D:relative_modular_category} is stronger than the one of \ref{D:non-degeneracy}.

\begin{proposition}\label{P:non-degeneracy_of_relative_modular_categories}
 If $\calC$ is a modular $G$-category relative to $(\PGr,X)$ then $\calC$ is non-\-de\-gen\-er\-ate, with $\Delta_- \Delta_+ = \zeta \neq 0$.
\end{proposition}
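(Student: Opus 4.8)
The plan is to read off the identity $\Delta_-\Delta_+ = |\PGr/\PGr_+|\,\zeta$ directly from the relative modularity skein equivalence of Figure \ref{F:relative_modularity} by performing a suitable closure, and then to deduce non-degeneracy from the sole fact that $\zeta \in \Bbbk^*$ and $|\PGr/\PGr_+| \geqslant 1$, so that $\Delta_-\Delta_+ \neq 0$, which is exactly Definition \ref{D:non-degeneracy}. This is the non-semisimple counterpart of the classical identity $\Delta_+\Delta_- = \mathcal{D}^2$ for semisimple modular categories, with the periodicity group accounting for the extra scalar.

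Concretely, I would fix a generic index $g \in G \smallsetminus X$ and an object $V_i \in \Theta(\calC_g)$, specialize the relative modularity relation to $h = g$ and $j = i$, and then close it up into a configuration of closed loops. Capping off should be done with the modified trace rather than with the ordinary categorical trace, so that the resulting closed $V_i$-loop contributes the non-zero scalar $\rmd(V_i) = \rmt_{V_i}(\id_{V_i})$; this is legitimate because $V_i$ is a simple projective object with epic evaluation, hence $\rmd(V_i) \neq 0$ by the discussion in Section \ref{S:main_definitions}, and it circumvents the possible vanishing of $\dim_\calC(V_i)$. On the right-hand side the Kronecker symbol $\delta_{ij}$ survives and the diagram collapses to $\zeta$ times a normalization built out of $\rmd(V_i)$. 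On the left-hand side I obtain the modified-trace evaluation of an $\Omega_g$-coloured loop encircling the closed $V_i$-loop, carrying (after the closure, or after applying the relation a controlled second time in a doubled configuration) one positive and one negative framing curl; by the defining skein equivalences of Figure \ref{F:stabilization_coefficients}, absorbing the $+1$-framed $\Omega_g$-loop costs a factor $\Delta_+$ and absorbing the $-1$-framed one costs $\Delta_-$, and these coefficients depend neither on $V_i$ nor on $g$ by Lemma 5.10 of \cite{CGP14}, leaving the very same $\rmd(V_i)$-normalization multiplied by $\Delta_-\Delta_+$.

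To see that matching the two computations produces the factor $|\PGr/\PGr_+|$, I would expand $\Omega_g = \sum_{[k]\in\PGr/\PGr_+}\dim_\calC(\sigma(k))\sum_{\ell\in\rmI_g}\rmd(V_\ell)\cdot V_\ell\otimes\sigma(k)$ and use $\vartheta_{\sigma(k)} = \id_{\sigma(k)}$, the compatibility form $\psi$ of Definition \ref{D:relative_pre-modular_category}, and the equality $\dim_\calC(\sigma(k))^2 = 1$ established in Section \ref{S:group_actions}: each coset $[k]$ then contributes one equal copy of the same evaluation, and the $\dim_\calC(\sigma(k))$ factors square to $1$, so the sum over $\PGr/\PGr_+$ yields exactly the multiplicity $|\PGr/\PGr_+|$. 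Cancelling the common non-zero factor $\rmd(V_i)$ gives $\Delta_-\Delta_+ = |\PGr/\PGr_+|\,\zeta$, which is non-zero; hence $\calC$ is non-degenerate.

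The step I expect to be the main obstacle is the framing bookkeeping: showing that a single (or controlled double) use of the relative modularity relation genuinely accounts for the full product $\Delta_-\Delta_+$, and not just one of the two stabilization coefficients, while keeping every intermediate skein admissible so that $F_\calC$ remains defined and the identities of Figure \ref{F:stabilization_coefficients} may legitimately be applied. Everything else is a routine transcription of the semisimple argument, with $|\PGr/\PGr_+| \in \{1,2\}$ as the only trace left by the periodicity group.
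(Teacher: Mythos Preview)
Your proposal is correct and contains all the right ingredients; it is essentially the paper's argument with two packaging differences worth noting.

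First, the paper does not close up with the modified trace at all: it works directly with endomorphisms of the simple object $V_i$, reading scalars off from $\End_\calC(V_i)=\Bbbk$. Your closure via $\rmt_{V_i}$ is harmless (it multiplies both sides by the non-zero $\rmd(V_i)$) but unnecessary.

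Second, and more usefully, the paper runs the computation in the \emph{opposite} direction, which dissolves your framing-bookkeeping worry before it arises. Rather than starting from the relative modularity relation and trying to recognise $\Delta_-\Delta_+$ in its closure, the paper starts from $\id_{V_i}$, performs a negative stabilization along the $V_i$-edge (introducing a $-1$-framed $\Omega_g$-meridian and the factor $\Delta_-$), and then a positive stabilization along that new $\Omega_g$-strand (introducing a $+1$-framed $\Omega_g$-meridian of the first meridian and the factor $\Delta_+$). Thus $\Delta_-\Delta_+\cdot\id_{V_i}$ equals the evaluation of a specific tangle whose framings are dictated by Figure~\ref{F:stabilization_coefficients}. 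One then expands the outer Kirby color as $\sum_{[k]}\dim_\calC(\sigma(k))\sum_j\rmd(V_j)\,V_j\otimes\sigma(k)$, uses compatibility to pull the $\sigma(k)$-strand clear and evaluates its $+1$-framed loop to $\tr_\calC(\vartheta_{\sigma(k)})=\dim_\calC(\sigma(k))$; this squares against the Kirby-color coefficient to $1$, leaving the bare count $|\PGr/\PGr_+|$. What remains is precisely the left-hand side of the relative modularity relation (with the twist/untwist pair $\vartheta_{V_i}\circ\vartheta_{V_i}^{-1}$ from the $\pm1$ framings), which the $\delta_{ij}$ collapses to $\zeta\cdot\id_{V_i}$. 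This direction makes the origin of both framings and of the $|\PGr/\PGr_+|$ factor completely mechanical.
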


\begin{figure}[tbh]\label{F:non-degeneracy_of_relative_modular_categories}
 \centering
 \includegraphics{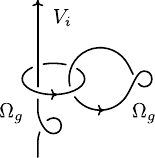}
 \caption{$\calC$-Colored ribbon tangle $T_i$ witnessing $\Delta_- \Delta_+ = \zeta$.}
\end{figure}

\begin{proof}
 We claim the non-degeneracy of $\calC$ can be proved by considering the $\calC$-colored ribbon tangle $T_i$ represented in Figure \ref{F:non-degeneracy_of_relative_modular_categories} for any $i \in \rmI_g$. Indeed, remark that $T_i$ can be obtained from the identity $\calC$-colored ribbon tangle $\id_{(+,V_i)}$ by first performing a negative stabilization on the $V_i$-colored edge, and then performing a positive stabilization on the resulting $\Omega_g$-colored framed knot. Thus we have
 \[
  \Delta_- \Delta_+ \cdot \id_{V_i} \doteq T_i \doteq \zeta \cdot \id_{V_i},
 \]
 where the second skein equivalence is a consequence of the relative modularity condition.
\end{proof}

\section{Relation with semisimple theory}\label{S:relation_with_semisimple_theory}

At first sight the relative modularity condition of Definition \ref{D:relative_modular_category} may seem strange, but it is actually a direct generalization of the standard modularity condition, which is usually stated in terms of the invertibility of the positive Hopf link matrix. In order to explain this, let us consider a pre-modular $G$-category $\calC$ relative to $(\PGr,X)$. In the trivial case $G = \PGr = \{ 0 \}$ we have $X = \varnothing$, which means $\Proj(\calC) = \calC$. Then, thanks to Lemma 16 and Corollary 17 of \cite{GPT09}, every projective dimension is just a scalar multiple of the categorical dimension, which means we may as well directly suppose $\rmd = \dim_{\calC}$. In particular, $\calC$ is semisimple, with $\Theta(\calC) = \{ V_i  \mid i \in \rmI \}$ providing a finite completely reduced dominating set, so it is pre-modular also in the usual sense. In this case, we drop the term \textit{relative} from the modularity condition of Definition \ref{D:relative_modular_category}. If $\calC$ is a pre-modular category then the \textit{positive Hopf link matrix of $\calC$} is the $\rmI^2$-matrix $(S^+_{ij})_{(i,j) \in \rmI^2}$ whose $(i,j)$-th entry ${S^+_{ij}}$ is given by the evaluation of the Reshetikhin-Turaev functor $F_{\calC}$ against the $\calC$-colored framed link depicted in the left-hand part of Figure \ref{F:S-matrix} for every $(i,j) \in \rmI^2$.

\begin{figure}[hbtp]\label{F:S-matrix}
 \centering
 \includegraphics{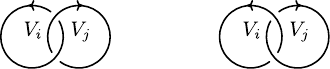}
 \caption{The $\calC$-colored framed links representing the $(i,j)$-th entries of the positive and of the negative Hopf link matrix of $\calC$.}
\end{figure}

\begin{proposition}\label{P:equivalence_of_modularity_conditions}
 A pre-modular category ${\calC}$ satisfies the modularity condition of Definition \ref{D:relative_pre-modular_category} if and only if its positive Hopf link matrix is invertible.
\end{proposition}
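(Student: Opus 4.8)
The statement says that a pre-modular category $\calC$ (so $G = \PGr = \{0\}$, hence semisimple in the usual sense) satisfies the modularity condition of Definition~\ref{D:relative_modular_category} if and only if its positive Hopf link matrix $S^+ = (S^+_{ij})$ is invertible. I would prove this by translating both conditions into statements about linear algebra over the (finite-dimensional) ``fusion algebra''-type data encoded by $\Theta(\calC) = \{V_i \mid i \in \rmI\}$. The key observation is that in this trivial setting the relative modularity condition of Figure~\ref{F:relative_modularity} becomes: evaluating $F_\calC$ on the depicted skein — an $\Omega$-colored knot encircling a pair of parallel $V_i$- and $V_j$-colored strands — yields $\delta_{ij}\zeta\cdot(\text{the identity skein with a twist})$, for all $i,j \in \rmI$. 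So the plan is to identify the left-hand side of that skein with a bilinear expression in the $S^+$-matrix entries and the dimensions $\rmd(V_i) = \dim_\calC(V_i)$.

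\textbf{Step 1: Reduce the skein to a matrix identity.} I would first recall (from the standard structure of $\WRT$-type invariants, cf.\ \cite{T94}) that encircling a $V_i$-colored strand with a $V_j$-colored unknot acts on $\Hom_\calC(V_i, V_i) = \Bbbk\cdot\id_{V_i}$ by the scalar $S^+_{ij}/\rmd(V_i)$ (up to the normalization conventions fixed earlier; I should be careful about the orientation convention flagged after Figure~\ref{F:partial_traces}, but it only permutes rows/columns and rescales, hence does not affect invertibility). Therefore encircling the \emph{two parallel strands} $V_i \otimes V_j$ (or rather the relevant configuration in Figure~\ref{F:relative_modularity}) with $\Omega = \sum_{k\in\rmI}\rmd(V_k)\cdot V_k$ produces the scalar $\sum_{k\in\rmI} \rmd(V_k) \cdot (S^+_{ki} \bar S_{kj})/(\rmd(V_i)\rmd(V_j))$ or a similar double sum, where $\bar S$ is the negative Hopf link matrix from the right-hand part of Figure~\ref{F:S-matrix}. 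The exact form of this sum I would extract by the standard ``doubled encircling'' computation; the upshot is that the relative modularity condition is equivalent to an equation of the shape $S^+ D \bar S^{\,t} = \zeta\cdot(\text{diagonal matrix with nonzero entries})$, where $D = \mathrm{diag}(\rmd(V_k))$ is invertible since every $V_k \in \Theta(\calC)$ has nonzero dimension (being a simple object with epic evaluation in a non-zero-trace setting, by the remark after the definition of ambidextrous objects).

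\textbf{Step 2: Conclude the equivalence.} Given the identity $S^+ D \bar S^{\,t} = \zeta\cdot\Lambda$ with $\Lambda$ diagonal and invertible, $D$ invertible, and $\zeta \neq 0$, invertibility of $S^+$ is immediate: it has a right inverse $D\bar S^{\,t}\Lambda^{-1}\zeta^{-1}$, and being square it is invertible. Conversely, if $S^+$ is invertible, one needs to \emph{produce} $\zeta$ and verify the skein equivalence holds with that value. Here I would use the standard fact that in a modular category the composite ``Hopf pairing'' $S^+ D \bar S^{\,t}$ is automatically diagonal — this is the classical computation that the $S$-matrix of a modular category satisfies $(S\bar S)_{ij}$ is supported on $i = j$ — so one \emph{defines} $\zeta$ as the (common, by the $G=\{0\}$ case of Lemma~5.10 of \cite{CGP14} or a direct argument) ratio appearing there, and nonvanishing of $\zeta$ follows from invertibility of $S^+$. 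Alternatively, and perhaps more cleanly, I would invoke Proposition~\ref{P:non-degeneracy_of_relative_modular_categories}: once the skein identity is known to have diagonal right-hand side $\zeta\cdot\Lambda$, that proposition (specialized to $G = \PGr = \{0\}$, where $|\PGr/\PGr_+| = 1$) gives $\Delta_-\Delta_+ = \zeta$, and $\Delta_-\Delta_+ \neq 0$ is equivalent to non-degeneracy, which for a semisimple pre-modular category is classically equivalent to invertibility of $S^+$.

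\textbf{Main obstacle.} I expect the real work to be Step~1: carefully matching the precise skein picture in Figure~\ref{F:relative_modularity} (which I cannot see in full detail from the text) to the correct bilinear expression in $S^+$, $\bar S$, and $D$, getting all the normalization and orientation conventions right — in particular the convention difference from \cite{T94} noted in the paper, and whether the encircling knot in Figure~\ref{F:relative_modularity} links one strand or a parallel pair. Once that dictionary is set up correctly, both implications are short linear algebra. A secondary subtlety is justifying that $\bar S$ (or equivalently $S^+$ post-composed with the charge-conjugation permutation $i \mapsto i^*$) enters, rather than $S^+$ twice; this is where the left/right orientation convention genuinely matters, but it affects the identity only by an invertible permutation and a diagonal rescaling, so the equivalence ``skein condition $\iff$ $S^+$ invertible'' is insensitive to it.
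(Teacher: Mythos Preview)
Your approach is correct in spirit but more roundabout than the paper's. For the direction ``modularity $\Rightarrow$ $S^+$ invertible'', the paper introduces the \emph{negative} Hopf link matrix $S^-$ (right-hand part of Figure~\ref{F:S-matrix}), writes both $S^\pm_{ij}$ as categorical traces of long Hopf links $T^\pm_{ij}$, and computes directly
\[
\sum_{h\in\rmI} S^+_{ih}S^-_{hj} \;=\; \sum_{h\in\rmI} \dim_\calC(V_h)\,\tr_\calC\!\big(F_\calC(T^+_{ih}\circ T^-_{hj})\big) \;=\; \zeta\,\delta_{ij},
\]
the last equality by applying the modularity skein to the composite $T^+_{ih}\circ T^-_{hj}$. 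This yields $(S^+)^{-1} = \zeta^{-1}S^-$ with no auxiliary diagonal matrix $D$ or transpose: the Kirby-color weight $\dim_\calC(V_h)$ is exactly what converts the product of two traces (each over the simple object $V_h$) into the trace of the composite. Your $S^+ D\bar S^{\,t}$ expression comes from trying to read the modularity skein as a scalar on open strands rather than closing everything up; it would work after untangling conventions, but the paper's move of taking the full trace from the start is cleaner.

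For the converse direction, the paper simply cites Exercise 3.10.2 of \cite{T94}. Your Step~2 has a mild circularity: you appeal to ``the standard fact that in a modular category the composite Hopf pairing is automatically diagonal'' in order to \emph{establish} the modularity condition. What is actually needed (and what the Turaev exercise supplies) is that invertibility of $S^+$ alone forces the $\Omega$-encircling of a pair of strands to project onto the diagonal $i=j$; this is a genuine computation, not a tautology, and your sketch does not supply it. Your alternative route via Proposition~\ref{P:non-degeneracy_of_relative_modular_categories} goes the wrong way: that proposition assumes relative modularity and deduces $\Delta_-\Delta_+ = \zeta \neq 0$, so it cannot be used to produce the modularity condition from invertibility of $S^+$.
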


\begin{proof}
 If the positive Hopf link matrix of $\calC$ is invertible, then it is known that the modularity condition holds, as follows from Exercise 3.10.2 of \cite{T94}. Therefore, let us suppose $\calC$ satisfies the modularity condition, and let us consider the negative Hopf link matrix of $\calC$, that is the $\rmI^2$-matrix $(S^-_{ij})_{(i,j) \in \rmI^2}$ whose $(i,j)$-th entry $S^-_{ij}$ is defined as the evaluation of the Reshetikhin-Turaev functor $F_{\calC}$ against the $\calC$-colored framed link depicted in the right-hand part of Figure \ref{F:S-matrix} for every $(i,j) \in \rmI^2$. Now we have 
 \[
  S^+_{ij} = \tr_{\calC} \left( F_{\calC} \left( T^+_{ij} \right) \right), \quad S^-_{ij} = \tr_{\calC} \left( F_{\calC} \left( T^-_{ij} \right) \right),
 \]
 for the $\calC$-colored ribbon tangles depicted in Figure \ref{F:long_Hopf_links} for every $(i,j) \in \rmI^2$.
 
 \begin{figure}[htbp]\label{F:long_Hopf_links}
  \centering
  \includegraphics{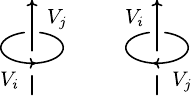}
  \caption{${\calC}$-Colored ribbon tangles $T^+_{ij}$ and $T^-_{ij}$.}
 \end{figure}
 
 Therefore we have 
 \begin{align*}
  \sum_{h \in \rmI} S^+_{ih} S^-_{hj} 
  &= \sum_{h \in \rmI} \tr_{\calC} \left( F_{\calC} \left( T^+_{ih} \right) \right) \tr_{\calC} \left( F_{\calC} \left( T^-_{hj} \right) \right) \\
  &= \sum_{h \in \rmI} \dim_{\calC}(V_h) \tr_{\calC} \left( F_{\calC} \left( T^+_{ih} \circ T^-_{hj} \right) \right) \\
  &= \zeta \delta_{ij},
 \end{align*}
 for every $(i,j) \in \rmI^2$, where the last equality follows from the skein equivalence of Figure \ref{F:equivalence_modularity}. \qedhere
 
 \begin{figure}[htbp]\label{F:equivalence_modularity}
  \centering
  \includegraphics{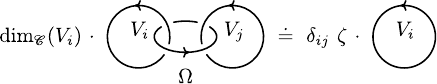}
  \caption{Skein equivalence witnessing $(S^+_{ij})_{(i,j) \in \rmI^2}^{-1} = \zeta^{-1} (S^-_{ij})_{(i,j) \in \rmI^2}$.}
 \end{figure}
 
\end{proof}


%
%
%

\chapter{Admissible cobordisms}\label{Ch:admissible_cobordisms}

In this chapter we introduce our source $2$-cat\-e\-go\-ry for ETQFTs and graded ETQFTs: the symmetric monoidal $2$-cat\-e\-go\-ry ${\bfadCob_{\calC}}$ of admissible cobordisms of dimension 1+1+1. Since our goal is to extend the Cos\-tan\-tino-Geer-Patu\-reau quantum invariants, which require cohomology classes for their definition, decorations on cobordisms will be richer than usual. Every manifold considered in this memoir is assumed to be oriented, except when explicitly stated otherwise.

\section{Group colorings}\label{S:group_colorings}

In this section we fix an abelian group ${G}$, and we define \textit{${G}$-colorings}, which are special decorations we progressively introduce on closed manifolds, cobordisms, and cobordisms with corners. They consist of a relative cohomology class with coefficients in ${G}$, together with a finite set of base points which is needed in order to induce ${G}$-colorings on horizontal and vertical gluings. The role of $G$-colorings will be to determine indices for surgery presentations of closed 3-manifolds. Appendix \ref{A:cobordisms_with_corners} explains our notation for cobordisms and cobordisms with corners.

\begin{definition}\label{D:G-coloring_object}
 If ${\varGamma}$ is a closed $1$-dimensional smooth manifold, then a \textit{$G$-coloring of $\varGamma$} is a relative cohomology class 
 \[
  \xi \in H^1(\varGamma,A;G)
 \]
 together with a finite set ${A \subset \varGamma}$ containing exactly one base point in every connected component of ${\varGamma}$.
\end{definition}

\begin{definition}\label{D:G-coloring_1-morphism}
 If ${\xi}$ and ${\xi'}$ are ${G}$-colorings of ${\varGamma}$ and ${\varGamma'}$, if ${\varSigma}$ is a $2$-dimensional cobordism from ${\varGamma}$ to ${\varGamma'}$, and if ${P}$ is a ribbon set inside ${\varSigma}$, then a \textit{$G$-coloring of ${(\varSigma,P)}$ extending ${\xi}$ and ${\xi'}$} is a relative cohomology class 
 \[
  \vartheta \in H^1(\varSigma \smallsetminus P,A \cup A' \cup B;G)
 \]
 satisfying ${j_{\varGamma}^*(\vartheta) = \xi}$ and ${j_{\varGamma'}^*(\vartheta) = \xi'}$ for the embeddings 
 \begin{gather*}
  j_{\varGamma} : (\varGamma,A) \hookrightarrow (\varSigma \smallsetminus P,A \cup A' \cup B), \\
  j_{\varGamma'} : (\varGamma',A') \hookrightarrow (\varSigma \smallsetminus P,A \cup A' \cup B)\phantom{,}
 \end{gather*}
 induced by the boundary identifications ${f_{\varSigma_-}}$ and ${f_{\varSigma_+}}$, together with a finite set ${B \subset \varSigma \smallsetminus P}$ containing at least one base point in the interior of every connected component of ${\varSigma}$ disjoint from the incoming boundary ${\partial_- \varSigma}$. Remark that we still denote with ${A \subset \partial_- \varSigma}$ and ${A' \subset \partial_+ \varSigma}$ the images of ${A \subset \varGamma}$ and ${A' \subset \varGamma'}$ under the boundary identifications ${f_{\varSigma_-}}$ and ${f_{\varSigma_+}}$ respectively. Our notation for cobordisms is explained in Definition \ref{D:cobordism}.
\end{definition}

\begin{remark}\label{R:cylinder_G-coloring_object}
 Every ${G}$-coloring ${\xi}$ of ${\varGamma}$ induces a ${G}$-coloring ${I \times \xi}$ of ${I \times \varGamma}$ extending ${\xi}$ given by ${p_{\varGamma}^*(\xi)}$ for the projection
 \[
  p_{\varGamma} : (I \times \varGamma,\{ 0,1 \} \times A) \rightarrow (\varGamma,A).
 \]
 The trivial cobordism ${I \times \varGamma}$ is introduced in Remark \ref{R:trivial_cobordism}. 
\end{remark}

\begin{definition}\label{D:G-coloring_2-morphism}
 If ${\vartheta}$ and ${\vartheta'}$ are ${G}$-colorings of ${(\varSigma,P)}$ and ${(\varSigma',P')}$ which extend ${G}$-colorings ${\xi}$ and ${\xi'}$ of ${\varGamma}$ and ${\varGamma'}$, if ${M}$ is a 3-dimensional cobordism with corners from ${\varSigma}$ to ${\varSigma'}$, and if ${T \subset M}$ is a ribbon graph from ${P}$ to ${P'}$, then a \textit{$G$-coloring of ${(M,T)}$ extending ${\vartheta}$ and ${\vartheta'}$} is a relative cohomology class 
 \[
  \omega \in H^1(M \smallsetminus T,((A \cup A') \times \{ 0,1 \}) \cup B \cup B';G),
 \]
 satisfying ${j_{\varGamma}^*(\omega) = p_{\varGamma}^*(\xi)}$, ${j_{\varGamma'}^*(\omega) = p_{\varGamma'}^*(\xi')}$, ${j_{\varSigma}^*(\omega) = \vartheta}$, and ${j_{\varSigma'}^*(\omega) = \vartheta'}$ for the embeddings
 \begin{gather*}
  j_{\varGamma} : (\varGamma \times I,A \times \{ 0,1 \}) \hookrightarrow (M \smallsetminus T,((A \cup A') \times \{ 0,1 \}) \cup B \cup B'), \\
  j_{\varGamma'} : (\varGamma' \times I,A' \times \{ 0,1 \}) \hookrightarrow (M \smallsetminus T,((A \cup A') \times \{ 0,1 \}) \cup B \cup B'), \\
  j_{\varSigma} : (\varSigma \smallsetminus P,A \cup A' \cup B) \hookrightarrow (M \smallsetminus T,((A \cup A') \times \{ 0,1 \}) \cup B \cup B'), \\
  j_{\varSigma'} : (\varSigma' \smallsetminus P',A \cup A' \cup B') \hookrightarrow (M \smallsetminus T,((A \cup A') \times \{ 0,1 \}) \cup B \cup B')\phantom{,}
 \end{gather*}
 induced by the boundary identifications ${f_{M^{\rmv}_-}}$, ${f_{M^{\rmv}_+}}$, ${f_{M^{\rmh}_-}}$, and ${f_{M^{\rmh}_+}}$, and for the projections
 \begin{gather*}
  p_{\varGamma} : (\varGamma \times I,A \times \{ 0,1 \}) \rightarrow (\varGamma,A), \\
  p_{\varGamma'} : (\varGamma' \times I,A' \times \{ 0,1 \}) \rightarrow (\varGamma',A').
 \end{gather*}
 Remark that we still denote with ${A \times \{ 0,1 \}}$, ${A' \times \{ 0,1 \}}$, ${B}$, and ${B'}$ the images of ${A \times \{ 0,1 \} \subset \varGamma \times I}$, ${A' \times \{ 0,1 \} \subset \varGamma'  \times I}$, ${B \subset \varSigma}$, and ${B' \subset \varSigma'}$ under the boundary identifications ${f_{M^{\rmv}_-}}$, ${f_{M^{\rmv}_+}}$, ${f_{M^{\rmh}_-}}$, and ${f_{M^{\rmh}_+}}$ respectively. Our notation for cobordisms with corners is explained in Definition \ref{D:cobordism_with_corners}.
\end{definition}

\begin{remark}\label{R:cylinder_G-coloring_1-morphism}
 Every ${G}$-coloring ${\vartheta}$ of ${(\varSigma,P)}$ extending ${G}$-colorings ${\xi}$ and ${\xi'}$ of ${\varGamma}$ and ${\varGamma'}$ induces a ${G}$-coloring ${\vartheta \times I}$ of ${(\varSigma \times I,P \times I)}$ extending ${\vartheta}$ given by ${p_{\varSigma}^*(\vartheta)}$ for the projection
 \[
  p_{\varSigma} : \left( (\varSigma \smallsetminus P) \times I,(A \cup A' \cup B) \times \{ 0,1 \} \right) \rightarrow
  (\varSigma \smallsetminus P,A \cup A' \cup B).
 \]
 The trivial cobordism with corners ${\varSigma \times I}$ is introduced in Remark \ref{R:trivial_cobordism_with_corners}.
\end{remark}

Let us fix a ribbon $G$-category $\calC$. If $\varSigma$ is a $2$-di\-men\-sion\-al cobordism from $\varGamma$ to $\varGamma'$, then we say a $G$-ho\-mo\-ge\-ne\-ous $\calC$-col\-ored ribbon set $P \subset \varSigma$ and a $G$-col\-or\-ing $\vartheta$ of $(\varSigma,P)$ extending $\xi$ and $\xi'$ are \textit{compatible} if, for every oriented vertex $p \in P$, the index of the color of $p$ is given by $\langle \vartheta, m_p \rangle$, where $m_p$ is the homology class of a positive meridian of $p$. A \textit{$(\calC,G)$-col\-or\-ing $(P,\vartheta)$ of $\varSigma$ relative to $\xi$ and $\xi'$} is then a $G$-ho\-mo\-ge\-ne\-ous $\calC$-col\-ored ribbon set $P \subset \varSigma$ together with a compatible $G$-col\-or\-ing $\vartheta$ of $(\varSigma,P)$ extending $\xi$ and $\xi'$. Analogously, if $M$ is a $3$-di\-men\-sion\-al cobordism with corners from $\varSigma$ to $\varSigma'$, then we say a $G$-homogeneous $\calC$-col\-ored ribbon graph $T \subset M$ from $P$ to $P'$ and a $G$-col\-or\-ing $\omega$ of $(M,T)$ extending $\vartheta$ and $\vartheta'$ are \textit{compatible} if, for every oriented edge $e \subset T$, the index of the color of $e$ is given by $\langle \omega, m_e \rangle$, where $m_e$ is the homology class of a positive meridian of $e$. A \textit{$(\calC,G)$-col\-or\-ing $(T,\omega)$ of $M$ relative to $(P,\vartheta)$ and $(P',\vartheta')$} is then a $G$-ho\-mo\-ge\-ne\-ous $\calC$-col\-ored ribbon graph $T \subset M$ from $P$ to $P'$ together with a compatible $G$-col\-or\-ing $\omega$ of $(M,T)$ extending $\vartheta$ and $\vartheta'$.

\section{2-Category of decorated cobordisms}

In this section we fix a ribbon ${G}$-category ${\calC}$ and we introduce the symmetric monoidal $2$-cat\-e\-go\-ry ${\bfCob_{\calC}}$ of decorated cobordisms of dimension 1+1+1. Alongside standard decorations like ${\calC}$-colored ribbon sets, ${\calC}$-colored ribbon graphs, Lagrangian subspaces, and signature defects, we also equip objects and morphisms of ${\bfCob_{\calC}}$ with ${G}$-colorings as introduced in the previous section. Remark that Maslov indices need to be discussed in the case of surfaces with boundary, so we do this in Appendix \ref{A:Maslov}. We also point out that ${\bfCob_{\calC}}$ is a rigid $2$-cat\-e\-go\-ry, meaning all of its objects are fully dualizable, while the domain $2$-category for our ETQFTs will be a non-rigid subcategory of this one. Here, we are using the term $2$-category in the fully weak sense, as a synonym for \textit{bicategory}. See Appendix \ref{A:symmetric_monoidal} for the definition of symmetric monoidal $2$-cat\-e\-go\-ry.

\begin{definition}
 An \textit{object ${\bbGamma}$ of ${\bfCob_{\calC}}$} is a pair ${(\varGamma,\xi)}$ where:
 \begin{enumerate}
  \item ${\varGamma}$ is a smooth closed $1$-dimensional manifold; 
  \item ${\xi}$ is a \textit{$G$-coloring} of ${\varGamma}$.
 \end{enumerate}
\end{definition}

See Definition \ref{D:G-coloring_object} for the notion of ${G}$-coloring ${\xi}$ of ${\varGamma}$.

\begin{definition}
 A \textit{$1$-morphism ${\bbSigma : \bbGamma \rightarrow \bbGamma'}$ of ${\bfCob_{\calC}}$} is a 4-tuple ${(\varSigma,P,\vartheta,\calL)}$ where: 
 \begin{enumerate}
  \item ${\varSigma}$ is a $2$-dimensional cobordism from ${\varGamma}$ to ${\varGamma'}$;
  \item ${P \subset \varSigma}$ is a ${G}$-homogeneous ${\calC}$-colored ribbon set;
  \item ${\vartheta}$ is a compatible ${G}$-coloring of ${(\varSigma,P)}$ extending ${\xi}$ and ${\xi'}$;
  \item ${\calL \subset H_1(\varSigma;\R)}$ is a Lagrangian subspace.
 \end{enumerate}
\end{definition}

See Definition \ref{D:G-coloring_1-morphism} for the notion of ${G}$-coloring ${\vartheta}$ of ${(\varSigma,P)}$, and see Definition \ref{D:Lagrangian} for the notion of Lagrangian subspace.

\begin{definition}
 A \textit{$2$-morphism ${\bbM : \bbSigma \Rightarrow \bbSigma'}$ of ${\bfCob_{\calC}}$} between $1$-morphisms ${\bbSigma,\bbSigma' : \bbGamma \rightarrow \bbGamma'}$ is an equivalence class of 4-tuples ${(M,T,\omega,n)}$ where:
 \begin{enumerate}
  \item ${M}$ is a 3-dimensional cobordism with corners from ${\varSigma}$ to ${\varSigma'}$;
  \item ${T \subset M}$ is a ${G}$-homogeneous ${\calC}$-colored ribbon graph from ${P}$ to ${P'}$;
  \item ${\omega}$ is a compatible ${G}$-coloring of ${(M,T)}$ extending ${\vartheta}$ and ${\vartheta'}$;
  \item ${n \in \Z}$ is an integer called the \textit{signature defect}.
 \end{enumerate}
 Two such 4-tuples ${(M,T,\omega,n)}$ and ${(M',T',\omega',n')}$ are equivalent if ${n = n'}$ and if there exists an isomorphism of cobordisms with corners ${f : M \rightarrow M'}$ satisfying ${f(T) = T'}$ and ${f^*(\omega') = \omega}$.
\end{definition}

See Definition \ref{D:cobordism_with_corners} for the notion of cobordism with corners and related isomorphisms, and see Definition \ref{D:G-coloring_2-morphism} for the notion of ${G}$-coloring ${\omega}$ of ${(M,T)}$.

\begin{definition}
 The \textit{identity $1$-morphism ${\id_{\bbGamma} : \bbGamma \rightarrow \bbGamma}$ associated with an object ${\bbGamma}$ of ${\bfCob_{\calC}}$} is the 4-tuple
 \[
  \left( I \times \varGamma,\varnothing,I \times \xi,H_1(I \times \varGamma;\R) \right),
 \]
 where ${I \times \xi}$ is the ${G}$-coloring introduced in Remark \ref{R:cylinder_G-coloring_object}.
\end{definition}

\begin{definition}
 The \textit{identity $2$-morphism ${\id_{\bbSigma} : \bbSigma \Rightarrow \bbSigma}$ associated with a $1$-morphism ${\bbSigma : \bbGamma \rightarrow \bbGamma'}$ of ${\bfCob_{\calC}}$} is the equivalence class of the 4-tuple
 \[
  \left( \varSigma \times I,P \times I,\vartheta \times I,0 \right),
 \]
 where ${\vartheta \times I}$ is the ${G}$-coloring introduced in Remark \ref{R:cylinder_G-coloring_1-morphism}.
\end{definition}

In order to define horizontal and vertical compositions of $1$-morphisms and $2$-morphisms, we need to be able to induce uniquely determined ${G}$-colorings on horizontal and vertical gluings of decorated cobordisms. The following remark will be repeatedly used for this purpose.

\begin{remark}\label{R:gluing_cohomology}
 Let ${X_-}$, ${X_+}$, and ${Y}$ be topological spaces, let ${f_- : Y \hookrightarrow X_-}$ and ${f_+ : Y \rightarrow X_+}$ be embeddings, and let ${E_- \subset X_-}$, ${E_+ \subset X_+}$, and ${F \subset Y}$ be subspaces satisfying ${f_-(F) = E_- \cap f_-(Y_-)}$ and ${f_+(F) = E_+ \cap f_+(Y_+)}$. If
 \[
  \alpha_- \in H^1(X_-,E_-;G), \quad \alpha_+ \in H^1(X_+,E_+;G) 
 \]
 are relative cohomology classes satisfying ${j_{Y_-}^*(\alpha_-) = j_{Y_+}^*(\alpha_+)}$ for the embeddings 
 \[
  j_{Y_-} : (Y,F) \hookrightarrow (X_-,E_-), \quad
  j_{Y_+} : (Y,F) \hookrightarrow (X_+,E_+)
 \]
 induced by ${f_-}$ and ${f_+}$ respectively, then there exists a relative cohomology class 
 \[
  {\alpha_- \cup_Y \alpha_+ \in H^1(X_- \cup_Y X_+,E_- \cup_F E_+;G)}
 \]
 satisfying ${i_{X_-}^*(\alpha_- \cup_Y \alpha_+) = \alpha_-}$ and ${i_{X_+}^*(\alpha_- \cup_Y \alpha_+) = \alpha_+}$ for the inclusions 
 \begin{gather*}
  i_{X_-} : (X_-,E_-) \hookrightarrow (X_- \cup_Y X_+,E_- \cup_F E_+), \\
  i_{X_+} : (X_+,E_+) \hookrightarrow (X_- \cup_Y X_+,E_- \cup_F E_+).
 \end{gather*}
 This follows from the Mayer-Vietoris sequence 
 \begin{center}
  \begin{tikzpicture}
   \node[right] (P0) at (0,0) {$\cdots$};
   \node[right] (P1) at (1.725,0) {$H^0(Y,F;G)$};
   \node[left] (P2) at (10.725,0) {$H^1(X_- \cup_Y X_+,E_- \cup_F E_+;G)$};
   \node (P3) at (12.45,0) {};
   \node (P4) at (0,-1) {};
   \node[right] (P5) at (1.725,-1) {$H^1(X_-,E_-;G) \oplus H^1(X_+,E_+;G)$};
   \node[left] (P6) at (10.725,-1) {$H^1(Y,F;G)$};
   \node[left] (P7) at (12.45,-1) {$\cdots$};
   \draw
   (P0) edge[->] (P1)
   (P1) edge[->] node[above] {$\partial^*$} (P2)
   (P2) edge[-] (P3)
   (P4) edge[->] node[above] {$(i_{X_-}^*,i_{X_+}^*)$} (P5)
   (P5) edge[->] node[above] {$j^*_{Y_-} - j^*_{Y_+}$} (P6)
   (P6) edge[->] (P7);
  \end{tikzpicture} 
 \end{center}
 Such a relative cohomology class is furthermore unique if the map 
 \[
  \partial^* : H^0(Y,F;G) \rightarrow H^1(X_- \cup_Y X_+,E_- \cup_F E_+;G)
 \]
 is zero. This happens, for instance, when ${H^0(Y,E;G) = 0}$.
\end{remark}

Now if ${\bbSigma : \bbGamma \rightarrow \bbGamma'}$ and ${\bbSigma' : \bbGamma' \rightarrow \bbGamma''}$ are $1$-mor\-phisms of ${\bfCob_{\calC}}$, then the gluing of ${\varSigma}$ to ${\varSigma'}$ along ${\varGamma'}$, as given in Definition \ref{D:gluing}, determines a ${G}$-coloring ${\vartheta \cup_{\xi'} \vartheta'}$ of ${(\varSigma \cup_{\varGamma'} \varSigma',P \cup P')}$ extending ${\xi}$ and ${\xi''}$: its relative cohomology class is given by the pull-back ${q_{A'}^* (\vartheta \cup_{\varGamma'} \vartheta')}$ for the relative cohomology class ${\vartheta \cup_{\varGamma'} \vartheta'}$ obtained from Remark \ref{R:gluing_cohomology} by taking
\begin{gather*}
 (X_-,E_-) = (\varSigma \smallsetminus P,A \cup A' \cup B), \\
 (X_+,E_+) = (\varSigma' \smallsetminus P',A' \cup A'' \cup B'), \\
 (Y,F) = (\varGamma',A'),
\end{gather*}
and for the inclusion $q_{A'}$ of the pair
\[
 \left( (\varSigma \cup_{\varGamma'} \varSigma') \smallsetminus (P \cup P'), A \cup A'' \cup B \cup B' \right)
\]
into the pair 
\[
 \left( (\varSigma \cup_{\varGamma'} \varSigma') \smallsetminus (P \cup P'),
 A \cup A' \cup A'' \cup B \cup B' \right),
\]
while its base set is given by ${B \cup B'}$.

\begin{definition}
 The \textit{horizontal composition ${\bbSigma' \circ \bbSigma : \bbGamma \rightarrow \bbGamma''}$ of $1$-mor\-phisms ${\bbSigma : \bbGamma \rightarrow \bbGamma'}$ and ${\bbSigma' : \bbGamma' \rightarrow \bbGamma''}$ of ${\bfCob_{\calC}}$} is the 4-tuple
 \[
  \left( \varSigma \cup_{\varGamma'} \varSigma',P \cup P',\vartheta \cup_{\xi'} \vartheta',i_{\varSigma *}(\calL) + i_{\varSigma'*}(\calL') \right),
 \]
 where ${i_{\varSigma*} : H_1(\varSigma;\R) \rightarrow H_1(\varSigma \cup_{\varGamma'} \varSigma';\R)}$ and ${i_{\varSigma'*} : H_1(\varSigma';\R) \rightarrow H_1(\varSigma \cup_{\varGamma'} \varSigma';\R)}$ are induced by inclusion, as in Proposition \ref{P:sum_of_Lagrangians}.
\end{definition}

Analogously, if ${\bbSigma,\bbSigma'' : \bbGamma \rightarrow \bbGamma'}$ and ${\bbSigma',\bbSigma''' : \bbGamma' \rightarrow \bbGamma''}$ are $1$-mor\-phisms, and if ${\bbM : \bbSigma \Rightarrow \bbSigma''}$ and ${\bbM' : \bbSigma' \Rightarrow \bbSigma'''}$ are $2$-mor\-phisms of ${\bfCob_{\calC}}$, then the horizontal gluing of ${M}$ to ${M'}$ along ${\varGamma' \times I}$, as given in Definition \ref{D:horizontal_gluing}, determines a ${G}$-coloring ${\omega \cup_{\xi' \times I} \omega'}$ of ${(M \cup_{\varGamma' \times I} M',T \cup T')}$ extending ${\vartheta \cup_{\varGamma'} \vartheta'}$ and ${\vartheta'' \cup_{\varGamma'} \vartheta'''}$: it is given by the pull-back ${q_{A' \times \{ 0,1 \}}^* (\omega \cup_{\varGamma' \times I} \omega')}$
for the relative cohomology class ${\omega \cup_{\varGamma' \times I} \omega'}$ obtained from Remark \ref{R:gluing_cohomology} by taking
\begin{gather*}
 (X_-,E_-) = (M \smallsetminus T,((A \cup A') \times \{ 0,1 \}) \cup B \cup B'), \\
 (X_+,E_+) = (M' \smallsetminus T',((A' \cup A'') \times \{ 0,1 \}) \cup B'' \cup B'''), \\
 (Y,F) = (\varGamma' \times I,A' \times \{ 0,1 \}),
\end{gather*}
and for the inclusion ${q_{A' \times \{ 0,1 \}}}$ of the pair
\[
 \left( (M \cup_{\varGamma' \times I} M') \smallsetminus (T \cup T'),((A \cup A'') \times \{ 0,1 \}) \cup B \cup B' \cup B'' \cup B''' \right)
\]
into the pair
\[
 \left( (M \cup_{\varGamma' \times I} M') \smallsetminus (T \cup T'),((A \cup A' \cup A'') \times \{ 0,1 \}) \cup B \cup B' \cup B'' \cup B''' \right).
\]

\begin{definition}
 The \textit{horizontal composition $\bbM' \circ \bbM : \bbSigma' \circ \bbSigma \Rightarrow \bbSigma''' \circ \bbSigma''$ of $2$-mor\-phisms $\bbM : \bbSigma \Rightarrow \bbSigma''$ and $\bbM' : \bbSigma' \Rightarrow \bbSigma'''$ of ${\bfCob_{\calC}}$} between $1$-mor\-phisms $\bbSigma,\bbSigma'' : \bbGamma \rightarrow \bbGamma'$ and $\bbSigma',\bbSigma''' : \bbGamma' \rightarrow \bbGamma''$ is the equivalence class of the 4-tuple
 \[
  \left( M \cup_{\varGamma' \times I} M',T \cup T',\omega \cup_{\xi' \times I} \omega', n + n' \right).
 \]
\end{definition}

If ${\bbSigma,\bbSigma',\bbSigma'' : \bbGamma \rightarrow \bbGamma'}$ are $1$-mor\-phisms, and if ${\bbM : \bbSigma \Rightarrow \bbSigma'}$ and ${\bbM' : \bbSigma' \Rightarrow \bbSigma''}$ are $2$-mor\-phisms of ${\bfCob_{\calC}}$, then the vertical gluing of ${M}$ to ${M'}$ along ${\varSigma'}$, as given in Definition \ref{D:vertical_gluing}, determines a ${G}$-coloring ${\omega \cup_{\vartheta'} \omega'}$ of ${(M \cup_{\varSigma'} M',T \cup_{P'} T')}$ extending ${\vartheta}$ and ${\vartheta''}$: it is given by the pull-back $q_{(A \cup A') \times \{ 1/2 \}}^* (\omega \cup_{\varSigma'} \omega')$ 
for the relative cohomology class ${\omega \cup_{\varSigma'} \omega'}$ obtained from Remark \ref{R:gluing_cohomology} by taking
\begin{gather*}
 (X_-,E_-) = (M \smallsetminus T,((A \cup A') \times \{ 0,1 \}) \cup B \cup B'), \\
 (X_+,E_+) = (M' \smallsetminus T',((A \cup A') \times \{ 0,1 \}) \cup B' \cup B''), \\
 (Y,F) = (\varSigma',A \cup A' \cup B'),
\end{gather*}
and for the inclusion ${q_{(A \cup A') \times \{ 1/2 \}}}$ of the pair
\[
 \left( (M \cup_{\varSigma'} M') \smallsetminus (T \cup_P T'),((A \cup A') \times \{ 0,1 \}) \cup B \cup B'' \right)
\]
into the pair
\[
 \left( (M \cup_{\varSigma'} M') \smallsetminus (T \cup_P T'),\left( (A \cup A') \times \left\{ 0,\frac{1}{2},1 \right\} \right) \cup B \cup B' \cup B'' \right).
\]
Remark that we still denote with ${A \times \left\{ 0,\frac{1}{2},1 \right\}}$ and with ${A' \times \left\{ 0,\frac{1}{2},1 \right\}}$ the images of ${A \times \left\{ 0,\frac{1}{2},1 \right\} \subset \varGamma \times I}$ and ${A' \times \left\{ 0,\frac{1}{2},1 \right\} \subset \varGamma \times I}$ under the boundary identifications ${(f_{M^{\rmv}_-} \cup_{\varGamma} f_{M'^{\rmv}_-}) \circ u_{\varGamma}}$ and ${(f_{M^{\rmv}_+} \cup_{\varGamma'} f_{M'^{\rmv}_+}) \circ u_{\varGamma'}}$, which, we recall, factor through ${(\varGamma \times I) \cup_{\varGamma} (\varGamma \times I)}$ and through ${(\varGamma' \times I) \cup_{\varGamma'} (\varGamma' \times I)}$ respectively, as explained in Definition \ref{D:vertical_gluing}.

\begin{definition}\label{D:vertical_composition}
 The \textit{vertical composition ${\bbM' \ast \bbM : \bbSigma \Rightarrow \bbSigma''}$ of $2$-mor\-phisms ${\bbM : \bbSigma \Rightarrow \bbSigma'}$ and ${\bbM' : \bbSigma' \Rightarrow \bbSigma''}$ of ${\bfCob_{\calC}}$} between $1$-mor\-phisms ${\bbSigma,\bbSigma',\bbSigma'' : \bbGamma \rightarrow \bbGamma'}$ is the equivalence class of the 4-tuple
 \[
  \left( M \cup_{\varSigma'} M',T \cup_{P'} T',\omega \cup_{\vartheta'} \omega',
  n + n' - \mu(M_*(\calL),\calL',M'^*(\calL'')) \right),
 \]
 where the push-forward ${M_*(\calL)}$ and the pull-back ${M'^*(\calL'')}$ are defined by
 \begin{gather*}
  M_*(\calL) := \{ x' \in H_1(\varSigma';\R) \mid j_{\varSigma' *}(x') \in j_{\varSigma *} (\calL) \}, \\
  M'^*(\calL'') := \{ x' \in H_1(\varSigma';\R) \mid j'_{\varSigma' *}(x') \in j'_{\varSigma'' *}(\calL'') \}
 \end{gather*}
 for the embeddings
 \[
  j_{\varSigma} : \varSigma \hookrightarrow M, \quad
  j_{\varSigma'} : \varSigma' \hookrightarrow M, \quad
  j'_{\varSigma'} : \varSigma' \hookrightarrow M', \quad
  j'_{\varSigma''} : \varSigma'' \hookrightarrow M'\phantom{,}
 \]
 induced by the boundary identifications ${f_{M^{\rmh}_-}}$, ${f_{M^{\rmh}_+}}$, ${f_{M'^{\rmh}_-}}$, and ${f_{M'^{\rmh}_+}}$, and where ${\mu}$ is the Maslov index introduced in Definition \ref{D:Maslov}. 
\end{definition}

See Proposition \ref{P:Lagrangian_relations} for a proof that ${M_*(\calL)}$ and ${M'^*(\calL'')}$ are indeed Lagrangian subspaces of ${H_1(\varSigma';\R)}$. If the reader wonders why ${\mu}$ did not appear in horizontal compositions of $2$-mor\-phisms of ${\bfCob_{\calC}}$, the answer is simple: horizontal gluings are performed along surfaces of the form ${\varGamma' \times I}$, whose first homology with real coefficients contains a unique Lagrangian, the whole space. Therefore, every Maslov index in ${H_1(\varGamma' \times I;\R)}$ is zero. Now, remark that the definitions we introduced up to here almost produce the structure of a $2$-cat\-e\-go\-ry as given in Definition \ref{D:2-category}. What we still miss are left and right unitors and associators. These can be easily defined, and it is important that these definitions can indeed be given. However, we will not, because if we did, then we would invoke Theorem \ref{T:coherence_for_2-cat} anyway. Thus, from now on, we pretend ${\bfCob_{\calC}}$ is a strict $2$-cat\-e\-go\-ry, and we move on to discuss its symmetric monoidal structure, which is induced by disjoint union.

\begin{definition}
 The \textit{tensor unit of ${\bfCob_{\calC}}$} is the unique object associated with the empty manifold, and it is denoted ${\varnothing}$. We refer to 1-endomorphisms of $\varnothing$ as \textit{closed 1-morphisms}, and to 2-endomorphisms of $\id_{\varnothing}$ as \textit{closed 2-morphisms}.
\end{definition}

If ${X}$ and ${X'}$ are topological spaces, if ${E \subset X}$ and ${E' \subset X'}$ are subspaces, and if ${\alpha \in H^k(X,E;G)}$ and ${\alpha' \in H^k(X',E';G)}$ are cohomology classes, then we denote with ${\alpha \sqcup \alpha'}$ the cohomology class in ${H^k(X \sqcup X',E \sqcup E';G)}$ given by
\[
 q_{X'}^*(e_{X'}^{*-1}(\alpha)) + q_X^*(e_X^{*-1}(\alpha'))
\]
for the inclusions
\begin{gather*}
 q_{X'} : (X \sqcup X',E \sqcup E') \hookrightarrow (X \sqcup X',E \sqcup X'), \\
 q_X : (X \sqcup X',E \sqcup E') \hookrightarrow (X \sqcup X',X \sqcup E'), \\
 e_{X'} : (X,E) \hookrightarrow (X \sqcup X',E \sqcup X'), \\
 e_X : (X',E') \hookrightarrow (X \sqcup X',X \sqcup E').
\end{gather*}
This induces a disjoint union operation on ${G}$-colorings, which is given by standard disjoint union on base sets.

\begin{definition}
 The \textit{tensor product ${\bbGamma \disjun \bbGamma'}$ of objects ${\bbGamma}$ and ${\bbGamma'}$ of ${\bfCob_{\calC}}$} is the pair ${(\varGamma \sqcup \varGamma',\xi \sqcup \xi')}$.
\end{definition}

\begin{definition}
 The \textit{tensor product ${\bbSigma \disjun \bbSigma' : \bbGamma \disjun \bbGamma' \rightarrow \bbGamma'' \disjun \bbGamma'''}$ of $1$-mor\-phisms ${\bbSigma : \bbGamma \rightarrow \bbGamma''}$ and ${\bbSigma' : \bbGamma' \rightarrow \bbGamma'''}$ of ${\bfCob_{\calC}}$} is the 4-tuple 
 \[
  (\varSigma \sqcup \varSigma',P \sqcup P',\vartheta \sqcup \vartheta',\calL \oplus \calL').
 \]
\end{definition}

\begin{definition}
 The \textit{tensor product $\bbM \disjun \bbM' : \bbSigma \disjun \bbSigma' \Rightarrow \bbSigma'' \disjun \bbSigma'''$ of $2$-morphisms $\bbM : \bbSigma \Rightarrow \bbSigma''$ and $\bbM' : \bbSigma' \Rightarrow \bbSigma'''$ of $\bfCob_{\calC}$} between $1$-mor\-phisms $\bbSigma,\bbSigma'' : \bbGamma \rightarrow \bbGamma''$ and $\bbSigma',\bbSigma''' : \bbGamma' \rightarrow \bbGamma'''$ is the equivalence class of the 4-tuple
 \[
  (M \sqcup M',T \sqcup T',\omega \sqcup \omega',n+n').
 \]
\end{definition}


In order to make all this into a tensor product $2$-func\-tor, we need to define coherence 2-morphisms of the form
\[
 \disjun_{(\bbSigma'',\bbSigma'''),(\bbSigma,\bbSigma')} : (\bbSigma'' \disjun \bbSigma''') \circ (\bbSigma \disjun \bbSigma') \Rightarrow {(\bbSigma'' \circ \bbSigma)} \disjun {(\bbSigma''' \circ \bbSigma')} 
\]
for all 2-morphisms $\bbSigma : \bbGamma \rightarrow \bbGamma''$, $\bbSigma' : \bbGamma' \rightarrow \bbGamma'''$, $\bbSigma'' : \bbGamma'' \rightarrow \bbGamma^{\fourth}$, $\bbSigma''' : \bbGamma''' \rightarrow \bbGamma^{\fifth}$ of ${\bfCob_{\calC}}$. These are naturally induced by the canonical isomorphisms
\[
 (\varSigma \sqcup \varSigma') \cup_{\varGamma'' \sqcup \varGamma'''} (\varSigma'' \sqcup \varSigma''') \cong(\varSigma \cup_{\varGamma''} \varSigma'') \sqcup (\varSigma' \cup_{\varGamma'''} \varSigma''').
\]
Now, remark that most of the remaining symmetric monoidal structure of ${\bfCob_{\calC}}$ can simply be skipped by appealing to Theorem \ref{T:coherence_for_sym_mon_2-cat}. Again, it is important to know that the rest of the coherence data can indeed be defined, which is the case, but we will avoid doing this because then we would forget about it anyway. Thus, the last piece of structure we need to discuss is the braiding, and we do this directly in the quasi-strict version of ${\bfCob_{\calC}}$.

\begin{definition}\label{D:braiding_1-morphism}
 The \textit{braiding $1$-mor\-phism ${\cobbr_{\bbGamma,\bbGamma'} : \bbGamma \disjun \bbGamma' \rightarrow \bbGamma' \disjun \bbGamma}$ associated with objects ${\bbGamma}$ and ${\bbGamma'}$ of ${\bfCob_{\calC}}$} is the 4-tuple
 \[
  (I \ttimes (\varGamma \sqcup \varGamma'),\varnothing,I \times (\xi \sqcup \xi'), H_1(I \times (\varGamma \sqcup \varGamma');\R)),
 \]
 where ${I \ttimes (\varGamma \sqcup \varGamma')}$ is the cobordism commuting ${\varGamma}$ past ${\varGamma'}$, as given in Definition \ref{D:flip_cobordism}.
\end{definition}

\begin{definition}\label{D:braiding_2-morphism}
 The \textit{braiding $2$-mor\-phism} 
 \[
  \cobbr_{\bbSigma,\bbSigma'} :
  \cobbr_{\bbGamma'',\bbGamma'''} \circ (\bbSigma \disjun \bbSigma') \Rightarrow 
  (\bbSigma' \disjun \bbSigma) \circ \cobbr_{\bbGamma,\bbGamma'}
 \]
 \textit{associated with $1$-mor\-phisms ${\bbSigma : \bbGamma \rightarrow \bbGamma''}$ and 
 ${\bbSigma' : \bbGamma' \rightarrow \bbGamma'''}$ of ${\bfCob_{\calC}}$} is the equivalence class of the 4-tuple
 \[
  ((\varSigma \sqcup \varSigma') \ttimes I,(P \sqcup P') \times I,(\vartheta \sqcup \vartheta') \times I, 0),
 \]
 where ${(\varSigma \sqcup \varSigma') \ttimes I}$ is the cobordism with corners commuting ${\varSigma}$ past ${\varSigma'}$, as given in Definition \ref{D:flip_cobordism_with_corners}.
\end{definition}

\section{2-Category of admissible cobordisms}\label{S:admissible_cobordisms}

In this section we fix a pre-modular $G$-category $\calC$ relative to $(\PGr,X)$ and we define the symmetric monoidal $2$-category $\bfadCob_{\calC}$ of admissible cobordisms of dimension 1+1+1. This is a sub-$2$-category of $\bfCob_{\calC}$ that will provide the source for our $\PGr$-graded ETQFTs. Its definition rests on the key notion of \textit{admissibility}, a condition for morphisms of $\bfCob_{\calC}$ which will allow us to define the Cos\-tan\-tino-Geer-Patu\-reau quantum invariants, and which will be responsible for the non-semisimplicity of our $\PGr$-graded ETQFTs. In order to introduce it, we first need to fix some terminology.

If $\varSigma$ is a $2$-di\-men\-sion\-al cobordism, then we say a $G$-homogeneous $\calC$-colored ribbon set $P \subset \varSigma$ is \textit{projective} if it admits a \textit{projective vertex}, that is a vertex $p \in P$ whose color is a projective object of $\calC$, and we say a $G$-coloring $\vartheta$ of $(\varSigma,P)$ is \textit{generic} if it admits a \textit{generic curve}, that is an embedded closed oriented curve $\gamma \subset \varSigma \smallsetminus P$ whose homology class, still denoted $\gamma$, satisfies $\langle \vartheta, \gamma \rangle \in G \smallsetminus X$. Analogously, if $M$ is a $3$-di\-men\-sion\-al cobordism with corners, then we say a $G$-homogeneous $\calC$-col\-ored ribbon graph $T \subset M$ is \textit{projective} if it admits a \textit{projective edge}, that is an edge $e \subset T$ whose color is a projective object of $\calC$, and we say a $G$-coloring $\omega$ of $(M,T)$ is \textit{generic} if it admits a \textit{generic curve}, that is an embedded closed oriented curve $\kappa \subset M \smallsetminus T$ whose homology class, still denoted $\kappa$, satisfies $\langle \omega, \kappa \rangle \in G \smallsetminus X$.

Let $\varSigma$ be a $2$-di\-men\-sion\-al cobordism with connected components $\varSigma_1,\ldots,\varSigma_m$, and let $(P,\vartheta)$ be a $(\calC,G)$-col\-or\-ing of $\varSigma$ inducing $(\calC,G)$-col\-or\-ings $(P_i,\vartheta_i)$ of $\varSigma_i$ for every integer $1 \leqslant i \leqslant m$. We say $(P,\vartheta)$ is \textit{admissible} if either $P_i$ is projective or $\vartheta_i$ is generic for every connected component $\varSigma_i$ of $\varSigma$ which is disjoint from the incoming boundary $\partial_- \varSigma$, and we say $(P,\vartheta)$ is \textit{strongly admissible} if the same condition holds for every connected component $\varSigma_i$ of $\varSigma$, regardless of its intersection with $\partial_- \varSigma$. Analogously, let $M$ be a $3$-di\-men\-sion\-al cobordism with corners with connected components $M_1,\ldots,M_m$, and let $(T,\omega)$ be a $(\calC,G)$-coloring of $M$ inducig $(\calC,G)$-colorings $(T_i,\omega_i)$ of $M_i$ for every integer $1 \leqslant i \leqslant m$. We say $(T,\omega)$ is \textit{admissible} if either $T_i$ is projective or $\omega_i$ is generic for every connected component $M_i$ of $M$ which is disjoint from the incoming horizontal boundary $\partial^{\rmh}_- M$, and we say $(T,\omega)$ is \textit{strongly admissible} if the same condition holds for every connected component $M_i$ of $M$, regardless of its intersection with $\partial^{\rmh}_- M$.

\begin{definition}\label{D:admissible_1-morphisms}
 A $1$-mor\-phism $\bbSigma = (\varSigma,P,\vartheta,\calL)$ of $\bfCob_{\calC}$ is \textit{admissible} if $(T,\vartheta)$ is an admissible $(\calC,G)$-col\-or\-ing of $\varSigma$, and it is \textit{strongly admissible} if $(P,\vartheta)$ is strongly admissible.
\end{definition}

Remark that the previous definition has the following direct consequences:
\begin{enumerate}
 \item If $\bbGamma$ is an object of $\bfCob_{\calC}$, then its identity $\id_{\bbGamma}$ is an admissible $1$-mor\-phism, as no component is disjoint from the incoming boundary;
 \item If $\bbSigma : \bbGamma \rightarrow \bbGamma'$ and $\bbSigma' : \bbGamma' \rightarrow \bbGamma''$ are admissible $1$-mor\-phisms of $\bfCob_{\calC}$, then their composition $\bbSigma' \circ \bbSigma$ is an admissible $1$-mor\-phism;
 \item If $\bbSigma$ and $\bbSigma'$ are admissible $1$-mor\-phisms of $\bfCob_{\calC}$, then their tensor product $\bbSigma \disjun \bbSigma'$ is an admissible $1$-mor\-phism.
\end{enumerate}

\begin{definition}\label{D:admissible_2-morphisms}
 A $2$-mor\-phism $\bbM = (M,T,\omega,n)$ of $\bfCob_{\calC}$ is \textit{admissible} if $(T,\omega)$ is an admissible $(\calC,G)$-coloring of $M$, and it is \textit{strongly admissible} if $(T,\omega)$ is strongly admissible.
\end{definition}

Remark that the previous definition has the following direct consequences:
\begin{enumerate}
 \item If $\bbSigma$ is a $1$-mor\-phism of $\bfCob_{\calC}$, then its identity $\id_{\bbSigma}$ is an admissible $2$-mor\-phism, as no component is disjoint from the incoming horizontal boundary;
 \item If $\bbM : \bbSigma \Rightarrow \bbSigma''$ and $\bbM' : \bbSigma' \Rightarrow \bbSigma'''$ are admissible $2$-mor\-phisms of $\bfCob_{\calC}$ between $1$-mor\-phisms $\bbSigma,\bbSigma'' : \bbGamma \rightarrow \bbGamma'$ and $\bbSigma',\bbSigma''' : \bbGamma' \rightarrow \bbGamma''$, then their horizontal composition $\bbM' \circ \bbM$ is an admissible $2$-mor\-phism;
 \item If $\bbM : \bbSigma \Rightarrow \bbSigma'$ and $\bbM' : \bbSigma' \Rightarrow \bbSigma''$ are admissible $2$-mor\-phisms of $\bfCob_{\calC}$ between $1$-mor\-phisms $\bbSigma,\bbSigma',\bbSigma'' : \bbGamma \rightarrow \bbGamma'$, then their vertical composition $\bbM' \ast \bbM$ is an admissible $2$-mor\-phism;
 \item If $\bbM$ and $\bbM'$ are admissible $2$-mor\-phisms of $\bfCob_{\calC}$, then their tensor product $\bbM \disjun \bbM'$ is an admissible $2$-mor\-phism.
\end{enumerate}

\begin{definition}\label{D:sym_mon_2-cat_of_adm_cob}
 The \textit{symmetric monoidal $2$-cat\-e\-go\-ry of admissible cobordisms of dimension} 1+1+1 is the symmetric monoidal $2$-cat\-e\-go\-ry $\bfadCob_{\calC}$ whose objects are objects of $\bfCob_{\calC}$, whose morphisms are admissible morphisms of $\bfCob_{\calC}$, and whose symmetric monoidal structure is inherited from the one of $\bfCob_{\calC}$.
\end{definition}


%
%
%

\chapter{Extension of Costantino-Geer-Patureau invariants}\label{Ch:extenstion_of_CGP_invariants}

In this chapter we fix a non-degenerate pre-modular $G$-category $\calC$ relative to $(\PGr,X)$ and we review the definition of the associated Costantino-Geer-Patureau quantum invariant $\CGP_{\calC}$ of admissible closed 3-manifolds constructed in \cite{CGP14}. To this invariant we apply a $2$-categorical version of the universal construction of \cite{BHMV95}. This will produce a strict $2$-functor $\bfA_{\calC}$ from the 2-category $\bfadCob_{\calC}$ of admissible cobordisms, introduced in Definition \ref{D:sym_mon_2-cat_of_adm_cob}, to the 2-category $\bfCat_{\Bbbk}$ of linear categories, introduced in Definition \ref{D:sym_mon_2-cat_of_lin_cat}. This $2$-functor will turn out not to be symmetric monoidal in general, but it will provide the basis for our definition of a $\PGr$-graded ETQFT extending $\CGP_{\calC}$. Starting from this chapter, we will begin to draw graphical representations of some morphisms of $\bfadCob_{\calC}$ inside $\R^3$. Here is our convention for reading them: edges of $\calC$-colored ribbon graphs will be represented by oriented solid blue lines with blackboard framing, and coupons will be equipped with orientations of horizontal and vertical boundaries indicating the order of tensor products and the direction of morphisms; $G$-colorings will appear as labels, denoting the result of evaluation, attached to oriented red dashed-dotted lines, representing relative homology classes.

\section{Projective and generic stabilizations}\label{S:projective_generic_stabilizations}

We start by introducing two operations which will be used later in order to define the Costantino-Geer-Patureau invariants for all closed 2-morphisms of $\bfadCob_{\calC}$. These operations transform an admissible $(\calC,G)$-coloring of a 3-dimensional cobordism with corners into a formal linear combination of admissible $(\calC,G)$-colorings, which we still interpret as an admissible $(\calC,G)$-coloring. Their purpose is to turn any admissible closed 3-manifold into a decorated closed 3-manifold against which we can compute the Costantino-Geer-Patureau invariant. Both operations consist in replacing small portions of decorations in order to ensure the presence of an edge colored by a simple projective object of generic index, as represented in Figure \ref{F:projective_generic_stabilization}. In order to explain all this in more detail, let us fix a generic $g \in G \smallsetminus X$, a simple projective $V_i \in \Theta(\calC_g)$, an arbitrary $h \in G$, and a projective $U \in \Proj(\calC_h)$. First of all, we denote with $\smash{\bbS^2_{(-,U),(+,U)}}$ the closed 1-morphism of $\bfadCob_{\calC}$ given by 
\[
 \left(S^2,P_{(-,U),(+,U)},\vartheta_{(-,U),(+,U)},\{ 0 \}\right),
\]
where $P_{(-,U),(+,U)} \subset S^2$ is the $\calC$-colored ribbon set given by the south pole with negative orientation and by the north pole with positive orientation, both with color $U$, and where $\vartheta_{(-,U),(+,U)}$ is the unique compatible $G$-coloring of $(S^2,P_{(-,U),(+,U)})$. Next, we denote with $\smash{\bbD^3_{\id_{(+,U)}} : \id_{\varnothing} \Rightarrow \bbS^2_{(-,U),(+,U)}}$ the $2$-morphism of $\bfadCob_{\calC}$ given by 
\[
 (D^3,\id_{(+,U)},\omega_{\id_{(+,U)}}, 0),
\]
where $\id_{(+,U)} \subset D^3$ is the $\calC$-colored ribbon tangle given by a single vertical edge joining the south and the north pole, with framing zero and color $U$, and where $\omega_{\id_{(+,U)}}$ is the unique compatible $G$-coloring of $(D^3,\id_{(+,U)})$. Similarly, we denote with $\smash{\bbD^3_{T_{U,i}} : \id_{\varnothing} \Rightarrow \bbS^2_{(-,U),(+,U)}}$ the $2$-morphism of $\bfadCob_{\calC}$ given by 
\[
 (D^3,T_{U,i},\omega_{T_{U,i}}, 0),
\]
where $T_{U,i} \subset D^3$ is the $\calC$-colored ribbon tangle represented in the bottom left part of Figure \ref{F:projective_generic_stabilization}, where $s_{U,i}$ is a section of the epimorphism $\smash{\id_U \otimes \rev_{V_i}}$ given by Remark \ref{R:epic_evaluations}, and where $\omega_{T_{U,i}}$ is the unique compatible $G$-coloring of $(D^3,T_{U,i})$. Then, if $(M,T,\omega,n)$ is a 2-morphism of $\bfadCob_{\calC}$ between 1-morphisms $\bbSigma, \bbSigma' : \bbGamma \rightarrow \bbGamma'$, a projective edge $e \subset T$ of color $U \in \Proj(\calC)$ determines a decomposition 
\[
 (M,T,\omega,n) = \bbM_e \ast \left( \bbD^3_{\id_{(+,U)}} \disjun \id_{\bbSigma} \right)
\]
for some 2-morphism $\bbM_e : \bbS^2_{(-,U),(+,U)} \disjun \bbSigma \Rightarrow \bbSigma'$ of $\bfadCob_{\calC}$, and we say a $(\calC,G)$-coloring $(T_e,\omega_e)$ of $M$ satisfying
\[
 (M,T_e,\omega_e,n) = \bbM_e \ast  \left( \bbD^3_{T_{U,i}} \disjun \id_{\bbSigma} \right)\phantom{,}
\]
is obtained from $(T,\omega)$ by \textit{projective stabilization of index $g$ along the edge $e$}. This defines the first operation, so let us move on to the second one. We begin by denoting with $\smash{(\bbS^1 \times \bbS^1)_g}$ the closed 1-morphism of $\bfadCob_{\calC}$ given by 
\[
 (S^1 \times S^1,\varnothing,\vartheta_g,\calL_m),
\]
where $\vartheta_g$ is the $G$-coloring of $(S^1 \times S^1,\varnothing)$ determined by $\langle \vartheta_g,\ell \rangle = g$ and by $\langle \vartheta_g,m \rangle = 0$ for the homology classes $\ell$ and $m$ of the longitude $S^1 \times \{ (0,1) \}$ and of the meridian $\{ (1,0) \} \times S^1$ respectively, and where the Lagrangian $\calL_m$ is generated by $m$. Next, we denote with $(\bbS^1 \times\overline{\bbD^2})_g : \id_{\varnothing} \Rightarrow (\bbS^1 \times \bbS^1)_g $ the $2$-morphism of $\bfadCob_{\calC}$ given by 
\[
 (S^1 \times \overline{D^2},\varnothing,\omega_g,0)
\]
where $\omega_g$ is the unique $G$-coloring of $(S^1 \times \overline{D^2},\varnothing)$ which extends $\vartheta_g$. Lastly, we denote with $(\bbS^1 \times\overline{\bbD^2})_{L_- \cup L_+} : \id_{\varnothing} \Rightarrow (\bbS^1 \times \bbS^1)_g$ the $2$-morphism of $\bfadCob_{\calC}$ given by 
\[
 (S^1 \times \overline{D^2},L_- \cup L_+,\omega_{L_- \cup L_+},0)
\]
where $L_- \cup L_+ \subset S^1 \times \overline{D^2}$ is the $\calC$-colored framed link represented in the bottom right part of Figure \ref{F:projective_generic_stabilization}, and where $\omega_{L_- \cup L_+}$ is the unique compatible $G$-col\-or\-ing of ${(S^1 \times \overline{D^2},L_- \cup L_+)}$ which extends $\vartheta_g$. Then, just like before, if $(M,T,\omega,n)$ is a 2-morphism of $\bfadCob_{\calC}$ between 1-morphisms $\bbSigma, \bbSigma' : \bbGamma \rightarrow \bbGamma'$, a generic curve $\kappa \subset M \smallsetminus T$ of index $g \in G \smallsetminus X$ determines a decomposition
\[
 (M,T,\omega,n) = \bbM_{\kappa} \ast \left( (\bbS^1 \times \overline{\bbD^2})_g \disjun \id_{\bbSigma} \right)
\]
for some 2-morphism $\bbM_{\kappa} : (\bbS^1 \times \bbS^1)_g \disjun \bbSigma \Rightarrow \bbSigma'$ of $\bfadCob_{\calC}$, and we say a $(\calC,G)$-coloring $(T_{\kappa},\omega_{\kappa})$ of $M$ satisfying
\[
 (M,T_{\kappa},\omega_{\kappa},n) = \Delta_-^{-1} \Delta_+^{-1} \cdot \bbM_{\kappa} \ast \left( (\bbS^1 \times \overline{\bbD^2})_{L_- \cup L_+} \disjun \id_{\bbSigma} \right)\phantom{,}
\]
is obtained from $(T,\omega)$ by \textit{generic stabilization of index $g$ along the curve $\kappa$}. Remark that, in order to define this operation, we are using the non-degeneracy condition of Definition \ref{D:non-degeneracy}.

\begin{figure}[htb]\label{F:projective_generic_stabilization}
 \centering
 \includegraphics{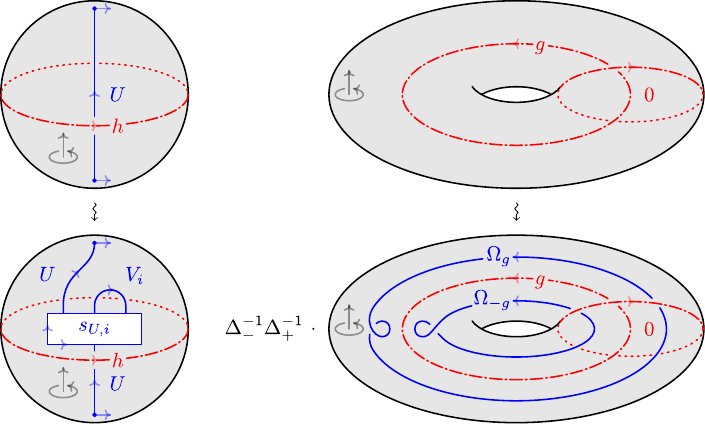}
 \caption{In the left-hand part, a projective stabilization of index $g$ replacing $(\id_{(+,U)},\omega_{\id_{(+,U)}})$ with $(T_{U,i},\omega_{T_{U,i}})$; In the right-hand part, a generic stabilization of index $g$ replacing $(\varnothing,\omega_g)$ with $\Delta_-^{-1} \Delta_+^{-1} \cdot (L_- \cup L_+,\omega_{L_- \cup L_+})$.}
\end{figure}

%
%

\section{Costantino-Geer-Patureau invariants}\label{S:CGP_invariants}

We start this section by recalling the definition of the renormalized invariant of admissible closed ribbon graphs, which was first introduced by Geer, Patureau, and Turaev in \cite{GPT09}. This construction relies crucially on a key ingredient: the non-degenerate trace $\rmt$ on $\Proj(\calC)$. Let us begin by fixing some terminology: if $T \subset S^3$ is a closed $G$-homogeneous $\calC$-colored ribbon graph, then we say an endomorphism $T_{(\underline{\varepsilon},\underline{V})} : (\underline{\varepsilon},\underline{V}) \rightarrow (\underline{\varepsilon},\underline{V})$ of $\Rib_{\calC}^G$ is a \textit{cutting presentation of $T$} if
\[
 T = \tr_{\Rib_{\calC}^G} (T_{(\underline{\varepsilon},\underline{V})}) = \rev_{(\underline{\varepsilon},\underline{V})} \circ (T_{(\underline{\varepsilon},\underline{V})} \otimes \id_{(\underline{\varepsilon},\underline{V})^*}) \circ \lcoev_{(\underline{\varepsilon},\underline{V})},
\]
where we are realizing $D^2 \times I$ as a subspace of $S^3$. Remark that if $T$ is projective, then it admits a cutting presentation of the form $T_{(+,V)}$ for some projective object $V \in \Proj(\calC)$. Such a presentation is of course far from being unique. Nonetheless, as follows from Theorem 3 of \cite{GPT09}, if $T_{(+,V)}$ is a cutting presentation of $T$ for some projective object $V \in \Proj(\calC)$, then
\[
 F'_{\calC}(T) := \rmt_V(F_{\calC}(T_{(+,V)}))
\]
is an invariant of the isotopy class of $T$. We call $F'_{\calC}$ the \textit{renormalized invariant of admissible closed ribbon graphs}. Remark that for every $\calC$-colored ribbon graph ${T \subset S^3}$ there exists a unique compatible $G$-coloring $\omega$ of $(S^3,T)$, and if $T$ is projective then $(S^3,T,\omega,0)$ is an admissible closed $2$-morphism of $\bfadCob_{\calC}$. Remark also that the definition of $F'_{\calC}$ does not actually require non-degeneracy of $\calC$. We will however use this condition now, because we need to choose a square root $\calD$ of $\Delta_- \Delta_+$, and to set 
\[
 \delta := \frac{\calD}{\Delta_-} = \frac{\Delta_+}{\calD}.
\] 
These constants will appear in the formula defining the Costantino-Geer-Patureau invariant. In order to review the construction, we first need to fix our terminology a little further, so let us consider $1$-mor\-phisms $\bbSigma,\bbSigma' : \bbGamma \rightarrow \bbGamma'$ of $\bfCob_{\calC}$, let $M$ be a connected $3$-dimensional cobordism with corners from $\varSigma$ to $\varSigma'$, and let $(M',T',\omega',n')$ be a connected $2$-morphism of $\bfCob_{\calC}$ from $\bbSigma$ to $\bbSigma'$. A surgery presentation of $M'$ in $M$ is a framed link $L = L_1 \cup \ldots \cup L_{\ell} \subset M$ together with a positive diffeomorphism $f_L$ between the surgered cobordism with corners $M(L)$ and $M'$. Remark that, since $T'$ can be isotoped inside the image of the exterior $M_L$ of $L$, we can pull it back to a $\calC$-colored ribbon graph in $M$ disjoint from $L$, which we still denote $T'$. Analogously, the restriction of $\omega'$ to $f_L(M_L) \smallsetminus T'$ can be pulled back to a cohomology class in $H^1(M \smallsetminus (L \cup T');G)$, which we still denote $\omega'$. Then, we say a surgery presentation $L = L_1 \cup \ldots \cup L_{\ell} \subset M$ of $M'$ is \textit{computable} with respect to $(T',\omega')$ if $\langle \omega',m_i \rangle \in G \smallsetminus X$ for every integer $1 \leqslant i \leqslant \ell$, where $m_i$ is the homology class of a positive meridian of the component $L_i$. In this case, we denote with $L_{\omega'}$ the $\calC$-colored framed link in $M$ obtained by labeling every $L_i$ with the Kirby color $\smash{\Omega_{\langle \omega',m_i \rangle}}$. Remark that, if $(T',\omega')$ is strongly admissible, then $M'$ admits a computable surgery presentation in $M$ with respect to some $\smash{(\tilde{T}',\tilde{\omega}')}$ obtained by performing projective or generic stabilization. Indeed, if $L \subset M$ is a surgery presentation of $M'$, we can distinguish two cases: either $T'$ is projective, or $\omega'$ is generic. If $T'$ is projective, then, as follows from Remark 4.4 and from the proof of Theorem 4.10 of \cite{CGP14}, a projective stabilization of sufficiently generic index $g$ allows us to turn $L$ into a computable surgery presentation with respect to the resulting $(\calC,G)$-coloring by sliding the edge of color $V_i$ over all surgery components. On the other hand, if $\omega'$ is generic, then we can perform a generic stabilization, which leads us back to the first case.  Once again, although computable surgery presentations are far from being unique, we have the following result.

\begin{proposition}\label{P:CGP_for_admissible_manifolds} 
 If $\bbM = (M,T,\omega,n)$ is a closed connected 2-morphism of $\bfadCob_{\calC}$, and if $L = L_1 \cup \ldots \cup L_{\ell} \subset S^3$ is a computable surgery presentation of $M$ with respect to some admissible $(\calC,G)$-coloring $(\tilde{T},\tilde{\omega})$ obtained from $(T,\omega)$ by performing projective or generic stabilizations, then 
\[
 \CGP_{\calC}(\bbM) := \calD^{-1 - \ell} \delta^{n - \sigma(L)} F'_{\calC} \left( L_{\tilde{\omega}} \cup \tilde{T} \right)
\]
is an invariant of the isomorphism class of $(M,T,\omega,n)$, where $\sigma(L)$ is the signature of the linking matrix of $L$.
\end{proposition}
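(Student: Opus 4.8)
The plan is to observe that, since $\bbM$ is a closed $2$-morphism, $M$ is a connected closed oriented $3$-manifold equipped with a closed $\calC$-colored ribbon graph $T$, a cohomology class $\omega \in H^1(M \smallsetminus T;G)$, and a signature defect $n$, so that the statement is essentially Theorem~4.10 of \cite{CGP14} supplemented by the remark that $n$ contributes the well-defined factor $\delta^n$. Concretely, I would first note that $\eta = \left| \PGr/\PGr_+ \right|/\calD$ is a fixed constant and that $n$ is part of the isomorphism data of $\bbM$, so it suffices to prove that $\calD^{-\ell} \delta^{-\sigma(L)} F'_{\calC}(L_{\tilde\omega} \cup \tilde T)$ depends neither on the sequence of projective and generic stabilizations used to pass from $(T,\omega)$ to $(\tilde T,\tilde\omega)$, nor on the computable surgery presentation $L \subset S^3$, and that it is unchanged when $(M,T,\omega,n)$ is replaced by an isomorphic $2$-morphism. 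I would also recall that, $M$ being connected and closed, admissibility of $\bbM$ forces $T$ to be projective or $\omega$ to be generic, so the stabilizations and the computable surgery presentation exist, as explained just before the statement.

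For the independence of the stabilization data I would show that each elementary stabilization is a skein-trivial replacement, hence invisible to $F'_{\calC}$. A projective stabilization of index $g$ along an edge of color $U$ replaces $\bigl(\id_{(+,U)},\omega_{\id_{(+,U)}}\bigr)$ by $\bigl(T_{U,i},\omega_{T_{U,i}}\bigr)$, and since $s_{U,i}$ is a section of $\id_U \otimes \rev_{V_i}$ one has $T_{U,i} \doteq \id_{(+,U)}$; a generic stabilization of index $g$ along a curve replaces $(\varnothing,\omega_g)$ by $\Delta_-^{-1} \Delta_+^{-1} \cdot (L_- \cup L_+, \omega_{L_- \cup L_+})$, and the skein relations of Figure~\ref{F:stabilization_coefficients} defining $\Delta_-$ and $\Delta_+$, together with their independence of the chosen generic index (Lemma~5.10 of \cite{CGP14}), yield $\Delta_-^{-1} \Delta_+^{-1} \cdot (L_- \cup L_+) \doteq \varnothing$. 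In both cases $\ell$ and $\sigma(L)$ are untouched, so, for a fixed $L$, the value of $F'_{\calC}(L_{\tilde\omega} \cup \tilde T)$ depends only on $(T,\omega)$. The sole function of the stabilizations is to create a simple projective edge of generic index which can be slid over surgery components; this is what allows computability to be arranged, which is also the reason the stabilization data and the surgery presentation must be varied together rather than separately.

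To handle the surgery presentation I would invoke Kirby calculus relative to a ribbon graph and a cohomology class, as in \cite{CGP14}: any two computable surgery presentations of $M$, possibly after a further (skein-trivial) stabilization, are connected by a finite sequence of isotopies, handle slides, and $(\pm1)$-framed blow-ups. Isotopies change none of $\ell$, $\sigma(L)$, $F'_{\calC}(L_{\tilde\omega} \cup \tilde T)$. A $(\pm1)$-framed blow-up on a split unknot raises $\ell$ by one, changes $\sigma(L)$ by $\pm1$, and multiplies $F'_{\calC}(L_{\tilde\omega} \cup \tilde T)$ by the stabilization coefficient $\Delta_\pm$; using $\calD^2 = \Delta_- \Delta_+$ and $\delta = \calD/\Delta_- = \Delta_+/\calD$ one checks $\calD^{-1} \delta^{\mp1} \Delta_\pm = 1$, so the normalization absorbs it. The substantive case is the handle slide of a component $L_j$ over a component $L_i$: here $\ell$ and $\sigma(L)$ are preserved, and invariance of $F'_{\calC}(L_{\tilde\omega} \cup \tilde T)$ follows from the sliding property of the Kirby color $\Omega_{\langle \tilde\omega, m_i \rangle}$, well defined precisely because the presentation is computable. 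I expect this step to be the main obstacle, together with the accompanying bookkeeping: a handle slide may push the index of some meridian, or of some generic curve needed for a later stabilization, into the critical set $X$, and one must re-establish computability beforehand by an additional stabilization, which is possible because $X$ is small and harmless by the previous paragraph. This simultaneous control of Kirby moves and computability is the technical core of Theorem~4.10 of \cite{CGP14}.

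Finally, isomorphism invariance is routine: an isomorphism $f \colon M \to M'$ with $f(T) = T'$, $f^*(\omega') = \omega$ and $n = n'$ transports a computable surgery presentation $L \subset S^3$ of $M$ with respect to $(\tilde T,\tilde\omega)$ to a computable surgery presentation of $M'$ with respect to the image decoration, having the same linking matrix and hence the same $\ell$ and $\sigma(L)$; since $F'_{\calC}$ is a diffeomorphism invariant of $\calC$-colored ribbon graphs in $S^3$, the two formulas coincide, and $\delta^n$ is manifestly an invariant of the isomorphism class since $n$ is. Combining the three independences yields the claim.
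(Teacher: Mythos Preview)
Your proposal is correct and follows essentially the same route as the paper: both reduce the projective case to Theorem~4.10 of \cite{CGP14}, and both handle the generic case by observing that generic stabilization leaves the value unchanged so that one may perform further stabilizations to reach a common computable presentation. The paper organizes the argument by splitting into the projective and generic cases (citing Lemma~4.10 of \cite{BCGP16} for the latter and spelling out the comparison of two generic curves $\kappa,\kappa'$ via simultaneous stabilization), whereas you organize it by the three independences (stabilization, surgery presentation, isomorphism class); these are equivalent packagings of the same content, with the same technical core deferred to \cite{CGP14}.
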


\begin{proof}
 The formula for $\CGP_{\calC}(\bbM)$ can be rewritten as
 \[
  \CGP_{\calC}(\bbM) = \calD^{-1-b_1(M)} \delta^{n} \frac{F'_{\calC} \left( L_{\tilde{\omega}} \cup \tilde{T} \right)}{\Delta_-^{\sigma_-(L)} \Delta_+^{\sigma_+(L)}}
 \]
 where $b_1(M)$ is the first Betti number of $M$, and where $\sigma_-(L)$ and $\sigma_+(L)$ are the positive and the negative signature of $L$ respectively. Therefore, the statement follows directly from Remark 4.4 and from Theorem 4.10 of \cite{CGP14} when $T$ is projective. When $\omega$ is generic, this essentially follows from Lemma 4.10 of \cite{BCGP16}, after suitably translating the proof from the language of unrolled quantum $\sltwo$ to the language of non-degenerate relative pre-modular categories. For the convenience of the reader, let us give the explicit argument here: if $\kappa, \kappa' \subset M$ are two distinct generic curves for $\omega$, then let $(T_{\kappa},\omega_{\kappa})$ be obtained from $(T,\omega)$ by generic stabilization along $\kappa$, let $(T_{\kappa'},\omega_{\kappa'})$ be obtained from $(T,\omega)$ by generic stabilization along $\kappa'$, and let $(T_{\kappa \cup \kappa'},\omega_{\kappa \cup \kappa'})$ be obtained from $(T,\omega)$ by generic stabilization along both $\kappa$ and $\kappa'$. If we set $L_{\kappa} := f_{\kappa}^{-1}(L_- \cup L_+) \subset M$ and $L_{\kappa'} := f_{\kappa'}^{-1}(L_- \cup L_+) \subset M$,
 then remark that surgery along both $L_{\kappa}$ and $L_{\kappa'}$ is trivial. Now let us consider a surgery presentation $L \subset S^3$ of $M$, and let $I_L$ be the finite set of indices determined by $L$ and $\omega$. Since $X \subset G$ is small, we can choose $g,g' \in G$ such that $(g + I_L) \cup (g' + I_L) \cup (g + g' + I_L) \subset G \smallsetminus X$.
Then, let $(T_{\kappa,L_{\kappa}},\omega_{\kappa,L_{\kappa}})$ be obtained from $(T_{\kappa},\omega_{\kappa})$ by projective stabilization of index $g$ along $L_{\kappa}$, let $(T_{\kappa',L_{\kappa'}},\omega_{\kappa',L_{\kappa'}})$ be obtained from $(T_{\kappa'},\omega_{\kappa'})$ by projective stabilization of index $g'$ along $L_{\kappa'}$, and let $(T_{\kappa \cup \kappa',L_{\kappa} \cup L_{\kappa'}},\omega_{\kappa \cup \kappa',L_{\kappa} \cup L_{\kappa'}})$ be obtained from $(T_{\kappa \cup \kappa'},\omega_{\kappa \cup \kappa'})$ by projective stabilization of index $g$ and $g'$ along $L_{\kappa}$ and $L_{\kappa'}$ respectively. If we slide the edges of color $V_i$ and $V_{i'}$ over all components of $L$, we turn $L$ into a computable surgery presentation of $M$ with respect to $(T_{\kappa \cup \kappa',L_{\kappa} \cup L_{\kappa'}},\omega_{\kappa \cup \kappa',L_{\kappa} \cup L_{\kappa'}})$. Now, if we slide back the edge of color $V_{i'}$ to its original position, and if we undo the second projective stabilization, we turn $L_{\kappa'} \cup L$ into a computable surgery presentation of $M$ with respect to $(T_{\kappa,L_{\kappa}},\omega_{\kappa,L_{\kappa}})$. On the other hand, if we slide back the edge of color $V_i$ to its original position, and if we undo the first projective stabilization, we turn $L_{\kappa} \cup L$ into a computable surgery presentation of $M$ with respect to $(T_{\kappa',L_{\kappa'}},\omega_{\kappa',L_{\kappa'}})$.
\end{proof}

We call $\CGP_{\calC}$ the \textit{Costantino-Geer-Patureau invariant of admissible closed 3-manifolds}.

\section{Universal construction}

In this section we consider the category $\rmadCob_{\calC}$ of closed $1$-mor\-phisms of $\bfCob_{\calC}$ and of admissible $2$-mor\-phisms between them, and we extend the invariant $\CGP_{\calC}$ to a functor $\rmV_{\calC} : \rmadCob_{\calC} \rightarrow \Vect_{\Bbbk}$. This is done by means of the universal construction of Blanchet, Habegger, Masbaum, and Vogel, which is a very natural recipe for doing so. The idea behind it is very straight-forward: first, we should linearize sets of morphisms of $\rmadCob_{\calC}$, and then, we should quotient these vector spaces by means of the invariant $\CGP_{\calC}$. More precisely, let us consider an object $\bbSigma$ of $\rmadCob_{\calC}$, let $\calV(\bbSigma)$ be the free vector space over the set $\rmadCob_{\calC}(\id_{\varnothing},\bbSigma)$ of morphisms $\bbM_{\bbSigma} : \id_{\varnothing} \Rightarrow \bbSigma$ of $\rmadCob_{\calC}$, and let $\calV'(\bbSigma)$ be the free vector space over the set $\rmadCob(\bbSigma,\id_{\varnothing})$ of morphisms $\bbM'_{\bbSigma} : \bbSigma \Rightarrow \id_{\varnothing}$ of $\rmadCob_{\calC}$. We define the bilinear pairing
\[
 \langle \cdot,\cdot \rangle_{\bbSigma} : \calV'(\bbSigma) \times \calV(\bbSigma) \rightarrow \Bbbk,
\]
sending every vector $(\bbM'_{\bbSigma},\bbM_{\bbSigma}) \in \calV'(\bbSigma) \times \calV(\bbSigma)$ to the number
\[
 \CGP_{\calC}(\bbM'_{\bbSigma} \ast \bbM_{\bbSigma}) \in \Bbbk.
\]

\begin{definition}\label{D:univ_vector_space}
 The \textit{universal vector space of $\bbSigma$} is the vector space
 \[
  \rmV_{\calC}(\bbSigma) := \calV(\bbSigma) / \calV'(\bbSigma)^{\perp}.
 \]
 The \textit{dual universal vector space of $\bbSigma$} is the vector space
 \[
  \rmV'_{\calC}(\bbSigma) := \calV'(\bbSigma) / \calV(\bbSigma)^{\perp}.
 \]
\end{definition}

Remark that this terminology is coherent, because for every object $\bbSigma$ of $\rmadCob_{\calC}$ the induced pairing
\[
 \langle \cdot,\cdot \rangle_{\bbSigma} : \rmV'_{\calC}(\bbSigma) \otimes \rmV_{\calC}(\bbSigma) \rightarrow \Bbbk
\]
is non-degenerate, so that $\rmV'_{\calC}(\bbSigma)$ is indeed dual to $\rmV_{\calC}(\bbSigma)$. Now, let us consider a morphism $\bbM : \bbSigma \Rightarrow \bbSigma'$ of $\rmadCob_{\calC}$.

\begin{definition}\label{D:univ_linear_map}
 The \textit{universal linear map of $\bbM$} is the linear map 
 \[
  \rmV_{\calC}(\bbM) : \rmV_{\calC}(\bbSigma) \rightarrow \rmV_{\calC}(\bbSigma')
 \]
 sending every vector $[\bbM_{\bbSigma}] \in \rmV_{\calC}(\bbSigma)$ to the vector
 \[
  [\bbM \ast \bbM_{\bbSigma}] \in \rmV_{\calC}(\bbSigma').
 \]
 The \textit{dual universal linear map of $\bbM$} is the linear map
 \[
  \rmV'_{\calC}(\bbM) : \rmV'_{\calC}(\bbSigma') \rightarrow \rmV'_{\calC}(\bbSigma)
 \]
 sending every vector $[\bbM'_{\bbSigma}] \in \rmV'_{\calC}(\bbSigma')$ to the vector 
 \[
  [\bbM'_{\bbSigma} \ast \bbM] \in \rmV'_{\calC}(\bbSigma).
 \]
\end{definition}

The \textit{universal construction} extends $\CGP_{\calC}$ to a pair of functors
\[
 \rmV_{\calC} : \rmadCob_{\calC} \rightarrow \Vect_{\Bbbk}, \quad
 \rmV'_{\calC} : (\rmadCob_{\calC})^{\op} \rightarrow \Vect_{\Bbbk}
\]
called the \textit{universal quantization functor} and the \textit{dual universal quantization functor} respectively. Some properties of these functors can be established right away. For instance, we can define a natural transformation of the form
\[
 \mu : \otimes \circ \left( \rmV_{\calC} \times \rmV_{\calC} \right) \Rightarrow \rmV_{\calC} \circ \disjun
\]
as follows: for all objects $\bbSigma$ and $\bbSigma'$ of $\rmadCob_{\calC}$, we consider the linear map
\[
 \mu_{\bbSigma,\bbSigma'} : \rmV_{\calC}(\bbSigma) \otimes \rmV_{\calC}(\bbSigma') \rightarrow \rmV_{\calC}(\bbSigma \disjun \bbSigma')
\]
sending every vector $[\bbM_{\bbSigma}] \otimes [\bbM_{\bbSigma'}] \in \rmV_{\calC}(\bbSigma) \otimes \rmV_{\calC}(\bbSigma')$ to the vector
\[
 \left[ \bbM_{\bbSigma} \disjun \bbM_{\bbSigma'} \right] \in \rmV_{\calC}(\bbSigma \disjun \bbSigma').
\]
Similarly, we can define a natural transformation of the form
\[
 \mu' : \otimes \circ \left( \rmV'_{\calC} \times \rmV'_{\calC} \right) \Rightarrow \rmV'_{\calC} \circ \disjun
\]
as follows: for all objects $\bbSigma$ and $\bbSigma'$ of $\rmadCob_{\calC}$, we consider the linear map
\[
 \mu'_{\bbSigma,\bbSigma'} : \rmV'_{\calC}(\bbSigma) \otimes \rmV'_{\calC}(\bbSigma') \rightarrow \rmV'_{\calC}(\bbSigma \disjun \bbSigma')
\]
sending every vector $[\bbM'_{\bbSigma}] \otimes [\bbM'_{\bbSigma'}] \in \rmV'_{\calC}(\bbSigma) \otimes \rmV'_{\calC}(\bbSigma')$ to the vector
\[
 \left[ \bbM_{\bbSigma} \disjun \bbM_{\bbSigma'} \right] \in \rmV'_{\calC}(\bbSigma \disjun \bbSigma').
\]

\begin{proposition}\label{P:lax_monoidality_TQFT}
 The linear maps $\mu_{\bbSigma,\bbSigma'} : \rmV_{\calC}(\bbSigma) \otimes \rmV_{\calC}(\bbSigma') \rightarrow \rmV_{\calC}(\bbSigma \disjun \bbSigma')$ and $\mu'_{\bbSigma,\bbSigma'} : \rmV'_{\calC}(\bbSigma) \otimes \rmV'_{\calC}(\bbSigma') \rightarrow \rmV'_{\calC}(\bbSigma \disjun \bbSigma')$ are injective for all $\bbSigma, \bbSigma' \in \rmadCob_{\calC}$.
\end{proposition}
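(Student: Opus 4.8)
The plan is to realise $\mu_{\bbSigma,\bbSigma'}$ and $\mu'_{\bbSigma,\bbSigma'}$ as mutually adjoint maps with respect to the defining pairings, and then to extract injectivity from a purely linear-algebraic statement about non-degenerate bilinear forms. Concretely, the first step is to prove the identity
\[
 \langle \mu'_{\bbSigma,\bbSigma'}(\zeta'), \mu_{\bbSigma,\bbSigma'}(\xi) \rangle_{\bbSigma \disjun \bbSigma'}
 = \big( \langle \cdot,\cdot \rangle_{\bbSigma} \otimes \langle \cdot,\cdot \rangle_{\bbSigma'} \big)(\zeta' \otimes \xi)
\]
for all $\xi \in \rmV_{\calC}(\bbSigma) \otimes \rmV_{\calC}(\bbSigma')$ and $\zeta' \in \rmV'_{\calC}(\bbSigma) \otimes \rmV'_{\calC}(\bbSigma')$, the right-hand side being computed with the tensor product of the two non-degenerate induced pairings. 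Granting this, injectivity is immediate: non-degeneracy of $\langle \cdot,\cdot \rangle_{\bbSigma}$ and of $\langle \cdot,\cdot \rangle_{\bbSigma'}$ gives injections $\rmV_{\calC}(\bbSigma) \hookrightarrow \rmV'_{\calC}(\bbSigma)^{*}$ and $\rmV_{\calC}(\bbSigma') \hookrightarrow \rmV'_{\calC}(\bbSigma')^{*}$, hence — tensoring injective linear maps, and then using that the canonical arrow $A^{*} \otimes B^{*} \to (A \otimes B)^{*}$ is injective for arbitrary vector spaces $A,B$ — an injection $\rmV_{\calC}(\bbSigma) \otimes \rmV_{\calC}(\bbSigma') \hookrightarrow \big( \rmV'_{\calC}(\bbSigma) \otimes \rmV'_{\calC}(\bbSigma') \big)^{*}$; the displayed identity says precisely that this injection factors through $\mu_{\bbSigma,\bbSigma'}$, so $\mu_{\bbSigma,\bbSigma'}$ is injective. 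Exchanging the roles of the primed and unprimed spaces throughout gives injectivity of $\mu'_{\bbSigma,\bbSigma'}$ by the same reasoning.

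To prove the identity it suffices, by bilinearity, to check it on generators $\xi = [\bbM_{\bbSigma}] \otimes [\bbM_{\bbSigma'}]$ and $\zeta' = [\bbM'_{\bbSigma}] \otimes [\bbM'_{\bbSigma'}]$. Unwinding the definitions of $\mu_{\bbSigma,\bbSigma'}$, $\mu'_{\bbSigma,\bbSigma'}$, and of the pairing $\langle \cdot,\cdot \rangle_{\bbSigma \disjun \bbSigma'}$, this reduces to the equality
\[
 \CGP_{\calC}\big( (\bbM'_{\bbSigma} \disjun \bbM'_{\bbSigma'}) \ast (\bbM_{\bbSigma} \disjun \bbM_{\bbSigma'}) \big)
 = \CGP_{\calC}(\bbM'_{\bbSigma} \ast \bbM_{\bbSigma}) \cdot \CGP_{\calC}(\bbM'_{\bbSigma'} \ast \bbM_{\bbSigma'}).
\]
Here the plan is to first invoke the interchange law in the symmetric monoidal $2$-category $\bfadCob_{\calC}$ to identify the argument of the left-hand side with $(\bbM'_{\bbSigma} \ast \bbM_{\bbSigma}) \disjun (\bbM'_{\bbSigma'} \ast \bbM_{\bbSigma'})$, and then to use that $\CGP_{\calC}$ is multiplicative under disjoint union of closed $2$-morphisms — which holds essentially by definition, since $\CGP_{\calC}$ on a disconnected closed $2$-morphism is the product of its values on the connected components.

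The one point that I expect to require genuine care — and the only real, if mild, obstacle — is checking that $(\bbM'_{\bbSigma} \disjun \bbM'_{\bbSigma'}) \ast (\bbM_{\bbSigma} \disjun \bbM_{\bbSigma'})$ and $(\bbM'_{\bbSigma} \ast \bbM_{\bbSigma}) \disjun (\bbM'_{\bbSigma'} \ast \bbM_{\bbSigma'})$ really represent the same $2$-morphism of $\bfadCob_{\calC}$, i.e.\ that their signature defects coincide. The underlying cobordisms with corners, ribbon graphs, and $G$-colorings agree because gluing commutes with disjoint union, so this comes down to the additivity of the Maslov-index correction occurring in vertical composition: the push-forward and pull-back of a Lagrangian along a disjoint union are the direct sums of the corresponding push-forwards and pull-backs, and the Maslov triple index is additive over direct sums of the relevant symplectic spaces, by the properties recorded in Appendix \ref{A:Maslov}. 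Everything else in the argument is formal.
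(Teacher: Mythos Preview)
Your proposal is correct and follows essentially the same idea as the paper's proof: both arguments test an element of the kernel of $\mu_{\bbSigma,\bbSigma'}$ against \emph{split} dual vectors $[\bbM'_{\bbSigma}] \otimes [\bbM'_{\bbSigma'}]$, reduce via interchange and multiplicativity of $\CGP_{\calC}$ under disjoint union to the tensor product of the two separate pairings, and conclude by non-degeneracy of that tensor pairing. The paper phrases this as a direct contrapositive (start with $\mu(\xi)=0$, pair with split generators, deduce $\xi=0$), whereas you package the same computation as an adjointness identity and a factorization of an injection; your discussion of the Maslov-index compatibility in the interchange step is more explicit than the paper's, which simply takes this for granted.
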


\begin{proof}
 Let us consider a trivial vector of the form
 \[
  \sum_{i=1}^m \alpha_i \cdot \left[ \bbM_{\bbSigma,i} \disjun \bbM_{\bbSigma',i} \right] \in \rmV_{\calC}(\bbSigma \disjun \bbSigma).
 \]
 By definition, this means the pairing
 \[
  \sum_{i=1}^m \alpha_i \CGP_{\calC} \left( \bbM'_{\bbSigma \disjun \bbSigma'} \ast (\bbM_{\bbSigma,i} \disjun \bbM_{\bbSigma',i}) \right)
 \]
 is zero for every vector $\bbM'_{\bbSigma \disjun \bbSigma'} \in \rmV'_{\calC}(\bbSigma \disjun \bbSigma')$. In particular, every vector of the form $\bbM'_{\bbSigma} \disjun \bbM'_{\bbSigma'} \in \rmV'_{\calC}(\bbSigma \disjun \bbSigma')$ has trivial pairing. This means the vector
 \[
  \sum_{i=1}^m \alpha_i \cdot \left[ \bbM_{\bbSigma,i} \right] \otimes \left[ \bbM_{\bbSigma',i} \right] \in \rmV_{\calC}(\bbSigma) \otimes \rmV_{\calC}(\bbSigma')
 \]
 is trivial too. The proof in for the dual case is completely analogous.
\end{proof}

\section{Extended universal construction}\label{S:extended_universal_construction}

In this section we extend the functor $\rmV_{\calC} : \rmadCob_{\calC} \rightarrow \Vect_{\Bbbk}$ to a strict 2-functor $\bfA_{\calC} : \bfadCob_{\calC} \rightarrow \bfCat_{\Bbbk}$ by repeating the universal construction on a higher categorical level, and we refer to Appendix \ref{A:symmetric_monoidal} for the terminology and notation we use. The idea remains the same: first, we should linearize categories of morphisms of $\bfadCob_{\calC}$, and then, we should quotient these linear categories by means of the functor $\rmV_{\calC}$. This requires fixing some terminology. First of all, we need to say what it means to linearize a category. This is easy: every category determines a \textit{free linear category} sharing its set of objects, which is obtained by specifying the free vector spaces over its sets of morphisms as vector spaces of morphisms. Next, we need to say what it means to quotient a linear category. This requires a preliminary definition: a \textit{linear congruence} $R$ on a linear category $A$ is given by the choice of a linear subspace $R(x,y) \subset \Hom_A(x,y)$ for every pair of objects $x,y \in A$ satisfying:
\begin{enumerate}
 \item $f \in R(x,y)$ $\Rightarrow$ $g \circ f \in R(x,z)$ $\Forall g \in \Hom_A(y,z)$;
 \item $g \in R(y,z)$ $\Rightarrow$ $g \circ f \in R(x,z)$ $\Forall f \in \Hom_A(x,y)$.
\end{enumerate}
If $A$ is a linear category, and if $R$ is a linear congruence on $A$, then the \textit{quotient linear category} $A/R$ is the linear category with the same set of objects of $A$, and with vector spaces of morphisms defined by $\Hom_{A/R}(x,y) := \Hom_A(x,y)/R(x,y)$ for all $x,y \in A$. Then, we need to say how to obtain linear congruences from the universal quantization functor. This is done as before: a \textit{bilinear pairing} between linear categories $A$ and $A'$ is a functor $\langle \cdot,\cdot \rangle : A' \times A \rightarrow \Vect_{\Bbbk}$ which is linear in every factor, the \textit{annihilator of $A'$} with respect to $\langle \cdot,\cdot \rangle$ is the linear congruence $A'^{\perp}$ on $A$ defined by
\[
 A'^{\perp}(x,y) := \{ f \in \Hom_A(x,y) \mid \langle \id_{x'},f \rangle = 0 \Forall x' \in A' \},
\]
and analogously the \textit{annihilator of $A$} with respect to $\langle \cdot,\cdot \rangle$ is the linear congruence $A^{\perp}$ on $A'$ defined by
\[
 A^{\perp}(x',y') := \{ f' \in \Hom_{A'}(x',y') \mid \langle f',\id_x \rangle = 0 \Forall x \in A \}.
\]
We are now ready to implement our construction, so let us consider an object $\bbGamma$ of $\bfadCob_{\calC}$, let $\calA(\bbGamma)$ be the free linear category over the category $\bfadCob_{\calC}(\varnothing,\bbGamma)$ of $1$-mor\-phisms $\bbSigma_{\bbGamma} : \varnothing \rightarrow \bbGamma$ and $2$-mor\-phisms $\bbM_{\bbGamma} : \bbSigma_{\bbGamma} \Rightarrow \bbSigma''_{\bbGamma}$ of $\bfadCob_{\calC}$, and let $\calA'(\bbGamma)$ be the free linear category over the category $\bfadCob_{\calC}(\bbGamma,\varnothing)$ of $1$-mor\-phisms $\bbSigma'_{\bbGamma} : \bbGamma \rightarrow \varnothing$ and $2$-mor\-phisms $\bbM'_{\bbGamma} : \bbSigma'_{\bbGamma} \Rightarrow \bbSigma'''_{\bbGamma}$ of $\bfadCob_{\calC}$. We define the bilinear pairing
\[
 \langle \cdot,\cdot \rangle_{\bbGamma} : \calA'(\bbGamma) \times \calA(\bbGamma) \rightarrow \Vect_{\Bbbk},
\]
sending every object $(\bbSigma'_{\bbGamma}, \bbSigma_{\bbGamma}) \in \calA'(\bbGamma) \times \calA(\bbGamma)$ to the vector space
\[
 \rmV_{\calC}(\bbSigma'_{\bbGamma} \circ \bbSigma_{\bbGamma}) \in \Vect_{\Bbbk},
\]
and sending every morphism 
$(\bbM'_{\bbGamma},\bbM_{\bbGamma}) \in \Hom_{\calA'(\bbGamma) \times \calA(\bbGamma)}((\bbSigma'_{\bbGamma},\bbSigma_{\bbGamma}),(\bbSigma'''_{\bbGamma},\bbSigma''_{\bbGamma}))$
to the linear map
\[
 \rmV_{\calC}(\bbM'_{\bbGamma} \circ \bbM_{\bbGamma}) \in \Hom_{\Bbbk} \left( \rmV_{\calC}(\bbSigma'_{\bbGamma} \circ \bbSigma_{\bbGamma}),\rmV_{\calC}(\bbSigma'''_{\bbGamma} \circ \bbSigma''_{\bbGamma}) \right).
\]

\begin{definition}\label{D:univ_linear_category}
 The \textit{universal linear category of $\bbGamma$} is the linear category
 \[
  \bfA_{\calC}(\bbGamma) := \calA(\bbGamma)/\calA'(\bbGamma)^{\perp}.
 \]
 The \textit{dual universal linear category of $\bbGamma$} is the linear category
 \[
  \bfA'_{\calC}(\bbGamma) := \calA'(\bbGamma)/\calA(\bbGamma)^{\perp}.
 \]
\end{definition}

Let us consider a 1-morphism $\bbSigma : \bbGamma \rightarrow \bbGamma'$ of $\bfadCob_{\calC}$.

\begin{definition}\label{D:univ_linear_functor}
 The \textit{universal linear functor of $\bbSigma$} is the linear functor
 \[
  \bfA_{\calC}(\bbSigma) : \bfA_{\calC}(\bbGamma) \rightarrow \bfA_{\calC}(\bbGamma')
 \]
 sending every object $\bbSigma_{\bbGamma} \in \bfA_{\calC}(\bbGamma)$ to the object
 \[
  \bbSigma \circ \bbSigma_{\bbGamma} \in \bfA_{\calC}(\bbGamma'),
 \]
 and every morphism $\left[ \bbM_{\bbGamma} \right] \in \Hom_{\bfA_{\calC}(\bbGamma)}(\bbSigma_{\bbGamma},\bbSigma''_{\bbGamma})$ to the morphism
 \[
  \left[ \id_{\bbSigma} \circ \bbM_{\bbGamma} \right] \in \Hom_{\bfA_{\calC}(\bbGamma')}(\bbSigma \circ \bbSigma_{\bbGamma},\bbSigma \circ \bbSigma''_{\bbGamma}).
 \]
 The \textit{dual universal linear functor of $\bbSigma$} is the linear functor
 \[
  \bfA'_{\calC}(\bbSigma) : \bfA'_{\calC}(\bbGamma') \rightarrow \bfA'_{\calC}(\bbGamma)
 \]
 sending every object $\bbSigma'_{\bbGamma'} \in \bfA'_{\calC}(\bbGamma')$ to the object 
 \[
  \bbSigma'_{\bbGamma'} \circ \bbSigma \in \bfA'_{\calC}(\bbGamma),
 \]
 and every morphism $\left[ \bbM'_{\bbGamma'} \right] \in \Hom_{\bfA'_{\calC}(\bbGamma')}(\bbSigma'_{\bbGamma'},\bbSigma'''_{\bbGamma'})$ to the morphism
 \[
  \left[ \bbM'_{\bbGamma'} \circ \id_{\bbSigma} \right] \in \Hom_{\bfA'_{\calC}(\bbGamma)}(\bbSigma'_{\bbGamma'} \circ \bbSigma, \bbSigma'''_{\bbGamma'} \circ \bbSigma).
 \]
\end{definition}

Let us consider a 2-morphism $\bbM : \bbSigma \Rightarrow \bbSigma'$ of $\bfadCob_{\calC}$ between 1-morphisms $\bbSigma, \bbSigma' : \bbGamma \rightarrow \bbGamma'$.

\begin{definition}\label{D:univ_natural_transformation}
 The \textit{universal natural transformation of $\bbM$} is the natural transformation
 \[
  \bfA_{\calC}(\bbM) : \bfA_{\calC}(\bbSigma) \Rightarrow \bfA_{\calC}(\bbSigma')
 \]
 associating with every object $\bbSigma_{\bbGamma} \in \bfA_{\calC}(\bbSigma)$ the morphism 
 \[
  \left[ \bbM \circ \id_{\bbSigma_{\bbGamma}} \right] \in \Hom_{\bfA_{\calC}(\bbGamma')}(\bbSigma \circ \bbSigma_{\bbGamma},\bbSigma' \circ \bbSigma_{\bbGamma}).
 \]
 The \textit{dual universal natural transformation of $\bbM$} is the natural transformation
 \[
  \bfA'_{\calC}(\bbM) : \bfA'_{\calC}(\bbSigma) \Rightarrow \bfA'_{\calC}(\bbSigma')
 \]
 associating with every object $\bbSigma'_{\bbGamma'} \in \bfA'_{\calC}(\bbSigma)$ the morphism
 \[
  \left[ \id_{\bbSigma'_{\bbGamma'}} \circ \bbM \right] \in \Hom_{\bfA'_{\calC}(\bbGamma)}(\bbSigma'_{\bbGamma'} \circ \bbSigma,\bbSigma'_{\bbGamma'} \circ \bbSigma').
 \]
\end{definition}

The \textit{extended universal construction} extends $\CGP_{\calC}$ to a pair of $2$-func\-tors
\[
 \bfA_{\calC} : \bfadCob_{\calC} \rightarrow \bfCat_{\Bbbk}, \quad
 \bfA'_{\calC} : (\bfadCob_{\calC})^{\op} \rightarrow \bfCat_{\Bbbk}
\]
called the \textit{universal quantization $2$-func\-tor} and the \textit{dual universal quantization $2$-func\-tor} respectively, where $(\bfadCob_{\calC})^{\op}$ is the $2$-cat\-e\-go\-ry with same class of objects as $\bfadCob_{\calC}$, and with categories of morphisms $(\bfadCob_{\calC})^{\op}(\bbGamma,\bbGamma') := \bfadCob_{\calC}(\bbGamma',\bbGamma)$ for all objects $\bbGamma, \bbGamma' \in \bfadCob_{\calC}$. We denote with
\[
 \hat{\bfA}_{\calC} : \bfadCob_{\calC} \rightarrow \coCat_{\Bbbk}, \quad
 \hat{\bfA}'_{\calC} : (\bfadCob_{\calC})^{\op} \rightarrow \coCat_{\Bbbk}
\]
the completions of the universal quantization $2$-func\-tor and of the dual universal quantization $2$-func\-tor in the sense of Proposition \ref{P:co_strict_2-funct} and Remark \ref{R:2-completion}, where $\coCat_{\Bbbk}$ is the symmetric monoidal $2$-cat\-e\-go\-ry of complete linear categories, see Definition \ref{D:sym_mon_2-cat_of_co_lin_cat}. These $2$-func\-tors will provide our first candidates for ETQFTs extending $\CGP_{\calC}$, although, as we will see later, they fail at being symmetric monoidal, and we will need to refine our construction in order to obtain the desired feature. However, some properties of these $2$-func\-tors can be established right away. First of all, they are strict, as a consequence of our confusing $\bfadCob_{\calC}$, and thus also $(\bfadCob_{\calC})^{\op}$, with strict $2$-cat\-e\-go\-ries. Moreover, if $\sqtimes$ denotes the enriched tensor product of linear categories introduced in Section \ref{S:lin_&_gr_lin_cat}, we can define a $2$-trans\-for\-ma\-tion of the form
\[
 \bfmu : \sqtimes \circ \left( \bfA_{\calC} \times \bfA_{\calC} \right) \Rightarrow \bfA_{\calC} \circ \disjun
\]
as follows: for all objects $\bbGamma$ and $\bbGamma'$ of $\bfadCob_{\calC}$, we consider the linear functor
\[
 \bfmu_{\bbGamma,\bbGamma'} : \bfA_{\calC}(\bbGamma) \sqtimes \bfA_{\calC}(\bbGamma') \rightarrow \bfA_{\calC}(\bbGamma \disjun \bbGamma')
\]
sending every object $(\bbSigma_{\bbGamma},\bbSigma_{\bbGamma'})$ of $\bfA_{\calC}(\bbGamma) \sqtimes \bfA_{\calC}(\bbGamma')$ to the object $\bbSigma_{\bbGamma} \disjun \bbSigma_{\bbGamma'}$ of $\bfA_{\calC}(\bbGamma \disjun \bbGamma')$,
and every morphism 
\[
 [\bbM_{\bbGamma}] \otimes [\bbM_{\bbGamma'}]
\]
of 
$\Hom_{\bfA_{\calC}(\bbGamma) \sqtimes \bfA_{\calC}(\bbGamma')}((\bbSigma_{\bbGamma},\bbSigma_{\bbGamma'}),(\bbSigma''_{\bbGamma},\bbSigma''_{\bbGamma'}))$ 
to the morphism
\[
 \left[ \bbM_{\bbGamma} \disjun \bbM_{\bbGamma'} \right]
\]
of $\Hom_{\bfA_{\calC}(\bbGamma \disjun \bbGamma')}(\bbSigma_{\bbGamma} \disjun \bbSigma_{\bbGamma'},\bbSigma''_{\bbGamma} \disjun \bbSigma''_{\bbGamma'})$. For all $1$-mor\-phisms $\bbSigma : \bbGamma \rightarrow \bbGamma''$ and $\bbSigma' : \bbGamma' \rightarrow \bbGamma'''$ of $\bfadCob_{\calC}$, we consider the natural transformation 
\[
 \bfmu_{\bbSigma,\bbSigma'} : \bfA_{\calC}(\bbSigma \disjun \bbSigma') \circ \bfmu_{\bbGamma,\bbGamma'} \Rightarrow \bfmu_{\bbGamma'',\bbGamma'''} \circ \left( \bfA_{\calC}(\bbSigma) \sqtimes \bfA_{\calC}(\bbSigma') \right)
\]
associating with every object $(\bbSigma_{\bbGamma},\bbSigma_{\bbGamma'})$ of $\bfA_{\calC}(\bbGamma) \sqtimes \bfA_{\calC}(\bbGamma')$
 the coherence morphism
\[
 \left[ \disjun_{(\bbSigma,\bbSigma'),(\bbSigma_{\bbGamma},\bbSigma_{\bbGamma'})} \right]
\]
of $\Hom_{\bfA_{\calC}(\bbGamma'' \disjun \bbGamma''')}((\bbSigma \disjun \bbSigma') \circ (\bbSigma_{\bbGamma} \disjun \bbSigma_{\bbGamma'}),{(\bbSigma \circ \bbSigma_{\bbGamma})} \disjun {(\bbSigma' \circ \bbSigma_{\bbGamma'})})$. 
The same can be done in the dual case. Indeed, we can define a $2$-trans\-for\-ma\-tion of the form
\[
 \bfmu' : \sqtimes \circ \left( \bfA'_{\calC} \times \bfA'_{\calC} \right) \Rightarrow \bfA'_{\calC} \circ \disjun
\]
as follows: for all objects $\bbGamma$ and $\bbGamma'$ of $\bfadCob_{\calC}$, we consider the linear functor
\[
 \bfmu'_{\bbGamma,\bbGamma'} : \bfA'_{\calC}(\bbGamma) \sqtimes \bfA'_{\calC}(\bbGamma') \rightarrow \bfA'_{\calC}(\bbGamma \disjun \bbGamma')
\]
sending every object $(\bbSigma'_{\bbGamma},\bbSigma'_{\bbGamma'})$ of $\bfA'_{\calC}(\bbGamma) \sqtimes \bfA'_{\calC}(\bbGamma')$ to the object $\bbSigma'_{\bbGamma} \disjun \bbSigma'_{\bbGamma'}$ of $\bfA'_{\calC}(\bbGamma \disjun \bbGamma')$,
and every morphism 
\[
 [\bbM'_{\bbGamma}] \otimes [\bbM'_{\bbGamma'}]
\]
of $\Hom_{\bfA'_{\calC}(\bbGamma) \sqtimes \bfA'_{\calC}(\bbGamma')}((\bbSigma'_{\bbGamma},\bbSigma'_{\bbGamma'}),(\bbSigma'''_{\bbGamma},\bbSigma'''_{\bbGamma'}))$ to the morphism
\[
 \left[ \bbM'_{\bbGamma} \disjun \bbM'_{\bbGamma'} \right]
\]
of $\Hom_{\bfA'_{\calC}(\bbGamma \disjun \bbGamma')}(\bbSigma'_{\bbGamma} \disjun \bbSigma'_{\bbGamma'},\bbSigma'''_{\bbGamma} \disjun \bbSigma'''_{\bbGamma'})$. For all $1$-mor\-phisms $\bbSigma : \bbGamma \rightarrow \bbGamma''$ and $\bbSigma' : \bbGamma' \rightarrow \bbGamma'''$ of $\bfadCob_{\calC}$, we consider the natural transformation
\[
 \bfmu'_{\bbSigma,\bbSigma'} : \bfA'_{\calC}(\bbSigma \disjun \bbSigma') \circ \bfmu'_{\bbGamma'',\bbGamma'''} \Rightarrow \bfmu'_{\bbGamma,\bbGamma'} \circ \left( \bfA'_{\calC}(\bbSigma) \sqtimes \bfA'_{\calC}(\bbSigma') \right)
\]
associating with every object $(\bbSigma'_{\bbGamma''},\bbSigma'_{\bbGamma'''})$ of $\bfA'_{\calC}(\bbGamma'') \sqtimes \bfA'_{\calC}(\bbGamma''')$
the coherence morphism
\[
 \left[ \disjun_{(\bbSigma'_{\bbGamma''},\bbSigma'_{\bbGamma'''}),(\bbSigma,\bbSigma')} \right]
\]
of $\Hom_{\bfA'_{\calC}(\bbGamma \disjun \bbGamma')}((\bbSigma'_{\bbGamma''} \disjun \bbSigma'_{\bbGamma'''}) \circ (\bbSigma \disjun \bbSigma'),{(\bbSigma'_{\bbGamma''} \circ \bbSigma)} \disjun {(\bbSigma'_{\bbGamma'''} \circ \bbSigma')})$. 

\begin{proposition}\label{P:lax_monoidality_ETQFT}
 The linear functors $\bfmu_{\bbGamma,\bbGamma'} : \bfA_{\calC}(\bbGamma) \sqtimes \bfA_{\calC}(\bbGamma') \rightarrow \bfA_{\calC}(\bbGamma \disjun \bbGamma')$ and $\bfmu'_{\bbGamma,\bbGamma'} : \bfA'_{\calC}(\bbGamma) \sqtimes \bfA'_{\calC}(\bbGamma') \rightarrow \bfA'_{\calC}(\bbGamma \disjun \bbGamma')$ are faithful for all $\bbGamma, \bbGamma' \in \bfadCob_{\calC}$.
\end{proposition}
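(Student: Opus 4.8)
The plan is to mirror the proof of Proposition~\ref{P:lax_monoidality_TQFT}, but one categorical level up and with a slightly different mechanism. Recall that a linear functor is faithful exactly when it is injective on each space of morphisms, and that, by the construction of $\sqtimes$ on $\bfCat_{\Bbbk}$, a morphism of $\bfA_{\calC}(\bbGamma) \sqtimes \bfA_{\calC}(\bbGamma')$ between objects $(\bbSigma_{\bbGamma},\bbSigma_{\bbGamma'})$ and $(\bbSigma''_{\bbGamma},\bbSigma''_{\bbGamma'})$ is a finite sum $\Phi = \sum_i [\bbM_{\bbGamma,i}] \otimes [\bbM_{\bbGamma',i}]$ of elementary tensors, which $\bfmu_{\bbGamma,\bbGamma'}$ sends to $\sum_i [\bbM_{\bbGamma,i} \disjun \bbM_{\bbGamma',i}]$. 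So faithfulness of $\bfmu_{\bbGamma,\bbGamma'}$ reduces to the implication: if $\sum_i [\bbM_{\bbGamma,i} \disjun \bbM_{\bbGamma',i}] = 0$ in $\bfA_{\calC}(\bbGamma \disjun \bbGamma')$, then $\Phi = 0$. I would prove this using the naturality of the $2$-transformation $\mu$ together with the injectivity of $\mu$ supplied by Proposition~\ref{P:lax_monoidality_TQFT} (and dually for $\bfmu'_{\bbGamma,\bbGamma'}$, via $\mu'$).

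First I would unwind the hypothesis. By Definition~\ref{D:univ_linear_category} and the definition of the annihilator, $\sum_i [\bbM_{\bbGamma,i} \disjun \bbM_{\bbGamma',i}] = 0$ means that, for \emph{every} $1$-morphism $\bbSigma'_{\bbGamma \disjun \bbGamma'} : \bbGamma \disjun \bbGamma' \to \varnothing$, the linear map $\sum_i \rmV_{\calC}\big( \id_{\bbSigma'_{\bbGamma \disjun \bbGamma'}} \circ (\bbM_{\bbGamma,i} \disjun \bbM_{\bbGamma',i}) \big)$ between universal vector spaces vanishes. Specializing this to the split test $1$-morphisms $\bbSigma'_{\bbGamma} \disjun \bbSigma'_{\bbGamma'}$ with $\bbSigma'_{\bbGamma} : \bbGamma \to \varnothing$ and $\bbSigma'_{\bbGamma'} : \bbGamma' \to \varnothing$, and using that the coherence $2$-morphisms of the tensor $2$-functor of $\bfadCob_{\calC}$ are invertible and depend only on the fixed $1$-morphisms involved and not on the summation index $i$ — so that they may be commuted past the sum once $\rmV_{\calC}$ is applied — this becomes $\sum_i \rmV_{\calC}\big( (\id_{\bbSigma'_{\bbGamma}} \circ \bbM_{\bbGamma,i}) \disjun (\id_{\bbSigma'_{\bbGamma'}} \circ \bbM_{\bbGamma',i}) \big) = 0$. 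Precomposing with $\mu_{\bbSigma'_{\bbGamma} \circ \bbSigma_{\bbGamma},\, \bbSigma'_{\bbGamma'} \circ \bbSigma_{\bbGamma'}}$, invoking naturality of $\mu$ to pull it through, and then using injectivity of $\mu_{\bbSigma'_{\bbGamma} \circ \bbSigma''_{\bbGamma},\, \bbSigma'_{\bbGamma'} \circ \bbSigma''_{\bbGamma'}}$ from Proposition~\ref{P:lax_monoidality_TQFT}, I would conclude that
\[
 \sum_i \rmV_{\calC}(\id_{\bbSigma'_{\bbGamma}} \circ \bbM_{\bbGamma,i}) \otimes \rmV_{\calC}(\id_{\bbSigma'_{\bbGamma'}} \circ \bbM_{\bbGamma',i}) = 0
\]
for all choices of $\bbSigma'_{\bbGamma}$ and $\bbSigma'_{\bbGamma'}$.

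To conclude, I would exploit that the ``sandwich'' functionals $[\bbM_{\bbGamma}] \mapsto \CGP_{\calC}\big( \bbM'_{\bbGamma} \ast (\id_{\bbSigma'_{\bbGamma}} \circ \bbM_{\bbGamma}) \ast \bbM''_{\bbGamma} \big)$, ranging over $\bbSigma'_{\bbGamma} : \bbGamma \to \varnothing$, $\bbM''_{\bbGamma} : \id_{\varnothing} \Rightarrow \bbSigma'_{\bbGamma} \circ \bbSigma_{\bbGamma}$ and $\bbM'_{\bbGamma} : \bbSigma'_{\bbGamma} \circ \bbSigma''_{\bbGamma} \Rightarrow \id_{\varnothing}$, separate the points of $\Hom_{\bfA_{\calC}(\bbGamma)}(\bbSigma_{\bbGamma},\bbSigma''_{\bbGamma})$; this is precisely the effect of having quotiented by $\calA'(\bbGamma)^{\perp}$, combined with the non-degeneracy of the pairings $\langle \cdot, \cdot \rangle_{\bbSigma}$ recorded after Definition~\ref{D:univ_vector_space}. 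Evaluating the displayed vanishing tensor of linear maps at a vector $[\bbM''_{\bbGamma}] \otimes [\bbM''_{\bbGamma'}]$ and then pairing against $[\bbM'_{\bbGamma}] \otimes [\bbM'_{\bbGamma'}]$ shows that the product of any sandwich functional on the first tensor factor with any sandwich functional on the second annihilates $\Phi$; since a separating family of functionals on each of two vector spaces yields, by taking tensor products, a separating family on their tensor product, this forces $\Phi = 0$. The argument for $\bfmu'_{\bbGamma,\bbGamma'}$ is identical after replacing $\bfA_{\calC}$, $\rmV_{\calC}$, $\mu$ by $\bfA'_{\calC}$, $\rmV'_{\calC}$, $\mu'$ and swapping incoming and outgoing boundaries.

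The step I expect to cost the most care is the coherence bookkeeping: $\bfadCob_{\calC}$ is only a symmetric monoidal bicategory, and although we work with it as if it were strict, one must verify that the interchange identities among $\circ$, $\ast$, $\disjun$ and the identities — in particular that $\id_{\bbSigma'_{\bbGamma} \disjun \bbSigma'_{\bbGamma'}} \circ (\bbM_{\bbGamma,i} \disjun \bbM_{\bbGamma',i})$ and $(\id_{\bbSigma'_{\bbGamma}} \circ \bbM_{\bbGamma,i}) \disjun (\id_{\bbSigma'_{\bbGamma'}} \circ \bbM_{\bbGamma',i})$ differ by an invertible $2$-morphism independent of $i$ — actually hold, so that these constraints disappear from the vanishing sums after $\rmV_{\calC}$ is applied. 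A minor preliminary point is to confirm from the definition of $\sqtimes$ on $\bfCat_{\Bbbk}$ that morphisms are exactly finite sums of elementary tensors, as used above. One could instead argue in closer parallel to the proof of Proposition~\ref{P:lax_monoidality_TQFT}, via multiplicativity of $\CGP_{\calC}$ under disjoint union; I would avoid that route here, since establishing such multiplicativity cleanly is itself somewhat delicate in the non-semisimple setting.
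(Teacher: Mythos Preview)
Your proposal is correct and follows essentially the same approach as the paper: assume the image is trivial, unwind this as the vanishing of $\sum_i \rmV_{\calC}(\id_{\bbSigma'_{\bbGamma \disjun \bbGamma'}} \circ (\bbM_{\bbGamma,i} \disjun \bbM_{\bbGamma',i}))$ for all test $1$-morphisms, specialize to split test $1$-morphisms $\bbSigma'_{\bbGamma} \disjun \bbSigma'_{\bbGamma'}$, and conclude that the tensor is zero. The paper's proof is extremely terse at the final step (it simply asserts ``This means the morphism \ldots\ is trivial too''), whereas you spell out the mechanism via naturality of $\mu$, injectivity from Proposition~\ref{P:lax_monoidality_TQFT}, and the separating family of sandwich functionals; this is a cleaner and more modular justification than unpacking multiplicativity of $\CGP_{\calC}$ directly, but it is the same argument underneath.
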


\begin{proof}
 Let us consider a trivial morphism of the form
 \[
  \sum_{i=1}^m \alpha_i \cdot \left[ \bbM_{\bbGamma,i} \disjun \bbM_{\bbGamma',i} \right] \in \Hom_{\bfA_{\calC}(\bbGamma \disjun \bbGamma')}(\bbSigma_{\bbGamma} \disjun \bbSigma_{\bbGamma'},\bbSigma''_{\bbGamma} \disjun \bbSigma''_{\bbGamma'}).
 \]
 By definition, this means the linear map 
 \[
  \sum_{i=1}^m \alpha_i \cdot \rmV_{\calC} \left( \id_{\bbSigma'_{\bbGamma \disjun \bbGamma'}} \circ (\bbM_{\bbGamma,i} \disjun \bbM_{\bbGamma',i}) \right)  
 \]
 is zero for every object $\bbSigma'_{\bbGamma \disjun \bbGamma'} \in \bfA'_{\calC}(\bbGamma \disjun \bbGamma')$. In particular, every object of the form $\bbSigma'_{\bbGamma} \disjun \bbSigma'_{\bbGamma'} \in \bfA'_{\calC}(\bbGamma \disjun \bbGamma')$ determines a zero linear map. This means the morphism
 \[
  \sum_{i=1}^m \alpha_i \cdot \left[ \bbM_{\bbGamma,i} \right] \otimes \left[ \bbM_{\bbGamma',i} \right] \in \Hom_{\bfA_{\calC}(\bbGamma)}(\bbSigma_{\bbGamma},\bbSigma''_{\bbGamma}) \otimes \Hom_{\bfA_{\calC}(\bbGamma')}(\bbSigma_{\bbGamma'},\bbSigma''_{\bbGamma'})
 \]
 is trivial too. The proof in for the dual case is completely analogous.
\end{proof}


We end this chapter with a final remark: for every admissible object $\bbSigma$ of $\rmadCob_{\calC}$, we have 
 \[
  \rmV_{\calC}(\bbSigma) = \Hom_{\bfA_{\calC}(\varnothing)}(\id_{\varnothing}, \bbSigma).
 \]
 Indeed, both $\rmV_{\calC}(\bbSigma)$ and $\Hom_{\bfA_{\calC}(\varnothing)}(\id_{\varnothing}, \bbSigma)$ are quotients of the free vector space generated by the set of $2$-morphisms of $\bfadCob_{\calC}$ from $\id_{\varnothing}$ to $\bbSigma$. In order to prove our claim, we have to show that they are the exact same quotient. It is clear that if the morphism
 \[
  \sum_{i=1}^m \alpha_i \cdot \left[ \bbM_{\varnothing,i} \right] \in 
  \Hom_{\bfA_{\calC}(\varnothing)}(\id_{\varnothing}, \bbSigma)
 \]
 is trivial, then the vector
 \[
  \sum_{i=1}^m \alpha_i \cdot \left[ \bbM_{\varnothing,i} \right] \in \rmV_{\calC}(\bbSigma)
 \]
 is trivial too. Indeed, if the linear map
 \[
  \sum_{i=1}^m \alpha_i \cdot \rmV_{\calC} \left( \id_{\bbSigma'_{\varnothing}} \circ \bbM_{\varnothing,i} \right) 
 \]
 is trivial for every object $\bbSigma'_{\varnothing} \in \bfA_{\calC}(\varnothing)$, then in particular it is trivial for $\bbSigma'_{\varnothing} = \id_{\varnothing}$. To show that the converse is also true, let us suppose the vector
 \[
  \sum_{i=1}^m \alpha_i \cdot \left[ \bbM_{\bbSigma,i} \right] \in 
  \rmV_{\calC}(\bbSigma)
 \]
 is trivial. This means 
 \[
  \sum_{i=1}^m \alpha_i \CGP_{\calC} \left( \bbM'_{\bbSigma} \ast \bbM_{\bbSigma,i} \right)
 \]
 is zero for every vector $[ \bbM'_{\bbSigma} ] \in \rmV'_{\calC}(\bbSigma)$. Now the morphism
 \[
  \sum_{i=1}^m \alpha_i \cdot \left[ \bbM_{\bbSigma,i} \right] \in 
  \Hom_{\bfA_{\calC}(\varnothing)}(\id_{\varnothing}, \bbSigma)
 \]
 is trivial if
 \[
  \sum_{i=1}^m \alpha_i \CGP_{\calC} \left( \bbM'_{\bbSigma'_{\varnothing} \circ \bbSigma} \ast \left( \id_{\bbSigma'_{\varnothing}} \circ \bbM_{\bbSigma,i} \right) \ast \bbM_{\bbSigma'_{\varnothing}} \right)
 \]
 is zero for every object $\bbSigma'_{\varnothing} \in \bfA_{\calC}'(\varnothing)$, and for all vectors $[ \bbM_{\bbSigma'_{\varnothing}} ] \in \rmV_{\calC}(\bbSigma'_{\varnothing})$ and $[ \bbM'_{\bbSigma'_{\varnothing} \circ \bbSigma} ] \in \rmV'_{\calC}(\bbSigma'_{\varnothing} \circ \bbSigma)$. But since
 \[
  \bbM'_{\bbSigma'_{\varnothing} \circ \bbSigma} \ast \left( \id_{\bbSigma'_{\varnothing}} \circ \bbM_{\bbSigma,i} \right) \ast \bbM_{\bbSigma'_{\varnothing}}
  = \left( \bbM'_{\bbSigma'_{\varnothing} \circ \bbSigma} \ast \left( \left( \id_{\bbSigma'_{\varnothing}} \ast \bbM_{\bbSigma'_{\varnothing}} \right) \circ \id_{\bbSigma} \right) \right) \ast \bbM_{\bbSigma,i}
 \]
 for every integer $1 \leqslant i \leqslant m$, we can conclude.


%
%
%

\chapter{Combinatorial and topological properties}\label{Ch:combinatorial_topological_properties}

In this chapter we begin to study the $2$-func\-tor $\bfA_{\calC} : \bfadCob_{\calC} \rightarrow \bfCat_{\Bbbk}$ of Section \ref{S:extended_universal_construction}. In particular, we discuss properties related the behaviour of the Costantino-Geer-Patureau invariant $\CGP_{\calC}$ under combinatorial and topological operations. From now on, $\calC$ will denote a fixed modular $G$-category relative to $(\PGr,X)$.

\section{Skein equivalence}\label{S:skein_equivalence}

In this section we extend the notion of skein equivalence, which was introduced in Section \ref{S:group_structures} for morphisms of $\Rib_{\calC}^G$, to an equivalence relation on admissible $(\calC,G)$-colorings of a 3-dimensional cobordism with corners. The idea is to declare two admissible $(\calC,G)$-colorings skein equivalent when they are related by a local skein equivalence inside a $3$-disc. To better explain this, let us consider the map
\[
 \begin{array}{rccc}
  f_{D^3} : & D^2 \times I & \rightarrow & D^3 \\
  & \left( \left( x,y \right), t \right) & \mapsto & \left( x,y,(2t-1) \sqrt{1 - \left( x^2 + y^2 \right)} \right).
 \end{array}
\] 
If $(\underline{\varepsilon},\underline{V})$ and $(\underline{\varepsilon'},\underline{V'})$ are objects of $\Rib_{\calC}^G$, we can use $f_{D^3}$ to define by pull-back a standard $\calC$-colored ribbon set $P_{(\underline{\varepsilon},\underline{V})^*,(\underline{\varepsilon'},\underline{V'})}$ inside $S^2$, and we denote with $\bbS^2_{(\underline{\varepsilon},\underline{V})^*,(\underline{\varepsilon'},\underline{V'})}$ the closed $1$-mor\-phism of $\bfCob_{\calC}$ given by
\[
 (S^2,P_{(\underline{\varepsilon},\underline{V})^*,(\underline{\varepsilon'},\underline{V'})},\vartheta_{(\underline{\varepsilon},\underline{V})^*,(\underline{\varepsilon'},\underline{V'})},\{ 0 \}),
\]
where $\vartheta_{(\underline{\varepsilon},\underline{V})^*,(\underline{\varepsilon'},\underline{V'})}$ is the unique compatible $G$-coloring of $(S^2,P_{(\underline{\varepsilon},\underline{V})^*,(\underline{\varepsilon'},\underline{V'})})$. If $T \subset D^3$ is a $G$-homogeneous $\calC$-colored ribbon graph from $\varnothing$ to $P_{(\underline{\varepsilon},\underline{V})^*,(\underline{\varepsilon'},\underline{V'})}$, we denote with $\bbD^3_T : \id_{\varnothing} \Rightarrow \bbS^2_{(\underline{\varepsilon},\underline{V})^*,(\underline{\varepsilon'},\underline{V'})}$ the $2$-mor\-phism of $\bfCob_{\calC}$ given by
\[
 (D^3,T,\omega_T,0),
\]
where $\omega_T$ is the unique compatible $G$-coloring of $(D^3,T)$. Then, if $(M,T,\omega,n)$ is a $2$-mor\-phism of $\bfadCob_{\calC}$ between $1$-mor\-phisms $\bbSigma, \bbSigma' : \bbGamma \rightarrow \bbGamma'$ which decomposes as
\[
 (M,T,\omega,n) = \sum_{i=1}^m \alpha_i \cdot \bbM_{(\underline{\varepsilon},\underline{V})^*,(\underline{\varepsilon'},\underline{V'})} \ast \left( \bbD^3_{T_i} \disjun \id_{\bbSigma} \right)
\]
for some $2$-mor\-phism $\bbM_{(\underline{\varepsilon},\underline{V})^*,(\underline{\varepsilon'},\underline{V'})} : \bbS^2_{(\underline{\varepsilon},\underline{V})^*,(\underline{\varepsilon'},\underline{V'})} \disjun \bbSigma \Rightarrow \bbSigma'$ of $\bfadCob_{\calC}$, we say a $(\calC,G)$-coloring $(T',\omega')$ of $M$ satisfying
\[
 (M,T',\omega',n) = \sum_{i'=1}^{m'} \alpha'_{i'} \cdot \bbM_{(\underline{\varepsilon},\underline{V})^*,(\underline{\varepsilon'},\underline{V'})} \ast \left( \bbD^3_{T'_{i'}} \disjun \id_{\bbSigma} \right)
\]
is \textit{skein equivalent} to $(T,\omega)$ if
\[
 \sum_{i=1}^m \alpha_i \cdot f^{-1}_{D^3} \left( T_i \right) \doteq \sum_{i' = 1}^{m'} \alpha'_{i'} \cdot f^{-1}_{D^3} \left( T'_{i'} \right).
\]
The projective stabilization of Section \ref{S:projective_generic_stabilizations} provides an example of skein equivalence.

\begin{remark}\label{R:skein_equivalence}
 If $(M,T,\omega,n)$ is a closed $2$-mor\-phism of $\bfadCob_{\calC}$, and if the $(\calC,G)$-coloring $(T',\omega')$ of $M$ is skein equivalent to $(T,\omega)$, then
 \[
  \CGP_{\calC}(M,T,\omega,n) = \CGP_{\calC}(M,T',\omega',n).
 \]
\end{remark}

\begin{lemma}\label{L:skein_equivalence}
 If $[ M_{\varGamma},T,\omega,n ]$ is a morphism of $\Hom_{\bfA_{\calC}(\bbGamma)}(\bbSigma_{\bbGamma},\bbSigma''_{\bbGamma})$,
 and if the $(\calC,G)$-coloring $(T',\omega')$ of $M_{\varGamma}$ is skein equivalent to $(T,\omega)$, then
 \[
  [M_{\varGamma},T,\omega,n] = [M_{\varGamma},T',\omega',n]
 \]
 as morphisms of $\Hom_{\bfA_{\calC}(\bbGamma)} ( \bbSigma_{\bbGamma},\bbSigma''_{\bbGamma} )$.
\end{lemma}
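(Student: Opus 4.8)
The plan is to reduce the statement about morphisms in the universal linear category $\bfA_{\calC}(\bbGamma)$ to the already-established Remark \ref{R:skein_equivalence} about the $\CGP_{\calC}$-invariant of closed $2$-morphisms. Recall that $\bfA_{\calC}(\bbGamma) = \calA(\bbGamma)/\calA'(\bbGamma)^{\perp}$, so to prove the equality $[M_{\varGamma},T,\omega,n] = [M_{\varGamma},T',\omega',n]$ in $\Hom_{\bfA_{\calC}(\bbGamma)}(\bbSigma_{\bbGamma},\bbSigma''_{\bbGamma})$ it suffices to show that the difference $(M_{\varGamma},T,\omega,n) - (M_{\varGamma},T',\omega',n)$ lies in the annihilator $\calA'(\bbGamma)^{\perp}$. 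By the definition of the pairing $\langle\cdot,\cdot\rangle_{\bbGamma}$ and of the annihilator, this means: for every object $\bbSigma'_{\bbGamma}$ of $\calA'(\bbGamma)$, i.e.\ every $1$-morphism $\bbSigma'_{\bbGamma} : \bbGamma \rightarrow \varnothing$, the images of the two $2$-morphisms under the linear functor $\rmV_{\calC}(\id_{\bbSigma'_{\bbGamma}} \circ (-))$ agree, and moreover this must persist after further whiskering so that it genuinely lands in the congruence. Concretely, I would show that for every $1$-morphism $\bbSigma'_{\bbGamma} : \bbGamma \rightarrow \varnothing$ and every $2$-morphism $\bbM'_{\bbGamma} : \bbSigma'_{\bbGamma} \circ \bbSigma''_{\bbGamma} \Rightarrow \id_{\varnothing}$ and every $2$-morphism $\bbM_{\bbGamma} : \id_{\varnothing} \Rightarrow \bbSigma'_{\bbGamma} \circ \bbSigma_{\bbGamma}$ (together with vectors representing classes in the relevant universal vector spaces), one has
\[
 \CGP_{\calC}\!\left( \bbM'_{\bbGamma} \ast (\id_{\bbSigma'_{\bbGamma}} \circ (M_{\varGamma},T,\omega,n)) \ast \bbM_{\bbGamma} \right)
 = \CGP_{\calC}\!\left( \bbM'_{\bbGamma} \ast (\id_{\bbSigma'_{\bbGamma}} \circ (M_{\varGamma},T',\omega',n)) \ast \bbM_{\bbGamma} \right).
\]

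The key observation making this work is that skein equivalence is \emph{local}: by definition $(T',\omega')$ is skein equivalent to $(T,\omega)$ precisely when the two differ by a skein relation inside an embedded $3$-disc $f_{D^3}(D^2 \times I) \subset M_{\varGamma}$, disjoint from the rest of the decoration. Whiskering with $\id_{\bbSigma'_{\bbGamma}}$, pre- and post-composing vertically with $\bbM_{\bbGamma}$ and $\bbM'_{\bbGamma}$, and then closing up, all take place away from this disc; hence the resulting closed $2$-morphisms are again related by exactly the same local skein equivalence inside the same $3$-disc. So if $(\tilde M, \tilde T, \tilde\omega, \tilde n)$ denotes the closed $2$-morphism built from $(M_{\varGamma},T,\omega,n)$ and $(\tilde M, \tilde T', \tilde\omega', \tilde n)$ the one built from $(M_{\varGamma},T',\omega',n)$, the coloring $(\tilde T', \tilde\omega')$ is skein equivalent to $(\tilde T, \tilde\omega)$ in the sense of Section \ref{S:skein_equivalence}. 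Applying Remark \ref{R:skein_equivalence} gives $\CGP_{\calC}(\tilde M, \tilde T, \tilde\omega, \tilde n) = \CGP_{\calC}(\tilde M, \tilde T', \tilde\omega', \tilde n)$, which is exactly the displayed equality above. This then shows the difference lies in $\calA'(\bbGamma)^{\perp}$, giving the claimed identity of morphisms in $\bfA_{\calC}(\bbGamma)$.

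The main obstacle, and the point requiring care, is bookkeeping the decomposition: to invoke skein equivalence one must write both $2$-morphisms in the form $\sum_i \alpha_i \cdot \bbM_{(\underline{\varepsilon},\underline{V})^*,(\underline{\varepsilon'},\underline{V'})} \ast (\bbD^3_{T_i} \disjun \id_{\bbSigma})$ with a \emph{common} outer piece $\bbM_{(\underline{\varepsilon},\underline{V})^*,(\underline{\varepsilon'},\underline{V'})}$, and then check that whiskering by $\id_{\bbSigma'_{\bbGamma}}$ and vertical composition by $\bbM_{\bbGamma}$, $\bbM'_{\bbGamma}$ only modifies this outer piece, leaving the inner $\bbD^3_{T_i}$'s — hence the linear combination of local tangles $f^{-1}_{D^3}(T_i)$ inside $D^2 \times I$ — untouched. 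This is essentially a naturality/functoriality statement for the various composition operations of $\bfadCob_{\calC}$ with respect to the disc decomposition, and is straightforward but needs to be stated cleanly. Once this localization is in place, the reduction to Remark \ref{R:skein_equivalence} is immediate, and one should also note that the same argument applies verbatim in the dual category $\bfA'_{\calC}(\bbGamma)$.
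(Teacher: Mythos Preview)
Your proposal is correct and follows essentially the same approach as the paper: unwind the definition of the quotient $\bfA_{\calC}(\bbGamma) = \calA(\bbGamma)/\calA'(\bbGamma)^{\perp}$, reduce to equality of $\CGP_{\calC}$ after whiskering by $\id_{\bbSigma'_{\bbGamma}}$ and closing up with test vectors, then invoke Remark~\ref{R:skein_equivalence}. Your discussion of the locality bookkeeping is more detailed than the paper's terse ``this follows directly from Remark~\ref{R:skein_equivalence}'', but the argument is the same.
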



\begin{proof}
 We have to show that for every object $\bbSigma'_{\bbGamma} \in \bfA'_{\calC}(\bbGamma)$ the linear map
 \[
  \rmV_{\calC} \left( \id_{\bbSigma'_{\bbGamma}} \circ \left( M_{\varGamma},T,\omega,n \right) \right) 
 \]
 is equal to the linear map
 \[
 \rmV_{\calC} \left( \id_{\bbSigma'_{\bbGamma}} \circ \left( M_{\varGamma},T',\omega',n \right) \right).
 \]
 This means we have to show that for all vectors $[\bbM_{\bbSigma'_{\bbGamma} \circ \bbSigma_{\bbGamma}}] \in \rmV_{\calC}(\bbSigma'_{\bbGamma} \circ \bbSigma_{\bbGamma})$ and $[\bbM'_{\bbSigma'_{\bbGamma} \circ \bbSigma''_{\bbGamma}}] \in \rmV'_{\calC}(\bbSigma'_{\bbGamma} \circ \bbSigma''_{\bbGamma})$ the invariant
 \[
  \CGP_{\calC} \left( \bbM'_{\bbSigma'_{\bbGamma} \circ \bbSigma''_{\bbGamma}} \ast \left( \id_{\bbSigma'_{\bbGamma}} \circ \left( M_{\varGamma},T,\omega,n \right) \right) \ast \bbM_{\bbSigma'_{\bbGamma} \circ \bbSigma_{\bbGamma}} \right)
 \]
 is equal to the invariant
 \[
  \CGP_{\calC} \left( \bbM'_{\bbSigma'_{\bbGamma} \circ \bbSigma''_{\bbGamma}} \ast \left( \id_{\bbSigma'_{\bbGamma}} \circ \left( M_{\varGamma},T',\omega',n \right) \right) \ast \bbM_{\bbSigma'_{\bbGamma} \circ \bbSigma_{\bbGamma}} \right).
 \]
 Now this follows directly from Remark \ref{R:skein_equivalence}.
\end{proof}

\section{Surgery axioms}\label{S:surgery_axioms}

In this section we introduce decorated surgeries of index $0$, $1$, and $2$. These operations transform a $2$-mor\-phism of $\bfadCob_{\calC}$ into a formal linear combination of $2$-mor\-phisms of $\bfadCob_{\calC}$, which we still interpret as a $2$-mor\-phism of $\bfadCob_{\calC}$, and they modify both topological properties of supports and combinatorial properties of decorations. On the level of topology, they consist in $3$-di\-men\-sion\-al surgeries, replacing a copy of $S^{i-1} \times (-1)^{i-1} D^{4-i}$ with a copy of $D^i \times S^{3-i}$ for some integer $0 \leqslant i \leqslant 2$, with the convention that $S^{-1} := \varnothing$ and that $D^0 = \{ 0 \}$. To explain how they affect decorations, we need to introduce some notation, so let us consider a generic $g \in G \smallsetminus X$, together with some $i \in \rmI_g$. We begin by defining index $0$ surgery morphisms, which are very simple: The \textit{index $0$ surgery $1$-mor\-phism $\bbSigma_0 : \varnothing \rightarrow \varnothing$ of $\bfadCob_{\calC}$} is just
\[
 \left( S^{-1} \times S^3,\varnothing,0,\{ 0 \} \right) = \id_{\varnothing};
\]
The \textit{index $0$ attaching $2$-mor\-phism $\bbA_0 : \varnothing \rightarrow \bbSigma_0$ of $\bfadCob_{\calC}$} is just
\[
 \left( S^{-1} \times \overline{D^4},\varnothing,0,0 \right) = \id_{\id_{\varnothing}};
\]
The \textit{$i$-colored index $0$ belt $2$-mor\-phism $\bbB_{i,0} : \varnothing \rightarrow \bbSigma_0$ of $\bfadCob_{\calC}$} is
\[
 \left( D^0 \times S^3,K_{B_{i,0}},\omega_{B_{0,i}},0 \right),
\]
where $K_{B_{i,0}} \subset D^0 \times S^3$ is the $\calC$-colored framed link given by an unknot with framing zero and color $V_i$, and where $\omega_{B_{i,0}}$ is the unique compatible $G$-coloring of $(D^0 \times S^3,K_{B_{i,0}})$.

Next, we move on to define index $1$ surgery morphisms, which are represented in Figure \ref{F:index_1_surgery}: The \textit{$i$-colored index $1$ surgery $1$-mor\-phism $\bbSigma_{i,1} : \varnothing \rightarrow \varnothing$ of $\bfadCob_{\calC}$} is
\[
 \left( S^0 \times S^2,P_{i,1},\vartheta_{i,1},\{ 0 \} \right), 
\]
where $P_{i,1} \subset S^0 \times S^2$ is the $\calC$-colored ribbon set given by the first south and the second north pole with positive orientation, and by the first north and the second south pole with negative orientation, all with color $V_i$, and where $\vartheta_{i,1}$ is the unique compatible $G$-coloring of $(S^0 \times S^2,P_{i,1})$, with a single base point on the equator of each copy of $S^2$ ; The \textit{$i$-colored index $1$ attaching $2$-mor\-phism $\bbA_{i,1} : \id_{\varnothing} \Rightarrow \bbSigma_{i,1}$ of $\bfadCob_{\calC}$} is
\[
 \left( S^0 \times D^3,T_{A_{i,1}},\omega_{A_{i,1}},0 \right), 
\]
where $T_{A_{i,1}} \subset S^0 \times D^3$ is the $\calC$-colored ribbon tangle given by two edges, both joining the south and the north pole of a copy of $S^2$, both with framing zero and color $V_i$, and where $\omega_{A_{i,1}}$ is the unique compatible $G$-coloring of ${(S^0 \times D^3,T_{A_{i,1}})}$; The \textit{$i$-colored index $1$ belt $2$-mor\-phism $\bbB_{i,1} : \id_{\varnothing} \Rightarrow \bbSigma_{i,1}$ of $\bfadCob_{\calC}$} is
\[
 \left( D^1 \times S^2,T_{B_{i,1}},\omega_{B_{i,1}},0 \right),
\]
where $T_{B_{i,1}} \subset D^1 \times S^2$ is the $\calC$-colored ribbon tangle given by two edges, one joining the south poles, one joining the north poles of the two copies of $S^2$, both with framing zero and color $V_i$, and where $\omega_{B_{i,1}}$ is the only compatible $G$-coloring of ${(D^1 \times S^2,T_{B_{i,1}})}$ which is a pull-back for the projection ${p_{S^2} : D^1 \times S^2 \rightarrow S^2}$.

\begin{figure}[htb]\label{F:index_1_surgery}
 \centering
 \includegraphics{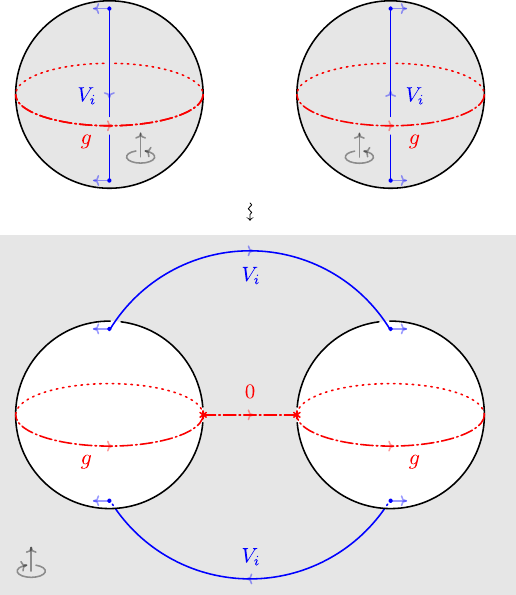}
 \caption{An $i$-colored index $1$ surgery replacing $\bbA_{i,1}$ with $\bbB_{i,1}$.}
\end{figure}

We finish by defining index $2$ surgery morphisms, which are represented in Figure \ref{F:index_2_surgery}: The \textit{$g$-colored index $2$ surgery $1$-mor\-phism $\bbSigma_{g,2} : \varnothing \rightarrow \varnothing$ of $\bfadCob_{\calC}$} is
\[
 \left( S^1 \times S^1,\varnothing,\vartheta_{g,2}, \calL_{g,2} \right),
\]
where $\vartheta_{g,2}$ is the $G$-coloring of $(S^1 \times S^1,\varnothing)$ determined by $\langle \vartheta_{g,2},\ell \rangle = 0$ and by $\langle \vartheta_{g,2},m \rangle = g$ for the homology classes $\ell$ and $m$ of the longitude $S^1 \times \{ (0,1) \}$ and of the meridian $\{ (1,0) \} \times S^1$ respectively, and where the Lagrangian $\calL_{g,2}$ is generated by $m$; The \textit{$g$-colored index $2$ attaching $2$-mor\-phism $\bbA_{g,2} : \id_{\varnothing} \Rightarrow \bbSigma_{g,2}$ of $\bfadCob_{\calC}$} is
\[
 \left( S^1 \times \overline{D^2},K_{A_{g,2}},\omega_{A_{g,2}},0 \right),
\]
where $K_{A_{g,2}} \subset S^1 \times \overline{D^2}$ is the $\calC$-colored framed knot given by the core ${S^1 \times \{ (0,0) \}}$, with framing zero and color $\Omega_g$, and where $\omega_{A_{g,2}}$ is the only compatible $G$-coloring of ${(S^1 \times \overline{D^2},K_{A_{g,2}})}$ which extends $\vartheta_{g,2}$; The \textit{$g$-colored index $2$ belt $2$-mor\-phism ${\bbB_{g,2} : \id_{\varnothing} \Rightarrow \bbSigma_{g,2}}$ of $\bfadCob_{\calC}$} is
\[
 \left( D^2 \times S^1,\varnothing,\omega_{B_{g,2}},0 \right)
\]
where $\omega_{B_{g,2}}$ is the only $G$-coloring of $(D^2 \times S^1,\varnothing)$ which extends $\vartheta_{g,2}$.

\begin{figure}[htb]\label{F:index_2_surgery}
 \centering
 \includegraphics{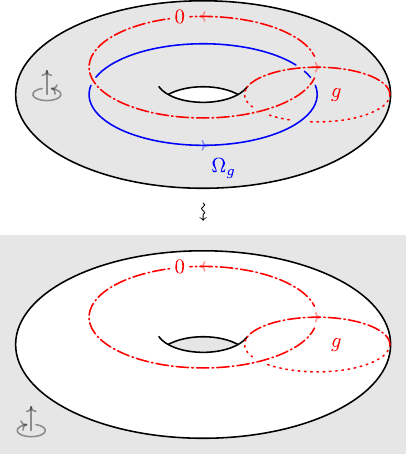}
 \caption{A $g$-colored index $2$ surgery replacing $\bbA_{g,2}$ with $\bbB_{g,2}$.}
\end{figure}

If $\bbM : \bbSigma \Rightarrow \bbSigma'$ is a $2$-mor\-phism of $\bfadCob_{\calC}$ between $1$-mor\-phisms $\bbSigma, \bbSigma' : \bbGamma \rightarrow \bbGamma'$ which decomposes as $\bbM_{i,j} \ast \left( \bbA_{i,j} \disjun \id_{\bbSigma} \right)$
for some generic $g \in G \smallsetminus X$, for some $i \in \rmI_g$, for some integer $0 \leqslant j \leqslant 2$, and for some $2$-mor\-phism $\bbM_{i,j} : \bbSigma_{i,j} \disjun \bbSigma \Rightarrow \bbSigma'$ of $\bfadCob_{\calC}$,  where we set $\bbSigma_{i,0} := \bbSigma_0$, $\bbA_{i,0} := \bbA_0$, $\bbSigma_{i,2} := \bbSigma_{g,2}$, $\bbA_{i,2} := \bbA_{g,2}$, and $\bbB_{i,2} := \bbB_{g,2}$, then we say the $2$-mor\-phism $\bbM_{i,j} \ast \left( \bbB_{i,j} \disjun \id_{\bbSigma} \right)$
is obtained from $\bbM$ by \textit{$i$-colored index $j$ surgery}. The generic stabilization of Section \ref{S:projective_generic_stabilizations} provides an example of a double index $2$ surgery.

\begin{proposition}\label{P:surgery_axioms}  
 If $\bbM_{i,j} : \bbSigma_{i,j} \Rightarrow \id_{\varnothing}$ is a closed $2$-mor\-phism of $\bfadCob_{\calC}$ for some generic $g \in G \smallsetminus X$, for some $i \in \rmI_g$, and for some integer $0 \leqslant j \leqslant 2$, then
 \[
  \CGP_{\calC}(\bbM_{i,j} \ast \bbB_{i,j}) = \lambda_{i,j} \CGP_{\calC}(\bbM_{i,j} \ast \bbA_{i,j})
 \]
 with $\lambda_{i,0} = \lambda_{i,1}^{-1} = \calD^{-1} \rmd(V_i)$ and $\lambda_{i,2} = \calD^{-1}$. 
\end{proposition}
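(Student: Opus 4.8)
The plan is to reduce the statement to three explicit computations of the Costantino--Geer--Patureau invariant, one for each index $j \in \{0,1,2\}$, by unpacking the definition of $\CGP_{\calC}$ in terms of surgery presentations and applying the combinatorial operations established earlier in the paper. In each case the $2$-morphism $\bbM_{i,j} \ast \bbA_{i,j}$ and the $2$-morphism $\bbM_{i,j} \ast \bbB_{i,j}$ are closed $2$-morphisms of $\bfadCob_{\calC}$, hence closed admissible decorated $3$-manifolds against which $\CGP_{\calC}$ is defined via Proposition \ref{P:CGP_for_admissible_manifolds}; the key point is that the two decorated closed $3$-manifolds differ only by a single index-$j$ surgery inside a standard piece, together with a local change of the $\calC$-colored ribbon graph, so their surgery presentations in $S^3$ can be chosen to differ in a controlled way. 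First I would treat the index $0$ case: attaching $\bbA_{i,0} = \id_{\id_{\varnothing}}$ does nothing topologically, while $\bbB_{i,0}$ adds a disjoint copy of $S^3$ carrying an $\Omega$-free $V_i$-colored unknot with zero framing. Since $S^3$ has an empty surgery presentation, and the renormalized invariant $F'_{\calC}$ of a $V_i$-colored zero-framed unknot equals $\rmt_{V_i}(\id_{V_i}) = \rmd(V_i)$ by the cutting-presentation formula, the multiplicativity of $F'_{\calC}$ under disjoint union and the factor $\eta$ coming from the extra $S^3$ component yield $\CGP_{\calC}(\bbM_{i,0}\ast\bbB_{i,0}) = \eta\,\rmd(V_i)\,\CGP_{\calC}(\bbM_{i,0}\ast\bbA_{i,0})$, i.e. $\lambda_{i,0} = \eta\,\rmd(V_i)$.

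Next I would handle the index $2$ case, which is essentially dual. Here $\bbA_{g,2}$ glues in $S^1 \times \overline{D^2}$ with its core labelled by the Kirby color $\Omega_g$, so in any surgery presentation this contributes one extra surgery component labelled $\Omega_g$ with the correct generic index $g$, whereas $\bbB_{g,2}$ glues in $D^2 \times S^1$, which performs the cancelling surgery and removes that component. Thus a computable surgery presentation $L$ for $\bbM_{g,2}\ast\bbA_{g,2}$ becomes a computable presentation $L \smallsetminus L_0$ for $\bbM_{g,2}\ast\bbB_{g,2}$, with one fewer component and with $\sigma(L_0) = 0$ for the cancelling handle, changing the normalization factor by exactly $\calD^{-1}$ according to the formula $\CGP_{\calC} = \eta\,\calD^{-\ell}\delta^{n-\sigma(L)}F'_{\calC}(L_{\tilde\omega}\cup\tilde T)$; hence $\lambda_{g,2} = \calD^{-1}$. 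The only subtlety is bookkeeping: one must check that performing the cancelling surgery on the $\Omega_g$-colored component, after possibly isotoping the rest of the link off the attaching region, does not change the skein class of what remains and does not affect the signature defect $n$ nor the signature of the linking matrix; this is the kind of standard Kirby-calculus argument already used in the proof of Proposition \ref{P:CGP_for_admissible_manifolds}.

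The index $1$ case is the main obstacle and requires genuine use of relative modularity. Attaching $\bbA_{i,1}$ over $S^0 \times D^3$ gives two $V_i$-colored edges joining south and north poles of two $2$-spheres, while $\bbB_{i,1}$ over $D^1 \times S^2$ replaces this by two $V_i$-colored edges joining the corresponding poles of the two spheres — topologically this is a $1$-handle surgery connecting two boundary $S^2$'s, which in a surgery presentation of the ambient closed $3$-manifold corresponds to a $0$-framed unknotted surgery component linking the $V_i$-colored edges in the standard Hopf pattern. To evaluate the change one must cut the $V_i$-colored edges to obtain a cutting presentation, apply the relative modularity skein relation of Figure \ref{F:relative_modularity} (or, after summing, the non-degeneracy identity $\Delta_-\Delta_+ = |\PGr/\PGr_+|\,\zeta$ of Proposition \ref{P:non-degeneracy_of_relative_modular_categories}) to resolve the new surgery component against the $V_i$-colored strands, and track the resulting scalar through the $\eta$, $\calD$, and $\delta$ normalizations. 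The expectation is that the local skein evaluation produces a factor $\bigl(\eta\,\rmd(V_i)\bigr)^{-1}$ relative to the index $0$ computation — consistent with $\bbB_{i,1}$ and $\bbA_{i,0}$ playing reciprocal roles — so that $\lambda_{i,1} = \bigl(\eta\,\rmd(V_i)\bigr)^{-1} = \lambda_{i,0}^{-1}$. The hard part is precisely this local skein manipulation: one must verify that projecting onto the $V_i$-isotypic channel (legitimate because $V_i \in \Theta(\calC_g)$ is simple and $\calC_g$ is semisimple) and using the modularity parameter $\zeta$ yields exactly the claimed scalar, and that no extra contribution from the signature or from the change in first Betti number of the $3$-manifold slips in uncompensated.
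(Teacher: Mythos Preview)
Your index-$0$ argument is correct and matches the paper.

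For index $2$ you have the bookkeeping backwards. In $\bbM_{g,2}\ast\bbA_{g,2}$ the $\Omega_g$-colored core $K_{A_{g,2}}$ is part of the ribbon graph $T$, not of the surgery link $L$, so it contributes to $F'_{\calC}$ but not to the count $\ell$. Passing to $\bbM_{g,2}\ast\bbB_{g,2}$ performs Dehn surgery along this core, so one \emph{adds} it to $L$ as a new component $L_{\ell+1}$ and simultaneously removes it from $T$; since surgery components carry Kirby colors and $K_{A_{g,2}}$ was already colored $\Omega_g$, one has $L'_{\omega'}\cup T' = L_\omega\cup T$, hence the same $F'_{\calC}$, while $\ell\to\ell+1$ produces the factor $\calD^{-1}$. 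Your assertion that neither $n$ nor $\sigma(L)$ changes is also off: both shift by $\sigma(L\cup K_{A_{g,2}})-\sigma(L)$, so the exponent $n-\sigma(L)$ of $\delta$ is unchanged.

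For index $1$ there is a genuine gap: you miss a case split that the paper makes explicit. When the two spheres of $\bbSigma_{i,1}$ lie in distinct connected components of $\bbM_{i,1}$, the closed manifolds $\bbM_{i,1}\ast\bbA_{i,1}$ and $\bbM_{i,1}\ast\bbB_{i,1}$ are respectively a disjoint union $M\sqcup M'$ and a connected sum $M\# M'$ along $V_i$. A disjoint union cannot be presented by surgery in a single $S^3$, so your ``add a $0$-framed surgery component'' picture does not apply here. The paper handles this case without relative modularity, using only simplicity of $V_i$: composing cutting presentations through the simple object gives $F'_{\calC}$ of the connected sum equal to $\rmd(V_i)^{-1}$ times the product of the two $F'_{\calC}$ values, and the factor $\eta^{-1}$ from merging two copies of $S^3$ into one yields $\lambda_{i,1}=\eta^{-1}\rmd(V_i)^{-1}$. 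Your relative-modularity sketch is essentially the paper's argument for the \emph{other} case, where the two spheres lie in the same component of $\bbM_{i,1}$; there one does add a surgery component linking the $V_i$-strands, and the modularity skein relation gives $F'_{\calC}$ of the surgered link as $\zeta\,\rmd(V_i)^{-1}=\eta^{-1}\calD\,\rmd(V_i)^{-1}$ times the original, which after the extra $\calD^{-1}$ from $\ell\to\ell+1$ again yields $\lambda_{i,1}=\eta^{-1}\rmd(V_i)^{-1}$.
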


\begin{proof}
 If $j = 0$, then the property reduces to the computation
 \begin{align*}
  \CGP_{\calC}(\bbB_{i,0}) &= \calD^{-1-0} \delta^{0 - 0} F'_{\calC}(K_{B_{i,0}}) \\
  &= \calD^{-1} \rmd(V_i) \\
  &= \calD^{-1} \rmd(V_i) \CGP_{\calC}(\bbA_0).
 \end{align*}

 If $j = 1$, then we can adapt the proof of Proposition 4.8 of \cite{BCGP16}. Indeed, we can distinguish two cases, according to whether or not the surgery operation involves two different connected components. We start from the first case, so let us suppose it does. Let us fix decompositions
 \[
  \bbSigma_{i,1} = \overline{\bbS^2_{(+,V_i),(-,V_i)}} \disjun \bbS^2_{(-,V_i),(+,V_i)}, \quad
  \bbA_{i,1} = \overline{\bbD^3_{\id_{(-,V_i)}}} \disjun \bbD^3_{\id_{(+,V_i)}}
 \]
 with respect to the $2$-mor\-phisms 
 \[
  \overline{\bbD^3_{\id_{(-,V_i)}}} : \id_{\varnothing} \Rightarrow \overline{\bbS^2_{(+,V_i),(-,V_i)}}, \quad \bbD^3_{\id_{(+,V_i)}} : \id_{\varnothing} \Rightarrow \bbS^2_{(-,V_i),(+,V_i)}
 \]
 of $\bfadCob_{\calC}$ represented in the top-left part and in the top-right part of Figure \ref{F:index_1_surgery} respectively. Then, let us consider connected $2$-morphisms 
 \[
  \bbM_{i,1} : \overline{\bbS^2_{(+,V_i),(-,V_i)}} \Rightarrow \id_{\varnothing}, \quad
  \bbM'_{i,1} : \bbS^2_{(-,V_i),(+,V_i)} \Rightarrow \id_{\varnothing}
 \] 
 of $\bfadCob_{\calC}$, and let us set
 \begin{gather*}
  (M,T,\omega,n) = \bbM_{i,1} \ast \overline{\bbD^3_{\id_{(-,V_i)}}}, \quad
  (M',T',\omega',n') = \bbM'_{i,1} \ast \bbD^3_{\id_{(+,V_i)}}, \\
  (M \sqcup M',T \sqcup T',\omega \sqcup \omega',n + n') = (\bbM_{i,1} \disjun \bbM'_{i,1}) \ast \bbA_{i,1}, \vphantom{\overline{\bbD^3_{\id_{(-,V_i)}}}} \\
  (M \# M',T \# T',\omega \# \omega',n + n') = (\bbM_{i,1} \disjun \bbM'_{i,1}) \ast \bbB_{i,1}. \vphantom{\overline{\bbD^3_{\id_{(-,V_i)}}}}
 \end{gather*}
 Up to performing projective stabilizations of sufficiently generic indices, we can suppose that $M$ admits a computable surgery presentation $L = L_1 \cup \ldots \cup L_{\ell} \subset S^3$ with respect to $(T,\omega)$, and that $M'$ admits a computable surgery presentation $L' = L'_1 \cup \ldots \cup L'_{\ell'} \subset S^3$ with respect to $(T',\omega')$. Then, $L \cup L'$ is a computable surgery presentation of $M \# M'$ with respect to $(T \# T',\omega \# \omega')$. Moreover, if $(L_{\omega} \cup T)_{(+,V_i)}$ and $(L'_{\omega'} \cup T')_{(+,V_i)}$ are cutting presentations of $L_{\omega} \cup T$ and of $L'_{\omega'} \cup T'$ respectively, then their composition $(L_{\omega} \cup T)_{(+,V_i)} \circ (L'_{\omega'} \cup T')_{(+,V_i)}$ is a cutting presentation of $(L \cup L')_{\omega \# \omega'} \cup (T \# T')$. This means
 \begin{align*}
  &F'_{\calC} \left( (L \cup L')_{\omega \# \omega'} \cup (T \# T') \right) = \rmt_{V_i} \left( F_{\calC} \left( (L_{\omega} \cup T)_{(+,V_i)} \circ (L'_{\omega'} \cup T')_{(+,V_i)} \right) \right) \\
  &\hspace{\parindent} = \rmd(V_i)^{-1} \rmt_{V_i} \left( F_{\calC} \left( (L_{\omega} \cup T)_{(+,V_i)} \right) \right) \rmt_{V_i} \left( F_{\calC} \left( (L'_{\omega'} \cup T')_{(+,V_i)} \right) \right) \\
  &\hspace{\parindent} = \rmd(V_i)^{-1} F'_{\calC} \left( L_{\omega} \cup T \right) F'_{\calC} \left( L'_{\omega'} \cup T' \right).
 \end{align*}
 Thus we get
 \begin{align*}
  &\CGP_{\calC} \left( (\bbM_{i,1} \disjun \bbM'_{i,1}) \ast \bbB_{i,1} \right) \\
  &\hspace{\parindent} = \calD^{-1 - (\ell + \ell')} \delta^{(n + n') - (\sigma(L) + \sigma(L'))} F'_{\calC} \left( (L \cup L')_{\omega \# \omega'} \cup (T \# T') \right) \\
  &\hspace{\parindent} = \calD^{-1 - \ell - \ell'} \delta^{n - \sigma(L) + n' - \sigma(L')} \rmd(V_i)^{-1} F'_{\calC} \left( L_{\omega} \cup T \right) F'_{\calC} \left( L'_{\omega'} \cup T' \right) \\
  &\hspace{\parindent} = \calD \rmd(V_i)^{-1} \CGP_{\calC} \left( \bbM_{i,1} \ast \overline{\bbD^3_{\id_{(-,V_i)}}} \right)
  \CGP_{\calC} \left( \bbM'_{i,1} \ast \bbD^3_{\id_{(+,V_i)}} \right) \\
  &\hspace{\parindent} = \calD \rmd(V_i)^{-1} \CGP_{\calC} \left( (\bbM_{i,1} \disjun \bbM'_{i,1}) \ast \bbA_{i,1} \right).
 \end{align*}

 Now, let us move on to the second case, let us consider a connected $2$-mor\-phism $\bbM_{i,1} : \bbSigma_{i,1} \Rightarrow \id_{\varnothing}$ of $\bfadCob_{\calC}$, and let us set $(M,T,\omega,n) = \bbM_{i,1} \ast \bbA_{i,1}$ and $(M',T',\omega',n') = \bbM_{i,1} \ast \bbB_{i,1}$. Up to performing a projective stabilization of sufficiently generic index, we can suppose that $M$ admits a computable surgery presentation $L = L_1 \cup \ldots \cup L_{\ell} \subset S^3$ with respect to $(T,\omega)$, and that there exists a curve $\gamma \subset M$ joining the base points of $\bbSigma_{i,1}$ whose relative homology class is evaluated to a generic index $h \in G \smallsetminus X$ by the $G$-coloring of $\bbM_{i,1}$. In order to check the second claim, remark that we can turn any relative homology class into a generic one by adding to it the homology class of a meridian of an edge whose color has sufficiently generic index. Furthermore, a computable surgery presentation $L'$ of $M'$ in $S^3$ with respect to $(T',\omega')$ can be obtained from $L$ by adding a single surgery component $L_{\ell + 1}$. In order to explain how, let us choose a cutting presentation $(L_{\omega} \cup T)_{((-,V_i),(+,V_i))}$ of $L_{\omega} \cup T$ in which $\gamma$ appears as an arc contained in the outgoing horizontal boundary $D^2 \times \{ 1 \}$ of $D^2 \times I$. Then, a cutting presentation $(L'_{\omega'} \cup T')_{((-,V_i),(+,V_i))}$ of $L'_{\omega'} \cup T'$ is represented in Figure \ref{F:proof_1-surgery}. 
 \begin{figure}[b]
  \centering
  \includegraphics{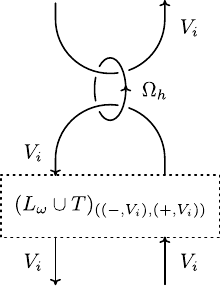}
  \caption{Cutting presentation of $L'_{\omega'} \cup T'$.}
  \label{F:proof_1-surgery}
 \end{figure}  
 Remark that
 \begin{align*}
  F'_{\calC}(L'_{\omega'} \cup T') &= \zeta \rmd(V_i)^{-1} F'_{\calC}(L_{\omega} \cup T) \\
  &= \calD^2 \rmd(V_i)^{-1} F'_{\calC}(L_{\omega} \cup T)
 \end{align*}
 thanks to the relative modularity condition of Definition \ref{D:relative_modular_category}, and thanks to Proposition \ref{P:non-degeneracy_of_relative_modular_categories}. Furthermore, remark that $n' = n$, and that $\sigma(L') = \sigma(L)$. This means
 \begin{align*}
  &\CGP_{\calC} \left( \bbM_{i,1} \ast \bbB_{i,1} \right)
  = \calD^{-1-\ell'} \delta^{n' - \sigma(L')} F'_{\calC}(L'_{\omega'} \cup T') \\
  &\hspace{\parindent} = \calD^{-\ell} \delta^{n - \sigma(L)} \rmd(V_i)^{-1} F'_{\calC} \left( L_{\omega} \cup T \right) \\
  &\hspace{\parindent} = \calD \rmd(V_i)^{-1} \CGP_{\calC} \left( (\bbM_{i,1} \disjun \bbM'_{i,1}) \ast \bbA_{i,1} \right).
 \end{align*}

 If $j = 2$, then we can adapt the proof of Proposition 4.9 of \cite{BCGP16}. Indeed, let us consider a connected $2$-mor\-phism $\bbM_{g,2} : \bbSigma_{g,2} \Rightarrow \id_{\varnothing}$ of $\bfadCob_{\calC}$, and let us set $(M,T,\omega,n) = \bbM_{g,2} \ast \bbA_{g,2}$ and $(M',T',\omega',n') = \bbM_{g,2} \ast \bbB_{g,2}$. Up to performing a projective stabilization of sufficiently generic index, we can suppose that $M$ admits a computable surgery presentation $L = L_1 \cup \ldots \cup L_{\ell} \subset S^3$ with respect to $(T,\omega)$. Then a computable surgery presentation $L'$ of $M'$ in $S^3$ with respect to $(T',\omega')$ is obtained from $L$ by adding a single surgery component $L_{\ell + 1}$ determined by the $\calC$-colored framed knot $K_{A_{g,2}}$ of $\bbA_{g,2}$. Furthermore, remark that $n' = n + \sigma(L \cup K_{A_{g,2}}) - \sigma(L)$, and that $K_{A_{g,2}} \cup T' = T$. Therefore, we get
 \begin{align*}
  \CGP_{\calC} \left( \bbM_{g,2} \ast \bbB_{g,2} \right)
  &= \calD^{-1-\ell'} \delta^{n' - \sigma(L')} F'_{\calC}\left( L'_{\omega'} \cup T' \right) \\
  &= \calD^{-2-\ell} \delta^{n - \sigma(L)} F'_{\calC} \left( L_{\omega} \cup K_{A_{g,2}} \cup T' \right) \\
  &= \calD^{-1} \CGP_{\calC} \left( \bbM_{g,2} \ast \bbA_{g,2} \right). \qedhere
 \end{align*}
\end{proof}

\begin{lemma}\label{L:surgery_axioms}
 If $[\bbM_{\bbGamma,i,j}]$ is a morphism of 
 $\Hom_{\bfA_{\calC}(\bbGamma)}(\bbSigma_{i,j} \disjun \bbSigma_{\bbGamma},\bbSigma''_{\bbGamma})$ for some generic $g \in G \smallsetminus X$, for some $i \in \rmI_g$, and for some integer $0 \leqslant j \leqslant 2$, then
 \[
  \left[ \bbM_{\bbGamma,i,j} \ast \left( \bbB_{i,j} \disjun \id_{\bbSigma_{\bbGamma}} \right) \right] 
  = \lambda_{i,j} \cdot \left[ \bbM_{\bbGamma,i,j} \ast \left( \bbA_{i,j} \disjun \id_{\bbSigma_{\bbGamma}} \right) \right]
 \]
 as morphisms of $\Hom_{\bfA_{\calC}(\bbGamma)}(\bbSigma_{i,j} \disjun \bbSigma_{\bbGamma},\bbSigma''_{\bbGamma})$.
\end{lemma}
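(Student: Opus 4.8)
The plan is to reduce the statement, via the extended universal construction, to Proposition \ref{P:surgery_axioms}, following verbatim the scheme of the proof of Lemma \ref{L:skein_equivalence}. Since vertical and horizontal composition of $2$-morphisms, the monoidal product $\disjun$, the functors $\rmV_{\calC}$ and $\rmV'_{\calC}$, and the invariant $\CGP_{\calC}$ on closed $2$-morphisms are all linear or bilinear, I would first reduce to the case in which $\bbM_{\bbGamma,i,j}$ is a single $2$-morphism of $\bfadCob_{\calC}$. To prove the desired equality in $\bfA_{\calC}(\bbGamma)$, I would then unwind Definition \ref{D:univ_linear_category}: it suffices to check that for every object $\bbSigma'_{\bbGamma} \in \bfA'_{\calC}(\bbGamma)$ the linear map $\rmV_{\calC}(\id_{\bbSigma'_{\bbGamma}} \circ (\bbM_{\bbGamma,i,j} \ast (\bbB_{i,j} \disjun \id_{\bbSigma_{\bbGamma}})))$ equals $\lambda_{i,j}$ times $\rmV_{\calC}(\id_{\bbSigma'_{\bbGamma}} \circ (\bbM_{\bbGamma,i,j} \ast (\bbA_{i,j} \disjun \id_{\bbSigma_{\bbGamma}})))$. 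By the non-degeneracy of the pairing $\langle\cdot,\cdot\rangle_{\bbSigma'_{\bbGamma}\circ\bbSigma''_{\bbGamma}}$ between $\rmV'_{\calC}(\bbSigma'_{\bbGamma}\circ\bbSigma''_{\bbGamma})$ and $\rmV_{\calC}(\bbSigma'_{\bbGamma}\circ\bbSigma''_{\bbGamma})$, this amounts in turn to showing, for all vectors $[\bbM_{\bbSigma'_{\bbGamma}\circ\bbSigma_{\bbGamma}}] \in \rmV_{\calC}(\bbSigma'_{\bbGamma}\circ\bbSigma_{\bbGamma})$ and $[\bbM'_{\bbSigma'_{\bbGamma}\circ\bbSigma''_{\bbGamma}}] \in \rmV'_{\calC}(\bbSigma'_{\bbGamma}\circ\bbSigma''_{\bbGamma})$, that the value of $\CGP_{\calC}$ on the closed $2$-morphism obtained by sandwiching $\id_{\bbSigma'_{\bbGamma}} \circ (\bbM_{\bbGamma,i,j} \ast (\bbB_{i,j} \disjun \id_{\bbSigma_{\bbGamma}}))$ between representatives of these two vectors equals $\lambda_{i,j}$ times the value of $\CGP_{\calC}$ on the analogous closed $2$-morphism with $\bbA_{i,j}$ in place of $\bbB_{i,j}$.

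The key step is then the observation that this closed $2$-morphism can be rewritten, by means of the interchange law in $\bfadCob_{\calC}$ and of the distributivity of whiskering and of $\disjun$ over vertical composition, in the form $\bbM_{i,j} \ast \bbB_{i,j}$ for a closed $2$-morphism $\bbM_{i,j} : \bbSigma_{i,j} \Rightarrow \id_{\varnothing}$ of $\bfadCob_{\calC}$ which is assembled from $\bbM_{\bbGamma,i,j}$, from $\bbM_{\bbSigma'_{\bbGamma}\circ\bbSigma_{\bbGamma}}$, from $\bbM'_{\bbSigma'_{\bbGamma}\circ\bbSigma''_{\bbGamma}}$ and from identities, and which in particular does not involve the belt morphism $\bbB_{i,j}$ itself. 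This is the same rearrangement exploited at the end of Chapter \ref{Ch:extenstion_of_CGP_invariants}. Since $\bbA_{i,j}$ and $\bbB_{i,j}$ share source and target and only affect the $\bbSigma_{i,j}$ factor of the relevant tensor products, performing the identical manipulation with $\bbA_{i,j}$ instead of $\bbB_{i,j}$ produces $\bbM_{i,j} \ast \bbA_{i,j}$ with exactly the same $\bbM_{i,j}$. The needed equality of $\CGP_{\calC}$-values is therefore precisely the content of Proposition \ref{P:surgery_axioms}, which yields $\CGP_{\calC}(\bbM_{i,j} \ast \bbB_{i,j}) = \lambda_{i,j}\,\CGP_{\calC}(\bbM_{i,j} \ast \bbA_{i,j})$.

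The point requiring most care is this rearrangement: one must track the unitors identifying $\id_{\varnothing} \disjun \bbSigma_{\bbGamma}$ with $\bbSigma_{\bbGamma}$ and the associativity coherence while sliding $\bbB_{i,j}$ past $\bbM_{\bbGamma,i,j}$ and the two caps, and one must check that the resulting $\bbM_{i,j}$ is again an \emph{admissible} $2$-morphism, so that Proposition \ref{P:surgery_axioms} indeed applies. The former is routine once we pretend $\bfadCob_{\calC}$ is strict by appealing to Theorem \ref{T:coherence_for_2-cat}; the latter follows from the closure of admissibility under horizontal and vertical composition and under $\disjun$ recorded after Definition \ref{D:admissible_2-morphisms}, together with the admissibility of $\bbA_{i,j}$ and $\bbB_{i,j}$ by construction. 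I do not expect any genuine conceptual obstacle beyond this bookkeeping, as the whole argument is the formal $2$-categorical lift of Proposition \ref{P:surgery_axioms}, mirroring how Lemma \ref{L:skein_equivalence} lifts Remark \ref{R:skein_equivalence}.
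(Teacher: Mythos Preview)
Your proposal is correct and follows essentially the same approach as the paper: unwind the definition of $\bfA_{\calC}(\bbGamma)$ to reduce to equality of linear maps $\rmV_{\calC}(\id_{\bbSigma'_{\bbGamma}} \circ -)$ for every $\bbSigma'_{\bbGamma}$, then to equality of $\CGP_{\calC}$-values on the closed $2$-morphisms obtained by sandwiching with test vectors, and finally invoke Proposition~\ref{P:surgery_axioms}. The paper is terser, simply asserting that the last step ``follows directly'' from Proposition~\ref{P:surgery_axioms}, whereas you spell out the interchange-law rearrangement and the admissibility check; this extra care is harmless and arguably clarifying.
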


\begin{proof}
 The proof is completely analogous to the proof of \ref{L:skein_equivalence}. Indeed, we have to show that for every object $\bbSigma'_{\bbGamma} \in \bfA'_{\calC}(\bbGamma)$ the linear map
 \[
  \rmV_{\calC} \left( \id_{\bbSigma'_{\bbGamma}} \circ \left( \bbM_{\bbGamma,i,j} \ast \left( \bbB_{i,j} \disjun \id_{\bbSigma_{\bbGamma}} \right) \right) \right) 
 \]
 is equal to the linear map
 \[
  \lambda_{i,j} \cdot \rmV_{\calC} \left( \id_{\bbSigma'_{\bbGamma}} \circ \left( \bbM_{\bbGamma,i,j} \ast \left( \bbA_{i,j} \disjun \id_{\bbSigma_{\bbGamma}} \right) \right) \right). 
 \]
 This means we have to show that for all vectors $[\bbM_{\bbSigma'_{\bbGamma} \circ \bbSigma_{\bbGamma}}] \in \rmV_{\calC}(\bbSigma'_{\bbGamma} \circ \bbSigma_{\bbGamma})$ and $[\bbM'_{\bbSigma'_{\bbGamma} \circ \bbSigma''_{\bbGamma}}] \in \rmV'_{\calC}(\bbSigma'_{\bbGamma} \circ \bbSigma''_{\bbGamma})$ the invariant
 \[
  \CGP_{\calC} \left( \bbM'_{\bbSigma'_{\bbGamma} \circ \bbSigma''_{\bbGamma}} \ast \left( \id_{\bbSigma'_{\bbGamma}} \circ \left( \bbM_{\bbGamma,i,j} \ast \left( \bbB_{i,j} \disjun \id_{\bbSigma_{\bbGamma}} \right) \right) \right) \ast \bbM_{\bbSigma'_{\bbGamma} \circ \bbSigma_{\bbGamma}} \right)
 \]
 is equal to the invariant
 \[
  \lambda_{i,j} \CGP_{\calC} \left( \bbM'_{\bbSigma'_{\bbGamma} \circ \bbSigma''_{\bbGamma}} \ast \left( \id_{\bbSigma'_{\bbGamma}} \circ \left( \bbM_{\bbGamma,i,j} \ast \left( \bbA_{i,j} \disjun \id_{\bbSigma_{\bbGamma}} \right) \right) \right) \ast \bbM_{\bbSigma'_{\bbGamma} \circ \bbSigma_{\bbGamma}} \right).
 \]
 Now this follows directly from Proposition \ref{P:surgery_axioms}.
\end{proof}

\section{Connection, domination, triviality}\label{S:connection_lemma}

In this section we derive some important consequences of skein equivalence and surgery axioms. Roughly speaking, these results can be summarized as follows: 
\begin{enumerate}
 \item The vector space of morphisms between a pair of objects of a universal linear category is generated by all admissible decorations of a fixed non-empty connected $3$-di\-men\-sion\-al cobordism with corners;
 \item A dominating set of objects of a universal linear category is obtained by considering all admissible decorations of a fixed non-empty $2$-di\-men\-sion\-al cobordism;
 \item To check whether a morphism of a universal linear category is trivial or not, it is enough to evaluate all admissible decorations of a fixed closed $3$-manifold obtained by first gluing horizontally a non-empty trivial $3$-di\-men\-sion\-al cobordism with corners to it, and then gluing vertically a pair of non-empty connected $3$-di\-men\-sion\-al cobordisms to the result.
\end{enumerate} 
In order to explain all this, we need some preliminary definition. First of all, if $\bbSigma = (\varSigma,P,\vartheta,\calL)$ and $\bbSigma' = (\varSigma',P',\vartheta',\calL')$ are $1$-mor\-phisms of $\bfadCob_{\calC}$ between objects $\bbGamma$ and $\bbGamma'$, and if $M$ is a $3$-dimensional cobordism with corners from $\varSigma$ to $\varSigma'$, then we denote with $\adSk(M;\bbSigma,\bbSigma')$ the free vector space over the set of all admissible $(\calC,G)$-colorings of $M$ relative to $(P,\vartheta)$ and $(P',\vartheta')$.

\begin{lemma}\label{L:connection_lemma}
 If $\bbSigma_{\bbGamma} = (\varSigma_{\varGamma},P,\vartheta,\calL)$ and $\bbSigma''_{\bbGamma} = (\varSigma''_{\varGamma},P'',\vartheta'',\calL'')$ are ob\-jects of $\bfA_{\calC}(\bbGamma)$, and if $M$ is a non-emp\-ty connected $3$-di\-men\-sion\-al cobordism with corners from $\varSigma_{\varGamma}$ to $\varSigma'_{\varGamma}$, then the linear map 
 \[
  \begin{array}{rccc}
   \rho_{M} : & \adSk(M;\bbSigma_{\bbGamma}, \bbSigma''_{\bbGamma}) & 
   \rightarrow & \Hom_{\bfA_{\calC}(\bbGamma)}(\bbSigma_{\bbGamma},\bbSigma''_{\bbGamma}) \\
   & (T,\omega) & \mapsto & [M,T,\omega,0]
  \end{array}
 \]
 is surjective.
\end{lemma}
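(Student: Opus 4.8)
The plan is to prove surjectivity of $\rho_M$ in three stages, progressively reducing an arbitrary morphism of $\Hom_{\bfA_{\calC}(\bbGamma)}(\bbSigma_{\bbGamma},\bbSigma''_{\bbGamma})$ to a decoration living on a fixed connected cobordism with corners $M$. First I would recall that, by construction of the universal linear category, every morphism in $\Hom_{\bfA_{\calC}(\bbGamma)}(\bbSigma_{\bbGamma},\bbSigma''_{\bbGamma})$ is represented by a finite linear combination $\sum_i \alpha_i \cdot [M_i,T_i,\omega_i,n_i]$ of classes of admissible $2$-morphisms of $\bfadCob_{\calC}$ from $\bbSigma_{\bbGamma}$ to $\bbSigma''_{\bbGamma}$, since the free linear category $\calA(\bbGamma)$ has such linear combinations as morphisms and $\bfA_{\calC}(\bbGamma)$ is a quotient of it. So it suffices to show that each individual class $[M_0,T_0,\omega_0,n_0]$ lies in the image of $\rho_M$.

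The main work is then to replace the underlying cobordism with corners $M_0$ by the fixed one $M$ without changing the class in $\bfA_{\calC}(\bbGamma)$. I would proceed in three reductions. \textbf{Reduction to signature defect zero and connectedness.} Using Lemma \ref{L:surgery_axioms} with index $0$ surgery (which corresponds to the trivial $1$-morphism $\bbSigma_0 = \id_\varnothing$) and a belt of color $V_i$ on a separate $S^3$ component, I can adjust the scalar $\eta\rmd(V_i)$ by also stabilizing; more precisely, a $V_i$-colored index $0$ belt inserts a factor $\eta\rmd(V_i)$, while an index $1$ surgery connects two components at the cost of a factor $(\eta\rmd(V_i))^{-1}$. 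By combining these one can glue any extra $S^3$ (or $S^1\times S^3$) pieces onto $M_0$, and in particular reduce to the case where $M_0$ is connected. Moreover, since $M$ is non-empty, an index $2$ surgery (and its partner) lets me adjust the first Betti number. The signature defect $n_0$ should be absorbed either into a separate constant or, if $\bfadCob_{\calC}$ allows a non-empty closed $2$-morphism carrying prescribed signature defect glued on a disjoint $S^3$-piece, by Lemma \ref{L:surgery_axioms}. \textbf{Reduction to the fixed $M$.} The key topological input is that any two connected $3$-dimensional cobordisms with corners from $\varSigma_{\varGamma}$ to $\varSigma''_{\varGamma}$, with the same boundary identifications, become related by a sequence of decorated surgeries of indices $0$, $1$, $2$ (this is the handle-decomposition / Cerf-theory style statement underlying the surgery axioms, the cobordism-with-corners analogue of the classical fact that TQFT gluing axioms follow from handle moves). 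Applying Lemma \ref{L:skein_equivalence} and Lemma \ref{L:surgery_axioms} along such a sequence, each move either preserves the class $[M_0,T_0,\omega_0,n_0]$ or multiplies it by an invertible scalar $\lambda_{i,j}$, which can be compensated. At the end the support has become $M$, so the class is $\rho_M(T',\omega')$ for a suitable admissible decoration $(T',\omega')$, and since $\rho_M$ is linear, the original linear combination is in its image.

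I expect the main obstacle to be the second reduction: making precise that the surgery moves of Section \ref{S:surgery_axioms} act transitively (up to the controllable scalars) on connected cobordisms with corners sharing a boundary, and checking that along the way one always stays inside the \emph{admissible} subcategory and can always find a generic index $g \in G\smallsetminus X$ for the colored surgery components --- this last point uses crucially that $X$ is small symmetric, exactly as in the proof of Proposition \ref{P:CGP_for_admissible_manifolds}. One should also take care that the decorations $(T_0,\omega_0)$ interact correctly with the surgery components, which is handled by first performing projective or generic stabilizations (Section \ref{S:projective_generic_stabilizations}) to guarantee the presence of a simple projective edge of generic index that can be slid over the new surgery components, again mirroring the argument in Proposition \ref{P:CGP_for_admissible_manifolds}. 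Everything else is a bookkeeping of invertible scalars, which the statement allows since we only claim surjectivity, not an isometry.
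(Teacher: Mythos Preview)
Your overall strategy is correct and would work, but the paper's argument is considerably more direct and avoids the Cerf-theory layer you invoke. After reducing to the connected case via index $1$ surgeries (exactly as you do), the paper does not try to relate $M_\varGamma$ to $M$ by an abstract sequence of handle moves of all indices. Instead it uses the single classical fact that any connected $3$-cobordism $M_\varGamma$ admits a \emph{surgery presentation in $M$}: there is a framed link $L = L_1 \cup \cdots \cup L_\ell \subset M$ with $M(L) \cong M_\varGamma$. After projective or generic stabilizations this link can be taken computable with respect to $(T,\omega)$, and then a single application of Lemma \ref{L:surgery_axioms} with index $j=2$ on each component gives
\[
 [M_\varGamma, T, \omega, n] \;=\; \calD^{-\ell}\cdot [M,\, L_\omega \cup T,\, \omega,\, n_L]
\]
for some integer $n_L$. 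This replaces your entire ``Reduction to the fixed $M$'' by one step using only index $2$ surgeries in one direction, rather than a back-and-forth sequence of indices $0,1,2$.

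Your treatment of the signature defect is also more elaborate than necessary. The paper simply observes that changing the signature defect by one multiplies the CGP invariant of any closed gluing by $\delta$, hence $[M,T',\omega',n_L] = \delta^{n_L}\cdot [M,T',\omega',0]$ directly in $\bfA_\calC(\bbGamma)$; no auxiliary $S^3$-pieces or index $0$ surgeries are needed for this. So while your plan is sound, the paper's route is shorter: connect components by index $1$ surgeries, pull everything onto $M$ via a surgery link and index $2$ surgeries, and absorb the signature defect into a power of $\delta$.
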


\begin{proof}
 In order to prove surjectivity of $\rho_{M}$, we have to show that for every mor\-phism $[M_{\varGamma},T,\omega,n]$ of $\Hom_{\bfA_{\calC}(\bbGamma)}(\bbSigma_{\bbGamma},\bbSigma''_{\bbGamma})$ there exist ad\-mis\-si\-ble $(\calC,G)$-col\-or\-ings $(T_1,\omega_1), \ldots, (T_m,\omega_m)$ of $M$ relative to $(P,\vartheta)$ and $(P'',\vartheta'')$, and coefficients $\alpha_1, \ldots, \alpha_m \in \Bbbk$ such that
 \[
  \sum_{i = 1}^m \alpha_i \cdot [M,T_i,\omega_i,0] = [M_{\varGamma},T,\omega,n].
 \]
 First of all, we can suppose $M_{\varGamma}$ is connected. Indeed, if it is not, then we can choose some some $i \in \rmI_g$ with $g \in G \smallsetminus X$, and we can suppose every connected component of $M_{\varGamma}$ contains an edge colored with $V_i$. This is achieved by performing projective or generic stabilizations, which, thanks to Lemmas \ref{L:skein_equivalence} and \ref{L:surgery_axioms}, do not alter $[M_{\varGamma},T,\omega,n]$. Thus, thanks to Lemma \ref{L:surgery_axioms}, a finite sequence of $i$-colored index $1$ surgeries on $(M_{\varGamma},T,\omega,n)$ connecting its components determines a vector of $\Hom_{\bfA_{\calC}(\bbGamma)}(\bbSigma_{\bbGamma},\bbSigma''_{\bbGamma})$ which is a non-zero scalar multiple of $[M_{\varGamma},T,\omega,n]$. Then, since we are assuming $M_{\varGamma}$ is connected, we know that, up to performing projective or generic stabilizations, there exists a computable surgery presentation $L = L_1 \cup \ldots \cup L_{\ell} \subset M$ of $M_{\varGamma}$ with respect to $(T,\omega)$. This means that, thanks to Lemma \ref{L:surgery_axioms}, there exists an integer $n_L \in \Z$ such that 
 \begin{align*}
  [M_{\varGamma},T,\omega,n] &= \calD^{-\ell} \cdot 
  [M,L_{\omega} \cup T,\omega,n_L] \\
  &= \calD^{-\ell} \delta^{n_L} \cdot 
  [M,L_{\omega} \cup T,\omega,0]. \qedhere
 \end{align*}
\end{proof}

Here is an important remark: at this point in the construction, universal linear categories divert fom dual ones. Indeed, using the terminology introduced in Section \ref{S:admissible_cobordisms}, admissibility is equivalent to strong admissibility for all objects and morphisms of universal linear categories, while this is not true in the dual case, where the two conditions differ. This discrepancy is not visible in Lemmas \ref{L:skein_equivalence} and \ref{L:surgery_axioms}, which can be directly translated, together with their proofs, into analogous results for dual universal linear categories. On the other hand, in order to translate Lemma \ref{L:connection_lemma}, we need stronger hypotheses.

\begin{lemma}\label{L:connection_lemma_prime}
 If $\bbSigma'_{\bbGamma} = (\varSigma'_{\varGamma},P',\vartheta',\calL')$ and $\bbSigma'''_{\bbGamma} = (\varSigma'''_{\varGamma},P''',\vartheta''',\calL''')$ are strong\-ly ad\-mis\-si\-ble ob\-jects of $\bfA'_{\calC}(\bbGamma)$, and if $M'$ is a non-emp\-ty connected $3$-di\-men\-sion\-al cobordism with corners from $\varSigma'_{\varGamma}$ to $\varSigma'''_{\varGamma}$, then the linear map 
 \[
  \begin{array}{rccc}
   \rho'_{M'} : & \adSk(M';\bbSigma'_{\bbGamma}, \bbSigma'''_{\bbGamma}) & 
   \rightarrow & \Hom_{\bfA'_{\calC}(\bbGamma)}(\bbSigma'_{\bbGamma},\bbSigma'''_{\bbGamma}) \\
   & (T',\omega') & \mapsto & [M',T',\omega',0]
  \end{array}
 \]
 is surjective.
\end{lemma}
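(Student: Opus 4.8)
The plan is to mirror the proof of Lemma \ref{L:connection_lemma} as closely as possible, paying attention to the single place where the argument uses that admissibility is equivalent to strong admissibility for objects of $\bfA_{\calC}(\bbGamma)$, which fails in the dual setting. So, given a morphism $[M'_{\varGamma},T',\omega',n']$ of $\Hom_{\bfA'_{\calC}(\bbGamma)}(\bbSigma'_{\bbGamma},\bbSigma'''_{\bbGamma})$, I would first reduce to the case where $M'_{\varGamma}$ is connected: fix some $i \in \rmI_g$ with $g \in G \smallsetminus X$, use projective or generic stabilizations (which do not alter the class by the dual versions of Lemmas \ref{L:skein_equivalence} and \ref{L:surgery_axioms}) to ensure every connected component of $M'_{\varGamma}$ carries an edge colored by $V_i$, and then apply a sequence of $i$-colored index $1$ surgeries to connect the components, again invoking the dual surgery lemma to see the result is a non-zero scalar multiple of $[M'_{\varGamma},T',\omega',n']$.

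Once $M'_{\varGamma}$ is connected, the key point is that it admits a computable surgery presentation $L = L_1 \cup \ldots \cup L_{\ell} \subset M'$ with respect to some $(\calC,G)$-coloring obtained from $(T',\omega')$ by projective or generic stabilization. This is exactly where strong admissibility of $\bbSigma'_{\bbGamma}$ and $\bbSigma'''_{\bbGamma}$ enters: in the universal (non-dual) case, connected components of a representing cobordism touching the incoming boundary automatically inherit admissible decorations from the source object since admissibility there is tested on all components, but in the dual case the source $1$-morphism decorations need to be strongly admissible in order to guarantee that, after gluing, the resulting closed-off cobordism used to build a computable presentation is strongly admissible — which is the condition required by Proposition \ref{P:CGP_for_admissible_manifolds} for $\CGP_{\calC}$ to be defined via a computable surgery presentation. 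With strong admissibility of the objects in hand, this obstruction disappears and the existence of the computable presentation follows just as in the proof of Lemma \ref{L:connection_lemma}.

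Finally, with a computable surgery presentation $L \subset M'$ of $M'_{\varGamma}$ with respect to $(T',\omega')$ in place, the dual version of Lemma \ref{L:surgery_axioms} gives an integer $n_L \in \Z$ with
\[
 [M'_{\varGamma},T',\omega',n'] = \calD^{-\ell} \cdot [M',L_{\omega'} \cup T',\omega',n_L] = \calD^{-\ell} \delta^{n_L} \cdot [M',L_{\omega'} \cup T',\omega',0],
\]
where the last equality uses the index $2$ surgery axiom (or, more directly, the effect of signature defect on the class) together with the invariance properties of $\CGP_{\calC}$. This exhibits $[M'_{\varGamma},T',\omega',n']$ as a scalar multiple of $\rho'_{M'}$ applied to the admissible $(\calC,G)$-coloring $(L_{\omega'} \cup T',\omega')$ of $M'$, proving surjectivity.

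I expect the main obstacle to be purely bookkeeping: making sure that at every stabilization and surgery step the decorations produced remain \emph{strongly} admissible (not merely admissible) as $(\calC,G)$-colorings of the dual cobordism, so that the dual analogues of Lemmas \ref{L:skein_equivalence} and \ref{L:surgery_axioms} genuinely apply and so that the closed-off $3$-manifolds against which $\CGP_{\calC}$ is evaluated satisfy the hypotheses of Proposition \ref{P:CGP_for_admissible_manifolds}. Once the role of the strong admissibility hypothesis on $\bbSigma'_{\bbGamma}$ and $\bbSigma'''_{\bbGamma}$ is correctly identified and used to propagate strong admissibility through the connection and surgery-presentation steps, the rest of the argument is a verbatim translation of the proof of Lemma \ref{L:connection_lemma}.
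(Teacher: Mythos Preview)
Your proposal is correct and follows the paper's approach exactly; the paper simply remarks that the stronger hypotheses allow for the exact same proof as Lemma~\ref{L:connection_lemma}. One minor imprecision worth noting: strong admissibility of $\bbSigma'_{\bbGamma}$ is already needed in your first step, not only at the surgery-presentation stage. In order to perform a projective or generic stabilization on a connected component of $M'_{\varGamma}$, that component must already carry a projective edge or a generic curve; components disjoint from the incoming horizontal boundary are covered by admissibility of the $2$-morphism, but components meeting $\bbSigma'_{\bbGamma}$ inherit the needed decoration precisely from strong admissibility of $\bbSigma'_{\bbGamma}$. This is the same mechanism you invoke later, so the argument goes through unchanged once you move the observation earlier. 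Also, the paper notes explicitly that the dual analogues of Lemmas~\ref{L:skein_equivalence} and~\ref{L:surgery_axioms} hold without any strong admissibility hypothesis, so your worry about preserving strong admissibility for those lemmas to apply is unnecessary.
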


Remark that the stronger hypotheses of Lemma \ref{L:connection_lemma_prime} allow for the exact same proof of Lemma \ref{L:connection_lemma}.
Now, if $\bbGamma = (\varGamma,\xi)$ and $\bbGamma' = (\varGamma',\xi')$ are objects of $\bfadCob_{\calC}$, and if $\varSigma$ is a $2$-dimensional cobordism from $\varGamma$ to $\varGamma'$, then we denote with $\adSk(\varSigma;\bbGamma,\bbGamma')$ the set of all admissible $(\calC,G)$-colorings of $\varSigma$ relative to $\xi$ and $\xi'$. For the next statement, we need the notion of dominating set as given in Section \ref{S:relative_pre-modular_categories}.

\begin{lemma}\label{L:Morita_reduction}
 If $\bbGamma = (\varGamma,\xi)$ is an object of $\bfadCob_{\calC}$, if $\varSigma$ is a non-empty $2$-di\-men\-sion\-al cobordism from $\varnothing$ to $\varGamma$, and if $\calL \subset H_1(\varSigma;\R)$ is a Lagrangian, then the set of objects
 \[
  D(\varSigma;\bbGamma) := \left\{ \bbSigma_{(P,\vartheta)} := (\varSigma,P,\vartheta,\calL) \in \bfA_{\calC}(\bbGamma) \bigm| (P,\vartheta) \in \adSk(\varSigma;\varnothing,\bbGamma) \right\}
 \]
 dominates $\bfA_{\calC}(\bbGamma)$.
\end{lemma}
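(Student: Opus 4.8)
The goal is to show that the set $D(\varSigma;\bbGamma)$ of objects obtained by decorating the fixed non-empty $2$-dimensional cobordism $\varSigma$ with all possible admissible $(\calC,G)$-colorings (and the fixed Lagrangian $\calL$) dominates $\bfA_{\calC}(\bbGamma)$. Concretely, given an arbitrary object $\bbSigma_{\bbGamma} = (\varSigma_{\varGamma},P,\vartheta,\calL_{\varGamma})$ of $\bfA_{\calC}(\bbGamma)$, one must produce finitely many $\bbSigma_{(P_j,\vartheta_j)} \in D(\varSigma;\bbGamma)$ together with morphisms $r_j \in \Hom_{\bfA_{\calC}(\bbGamma)}(\bbSigma_{(P_j,\vartheta_j)},\bbSigma_{\bbGamma})$ and $s_j \in \Hom_{\bfA_{\calC}(\bbGamma)}(\bbSigma_{\bbGamma},\bbSigma_{(P_j,\vartheta_j)})$ with $\id_{\bbSigma_{\bbGamma}} = \sum_j r_j \circ s_j$ in $\bfA_{\calC}(\bbGamma)$.

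The plan is to build an explicit $3$-dimensional cobordism with corners $W$ from $\varSigma$ to $\varSigma_{\varGamma}$ (the identity on $\bbGamma$ on the two vertical boundary pieces) which realizes a single morphism $\bbSigma_{(P,\vartheta)} \Rightarrow \bbSigma_{\bbGamma}$ up to decoration, and its mirror $\overline{W}$ going back, so that $\overline{W} \ast W$ is vertically a $3$-manifold filling $\varSigma$, and $W \ast \overline{W}$ fills $\varSigma_{\varGamma}$. Then I would use the identity $2$-morphism $\id_{\bbSigma_{\bbGamma}}$ represented by $\varSigma_{\varGamma} \times I$: by gluing in $W$ and $\overline{W}$ and then choosing a computable surgery presentation together with the appropriate stabilizations, one rewrites $[\varSigma_{\varGamma} \times I, \dots]$ as a finite linear combination of composites of $2$-morphisms each of which factors through $\bbSigma$ with some admissible $(P_j,\vartheta_j)$. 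This is exactly the $2$-categorical analogue of the Morita-type reduction of the universal construction: the surjectivity statement of Lemma \ref{L:connection_lemma} (applied to the connected cobordism $W$, and noting that for objects of $\bfA_{\calC}(\bbGamma)$ admissibility and strong admissibility coincide) tells us that \emph{every} morphism into or out of $\bbSigma_{\bbGamma}$ is a linear combination of decorations of $W$ respectively $\overline{W}$. So the $r_j$ and $s_j$ can be taken supported on $W$ and $\overline{W}$ with admissible colorings; only the question of which finite collection suffices, and why $\sum_j r_j \circ s_j = \id$, needs the surgery axioms of Lemma \ref{L:surgery_axioms} and skein equivalence of Lemma \ref{L:skein_equivalence}.

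The key steps, in order, would be: (1) Fix a non-empty connected $3$-dimensional cobordism with corners $W$ from $\varSigma$ to $\varSigma_{\varGamma}$ restricting to $I \times \varGamma$ on the two vertical boundary components — such a $W$ exists because $\varSigma$ and $\varSigma_{\varGamma}$ are both cobordisms from $\varnothing$ to $\varGamma$, so their difference is a closed-up cobordism which can be taken connected, and then tube it to make it connected and attach the trivial cylinder over $\varGamma$. (2) Observe $W \ast \overline{W}$ is a $3$-dimensional cobordism with corners from $\varSigma_{\varGamma}$ to $\varSigma_{\varGamma}$ which is obtained from the trivial cobordism $\varSigma_{\varGamma} \times I$ by a sequence of index $1$ and index $2$ surgeries (handle cancellation); this is where Lemma \ref{L:surgery_axioms} enters, allowing us to pass between $[\varSigma_{\varGamma} \times I, \dots]$ and $[W \ast \overline{W}, \dots]$ up to an explicit non-zero scalar and up to stabilizations which by Lemma \ref{L:skein_equivalence} do not change the morphism. (3) By Lemma \ref{L:connection_lemma} applied to $W$, the image $\rho_W$ of $\adSk(W;\bbSigma_{(P,\vartheta)},\bbSigma_{\bbGamma})$ is all of $\Hom_{\bfA_{\calC}(\bbGamma)}(\bbSigma_{(P,\vartheta)},\bbSigma_{\bbGamma})$; in particular, after the surgery rewriting of step (2), the composite sitting over $W \ast \overline{W}$ that equals a scalar multiple of $\id_{\bbSigma_{\bbGamma}}$ is a finite linear combination $\sum_j \alpha_j [W,T_j,\omega_j,0] \ast [\overline{W},T'_j,\omega'_j,0]$, and each factor $[W,T_j,\omega_j,0]$ (resp. $[\overline{W},T'_j,\omega'_j,0]$) is a morphism $\bbSigma_{(P_j,\vartheta_j)} \Rightarrow \bbSigma_{\bbGamma}$ (resp. $\bbSigma_{\bbGamma} \Rightarrow \bbSigma_{(P_j,\vartheta_j)}$) in $\bfA_{\calC}(\bbGamma)$ with $(P_j,\vartheta_j)$ the induced admissible coloring on the copy of $\varSigma$ inside $W\ast\overline{W}$; absorb the scalar into one of the two families. (4) Conclude $\id_{\bbSigma_{\bbGamma}} = \sum_j r_j \circ s_j$ with $r_j, s_j$ as required and $\bbSigma_{(P_j,\vartheta_j)} \in D(\varSigma;\bbGamma)$, which is the definition of domination.

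I expect the main obstacle to be step (2): carefully justifying that $W \ast \overline{W}$ (or rather a suitable stabilization of it) is connected to $\varSigma_{\varGamma}\times I$ by a sequence of the \emph{decorated} surgeries of index $0,1,2$ for which Lemma \ref{L:surgery_axioms} supplies the scalars $\lambda_{i,j}$, so that the cancelling handle pairs really do produce only non-zero scalars and admissible colorings throughout, and that the curves and edges along which one stabilizes can always be chosen of sufficiently generic index (using that $X$ is small). One must also handle the bookkeeping of the Lagrangian $\calL$ and the Maslov-index corrections in the vertical compositions — but since all gluings in the relevant composites are along surfaces obtained from $\varGamma \times I$ or along $\varSigma_{\varGamma}$, and the correction terms only affect the signature defect $n$ (hence only an invertible scalar $\delta^{n}$ after applying $\CGP_{\calC}$), they are harmless. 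A subtlety worth flagging explicitly is that $D(\varSigma;\bbGamma)$ uses the \emph{fixed} Lagrangian $\calL$ on every object, whereas the colorings $(P_j,\vartheta_j)$ arising in step (3) come with whatever Lagrangian $W\ast\overline{W}$ induces on the internal copy of $\varSigma$; one absorbs this by noting that changing the Lagrangian of an object changes the $2$-morphism spaces only by an overall shift of the signature defect, hence yields isomorphic objects in $\bfA_{\calC}(\bbGamma)$, so we may replace each $\bbSigma_{(P_j,\vartheta_j)}$ by its counterpart with Lagrangian $\calL$ at the cost of post- and pre-composing $r_j$ and $s_j$ with the corresponding isomorphisms.
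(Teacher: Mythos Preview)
Your overall plan is correct and lands on the right factorization argument, but you are working much harder than necessary. The paper's proof is a two-line application of Lemma~\ref{L:connection_lemma} that bypasses your step~(2) entirely.

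Here is the simplification: pick non-empty connected $3$-dimensional cobordisms with corners $M$ from $\varSigma_{\varGamma}$ to $\varSigma$ and $M'$ from $\varSigma$ back to $\varSigma_{\varGamma}$ (in your notation, $M = \overline{W}$ and $M' = W$). Then $M \cup_{\varSigma} M'$ is a non-empty connected cobordism from $\varSigma_{\varGamma}$ to $\varSigma_{\varGamma}$, so Lemma~\ref{L:connection_lemma} applies \emph{directly to this composite}: the identity $[\id_{\bbSigma_{\bbGamma}}]$ lies in the image of $\rho_{M \cup_{\varSigma} M'}$, hence is a finite linear combination of decorations $[M \cup_{\varSigma} M', T_i, \omega_i, 0]$. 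Up to isotopy each $(T_i,\omega_i)$ restricts to an admissible $(\calC,G)$-coloring $(P_i,\vartheta_i)$ on the middle slice $\varSigma$, so each term factors as $[(M',T'_i,\omega'_i,0)] \ast [(M,T_i,\omega_i,0)]$ through $\bbSigma_{(P_i,\vartheta_i)} \in D(\varSigma;\bbGamma)$, and you are done.

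Your step~(2) --- relating $W \ast \overline{W}$ to $\varSigma_{\varGamma} \times I$ by explicit decorated surgeries --- is precisely what the \emph{proof} of Lemma~\ref{L:connection_lemma} already does internally (via projective/generic stabilizations and surgery axioms). By invoking the connection lemma on the composite rather than on $W$ alone, you get this for free and the ``main obstacle'' you flag evaporates. Your remarks on the Lagrangian and Maslov corrections are correct but, as you note, harmless: the signature defects can be absorbed into the $r_j$ and $s_j$.
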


\begin{proof}
 If $\bbSigma_{\bbGamma} = (\varSigma_{\varGamma},P,\vartheta,\calL)$ and $\bbSigma''_{\bbGamma} = (\varSigma''_{\varGamma},P'',\vartheta'',\calL'')$ are objects of $\bfA_{\calC}(\bbGamma)$, if $M$ is a non-empty connected $3$-di\-men\-sion\-al cobordism with corners from $\varSigma_{\varGamma}$ to $\varSigma$, and if $M'$ is a non-empty connected $3$-di\-men\-sion\-al cobordism with corners from $\varSigma$ to $\varSigma''_{\varGamma}$, then, thanks to Lemma \ref{L:connection_lemma}, every morphism $[\bbM_{\bbGamma}]$ of $\Hom_{\bfA_{\calC}(\bbGamma)}(\bbSigma_{\bbGamma},\bbSigma''_{\bbGamma})$ is the image of some vector of ${\adSk(M \cup_{\varSigma} M';\bbSigma_{\bbGamma},\bbSigma''_{\bbGamma})}$. Up to isotopy, every $(\calC,G)$-coloring of $M \cup_{\varSigma} M'$ determines a $(\calC,G)$-coloring of $\varSigma$, which is a vector of $\adSk(\varSigma;\varnothing,\bbGamma)$. This means we have
 \[
  \left[ \bbM_{\bbGamma} \right] = \sum_{i=1}^m \alpha_i \left[ (M',T'_i,\omega'_i,0) \ast (M,T_i,\omega_i,0) \right],
 \]
 where $(T_i,\omega_i) \in \adSk(M;\bbSigma_{\bbGamma},\bbSigma_{(P_i,\vartheta_i)})$ and ${(T'_i,\omega'_i) \in \adSk(M';\bbSigma_{(P_i,\vartheta_i)},\bbSigma''_{\bbGamma})}$ for every integer $1 \leqslant i \leqslant m$ and for some $(P_1,\vartheta_1),\ldots,(P_m,\vartheta_m) \in \adSk(\varSigma;\varnothing,\bbGamma)$, 
\end{proof}

Remark that Lemma \ref{L:Morita_reduction} does not extend to the dual case, because morphisms of dual universal linear categories are not in general strongly admissible.
%
%
%
%

\begin{lemma}\label{L:triviality_lemma}
 If $\bbSigma_{\bbGamma} = (\varSigma_{\varGamma},P,\vartheta,\calL)$ and $\bbSigma''_{\bbGamma} = (\varSigma''_{\varGamma},P'',\vartheta'',\calL'')$ are objects of $\bfA_{\calC}(\bbGamma)$ for some object $\bbGamma = (\varGamma,\xi)$ of $\bfadCob_{\calC}$, if $\varSigma'$ is a non-empty $2$-di\-men\-sion\-al cobordism from $\varGamma$ to $\varnothing$, if $\calL' \subset H_1(\varSigma';\R)$ is a Lagrangian, if $M$ is a connected $3$-di\-men\-sion\-al cobordism from $\varnothing$ to $\Sigma_{\varGamma} \cup_{\varGamma} \Sigma'$, and if $M'$ is a connected $3$-di\-men\-sion\-al cobordism from $\Sigma''_{\varGamma} \cup_{\varGamma} \Sigma'$ to $\varnothing$, then a linear combination
 \[
  \sum_{i=1}^m \alpha_i \cdot [ \bbM_{\bbGamma,i} ] \in \Hom_{\bfA_{\calC}(\bbGamma)}(\bbSigma_{\bbGamma},\bbSigma''_{\bbGamma})
 \]
 is trivial if and only if
 \[
  \sum_{i=1}^m \alpha_i \CGP_{\calC} \left( (M',T',\omega',0) \ast \left( \id_{\bbSigma'_{(P',\vartheta')}}  \circ \bbM_{\bbGamma,i} \right) \ast (M,T,\omega,0) \right) = 0
 \]
 for all $(P',\vartheta') \in \adSk(\varSigma';\bbGamma,\varnothing)$, for all $(T,\omega) \in \adSk(M;\id_{\varnothing},\bbSigma'_{(P',\vartheta')} \circ \bbSigma_{\bbGamma})$, and for all $(T',\omega') \in \adSk(M';\bbSigma'_{(P',\vartheta')} \circ \bbSigma''_{\bbGamma},\id_{\varnothing})$.
\end{lemma}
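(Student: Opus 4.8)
The plan is to reduce Lemma~\ref{L:triviality_lemma} to the definition of the universal linear category, using the surjectivity results of Lemmas~\ref{L:connection_lemma} and~\ref{L:connection_lemma_prime} together with the domination statement of Lemma~\ref{L:Morita_reduction}. Recall that a linear combination $\sum_{i=1}^m \alpha_i \cdot [\bbM_{\bbGamma,i}]$ is trivial in $\Hom_{\bfA_{\calC}(\bbGamma)}(\bbSigma_{\bbGamma},\bbSigma''_{\bbGamma})$ precisely when, for every object $\bbSigma'_{\bbGamma} \in \bfA'_{\calC}(\bbGamma)$, the linear map $\sum_{i=1}^m \alpha_i \cdot \rmV_{\calC}(\id_{\bbSigma'_{\bbGamma}} \circ \bbM_{\bbGamma,i})$ vanishes; and this in turn means that for all $[\bbM_{\bbSigma'_{\bbGamma} \circ \bbSigma_{\bbGamma}}] \in \rmV_{\calC}(\bbSigma'_{\bbGamma} \circ \bbSigma_{\bbGamma})$ and all $[\bbM'_{\bbSigma'_{\bbGamma} \circ \bbSigma''_{\bbGamma}}] \in \rmV'_{\calC}(\bbSigma'_{\bbGamma} \circ \bbSigma''_{\bbGamma})$ the scalar $\sum_{i=1}^m \alpha_i \CGP_{\calC}(\bbM'_{\bbSigma'_{\bbGamma} \circ \bbSigma''_{\bbGamma}} \ast (\id_{\bbSigma'_{\bbGamma}} \circ \bbM_{\bbGamma,i}) \ast \bbM_{\bbSigma'_{\bbGamma} \circ \bbSigma_{\bbGamma}})$ is zero. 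The forward implication ("trivial $\Rightarrow$ vanishing of the listed invariants") is then immediate: given $(P',\vartheta') \in \adSk(\varSigma';\bbGamma,\varnothing)$ we take $\bbSigma'_{\bbGamma} := \bbSigma'_{(P',\vartheta')}$, and given $(T,\omega)$ and $(T',\omega')$ as in the statement we take $\bbM_{\bbSigma'_{\bbGamma} \circ \bbSigma_{\bbGamma}} := [M,T,\omega,0]$ and $\bbM'_{\bbSigma'_{\bbGamma} \circ \bbSigma''_{\bbGamma}} := [M',T',\omega',0]$ (noting these are indeed $2$-morphisms of $\bfadCob_{\calC}$ with the correct source and target, since $M$ goes from $\varnothing$ to $\varSigma_{\varGamma} \cup_{\varGamma} \varSigma'$ and $\varSigma'_{(P',\vartheta')} \circ \bbSigma_{\bbGamma}$ has underlying surface $\varSigma_{\varGamma} \cup_{\varGamma} \varSigma'$, and similarly for $M'$). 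This realizes the displayed sum of $\CGP_{\calC}$-values as one of the expressions that vanishes by triviality.

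The substantive direction is the converse. Here I would argue as follows. Suppose the displayed sum of $\CGP_{\calC}$-invariants vanishes for all the indicated data; I must show $\sum_i \alpha_i \cdot [\bbM_{\bbGamma,i}] = 0$ in $\Hom_{\bfA_{\calC}(\bbGamma)}(\bbSigma_{\bbGamma},\bbSigma''_{\bbGamma})$. By the characterization above it suffices to fix an arbitrary object $\bbSigma'_{\bbGamma} \in \bfA'_{\calC}(\bbGamma)$ and arbitrary vectors $[\bbM_{\bbSigma'_{\bbGamma} \circ \bbSigma_{\bbGamma}}]$ and $[\bbM'_{\bbSigma'_{\bbGamma} \circ \bbSigma''_{\bbGamma}}]$ and show the corresponding $\CGP_{\calC}$-sum vanishes. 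First, using Lemma~\ref{L:Morita_reduction} applied to the fixed non-empty cobordism $\varSigma'$ from $\varGamma$ to $\varnothing$ (read appropriately, i.e. as a cobordism from $\varnothing$ to $\overline{\varGamma}$ in the dual category, or by the dual version of the domination lemma applied in $\bfA'_{\calC}(\bbGamma)$), every object of $\bfA'_{\calC}(\bbGamma)$ — and in particular $\bbSigma'_{\bbGamma}$ — is dominated by objects of the form $\bbSigma'_{(P',\vartheta')}$ with $(P',\vartheta') \in \adSk(\varSigma';\bbGamma,\varnothing)$. Here a small point of care is needed: Lemma~\ref{L:Morita_reduction} as stated concerns $\bfA_{\calC}$, not $\bfA'_{\calC}$, and the text warns that domination does not transfer verbatim to the dual case because dual morphisms are not strongly admissible. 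However, for the present purpose one only needs that the \emph{specific} dual object $\bbSigma'_{\bbGamma}$ arising here can be replaced, up to the domination relation, by strongly admissible objects supported on $\varSigma'$; since domination of $[\bbM_{\bbGamma,i}]$ is insensitive to replacing $\bbSigma'_{\bbGamma}$ by a retract summand and since $\bfA'_{\calC}(\bbGamma)$ is built from admissible cobordisms, this reduction goes through — but I flag verifying it as the point requiring the most attention.

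Having reduced to $\bbSigma'_{\bbGamma} = \bbSigma'_{(P',\vartheta')}$, I next unpack the vectors. The object $\bbSigma'_{(P',\vartheta')} \circ \bbSigma_{\bbGamma}$ of $\bfA_{\calC}(\varnothing)$ is (the equivalence class of) a closed $1$-morphism with underlying surface $\varSigma_{\varGamma} \cup_{\varGamma} \varSigma'$, and by the final remark of Chapter~\ref{Ch:extenstion_of_CGP_invariants} we have $\rmV_{\calC}(\bbSigma'_{(P',\vartheta')} \circ \bbSigma_{\bbGamma}) = \Hom_{\bfA_{\calC}(\varnothing)}(\id_{\varnothing}, \bbSigma'_{(P',\vartheta')} \circ \bbSigma_{\bbGamma})$, so by Lemma~\ref{L:connection_lemma} applied to the fixed non-empty connected $M$ from $\varnothing$ to $\varSigma_{\varGamma} \cup_{\varGamma} \varSigma'$ every vector $[\bbM_{\bbSigma'_{\bbGamma} \circ \bbSigma_{\bbGamma}}]$ is a linear combination of classes $[M,T,\omega,0]$ with $(T,\omega) \in \adSk(M; \id_{\varnothing}, \bbSigma'_{(P',\vartheta')} \circ \bbSigma_{\bbGamma})$. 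Dually, via $\rmV'_{\calC}(\bbSigma'_{(P',\vartheta')} \circ \bbSigma''_{\bbGamma}) = \Hom_{\bfA'_{\calC}(\varnothing)}(\bbSigma'_{(P',\vartheta')} \circ \bbSigma''_{\bbGamma}, \id_{\varnothing})$ and Lemma~\ref{L:connection_lemma_prime} applied to the fixed connected $M'$ from $\varSigma''_{\varGamma} \cup_{\varGamma} \varSigma'$ to $\varnothing$ (whose strong-admissibility hypotheses are met since $\id_{\varnothing}$ and the relevant composite are strongly admissible), every vector $[\bbM'_{\bbSigma'_{\bbGamma} \circ \bbSigma''_{\bbGamma}}]$ is a linear combination of classes $[M',T',\omega',0]$ with $(T',\omega') \in \adSk(M';\bbSigma'_{(P',\vartheta')} \circ \bbSigma''_{\bbGamma}, \id_{\varnothing})$. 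By bilinearity of the $\CGP_{\calC}$-pairing in these two arguments, the general $\CGP_{\calC}$-sum attached to $(\bbSigma'_{\bbGamma}, [\bbM_{\bbSigma'_{\bbGamma}\circ\bbSigma_{\bbGamma}}], [\bbM'_{\bbSigma'_{\bbGamma}\circ\bbSigma''_{\bbGamma}}])$ is a linear combination of the displayed expressions $\sum_i \alpha_i \CGP_{\calC}((M',T',\omega',0) \ast (\id_{\bbSigma'_{(P',\vartheta')}} \circ \bbM_{\bbGamma,i}) \ast (M,T,\omega,0))$, each of which vanishes by hypothesis; hence the whole sum vanishes, and we are done. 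The one computational identity to check along the way is that $(M',T',\omega',0) \ast (\id_{\bbSigma'_{(P',\vartheta')}} \circ \bbM_{\bbGamma,i}) \ast (M,T,\omega,0)$ is exactly the composite $\bbM'_{\bbSigma'_{\bbGamma} \circ \bbSigma''_{\bbGamma}} \ast (\id_{\bbSigma'_{\bbGamma}} \circ \bbM_{\bbGamma,i}) \ast \bbM_{\bbSigma'_{\bbGamma} \circ \bbSigma_{\bbGamma}}$ that appears in the definition of triviality — this is an associativity/interchange bookkeeping identity of the kind already used at the end of Chapter~\ref{Ch:extenstion_of_CGP_invariants}.
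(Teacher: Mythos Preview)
Your overall architecture is right, and the forward implication is fine. The gap is exactly the step you flagged yourself and then tried to wave away: you cannot use Lemma~\ref{L:Morita_reduction} (or any dual analogue) to dominate an arbitrary object $\bbSigma'_{\bbGamma}$ of $\bfA'_{\calC}(\bbGamma)$ by objects supported on $\varSigma'$. The paper is explicit about this immediately after that lemma: domination fails in the dual category because morphisms of $\bfA'_{\calC}(\bbGamma)$ are not strongly admissible, so the connection lemma you would need to produce the dominating morphisms is unavailable. Your sentence ``since domination of $[\bbM_{\bbGamma,i}]$ is insensitive to replacing $\bbSigma'_{\bbGamma}$ by a retract summand'' presupposes that such a retraction exists in $\bfA'_{\calC}(\bbGamma)$, which is precisely what is not established.

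The paper's route around this obstacle is different from yours and is the missing idea. Rather than reducing the \emph{object} $\bbSigma'_{\bbGamma}$, the paper works with the \emph{identity morphism} $\id_{\bbSigma'_{\bbGamma}}$. First, by projective or generic stabilization on $\bbM_{\bbSigma'_{\bbGamma}\circ\bbSigma_{\bbGamma}}$, one arranges that this bottom piece carries a projective edge in every connected component; these edges can then be isotoped up through every component of the cylinder $\id_{\bbSigma'_{\bbGamma}}$, which forces $\id_{\bbSigma'_{\bbGamma}}$ to become \emph{strongly} admissible. Now Lemma~\ref{L:connection_lemma_prime} applies (its hypotheses are met) to the connected cobordism $M'' \cup_{\varSigma'} M'''$ with the given $\varSigma'$ as the middle slice, yielding a decomposition
\[
 \left[\id_{\bbSigma'_{\bbGamma}}\right] \;=\; \sum_{i'} \alpha'_{i'}\,\left[(M''',T'''_{i'},\omega'''_{i'},0)\ast(M'',T''_{i'},\omega''_{i'},0)\right]
\]
factoring through objects $\bbSigma'_{(P'_{i'},\vartheta'_{i'})}$ supported on $\varSigma'$. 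Only after this step does one invoke Lemma~\ref{L:connection_lemma} on the two resulting halves
\[
 \left[(M'',T''_{i'},\omega''_{i'},0)\circ\id_{\bbSigma_{\bbGamma}}\right]\ast\bbM_{\bbSigma'_{\bbGamma}\circ\bbSigma_{\bbGamma}}
 \quad\text{and}\quad
 \bbM'_{\bbSigma'_{\bbGamma}\circ\bbSigma''_{\bbGamma}}\ast\left[(M''',T'''_{i'},\omega'''_{i'},0)\circ\id_{\bbSigma''_{\bbGamma}}\right]
\]
to land on the fixed connected $M$ and $M'$. In short: the reduction to $\varSigma'$ happens at the level of decomposing $\id_{\bbSigma'_{\bbGamma}}$ after forcing strong admissibility by borrowing projective edges from the test vector, not by dominating $\bbSigma'_{\bbGamma}$ as an object.
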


\begin{proof}
 The morphism
 \[
  \sum_{i=1}^m \alpha_i \cdot [ \bbM_{\bbGamma,i} ] \in \Hom_{\bfA_{\calC}(\bbGamma)}(\bbSigma_{\bbGamma},\bbSigma''_{\bbGamma})
 \]
 is zero if and only if the linear map
 \[
  \sum_{i=1}^m \alpha_i \cdot \rmV_{\calC} \left( \id_{\bbSigma'_{\bbGamma}} \circ \bbM_{\bbGamma,i} \right) 
 \]
 is zero for every object $\bbSigma'_{\bbGamma} \in \bfA'_{\calC}(\bbGamma)$. This happens if and only if the invariant
 \[
  \sum_{i=1}^m \alpha_i \CGP_{\calC} \left( \bbM'_{\bbSigma'_{\bbGamma} \circ \bbSigma''_{\bbGamma}} \ast \left( \id_{\bbSigma'_{\bbGamma}} \circ \bbM_{\bbGamma,i} \right) \ast \bbM_{\bbSigma'_{\bbGamma} \circ \bbSigma_{\bbGamma}} \right)
 \]
 is zero for all vectors $[\bbM_{\bbSigma'_{\bbGamma} \circ \bbSigma_{\bbGamma}}] \in \rmV_{\calC}(\bbSigma'_{\bbGamma} \circ \bbSigma_{\bbGamma})$ and $[\bbM'_{\bbSigma'_{\bbGamma} \circ \bbSigma''_{\bbGamma}}] \in \rmV'_{\calC}(\bbSigma'_{\bbGamma} \circ \bbSigma''_{\bbGamma})$. Up to performing projective or generic stabilizations, we can suppose the $\calC$-colored ribbon graph of $\bbM_{\bbSigma'_{\bbGamma} \circ \bbSigma_{\bbGamma}}$ contains a projective edge in every connected component of the support of $\bbM_{\bbSigma'_{\bbGamma} \circ \bbSigma_{\bbGamma}}$. Therefore, up to isotoping these projective edges through all connected components of the support of $\id_{\bbSigma'_{\bbGamma}}$, we can suppose $\id_{\bbSigma'_{\bbGamma}}$ is strongly admissible. Then, thanks to Lemma \ref{L:connection_lemma_prime}, we know $[\id_{\bbSigma'_{\bbGamma}}]$ is the image of a vector of $\adSk(M'' \cup_{\varSigma'} M''';\bbSigma'_{\bbGamma},\bbSigma'_{\bbGamma})$ for some non-empty connected $3$-di\-men\-sion\-al cobordisms with corners $M''$ from $\varSigma'_{\varGamma}$ to $\varSigma'$ and $M'''$ from $\varSigma'$ to $\varSigma'''_{\varGamma}$. Up to isotopy, every $(\calC,G)$-coloring of $M'' \cup_{\varSigma'} M'''$ determines a $(\calC,G)$-coloring of $\varSigma'$, which is a vector of $\adSk(\varSigma';\bbGamma,\varnothing)$. This means we have
 \[
  \left[ \id_{\bbSigma'_{\bbGamma}} \right] = \sum_{i'=1}^{m'} \alpha'_{i'} \left[ (M''',T'''_{i'},\omega'''_{i'},0) \ast (M'',T''_{i'},\omega''_{i'},0) \right],
 \]
 where $(T''_{i'},\omega''_{i'}) \in \adSk(M'';\bbSigma'_{\bbGamma},\bbSigma'_{(P'_{i'},\vartheta'_{i'})})$ and $(T'''_{i'},\omega'''_{i'}) \in \adSk(M''';\bbSigma'_{(P'_{i'},\vartheta'_{i'})},\bbSigma'_{\bbGamma})$ for every integer $1 \leqslant i' \leqslant m'$ and for some $(P'_1,\vartheta'_1),\ldots,(P'_{m'},\vartheta'_{m'}) \in \adSk(\varSigma';\bbGamma,\varnothing)$. Now we can apply Lemma \ref{L:connection_lemma} to
 \[
  \left[ \left( (M'',T''_{i'},\omega''_{i'},0) \circ \id_{\bbSigma_{\bbGamma}} \right) \ast \bbM_{\bbSigma'_{\bbGamma} \circ \bbSigma_{\bbGamma}} \right]
 \]
 and to
 \[
  \left[ \bbM'_{\bbSigma'_{\bbGamma} \circ \bbSigma''_{\bbGamma}} \ast \left( (M''',T'''_{i'},\omega'''_{i'},0) \circ \id_{\bbSigma''_{\bbGamma}} \right) \right]
 \]
 for every integer $1 \leqslant i' \leqslant m'$ in order to conclude.
\end{proof}

We end this chapter with a final remark: many properties of morphism spaces of universal linear categories can be proved, with identical or easier arguments, for universal vector spaces. Let us spell out the analogues, in the context of the universal construction, of the main results we obtained so far.

\begin{lemma}\label{L:connection_lemma_TQFT}
 If $\bbSigma = (\varSigma,P,\vartheta,\calL)$ is an ob\-ject of $\rmadCob_{\calC}$, and if $M$ is a non-emp\-ty connected $3$-di\-men\-sion\-al cobordism with corners from $\varnothing$ to $\varSigma$, then the linear map 
 \[
  \begin{array}{rccc}
   \rho_{M} : & \adSk(M;\id_{\varnothing}, \bbSigma) & 
   \rightarrow & \rmV_{\calC}(\bbSigma) \\
   & (T,\omega) & \mapsto & [M,T,\omega,0]
  \end{array}
 \]
 is surjective.
\end{lemma}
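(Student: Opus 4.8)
The plan is to recognize Lemma \ref{L:connection_lemma_TQFT} as the ``single-object'' analogue of Lemma \ref{L:connection_lemma}, obtained by specializing the latter to the terminal object $\bbGamma = \varnothing$ of $\bfadCob_{\calC}$ and to the 1-morphisms $\bbSigma_{\bbGamma} = \id_{\varnothing}$ and $\bbSigma''_{\bbGamma} = \bbSigma$, but using the universal vector space $\rmV_{\calC}(\bbSigma)$ in place of a morphism space of a universal linear category. The final remark at the end of the chapter (and the identity $\bbV_{\calC}(\bbSigma) = \Hom_{\bbA_{\calC}(\varnothing)}(\id_{\varnothing},\bbSigma)$ established at the close of Chapter \ref{Ch:extenstion_of_CGP_invariants}) tells us this translation is legitimate, but I would also give the direct argument, which is in fact slightly easier.

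The key steps, in order: First, unwind the definitions. A vector of $\rmV_{\calC}(\bbSigma)$ is by Definition \ref{D:univ_vector_space} an element of $\calV(\bbSigma)/\calV'(\bbSigma)^{\perp}$, i.e.\ a class of the form $\sum_i \beta_i \cdot [\bbM_{\bbSigma,i}]$ for 2-morphisms $\bbM_{\bbSigma,i} : \id_{\varnothing} \Rightarrow \bbSigma$ of $\bfadCob_{\calC}$. So to prove surjectivity of $\rho_M$ I must show that any such class is a linear combination of classes of the form $[M,T,\omega,0]$ with $(T,\omega) \in \adSk(M;\id_{\varnothing},\bbSigma)$. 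Second, reduce a single generator $\bbM_{\bbSigma} = (M_0,T_0,\omega_0,n_0)$. By the same sequence of moves as in the proof of Lemma \ref{L:connection_lemma}: perform projective or generic stabilizations so that every connected component of $M_0$ carries an edge colored by a fixed simple projective $V_i$ of generic index $g$; by Lemmas \ref{L:skein_equivalence} and \ref{L:surgery_axioms} this does not change the class in $\rmV_{\calC}(\bbSigma)$ (these lemmas apply verbatim at the level of universal vector spaces, as noted in the final remark of the chapter — indeed the universal-construction versions are the special cases $\bbGamma = \varnothing$ of what is proved). Then apply a finite sequence of $i$-colored index $1$ surgeries, connecting the components of $M_0$ using Lemma \ref{L:surgery_axioms} again, so that the support becomes connected, at the cost of a nonzero scalar $\lambda$. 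Third, once the support is connected, invoke the existence of a computable surgery presentation $L = L_1 \cup \cdots \cup L_{\ell} \subset M$ of (the decorated, connected) $M_0$ with respect to $(T_0,\omega_0)$ — obtained, as in Proposition \ref{P:CGP_for_admissible_manifolds} and the discussion preceding it, up to further projective/generic stabilizations — so that by Lemma \ref{L:surgery_axioms} the class equals $\calD^{-\ell}\delta^{n_L} \cdot [M,L_{\omega_0}\cup T_0,\omega_0,0]$ for a suitable $n_L \in \Z$. This last expression is manifestly in the image of $\rho_M$, since $(L_{\omega_0}\cup T_0,\omega_0)$ is an admissible $(\calC,G)$-coloring of $M$ relative to $\id_{\varnothing}$ and $\bbSigma$.

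The one technical point that differs from Lemma \ref{L:connection_lemma} and deserves a sentence is that here the target $1$-morphism $\bbSigma$ need not already be strongly admissible, but because the incoming boundary is empty, admissibility \emph{is} strong admissibility for any decorated cobordism from $\id_{\varnothing}$ to $\bbSigma$ (this is the same observation made just after the proof of Lemma \ref{L:connection_lemma}), so all the stabilization and surgery moves are available without extra hypotheses — which is exactly why Lemma \ref{L:connection_lemma_TQFT} holds for arbitrary $\bbSigma$ rather than only strongly admissible ones. I do not anticipate a genuine obstacle: the proof is a near-transcription of Lemma \ref{L:connection_lemma}'s proof with $\bbGamma = \varnothing$ and the observation that the universal-construction analogues of the skein and surgery lemmas hold. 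The only thing to be careful about is bookkeeping of the signature-defect integer and the scalar factors $\calD^{-\ell}\delta^{n_L}$ and $\lambda$, but these are all nonzero and irrelevant to surjectivity. I would therefore write: ``The proof is completely analogous to the proof of Lemma \ref{L:connection_lemma}, specialized to $\bbGamma = \varnothing$,'' and then reproduce the two-move reduction (stabilize-and-connect, then surger along a computable presentation) in two or three lines.
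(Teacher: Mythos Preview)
Your proposal is correct and follows essentially the same approach as the paper: transcribe the proof of Lemma \ref{L:connection_lemma} (stabilize, connect components via $i$-colored index $1$ surgeries, then pass to a computable surgery presentation in $M$), noting that the only subtlety is that $\bbSigma$ need not be admissible. Your justification that admissibility coincides with strong admissibility for $2$-morphisms out of $\id_{\varnothing}$ is exactly the paper's point that ``every vector of $\rmV_{\calC}(\bbSigma)$ is represented by a linear combination of strongly admissible morphisms,'' and that the admissibility of sources and targets is irrelevant to Lemmas \ref{L:skein_equivalence} and \ref{L:surgery_axioms}.
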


\begin{proof}
 This statement is almost a special case of Lemma \ref{L:connection_lemma}, apart from the fact that objects of $\rmadCob_{\calC}$ need not be admissible. However, the exact same proof works in this case: indeed, every vector of $\rmV_{\calC}(\bbSigma)$ is represented by a linear combination of strongly admissible morphisms of $\rmadCob_{\calC}$, and the admissibility of sources and targets plays no role at all in the proofs of Lemmas \ref{L:skein_equivalence} and \ref{L:surgery_axioms}.
\end{proof}

\begin{lemma}\label{L:triviality_lemma_TQFT}
 If $\bbSigma = (\varSigma,P,\vartheta,\calL)$ is an admissible ob\-ject of $\rmadCob_{\calC}$, if $M'$ is a connected $3$-di\-men\-sion\-al cobordism from $\Sigma$ to $\varnothing$, then a linear combination
 \[
  \sum_{i=1}^m \alpha_i \cdot [ \bbM_{\bbSigma,i} ] \in \rmV_{\calC}(\bbSigma)
 \]
 is trivial if and only if
 \[
  \sum_{i=1}^m \alpha_i \CGP_{\calC} \left( (M',T',\omega',0) \ast \bbM_{\bbSigma,i} \right) = 0
 \]
 for all $(T',\omega') \in \adSk(M';\bbSigma,\id_{\varnothing})$.
\end{lemma}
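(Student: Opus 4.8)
The plan is to unwind the definition of $\rmV_{\calC}(\bbSigma)$ and then to invoke the dual analogue of Lemma~\ref{L:connection_lemma_TQFT}. By Definition~\ref{D:univ_vector_space}, writing $\rmV_{\calC}(\bbSigma) = \calV(\bbSigma)/\calV'(\bbSigma)^{\perp}$, the vector $\sum_{i=1}^m \alpha_i \cdot [\bbM_{\bbSigma,i}]$ is trivial precisely when $\sum_{i=1}^m \alpha_i \CGP_{\calC}(\bbM'_{\bbSigma} \ast \bbM_{\bbSigma,i}) = 0$ for every $2$-morphism $\bbM'_{\bbSigma} : \bbSigma \Rightarrow \id_{\varnothing}$ of $\rmadCob_{\calC}$, since these morphisms generate $\calV'(\bbSigma)$ and the pairing is $\langle \bbM'_{\bbSigma}, \bbM_{\bbSigma} \rangle_{\bbSigma} = \CGP_{\calC}(\bbM'_{\bbSigma} \ast \bbM_{\bbSigma})$. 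The ``only if'' direction is then immediate, because each $(T',\omega') \in \adSk(M';\bbSigma,\id_{\varnothing})$ yields the particular $2$-morphism $(M',T',\omega',0) : \bbSigma \Rightarrow \id_{\varnothing}$.

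For the converse, I would show that the decorations of the single fixed cobordism $M'$ already span $\rmV'_{\calC}(\bbSigma)$, i.e.\ that the linear map $(T',\omega') \mapsto [M',T',\omega',0]$ from $\adSk(M';\bbSigma,\id_{\varnothing})$ to $\rmV'_{\calC}(\bbSigma)$ is surjective. This is the exact analogue, for the universal construction, of Lemma~\ref{L:connection_lemma_prime}, and the key observation making it go through is that here admissibility coincides with strong admissibility: since $\bbSigma$ is an admissible object of $\rmadCob_{\calC}$ and its incoming boundary $\partial_-\varSigma$ is empty, $\bbSigma$ is strongly admissible, and therefore every admissible $2$-morphism $\bbM'_{\bbSigma} : \bbSigma \Rightarrow \id_{\varnothing}$ is strongly admissible as well --- each connected component of its support is either disjoint from $\varSigma$, hence carries a projective edge or a generic curve by admissibility, or contains a connected component of $\varSigma$, hence inherits a projective edge from a projective vertex of $P$, or a generic curve from a generic curve of $\vartheta$, by strong admissibility of $\bbSigma$. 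With this in hand the proof of Lemma~\ref{L:connection_lemma} applies verbatim: first connect the components of the support of $\bbM'_{\bbSigma}$ by index~$1$ surgeries, harmless by Lemma~\ref{L:surgery_axioms}; then, $M'$ being connected (hence non-empty), produce a computable surgery presentation of $\bbM'_{\bbSigma}$ inside $M'$ after suitable projective or generic stabilizations, so that Lemmas~\ref{L:skein_equivalence} and~\ref{L:surgery_axioms} give $[\bbM'_{\bbSigma}] = \sum_{i'} \beta_{i'} \cdot [M',T'_{i'},\omega'_{i'},0]$ in $\rmV'_{\calC}(\bbSigma)$ for suitable $\beta_{i'} \in \Bbbk$ and $(T'_{i'},\omega'_{i'}) \in \adSk(M';\bbSigma,\id_{\varnothing})$.

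Granting this, the converse follows formally: since $\langle \cdot,\cdot \rangle_{\bbSigma}$ descends to $\rmV'_{\calC}(\bbSigma) \otimes \rmV_{\calC}(\bbSigma)$, we get
\[
 \sum_{i=1}^m \alpha_i \CGP_{\calC}\left( \bbM'_{\bbSigma} \ast \bbM_{\bbSigma,i} \right) = \sum_{i'} \beta_{i'} \sum_{i=1}^m \alpha_i \CGP_{\calC}\left( (M',T'_{i'},\omega'_{i'},0) \ast \bbM_{\bbSigma,i} \right) = 0
\]
by hypothesis, and since $\bbM'_{\bbSigma}$ was arbitrary this shows $\sum_{i=1}^m \alpha_i \cdot [\bbM_{\bbSigma,i}] = 0$. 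I expect the main obstacle to be precisely the verification in the second paragraph that all the surgery and stabilization moves of Lemma~\ref{L:connection_lemma} can be carried out while staying inside the class of admissible $2$-morphisms of $\rmadCob_{\calC}$; once the coincidence of admissibility and strong admissibility is noted, this is routine, and no genuinely new computation is needed beyond Lemmas~\ref{L:skein_equivalence} and~\ref{L:surgery_axioms}.
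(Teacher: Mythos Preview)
Your proposal is correct and follows essentially the same approach as the paper: unwind the definition of $\rmV_{\calC}(\bbSigma)$ via the pairing, then reduce to decorations of the fixed $M'$ by a dual connection lemma. The paper's proof is terser---it simply invokes Lemma~\ref{L:connection_lemma} to claim $[\bbM'_{\bbSigma}]$ lies in the image of $\adSk(M';\bbSigma,\id_{\varnothing})$---while you spell out the reason this dual version applies, namely that admissibility of the closed $1$-morphism $\bbSigma$ forces strong admissibility of every $\bbM'_{\bbSigma}:\bbSigma\Rightarrow\id_\varnothing$, which is exactly the hypothesis needed in Lemma~\ref{L:connection_lemma_prime}.
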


\begin{proof}
 The vector
 \[
  \sum_{i=1}^m \alpha_i \cdot [ \bbM_{\bbSigma,i} ] \in \rmV_{\calC}(\bbSigma)
 \]
 is zero if and only if the invariant
 \[
  \sum_{i=1}^m \alpha_i \CGP_{\calC} \left( \bbM'_{\bbSigma} \ast \bbM_{\bbSigma,i} \right)
 \]
 is zero for all vectors $[\bbM'_{\bbSigma}] \in \rmV'_{\calC}(\bbSigma')$. Thanks to Lemma \ref{L:connection_lemma}, we know $[\bbM'_{\bbSigma}]$ is the image of a vector of $\adSk(M';\bbSigma,\id_{\varnothing})$.
\end{proof}


%
%
%

\chapter{Graded extensions}\label{Ch:graded_extensions}

In this chapter we prove the $2$-func\-tor $\hat{\bfA}_{\calC} : \bfadCob_{\calC} \rightarrow \coCat_{\Bbbk}$ of Section \ref{S:extended_universal_construction} is not in general an ETQFT. The obstruction to monoidality can be characterized by means of the periodicity group $\PGr$. In order to do this, we first introduce the various ingredients involved in the definition of a free realization of $\PGr$ in $\bfA_{\calC}(\varnothing)$. Next, we assemble everything into an action of $\PGr$ on all universal linear categories. This allows for the construction of $\PGr$-graded extensions of universal linear categories, which lead to the definition of a strict $2$-functor $\bbA^Z_{\calC}$ whose target, the $2$-category $\bfCat_{\Bbbk}^Z$ of $Z$-graded linear categories, is introduced in Definition \ref{D:sym_mon_2-cat_of_gr_lin_cat}. The completion of this $2$-functor will provide a $\PGr$-graded ETQFT extending $\CGP_{\calC}$.

\section{2-Spheres}\label{S:2-spheres}

We suppose from now on that the element $0 \in G$ is critical. Indeed, should $0$ be generic, we would have $\Proj(\calC) = \calC$, because every object of $\calC$ can be realized as a tensor product with $\one \in \calC_0$. Then, thanks to Lemma 16 and Corollary 17 of \cite{GPT09}, this would force the projective trace $\rmt$ to be just a scalar multiple of the standard categorical trace $\tr_{\calC}$, and similarly the renormalized invariant $F'_{\calC}$ would just be a scalar multiple of the standard Reshetikhin-Turaev functor $F_{\calC}$. Then, $\CGP_{\calC}$ would boil down to a semisimple Witten-Reshetikhin-Turaev invariant with $G$-structure. In particular, the rest of the construction could very well be carried on also without this assumption, but this would force us to use two different notations for the rest of the memoir. Since the semisimple case produces a milder generalization of what was already known, we prefer to focus directly on the non-semisimple one. Then, let us consider some generic $g \in G \smallsetminus X$, some $i,j \in \rmI_g$, and some $k \in \PGr$.

\begin{definition}\label{D:2-sphere}
 The \textit{$(i,j,k)$-colored $2$-sphere $\smash{\bbS^2_{i,j,k}}$} is the closed $1$-mor\-phism of $\bfadCob_{\calC}$ given by 
 \[ 
  (S^2,P_{\varnothing,((+,V_i),(+,\sigma(k)),(-,V_j))},\vartheta_{\varnothing,((+,V_i),(+,\sigma(k)),(-,V_j))},\{ 0 \})
 \]
 in the notation of Section \ref{S:skein_equivalence}, with a single base point at the south pole.
\end{definition}

This family of closed $1$-mor\-phisms plays a major role in our construction, and we devote the next few sections to the study of their properties.

\begin{lemma}\label{L:2-sphere}
 For all $i,j \in \rmI_g$ and all $k,k' \in \PGr$ we have
 \[
  \dim_{\Bbbk} \left( \Hom_{\bfA_{\calC}(\varnothing)} \left( \bbS^2_{i,j,k},\bbS^2_{i,j,k'} \right) \right) = \delta_{ij} \delta_{k k'}.
 \]
\end{lemma}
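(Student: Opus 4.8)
The plan is to compute $\Hom_{\bfA_{\calC}(\varnothing)}(\bbS^2_{i,j,k},\bbS^2_{i,j,k'})$ by combining the ``connection, domination, triviality'' lemmas of Section \ref{S:connection_lemma} with explicit evaluations of $\CGP_{\calC}$ against closed $3$-manifolds, essentially reducing everything to the renormalized invariant $F'_{\calC}$ evaluated on ribbon graphs in $S^3$.

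\textbf{Upper bound.} First I would produce a spanning set. Take the cylinder $M := S^2 \times I$ as a connected $3$-dimensional cobordism with corners from $S^2$ to $S^2$. By Lemma \ref{L:connection_lemma}, the map $\rho_M : \adSk(M;\bbS^2_{i,j,k},\bbS^2_{i,j,k'}) \to \Hom_{\bfA_{\calC}(\varnothing)}(\bbS^2_{i,j,k},\bbS^2_{i,j,k'})$ is surjective, so every morphism is of the form $[S^2 \times I, T, \omega, 0]$ for some admissible $(\calC,G)$-coloring. Now $S^2 \times I$, being simply connected away from a $1$-complex, has very restricted cohomology, so the coloring $\omega$ is essentially determined by $\vartheta$ on the two boundary spheres; in particular any generic curve bounds, so the ribbon graph $T$ connecting the three marked points on each boundary sphere must be projective-compatible and, up to skein equivalence inside a $3$-disc (Lemma \ref{L:skein_equivalence}), can be brought to a standard form. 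Concretely, after isotopy $T$ is a ribbon graph in $S^2 \times I$ joining the incoming triple $((+,V_i),(+,\sigma(k)),(-,V_j))$ to the outgoing triple $((+,V_i),(+,\sigma(k')),(-,V_j))$, and $F_{\calC}$ of this graph lives in a $\Hom$-space of $\calC$ which by generic semisimplicity and simplicity of $V_i,V_j,\sigma(k),\sigma(k')$ is at most one-dimensional, and is zero unless $i=j$; the $\sigma(k)$ versus $\sigma(k')$ edges force $k=k'$ once one tracks the $\PGr$-coloring (here the freeness of the realization $\sigma$ enters). This shows $\dim_{\Bbbk}\Hom \leqslant \delta_{ij}\delta_{kk'}$.

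\textbf{Lower bound.} When $i=j$ and $k=k'$ I would exhibit a nonzero morphism. The natural candidate is the identity $2$-morphism $\id_{\bbS^2_{i,i,k}} = [S^2 \times I, P_{i,i,k} \times I, \ldots, 0]$; to see it is nonzero I would use the triviality Lemma \ref{L:triviality_lemma} (or rather its TQFT analogue via Lemma \ref{L:triviality_lemma_TQFT} together with the cap $2$-morphisms $\smash{\bbD^3_{T_{U,i}}}$ from Section \ref{S:projective_generic_stabilizations}). Cap off the cylinder top and bottom with suitable $3$-discs carrying the graphs $T_{V_i,i}$ built from the sections $s_{V_i,i}$ of Remark \ref{R:epic_evaluations}, producing a closed $2$-morphism whose underlying manifold is $S^3$ with a closed projective ribbon graph; by Proposition \ref{P:CGP_for_admissible_manifolds} its $\CGP_{\calC}$-value is a nonzero multiple of $\rmt_{V_i}(\id_{V_i}) = \rmd(V_i)$, which is nonzero by the last paragraph of Section on projective traces (simple projective object with epic evaluation, nonzero trace). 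Hence $[\id_{\bbS^2_{i,i,k}}] \neq 0$ and the dimension is exactly $1$ in this case.

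\textbf{Main obstacle.} The delicate point is the upper bound argument: I must argue carefully that an \emph{arbitrary} admissible coloring $(T,\omega)$ of $S^2 \times I$ represents a scalar multiple of the standard generator, rather than merely that the standard colorings do. This requires (i) showing $\omega$ is forced by its boundary values because $H^1(S^2\times I \smallsetminus T; G)$ injects into the boundary cohomology once $T$ is understood, and more seriously (ii) reducing $T$ to standard form --- here a naive ribbon graph in $S^2\times I$ need not be a braid, so one must either invoke a surgery/connection argument to cut $S^2 \times I$ along a disc and land in $D^2 \times I$, apply $F_{\calC}$, and then use that $\Hom_{\calC}(V_i \otimes \sigma(k) \otimes V_j^*, V_i \otimes \sigma(k') \otimes V_j^*)$ (as computed in $\calC_{g - g + (\text{something in }\PGr_+)}$) is at most one-dimensional. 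Tracking the $\PGr$-grading to separate $k$ from $k'$, using the compatibility bilinear map $\psi$ of Definition \ref{D:relative_pre-modular_category} and the freeness of $\sigma$, is where the bookkeeping is heaviest; I expect the cleanest route is to pair against the dual universal category via Lemma \ref{L:triviality_lemma} and reduce the whole computation to evaluating $F'_{\calC}$ on a Hopf-link-type configuration, for which the relative modularity condition (Definition \ref{D:relative_modular_category}) gives the needed orthogonality.
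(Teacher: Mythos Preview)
Your overall strategy---use Lemma~\ref{L:connection_lemma} to get a spanning set of cylinder colorings, then use Lemma~\ref{L:triviality_lemma} to test triviality by closing up and invoking relative modularity---is exactly the paper's approach. However, your ``Upper bound'' paragraph as written contains a genuine error that your ``Main obstacle'' paragraph only partially repairs.

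The false step is the claim that $F_{\calC}$ of a ribbon graph in $S^2 \times I$ joining the two triples lands in a Hom-space of $\calC$ that ``by generic semisimplicity and simplicity'' is at most one-dimensional. First, $F_{\calC}$ is only defined on graphs in $D^2 \times I$, not $S^2 \times I$; after reducing to a single coupon you land in $\Hom_{\calC}(V_i \otimes \sigma(k) \otimes V_j^*,\, V_i \otimes \sigma(k') \otimes V_j^*)$, and this space is \emph{not} at most one-dimensional---for $i=j$, $k=k'$ it is $\End_{\calC}(V_i \otimes \sigma(k) \otimes V_i^*)$, which has dimension equal to the number of simple summands of that tensor product. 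The skein module of the cylinder is genuinely large; what is small is its image in $\bfA_{\calC}(\varnothing)$, and that shrinkage comes entirely from the universal-construction quotient. Second, the $G$-coloring $\omega$ is not determined by its boundary values: there is a free parameter $h \in G$ coming from the arc between base points (this is the $\omega_h$ in the paper's Figure~\ref{F:2-sphere_lemma_1}).

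The fix you sketch at the end is the correct one and is precisely what the paper does: pair against an arbitrary $\overline{\bbS^2_{(P',\vartheta')}}$ and cap top and bottom to obtain a closed $2$-morphism supported on $S^2 \times S^1$; present this by a single $0$-framed unknot (made computable by stabilization), so that the $\CGP_{\calC}$-value becomes a projective trace of $F_{\calC}$ applied to a graph where an $\Omega$-colored meridian encircles the strands. The relative modularity condition of Definition~\ref{D:relative_modular_category} then cuts that meridian, collapsing the computation to a pairing governed by $\Hom_{\calC}(V_i \otimes \sigma(k) \otimes V_j^*,\, \sigma(k'))$, which \emph{is} $\delta_{ij}\delta_{kk'}$-dimensional by the complete reduction of $\Theta(\calC_g) \otimes \sigma(\PGr)$. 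Your lower bound is fine in spirit, though your reference to the caps $\bbD^3_{T_{U,i}}$ is off (those have two marked points, not three); the paper instead just takes $f = f' = \id$ and $h' \in G \smallsetminus X$ in the same closing-up framework.
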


\begin{proof}
 Every morphism of $\Hom_{\bfA_{\calC}(\varnothing)} ( \bbS^2_{i,j,k},\bbS^2_{i,j,k'} )$ is the image of a vector of $\adSk(S^2 \times I;\bbS^2_{i,j,k},\bbS^2_{i,j,k'})$ thanks to Lemma \ref{L:connection_lemma}. Furthermore, up to isotopy and skein equivalence, we can restrict to admissible $(\calC,G)$-colorings of the form $(T_f,\omega_h)$ like the one represented in Figure \ref{F:2-sphere_lemma_1} with 
 \[
  f \in \Hom_{\calC}(V_i \otimes \sigma(k) \otimes V_j^*,V_i \otimes \sigma(k') \otimes V_j^*), \quad
  h \in G.
 \]
 \begin{figure}[t]\label{F:2-sphere_lemma_1}
  \centering
  \includegraphics{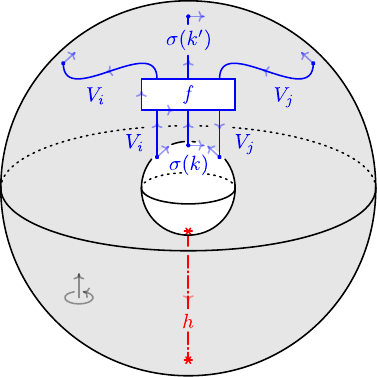}
  \caption{$(\calC,G)$-Coloring of $S^2 \times I$ of the form $(T_f,\omega_h)$.}
 \end{figure}
 To determine which $(\calC,G)$-colorings correspond to trivial morphisms, we can appeal to Lemma \ref{L:triviality_lemma}, so let us specify $\smash{\overline{S^2}}$ as a $2$-di\-men\-sion\-al cobordism from $\varnothing$ to $\varnothing$, let us specify $S^2 \times \overline{I}$ as a $3$-di\-men\-sion\-al cobordism from $\varnothing$ to $S^2 \sqcup \overline{S^2}$, and let us specify $S^2 \times I$ as a $3$-di\-men\-sion\-al cobordism from $S^2 \sqcup \overline{S^2}$ to $\varnothing$. Now the triviality of the morphism $[S^2 \times I,T_f,\omega_h,0]$ can be tested by considering all $(P',\vartheta') \in \adSk(\overline{S^2};\varnothing,\varnothing)$ and all
 \begin{gather*}
  (T,\omega) \in \adSk \left( S^2 \times \overline{I};\id_{\varnothing},\bbS^2_{i,j,k} \disjun \overline{\bbS^2_{(P',\vartheta')}} \right), \\
  (T',\omega') \in \adSk \left( S^2 \times I;\bbS^2_{i,j,k'} \disjun \overline{\bbS^2_{(P',\vartheta')}},\id_{\varnothing} \right),
 \end{gather*}
 and by computing the Costantino-Geer-Patureau invariant $\CGP_{\calC}$ of the resulting closed $2$-mor\-phism of $\bfadCob_{\calC}$. We can choose a surgery presentation composed of a single unknot with framing zero, whose computability can always be forced by performing generic or projective stabilization outside of $(T_f,\omega_h)$. Therefore, up to isotopy, skein equivalence, and multiplication by invertible scalars, we only need to compute the projective trace of the morphism $F_{\calC}(T_{f,f',h+h'})$ for all
 \[
  f' \in \Hom_{\calC} \left( V_i \otimes \sigma(k') \otimes V_j^*,V_i \otimes \sigma(k) \otimes V_j^* \right), \quad
  h' \in \{ -h \} + (G \smallsetminus X),
 \]
 where the $\calC$-colored ribbon graph $T_{f,f',h + h'}$ is represented in the left-hand part of Figure \ref{F:2-sphere_lemma_2}. 
 \begin{figure}[ht]\label{F:2-sphere_lemma_2}
  \centering
  \includegraphics{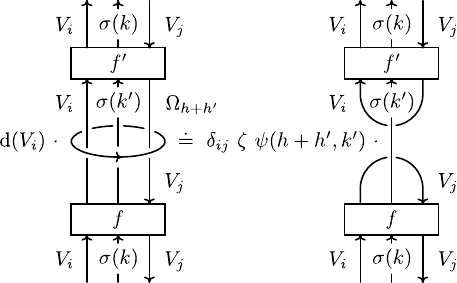}
  \caption{Skein equivalence following from the compatibility between the $G$-structure and the $\PGr$-action, and from the relative modularity of $\calC$.}
 \end{figure}
 Now the ribbon structure of $\calC$ and the semisimplicity of $\calC_g$ yield
 \[
  \dim_{\Bbbk} \left( \Hom_{\calC}(V_i \otimes \sigma(k) \otimes V_j^*,\sigma(k')) \right) = \delta_{ij} \delta_{kk'}.
 \]
 This means
 \[
  \dim_{\Bbbk} \left( \Hom_{\bfA_{\calC}(\varnothing)} \left( \bbS^2_{i,j,k},\bbS^2_{i,j,k'} \right) \right) \leqslant \delta_{ij} \delta_{kk'}.
 \]
 The dimension of $\End_{\bfA_{\calC}(\varnothing)}(\bbS^2_{i,i,k})$ is furthermore exactly equal to $1$, because the non-triviality of $[ \id_{\bbS^2_{i,i,k}} ]$ follows by considering
 \begin{gather*}
  f = \id_{V_i \otimes \sigma(k) \otimes V_i^*}, \quad h = 0, \\
  f' = \id_{V_i \otimes \sigma(k) \otimes V_i^*}, \quad h' \in G \smallsetminus X. \qedhere
 \end{gather*}
\end{proof}

Recall Remark \ref{R:critical_fusion}, in which we fixed, for every critical $x \in X$, a generic $g_x \in G \smallsetminus X$ satisfying $x + g_x \in G \smallsetminus X$, together with some $i_x \in \rmI_{g_x}$ specifying a simple projective object $V_x := V_{i_x} \in \Theta(\calC_{g_x})$. Now, as a direct consequence of Remark \ref{R:critical_fusion} and of Lemmas \ref{L:skein_equivalence} and \ref{L:Morita_reduction}, the universal linear category $\bfA_{\calC}(\varnothing)$ is dominated by the set of obects $\{ \bbS^2_{i,i_0,k} \mid i \in \rmI_{g_0}, k \in \PGr \}$. However, this dominating set is redundant, as we just established that, for all $i \in \rmI_{g_0} \smallsetminus \{ i_0 \}$, and for all $k \in \PGr$, the $(i,i_0,k)$-colored $2$-sphere $\bbS^2_{i,i_0,k}$ is a zero object of $\bfA_{\calC}(\varnothing)$. Therefore, we set 
\[
 \bbS^2_k := \bbS^2_{i_0,i_0,k},
\]
and we refer to $\bbS^2_k$ simply as the \textit{$k$-colored 2-sphere}. Now the linear category $\bfA_{\calC}(\varnothing)$ is Morita equivalent to its full subcategory with set of objects $\{ \bbS^2_k \mid k \in \PGr \}$, which is not Morita equivalent to the tensor unit $\Bbbk$ of $\bfCat_{\Bbbk}$ unless $\PGr = \{ 0 \}$. This means the periodicity group $\PGr$ measures the deviation from monoidality of $\hat{\bfA}_{\calC}$.

\section{3-Discs}\label{S:3-discs}

The family of $2$-spheres we just specified provides the first brick in the definition of a free realization of $\PGr$ in $\bfA_{\calC}(\varnothing)$. In the next two sections we introduce the remaining ingredients.

\begin{definition}\label{D:3-discs}
 The \textit{$0$-col\-ored $3$-disc} $\bbD^3_0 : \id_{\varnothing} \Rightarrow \bbS^2_0$ is the $2$-mor\-phism of $\bfadCob_{\calC}$ given by 
 \[
  \left( D^3,T_0,\omega_0,0 \right)
 \]
 where $T_0 \subset D^3$ is the $\calC$-colored ribbon graph represented in Figure \ref{F:3-disc}, where $\varepsilon \in \Hom_{\calC}(\one,\sigma(0))$ is a coherence morphism of $\sigma : \PGr \rightarrow \calC_0$, and where $\omega_0$ is the unique compatible $G$-coloring of $(D^3,T_0)$. Similarly, the \textit{inverse $0$-col\-ored $3$-disc} $\overline{\bbD^3_0} : \bbS^2_0 \Rightarrow \id_{\varnothing}$ is the $2$-mor\-phism of $\bfadCob_{\calC}$ given by 
 \[
  \left( \overline{D^3},\overline{T_0},\omega_0,0 \right)
 \]
 where $\overline{T_0} \subset \overline{D^3}$ is the $\calC$-colored ribbon graph obtained from $T_0$ by reversing the orientation of all edges and vertical boundaries of coupons, and by replacing the color $\varepsilon$ with its inverse $\varepsilon^{-1}$.
\end{definition}

\begin{figure}[htb]\label{F:3-disc}
 \centering
 \includegraphics{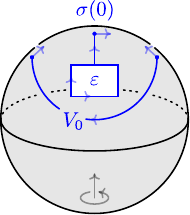}
 \caption{The $0$-col\-ored $3$-disc $\bbD^3_0$.}
\end{figure}

Let us discuss properties of compositions of these morphisms.

\begin{lemma}\label{L:3-discs_composition}
 We have the equality
 \[
  \left[ \bbD^3_0 \ast \overline{\bbD^3_0} \right] = \calD^{-1} \rmd(V_0) \cdot \left[ \id_{\bbS^2_0} \right]
 \]
 between morphisms of $\End_{\bfA_{\calC}(\varnothing)}(\bbS^2_0)$.
\end{lemma}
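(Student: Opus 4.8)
The idea is to compute the composition $\bbD^3_0 \ast \overline{\bbD^3_0}$ as a $2$-morphism of $\bfadCob_{\calC}$ and then identify its class in $\End_{\bfA_{\calC}(\varnothing)}(\bbS^2_0)$. First I would observe that, since $\End_{\bfA_{\calC}(\varnothing)}(\bbS^2_0)$ is one-dimensional by Lemma~\ref{L:2-sphere} (as $\bbS^2_0 = \bbS^2_{i_0,i_0,0}$, so the hypotheses $i=j=i_0$, $k=k'=0$ of that lemma are met), it suffices to show that $[\bbD^3_0 \ast \overline{\bbD^3_0}]$ is the scalar $\eta \rmd(V_0)$ times $[\id_{\bbS^2_0}]$ \emph{after pairing with one suitable nonzero dual vector}. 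Concretely, both morphisms are classes of closed-up $2$-morphisms obtained by capping off with an arbitrary element of $\Hom_{\bfA'_{\calC}(\varnothing)}$, and since the pairing is non-degenerate and the space is one-dimensional, matching the value of $\CGP_{\calC}$ on a single nonzero closed-up configuration proves the equality.

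Next I would carry out the topological identification: gluing $D^3$ to $\overline{D^3}$ along $S^2$ produces $S^3$, and the ribbon graph $T_0 \cup_{P} \overline{T_0}$ inside $S^3$ is, up to isotopy, the composition in $\Rib_{\calC}$ of the coupon labelled $\varepsilon$ with the coupon labelled $\varepsilon^{-1}$ glued along a $\sigma(0)$-colored edge, together with a closed $V_{i_0}$-colored loop (coming from the $V_{i_0}$ strand that runs through both discs and closes up). Using the coherence relation $\varepsilon^{-1}\circ\varepsilon = \id_{\one}$ for the realization $\sigma : \PGr \to \calC_0$, the $\sigma(0)$-colored portion collapses to an identity, so $T_0 \cup_P \overline{T_0}$ is skein equivalent to an unknot colored by $V_{i_0} = V_0$ with framing zero (and trivial $G$-coloring, since $0 \in G$). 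Then, capping this closed $2$-morphism off appropriately and computing $\CGP_{\calC}$, the surgery presentation is empty (or can be taken trivial), the signature defect is $0$, and $F'_{\calC}$ of the $V_0$-colored unknot is exactly $\rmd(V_0)$; multiplying by the normalization prefactor $\eta\calD^{0}\delta^{0}= \eta$ gives $\eta\rmd(V_0)$. This is exactly the value of $\CGP_{\calC}$ on the same closed-up configuration with $\bbD^3_0 \ast \overline{\bbD^3_0}$ replaced by $\eta \rmd(V_0)\cdot\id_{\bbS^2_0}$, by the same computation with the scalar pulled out.

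The main obstacle I expect is bookkeeping: carefully tracking the $\calC$-colored ribbon graph under the gluing $D^3 \cup_{S^2} \overline{D^3} \cong S^3$ (in particular the orientations of coupon boundaries and of the edges, and the fact that $\overline{T_0}$ is the mirror with $\varepsilon$ replaced by $\varepsilon^{-1}$), and verifying that after the $\varepsilon^{-1}\circ\varepsilon$ cancellation one is genuinely left with the unknotted $V_0$-colored loop rather than some linked or twisted configuration. Once that skein reduction is in hand, invoking Remark~\ref{R:skein_equivalence} (or rather Lemma~\ref{L:skein_equivalence} at the level of morphisms of $\bfA_{\calC}(\varnothing)$) together with the defining formula for $F'_{\calC}$ of the $V_0$-colored unknot and the $j=0$ case of Proposition~\ref{P:surgery_axioms} (where $\lambda_{i,0} = \eta\rmd(V_i)$ already records exactly this normalization) closes the argument. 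In fact the cleanest route may be to recognize $\bbD^3_0$ and $\overline{\bbD^3_0}$ as mutually inverse ``belt/attaching'' morphisms up to the scalar $\eta\rmd(V_0)$, so that the statement becomes a direct instance of the index-$0$ surgery axiom applied through Lemma~\ref{L:surgery_axioms}.
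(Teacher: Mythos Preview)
Your topological identification is for the wrong composition. Recall that $\bbD^3_0 \ast \overline{\bbD^3_0}$ means first $\overline{\bbD^3_0} : \bbS^2_0 \Rightarrow \id_{\varnothing}$ and then $\bbD^3_0 : \id_{\varnothing} \Rightarrow \bbS^2_0$, so the vertical gluing is along the \emph{empty} surface and the support is the disjoint union $\overline{D^3} \sqcup D^3$, not $S^3$. The $2$-morphism whose support is $S^3$ is $\overline{\bbD^3_0} \ast \bbD^3_0 \in \End_{\bfA_{\calC}(\varnothing)}(\id_{\varnothing})$, and that is precisely the companion equality handled by the index-$0$ surgery axiom in the remark immediately following the lemma. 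Consequently your description of ``$T_0 \cup_P \overline{T_0}$ inside $S^3$'' and the ensuing $\CGP_{\calC}$ computation are computing the wrong object, and your final suggestion to invoke the index-$0$ case of Lemma~\ref{L:surgery_axioms} has the right shape but the wrong index.

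The paper's proof is a one-line invocation of Lemma~\ref{L:surgery_axioms} with $i = i_0$ and index $j = 1$. The point is that $\id_{\bbS^2_0}$ has support $S^2 \times I \cong D^1 \times S^2$ while $\bbD^3_0 \ast \overline{\bbD^3_0}$ has support $\overline{D^3} \sqcup D^3 \cong S^0 \times D^3$, and these are related by the index-$1$ surgery that swaps $\bbB_{i_0,1}$ for $\bbA_{i_0,1}$: after the skein move $\id_{\sigma(0)} = \varepsilon \circ \varepsilon^{-1}$ breaks the $\sigma(0)$-strand in $\id_{\bbS^2_0}$, the middle portion of the cylinder carries only the two $V_0$-strands and is exactly a copy of $\bbB_{i_0,1}$; replacing it by $\bbA_{i_0,1}$ yields $\bbD^3_0 \ast \overline{\bbD^3_0}$. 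Since $\lambda_{i_0,1} = \eta^{-1}\rmd(V_0)^{-1}$, the claim follows. Your pairing strategy could be salvaged (cap both sides with $\bbD^3_0$ below and $\overline{\bbD^3_0}$ above, obtaining $S^3 \sqcup S^3$ versus $S^3$, and compare $\CGP_{\calC}$ values), but the write-up as it stands does not do this correctly.
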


\begin{proof}
 This result is an immediate consequence of Lemma \ref{L:surgery_axioms} with color $i = i_{g_0}$ and index $j = 1$.
\end{proof}

Remark that, since the equality 
\[
 \left[ \overline{\bbD^3_0} \ast \bbD^3_0 \right] = \calD^{-1} \rmd(V_0) \cdot \left[ \id_{\id_{\varnothing}} \right]
\]
between morphisms of $\End_{\bfA_{\calC}(\varnothing)}(\id_{\varnothing})$ follows immediately from Lemma \ref{L:surgery_axioms} with color $i = i_{g_0}$ and index $j = 0$, the objects $\id_{\varnothing}$ and $\bbS^2_0$ of $\bfA_{\calC}(\varnothing)$ are isomorphic.

\section{3-Pants}\label{S:3-pants}

We define the \textit{$3$-pant cobordism $P^3$} as the $3$-dimensional cobordism from $S^2 \sqcup S^2$ to $S^2$ whose support is given by $D^3$ minus two open balls of radius $\frac{1}{4}$ and center $(-\frac{1}{2},0,0)$ and $(+\frac{1}{2},0,0)$ respectively, and whose incoming and outgoing horizontal boundary identifications are induced by $\id_{S^2}$ through rescaling and translation. Then, let us consider $k,k' \in \PGr$.

\begin{definition}\label{D:3-pants}
 The \textit{$(k,k')$-colored $3$-pant} $\bbP^3_{k,k'} : \bbS^2_k \disjun \bbS^2_{k'} \Rightarrow \bbS^2_{k + k'}$ is the $2$-mor\-phism of $\bfadCob_{\calC}$ given by 
 \[
  \left( P^3,T_{k,k'},\omega_{k,k'},0 \right)
 \]
 where $T_{k,k'} \subset P^3$ is the $\calC$-colored ribbon graph represented in Figure \ref{F:3-pant}, where $\mu_{k,k'} \in \Hom_{\calC}(\sigma(k) \otimes \sigma(k'),\sigma(k+k'))$ is a coherence morphism of $\sigma : \PGr \rightarrow \calC_0$, and where $\omega_{k,k'}$ is the only compatible $G$-coloring of $(P^3,T_{k,k'})$ which vanishes on all relative homology classes contained in the southern hemisphere. Similarly, the \textit{inverse $(k,k')$-colored $3$-pant} $\overline{\bbP^3_{k,k'}} : \bbS^2_{k + k'} \Rightarrow \bbS^2_k \disjun \bbS^2_{k'}$ is the $2$-mor\-phism of $\bfadCob_{\calC}$ given by 
 \[
  \left( \overline{P^3},\overline{T_{k,k'}},\omega_{k,k'},0 \right)
 \]
 where $\overline{T_{k,k'}} \subset \overline{P^3}$ is the $\calC$-colored ribbon graph obtained from $T_{k,k'}$ by reversing the orientation of all edges and vertical boundaries of coupons, and by replacing the color $\mu_{k,k'}$ with its inverse $\mu_{k,k'}^{-1}$.
\end{definition}

\begin{figure}[htb]\label{F:3-pant}
 \centering
 \includegraphics{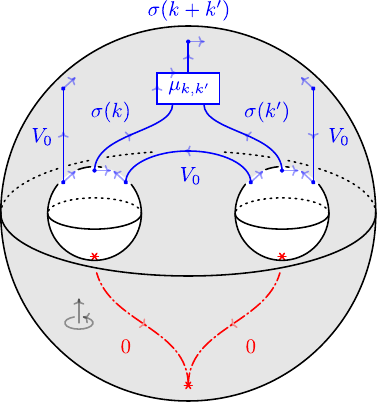}
 \caption{The $(k,k')$-colored $3$-pant $\bbP^3_{k,k'}$.}
\end{figure}

\begin{lemma}\label{L:3-pant}
 For all $k,k',k'' \in \PGr$ we have
 \begin{gather*}
  \dim_{\Bbbk} \Hom_{\bfA_{\calC}(\varnothing)} \left( \bbS^2_k \disjun \bbS^2_{k'},\bbS^2_{k''} \right) = \delta_{(k + k')k''}\phantom{,} \\
  \dim_{\Bbbk} \Hom_{\bfA_{\calC}(\varnothing)} \left( \bbS^2_k,\bbS^2_{k'} \disjun \bbS^2_{k''} \right) = \delta_{k(k' + k'')}.
 \end{gather*}
\end{lemma}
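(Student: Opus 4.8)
The plan is to reduce the computation of $\dim_{\Bbbk} \Hom_{\bfA_{\calC}(\varnothing)}(\bbS^2_k \disjun \bbS^2_{k'},\bbS^2_{k''})$ to a computation already carried out, namely the one in Lemma \ref{L:2-sphere}. The key observation is that $\bbS^2_k \disjun \bbS^2_{k'}$ and $\bbS^2_{k+k'}$ are isomorphic objects of $\bfA_{\calC}(\varnothing)$: the $(k,k')$-colored $3$-pant $\bbP^3_{k,k'}$ and its inverse $\overline{\bbP^3_{k,k'}}$ provide mutually inverse morphisms, up to an invertible scalar coming from surgery axioms. So first I would establish this isomorphism. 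Concretely, I would show that $[\bbP^3_{k,k'} \ast \overline{\bbP^3_{k,k'}}]$ is an invertible scalar multiple of $[\id_{\bbS^2_{k+k'}}]$, and that $[\overline{\bbP^3_{k,k'}} \ast \bbP^3_{k,k'}]$ is an invertible scalar multiple of $[\id_{\bbS^2_k \disjun \bbS^2_{k'}}]$, by the same kind of argument used in Lemma \ref{L:3-discs_composition}: after gluing, the resulting closed $2$-morphism can be analyzed by a surgery presentation together with skein equivalence, using the coherence morphisms $\mu_{k,k'}$ and $\mu_{k,k'}^{-1}$ of $\sigma$, and the relevant scalar is a power of $\eta \rmd(V_0)$ coming from Lemma \ref{L:surgery_axioms} with color $i = i_{g_0}$ and appropriate index. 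Once the isomorphism $\bbS^2_k \disjun \bbS^2_{k'} \cong \bbS^2_{k+k'}$ is in hand, functoriality of $\Hom$ gives
\[
 \dim_{\Bbbk} \Hom_{\bfA_{\calC}(\varnothing)} \left( \bbS^2_k \disjun \bbS^2_{k'},\bbS^2_{k''} \right)
 = \dim_{\Bbbk} \Hom_{\bfA_{\calC}(\varnothing)} \left( \bbS^2_{k+k'},\bbS^2_{k''} \right)
 = \dim_{\Bbbk} \Hom_{\bfA_{\calC}(\varnothing)} \left( \bbS^2_{(i_0,i_0,k+k')},\bbS^2_{(i_0,i_0,k'')} \right),
\]
which by Lemma \ref{L:2-sphere} with $i = j = i_0$ equals $\delta_{i_0 i_0} \delta_{(k+k')k''} = \delta_{(k+k')k''}$. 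This settles the first equality. The second equality follows by the dual argument, or more directly by the same isomorphism: $\bbS^2_{k'} \disjun \bbS^2_{k''} \cong \bbS^2_{k'+k''}$, so $\dim_{\Bbbk} \Hom_{\bfA_{\calC}(\varnothing)}(\bbS^2_k,\bbS^2_{k'} \disjun \bbS^2_{k''}) = \dim_{\Bbbk} \Hom_{\bfA_{\calC}(\varnothing)}(\bbS^2_k,\bbS^2_{k'+k''}) = \delta_{k(k'+k'')}$ again by Lemma \ref{L:2-sphere}.

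The main obstacle I anticipate is the careful verification that $\bbP^3_{k,k'}$ and $\overline{\bbP^3_{k,k'}}$ really do compose to scalar multiples of identities in $\bfA_{\calC}(\varnothing)$ — that is, the bookkeeping of the surgery presentation of the glued cobordism $P^3 \cup \overline{P^3}$ (respectively $\overline{P^3} \cup P^3$), the identification of which index-$0$ or index-$1$ surgery to apply, and tracking that the coherence morphisms $\mu_{k,k'} \circ \mu_{k,k'}^{-1} = \id$ collapse the $\calC$-colored ribbon graph correctly after skein equivalence inside a $3$-disc (Lemma \ref{L:skein_equivalence}). This is the analogue of Lemma \ref{L:3-discs_composition} but with a slightly more involved underlying topology, since $P^3$ has two incoming boundary spheres rather than one. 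Once that scalar is pinned down to be invertible — which it will be, being a power of $\eta \rmd(V_0) \neq 0$ — the rest is formal. An alternative, perhaps cleaner, route that avoids computing the precise scalar: one can invoke Lemma \ref{L:triviality_lemma} directly, mimicking the proof of Lemma \ref{L:2-sphere}, to bound $\dim_{\Bbbk} \Hom_{\bfA_{\calC}(\varnothing)}(\bbS^2_k \disjun \bbS^2_{k'},\bbS^2_{k''})$ above by the dimension of a $\Hom$-space in $\calC$ of the form $\Hom_{\calC}(\sigma(k) \otimes \sigma(k'), \sigma(k''))$, which is $\delta_{(k+k')k''}$ by simplicity of the objects $\sigma(k)$ and freeness of the realization, and then exhibit a non-trivial morphism when $k+k' = k''$ using $\mu_{k,k'}$ to witness non-degeneracy against the pairing. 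Either way, the skeleton is: (i) reduce to a $\Hom$-space in $\calC$ via the connection and triviality lemmas, exactly as in Lemma \ref{L:2-sphere}; (ii) compute that $\Hom$-space using the ribbon structure and the freeness of $\sigma$; (iii) check non-triviality in the equality case. I would present the argument in the isomorphism form since it is shortest and reuses Lemma \ref{L:2-sphere} as a black box.
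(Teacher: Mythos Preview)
Your proposal is correct, and in fact both routes you sketch are valid. The paper takes your \emph{alternative} route: it does a direct computation mimicking the proof of Lemma~\ref{L:2-sphere}, using Lemma~\ref{L:connection_lemma} to reduce every morphism to a $(\calC,G)$-coloring of $P^3$ of a standard form $(T_f,\omega_{h,h'})$ with $f \in \Hom_{\calC}(V_0 \otimes \sigma(k) \otimes V_0^* \otimes V_0 \otimes \sigma(k') \otimes V_0^*, V_0 \otimes \sigma(k'') \otimes V_0^*)$, then Lemma~\ref{L:triviality_lemma} with a two-component unlink surgery presentation, and finally relative modularity to collapse the computation to $\dim_{\Bbbk}\Hom_{\calC}(\sigma(k)\otimes\sigma(k'), V_0 \otimes \sigma(k'') \otimes V_0^*) = \delta_{(k+k')k''}$, with non-triviality in the equality case witnessed by $[\bbP^3_{k,k'}]$ itself.

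Your primary route --- establish $\bbS^2_k \disjun \bbS^2_{k'} \cong \bbS^2_{k+k'}$ via the $3$-pants and then quote Lemma~\ref{L:2-sphere} --- is genuinely different and shorter once the isomorphism is in hand, but it requires the content of what the paper proves \emph{after} the present lemma as Lemmas~\ref{L:unitarity_of_3-discs}, \ref{L:3-pants_composition_1}, and~\ref{L:3-pants_composition_2}. There is no circularity: those later lemmas depend only on Lemma~\ref{L:2-sphere}, skein equivalence, and surgery axioms, never on Lemma~\ref{L:3-pant} itself, so your reordering is permissible. The trade-off is that the paper's direct computation is self-contained at this point in the exposition, whereas your approach front-loads the composition lemmas for the $3$-pants; either way the same ingredients appear, just in a different order.
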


\begin{proof}
 The argument is the exact same one we used for the proof of Lemma \ref{L:2-sphere}. Indeed, every morphism of $\Hom_{\bfA_{\calC}(\varnothing)} ( \bbS^2_k \disjun \bbS^2_{k'},\bbS^2_{k''} )$ is the image of a vector of $\adSk(P^3;\bbS^2_k \disjun \bbS^2_{k'},\bbS^2_{k''})$ thanks to Lemma \ref{L:connection_lemma}. Furthermore, up to isotopy and skein equivalence, we can restrict to admissible $(\calC,G)$-colorings of the form $(T_f,\omega_{h,h'})$ like the one represented in Figure \ref{F:3-pant_lemma_1} with 
 \[
  f \in \Hom_{\calC}(V_0 \otimes \sigma(k) \otimes V_0^* \otimes V_0 \otimes \sigma(k') \otimes V_0^*,V_0 \otimes \sigma(k'') \otimes V_0^*), \quad
  h,h' \in G.
 \]
 \begin{figure}[tb]\label{F:3-pant_lemma_1}
  \centering
  \includegraphics{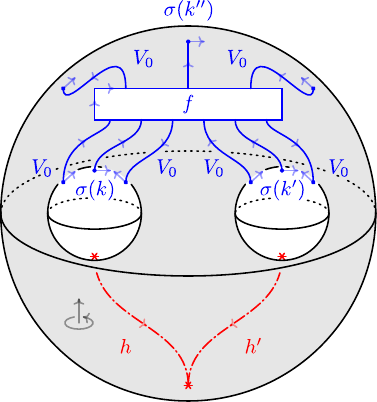}
  \caption{$(\calC,G)$-Coloring of $P^3$ of the form $(T_f,\omega_{h,h'})$.}
 \end{figure}
 To determine which $(\calC,G)$-colorings correspond to trivial morphisms, we can appeal to Lemma \ref{L:triviality_lemma}, so let us specify $\smash{\overline{S^2}}$ as a $2$-di\-men\-sion\-al cobordism from $\varnothing$ to $\varnothing$, let us specify $\overline{P^3}$ as a $3$-di\-men\-sion\-al cobordism from $\varnothing$ to $S^2 \sqcup S^2 \sqcup \overline{S^2}$, and let us specify $S^2 \times I$ as a $3$-di\-men\-sion\-al cobordism from $S^2 \sqcup \overline{S^2}$ to $\varnothing$. Now the triviality of the morphism $[P^3,T_f,\omega_{h,h'},0]$ can be tested by considering all $(P',\vartheta') \in \adSk(\overline{S^2};\varnothing,\varnothing)$ and all
 \begin{gather*}
  (T,\omega) \in \adSk \left( \overline{P^3};\id_{\varnothing},\bbS^2_k \disjun \bbS^2_{k'} \disjun \overline{\bbS^2_{(P',\vartheta')}} \right), \\
  (T',\omega') \in \adSk \left( S^2 \times I;\bbS^2_{k''} \disjun \overline{\bbS^2_{(P',\vartheta')}},\id_{\varnothing} \right),
 \end{gather*}
 and by computing the Costantino-Geer-Patureau invariant $\CGP_{\calC}$ of the resulting closed $2$-mor\-phism of $\bfadCob_{\calC}$. We can choose a surgery presentation composed of a two-component unlink with framing zero, whose computability can always be forced by performing generic or projective stabilization outside of $(T_f,\omega_{h,h'})$. Therefore, up to isotopy, skein equivalence, and multiplication by invertible scalars, we only need to compute the projective trace of the morphism $F_{\calC}(T_{f,f',h + h'',h' + h'''})$ for all
 \begin{gather*}
  f' \in \Hom_{\calC} \left( V_0 \otimes \sigma(k'') \otimes V_0^*,V_0 \otimes \sigma(k) \otimes V_0^* \otimes V_0 \otimes \sigma(k') \otimes V_0^* \right), \\
 h'' \in \{ -h \} + (G \smallsetminus X), \quad h''' \in \{ -h' \} + (G \smallsetminus X),
 \end{gather*}
 where the $\calC$-colored ribbon graph $T_{f,f',h + h'',h' + h'''}$ is represented in the left-hand part of Figure \ref{F:3-pant_lemma_2}. 
 \begin{figure}[b]\label{F:3-pant_lemma_2}
  \centering
  \includegraphics{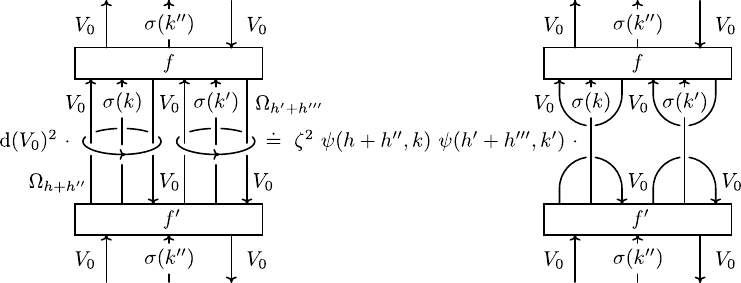}
  \caption{Skein equivalence following from the compatibility between the $G$-struc\-ture and the $\PGr$-ac\-tion, and from the relative modularity of $\calC$.}
 \end{figure}
 Now the ribbon structure of $\calC$ and the semisimplicity of $\calC_{g_0}$ yield
 \[
  \dim_{\Bbbk} \left( \Hom_{\calC}(\sigma(k) \otimes \sigma(k'),V_0 \otimes \sigma(k'') \otimes V_0^*) \right) = \delta_{(k + k')k''}.
 \]
 This means
 \[
  \dim_{\Bbbk} \left( \Hom_{\bfA_{\calC}(\varnothing)} \left( \bbS^2_k \disjun \bbS^2_{k'},\bbS^2_{k''} \right) \right) \leqslant \delta_{(k + k')k''}.
 \]
 The dimension of $\Hom_{\bfA_{\calC}(\varnothing)} ( \bbS^2_k \disjun \bbS^2_{k'},\bbS^2_{k + k'} )$ is furthermore exactly equal to $1$, because the non-triviality of $[ \bbP^3_{k,k'} ]$ follows by considering 
 \begin{gather*}
  f = \id_{V_0} \otimes \left( \mu_{k,k'} \circ (\id_{\sigma(k)} \otimes \lev_{V_0} \otimes \id_{\sigma(k')}) \right) \otimes \id_{V_0^*}, \quad 
  h = 0, \\
  f' = \id_{V_0} \otimes \left( (\id_{\sigma(k)} \otimes \rcoev_{V_0} \otimes \id_{\sigma(k')}) \circ  \mu_{k,k'}^{-1} \right) \otimes \id_{V_0^*}, \quad 
  h' = g_0.
 \end{gather*}
 The proof of the second statement is completely analogous. \qedhere
\end{proof}

Let us discuss properties of compositions of these morphisms.

\begin{lemma}\label{L:unitarity_of_3-discs}
 For every $k \in \PGr$ we have the equalities
 \begin{gather*}
  \left[ \bbP^3_{0,k} \ast \left( \bbD^3_0 \disjun \id_{\bbS^2_k} \right) \right]
  = \left[ \bbP^3_{k,0} \ast \left( \id_{\bbS^2_k} \disjun \bbD^3_0 \right) \right]
  = \left[ \id_{\bbS^2_k} \right], \\
  \left[ \left( \overline{\bbD^3_0} \disjun \id_{\bbS^2_k} \right) \ast \overline{\bbP^3_{0,k}} \right]
  = \left[ \left( \id_{\bbS^2_k} \disjun \overline{\bbD^3_0} \right) \ast \overline{\bbP^3_{k,0}} \right]
  = \left[ \id_{\bbS^2_k} \right]\phantom{,}
 \end{gather*}
 between vectors of $\End_{\bfA_{\calC}(\varnothing)}(\bbS^2_k)$.
\end{lemma}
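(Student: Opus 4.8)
The plan is to verify each of the six claimed equalities by exhibiting an explicit diffeomorphism of $3$-dimensional cobordisms with corners that identifies the relevant vertical composition with the identity $2$-morphism of $\bbS^2_k$, together with a skein equivalence on the $\calC$-colored ribbon graphs that reduces the colored data to the identity coloring. I would treat the three equalities in the first line, and note that the second line follows by the same argument applied to the reversed cobordisms, using the description of $\overline{\bbD^3_0}$ and $\overline{\bbP^3_{k,k'}}$ given in Definitions \ref{D:3-discs} and \ref{D:3-pants}. Concretely, for $[\bbP^3_{0,k} \ast (\bbD^3_0 \disjun \id_{\bbS^2_k})]$ the underlying cobordism is the vertical gluing of $D^3 \sqcup (S^2 \times I)$ to $P^3$ along $S^2 \sqcup S^2$; this is diffeomorphic to the trivial cobordism $S^2 \times I$ because capping off one of the two incoming boundary spheres of the $3$-pant with a $3$-disc yields a cylinder (this is the standard ``cap off a pant leg'' move). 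The cohomology class $\omega_{0,k}$ was designed to vanish on all relative homology classes in the southern hemisphere, and $\omega_0$ is the unique compatible $G$-coloring of $(D^3,T_0)$, so the glued $G$-coloring agrees with the pull-back of $\vartheta_k$ from $S^2$, matching the $G$-coloring of $\id_{\bbS^2_k}$.

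On the level of ribbon graphs, the colored graph of $\bbP^3_{0,k}$ has a coupon colored by $\mu_{0,k} \in \Hom_{\calC}(\sigma(0) \otimes \sigma(k), \sigma(k))$, and gluing $\bbD^3_0$ inserts the coupon colored by the coherence isomorphism $\varepsilon \in \Hom_{\calC}(\one, \sigma(0))$. The composite $\mu_{0,k} \circ (\varepsilon \otimes \id_{\sigma(k)})$ equals $\id_{\sigma(k)}$ by the left unit coherence axiom for the monoidal functor $\sigma : \PGr \to \calC_0$ (recall $\PGr$ is the discrete category on the group $\PGr$, with tensor unit the identity element). Similarly the $V_0$- and $V_0^*$-colored edges, which in $T_0$ form a coevaluation/evaluation pair, can be straightened by an isotopy, so after skein equivalence the colored graph in $S^2 \times I$ becomes exactly $P_{\varnothing,((+,V_{i_0}),(+,\sigma(k)),(-,V_{i_0}))} \times I$, which is the ribbon set of $\id_{\bbS^2_k}$. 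Invoking Lemma \ref{L:skein_equivalence}, the class $[\bbP^3_{0,k} \ast (\bbD^3_0 \disjun \id_{\bbS^2_k})]$ therefore equals $[\id_{\bbS^2_k}]$ in $\End_{\bfA_{\calC}(\varnothing)}(\bbS^2_k)$. The equality involving $\bbP^3_{k,0}$ and $\id_{\bbS^2_k} \disjun \bbD^3_0$ is identical, using the right unit coherence axiom and $\mu_{k,0} \circ (\id_{\sigma(k)} \otimes \varepsilon) = \id_{\sigma(k)}$.

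For the second line, I would dualize: $[(\overline{\bbD^3_0} \disjun \id_{\bbS^2_k}) \ast \overline{\bbP^3_{0,k}}]$ has underlying cobordism the vertical gluing of $\overline{P^3}$ to $\overline{D^3} \sqcup (S^2 \times I)$, which is again diffeomorphic to $S^2 \times I$ by the mirror of the capping-off move, and on ribbon graphs the coupons are now colored by $\mu_{0,k}^{-1}$ and $\varepsilon^{-1}$, whose composite $(\varepsilon^{-1} \otimes \id_{\sigma(k)}) \circ \mu_{0,k}^{-1} = \id_{\sigma(k)}$ by the same coherence identity. One subtlety: since we work in $\bfA_{\calC}(\varnothing)$ rather than in $\bfA'_{\calC}(\varnothing)$, I should make sure the sources and targets here are genuinely admissible; but every $2$-sphere $\bbS^2_k$ is admissible (its ribbon set has a projective vertex colored by $V_{i_0} = V_0 \in \Proj(\calC)$, since $V_0 \in \Theta(\calC_{g_0})$ consists of objects with epic evaluation, hence simple projective), so there is no issue applying Lemma \ref{L:skein_equivalence}. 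I expect the main obstacle to be bookkeeping the $G$-colorings through the vertical gluings---verifying that the uniqueness clauses in Definitions \ref{D:3-discs} and \ref{D:3-pants} (the vanishing of $\omega_{k,k'}$ on southern-hemisphere classes, and compatibility) pin down exactly the $G$-coloring $I \times \vartheta_k$ of $S^2 \times I$ after gluing, using Remark \ref{R:gluing_cohomology}---rather than anything in the topology or the ribbon-graph calculus, both of which are routine.
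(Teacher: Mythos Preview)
Your proposal is correct and follows essentially the same approach as the paper: the paper's proof simply records the coherence identities $\mu_{0,k} \circ (\varepsilon \otimes \id_{\sigma(k)}) = \mu_{k,0} \circ (\id_{\sigma(k)} \otimes \varepsilon) = \id_{\sigma(k)}$ for the monoidal functor $\sigma$, notes that the cobordisms $(D^3 \sqcup (S^2 \times I)) \cup_{S^2 \sqcup S^2} P^3$, $((S^2 \times I) \sqcup D^3) \cup_{S^2 \sqcup S^2} P^3$, and $S^2 \times I$ are isomorphic, and invokes Lemma \ref{L:skein_equivalence}. Your write-up is more explicit about the $G$-coloring bookkeeping and the dual case, but the argument is the same.
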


\begin{proof}
 The equalities 
 \[
  \mu_{0,k} \circ (\varepsilon \otimes \id_{\sigma(k)}) = \mu_{k,0} \circ (\id_{\sigma(k)} \otimes \varepsilon) = \id_{\sigma(k)}
 \]
 follow from the definition of the coherence data for the monoidal functor $\sigma$. Therefore, the result follows from Lemma \ref{L:skein_equivalence} by 
 remarking that the cobordisms
 \[
  \left( D^3 \sqcup (S^2 \times I) \right) \cup_{S^2 \sqcup S^2} P^3, \quad
  \left( (S^2 \times I) \sqcup D^3 \right) \cup_{S^2 \sqcup S^2} P^3, \quad
  S^2 \times I
 \]
 are isomorphic.
\end{proof}

\begin{lemma}\label{L:associativity_of_3-pants}
 For all $k,k',k'' \in \PGr$ we have the equalities
 \[
  \left[ \bbP^3_{k + k',k''} \ast \left( \bbP^3_{k,k'} \disjun \id_{\bbS^2_{k''}} \right) \right] 
  = \left[ \bbP^3_{k,k' + k''} \ast \left( \id_{\bbS^2_k} \disjun \bbP^3_{k',k''} \right) \right]
 \]
 between vectors of $\Hom_{\bfA_{\calC}(\varnothing)}(\bbS^2_k \disjun \bbS^2_{k'} \disjun \bbS^2_{k''},\bbS^2_{k + k' + k''})$ and
 \[
  \left[ \left( \overline{\bbP^3_{k,k'}} \disjun \id_{\bbS^2_{k''}} \right) \ast \overline{\bbP^3_{k + k',k''}} \right]
  = \left[ \left( \id_{\bbS^2_k} \disjun \overline{\bbP^3_{k',k''}} \right) \ast \overline{\bbP^3_{k,k' + k''}} \right]
 \]
 between vectors of $\Hom_{\bfA_{\calC}(\varnothing)}(\bbS^2_{k + k' + k''},\bbS^2_k \disjun \bbS^2_{k'} \disjun 
 \bbS^2_{k''})$.
\end{lemma}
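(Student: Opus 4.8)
The plan is to prove the associativity (pentagon-type) identities for the $3$-pants by the same dimension-counting strategy that Lemma~\ref{L:3-pant} established. First I would note that, by Lemma~\ref{L:3-pant}, the morphism space $\Hom_{\bfA_{\calC}(\varnothing)}(\bbS^2_k \disjun \bbS^2_{k'} \disjun \bbS^2_{k''},\bbS^2_{k+k'+k''})$ is $1$-dimensional: indeed, it is the image under $\bfmu$ (or rather can be computed directly by the same argument as Lemma~\ref{L:3-pant}, iterating the computation once more) of a space of dimension $\delta_{(k+k'+k'')(k+k'+k'')} = 1$, the relevant count being $\dim_{\Bbbk}\Hom_{\calC}(\sigma(k)\otimes\sigma(k')\otimes\sigma(k''),V_0\otimes\sigma(k+k'+k'')\otimes V_0^*) = 1$ by the ribbon structure and the semisimplicity of $\calC_{g_0}$. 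So both sides of the first displayed equality are scalar multiples of a single generator, and it suffices to check that both scalars are equal and nonzero.

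Next I would show the common generator is nonzero and identify it. Both $[\bbP^3_{k+k',k''} \ast (\bbP^3_{k,k'} \disjun \id_{\bbS^2_{k''}})]$ and $[\bbP^3_{k,k'+k''} \ast (\id_{\bbS^2_k} \disjun \bbP^3_{k',k''})]$ are represented by a $\calC$-colored ribbon graph inside the evident iterated $3$-pant cobordism $P^3 \cup_{S^2} (P^3 \sqcup (S^2 \times I))$, with the coupons carrying the coherence morphisms $\mu_{k+k',k''}\circ(\mu_{k,k'}\otimes\id_{\sigma(k'')})$ and $\mu_{k,k'+k''}\circ(\id_{\sigma(k)}\otimes\mu_{k',k''})$ respectively (sandwiched between the same $V_0$, $V_0^*$ edges and $\lev_{V_0}$, $\rcoev_{V_0}$ caps/cups used in Lemma~\ref{L:3-pant}). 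The associativity axiom for the coherence data of the monoidal functor $\sigma:\PGr\to\calC_0$ gives
\[
 \mu_{k+k',k''}\circ(\mu_{k,k'}\otimes\id_{\sigma(k'')}) = \mu_{k,k'+k''}\circ(\id_{\sigma(k)}\otimes\mu_{k',k''}),
\]
so the two ribbon graphs are skein equivalent (in fact isotopic once the coupons are identified), and by Lemma~\ref{L:skein_equivalence} the two morphisms of $\bfA_{\calC}(\varnothing)$ coincide. Nonvanishing of this generator follows exactly as in the last paragraph of the proof of Lemma~\ref{L:3-pant}: one exhibits an explicit triviality test using $f,f'$ built from $\lev_{V_0}$, $\rcoev_{V_0}$, the $\mu$'s and their inverses, with generic auxiliary indices, whose Costantino-Geer-Patureau invariant is a nonzero scalar multiple of $\rmt_{V_0}(\id_{V_0})\,\rmd(V_0)^{\pm}$ times a product of stabilization factors.

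For the second displayed equality, involving the inverse $3$-pants, I would use the analogous statement from Lemma~\ref{L:3-pant} that $\Hom_{\bfA_{\calC}(\varnothing)}(\bbS^2_{k+k'+k''},\bbS^2_k\disjun\bbS^2_{k'}\disjun\bbS^2_{k''})$ is $1$-dimensional, together with the fact that $\overline{T_{k,k'}}$ carries $\mu_{k,k'}^{-1}$; the inverse coherence morphisms satisfy the mirror-image associativity identity (obtained by inverting the displayed relation above), so again the two representing ribbon graphs are skein equivalent and Lemma~\ref{L:skein_equivalence} finishes the argument, with nonvanishing checked by the same kind of explicit test. The main obstacle I anticipate is purely bookkeeping: carefully matching up the boundary identifications of the iterated $3$-pant cobordisms on the two sides so that the isomorphism of cobordisms with corners is genuinely the identity on the relevant $V_0$-colored strands and coupons, and making sure the $G$-colorings $\omega$ agree (they are forced to vanish on the southern hemispheres, so this is automatic once one checks the few remaining relative homology classes). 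Everything else reduces to the associativity axiom for $\sigma$ and to Lemmas~\ref{L:skein_equivalence} and~\ref{L:3-pant}.
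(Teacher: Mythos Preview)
Your core argument is the paper's: the associativity axiom for the coherence data of the monoidal functor $\sigma$ gives
\[
 \mu_{k+k',k''}\circ(\mu_{k,k'}\otimes\id_{\sigma(k'')}) = \mu_{k,k'+k''}\circ(\id_{\sigma(k)}\otimes\mu_{k',k''}),
\]
the two iterated $3$-pant cobordisms $\left(P^3 \sqcup (S^2\times I)\right)\cup_{S^2\sqcup S^2} P^3$ and $\left((S^2\times I)\sqcup P^3\right)\cup_{S^2\sqcup S^2} P^3$ are isomorphic, and then Lemma~\ref{L:skein_equivalence} finishes. That is exactly what the paper does, in two sentences.

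The surrounding scaffolding in your proposal is unnecessary. You do not need to know that the Hom space is $1$-dimensional, and you do not need a separate nonvanishing check: Lemma~\ref{L:skein_equivalence} already gives \emph{equality} of the two morphisms in $\bfA_{\calC}(\varnothing)$ once you know the $(\calC,G)$-colorings are skein equivalent on isomorphic supports. The strategy ``both lie in a $1$-dimensional space, so compare scalars'' would be relevant only if you could not directly compare the two representatives; here you can, so drop the first and last steps and keep only the middle paragraph.
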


\begin{proof}
 The equality
 \[
  \mu_{k + k',k''} \circ (\mu_{k,k'} \otimes \id_{\sigma(k'')}) = \mu_{k,k' +  k''} \circ (\id_{\sigma(k)} \otimes \mu_{k',k''})
 \]
 follows from the definition of the coherence data for the monoidal functor $\sigma$. Therefore, the result follows from Lemma \ref{L:skein_equivalence} by 
 remarking that the cobordisms
 \[
  \left( P^3 \sqcup (S^2 \times I) \right) \cup_{S^2 \sqcup S^2} P^3, \quad
  \left( (S^2 \times I) \sqcup P^3 \right) \cup_{S^2 \sqcup S^2} P^3
 \]
 are isomorphic.
\end{proof}

\begin{lemma}\label{L:3-pants_composition_1}
 For all $k,k' \in \PGr$ we have the equality
 \[
  \left[ \overline{\bbP^3_{k,k'}} \ast \bbP^3_{k,k'} \right] = \calD \rmd(V_0)^{-1} \cdot \left[ \id_{\bbS^2_k} \disjun \id_{\bbS^2_{k'}} \right]
 \]
 between vectors of $\End_{\bfA_{\calC}(\varnothing)}(\bbS^2_k \disjun \bbS^2_{k'})$.
\end{lemma}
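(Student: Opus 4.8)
The statement to prove is
\[
 \left[ \overline{\bbP^3_{k,k'}} \ast \bbP^3_{k,k'} \right] = \eta^{-1} \rmd(V_0)^{-1} \cdot \left[ \id_{\bbS^2_k} \disjun \id_{\bbS^2_{k'}} \right]
\]
as morphisms of $\End_{\bfA_{\calC}(\varnothing)}(\bbS^2_k \disjun \bbS^2_{k'})$. The natural strategy is the same as the one already used repeatedly in this chapter: identify the underlying decorated cobordism with corners of the vertical composite $\overline{\bbP^3_{k,k'}} \ast \bbP^3_{k,k'}$, recognize it up to isomorphism and skein equivalence as a standard surgery situation, and then read off the scalar via Lemma \ref{L:surgery_axioms} (the surgery axioms) combined with Lemma \ref{L:skein_equivalence}. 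First I would note that $P^3 \cup_{S^2} \overline{P^3}$ is diffeomorphic rel boundary to the disjoint union $(S^2 \times I) \sqcup (S^2 \times I)$ with a $1$-handle attached connecting the two slabs — more precisely, it is obtained from $\id_{\bbS^2_k} \disjun \id_{\bbS^2_{k'}}$ by a single index $1$ surgery, since gluing a pair of pants to an inverted pair of pants along the single outgoing $S^2$ produces exactly the connected sum picture, i.e. $S^2 \times I$ with a tube joining the two copies of $S^2 \times I$.

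Next I would track the decoration. The $\calC$-colored ribbon graph of $\bbP^3_{k,k'}$ carries the coherence morphism $\mu_{k,k'} \in \Hom_{\calC}(\sigma(k) \otimes \sigma(k'),\sigma(k+k'))$, and that of $\overline{\bbP^3_{k,k'}}$ carries $\mu_{k,k'}^{-1}$ on the inverted coupon; when we glue along $S^2$ these two coupons meet along the $\sigma(k+k')$-colored edge and compose to $\mu_{k,k'}^{-1} \circ \mu_{k,k'} = \id_{\sigma(k) \otimes \sigma(k')}$. Using Lemma \ref{L:skein_equivalence}, this lets me cancel the two coupons and reduce $\overline{\bbP^3_{k,k'}} \ast \bbP^3_{k,k'}$ to the decorated cobordism obtained from $\id_{\bbS^2_k} \disjun \id_{\bbS^2_{k'}}$ purely by the index $1$ surgery, where the surgery $1$-morphism is the $i_0$-colored one (recall $\bbS^2_k = \bbS^2_{i_0,i_0,k}$ and the distinguished object is $V_0 = V_{i_0}$, of generic index $g_0$, simple projective). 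Then applying Lemma \ref{L:surgery_axioms} with color $i = i_{g_0}$ and index $j = 1$ gives the scalar $\lambda_{i_0,1} = (\eta \rmd(V_0))^{-1} = \eta^{-1} \rmd(V_0)^{-1}$, which is exactly the claimed coefficient. This mirrors the structure of the proof of Lemma \ref{L:3-discs_composition}, which was the $k = k' = 0$ shadow of the present statement.

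The main obstacle I anticipate is the bookkeeping for \emph{which} index $1$ surgery produces the connected sum and verifying that the edge colors match up so that Lemma \ref{L:surgery_axioms} applies literally: I need the tube connecting the two copies of $S^2 \times I$ to be decorated by the two $V_0$-colored edges (the ``doubled'' edge of $\bbA_{i_0,1}$ versus the split edge of $\bbB_{i_0,1}$), and I need the $G$-colorings to be compatible along the gluing — this requires invoking that the $G$-coloring $\omega_{k,k'}$ of $\bbP^3_{k,k'}$ was chosen to vanish on relative homology classes in the southern hemisphere, so that the glued-up $G$-coloring on $P^3 \cup_{S^2} \overline{P^3}$ is exactly the one appearing in the definition of the belt $2$-morphism $\bbB_{i_0,1}$ and its attaching $2$-morphism $\bbA_{i_0,1}$. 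Once these identifications are made, the cobordism-level isomorphism plus Lemma \ref{L:skein_equivalence} and Lemma \ref{L:surgery_axioms} close the argument with no further computation; the proof should be only a few lines, in the style of Lemmas \ref{L:unitarity_of_3-discs} and \ref{L:associativity_of_3-pants}.
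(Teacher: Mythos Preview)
Your approach is correct but takes a genuinely different route from the paper. The paper does \emph{not} invoke Lemma~\ref{L:surgery_axioms} directly. Instead it uses the monoidal--functor coherence
\[
 \mu_{k,k'}^{-1} \circ \mu_{k,k'} = \bigl( (\id_{\sigma(k)} \otimes \varepsilon^{-1}) \circ \mu_{k,0}^{-1} \bigr) \otimes \bigl( \mu_{0,k'} \circ (\varepsilon \otimes \id_{\sigma(k')}) \bigr)
\]
together with an isomorphism of underlying cobordisms to rewrite
\[
 \bigl[ \overline{\bbP^3_{k,k'}} \ast \bbP^3_{k,k'} \bigr]
 = \bigl[ (\id_{\bbS^2_k} \disjun \bbP^3_{0,k'}) \ast (\overline{\bbP}^3_{k,0} \disjun \id_{\bbS^2_{k'}}) \bigr],
\]
and then finishes by inserting $\bbD^3_0 \ast \overline{\bbD^3_0}$ on the intermediate $\bbS^2_0$ factor and applying Lemmas~\ref{L:3-discs_composition} and~\ref{L:unitarity_of_3-discs}. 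So the surgery scalar enters only indirectly, through the already--established Lemma~\ref{L:3-discs_composition}.

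Your argument is more geometric and bypasses these intermediate lemmas entirely, at the cost of having to identify the $1$--handle carefully. One point worth making explicit in your write--up: the belt sphere of the $1$--handle is \emph{not} the gluing $S^2$ (the outer boundary of $P^3$). Through that neck, after cancelling the coupons, four strands pass (the outer $V_0$ and $V_0^*$ strands together with the $\sigma(k)$-- and $\sigma(k')$--strands). The correct belt sphere is the transverse separating sphere (the $x=0$ sphere in the model $D^3$), which is crossed only by the two $V_0$--colored arcs coming from $\lev_{V_0}$ in $\bbP^3_{k,k'}$ and $\rcoev_{V_0}$ in $\overline{\bbP^3_{k,k'}}$; this is exactly what matches $\bbB_{i_0,1}$. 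With that identification in hand, your plan goes through as stated.
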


\begin{proof}
 The equality
 \[
  \mu_{k,k'}^{-1} \circ \mu_{k,k'} = \left( (\id_{\sigma(k)} \otimes \varepsilon^{-1}) \circ \mu_{k,0}^{-1} \right) \otimes \left( \mu_{0,k'} \circ (\varepsilon \otimes \id_{\sigma(k')}) \right)
 \]
 follows from the definition of the coherence data for the monoidal functor $\sigma$. Therefore, the equality 
 \[
  \left[ \overline{\bbP^3_{k,k'}} \ast \bbP^3_{k,k'} \right] =
  \left[ \left( \id_{\bbS^2_k} \disjun \bbP^3_{0,k'} \right) \ast \left( \overline{\bbP}^3_{k,0} \disjun \id_{\bbS^2_{k'}} \right) \right]
 \]
 between vectors of $\End_{\bfA_{\calC}(\varnothing)}(\bbS^2_k \disjun \bbS^2_{k'})$ follows from Lemma \ref{L:skein_equivalence} by remarking that the cobordisms
 \[
  P^3 \cup_{S^2} \overline{P^3}, \quad
  \left( (S^2 \times I) \sqcup \overline{P^3} \right) \cup_{S^2 \sqcup S^2 \sqcup S^2} 
  \left( P^3 \sqcup (S^2 \times I) \right)
 \]
 are isomorphic. The result now follows from Lemmas \ref{L:3-discs_composition} and \ref{L:unitarity_of_3-discs}.
\end{proof}

\begin{lemma}\label{L:3-pants_composition_2}
 For all $k,k' \in \PGr$ we have the equality 
 \[
  \left[ \bbP^3_{k,k'} \ast \overline{\bbP^3_{k,k'}} \right] = \calD \rmd(V_0)^{-1} \cdot \left[ \id_{\bbS^2_{k + k'}} \right]
 \]
 between vectors of $\End_{\bfA_{\calC}(\varnothing)}(\bbS^2_{k + k'})$.
\end{lemma}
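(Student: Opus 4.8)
The plan is to avoid any direct surgery computation and instead bootstrap from the companion Lemma~\ref{L:3-pants_composition_1}, exploiting the fact that the relevant $\Hom$-spaces in $\bfA_{\calC}(\varnothing)$ are one-dimensional. First I would observe that, by Lemma~\ref{L:2-sphere} applied with $i = j = i_0$ and periodicity index $k+k'$, the space $\End_{\bfA_{\calC}(\varnothing)}(\bbS^2_{k+k'})$ is one-dimensional and spanned by $[\id_{\bbS^2_{k+k'}}]$. Hence there is a unique scalar $\lambda \in \Bbbk$ with $[\bbP^3_{k,k'} \ast \overline{\bbP^3_{k,k'}}] = \lambda \cdot [\id_{\bbS^2_{k+k'}}]$, and the whole content of the lemma is the identification $\lambda = \eta^{-1}\rmd(V_0)^{-1}$.

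To pin down $\lambda$ I would compute the morphism of $\Hom_{\bfA_{\calC}(\varnothing)}(\bbS^2_k \disjun \bbS^2_{k'},\bbS^2_{k+k'})$ obtained by post-composing $\bbP^3_{k,k'}$ with the endomorphism $\bbP^3_{k,k'} \ast \overline{\bbP^3_{k,k'}}$, in two ways. Parenthesizing as $[(\bbP^3_{k,k'} \ast \overline{\bbP^3_{k,k'}}) \ast \bbP^3_{k,k'}]$ gives $\lambda \cdot [\bbP^3_{k,k'}]$. Parenthesizing the other way, associativity of composition in the linear category $\bfA_{\calC}(\varnothing)$ together with Lemma~\ref{L:3-pants_composition_1} gives
\[
 [\bbP^3_{k,k'} \ast (\overline{\bbP^3_{k,k'}} \ast \bbP^3_{k,k'})] = [\bbP^3_{k,k'}] \circ \bigl( \eta^{-1}\rmd(V_0)^{-1} \cdot [\id_{\bbS^2_k} \disjun \id_{\bbS^2_{k'}}] \bigr) = \eta^{-1}\rmd(V_0)^{-1} \cdot [\bbP^3_{k,k'}].
\]
Comparing the two expressions yields $\lambda \cdot [\bbP^3_{k,k'}] = \eta^{-1}\rmd(V_0)^{-1} \cdot [\bbP^3_{k,k'}]$ inside $\Hom_{\bfA_{\calC}(\varnothing)}(\bbS^2_k \disjun \bbS^2_{k'},\bbS^2_{k+k'})$. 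To cancel $[\bbP^3_{k,k'}]$ I need it to be nonzero, which is exactly the non-triviality established in the proof of Lemma~\ref{L:3-pant} (and follows a fortiori from Lemma~\ref{L:3-pants_composition_1}, since $[\overline{\bbP^3_{k,k'}} \ast \bbP^3_{k,k'}] \neq 0$); and $\eta^{-1}\rmd(V_0)^{-1}$ is itself a nonzero element of $\Bbbk$, because $\calD$ is a square root of $\Delta_-\Delta_+ \neq 0$ by Proposition~\ref{P:non-degeneracy_of_relative_modular_categories}, and $\rmd(V_0) \neq 0$ since $V_0 = V_{i_0} \in \Theta(\calC_{g_0})$ is a simple projective object with epic evaluation and $\rmt$ is a non-zero trace. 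Cancelling by the nonzero vector $[\bbP^3_{k,k'}]$ then gives $\lambda = \eta^{-1}\rmd(V_0)^{-1}$, which is the claim.

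I do not expect a serious obstacle; the only point that deserves care is the bookkeeping in the triple composite, namely that vertical composition is honestly associative on the nose for the cobordisms in question. This is immediate here: $H_1(S^2;\R) = 0$, so every Lagrangian in sight is the zero space and every Maslov index vanishes, and all four-tuples involved carry signature defect $0$; equivalently, one may simply invoke associativity of composition in the linear category $\bfA_{\calC}(\varnothing)$ and pass to classes. As an alternative route one could, in principle, prove the identity directly: the underlying cobordism of $\bbP^3_{k,k'} \ast \overline{\bbP^3_{k,k'}}$ is $\overline{P^3} \cup_{S^2 \sqcup S^2} P^3$, diffeomorphic to $S^1 \times S^2$ with two open balls removed, so a $0$-framed unknot is a surgery presentation and the index-$2$ surgery axiom of Proposition~\ref{P:surgery_axioms} plus a short skein evaluation would recover $\lambda$; but this only reproves work already packaged in Lemmas~\ref{L:3-pant} and~\ref{L:3-pants_composition_1}, so the bootstrap argument above is preferable.
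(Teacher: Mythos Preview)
Your proof is correct and follows essentially the same approach as the paper: posit a scalar using the one-dimensionality of $\End_{\bfA_{\calC}(\varnothing)}(\bbS^2_{k+k'})$, then pin it down via associativity and Lemma~\ref{L:3-pants_composition_1}. The only cosmetic difference is that the paper considers the quadruple composite $\overline{\bbP^3_{k,k'}} \ast \bbP^3_{k,k'} \ast \overline{\bbP^3_{k,k'}} \ast \bbP^3_{k,k'}$ and cancels against $[\id_{\bbS^2_k} \disjun \id_{\bbS^2_{k'}}]$, whereas you use the triple composite $\bbP^3_{k,k'} \ast \overline{\bbP^3_{k,k'}} \ast \bbP^3_{k,k'}$ and cancel against $[\bbP^3_{k,k'}]$; both cancellations rest on non-triviality facts already established in Lemmas~\ref{L:2-sphere} and~\ref{L:3-pant}.
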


\begin{proof}
 Since $\dim_{\Bbbk} \End_{\bfA_{\calC}(\varnothing)}(\bbS^2_{k + k'}) = 1$, we must have 
 \[
  \left[ \bbP^3_{k,k'} \ast \overline{\bbP^3_{k,k'}} \right] = \alpha \cdot \left[ \id_{\bbS^2_{k + k'}} \right]
 \]
 for some $\alpha \in \Bbbk$. Thanks to Lemma \ref{L:3-pants_composition_1}, we have the chain of equalities
 \begin{align*}
  \left[ \id_{\bbS^2_k} \disjun \id_{\bbS^2_{k'}} \right] & = 
  \calD^{-2} \rmd(V_0)^2 \cdot \left[ \overline{\bbP^3_{k,k'}} \ast \bbP^3_{k,k'} 
  \ast \overline{\bbP^3_{k,k'}} \ast \bbP^3_{k,k'} \right] \\
  & = \calD^{-2} \rmd(V_0)^2 \alpha \cdot \left[ \overline{\bbP^3_{k,k'}} \ast \bbP^3_{k,k'} \right] \\
  & = \calD^{-1} \rmd(V_0) \alpha \cdot \left[ \id_{\bbS^2_k} \disjun \id_{\bbS^2_{k'}} \right],
 \end{align*}
 which implies $\alpha = \calD \rmd(V_0)^{-1}$.
\end{proof}

Remark that, thanks to Lemmas \ref{L:3-pants_composition_1} and \ref{L:3-pants_composition_2}, the objects $\bbS^2_k \disjun \bbS^2_{k'}$ and $\bbS^2_{k + k'}$ of $\bfA_{\calC}(\varnothing)$ are isomorphic. Then, let us consider $k,k' \in \PGr$.

\begin{definition}\label{D:twisted_3-pant}
 The \textit{twisted $(k,k')$-colored $3$-pant} $\tilde{\bbP}^3_{k,k'} : \bbS^2_k \disjun \bbS^2_{k'} \Rightarrow \bbS^2_{k + k'}$ is the $2$-mor\-phism of $\bfadCob_{\calC}$ given by 
 \[
  \bbP^3_{k',k} \ast \cobbr_{\bbS^2_k,\bbS^2_{k'}},
 \]
 where the braiding $2$-morphism $\cobbr_{\bbS^2_k,\bbS^2_{k'}}$ is given in Definition \ref{D:braiding_2-morphism}.
\end{definition}

Let us consider the unique bilinear map $\gamma : \PGr \times \PGr \rightarrow \Z^*$ satisfying
\[
 \gamma(k,k') \cdot \mu_{k,k'} = \mu_{k',k} \circ c_{\sigma(k),\sigma(k')}. 
\]

\begin{lemma}\label{L:twisted_3-pant_lemma}
 For all $k,k' \in \PGr$ we have the equality
 \[
  \left[ \tilde{\bbP}^3_{k,k'} \right] = \gamma(k,k') \cdot \left[ \bbP^3_{k,k'} \right]
 \]
 between vectors of $\Hom_{\bfA_{\calC}(\varnothing)}(\bbS^2_k \disjun \bbS^2_{k'},\bbS^2_{k + k'})$.
\end{lemma}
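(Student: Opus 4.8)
The plan is to argue exactly as in the proof of Lemma~\ref{L:unitarity_of_3-discs} and Lemma~\ref{L:associativity_of_3-pants}: reduce the identity between $2$-morphisms of $\bfadCob_{\calC}$ to an identity of morphisms in $\calC$, namely the one defining the scalar $\gamma(k,k')$, and then invoke skein equivalence (Lemma~\ref{L:skein_equivalence}) to conclude the corresponding identity in $\bfA_{\calC}(\varnothing)$. The starting point is that, by Lemma~\ref{L:3-pant}, the space $\Hom_{\bfA_{\calC}(\varnothing)}(\bbS^2_k \disjun \bbS^2_{k'},\bbS^2_{k+k'})$ is one-dimensional, spanned by $[\bbP^3_{k,k'}]$; hence both sides of the claimed equality are scalar multiples of $[\bbP^3_{k,k'}]$, and it suffices to identify the scalar.

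First I would unpack the definition $\tilde{\bbP}^3_{k,k'} := \bbP^3_{k',k} \ast \cobbr_{\bbS^2_k,\bbS^2_{k'}}$. The braiding $2$-morphism $\cobbr_{\bbS^2_k,\bbS^2_{k'}}$ is the flip cobordism with corners commuting $S^2$ past $S^2$ (Definition~\ref{D:braiding_2-morphism}), decorated trivially. Composing vertically with $\bbP^3_{k',k}$ and using the canonical isomorphism of the underlying $3$-dimensional cobordisms with corners (the flip followed by the $3$-pant is isomorphic to the $3$-pant itself), the composite $2$-morphism has support isomorphic to $P^3$, and its $\calC$-colored ribbon graph is obtained from the graph $T_{k',k}$ of $\bbP^3_{k',k}$ by passing the two incoming strands across each other, which introduces a braiding $c_{\sigma(k),\sigma(k')}$ before the coupon colored $\mu_{k',k}$. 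Thus, under the identification of supports, $\tilde{\bbP}^3_{k,k'}$ is represented by $(P^3, T'_{k,k'}, \omega_{k,k'}, 0)$ where the coupon of $T'_{k,k'}$ carries the color $\mu_{k',k} \circ c_{\sigma(k),\sigma(k')}$ in place of $\mu_{k,k'}$ (all other data being unchanged up to isotopy).

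Next I would apply the defining relation of $\gamma$, namely $\gamma(k,k') \cdot \mu_{k,k'} = \mu_{k',k} \circ c_{\sigma(k),\sigma(k')}$, so that the coupon color of $T'_{k,k'}$ equals $\gamma(k,k') \cdot \mu_{k,k'}$, i.e.\ $T'_{k,k'}$ is, as a formal linear combination of $\calC$-colored ribbon graphs, equal to $\gamma(k,k') \cdot T_{k,k'}$ up to isotopy. Restricting to the local $3$-disc around the coupon where this replacement occurs and invoking the definition of skein equivalence (the colors differ by a scalar, which is visible to the Reshetikhin-Turaev functor $F_{\calC}$), we get that the $(\calC,G)$-coloring underlying $\tilde{\bbP}^3_{k,k'}$ is skein equivalent to $\gamma(k,k')$ times the $(\calC,G)$-coloring underlying $\bbP^3_{k,k'}$. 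Lemma~\ref{L:skein_equivalence} then yields $[\tilde{\bbP}^3_{k,k'}] = \gamma(k,k') \cdot [\bbP^3_{k,k'}]$ as morphisms of $\bfA_{\calC}(\varnothing)$, which is the claim.

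The only delicate point — and the step I expect to require the most care — is the precise bookkeeping of the underlying cobordisms with corners: one must check that the flip cobordism $\cobbr_{\bbS^2_k,\bbS^2_{k'}}$ glued vertically below $P^3$ (with the appropriate outgoing/incoming identifications of the two $S^2$ boundary components) is indeed isomorphic, as a decorated cobordism with corners, to $P^3$ with its two incoming strands braided, and that the $G$-coloring $\omega_{k,k'}$ and the signature defect $0$ are unaffected. This is a routine but notation-heavy verification of the kind already performed in Lemmas~\ref{L:unitarity_of_3-discs}, \ref{L:associativity_of_3-pants}, \ref{L:3-pants_composition_1}, so I would present it briefly by appeal to those analogues rather than in full detail. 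Everything else is an immediate consequence of Lemma~\ref{L:3-pant} (one-dimensionality) and the definition of $\gamma$.
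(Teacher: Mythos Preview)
Your approach differs from the paper's, and as written it has a real gap. You treat the ribbon graph $T_{k,k'}$ of $\bbP^3_{k,k'}$ as if it consisted only of the two $\sigma$-colored strands meeting at the coupon $\mu_{k,k'}$. But recall that each incoming sphere $\bbS^2_k = \bbS^2_{i_0,i_0,k}$ carries \emph{three} marked points colored $V_0$, $\sigma(k)$, $V_0^*$, so $T_{k,k'}$ has six incoming strands, and (as seen in the proof of Lemma~\ref{L:3-pant}) it involves a $\lev_{V_0}$ between the inner $V_0^*,V_0$ strands and passes the outer $V_0, V_0^*$ through. When the braiding $\cobbr_{\bbS^2_k,\bbS^2_{k'}}$ swaps the two spheres, it swaps all six strands, not just the two $\sigma$-colored ones. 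After absorbing the $\sigma(k)$--$\sigma(k')$ crossing into the coupon via the defining relation for $\gamma$, you are still left with crossings of the $\sigma(k)$-edge over the $V_0$-colored edges coming from the other sphere; your claim that ``all other data [is] unchanged up to isotopy'' is not justified and is where the argument breaks down.

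The paper takes a different route: rather than attempting a direct skein equivalence on $P^3$, it invokes Lemma~\ref{L:triviality_lemma} to test the identity by pairing against all admissible closing data, reducing to a comparison of projective traces of $F_\calC(T_{f,f',h+h'',h'+h'''})$ versus $F_\calC(T_{\tilde f,f',h+h'',h'+h'''})$. In that closed setting the extra $\sigma(k)$--$V_0$ crossings are visible, and the paper removes them by applying the compatibility condition between the $G$-structure and the $\PGr$-action twice (once for the $V_0$-edge, once for the $V_0^*$-edge, so that the $\psi$-factors cancel), after which only the $\gamma(k,k')$ factor survives. Your direct approach could in principle be repaired by carrying out this same $V_0$-crossing analysis at the level of the open $3$-pant, but that is precisely the missing work, and it is not as routine as the analogous steps in Lemmas~\ref{L:unitarity_of_3-discs} and~\ref{L:associativity_of_3-pants}, where no braiding (and hence no $V_0$-crossing) is involved.
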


\begin{proof}
 We know $\left[ \bbP^3_{k,k'} \right]$ generates $\Hom_{\bfA_{\calC}(\varnothing)}(\bbS^2_k \disjun \bbS^2_{k'},\bbS^2_{k + k'})$, so there have to exist coefficients $\alpha_{k,k'} \in \Bbbk$ for all $k,k' \in \PGr$ satisfying
 \[
  \left[ \tilde{\bbP}^3_{k,k'} \right] = \alpha_{k,k'} \cdot \left[ \bbP^3_{k,k'} \right].
 \]
 In order to compute $\alpha_{k,k'}$, we can appeal to Lemma \ref{L:triviality_lemma}, so let us specify $\smash{\overline{S^2}}$ as a $2$-di\-men\-sion\-al cobordism from $\varnothing$ to $\varnothing$, let us specify $\overline{P^3}$ as a $3$-di\-men\-sion\-al cobordism from $\varnothing$ to $S^2 \sqcup S^2 \sqcup \overline{S^2}$, and let us specify $S^2 \times I$ as a $3$-di\-men\-sion\-al cobordism from $S^2 \sqcup \overline{S^2}$ to $\varnothing$. 
 Up to isotopy, skein equivalence, and multiplication by invertible scalars, we only need to compare the projective traces of the morphisms $F_{\calC}(T_{f,f',g + g'',g' + g'''})$ and $F_{\calC}(T_{\tilde{f},f',g + g'',g' + g'''})$ for all
 \begin{gather*}
  f' \in \Hom_{\calC} \left( V_0 \otimes \sigma(k'') \otimes V_0^*,V_0 \otimes \sigma(k) \otimes V_0^* \otimes V_0 \otimes \sigma(k') \otimes V_0^* \right), \\
 g'' \in \{ -g \} + (G \smallsetminus X), \quad g''' \in \{ -g' \} + (G \smallsetminus X),
 \end{gather*}
 where the $\calC$-colored ribbon graph $T_{f,f',g + g'',g' + g'''}$ is represented in the left-hand part of Figure \ref{F:3-pant_lemma_2}, where $k'' = k + k'$, where $g = g' = 0$, and where $f$ and $\tilde{f}$ are given by the evaluation of the Reshetikhin-Turaev functor $F_{\calC}$ against the $\calC$-colored ribbon graphs represented in the left-hand part and in the right-hand part of Figure \ref{F:twisted_3-pant_lemma} respectively.
 \begin{figure}[tb]\label{F:twisted_3-pant_lemma}
  \centering
  \includegraphics{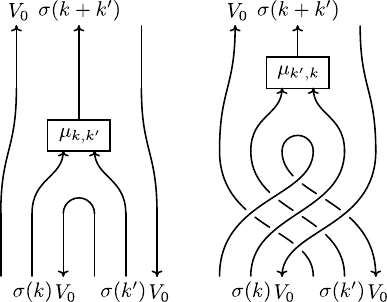}
  \caption{$\calC$-Colored ribbon graphs representing $f$ and $\tilde{f}$ respectively.}
 \end{figure} 
 By using twice the compatibility condition between the $G$-structure and the $\PGr$-action, we can pull the $\sigma(k)$-colored edge in the graphical representation of $\tilde{f}$ on top of all $V_0$-colored edges. This means
 \[
  T_{\tilde{f},f',g + g'',g' + g'''} \doteq \gamma(k,k') \cdot T_{f,f',g + g'',g' + g'''}. \qedhere
 \]
\end{proof}

\section{Graded extended universal construction}\label{S:graded_extended_universal_construction}

In this section we define a $\PGr$-graded refinement of the $2$-functors $\bfA_{\calC}$ and $\bfA'_{\calC}$ based on the results of the previous sections. What we proved so far can be summarized as follows: we have a realization $\bbS^2$ of $\PGr$ in $\bfA_{\calC}(\varnothing) = \bfA'_{\calC}(\varnothing)$ sending every $k \in \PGr$ to $\bbS^2_k \in \bfA_{\calC}(\varnothing) = \bfA'_{\calC}(\varnothing)$, with coherence data provided by the $0$-colored $3$-discs $\bbD^3_0$, and by the $(k,k')$-colored $3$-pants $\bbP^3_{k,k'}$. Now, recall the definition of the $2$-transformations $\bfmu : \sqtimes \circ \left( \bfA_{\calC} \times \bfA_{\calC} \right) \Rightarrow \bfA_{\calC} \circ \disjun$ and $\bfmu' : \sqtimes \circ \left( \bfA'_{\calC} \times \bfA'_{\calC} \right) \Rightarrow \bfA'_{\calC} \circ \disjun$ given in Proposition \ref{P:lax_monoidality_ETQFT}.

\begin{proposition}\label{P:action_by_spheres}
 For every object $\bbGamma$ of $\bfadCob_{\calC}$, we have an action $R^{\bbS^2}$ of $\PGr$ on $\bfA_{\calC}(\bbGamma)$ determined by the right translation linear endofunctors 
 \[
  R^{\bbS^2_k} := \bfmu_{\bbGamma,\varnothing} \circ (\id_{\bfA_{\calC}(\bbGamma)} \times \bbS^2_k) \in \End_{\Bbbk}(\bfA_{\calC}(\bbGamma))
 \]
 for every $k \in \PGr$, and similarly we have an action ${R'}^{\bbS^2}$ of $\PGr$ on $\bfA'_{\calC}(\bbGamma)$ determined by the right translation linear endofunctors 
 \[ R'^{\bbS^2_k} := \bfmu'_{\bbGamma,\varnothing} \circ (\id_{\bfA'_{\calC}(\bbGamma)} \times \bbS^2_k) \in \End_{\Bbbk}(\bfA'_{\calC}(\bbGamma))
 \]
 for every $k \in \PGr$.
\end{proposition}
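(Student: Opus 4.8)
The plan is to unwind the definition of a group action recorded in Section~\ref{S:group_actions} and to observe that every ingredient has already been prepared by the lemmas of the present chapter. By definition, an action of $\PGr$ on the linear category $\bfA_{\calC}(\bbGamma)$ is a monoidal functor $R\colon\PGr\to\End_{\Bbbk}(\bfA_{\calC}(\bbGamma))$, where $\End_{\Bbbk}(\bfA_{\calC}(\bbGamma))$ is monoidal under composition with tensor unit $\id_{\bfA_{\calC}(\bbGamma)}$. Since $\PGr$ is a discrete category, such a functor is precisely the following package of data: the linear endofunctors $R^{\bbS^2_k}$, which are already specified in the statement; an invertible coherence $2$-morphism $\varepsilon^{\bbS^2}\colon\id_{\bfA_{\calC}(\bbGamma)}\Rightarrow R^{\bbS^2_0}$; and, for all $k,k'\in\PGr$, an invertible natural transformation $\mu^{\bbS^2}_{k,k'}\colon R^{\bbS^2_k}\circ R^{\bbS^2_{k'}}\Rightarrow R^{\bbS^2_{k+k'}}$; subject only to the unit and associativity coherence axioms for a monoidal functor. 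There is no naturality-in-morphisms condition to verify, because $\PGr$ has only identity morphisms. So the whole proof consists in constructing $\varepsilon^{\bbS^2}$ and $\mu^{\bbS^2}_{k,k'}$ and then checking those two axioms; the dual statement follows by replacing every object and $2$-morphism of $\bfA_{\calC}$ appearing below with its primed counterpart, since every lemma invoked holds verbatim for dual universal linear categories.

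First I would record the elementary identifications that follow from the definition of $\bfmu_{\bbGamma,\varnothing}$, together with the invertible coherence $2$-morphisms of the $2$-transformation $\bfmu$ and the strict associator and unitors of the monoidal $2$-category $\bfadCob_{\calC}$: namely $R^{\bbS^2_k}(\bbSigma_{\bbGamma})=\bbSigma_{\bbGamma}\disjun\bbS^2_k$, $R^{\bbS^2_k}([\bbM_{\bbGamma}])=[\bbM_{\bbGamma}\disjun\id_{\bbS^2_k}]$, and $(R^{\bbS^2_k}\circ R^{\bbS^2_{k'}})(\bbSigma_{\bbGamma})=\bbSigma_{\bbGamma}\disjun(\bbS^2_{k'}\disjun\bbS^2_k)$. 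Then I would set the component of $\varepsilon^{\bbS^2}$ at $\bbSigma_{\bbGamma}$ equal to $[\id_{\bbSigma_{\bbGamma}}\disjun\bbD^3_0]$, and the component of $\mu^{\bbS^2}_{k,k'}$ at $\bbSigma_{\bbGamma}$ equal to $[\id_{\bbSigma_{\bbGamma}}\disjun\bbP^3_{k',k}]$, the transposition of the two indices being immaterial since $\PGr$ is abelian. Naturality of these two families in $\bbSigma_{\bbGamma}$ is immediate from the interchange law in $\bfadCob_{\calC}$ and the bilinearity of $\disjun$. Their invertibility is exactly where the composition lemmas of Sections~\ref{S:3-discs} and \ref{S:3-pants} enter: by Lemma~\ref{L:3-discs_composition} the morphism $(\eta\,\rmd(V_0))^{-1}\cdot[\id_{\bbSigma_{\bbGamma}}\disjun\overline{\bbD^3_0}]$ is a two-sided inverse of $[\id_{\bbSigma_{\bbGamma}}\disjun\bbD^3_0]$, and by Lemmas~\ref{L:3-pants_composition_1} and \ref{L:3-pants_composition_2} the morphism $\eta\,\rmd(V_0)\cdot[\id_{\bbSigma_{\bbGamma}}\disjun\overline{\bbP^3_{k',k}}]$ is a two-sided inverse of $[\id_{\bbSigma_{\bbGamma}}\disjun\bbP^3_{k',k}]$. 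Here one uses $\eta\neq 0$ and $\rmd(V_0)=\rmt_{V_0}(\id_{V_0})\neq 0$, the latter because $V_0$ is a simple projective object with epic evaluation and $\rmt$ is a non-zero trace.

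It then remains to check the two coherence axioms for $R^{\bbS^2}$. Applying $[\id_{\bbSigma_{\bbGamma}}\disjun(-)]$ and using functoriality of $\disjun$ together with the interchange law, the unit triangles reduce to the equalities $[\bbP^3_{0,k}\ast(\bbD^3_0\disjun\id_{\bbS^2_k})]=[\bbP^3_{k,0}\ast(\id_{\bbS^2_k}\disjun\bbD^3_0)]=[\id_{\bbS^2_k}]$, which are exactly Lemma~\ref{L:unitarity_of_3-discs}, while the associativity pentagon reduces to $[\bbP^3_{k+k',k''}\ast(\bbP^3_{k,k'}\disjun\id_{\bbS^2_{k''}})]=[\bbP^3_{k,k'+k''}\ast(\id_{\bbS^2_k}\disjun\bbP^3_{k',k''})]$, which is exactly Lemma~\ref{L:associativity_of_3-pants}. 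This shows that $(R^{\bbS^2_k},\varepsilon^{\bbS^2},\mu^{\bbS^2})$ is an action of $\PGr$ on $\bfA_{\calC}(\bbGamma)$; running the identical argument with $\bfmu'$ and the dual universal linear categories in place of $\bfmu$ and the universal ones yields the asserted action of $\PGr$ on $\bfA'_{\calC}(\bbGamma)$.

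I do not expect a serious obstacle: the mathematical substance lies in the lemmas of Sections~\ref{S:2-spheres}--\ref{S:3-pants}, and the present statement is essentially a packaging result. The one point calling for a little care is the bookkeeping in the second paragraph, that is, translating the composite of two $3$-pants (resp.\ of a $3$-pant and a $3$-disc) inside $\bfadCob_{\calC}$ into the commutativity of the relevant coherence square for the functor $R^{\bbS^2}$; this rests on identifying $R^{\bbS^2_k}\circ R^{\bbS^2_{k'}}$ with right translation by $\bbS^2_{k'}\disjun\bbS^2_k$ through the coherence $2$-morphisms of $\bfmu$ and the strict associator of $\disjun$, and I expect it to be routine under the strictification conventions already in force. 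The twisted $3$-pant of Definition~\ref{D:twisted_3-pant} and Lemma~\ref{L:twisted_3-pant_lemma} are not needed here; they become relevant only later, when this action is compared with the symmetric monoidal structure.
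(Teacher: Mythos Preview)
Your proposal is correct and follows the same approach as the paper, which simply states that the proof is given by Lemmas~\ref{L:unitarity_of_3-discs}, \ref{L:associativity_of_3-pants}, \ref{L:3-pants_composition_1}, and \ref{L:3-pants_composition_2}; you have spelled out explicitly how these lemmas furnish the coherence data and axioms of a monoidal functor $\PGr\to\End_{\Bbbk}(\bfA_{\calC}(\bbGamma))$. Your additional invocation of Lemma~\ref{L:3-discs_composition} (together with the remark following it) for the invertibility of $\varepsilon^{\bbS^2}$ is appropriate and implicitly needed, even though the paper does not list it among the cited lemmas.
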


The proof of Proposition \ref{P:action_by_spheres} is given by Lemmas \ref{L:unitarity_of_3-discs}, \ref{L:associativity_of_3-pants}, \ref{L:3-pants_composition_1}, and \ref{L:3-pants_composition_2}. We are now ready to upgrade the extended universal construction by means of an operation called \textit{$\PGr$-graded extension}, which is introduced here in Definition \ref{D:graded_extension}. Let us consider an object $\bbGamma$ of $\bfadCob_{\calC}$.

\begin{definition}\label{D:univ_gr_linear_category}
 The \textit{universal $\PGr$-graded linear category of $\bbGamma$} is the $\PGr$-graded linear category 
 \[
  \bbA^Z_{\calC}(\bbGamma) := R^{\bbS^2}(\bfA_{\calC}(\bbGamma))
 \]
 obtained from $\bfA_{\calC}(\bbGamma)$ by $\PGr$-graded extension with respect to the action $R^{\bbS^2}$ of $\PGr$ given by Proposition \ref{P:action_by_spheres}. The \textit{dual universal $\PGr$-graded linear category of $\bbGamma$} is the $\PGr$-graded linear category 
 \[
  \bbA'^Z_{\calC}(\bbGamma) := {R'}^{\bbS^2}(\bfA'_{\calC}(\bbGamma))
 \]
 obtained from $\bfA'_{\calC}(\bbGamma)$ by $\PGr$-graded extension with respect to the action ${R'}^{\bbS^2}$ of $\PGr$ given by Proposition \ref{P:action_by_spheres}.
\end{definition}

Let us spell this definition out: objects of $\bbA^Z_{\calC}(\bbGamma)$ are the same as objects $\bbSigma_{\bbGamma}$ of $\bfA_{\calC}(\bbGamma)$, degree $k$ morphisms of $\bbA^Z_{\calC}(\bbGamma)$ from $\bbSigma_{\bbGamma}$ to $\bbSigma''_{\bbGamma}$ are morphisms $[ \bbM^k_{\bbGamma} ]$ of $\Hom_{\bfA_{\calC}(\bbGamma)}(\bbSigma_{\bbGamma},\bbSigma''_{\bbGamma} \disjun \bbS^2_{-k})$, identities of $\bbA^Z_{\calC}(\bbGamma)$ are
\[
 \left[ \id_{\bbSigma_{\bbGamma}}^0 \right] := \left[ \id_{\bbSigma_{\bbGamma}} \disjun \bbD^3_0 \right]
\]
in $\Hom_{\bfA_{\calC}(\bbGamma)}(\bbSigma_{\bbGamma},\bbSigma_{\bbGamma} \disjun \bbS^2_0)$ for every $\bbSigma_{\bbGamma} \in \bfA_{\calC}(\bbGamma)$, and compositions of $\bbA^Z_{\calC}(\bbGamma)$ are
\[
 \left[ \bbM''^{\ell}_{\bbGamma} \right] \diamond \left[ \bbM^k_{\bbGamma} \right] := \left[ ( \id_{\bbSigma^{\fourth}_{\bbGamma}} \disjun \bbP^3_{-\ell,-k} ) \ast ( \bbM''^{\ell}_{\bbGamma} \disjun \id_{\bbS^2_{-k}} ) \ast \bbM^k_{\bbGamma} \right]
\]
in $\Hom_{\bfA_{\calC}(\bbGamma)}(\bbSigma_{\bbGamma},\bbSigma^{\fourth}_{\bbGamma} \disjun \bbS^2_{-\ell-k})$ for every $[ \bbM^k_{\bbGamma} ] \in \Hom_{\bfA_{\calC}(\bbGamma)}(\bbSigma_{\bbGamma},\bbSigma''_{\bbGamma} \disjun \bbS^2_{-k})$ and every $[ \bbM''^{\ell}_{\bbGamma} ] \in \Hom_{\bfA_{\calC}(\bbGamma)}(\bbSigma''_{\bbGamma},\bbSigma^{\fourth}_{\bbGamma} \disjun \bbS^2_{-\ell})$. The same description holds for the dual case: objects of $\bbA'^Z_{\calC}(\bbGamma)$ are the same as objects $\bbSigma'_{\bbGamma}$ of $\bfA'_{\calC}(\bbGamma)$, degree $k$ morphisms of $\bbA'^Z_{\calC}(\bbGamma)$ from $\bbSigma'_{\bbGamma}$ to $\bbSigma'''_{\bbGamma}$ are morphisms $[ \bbM'^k_{\bbGamma} ]$ of $\Hom_{\bfA'_{\calC}(\bbGamma)}(\bbSigma'_{\bbGamma},\bbSigma'''_{\bbGamma} \disjun \bbS^2_{-k})$, identities of $\bbA'^Z_{\calC}(\bbGamma)$ are
\[
 \left[ \id_{\bbSigma'_{\bbGamma}}^0 \right] := \left[ \id_{\bbSigma'_{\bbGamma}} \disjun \bbD^3_0 \right]
\]
in $\Hom_{\bfA'_{\calC}(\bbGamma)}(\bbSigma'_{\bbGamma},\bbSigma'_{\bbGamma} \disjun \bbS^2_0)$ for every $\bbSigma'_{\bbGamma} \in \bfA'_{\calC}(\bbGamma)$, and compositions of $\bbA'^Z_{\calC}(\bbGamma)$ are
\[
 \left[ \bbM'''^{\ell}_{\bbGamma} \right] \diamond \left[ \bbM'^k_{\bbGamma} \right] := \left[ ( \id_{\bbSigma^{\fifth}_{\bbGamma}} \disjun \bbP^3_{-\ell,-k} ) \ast ( \bbM'''^{\ell}_{\bbGamma} \disjun \id_{\bbS^2_{-k}} ) \ast \bbM'^k_{\bbGamma} \right]
\]
in $\Hom_{\bfA'_{\calC}(\bbGamma)}(\bbSigma'_{\bbGamma},\bbSigma^{\fifth}_{\bbGamma} \disjun \bbS^2_{-\ell-k})$ for every $[ \bbM'^k_{\bbGamma} ] \in \Hom_{\bfA'_{\calC}(\bbGamma)}(\bbSigma'_{\bbGamma},\bbSigma'''_{\bbGamma} \disjun \bbS^2_{-k})$ and every $[ \bbM'''^{\ell}_{\bbGamma} ] \in \Hom_{\bfA'_{\calC}(\bbGamma)}(\bbSigma'''_{\bbGamma},\bbSigma^{\fifth}_{\bbGamma} \disjun \bbS^2_{-\ell})$. Moving on, let us consider a 1-morphism $\bbSigma : \bbGamma \rightarrow \bbGamma'$ of $\bfadCob_{\calC}$.

\begin{definition}\label{D:univ_gr_linear_functor}
 The \textit{universal $\PGr$-graded linear functor of $\bbSigma$} is the $\PGr$-graded linear functor 
 \[
  \bbA^Z_{\calC}(\bbSigma) : \bbA^Z_{\calC}(\bbGamma) \rightarrow \bbA^Z_{\calC}(\bbGamma')
 \]
 sending every object $\bbSigma_{\bbGamma} \in \bbA^Z_{\calC}(\bbGamma)$
 to the object 
 \[
  \bbSigma \circ \bbSigma_{\bbGamma} \in \bbA^Z_{\calC}(\bbGamma'),
 \]
 and every degree $k$ morphism $[ \bbM^k_{\bbGamma} ] \in \Hom_{\bbA^Z_{\calC}(\bbGamma)}(\bbSigma_{\bbGamma},\bbSigma''_{\bbGamma})$ 
 to the degree $k$ morphism 
 \[
  \left[ \id_{\bbSigma} \circ \bbM^k_{\bbGamma} \right] \in \Hom_{\bbA^Z_{\calC}(\bbGamma')} \left( \bbSigma \circ \bbSigma_{\bbGamma},\bbSigma \circ \bbSigma''_{\bbGamma} \right).
 \] 
 The \textit{dual universal $\PGr$-graded linear functor of $\bbSigma$} is the $\PGr$-graded linear functor 
 \[
  \bbA'^Z_{\calC}(\bbSigma) : \bbA'^Z_{\calC}(\bbGamma') \rightarrow \bbA'^Z_{\calC}(\bbGamma)
 \]
 sending every object $\bbSigma'_{\bbGamma'} \in \bbA'^Z_{\calC}(\bbGamma')$
 to the object 
 \[
  \bbSigma'_{\bbGamma'} \circ \bbSigma \in \bbA'^Z_{\calC}(\bbGamma),
 \]
 and every degree $k$ morphism $[ \bbM'^k_{\bbGamma'} ] \in \Hom_{\bbA'^Z_{\calC}(\bbGamma')}(\bbSigma'_{\bbGamma'},\bbSigma'''_{\bbGamma'})$ 
 to the degree $k$ morphism 
 \[
  \left[ \bbM'^k_{\bbGamma'} \circ \id_{\bbSigma} \right] \in \Hom_{\bbA'^Z_{\calC}(\bbGamma)} \left( \bbSigma'_{\bbGamma'} \circ \bbSigma,\bbSigma'''_{\bbGamma'} \circ \bbSigma \right).
 \]
\end{definition}

Next, let us consider a $2$-morphism $\bbM : \bbSigma \Rightarrow \bbSigma'$ of $\bfadCob_{\calC}$ between $1$-morphisms $\bbSigma, \bbSigma' : \bbGamma \rightarrow \bbGamma'$.

\begin{definition}\label{D:univ_gr_natural_transformation}
 The \textit{universal $\PGr$-graded natural transformation of $\bbM$} is the $\PGr$-graded natural transformation $\bbA^Z_{\calC}(\bbM) : \bbA^Z_{\calC}(\bbSigma) \Rightarrow \bbA^Z_{\calC}(\bbSigma')$
 associating with every object $\bbSigma_{\bbGamma} \in \bbA^Z_{\calC}(\bbGamma)$
 the degree $0$ morphism 
 \[
  \left[ \left( \bbM \circ \id_{\bbSigma_{\bbGamma}} \right) \disjun \bbD^3_0 \right] \in \Hom_{\bbA^Z_{\calC}(\bbGamma')}(\bbSigma \circ \bbSigma_{\bbGamma},\bbSigma' \circ \bbSigma_{\bbGamma}).
 \] 
 The \textit{dual universal $\PGr$-graded natural transformation of $\bbM$} is the $\PGr$-graded natural transformation $\bbA'^Z_{\calC}(\bbM) : \bbA'^Z_{\calC}(\bbSigma) \Rightarrow \bbA'^Z_{\calC}(\bbSigma')$
 associating with every object ${\bbSigma'_{\bbGamma} \in  \bbA'^Z_{\calC}(\bbGamma')}$ the degree $0$ morphism 
 \[
  \left[ \left( \id_{\bbSigma'_{\bbGamma'}} \circ \bbM \right) \disjun \bbD^3_0 \right] \in \Hom_{\bbA'^Z_{\calC}(\bbGamma)}(\bbSigma'_{\bbGamma'} \circ \bbSigma,\bbSigma'_{\bbGamma'} \circ \bbSigma').
 \]
\end{definition}

The \textit{$\PGr$-graded extended universal construction} extends $\CGP_{\calC}$ to a pair of $2$-func\-tors
\[
 \bbA^Z_{\calC} : \bfadCob_{\calC} \rightarrow \bfCat_{\Bbbk}^{\PGr}, \quad
 \bbA'^Z_{\calC} : (\bfadCob_{\calC})^{\op} \rightarrow \bfCat_{\Bbbk}^{\PGr}
\]
called the \textit{universal $\PGr$-graded quantization $2$-func\-tor} and the \textit{dual universal $\PGr$-graded quantization $2$-func\-tor} respectively. Remark that these $2$-func\-tors are strict, as a consequence of our confusing $\bfadCob_{\calC}$, and thus also $(\bfadCob_{\calC})^{\op}$, with strict $2$-cat\-e\-go\-ries. We denote with
\[
 \hat{\bbA}^Z_{\calC} : \bfadCob_{\calC} \rightarrow \coCat_{\Bbbk}^{\PGr}, \quad
 \hat{\bbA}'^Z_{\calC} : (\bfadCob_{\calC})^{\op} \rightarrow \coCat_{\Bbbk}^{\PGr}
\]
the completions of the universal $\PGr$-graded quantization $2$-func\-tor and of the dual universal $\PGr$-graded quantization $2$-func\-tor in the sense of Proposition \ref{P:gr_co_strict_2-funct} and Remark \ref{R:2-gr-completion}, where $\coCat_{\Bbbk}^{\PGr}$ is the symmetric monoidal $2$-cat\-e\-go\-ry of complete $\PGr$-graded linear categories, see Definition \ref{D:sym_mon_2-cat_of_co_gr_lin_cat}.

\chapter{Symmetric monoidality}\label{Ch:symmetric_monoidality}

The first part of this chapter contains the proof of our main result: the $2$-func\-tor $\hat{\bbA}^Z_{\calC} : \bfadCob_{\calC} \rightarrow \coCat_{\Bbbk}^{\PGr}$ of Section \ref{S:graded_extended_universal_construction} 
is symmetric monoidal. The second part is devoted to a description of the $\PGr$-graded TQFT contained in $\hat{\bbA}^Z_{\calC}$, which will be studied in detail in the next chapter.

\section{Graded ETQFT}\label{S:graded_ETQFT}

In order to establish the symmetric monoidality of the $2$-func\-tor $\hat{\bbA}^Z_{\calC}$, we first need to construct the appropriate coherence data. To this end, we start by defining a $\PGr$-graded linear functor of the form
\[
 \bfepsilon : \Bbbk \rightarrow \bbA^Z_{\calC}(\varnothing).
\]
We do this by sending the unique object of $\Bbbk$ to the empty object $\id_{\varnothing}$ of $\bbA^Z_{\calC}(\varnothing)$.

\begin{proposition}\label{P:epsilon}
 The $\PGr$-graded linear functor 
 \[ 
  \bfepsilon : \Bbbk \rightarrow \bbA^Z_{\calC}(\varnothing)
 \]
 is a $\PGr$-Morita equivalence.
\end{proposition}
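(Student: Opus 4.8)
The plan is to exhibit a $\PGr$-graded quasi-inverse to $\bfepsilon$ and verify the two compositions are isomorphic to the respective identities in the $\PGr$-graded sense. First I would use the results of Sections \ref{S:2-spheres}--\ref{S:3-pants} to identify $\bfA_{\calC}(\varnothing)$ up to Morita equivalence: by Remark \ref{R:critical_fusion} together with Lemmas \ref{L:skein_equivalence} and \ref{L:Morita_reduction} the set $\{ \bbS^2_k \mid k \in \PGr \}$ dominates $\bfA_{\calC}(\varnothing)$, while Lemma \ref{L:2-sphere} (and the uniqueness of $i_0$) shows $\dim_{\Bbbk} \Hom_{\bfA_{\calC}(\varnothing)}(\bbS^2_k,\bbS^2_{k'}) = \delta_{kk'}$, so the full subcategory on these objects is equivalent to the free $\Bbbk$-linear category on the set $\PGr$. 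Passing to the $\PGr$-graded extension $\bbA_{\calC}(\varnothing) = R^{\bbS^2}(\bfA_{\calC}(\varnothing))$, I would observe that $R^{\bbS^2_k}(\bbS^2_\ell)$ is isomorphic to $\bbS^2_{k+\ell}$ via the colored $3$-pants (Lemmas \ref{L:3-pants_composition_1}, \ref{L:3-pants_composition_2}), so $\PGr$ acts freely and transitively on $\{ \bbS^2_k \}$; this is exactly the statement that the graded subcategory on these objects is $\PGr$-Morita trivial, i.e. $\PGr$-Morita equivalent to $\Bbbk$ placed in degree $0$.

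Concretely I would define the candidate quasi-inverse $\bfpi : \bbA_{\calC}(\varnothing) \rightarrow \Bbbk$ on objects by sending every object $\bbSigma_\varnothing$ to the unique object of $\Bbbk$, with the graded Hom-pairing realized through the $2$-spheres; equivalently, I would invoke Lemma \ref{L:Morita_reduction} to reduce to the full $\PGr$-graded subcategory $\calS$ on $\{ \bbS^2_k \mid k \in \PGr\}$ and check $\bfepsilon$ factors as $\Bbbk \xrightarrow{\sim} \calS \hookrightarrow \bbA_{\calC}(\varnothing)$ with the second arrow a $\PGr$-graded equivalence (it is fully faithful and essentially surjective on objects, the latter by domination). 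The first arrow sends the object of $\Bbbk$ to $\bbS^2_0 = \id_\varnothing$, which is legitimate since $\id_\varnothing$ and $\bbS^2_0$ are isomorphic in $\bfA_{\calC}(\varnothing)$ by the remark following Lemma \ref{L:3-discs_composition}. To see it is a $\PGr$-graded equivalence onto $\calS$, I must check it is graded-fully-faithful: the degree-$k$ morphisms of $\calS$ from $\bbS^2_\ell$ to $\bbS^2_{\ell'}$ are $\Hom_{\bfA_{\calC}(\varnothing)}(\bbS^2_\ell, \bbS^2_{\ell'} \disjun \bbS^2_{-k}) \cong \Hom_{\bfA_{\calC}(\varnothing)}(\bbS^2_\ell,\bbS^2_{\ell'-k})$ using Lemmas \ref{L:3-pants_composition_1}--\ref{L:3-pants_composition_2}, which by Lemma \ref{L:2-sphere} is one-dimensional exactly when $\ell = \ell' - k$, matching the degree-$k$ Hom-space of $\Bbbk$ viewed as the one-object $\PGr$-graded category concentrated in degree $0$. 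Composition compatibility is Lemma \ref{L:associativity_of_3-pants} together with the unit Lemma \ref{L:unitarity_of_3-discs}; I would spell out that the graded composition $\diamond$ in $\bbA_{\calC}(\varnothing)$ on these one-dimensional spaces matches scalar multiplication in $\Bbbk$, with the colored $3$-pant coherence morphisms $\bbP^3_{k,k'}$ providing the needed unitors and associators (and Lemma \ref{L:twisted_3-pant_lemma} controlling the symmetry, which we will only need later for monoidality).

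The main obstacle I anticipate is purely bookkeeping: one must carefully match the coherence data. Specifically, the $\PGr$-graded extension $\bbA_{\calC}(\varnothing)$ is \emph{not} strict — its composition $\diamond$ involves the $3$-pants $\bbP^3_{-\ell,-k}$, which are only coherent up to the associator/unitor relations of Lemmas \ref{L:unitarity_of_3-discs} and \ref{L:associativity_of_3-pants} — so $\bfepsilon$ is not literally an isomorphism of categories but only a $\PGr$-graded equivalence, and establishing this requires producing an explicit natural isomorphism $\bfpi \circ \bfepsilon \Rightarrow \id_\Bbbk$ (which is trivial, as both sides are the identity on the one object) and $\bfepsilon \circ \bfpi \Rightarrow \id_{\bbA_{\calC}(\varnothing)}$, the latter built componentwise from the isomorphisms $\bbSigma_\varnothing \cong \bbS^2_{k(\bbSigma_\varnothing)}$ furnished by the domination/rigidity machinery of Section \ref{S:connection_lemma}. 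I would close by invoking the definition of $\PGr$-Morita equivalence from Appendix \ref{A:co_lin_&_gr_lin_cat} and noting that a $\PGr$-graded linear functor that is graded-fully-faithful and essentially surjective is a $\PGr$-Morita equivalence, which finishes the proof; the nonzero-scalar factors $\eta \rmd(V_0)$ appearing in Lemmas \ref{L:3-discs_composition}, \ref{L:3-pants_composition_1}, and \ref{L:3-pants_composition_2} are harmless since they are invertible in $\Bbbk^*$ and can be absorbed into a rescaling of the coherence data.
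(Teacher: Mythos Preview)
Your argument is essentially correct, but it is a substantial detour compared to the paper's proof, which is a single sentence: observe that $\{\id_\varnothing\}$ dominates the $\PGr$-graded category $\bbA_{\calC}(\varnothing)$ and that $\id_\varnothing$ is simple (so $\bfepsilon$ is fully faithful), and apply the criterion of Proposition~\ref{P:gr_Morita_equivalence} directly. All of your work with the full subcategory on $\{\bbS^2_k \mid k \in \PGr\}$, the transitivity of the $\PGr$-action, and the explicit quasi-inverse is unnecessary at this point; the domination of $\bbA_{\calC}(\varnothing)$ by $\id_\varnothing$ in the \emph{graded} sense follows immediately from the ungraded domination by $\{\bbS^2_k\}$ together with the fact that each $\bbS^2_k$ is graded-isomorphic to $\id_\varnothing$ (via degree $\pm k$ morphisms built from $3$-discs and $3$-pants), and simplicity of $\id_\varnothing$ is the computation $\Hom_{\bfA_{\calC}(\varnothing)}(\id_\varnothing, \id_\varnothing \disjun \bbS^2_{-k}) \cong \delta_{k0}\,\Bbbk$, which is Lemma~\ref{L:2-sphere} combined with $\id_\varnothing \cong \bbS^2_0$.

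One terminological slip worth flagging: you write that the inclusion $\calS \hookrightarrow \bbA_{\calC}(\varnothing)$ is ``essentially surjective on objects, the latter by domination'', and close by saying a graded-fully-faithful and essentially surjective functor is a $\PGr$-Morita equivalence. Domination is strictly weaker than essential surjectivity --- an arbitrary object of $\bbA_{\calC}(\varnothing)$ is only a \emph{retract of a direct sum} of $\bbS^2_k$'s, not isomorphic to one --- so the inclusion is not a $\PGr$-graded equivalence in the ordinary sense. What you need (and what the paper uses) is precisely Proposition~\ref{P:gr_Morita_equivalence}: fully faithful plus dominating image implies $\PGr$-Morita equivalence (i.e.\ equivalence after completion). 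Once you invoke that criterion, the factorization through $\calS$ and the explicit coherence checks are superfluous.
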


\begin{proof}
 The $\PGr$-graded linear category $\bbA^Z_{\calC}(\varnothing)$ is dominated by $\{ \id_{\varnothing} \}$, and $\id_{\varnothing}$ is a simple object, so the result follows immediately from Proposition \ref{P:gr_Morita_equivalence}.
\end{proof}

Next, we need to define a $2$-trans\-for\-ma\-tion of the form
\[
 \bfmu : \sqtimes \circ \left( \bbA^Z_{\calC} \times \bbA^Z_{\calC} \right) \Rightarrow \bbA^Z_{\calC} \circ \disjun.
\]
In order to do this, we need to start by defining, for all objects $\bbGamma$ and $\bbGamma'$ of $\bfadCob_{\calC}$, a $\PGr$-graded linear functor of the form
\[
 \bfmu_{\bbGamma,\bbGamma'} : \bbA^Z_{\calC}(\bbGamma) \sqtimes \bbA^Z_{\calC}(\bbGamma') \rightarrow \bbA^Z_{\calC}(\bbGamma \disjun \bbGamma').
\]
We do this by sending every object $(\bbSigma_{\bbGamma},\bbSigma_{\bbGamma'})$ of $\bbA^Z_{\calC}(\bbGamma) \sqtimes \bbA^Z_{\calC}(\bbGamma')$ to the object $ \bbSigma_{\bbGamma} \disjun \bbSigma_{\bbGamma'}$ of $\bbA^Z_{\calC}(\bbGamma \disjun \bbGamma')$,
and every degree $k + k'$ morphism 
\[ 
 [\bbM^k_{\bbGamma}] \otimes [\bbM^{k'}_{\bbGamma'}]
\]
of $\Hom_{\bbA^Z_{\calC}(\bbGamma) \sqtimes \bbA^Z_{\calC}(\bbGamma')}((\bbSigma_{\bbGamma},\bbSigma_{\bbGamma'}),(\bbSigma''_{\bbGamma},\bbSigma''_{\bbGamma'}))$ to the degree $k + k'$ morphism
\[
 \left[ \left( \id_{\bbSigma''_{\bbGamma}} \disjun \left( ( \id_{\bbSigma''_{\bbGamma'}} \disjun \bbP^3_{-k,-k'} ) \ast ( \cobbr_{\bbS^2_{-k},\bbSigma''_{\bbGamma'}} \disjun \id_{\bbS^2_{-k'}} ) \right) \right) \ast ( \bbM^k_{\bbGamma} \disjun \bbM^{k'}_{\bbGamma'} ) \right]
\]
of $\Hom_{\bbA^Z_{\calC}(\bbGamma \disjun \bbGamma')}(\bbSigma_{\bbGamma} \disjun \bbSigma_{\bbGamma'},\bbSigma''_{\bbGamma} \disjun \bbSigma''_{\bbGamma'})$, where $\cobbr_{\bbS^2_{-k},\bbSigma''_{\bbGamma'}}$ denotes the braiding $2$-mor\-phism of Definition \ref{D:braiding_2-morphism}.

\begin{proposition}\label{P:mu}
 For all objects $\bbGamma$ and $\bbGamma'$ of $\bfadCob_{\calC}$
 \[
  \bfmu_{\bbGamma,\bbGamma'} : \bbA^Z_{\calC}(\bbGamma) \sqtimes \bbA^Z_{\calC}(\bbGamma') \rightarrow \bbA^Z_{\calC}(\bbGamma \disjun \bbGamma')
 \]
 is a $\PGr$-Morita equivalence.
\end{proposition}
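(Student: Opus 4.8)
The strategy is to show that $\bfmu_{\bbGamma,\bbGamma'}$ satisfies the hypotheses of the $\PGr$-graded analogue of the Morita equivalence criterion (Proposition \ref{P:gr_Morita_equivalence}), namely that it is fully faithful in the $\PGr$-graded sense and that every object of the target $\PGr$-graded linear category $\bbA_{\calC}(\bbGamma \disjun \bbGamma')$ is dominated by objects in the image. Since the underlying (ungraded) linear functor of $\bfmu_{\bbGamma,\bbGamma'}$ agrees on objects and on degree-zero morphisms with the functor $\bfmu_{\bbGamma,\bbGamma'}$ of Proposition \ref{P:lax_monoidality_ETQFT}, which is known to be faithful, the first task is to upgrade faithfulness to $\PGr$-graded full faithfulness; the second is to establish the domination statement, for which the key input is the connection-and-domination machinery of Section \ref{S:connection_lemma}.

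First I would treat domination. By Lemma \ref{L:Morita_reduction}, $\bfA_{\calC}(\bbGamma)$ is dominated by the objects $\bbSigma_{(P,\vartheta)} = (\varSigma,P,\vartheta,\calL)$ obtained by letting $(P,\vartheta)$ range over all admissible $(\calC,G)$-colorings of a fixed non-empty $2$-dimensional cobordism $\varSigma$ from $\varnothing$ to $\bbGamma$; likewise for $\bbGamma'$ with a fixed $\varSigma'$. Gluing horizontally, the disjoint unions $\bbSigma_{(P,\vartheta)} \disjun \bbSigma'_{(P',\vartheta')}$ arise from admissible colorings of $\varSigma \sqcup \varSigma'$, which is a non-empty $2$-dimensional cobordism from $\varnothing$ to $\bbGamma \disjun \bbGamma'$. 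Applying Lemma \ref{L:Morita_reduction} again, these objects dominate $\bfA_{\calC}(\bbGamma \disjun \bbGamma')$. Since $\PGr$-graded extension does not change the class of objects (the objects of $\bbA_{\calC}(\bbGamma)$ are literally the objects of $\bfA_{\calC}(\bbGamma)$), and since a dominating set remains dominating after graded extension — the splitting idempotents live in degree zero, where the graded and ungraded $\Hom$-spaces coincide — the objects $\bbSigma_{\bbGamma} \disjun \bbSigma_{\bbGamma'}$ dominate $\bbA_{\calC}(\bbGamma \disjun \bbGamma')$, which is exactly what is needed.

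Next I would handle $\PGr$-graded full faithfulness. For degree $k$, the $\Hom$-space $\Hom_{\bbA_{\calC}(\bbGamma) \sqtimes \bbA_{\calC}(\bbGamma')}$ between $(\bbSigma_{\bbGamma},\bbSigma_{\bbGamma'})$ and $(\bbSigma''_{\bbGamma},\bbSigma''_{\bbGamma'})$ in degree $k$ is, by definition of the graded tensor product $\sqtimes$ together with the definition of $\PGr$-graded extension, a direct sum $\bigoplus_{k_1 + k_2 = k}$ of tensor products $\Hom_{\bfA_{\calC}(\bbGamma)}(\bbSigma_{\bbGamma},\bbSigma''_{\bbGamma} \disjun \bbS^2_{-k_1}) \otimes \Hom_{\bfA_{\calC}(\bbGamma')}(\bbSigma_{\bbGamma'},\bbSigma''_{\bbGamma'} \disjun \bbS^2_{-k_2})$. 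The target degree $k$ space is $\Hom_{\bfA_{\calC}(\bbGamma \disjun \bbGamma')}(\bbSigma_{\bbGamma} \disjun \bbSigma_{\bbGamma'}, \bbSigma''_{\bbGamma} \disjun \bbSigma''_{\bbGamma'} \disjun \bbS^2_{-k})$. On each summand the effect of $\bfmu_{\bbGamma,\bbGamma'}$ is, up to the invertible $3$-pant and braiding $2$-morphisms $\bbP^3_{-k_1,-k_2}$ and $\cobbr_{\bbS^2_{-k_1},\bbSigma''_{\bbGamma'}}$, precisely the ungraded map $\bfmu_{\bbGamma,\bbGamma'}$ applied to classes valued in $\bbS^2_{-k_1}$ and $\bbS^2_{-k_2}$. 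Faithfulness on each summand follows from Proposition \ref{P:lax_monoidality_ETQFT} (applied with targets twisted by the $2$-spheres), together with the invertibility of the coherence $2$-morphisms recorded in Lemmas \ref{L:3-pants_composition_1} and \ref{L:3-pants_composition_2}. For fullness, the argument is the key geometric point: given a degree $k$ morphism $[\bbM]$ in the target, after undoing the invertible braiding and pant, one must write it as a sum of split morphisms $[\bbM^{k_1}_{\bbGamma} \disjun \bbM^{k_2}_{\bbGamma'}]$. This is exactly the domination-style argument: by Lemma \ref{L:connection_lemma}, $\bbM$ is represented by admissible decorations of a connected $3$-dimensional cobordism with corners; by inserting a non-empty $2$-dimensional separating surface $\varSigma \sqcup \varSigma'$ between $\bbGamma$ and $\bbGamma'$ — just as in the proof of Lemma \ref{L:Morita_reduction} — one factors the cobordism through $(P,\vartheta) \sqcup (P',\vartheta')$, and isotopy of the decorations produces the desired split representatives. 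The $\PGr$-grading is carried along transparently because all the gluings take place in the part of the cobordism away from the $2$-spheres recording the degree.

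\textbf{Main obstacle.} I expect the delicate point to be fullness, specifically the bookkeeping needed to match the degree $k$ morphism in the target with a sum of split morphisms in the correct degrees $k_1 + k_2 = k$. One must check that when $[\bbM]$ factors through the separating surface, the $2$-sphere $\bbS^2_{-k}$ recording the total degree correctly distributes — via the inverse $3$-pant $\overline{\bbP^3_{-k_1,-k_2}}$ — into $\bbS^2_{-k_1}$ and $\bbS^2_{-k_2}$ sitting on the two sides, and that the braiding $2$-morphism used in the definition of $\bfmu_{\bbGamma,\bbGamma'}$ is precisely what is needed to move $\bbS^2_{-k_1}$ into position past $\bbSigma''_{\bbGamma'}$. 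This requires Lemma \ref{L:twisted_3-pant_lemma} and the compatibility between the $G$-structure and the $\PGr$-action to control the scalar $\gamma(k_1,k_2)$ that may appear, but since $\gamma$ takes invertible values this only rescales basis vectors and does not affect surjectivity. The rest of the proof is a matter of combining the already-established lemmas.
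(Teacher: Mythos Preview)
Your overall strategy --- apply Proposition~\ref{P:gr_Morita_equivalence} by checking that the image dominates and that the functor is fully faithful --- is exactly the paper's, and your treatment of domination and of fullness matches the paper closely (the paper makes your ``separating surface'' idea explicit by applying Lemma~\ref{L:connection_lemma} to a specific cobordism $P^3(M,M')$ built from two connected pieces $M,M'$ joined through a $3$-pant, then invoking Lemmas~\ref{L:2-sphere} and~\ref{L:3-pant}).

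There is, however, a genuine omission: you never verify that the assignment $\bfmu_{\bbGamma,\bbGamma'}$ is a $\PGr$-graded linear functor in the first place. Its definition on degree $k+k'$ morphisms involves the $3$-pant $\bbP^3_{-k,-k'}$ and the braiding $\cobbr_{\bbS^2_{-k},\bbSigma''_{\bbGamma'}}$, and checking that it respects composition with the sign $\gamma(k,\ell')$ required by Remark~\ref{R:comp_tens_prod_gr_lin_cat} is the first and most carefully argued part of the paper's proof; it is precisely here that Lemmas~\ref{L:unitarity_of_3-discs}, \ref{L:associativity_of_3-pants}, and~\ref{L:twisted_3-pant_lemma} are used. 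You invoke Lemma~\ref{L:twisted_3-pant_lemma} and the scalar $\gamma$, but you place them inside the fullness argument, where in fact they play no role. A second, smaller gap concerns faithfulness: ``injectivity on each summand'' via Proposition~\ref{P:lax_monoidality_ETQFT} does not by itself give injectivity on the direct sum $\bigoplus_{k_1+k_2=k}$, since after the $3$-pant the images of different summands land in the same target space. The paper closes this gap with a direct computation: for each term it inserts the matching inverse $3$-pant and inverse braiding and shows the resulting closed manifold has $\CGP_\calC$ equal (up to $\eta^{-1}\rmd(V_0)^{-1}$) to a product of two $\CGP_\calC$ values, which is what lets one read off triviality of each tensor-product pairing in the source from triviality of the target class.
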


\begin{proof}
 We start by proving $\bfmu_{\bbGamma,\bbGamma'}$ is actually a $\PGr$-graded linear functor. First of all, we need to show that
 \[
  \bfmu_{\bbGamma,\bbGamma'} \left( [\id^0_{\bbSigma_{\bbGamma}}] \otimes [\id^0_{\bbSigma_{\bbGamma'}}] \right) = \left[ \id^0_{\bbSigma_{\bbGamma} \disjun \bbSigma_{\bbGamma'}} \right].
 \]
 For all objects $\bbSigma_{\bbGamma} \in \bbA^Z_{\calC}(\bbGamma)$ and $\bbSigma_{\bbGamma'} \in \bbA^Z_{\calC}(\bbGamma')$. This follows directly from Lemma \ref{L:unitarity_of_3-discs}. Next, we need to show that
 \begin{align*}
  &\bfmu_{\bbGamma,\bbGamma'} \left( [\bbM''^{\ell'}_{\bbGamma}] \otimes [\bbM''^{\ell'}_{\bbGamma'}] \right) \diamond \bfmu_{\bbGamma,\bbGamma'} \left( [\bbM^k_{\bbGamma}] \otimes [\bbM^{k'}_{\bbGamma'}] \right) \\
  &\hspace{\parindent} = \gamma(k,\ell') \cdot \bfmu_{\bbGamma,\bbGamma'} \left( \left( [\bbM''^{\ell'}_{\bbGamma}] \diamond [\bbM^k_{\bbGamma}] \right) \otimes \left( [\bbM''^{\ell'}_{\bbGamma'}] \diamond [\bbM^{k'}_{\bbGamma'}] \right) \right)
 \end{align*}
 for every degree $k + k'$ morphism
 \[
  [\bbM^k_{\bbGamma}] \otimes [\bbM^{k'}_{\bbGamma'}] \in \Hom_{\bbA^Z_{\calC}(\bbGamma)}(\bbSigma_{\bbGamma},\bbSigma''_{\bbGamma}) \otimes \Hom_{\bbA^Z_{\calC}(\bbGamma')}(\bbSigma_{\bbGamma'},\bbSigma''_{\bbGamma'})
 \]
 and every degree $\ell + \ell'$ morphism
 \[
  [\bbM''^{\ell}_{\bbGamma}] \otimes [\bbM''^{\ell'}_{\bbGamma'}] \in \Hom_{\bbA^Z_{\calC}(\bbGamma)}(\bbSigma''_{\bbGamma},\bbSigma^{\fourth}_{\bbGamma}) \otimes \Hom_{\bbA^Z_{\calC}(\bbGamma')}(\bbSigma''_{\bbGamma'},\bbSigma^{\fourth}_{\bbGamma'}).
 \]
 This follows from
 \begin{align*}
  &\left[ {\bbP^3_{\phantom{3}}}_{\mathllap{-} \ell - \ell', - k - k'} \ast ( {\bbP^3_{\phantom{3}}}_{\mathllap{-}\ell,-\ell'} \disjun {\bbP^3_{\phantom{3}}}_{\mathllap{-}k,-k'} ) \ast ( \id_{{\bbS^2_{\phantom{2}}}_{\mathllap{-}\ell}} \disjun \cobbr_{{\bbS^2_{\phantom{2}}}_{\mathllap{-}k},{\bbS^2_{\phantom{2}}}_{\mathllap{-}\ell'}} \disjun \id_{{\bbS^2_{\phantom{2}}}_{\mathllap{-}k'}} ) \right] \\
  &\hspace{\parindent} = \left[ {\bbP^3_{\phantom{3}}}_{\mathllap{-} \ell - \ell' - k, - k'} \ast \left( \big( {\bbP^3_{\phantom{3}}}_{\mathllap{-} \ell - \ell', - k} \ast ( {\bbP^3_{\phantom{3}}}_{\mathllap{-}\ell,-\ell'} \disjun \id_{{\bbS^2_{\phantom{2}}}_{\mathllap{-}k}} ) \ast ( \id_{{\bbS^2_{\phantom{2}}}_{\mathllap{-}\ell}} \disjun \cobbr_{{\bbS^2_{\phantom{2}}}_{\mathllap{-}k},{\bbS^2_{\phantom{2}}}_{\mathllap{-}\ell'}} ) \big) \disjun \id_{{\bbS^2_{\phantom{2}}}_{\mathllap{-}k'}} \right) \right] \\
  &\hspace{\parindent} = \left[ {\bbP^3_{\phantom{3}}}_{\mathllap{-} \ell - \ell' - k, - k'} \ast \left( \Big( {\bbP^3_{\phantom{3}}}_{\mathllap{-} \ell, - \ell' - k} \ast \big( \id_{{\bbS^2_{\phantom{2}}}_{\mathllap{-} \ell}} \disjun {( {\bbP^3_{\phantom{3}}}_{\mathllap{-}\ell',-k}  \ast \cobbr_{{\bbS^2_{\phantom{2}}}_{\mathllap{-}k},{\bbS^2_{\phantom{2}}}_{\mathllap{-}\ell'}} )} \big) \Big) \disjun \id_{{\bbS^2_{\phantom{2}}}_{\mathllap{-}k'}} \right) \right] \\
  &\hspace{\parindent} = \gamma(-k,-\ell') \cdot \left[ {\bbP^3_{\phantom{3}}}_{\mathllap{-} \ell - k - \ell', - k'} \ast \left( \big( {\bbP^3_{\phantom{3}}}_{\mathllap{-} \ell,- k - \ell'} \ast ( \id_{{\bbS^2_{\phantom{2}}}_{\mathllap{-}\ell}} \disjun {\bbP^3_{\phantom{3}}}_{\mathllap{-}k,-\ell'} ) \big) \disjun \id_{{\bbS^2_{\phantom{2}}}_{\mathllap{-}k'}} \right) \right] \\
  &\hspace{\parindent} = \gamma(k,\ell') \cdot \left[ {\bbP^3_{\phantom{3}}}_{\mathllap{-} \ell - k - \ell', - k'} \ast \left( \big( {\bbP^3_{\phantom{3}}}_{\mathllap{-} \ell - k, - \ell'} \ast ( {\bbP^3_{\phantom{3}}}_{\mathllap{-}\ell,-k} \disjun \id_{{\bbS^2_{\phantom{2}}}_{\mathllap{-}\ell'}} ) \big) \disjun \id_{{\bbS^2_{\phantom{2}}}_{\mathllap{-}k'}} \right) \right] \\
  &\hspace{\parindent} = \gamma(k,\ell') \cdot \left[ {\bbP^3_{\phantom{3}}}_{\mathllap{-} \ell - k, - \ell' - k'} \ast ( {\bbP^3_{\phantom{3}}}_{\mathllap{-}\ell,-k} \disjun {\bbP^3_{\phantom{3}}}_{\mathllap{-}\ell',-k'} ) \right],
 \end{align*}
 where the first, the second, the fourth, and the fifth equality follow from Lemma \ref{L:associativity_of_3-pants}, and where the third equality follows from Lemma \ref{L:twisted_3-pant_lemma}.
 Now, we can move on to prove $\bfmu_{\bbGamma,\bbGamma'}$ is a $\PGr$-Morita equivalence. Once again, the strategy is to use Proposition \ref{P:gr_Morita_equivalence}. To begin with, if $\bbA^Z_{\calC}(\bbGamma)$ is dominated by 
 \[
  D(\varSigma;\bbGamma) = \left\{ \bbSigma_{(P,\vartheta)} = (\varSigma,P,\vartheta,\calL) \in \bbA^Z_{\calC}(\bbGamma) \bigm| (P,\vartheta) \in \adSk(\varSigma;\varnothing,\bbGamma) \right\},
 \]
 and if $\bbA^Z_{\calC}(\bbGamma')$ is dominated by 
 \[
  D(\varSigma';\bbGamma') = \left\{ \bbSigma'_{(P',\vartheta')} = (\varSigma',P',\vartheta',\calL') \in \bbA^Z_{\calC}(\bbGamma') \Bigm| (P',\vartheta') \in \adSk(\varSigma';\varnothing,\bbGamma') \right\},
 \]
 then $\bbA^Z_{\calC}(\bbGamma \disjun \bbGamma')$ is dominated by 
 \[
  D(\varSigma \sqcup \varSigma';\bbGamma \disjun \bbGamma') = \left\{ \bbSigma_{(P,\vartheta)} \disjun \bbSigma'_{(P',\vartheta')} \in \bbA^Z_{\calC}(\bbGamma \disjun \bbGamma') \biggm| 
  \begin{array}{l} 
   (P,\vartheta) \in \adSk(\varSigma;\varnothing,\bbGamma) \\
   (P',\vartheta') \in \adSk(\varSigma';\varnothing,\bbGamma')
  \end{array} \right\},
 \]
 as follows from Lemma \ref{L:Morita_reduction}. Therefore, $\bfmu_{\bbGamma,\bbGamma'}$ defines a bijection between generators. Next, to see that it is faithful, let us consider a trivial degree $k$ morphism
 \[
  \sum_{i=1}^m \alpha_i \cdot \bfmu_{\bbGamma,\bbGamma'} 
  \left( [ \bbM^{k - k_i}_{\bbGamma,i} ] \otimes [ \bbM^{k_i}_{\bbGamma',i} ] \right) \in \Hom_{\bbA^Z_{\calC}(\bbGamma \disjun \bbGamma')}(\bbSigma_{\bbGamma} \disjun \bbSigma_{\bbGamma'},\bbSigma''_{\bbGamma} \disjun \bbSigma''_{\bbGamma'}).
 \] 
 Now remark that, for all $1$-mor\-phisms 
 \[
  \bbSigma : \bbGamma \rightarrow \varnothing, \quad
  \bbSigma' : \bbGamma' \rightarrow \varnothing,
 \]
 and for all $2$-mor\-phisms
 \begin{gather*}
  \bbM : \id_{\varnothing} \Rightarrow \left( \bbSigma \circ \bbSigma_{\bbGamma} \right), \quad
  \bbM' : \id_{\varnothing} \Rightarrow \left( \bbSigma' \circ \bbSigma_{\bbGamma'} \right), \\
  \bbM'' : \left( (\bbSigma \circ \bbSigma''_{\bbGamma}) \disjun \bbS^2_{- k + k_i} \right) \Rightarrow \id_{\varnothing}, \quad
  \bbM''' : \left( (\bbSigma' \circ \bbSigma''_{\bbGamma'}) \disjun \bbS^2_{-k_i} \right) \Rightarrow \id_{\varnothing}
 \end{gather*}
 of $\bfadCob_{\calC}$, the Costantino-Geer-Patureau invariant of the admissible closed $3$-man\-i\-fold obtained by vertically composing
 \[
  \bbM \disjun \bbM'
 \]
 to
 \[
  \left( \id_{\bbSigma} \circ \bbM^{k - k_i}_{\bbGamma,i} \right) \disjun \left( \id_{\bbSigma'} \circ \bbM^{k_i}_{\bbGamma',i} \right),
 \]
 then to
 \[
  \id_{\bbSigma \circ \bbSigma''_{\bbGamma}} \disjun \left( \left( \id_{\bbSigma' \circ \bbSigma''_{\bbGamma'}} \disjun \bbP^3_{- k + k_i,- k_i} \right) \ast \left( \cobbr_{\bbS^2_{- k + k_i},\bbSigma' \circ \bbSigma''_{\bbGamma'}} \disjun \id_{\bbS^2_{-k_i}} \right) \right),
 \]
 then to
 \[
  \id_{\bbSigma \circ \bbSigma''_{\bbGamma}} \disjun \left( \left( \cobbr_{\bbSigma' \circ \bbSigma''_{\bbGamma'},\bbS^2_{- k + k_i}} \disjun \id_{\bbS^2_{-k_i}} \right) \right) \ast \left( \id_{\bbSigma' \circ \bbSigma''_{\bbGamma'}} \disjun \overline{\bbP^3_{- k + k_i, - k_i}} \right),
 \]
 and then to
 \[
  \bbM'' \disjun \bbM'''
 \]
 equals
 \[
  \calD \rmd(V_0)^{-1}
  \CGP_{\calC} \left( \bbM'' \ast \left( \id_{\bbSigma} \circ \bbM^{k-k_i}_{\bbGamma,i} \right) \ast \bbM \right) 
  \CGP_{\calC} \left( \bbM''' \ast \left( \id_{\bbSigma'} \circ \bbM^{k_i}_{\bbGamma',i} \right) \ast \bbM' \right)
 \]
 for every integer $1 \leqslant i \leqslant m$. This implies the degree $k$ morphism
 \[
  \sum_{i=1}^m \alpha_i \cdot [ \bbM^{k-k_i}_{\bbGamma,i} ] \otimes [ \bbM^{k_i}_{\bbGamma',i} ] \in \Hom_{\bbA^Z_{\calC}(\bbGamma)}(\bbSigma_{\bbGamma},\bbSigma''_{\bbGamma}) \otimes \Hom_{\bbA^Z_{\calC}(\bbGamma')}(\bbSigma_{\bbGamma'},\bbSigma''_{\bbGamma'})
 \]
 is trivial, which proves $\bfmu_{\bbGamma,\bbGamma'}$ is faithful. To see it is also full, we need to show that, for all objects 
 \[
  \bbSigma_{\bbGamma} = (\varSigma_{\varGamma},P,\vartheta,\calL), \quad
  \bbSigma''_{\bbGamma} = (\varSigma''_{\varGamma},P'',\vartheta'',\calL'')
 \]
 of $\bbA^Z_{\calC}(\bbGamma)$, and for all objects 
 \[
  \bbSigma_{\bbGamma'} = (\varSigma_{\varGamma'},P',\vartheta',\calL'), \quad
  \bbSigma''_{\bbGamma'} = (\varSigma''_{\varGamma'},P''',\vartheta''',\calL''')
 \]
 of $\bbA^Z_{\calC}(\bbGamma')$, every degree $k$ morphism
 \[
  [ \bbM^k_{\bbGamma \disjun \bbGamma'} ] \in \Hom_{\bbA^Z_{\calC}(\bbGamma \disjun \bbGamma')} \left( \bbSigma_{\bbGamma} \disjun \bbSigma_{\bbGamma'}, \bbSigma''_{\bbGamma} \disjun \bbSigma''_{\bbGamma'} \right)
 \]
 can be written as
 \[
  \sum_{i=1}^m \alpha_i \cdot \bfmu_{\bbGamma,\bbGamma'} \left( [\bbM^{k-k_i}_{\bbGamma,i}] \otimes [\bbM^{k_i}_{\bbGamma',i}] \right)
 \]
 for some coefficients $\alpha_i \in \Bbbk$, for some degree $k-k_i$ morphisms 
 \[
  [ \bbM^{k-k_i}_{\bbGamma,i} ] \in \Hom_{\bbA^Z_{\calC}(\bbGamma)}(\bbSigma_{\bbGamma},\bbSigma''_{\bbGamma}),
 \]
 and for some degree $k_i$ morphisms 
 \[
  [ \bbM^{k_i}_{\bbGamma',i} ] \in \Hom_{\bbA^Z_{\calC}(\bbGamma')}(\bbSigma_{\bbGamma'},\bbSigma''_{\bbGamma'}),
 \]
 with $1 \leqslant i \leqslant m$. In orded to do so, let us consider non-empty connected $3$-dimensional cobordisms with corners $M$ from $\varSigma_{\varGamma}$ to $\varSigma''_{\varGamma} \sqcup S^2$ and $M'$ from $\varSigma_{\varGamma'}$ to $\varSigma''_{\varGamma'} \sqcup S^2$. Then, thanks to Lemma \ref{L:connection_lemma}, $[ \bbM^k_{\bbGamma \disjun \bbGamma'} ]$ is the image of some vector of $\adSk(P^3(M,M');\bbSigma_{\bbGamma} \disjun \bbSigma_{\bbGamma'},\bbSigma''_{\bbGamma} \disjun \bbSigma''_{\bbGamma'} \disjun \bbS^3_{-k})$, where the cobordism with corners $P^3(M,M')$ is obtained by vertically gluing
 \[
  M \sqcup M'
 \]
 to
 \[
  ( \varSigma''_{\varGamma} \times I ) \sqcup \left( ( S^2 \sqcup \varSigma''_{\varGamma'} ) \ttimes I \right) \sqcup ( S^2 \times I )
 \]
 along $\varSigma''_{\varGamma} \sqcup S^2 \sqcup \varSigma''_{\varGamma'} \sqcup S^2$, and then to
 \[
  ( \varSigma''_{\varGamma} \times I ) \sqcup ( \varSigma''_{\varGamma'} \times I ) \sqcup P^3
 \]
 along $\varSigma''_{\varGamma} \sqcup \varSigma''_{\varGamma'} \sqcup S^2 \sqcup S^2$. Up to isotopy and skein equivalence, Lemmas \ref{L:2-sphere} and \ref{L:3-pant} allow us to conclude.
\end{proof}


Now, we need to define, for all $1$-mor\-phisms $\bbSigma : \bbGamma \rightarrow \bbGamma''$ and $\bbSigma' : \bbGamma' \rightarrow \bbGamma'''$ of $\bfadCob_{\calC}$, a $\PGr$-graded natural transformation of the form
\[
 \bfmu_{\bbSigma,\bbSigma'} : \bbA^Z_{\calC}(\bbSigma \disjun \bbSigma') \circ \bfmu_{\bbGamma,\bbGamma'} \Rightarrow \bfmu_{\bbGamma'',\bbGamma'''} \circ \left( \bbA^Z_{\calC}(\bbSigma) \sqtimes \bbA^Z_{\calC}(\bbSigma') \right).
\]
We do this by associating with every object $(\bbSigma_{\bbGamma},\bbSigma_{\bbGamma'})$ of $\bbA^Z_{\calC}(\bbGamma) \sqtimes \bbA^Z_{\calC}(\bbGamma')$
the degree $0$ morphism
\[
 \left[ \disjun_{(\bbSigma,\bbSigma'),(\bbSigma_{\bbGamma},\bbSigma_{\bbGamma'})} \disjun \bbD^3_0 \right]
\]
of $\Hom_{\bbA^Z_{\calC}(\bbGamma'' \disjun \bbGamma''')}((\bbSigma \disjun \bbSigma') \circ (\bbSigma_{\bbGamma} \disjun \bbSigma_{\bbGamma'}),{(\bbSigma \circ \bbSigma_{\bbGamma})} \disjun {(\bbSigma' \circ \bbSigma_{\bbGamma'})})$,
which we denote $\bfmu_{(\bbSigma,\bbSigma'),(\bbSigma_{\bbGamma},\bbSigma_{\bbGamma'})}^0$.

\begin{lemma}
 For all $1$-mor\-phisms $\bbSigma : \bbGamma \rightarrow \bbGamma''$ and $\bbSigma' : \bbGamma' \rightarrow \bbGamma'''$ of $\bfadCob_{\calC}$
 \[
  \bfmu_{\bbSigma,\bbSigma'} : \bbA^Z_{\calC}(\bbSigma \disjun \bbSigma') \circ \bfmu_{\bbGamma,\bbGamma'} \Rightarrow \bfmu_{\bbGamma'',\bbGamma'''} \circ \left( \bbA^Z_{\calC}(\bbSigma) \sqtimes \bbA^Z_{\calC}(\bbSigma') \right).
 \]
 is a $\PGr$-graded natural transformation
\end{lemma}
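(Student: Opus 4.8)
The claim is that the family of degree-zero morphisms $\bfmu^0_{(\bbSigma,\bbSigma'),(\bbSigma_{\bbGamma},\bbSigma_{\bbGamma'})}$ assembles into a natural transformation between the two indicated composite $\PGr$-graded linear functors. Since a $\PGr$-graded natural transformation consists of one homogeneous morphism for each object together with the usual naturality squares, the plan is in two parts: first record that each component is a well-defined morphism of the correct hom-space and of degree $0$ (which is immediate from Definition \ref{D:univ_gr_linear_category} and the fact that $\bbD^3_0$ is the degree-$0$ generator $[\id^0]$, via Lemma \ref{L:2-sphere} and Lemma \ref{L:3-discs_composition}), and second, and substantively, verify the naturality square for an arbitrary homogeneous morphism of $\bbA_{\calC}(\bbGamma) \sqtimes \bbA_{\calC}(\bbGamma')$.

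For naturality, I would take a degree $k+k'$ morphism $[\bbM^k_{\bbGamma}] \otimes [\bbM^{k'}_{\bbGamma'}]$ from $(\bbSigma_{\bbGamma},\bbSigma_{\bbGamma'})$ to $(\bbSigma''_{\bbGamma},\bbSigma''_{\bbGamma'})$, and unwind both sides of the square. On one side we apply $\bbA_{\calC}(\bbSigma \disjun \bbSigma') \circ \bfmu_{\bbGamma,\bbGamma'}$, which by Definitions \ref{D:univ_gr_linear_functor} and \ref{P:mu} produces a $2$-morphism of $\bfadCob_{\calC}$ obtained by whiskering $\bbM^k_{\bbGamma} \disjun \bbM^{k'}_{\bbGamma'}$ by $\id_{\bbSigma \disjun \bbSigma'}$ and then composing with the $3$-pant and braiding $2$-morphisms inside the $\PGr$-grading machinery, finally post-composed with the coherence $2$-morphism $\disjun_{(\bbSigma,\bbSigma'),(\bbSigma''_{\bbGamma},\bbSigma''_{\bbGamma'})} \disjun \bbD^3_0$; on the other side we pre-compose the coherence $2$-morphism $\disjun_{(\bbSigma,\bbSigma'),(\bbSigma_{\bbGamma},\bbSigma_{\bbGamma'})} \disjun \bbD^3_0$ and then apply $\bfmu_{\bbGamma'',\bbGamma'''} \circ (\bbA_{\calC}(\bbSigma) \sqtimes \bbA_{\calC}(\bbSigma'))$. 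The two resulting decorated cobordisms with corners differ only by the canonical isomorphism of supports
\[
 (\varSigma \sqcup \varSigma') \cup_{\varGamma'' \sqcup \varGamma'''} (\varSigma''_{\varGamma} \sqcup \varSigma''_{\varGamma'}) \cong (\varSigma \cup_{\varGamma''} \varSigma''_{\varGamma}) \sqcup (\varSigma' \cup_{\varGamma'''} \varSigma''_{\varGamma'})
\]
used to define the coherence $2$-morphisms $\disjun_{(\bbSigma'',\bbSigma'''),(\bbSigma,\bbSigma')}$ in Chapter \ref{Ch:admissible_cobordisms}, together with the fact that the braiding $2$-morphisms $\cobbr_{\bbS^2_{-k},-}$ and the $3$-pants $\bbP^3_{-k,-k'}$ pass through the disjoint-union coherence isomorphisms. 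Concretely I would check that, after identifying supports via this canonical homeomorphism, the underlying $4$-tuples $(M,T,\omega,n)$ on the two sides coincide up to the equivalence relation defining $2$-morphisms of $\bfCob_{\calC}$; this is a bookkeeping verification that the $\calC$-colored ribbon graphs agree (the coupon colorings $\varepsilon$ of $\bbD^3_0$ and $\mu_{-k,-k'}$ of $\bbP^3_{-k,-k'}$ appear symmetrically on both sides) and that the signature defects add up to the same integer (all Maslov corrections vanish, since the relevant gluings are along surfaces of the form $\varGamma \times I$, exactly as noted for horizontal compositions in Chapter \ref{Ch:admissible_cobordisms}).

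The main obstacle, such as it is, is not conceptual but notational: one must pin down precisely how the coherence $2$-morphism $\disjun_{(\bbSigma,\bbSigma'),(\bbSigma_{\bbGamma},\bbSigma_{\bbGamma'})}$ interacts with the braiding $2$-morphism $\cobbr_{\bbS^2_{-k},\bbSigma''_{\bbGamma'}}$ occurring in the definition of $\bfmu_{\bbGamma,\bbGamma'}$ on morphisms, and confirm that sliding the $\bbS^2_{-k}$-factor past $\bbSigma''_{\bbGamma'}$ commutes with rearranging disjoint summands. Once that compatibility is made explicit — which follows from the quasi-strictness conventions and the coherence theorems invoked throughout (Theorems \ref{T:coherence_for_2-cat} and \ref{T:coherence_for_sym_mon_2-cat}) — both composites reduce to literally the same class of $2$-morphisms of $\bfadCob_{\calC}$, so the naturality square commutes strictly and each $\bfmu^0_{(\bbSigma,\bbSigma'),(\bbSigma_{\bbGamma},\bbSigma_{\bbGamma'})}$ is even an isomorphism. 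Hence $\bfmu_{\bbSigma,\bbSigma'}$ is a $\PGr$-graded natural transformation, which is what we wanted.
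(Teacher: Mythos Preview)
Your approach is correct and essentially the same as the paper's: write down the naturality square for a generic homogeneous morphism $[\bbM^k_{\bbGamma}] \otimes [\bbM^{k'}_{\bbGamma'}]$ and verify it using the coherence properties of $\disjun$. The paper is far terser, citing only ``the properties of $\disjun$'' together with Lemma~\ref{L:unitarity_of_3-discs} (rather than Lemma~\ref{L:3-discs_composition}); the point is that the $\diamond$-composition with a degree-$0$ component introduces a $3$-pant $\bbP^3_{0,-(k+k')}$ (or $\bbP^3_{-(k+k'),0}$) paired with the $\bbD^3_0$, and it is precisely the unitarity relation $[\bbP^3_{0,k} \ast (\bbD^3_0 \disjun \id_{\bbS^2_k})] = [\id_{\bbS^2_k}]$ that collapses this to make the two sides literally equal.
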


\begin{proof}
 We need to show that
 \begin{align*}
  &\bfmu_{\bbGamma'',\bbGamma'''} \left( \bbA^Z_{\calC}(\bbSigma) \left( [\bbM^k_{\bbGamma}] \right) \otimes \bbA^Z_{\calC}(\bbSigma') \left( [\bbM^{k'}_{\bbGamma'}] \right) \right) \diamond \bfmu_{(\bbSigma,\bbSigma'),(\bbSigma_{\bbGamma},\bbSigma_{\bbGamma'})}^0 \\
  &\hspace{\parindent} = \bfmu_{(\bbSigma,\bbSigma'),(\bbSigma''_{\bbGamma},\bbSigma''_{\bbGamma'})}^0 \diamond \bbA^Z_{\calC}(\bbSigma \disjun \bbSigma') \left( \bfmu_{\bbGamma,\bbGamma'} \left( [\bbM^k_{\bbGamma}] \otimes [\bbM^{k'}_{\bbGamma'}] \right) \right)
 \end{align*}
 for every degree $k + k'$ morphism
 \[
  [\bbM^k_{\bbGamma}] \otimes [\bbM^{k'}_{\bbGamma'}] \in \Hom_{\bbA^Z_{\calC}(\bbGamma)}(\bbSigma_{\bbGamma},\bbSigma''_{\bbGamma}) \otimes \Hom_{\bbA^Z_{\calC}(\bbGamma')}(\bbSigma_{\bbGamma'},\bbSigma''_{\bbGamma'}).
 \]
 This follows directly from the properties of $\disjun$, and from Lemma \ref{L:unitarity_of_3-discs}.
\end{proof}

It is now easy to check that $\bfmu$ is a $2$-transformation. Next, we need to define a $2$-modification of the form
\[
 \bfH : ( \bbA^Z_{\calC} \triangleright \cobbr ) \circ \bfmu \Rrightarrow ( \bfmu \triangleleft \bfT ) \circ \left( \gmmbr \triangleleft ( \bbA^Z_{\calC} \times \bbA^Z_{\calC} ) \right),
\]
where the $2$-func\-tor $\bfT : \bfadCob_{\calC} \times \bfadCob_{\calC} \rightarrow \bfadCob_{\calC} \times \bfadCob_{\calC}$ transposes the two copies of $\bfadCob_{\calC}$. In order to do this, we need to define, for all objects $\bbGamma$ and $\bbGamma'$ of $\bfadCob_{\calC}$, a $\PGr$-graded natural transformation of the form
\[
 \bfH_{\bbGamma,\bbGamma'} : \bbA^Z_{\calC}(\cobbr_{\bbGamma,\bbGamma'}) \circ \bfmu_{\bbGamma,\bbGamma'} \Rightarrow \bfmu_{\bbGamma',\bbGamma} \circ \gmmbr_{\bbA^Z_{\calC}(\bbGamma),\bbA^Z_{\calC}(\bbGamma')}.
\]
We do this by associating with every object $(\bbSigma_{\bbGamma},\bbSigma_{\bbGamma'})$ of $\bbA^Z_{\calC}(\bbGamma) \sqtimes \bbA^Z_{\calC}(\bbGamma')$
the degree $0$ morphism 
\[
 \left[ \cobbr_{\bbSigma_{\bbGamma},\bbSigma_{\bbGamma'}} \disjun \bbD^3_0 \right]
\]
of $\Hom_{\bbA^Z_{\calC}(\bbGamma' \disjun \bbGamma)}(\cobbr_{\bbGamma,\bbGamma'} \circ ( \bbSigma_{\bbGamma} \disjun \bbSigma_{\bbGamma'} ),\bbSigma_{\bbGamma'} \disjun \bbSigma_{\bbGamma})$, which we denote $\bfH_{\bbSigma_{\bbGamma},\bbSigma_{\bbGamma'}}^0$.

\begin{lemma}
 For all objects $\bbGamma$ and $\bbGamma'$ of $\bfadCob_{\calC}$
 \[
  \bfH_{\bbGamma,\bbGamma'} : \bbA^Z_{\calC}(\cobbr_{\bbGamma,\bbGamma'}) \circ \bfmu_{\bbGamma,\bbGamma'} \Rightarrow \bfmu_{\bbGamma',\bbGamma} \circ \gmmbr_{\bbA^Z_{\calC}(\bbGamma),\bbA^Z_{\calC}(\bbGamma')}
 \]
 is a $\PGr$-graded natural transformation.
\end{lemma}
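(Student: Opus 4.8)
The plan is to verify the single naturality square defining $\bfH_{\bbGamma,\bbGamma'}$: for every homogeneous morphism $[\bbM^k_{\bbGamma}]\otimes[\bbM^{k'}_{\bbGamma'}]$ of $\bbA_{\calC}(\bbGamma)\sqtimes\bbA_{\calC}(\bbGamma')$ of degree $k+k'$ from $(\bbSigma_{\bbGamma},\bbSigma_{\bbGamma'})$ to $(\bbSigma''_{\bbGamma},\bbSigma''_{\bbGamma'})$, one must show
\[
 \bfmu_{\bbGamma',\bbGamma}\!\left(\gmmbr_{\bbA_{\calC}(\bbGamma),\bbA_{\calC}(\bbGamma')}([\bbM^k_{\bbGamma}]\otimes[\bbM^{k'}_{\bbGamma'}])\right) \diamond \bfH^0_{\bbSigma_{\bbGamma},\bbSigma_{\bbGamma'}}
 = \bfH^0_{\bbSigma''_{\bbGamma},\bbSigma''_{\bbGamma'}} \diamond \bbA_{\calC}(\cobbr_{\bbGamma,\bbGamma'})\!\left(\bfmu_{\bbGamma,\bbGamma'}([\bbM^k_{\bbGamma}]\otimes[\bbM^{k'}_{\bbGamma'}])\right).
\]
First I would unpack both sides using the definitions of $\bfmu_{\bbGamma,\bbGamma'}$, $\bfmu_{\bbGamma',\bbGamma}$, the $\PGr$-graded functor $\bbA_{\calC}(\cobbr_{\bbGamma,\bbGamma'})$, and the $\gamma$-twisted braiding functor $\gmmbr$ on $\bfCat_{\Bbbk}^{\PGr}$. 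After this expansion each side becomes the class of a concrete $2$-morphism of $\bfadCob_{\calC}$, assembled by horizontal and vertical gluing from the data $\bbM^k_{\bbGamma}$, $\bbM^{k'}_{\bbGamma'}$, the topological braiding $2$-morphisms $\cobbr$ of Definition \ref{D:braiding_2-morphism}, the $3$-pants $\bbP^3_{\bullet,\bullet}$, and the $3$-discs $\bbD^3_0$; the degree $0$ components $\bfH^0$ simply contribute a $\cobbr$ on the surface factors disjoint-union-ed with a $\bbD^3_0$.

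The equality is then proved by a sequence of moves inside $\bfadCob_{\calC}$, each already available in the excerpt: (i) the naturality and hexagon identities for $\cobbr$, which hold because $\bfCob_{\calC}$ is a symmetric monoidal $2$-category (Appendix \ref{A:symmetric_monoidal}), allow the braiding $2$-morphisms $\cobbr_{\bbSigma_{\bbGamma},\bbSigma_{\bbGamma'}}$ to be slid past the supports of $\bbM^k_{\bbGamma}$, $\bbM^{k'}_{\bbGamma'}$ and the $3$-pant coupons; (ii) once a braiding of the $2$-sphere factors $\bbS^2_{-k},\bbS^2_{-k'}$ has been dragged onto a pant coupon, the identity $\tilde{\bbP}^3_{k,k'}=\bbP^3_{k',k}\ast\cobbr_{\bbS^2_k,\bbS^2_{k'}}$ of Definition \ref{D:twisted_3-pant} together with Lemma \ref{L:twisted_3-pant_lemma} converts it into the scalar $\gamma$ that matches exactly the scalar produced by $\gmmbr$ on the algebraic side; (iii) Lemmas \ref{L:unitarity_of_3-discs} and \ref{L:associativity_of_3-pants} absorb and re-bracket the auxiliary $\bbD^3_0$ and $\bbP^3$ coupons introduced by the degree bookkeeping (recall that a degree $k$ morphism is a $2$-morphism landing in $\bbS^2_{-k}$). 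As in the proofs of Lemmas \ref{L:2-sphere} and \ref{L:3-pant}, all these reductions take place up to isotopy and skein equivalence of the underlying $\calC$-colored ribbon graphs, and are legitimized at the level of $\bbA_{\calC}$ by Lemmas \ref{L:skein_equivalence} and \ref{L:surgery_axioms}.

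The main obstacle I anticipate is purely combinatorial: keeping the $\gamma$-bookkeeping straight. One must check that the single factor $\gamma(k,k')$ (equivalently $\gamma(-k,-k')$, using bilinearity of $\gamma$) produced by $\gmmbr$ applied to $[\bbM^k_{\bbGamma}]\otimes[\bbM^{k'}_{\bbGamma'}]$ is reproduced, with precisely the correct sign of its arguments, when the topological braiding $\cobbr$ is pulled through the two pant coupons on the other side of the square; the interplay between the ``positive'' labels on the spheres $\bbS^2_{-k}$ and the degrees $k$ is where sign errors are easy to make. Modulo this careful index chase, the argument is entirely parallel to the verification that $\bfmu$ is a $2$-transformation, and it uses no geometric input beyond the lemmas of Sections \ref{S:2-spheres}, \ref{S:3-discs}, \ref{S:3-pants} and the symmetric monoidal axioms of $\bfCob_{\calC}$.
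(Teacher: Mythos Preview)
Your proposal is correct and follows essentially the same approach as the paper. The paper's proof is terser—it writes out the naturality square (with the $\gamma(k,k')$ factor made explicit on the left after unpacking $\gmmbr$) and then simply invokes the properties of the braiding $\cobbr$ together with Lemmas \ref{L:unitarity_of_3-discs} and \ref{L:twisted_3-pant_lemma}; your additional references to Lemma \ref{L:associativity_of_3-pants} and to Lemmas \ref{L:skein_equivalence}, \ref{L:surgery_axioms} are not strictly needed here, but they do no harm.
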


\begin{proof}
 We have to show that
 \begin{align*}
  &\gamma(k,k') \cdot \bfmu_{\bbGamma',\bbGamma} \left( [ \bbM^{k'}_{\bbGamma'} ] \otimes [ \bbM^k_{\bbGamma} ] \right) \diamond \bfH_{\bbSigma_{\bbGamma},\bbSigma_{\bbGamma'}}^0 \\
  &\hspace{\parindent} = \bfH_{\bbSigma''_{\bbGamma},\bbSigma''_{\bbGamma'}}^0 \diamond \left( \bbA^Z_{\calC}(\cobbr_{\bbGamma,\bbGamma'}) \left( \bfmu_{\bbGamma,\bbGamma'} \left( [ \bbM^k_{\bbGamma} ] \otimes [ \bbM^{k'}_{\bbGamma'} ] \right) \right) \right)
 \end{align*}
 for every degree $k + k'$ morphism
 \[
  [\bbM^k_{\bbGamma}] \otimes [\bbM^{k'}_{\bbGamma'}] \in \Hom_{\bbA^Z_{\calC}(\bbGamma)}(\bbSigma_{\bbGamma},\bbSigma''_{\bbGamma}) \otimes \Hom_{\bbA^Z_{\calC}(\bbGamma')}(\bbSigma_{\bbGamma'},\bbSigma''_{\bbGamma'})
 \]
 This follows directly from the properties of $\cobbr$, and from Lemmas \ref{L:unitarity_of_3-discs} and \ref{L:twisted_3-pant_lemma}.
\end{proof}

We are now ready to prove our main result.

\begin{theorem}\label{T:symmetric_monoidality}
 The $2$-functor $\hat{\bbA}^Z_{\calC} : \bfadCob_{\calC} \rightarrow \coCat_{\Bbbk}^{\PGr}$ is symmetric monoidal.
\end{theorem}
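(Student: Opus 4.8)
The statement is exactly that the assembled data
$(\bbepsilon, \bbmu, \bfH, \dots)$ together with the $2$-functor $\hat\bbA_\calC$ satisfy the axioms of a symmetric monoidal $2$-functor between symmetric monoidal $2$-categories. By Theorem~\ref{T:coherence_for_sym_mon_2-cat} both source and target may be treated as quasi-strict, so the task reduces to checking the remaining coherence conditions, and these come in three layers: (i) the \emph{unit and multiplication} data is well defined and consists of equivalences; (ii) the \emph{monoidality} $2$-modifications satisfy their pentagon/triangle-type constraints; (iii) the braiding is respected, i.e.\ $\bfH$ satisfies the hexagon-type constraints making $\hat\bbA_\calC$ symmetric monoidal. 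My plan is to verify these layer by layer, reducing everything, via the completion functor of Proposition~\ref{P:gr_co_strict_2-funct} and Remark~\ref{R:2-gr-completion}, to statements about the uncompleted strict $2$-functor $\bbA_\calC$, which in turn are statements about the $\PGr$-graded universal linear categories $\bbA_\calC(\bbGamma)$ and the explicit generators $\bbS^2_k$, $\bbD^3_0$, $\bbP^3_{k,k'}$.

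First I would record that the pieces are already in place: $\bfepsilon$ is a $\PGr$-Morita equivalence by Proposition~\ref{P:epsilon}, each $\bfmu_{\bbGamma,\bbGamma'}$ is a $\PGr$-Morita equivalence by Proposition~\ref{P:mu}, and the coherence $2$-cells $\bfmu_{\bbSigma,\bbSigma'}$ and $\bfH_{\bbGamma,\bbGamma'}$ were just shown to be $\PGr$-graded natural transformations. Since $\coCat_\Bbbk^{\PGr}$ has tensor product given by the completed Deligne-type product $\csqtimes$ and a $2$-morphism is invertible there precisely when it is a $\PGr$-Morita equivalence after completion, all of $\bbepsilon$, $\bbmu_{\bbGamma,\bbGamma'}$, $\bfmu_{\bbSigma,\bbSigma'}$, $\bfH_{\bbGamma,\bbGamma'}$ become the required equivalences/isomorphisms in the completed target. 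What remains is to verify the finitely many coherence axioms.

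The bulk of the work is axiom-checking, and essentially every axiom unwinds to an identity between (classes of) $2$-morphisms of $\bfadCob_\calC$ built out of $3$-discs, $3$-pants, and braiding cobordisms, which one then recognizes via Lemmas~\ref{L:unitarity_of_3-discs}, \ref{L:associativity_of_3-pants}, \ref{L:3-pants_composition_1}, \ref{L:3-pants_composition_2}, and \ref{L:twisted_3-pant_lemma}. Concretely: the associativity-type coherence for $\bfmu$ (the ``$\otimes$ vs.\ $\circ$'' interchange modification) reduces to the associativity of $3$-pants, Lemma~\ref{L:associativity_of_3-pants}, exactly as in the computation already displayed inside the proof of Proposition~\ref{P:mu}; the unit coherences reduce to Lemma~\ref{L:unitarity_of_3-discs}; and the symmetry hexagons for $\bfH$ reduce to the interaction of the braiding cobordism $\cobbr$ with $3$-pants, which is governed by the bilinear form $\gamma$ and Lemma~\ref{L:twisted_3-pant_lemma}, together with the already-known fact that $\cobbr$ satisfies the braid relations in $\bfadCob_\calC$ (so the two sides of each hexagon differ by the same $\gamma$-factor and hence agree). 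In each case the strategy is identical to the proofs of the two lemmas immediately preceding the theorem: write both sides as classes $[\,\cdot\,]$ of decorated cobordisms, use that the underlying cobordisms-with-corners are diffeomorphic, and invoke the relevant composition lemma to match the coherence morphisms $\varepsilon,\mu_{k,k'}$ of $\sigma$ up to the predicted scalar; since $\gamma$ is bilinear, all scalar discrepancies cancel across an axiom.

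\textbf{Main obstacle.} The genuine difficulty is bookkeeping rather than a conceptual hurdle: the coherence diagrams for a \emph{symmetric monoidal} $2$-functor between quasi-strict symmetric monoidal $2$-categories are large, and one must be careful that the $\PGr$-grading shifts (the ghost spheres $\bbS^2_{-k}$ inserted to define degree-$k$ morphisms) and the $\gamma$-twists introduced by $\tilde\bbP^3_{k,k'}$ line up consistently on both sides of every diagram; the interchange between $\bfmu$ and $\cobbr$ in the hexagons is where a sign/$\gamma$-factor could plausibly fail to cancel, so that is the step I would check most carefully, using bilinearity of $\gamma$ and $\psi$ together with the compatibility condition of Definition~\ref{D:relative_pre-modular_category}. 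Once that interchange is pinned down, the remaining axioms follow the same template and present no new content, and one concludes that $\hat\bbA_\calC$ is symmetric monoidal, completing the proof.
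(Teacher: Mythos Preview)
Your proposal is correct and follows essentially the same route as the paper: reduce via completion and quasi-strictness, invoke Propositions~\ref{P:epsilon} and~\ref{P:mu} for the equivalence data, and verify the remaining coherence axioms using Lemmas~\ref{L:unitarity_of_3-discs}, \ref{L:associativity_of_3-pants}, and~\ref{L:twisted_3-pant_lemma}.

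The one refinement the paper makes that you underemphasize is organizational rather than mathematical, but it buys a real simplification. The paper observes that the unit and associativity constraints for $\bfmu$ hold not merely up to invertible $2$-modification but as \emph{strict equalities} of $\PGr$-graded linear functors: on objects this is quasi-strictness of $\bfadCob_\calC$, and on morphisms it is exactly Lemmas~\ref{L:unitarity_of_3-discs} and~\ref{L:associativity_of_3-pants}. Consequently the $2$-modifications $\bfGamma$, $\bfDelta$, $\bfOmega$ (in the notation of Definition~\ref{D:symmetric_monoidal_2-functor}) can all be taken to be identities, and the only nontrivial coherence $2$-modification is $\cmpl\triangleright\bfH$. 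This collapses the pentagon/triangle axioms (HTA1), (HTA2) to equalities between compositions of identity $2$-modifications, which hold automatically; and it means the hexagon/syllepsis axioms (BHA1), (BHA2), (SHA1) involve only $\bfH$ and reduce directly to the properties of the braiding $\cobbr$ in $\bfadCob_\calC$, with no delicate $\gamma$-factor bookkeeping left to do. Your ``check each diagram, match $\gamma$-factors'' plan would also succeed, but the paper's observation that most modifications are literally identities is what makes the verification short rather than merely routine.
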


\begin{proof}
 First of all, the completion $\cmpl \circ \bfepsilon$ of the $\PGr$-graded linear functor $\bfepsilon$ is an equivalence, as a direct consequence of Proposition \ref{P:epsilon}. Next, the $2$-transformation $( \cmpl \triangleright \bfmu ) \circ ( \bfchi \triangleleft (\bfF \times \bfF))$ is a composition of equivalences thanks to Proposition \ref{P:mu}, where $\bfchi : \cmpl \circ {\sqtimes} \circ (\cmpl \times \cmpl) \Rightarrow \cmpl \circ \sqtimes$ is part of the coherence data of the symmetric monoidal $2$-functor $\cmpl$ given by Proposition \ref{P:gr_co_strict_2-funct}. Furthermore, we claim
 \begin{gather*}
  \bfmu_{\varnothing,\bbGamma} \circ (\bfepsilon \sqtimes \id_{\bbA^Z_{\calC}(\bbGamma)}) = \id_{\bbA^Z_{\calC}(\bbGamma)}, \quad \bfmu_{\bbGamma,\varnothing} \circ (\id_{\bbA^Z_{\calC}(\bbGamma)} \sqtimes \bfepsilon) = \id_{\bbA^Z_{\calC}(\bbGamma)}, \\
  \bfmu_{\bbGamma,\bbGamma' \disjun \bbGamma''} \circ \left( \id_{\bbA^Z_{\calC}(\bbGamma)} \sqtimes \bfmu_{\bbGamma',\bbGamma''} \right) = \bfmu_{\bbGamma \disjun \bbGamma',\bbGamma''} \circ \left( \bfmu_{\bbGamma,\bbGamma'} \sqtimes \id_{\bbA^Z_{\calC}(\bbGamma'')} \right)
 \end{gather*}
 for all $\bbGamma, \bbGamma', \bbGamma'' \in \bfadCob_{\calC}$. On the level of objects, these equalities are an immediate consequence of the quasi-strictness of $\bfadCob_{\calC}$. On the level of morphisms, the first and the second equality follow from Lemma \ref{L:unitarity_of_3-discs}, while the third equality follows from Lemma \ref{L:associativity_of_3-pants}. This means that, apart from the invertible $2$-modification $\cmpl \triangleright \bfH$, we can assemble the coherence data for $\bbA^Z_{\calC}$ simply by using identity $2$-modifications. Therefore, all the conditions we need to check are greatly simplified. For instance, the first two axioms, namely Equations (HTA1) and (HTA2) of \cite{GPS95}, reduce to equalities between different compositions of identity $2$-modifications, thanks again to the quasi-strictness of $\bfadCob_{\calC}$. On the other hand, Equations (BHA1), (BHA2), and (SHA1) of \cite{M00} follow directly from the properties of the braiding $\cobbr$. 
\end{proof}

\section{Graded TQFT}\label{S:graded_TQFT}

In this section, we consider the category $\rmadCob_{\calC}$ of closed $1$-mor\-phisms of $\bfCob_{\calC}$ and of admissible $2$-mor\-phisms between them, and we describe the $\PGr$-graded TQFT $\bbV^Z_{\calC} : \rmadCob_{\calC} \rightarrow \Vect_{\Bbbk}^{\PGr}$ contained in $\hat{\bbA}^Z_{\calC}$. Indeed, as it was explained in the introduction, every $\PGr$-graded ETQFT yields a $\PGr$-graded TQFT, and $\PGr$-graded linear functors associated with closed $1$-morphisms of $\bfadCob_{\calC}$ determine $\PGr$-graded vector spaces which we explicitly characterize. When $\calC$ is taken to be the relative modular category of finite-dimensional weight representations of the unrolled version of quantum $\sltwo$ at even roots of unity, our construction recovers the $\Z$-graded TQFT defined in \cite{BCGP16}. To start, let us consider an object $\bbSigma$ of $\rmadCob_{\calC}$.

\begin{definition}\label{D:univ_gr_vector_space}
 The \textit{universal $\PGr$-graded vector space of $\bbSigma$} is the $\PGr$-graded vector space $\bbV^Z_{\calC}(\bbSigma)$ whose space of degree $k$ vectors is given by
 \[
  \bbV^k_{\calC}(\bbSigma) := \rmV_{\calC}(\bbSigma \disjun \bbS^2_{-k})
 \]
 for every $k \in \PGr$. The \textit{dual universal $\PGr$-graded vector space of $\bbSigma$} is the $\PGr$-graded vector space $\bbV'^Z_{\calC}(\bbSigma)$ whose space of degree $k$ vectors is given by 
 \[
  \bbV'^k_{\calC}(\bbSigma) := \rmV'_{\calC}(\bbSigma \disjun \bbS^2_k)
 \]
 for every $k \in \PGr$.
\end{definition}

Remark that this terminology is coherent, because for every object $\bbSigma$ of $\rmadCob_{\calC}$ the induced pairing $\langle \cdot,\cdot \rangle_{\bbSigma} : \bbV'^Z_{\calC}(\bbSigma) \otimes \bbV^Z_{\calC}(\bbSigma) \rightarrow \Bbbk$ whose degree $0$ component
\[
 \langle \cdot , \cdot \rangle^0_{\bbSigma} : \bigoplus_{k \in \PGr} \bbV'^{-k}_{\calC}(\bbSigma) \otimes \bbV^k_{\calC}(\bbSigma) \rightarrow \Bbbk
\]
sends every vector $[ \bbM'^{-k}_{\bbSigma} ] \otimes [ \bbM^k_{\bbSigma} ] \in \bbV'^{-k}_{\calC}(\bbSigma) \otimes \bbV^k_{\calC}(\bbSigma)$ to the number
\[
 \CGP_{\calC}(\bbM'^{-k}_{\bbSigma} \ast \bbM^k_{\bbSigma}) \in \Bbbk
\]
is non-degenerate, so that $\bbV'^Z_{\calC}(\bbSigma)$ is indeed dual to $\bbV^Z_{\calC}(\bbSigma)$. Now, let us consider a morphism $\bbM : \bbSigma \Rightarrow \bbSigma'$ of $\rmadCob_{\calC}$.

\begin{definition}
 The \textit{universal $\PGr$-graded linear map of $\bbM$} is the $\PGr$-graded linear map $\bbV^Z_{\calC}(\bbM) : \bbV^Z_{\calC}(\bbSigma) \rightarrow \bbV^Z_{\calC}(\bbSigma')$ whose degree $k$ component is given by 
 \[
  \bbV^k_{\calC}(\bbM) := \smash{\rmV_{\calC}(\bbM \disjun \id_{\bbS^2_{-k}})}
 \]
 for every $k \in \PGr$.  The \textit{dual universal $\PGr$-graded linear map of $\bbM$} is the $\PGr$-graded linear map $\bbV'^Z_{\calC}(\bbM) : \bbV'^Z_{\calC}(\bbSigma') \rightarrow \bbV'^Z_{\calC}(\bbSigma)$ whose degree $k$ component is given by 
 \[
  \bbV'^k_{\calC}(\bbM) := \rmV_{\calC}(\bbM \disjun \id_{\bbS^2_k})
 \]
 for every $k \in \PGr$.
\end{definition}

The \textit{$\PGr$-graded universal construction} extends $\CGP_{\calC}$ to a pair of functors
\[
 \bbV^Z_{\calC} : \rmadCob_{\calC} \rightarrow \Vect_{\Bbbk}^{\PGr}, \quad
 \bbV'^Z_{\calC} : (\rmadCob_{\calC})^{\op} \rightarrow \Vect_{\Bbbk}^{\PGr}
\]
called the \textit{universal $\PGr$-graded quantization functor} and the \textit{dual universal $\PGr$-graded quantization functor} respectively. In order to establish the symmetric monoidality of the functor $\bbV^Z_{\calC}$, we first need to construct the appropriate coherence data. To this end, we start by considering the $\PGr$-graded linear
\[
 \varepsilon : \Bbbk \rightarrow \bbV^Z_{\calC}(\id_{\varnothing}).
\]
which sends the generator $1$ of $\Bbbk$ to the empty vector $[\id_{\id_{\varnothing}}]$ of $\bbV^Z_{\calC}(\id_{\varnothing})$.

\begin{proposition}\label{P:epsilon_TQFT}
 The $\PGr$-graded linear map 
 \[
  \varepsilon : \Bbbk \rightarrow \bbV^Z_{\calC}(\id_{\varnothing})
 \]
 is a $\PGr$-graded isomorphism.
\end{proposition}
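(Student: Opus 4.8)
The plan is to show that the $\PGr$-graded vector spaces $\Bbbk$ and $\bbV_{\calC}(\id_{\varnothing})$ are both one-dimensional and concentrated in degree $0$, and then to check that $\varepsilon$ does not vanish; a map between one-dimensional spaces is an isomorphism precisely when it is nonzero. For the target, I would first unwind Definition \ref{D:univ_gr_vector_space}: the degree-$k$ component of $\bbV_{\calC}(\id_{\varnothing})$ is $\rmV_{\calC}(\id_{\varnothing}\disjun\bbS^2_{-k}) = \rmV_{\calC}(\bbS^2_{-k})$. Using the identification $\rmV_{\calC}(\bbSigma) \cong \Hom_{\bfA_{\calC}(\varnothing)}(\id_{\varnothing},\bbSigma)$ established at the end of Chapter \ref{Ch:extenstion_of_CGP_invariants} (which applies since $\bbS^2_{-k}$ carries a $V_{i_0}$-colored, hence projective, vertex and is therefore admissible), this becomes $\Hom_{\bfA_{\calC}(\varnothing)}(\id_{\varnothing},\bbS^2_{-k})$. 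Now $\id_{\varnothing}$ and $\bbS^2_0$ are isomorphic objects of $\bfA_{\calC}(\varnothing)$ by the remark following Lemma \ref{L:3-discs_composition}, and $\bbS^2_{-k} = \bbS^2_{i_0,i_0,-k}$ by definition, so Lemma \ref{L:2-sphere} computes $\dim_{\Bbbk}\bbV^k_{\calC}(\id_{\varnothing}) = \dim_{\Bbbk}\Hom_{\bfA_{\calC}(\varnothing)}(\bbS^2_{i_0,i_0,0},\bbS^2_{i_0,i_0,-k}) = \delta_{0k}$ for every $k\in\PGr$. This matches $\Bbbk$ exactly.

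It then remains to check that $\varepsilon(1) = [\id_{\id_{\varnothing}}]$ is nonzero in $\bbV^0_{\calC}(\id_{\varnothing}) = \rmV_{\calC}(\bbS^2_0)$. By the description of identities in the $\PGr$-graded extension recalled after Definition \ref{D:univ_gr_linear_category}, this vector is represented inside $\Hom_{\bfA_{\calC}(\varnothing)}(\id_{\varnothing},\bbS^2_0)$ by the $0$-colored $3$-disc $\bbD^3_0$. I would then invoke Lemma \ref{L:3-discs_composition}, which gives $[\bbD^3_0\ast\overline{\bbD^3_0}] = \eta\,\rmd(V_0)\cdot[\id_{\bbS^2_0}]$ in $\End_{\bfA_{\calC}(\varnothing)}(\bbS^2_0)$. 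Here $\eta\neq 0$ by its definition, $\rmd(V_0)\neq 0$ because $V_0 = V_{i_0}$ is a simple projective object with epic evaluation while $\rmt$ is a non-zero trace on $\Proj(\calC)$ (Section 5 of \cite{GPV13}), and $[\id_{\bbS^2_0}]\neq 0$ by Lemma \ref{L:2-sphere}. Hence the left-hand side is nonzero, which forces $[\bbD^3_0]\neq 0$; together with the dimension count this shows $\varepsilon$ is a $\PGr$-graded isomorphism.

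I do not anticipate a genuine obstacle here, since every ingredient is already available; the only point requiring care is the bookkeeping in the first step, namely correctly tracking the degree shift by $\bbS^2_{-k}$, applying the identification $\rmV_{\calC}(\bbSigma)\cong\Hom_{\bfA_{\calC}(\varnothing)}(\id_{\varnothing},\bbSigma)$ only to admissible $\bbSigma$, and feeding the isomorphism $\id_{\varnothing}\cong\bbS^2_0$ into the correct $\Hom$-slot so that Lemma \ref{L:2-sphere} applies verbatim. Conceptually this Proposition is simply the TQFT-level shadow of Proposition \ref{P:epsilon}: $\bbV_{\calC}(\id_{\varnothing})$ is $\End_{\bbA_{\calC}(\varnothing)}(\id_{\varnothing})$ and $\varepsilon$ is the map it induces on the endomorphism algebra of the simple dominating object $\id_{\varnothing}$ under the $\PGr$-Morita equivalence $\bfepsilon$. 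I would mention this as the guiding idea, but carry out the argument through the explicit dimension count above so as not to rely on the precise formulation of $\PGr$-Morita equivalence.
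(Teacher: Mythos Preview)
Your argument is correct and complete. It differs from the paper's proof, which is a single sentence: ``the claim follows immediately from the multiplicativity and from the non-triviality of the invariant $\CGP_{\calC}$.'' The paper's intended argument is presumably that vertical composition along $\id_{\varnothing}$ is disjoint union, so the universal pairing on $\rmV_{\calC}(\id_{\varnothing})$ factors as $\langle \bbM',\bbM\rangle = \CGP_{\calC}(\bbM')\,\CGP_{\calC}(\bbM)$; non-triviality of $\CGP_{\calC}$ then forces $\rmV_{\calC}(\id_{\varnothing})\cong\Bbbk$ via $[\bbM]\mapsto\CGP_{\calC}(\bbM)$. This is elegant for degree~$0$, but it does not by itself address the vanishing of $\bbV^k_{\calC}(\id_{\varnothing}) = \rmV_{\calC}(\bbS^2_{-k})$ for $k\neq 0$, where gluing is along a nontrivial sphere and multiplicativity no longer applies directly. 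Your route through the identification $\rmV_{\calC}(\bbS^2_{-k})\cong\Hom_{\bfA_{\calC}(\varnothing)}(\id_{\varnothing},\bbS^2_{-k})$ together with $\id_{\varnothing}\cong\bbS^2_0$ and the dimension count of Lemma~\ref{L:2-sphere} handles all degrees uniformly and makes the nonzero-degree vanishing explicit; the trade-off is that it invokes the Chapter~\ref{Ch:graded_extensions} machinery rather than arguing from first principles. Your non-vanishing step via Lemma~\ref{L:3-discs_composition} is also fine, and your care about admissibility of $\bbS^2_{-k}$ when applying the identification at the end of Chapter~\ref{Ch:extenstion_of_CGP_invariants} is well placed.
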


\begin{proof}
 The claim follows immediately from the multiplicativity and from the non-triviality of the invariant $\CGP_{\calC}$.
\end{proof}

Next, we consider the natural transformation
\[
 \mu : \otimes \circ \left( \bbV^Z_{\calC} \times \bbV^Z_{\calC} \right) \Rightarrow \bbV^Z_{\calC} \circ \disjun
\]
whose components are given, for all objects $\bbSigma$ and $\bbSigma'$ of $\rmadCob_{\calC}$, by the $\PGr$-graded linear map
\[
 \mu_{\bbSigma,\bbSigma'} : \bbV^Z_{\calC}(\bbSigma) \otimes \bbV^Z_{\calC}(\bbSigma') \rightarrow \bbV^Z_{\calC}(\bbSigma \disjun \bbSigma')
\]
sending every degree $k + k'$ vector 
\[ 
 [\bbM^k_{\bbSigma}] \otimes [\bbM^{k'}_{\bbSigma'}]
\]
of $\bbV^Z_{\calC}(\bbSigma) \otimes \bbV^Z_{\calC}(\bbSigma')$ to the degree $k + k'$ vector
\[
 \left[ \left( \id_{\bbSigma} \disjun \left( ( \id_{\bbSigma'} \disjun \bbP^3_{-k,-k'} ) \ast ( \cobbr_{\bbS^2_{-k},\bbSigma'} \disjun \id_{\bbS^2_{-k'}} ) \right) \right) \ast ( \bbM^k_{\bbSigma} \disjun \bbM^{k'}_{\bbSigma'} ) \right]
\]
of $\bbV^Z_{\calC}(\bbSigma \disjun \bbSigma')$.

\begin{proposition}\label{P:mu_TQFT}
 For all objects $\bbSigma$ and $\bbSigma'$ of $\rmadCob_{\calC}$
 \[
  \mu_{\bbSigma,\bbSigma'} : \bbV^Z_{\calC}(\bbSigma) \otimes \bbV^Z_{\calC}(\bbSigma') \rightarrow \bbV^Z_{\calC}(\bbSigma \disjun \bbSigma')
 \]
 is a $\PGr$-graded isomorphism.
\end{proposition}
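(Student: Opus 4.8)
The plan is to mirror the proof of Proposition~\ref{P:mu} (the 2-categorical statement) at the level of vector spaces, where the argument simplifies considerably because we no longer need Morita equivalences, only isomorphisms of $\PGr$-graded vector spaces. First I would verify that $\mu_{\bbSigma,\bbSigma'}$ is a well-defined morphism of $\PGr$-graded vector spaces, i.e.\ that the prescribed assignment respects the $\PGr$-grading and is linear. This is essentially bookkeeping with the $3$-pants $\bbP^3_{-k,-k'}$ and the braiding $2$-morphism $\cobbr_{\bbS^2_{-k},\bbSigma'}$; the compatibility with gradings is immediate from the indices, and linearity is clear from the construction via $\disjun$ and vertical composition.

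The core of the argument is to show $\mu_{\bbSigma,\bbSigma'}$ is bijective in each degree. Fixing a total degree $n$, the degree $n$ component is a linear map $\bigoplus_{k + k' = n} \rmV_{\calC}(\bbSigma \disjun \bbS^2_{-k}) \otimes \rmV_{\calC}(\bbSigma' \disjun \bbS^2_{-k'}) \rightarrow \rmV_{\calC}(\bbSigma \disjun \bbSigma' \disjun \bbS^2_{-n})$. For injectivity I would adapt the faithfulness computation from the proof of Proposition~\ref{P:mu}: using Lemma~\ref{L:triviality_lemma_TQFT} to test triviality of a vector in $\rmV_{\calC}(\bbSigma \disjun \bbSigma' \disjun \bbS^2_{-n})$ against closed $3$-manifolds built by capping off with dual morphisms $\bbM'' \disjun \bbM'''$, then decomposing the gluing through a $3$-pant and an inverse $3$-pant. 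The key identity is that vertically composing through $\overline{\bbP^3_{-k,-k'}} \ast \bbP^3_{-k,-k'}$ produces a scalar factor $\eta^{-1}\rmd(V_0)^{-1}$ (Lemma~\ref{L:3-pants_composition_1}) times a product $\CGP_{\calC}(\bbM'' \ast \bbM^k_{\bbSigma} \ast \cdots) \CGP_{\calC}(\bbM''' \ast \bbM^{k'}_{\bbSigma'} \ast \cdots)$ of two separate invariants. By the multiplicativity of $\CGP_{\calC}$ under disjoint union, a trivial image therefore forces the preimage tensor to be trivial in $\rmV_{\calC}(\bbSigma \disjun \bbS^2_{-k}) \otimes \rmV_{\calC}(\bbSigma' \disjun \bbS^2_{-k'})$. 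For surjectivity I would invoke Lemma~\ref{L:connection_lemma_TQFT}: every vector of $\rmV_{\calC}(\bbSigma \disjun \bbSigma' \disjun \bbS^2_{-n})$ is represented by a decoration of the $3$-pant-type cobordism $P^3(M,M')$ obtained by gluing connected cobordisms $M$ from $\varnothing$ to $\varSigma \sqcup S^2$ and $M'$ from $\varnothing$ to $\varSigma' \sqcup S^2$ along a pair-of-pants, so that up to isotopy and skein equivalence Lemmas~\ref{L:2-sphere} and \ref{L:3-pant} identify such a decoration with one in the image of $\mu_{\bbSigma,\bbSigma'}$.

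The main obstacle I anticipate is the bookkeeping in the surjectivity step: one has to choose the connected cobordisms $M$, $M'$ carefully so that their pair-of-pants gluing realizes the target $3$-manifold, and then argue that the $2$-sphere decorations appearing after cutting along the two $S^2$'s can be normalized, using Lemma~\ref{L:2-sphere}, to the standard colors $i_0$ so that the decoration genuinely comes from a tensor product of decorations of $\bbSigma \disjun \bbS^2_{-k}$ and $\bbSigma' \disjun \bbS^2_{-k'}$. This is precisely the point where the analogous step in Proposition~\ref{P:mu} required Lemma~\ref{L:3-pant}, and the same analysis applies verbatim here. Once bijectivity in every degree is established, $\mu_{\bbSigma,\bbSigma'}$ is a $\PGr$-graded isomorphism, which is the claim. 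The argument is entirely parallel to the proof of Proposition~\ref{P:mu}, only lighter, so I would present it by pointing to that proof for the shared computations and spelling out only the simplifications that occur at the level of vector spaces.
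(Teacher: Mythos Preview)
Your proposal is correct and takes essentially the same approach as the paper: the paper's proof simply points back to the faithfulness and fullness argument in Proposition~\ref{P:mu}, noting that the only change is to invoke Lemma~\ref{L:connection_lemma_TQFT} in place of Lemma~\ref{L:connection_lemma} for surjectivity, which is exactly what you do. Your write-up is more detailed than the paper's one-line reduction, but the strategy and the key lemmas you cite match precisely.
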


\begin{proof}
 The argument is the exact same one we used in order to establish faithfulness and fullness of the $\PGr$-graded functors 
 \[ 
  \bfmu_{\bbGamma,\bbGamma'} : \bbA^Z_{\calC}(\bbGamma) \sqtimes \bbA^Z_{\calC}(\bbGamma') \rightarrow \bbA^Z_{\calC}(\bbGamma \disjun \bbGamma')
 \]
 for objects $\bbGamma$ and $\bbGamma'$ of $\bfadCob_{\calC}$ in the proof of Proposition \ref{P:mu}, with the only difference that we have to invoke Lemma \ref{L:connection_lemma_TQFT} instead of Lemma \ref{L:connection_lemma} for establishing the surjectivity of $\mu_{\bbSigma,\bbSigma'}$.
\end{proof}

We are now ready to prove the following result.

\begin{theorem}\label{T:graded_TQFT}
 The functor 
 \[
  \bbV^Z_{\calC} : \rmadCob_{\calC} \rightarrow \Vect_{\Bbbk}^{\PGr}
 \]
 is  symmetric monoidal.
\end{theorem}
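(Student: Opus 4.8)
The plan is to mirror the structure of the proof of Theorem \ref{T:symmetric_monoidality}, but one categorical level down, so that most of the heavy lifting has already been done. The functor $\bbV_{\calC}$ is the TQFT contained in the ETQFT $\hat{\bbA}_{\calC}$: concretely, $\bbV_{\calC}(\bbSigma) = \Hom_{\bbA_{\calC}(\varnothing)}(\id_{\varnothing},\bbSigma)$ with its $\PGr$-grading, in exact analogy with the remark closing Chapter \ref{Ch:extenstion_of_CGP_invariants} which identified $\bbV_{\calC}(\bbSigma)$ with $\Hom_{\bbA_{\calC}(\varnothing)}(\id_{\varnothing},\bbSigma)$ in the ungraded case. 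So one option is a soft argument: a symmetric monoidal $2$-functor between symmetric monoidal $2$-categories induces, by taking $\Hom$-categories out of the monoidal unit and then Hom-sets out of the identity, a symmetric monoidal functor of the underlying $1$-categories, and $\bbV_{\calC}$ is obtained from $\hat{\bbA}_{\calC}$ by precisely this procedure. However, to keep the paper self-contained I would instead give the direct argument, reusing the coherence data $\varepsilon$ and $\mu$ just defined.

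First I would record that $\mu$ is indeed a natural transformation of functors $\rmadCob_{\calC} \times \rmadCob_{\calC} \to \Vect_{\Bbbk}^{\PGr}$: naturality in each variable is checked exactly as in the proof that $\bfmu_{\bbSigma,\bbSigma'}$ is a $\PGr$-graded natural transformation, using the properties of $\disjun$, the definition of the braiding $2$-morphisms $\cobbr$, and Lemmas \ref{L:unitarity_of_3-discs} and \ref{L:twisted_3-pant_lemma}; the appearance of the sign $\gamma(k,k')$ when transposing a degree-$k$ and a degree-$k'$ factor past each other is controlled by Lemma \ref{L:twisted_3-pant_lemma}, exactly as in the ETQFT proof. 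Next I would verify the three coherence identities for a symmetric monoidal functor, namely the left and right unit constraints
\[
 \mu_{\id_{\varnothing},\bbSigma} \circ (\varepsilon \otimes \id_{\bbV_{\calC}(\bbSigma)}) = \id_{\bbV_{\calC}(\bbSigma)}, \quad
 \mu_{\bbSigma,\id_{\varnothing}} \circ (\id_{\bbV_{\calC}(\bbSigma)} \otimes \varepsilon) = \id_{\bbV_{\calC}(\bbSigma)},
\]
the associativity constraint
\[
 \mu_{\bbSigma,\bbSigma' \disjun \bbSigma''} \circ (\id_{\bbV_{\calC}(\bbSigma)} \otimes \mu_{\bbSigma',\bbSigma''}) = \mu_{\bbSigma \disjun \bbSigma',\bbSigma''} \circ (\mu_{\bbSigma,\bbSigma'} \otimes \id_{\bbV_{\calC}(\bbSigma'')}),
\]
and the symmetry constraint relating $\mu_{\bbSigma',\bbSigma} \circ \tau$ to $\bbV_{\calC}(\text{flip}) \circ \mu_{\bbSigma,\bbSigma'}$. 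On objects all four are immediate from the quasi-strictness of $\bfadCob_{\calC}$ (equivalently $\rmadCob_{\calC}$); on vectors the unit constraints follow from Lemma \ref{L:unitarity_of_3-discs}, the associativity constraint from Lemma \ref{L:associativity_of_3-pants}, and the symmetry constraint from the properties of $\cobbr$ together with Lemmas \ref{L:unitarity_of_3-discs} and \ref{L:twisted_3-pant_lemma} — precisely the pattern used at the end of the proof of Theorem \ref{T:symmetric_monoidality}. Finally, $\varepsilon$ is a $\PGr$-graded isomorphism by Proposition \ref{P:epsilon_TQFT} and each $\mu_{\bbSigma,\bbSigma'}$ is a $\PGr$-graded isomorphism by Proposition \ref{P:mu_TQFT}, so the coherence data are invertible and $\bbV_{\calC}$ is symmetric monoidal.

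I do not expect a genuine obstacle here: every computational input has an exact counterpart in the ETQFT proof, and the $1$-categorical coherence diagrams are strictly fewer and simpler than the $2$-categorical ones (there is no analogue of the $2$-modification $\bfH$ or of the pentagonators, and no HTA/BHA/SHA axioms to dispatch). The only point requiring a little care is bookkeeping of the $\PGr$-degrees: each vector of $\bbV_{\calC}(\bbSigma)$ of degree $k$ is a $2$-morphism into $\bbSigma \disjun \bbS^2_{-k}$, so in checking the constraints one must keep track of which $\bbS^2_{-k}$ factors get fused by which $3$-pant $\bbP^3$, and confirm that the fusions prescribed by the two sides of each identity agree up to the skein equivalences of Lemmas \ref{L:skein_equivalence}, \ref{L:surgery_axioms}, \ref{L:unitarity_of_3-discs}, \ref{L:associativity_of_3-pants}, and \ref{L:twisted_3-pant_lemma}. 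This is routine, and I would present it by citing the corresponding steps of the proof of Theorem \ref{T:symmetric_monoidality} rather than repeating them.
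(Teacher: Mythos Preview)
Your proposal is correct and follows essentially the same approach as the paper: verify the unit, associativity, and symmetry constraints for $(\varepsilon,\mu)$ directly, citing Lemma \ref{L:unitarity_of_3-discs} for the units, Lemma \ref{L:associativity_of_3-pants} for associativity, and Lemma \ref{L:twisted_3-pant_lemma} for symmetry, with invertibility supplied by Propositions \ref{P:epsilon_TQFT} and \ref{P:mu_TQFT}. The paper's proof is slightly terser (it omits your preliminary discussion of the soft argument and of naturality), but the substance is identical.
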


\begin{proof}
 In order to prove our claim we only need to check that
 \begin{gather*}
  \mu_{\id_{\varnothing},\bbSigma} \circ (\varepsilon \otimes \id_{\bbV^Z_{\calC}(\bbSigma)}) = \id_{\bbV^Z_{\calC}(\bbSigma)}, \quad \mu_{\bbSigma,\id_{\varnothing}} \circ (\id_{\bbV^Z_{\calC}(\bbSigma)} \otimes \varepsilon) = \id_{\bbV^Z_{\calC}(\bbSigma)}, \\
  \mu_{\bbSigma,\bbSigma' \disjun \bbSigma''} \circ \left( \id_{\bbV^Z_{\calC}(\bbSigma)} \otimes \mu_{\bbSigma',\bbSigma''} \right) = \mu_{\bbSigma \disjun \bbSigma',\bbSigma''} \circ \left( \mu_{\bbSigma,\bbSigma'} \otimes \id_{\bbV^Z_{\calC}(\bbSigma'')} \right), \\
  \bbV^Z_{\calC}(\cobbr_{\bbSigma,\bbSigma'}) \circ \mu_{\bbSigma,\bbSigma'} = \mu_{\bbSigma',\bbSigma} \circ c^{\gamma}_{\bbV^Z_{\calC}(\bbSigma),\bbV^Z_{\calC}(\bbSigma')}
 \end{gather*}
 for all $\bbSigma, \bbSigma', \bbSigma'' \in \rmadCob_{\calC}$. The first and the second equality follow from Lemma \ref{L:unitarity_of_3-discs}, the third equality follows from Lemma \ref{L:associativity_of_3-pants}, while the fourth equality follows from Lemma \ref{L:twisted_3-pant_lemma}.
\end{proof}

Our goal for the remainder of the memoir will be characterize universal $\PGr$-graded linear categories and universal $\PGr$-graded linear functors associated with generating objects and $1$-morphisms of $\bfadCob_{\calC}$. This will allow us to produce computations of universal $\PGr$-graded vector spaces associated with closed $1$-morphisms of $\bfadCob_{\calC}$ in terms of the relative modular category $\calC$. In order to do this, we need to introduce a $\PGr$-graded linear version of the functor $\bbV^Z_{\calC}$ providing our $\PGr$-graded TQFT. First, we need to recall the definition of the $\PGr$-graded linear category $\bbVect_{\Bbbk}^{\PGr}$ of $\PGr$-graded vector spaces given by Example \ref{E:gr_cat_of_gr_vector_spaces}. Next, we need to consider the $\PGr$-graded linear functor 
\[
 \gltqft : \bbA^Z_{\calC}(\varnothing) \rightarrow \bbVect_{\Bbbk}^{\PGr}
\]
sending every object $\bbSigma_{\varnothing} \in \bbA^Z_{\calC}(\varnothing)$ to the $\PGr$-graded vector space $\bbV^Z_{\calC}(\bbSigma_{\varnothing})$, and every degree $k$ morphism $[ \bbM^k_{\varnothing} ] \in \Hom_{\bbA^Z_{\calC}(\varnothing)}(\bbSigma_{\varnothing},\bbSigma''_{\varnothing})$ to the $\PGr$-graded linear map $\gltqft ( [ \bbM^k_{\varnothing} ] ) : \bbV^Z_{\calC}(\bbSigma_{\varnothing}) \rightarrow R^{-k}(\bbV^Z_{\calC}(\bbSigma''_{\varnothing}))$
sending every degree $\ell$ vector $[ \bbM^{\ell}_{\bbSigma_{\varnothing}} ]$ of $\bbV^Z_{\calC}(\bbSigma_{\varnothing})$ to the degree $k + \ell$ vector
\[
 \left[ \left( \id_{\bbSigma''_{\varnothing}} \disjun \bbP^3_{-k,-\ell} \right) \ast \left( \bbM^k_{\varnothing} \disjun \id_{\bbS^2_{-\ell}} \right) \ast \bbM^{\ell}_{\bbSigma_{\varnothing}} \right]
\]
of $\bbV^Z_{\calC}(\bbSigma''_{\varnothing})$.
We will think about this problem as follows: if $\bbSigma$ is a closed $1$-morphism of $\bfadCob_{\calC}$, then we look for a $\PGr$-graded linear functor $\bbF_{\bbSigma} : \Bbbk \rightarrow \bbVect_{\Bbbk}^{\PGr}$, together with a $\PGr$-graded natural isomorphism of the form
\begin{center}
 \begin{tikzpicture}[descr/.style={fill=white}]
  \node (P0) at (45:{3.25/sqrt(2)}) {$\bbVect_{\Bbbk}^{\PGr}$};
  \node (P1) at (45+90:{3.25/sqrt(2)}) {$\Bbbk$};
  \node (P2) at (45+2*90:{3.25/sqrt(2)}) {$\bbA^Z_{\calC}(\varnothing)$};
  \node (P3) at (45+3*90:{3.25/sqrt(2)}) {$\bbA^Z_{\calC}(\varnothing)$};
  \node[above] at (0,0) {$\Downarrow$};
  \node[below] at (0,0) {$\bbeta_{\bbSigma}$};
  \draw
  (P1) edge[->] node[above,yshift=5pt] {$\bbF_{\bbSigma}$} (P0)
  (P1) edge[->] node[left,xshift=-5pt] {$\bfepsilon$} (P2)
  (P2) edge[->] node[below,yshift=-5pt] {$\bbA^Z_{\calC}(\bbSigma)$} (P3)
  (P3) edge[->] node[right,xshift=5pt] {$\gltqft$} (P0);
 \end{tikzpicture}
\end{center}
Since the $\PGr$-graded linear category $\Bbbk$ features a single object, we are actually looking for a single $\PGr$-graded vector space, together with a $\PGr$-graded linear isomorphism from $\gltqft(\bbSigma)$ to it. The idea is then to try and decompose the closed $1$-morphism $\bbSigma$ as a compostion of tensor products of generating $1$-morphisms, whose images are easier to describe and compute.


%
%
%

\chapter{Characterization of the image}\label{Ch:characterization_of_image}

This chapter is devoted to a characterization of the $\PGr$-graded ETQFT $\hat{\bbA}^Z_{\calC}$
by means of the relative modular category $\calC$. 
First, we identify $\PGr$-graded linear categories associated with a set of generating objects of $\bfadCob_{\calC}$ composed of $1$-spheres. Next, we study $\PGr$-graded linear functors associated with a set of generating $1$-morphisms of $\bfadCob_{\calC}$ composed of $2$-discs, $2$-pants, and $2$-cylinders. We end this chapter with some computations of $\PGr$-graded vector spaces associated with closed $1$-morphisms of $\bfadCob_{\calC}$, as an application of the characterization we provide.

\section{1-Spheres}\label{S:1-spheres}

We start by describing universal $\PGr$-graded linear categories associated with generating objects of $\bfadCob_{\calC}$, given by $1$-spheres, in terms of homogeneous subcategories of projective objects of $\calC$. In order to do this, we first need to fix some notation. Recalling Definition \ref{D:graded_extension}, we denote with $R^{\sigma}(\calC)$ the $\PGr$-graded linear category obtained from $\calC$ by $\PGr$-graded extension with respect to the free action $R^{\sigma}$ of $\PGr$ determined by the right translation linear endofunctors $R^{\sigma(k)} \in \End_{\Bbbk}(\calC)$ mapping every object $V$ of $\calC$ to $V \otimes \sigma(k)$, and every morphism $f$ of $\calC$ to $f \otimes \id_{\sigma(k)}$ for every $k \in \PGr$. 
For all objects $V,V' \in \calC$, we denote with $\bbHom_{\calC}(V,V')$ the $\PGr$-graded vector space of morphisms of $R^{\sigma}(\calC)$ from $V$ to $V'$. Furthermore, since $R^{\sigma}$ restricts to a free action of $\PGr$ on $\Proj(\calC_g)$ for every $g \in G$, we set $\bbProj(\calC_g) := R^{\sigma}(\Proj(\calC_g))$.
Next, we need to introduce some objects and $1$-morphisms of $\bfCob_{\calC}$, so let us fix a $g \in G$ and an object $V \in \calC_g$.

\begin{definition}\label{D:1-sphere}
 The \textit{$g$-colored $1$-sphere} $\bbS^1_g$ is the object of $\bfCob_{\calC}$ given by 
 \[
  \left( S^1,\xi_g \right)
 \]
 where $\xi_g$ is the $G$-coloring of $S^1$ determined by $\langle \xi_g,[S^1] \rangle = g$, with a single base point at the south pole.
\end{definition}

%
%

\begin{definition}\label{D:2-disc}
 The \textit{$V$-col\-ored $2$-disc} $\bbD^2_V : \varnothing \rightarrow \bbS^1_g$ is the $1$-morphism of $\bfCob_{\calC}$ given by 
 \[
  \left( D^2,P_V,\vartheta_V,\{ 0 \} \right)
 \]
 where $P_V \subset D^2$ is the standard $\calC$-colored ribbon set associated with $(+,V) \in \Rib_{\calC}^G$, and where $\vartheta_V$ is the only compatible $G$-coloring of $(D^2,P_V)$ which vanishes on all relative homology classes contained in the southern hemisphere, as represented in Figure \ref{F:2-disc}. Similarly, the \textit{dual $V$-col\-ored $2$-disc} $\overline{\bbD^2_V} : \bbS^1_g \rightarrow \varnothing$ is the $1$-mor\-phism of $\bfadCob_{\calC}$ given by 
 \[
  \left( \overline{D^2},\overline{P_V},\vartheta_V,\{ 0 \} \right)
 \]
 where $\overline{P_V} \subset \overline{D^2}$ is the $\calC$-colored ribbon set obtained from $P_V$ by a change of sign.
\end{definition}

\begin{figure}[hbt]\label{F:2-disc}
 \centering
 \includegraphics{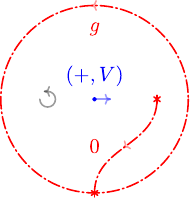}
 \caption{The $1$-morphism $\bbD^2_V$ of $\bfadCob_{\calC}$.}
\end{figure}

Let us consider a $3$-dimensional cobordism with corners $Y$ from $D^2$ to $D^2 \sqcup S^2$ whose support is given by $D^2 \times I \subset \R^3$ minus an open ball of radius $\frac{1}{4}$ and center $(0,0,\frac 23)$, 
and let us fix a section $s_{V,0} \in \Hom_{\calC}(V,V \otimes V_0 \otimes V_0^*)$
of the epimorphism $\id_V \otimes \rev_{V_0}$ for every object $V \in \bbProj(\calC_g)$, whose existence is ensured by Remark \ref{R:epic_evaluations}. Let
\[
 \bbF_g : \bbProj(\calC_g) \rightarrow \bbA^Z_{\calC}(\bbS^1_g)
\]
be the $\PGr$-graded linear functor sending every object $V$ of $\bbProj(\calC_g)$ to the object $\bbD^2_V$ of $\bbA^Z_{\calC}(\bbS^1_g)$, and every degree $k$ morphism $f^k$ of $\bbHom_{\calC}(V,V')$ to the degree $k$ morphism $\calD^{-1} \rmd(V_0) \cdot [ \bbY_{f^k} ]$ of $\smash{\Hom_{\bbA^Z_{\calC}(\bbS^1_g)}(\bbD^2_V,\bbD^2_{V'})}$, where the $2$-morphism ${\bbY_{f^k } : \bbD^2_V \Rightarrow \bbD^2_{V'} \disjun \bbS^2_{-k}}$ of $\bfadCob_{\calC}$ is given by
\[
 (Y,T_{f^k},\omega_{f^k},0 )
\]
for the $(\calC,G)$-coloring $(T_{f^k},\omega_{f^k})$ of $Y$ represented in Figure \ref{F:punctured_3-cylinder}.

\begin{figure}[hbt]\label{F:punctured_3-cylinder}
 \centering
 \includegraphics{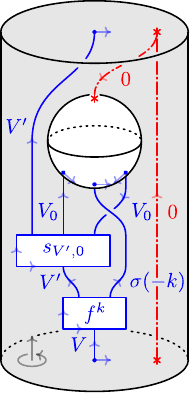}
 \caption{The $2$-morphism $\bbY_{f^k}$ of $\bfadCob_{\calC}$.}
\end{figure}

\begin{proposition}\label{P:univ_lin_cat}
 For every $g \in G$ the $\PGr$-graded linear functor
 \[ 
  \bbF_g : \bbProj(\calC_g) \rightarrow \bbA^Z_{\calC}(\bbS^1_g)
 \]
 is a $\PGr$-Morita equivalence.
\end{proposition}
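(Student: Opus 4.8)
The plan is to deduce the statement from Proposition \ref{P:gr_Morita_equivalence}, exactly as in the proofs of Propositions \ref{P:epsilon} and \ref{P:mu}: it suffices to check that $\bbF_g$ is a well-defined $\PGr$-graded linear functor, that the objects $\bbD^2_V$ with $V\in\bbProj(\calC_g)$ form a dominating set of $\bbA_{\calC}(\bbS^1_g)$, and that $\bbF_g$ is fully faithful. Well-definedness (preservation of identities and of composition, with no $\gamma$-twist since $\bbF_g$ is not a functor out of a $\sqtimes$-product) will follow from skein equivalence and the surgery identities. Indeed, gluing two copies of the cobordism $Y$ vertically and applying the $3$-pant $\bbP^3_{-\ell,-k}$ creates a tube running along the $V_0$-colored arcs produced by the sections $s_{V,0}$, and an index-$1$ surgery of color $V_0 = V_{i_{g_0}}$ along that tube contributes, by Lemma \ref{L:surgery_axioms}, the factor $\lambda_{i_{g_0},1} = (\eta\rmd(V_0))^{-1}$; this is precisely what makes the two normalizations $\eta\rmd(V_0)$ collapse to one, and what remains after a skein equivalence is $\bbY_{f'^{\ell}\diamond f^k}$. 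The identity axiom is checked similarly, using the index-$0$ coefficient $\lambda_{i_{g_0},0}=\eta\rmd(V_0)$, the relation $(\id_V\otimes\rev_{V_0})\circ s_{V,0}=\id_V$ of Remark \ref{R:epic_evaluations}, and Lemmas \ref{L:3-discs_composition} and \ref{L:unitarity_of_3-discs}.

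For domination I would apply Lemma \ref{L:Morita_reduction} with $\varSigma=D^2$: the category $\bbA_{\calC}(\bbS^1_g)$ is dominated by the objects $(D^2,P,\vartheta,\{0\})$ with $(P,\vartheta)\in\adSk(D^2;\varnothing,\bbS^1_g)$. A standard cabling move, realized by a punctured-cylinder cobordism of the same shape as $Y$ and justified by Lemma \ref{L:skein_equivalence} together with functoriality of $F_{\calC}$, shows each such object is isomorphic in $\bbA_{\calC}(\bbS^1_g)$ to $\bbD^2_W$, where $W\in\calC_g$ is the tensor product of the colors of $P$. It then remains to see $W$ may be taken projective. If $P$ has a projective vertex this is automatic, since $\Proj(\calC)$ is an ideal. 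Otherwise admissibility of the $1$-morphism forces $\vartheta$ to be generic, so some partial sum of the indices of the colors of $P$ equals a generic $h\in G\smallsetminus X$; fusing the corresponding points yields an object of $\calC_h$, which is projective because $\calC_h$ is semisimple and is dominated completely reducedly by $\Theta(\calC_h)\otimes\sigma(\PGr)$ with $\Theta(\calC_h)$ consisting of projective objects (cf.\ Remark \ref{R:critical_fusion}), and a retract of a projective object is projective. Hence $W$ is projective in all cases, and $\{\bbD^2_V\mid V\in\bbProj(\calC_g)\}$ dominates $\bbA_{\calC}(\bbS^1_g)$.

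For full faithfulness, fix $V,V'\in\bbProj(\calC_g)$ and $k\in\PGr$. By the connection Lemma \ref{L:connection_lemma}, every degree-$k$ morphism in $\Hom_{\bbA_{\calC}(\bbS^1_g)}(\bbD^2_V,\bbD^2_{V'})$, that is, every morphism of $\Hom_{\bfA_{\calC}(\bbS^1_g)}(\bbD^2_V,\bbD^2_{V'}\disjun\bbS^2_{-k})$, is induced by an admissible $(\calC,G)$-coloring of $Y$; up to isotopy, skein equivalence, and projective or generic stabilization — which absorb closed components into scalars exactly as in Chapter \ref{Ch:combinatorial_topological_properties} — such a coloring can be brought to the standard form $(T_{f^k},\omega_{f^k})$ of Figure \ref{F:punctured_3-cylinder} with $f^k\in\bbHom_{\calC}(V,V')_k=\Hom_{\calC}(V,V'\otimes\sigma(-k))$, the $G$-coloring being pinned down by compatibility and by the extension conditions relative to $\vartheta_V$, $\vartheta_{V'}$ and the coloring of $\bbS^2_{-k}$. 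This gives surjectivity of $\bbF_g$ on morphism spaces, hence fullness. Faithfulness will follow from the triviality Lemma \ref{L:triviality_lemma}: capping $\bbD^2_V$ with a dual $2$-disc over $\bbS^1_g$ and the $\bbS^2_{-k}$ factor with the relevant $3$-disc, the vanishing of $[\bbY_{f^k}]$ becomes equivalent to the vanishing of $\CGP_{\calC}$ on a family of closed $3$-manifolds, each of whose values equals, up to an invertible scalar, $\rmt_{V}$ of a morphism depending linearly on $f^k$ and ranging over the pairing $\rmt_V(\,\cdot\circ\cdot\,)$; since this pairing is non-degenerate by Proposition \ref{P:non-degeneracy_of_trace}, we get $f^k=0$. (Alternatively one may combine fullness with the upper bound $\dim_{\Bbbk}\Hom_{\bfA_{\calC}(\bbS^1_g)}(\bbD^2_V,\bbD^2_{V'}\disjun\bbS^2_{-k})\le\dim_{\Bbbk}\Hom_{\calC}(V,V'\otimes\sigma(-k))$ obtained by the same triviality-lemma computation as in Lemma \ref{L:2-sphere}, using semisimplicity of the generic homogeneous subcategories.)

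I expect the main obstacle to be the faithfulness step together with its bookkeeping. Reducing an arbitrary coloring of $Y$ to the standard form requires the stabilization machinery of Chapter \ref{Ch:combinatorial_topological_properties} to remove non-standard (in particular closed, non-projective) components, and identifying the resulting number with the trace pairing $\rmt_V(\,\cdot\circ\cdot\,)$ — while keeping track of the $\sigma(-k)$- and $V_0$-strands contributed by the $\bbS^2_{-k}$ factor and by the sections $s_{V,0}$ — is exactly where the normalization $\eta\rmd(V_0)$ and the choice of section $s_{V,0}$ via Remark \ref{R:epic_evaluations} are used, to certify that the induced pairing really is the non-degenerate pairing of Proposition \ref{P:non-degeneracy_of_trace} rather than a degenerate variant.
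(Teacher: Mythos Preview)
Your overall strategy --- invoke Proposition~\ref{P:gr_Morita_equivalence} after checking functoriality, domination, and full faithfulness --- matches the paper exactly, and your identity-functoriality and faithfulness steps are essentially correct. But composition-functoriality and fullness have real gaps, and the common thread is that you never use the relative modularity condition of Definition~\ref{D:relative_modular_category}, which the paper needs in both places.

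For composition, your index-$1$ surgery does not apply as stated. The only evident tube in the composite $[\bbY_{f'^{\ell}}]\diamond[\bbY_{f^k}]$ is the $S^2\times I$ coming from $\id_{\bbS^2_{-k}}$, and it carries \emph{three} parallel strands $(V_0,\sigma(-k),V_0^*)$, not two, so it is not of the form $\bbB_{i_0,1}$ required by Lemma~\ref{L:surgery_axioms} with $j=1$. The paper instead presents the composite as index-$2$ surgery on $Y$ with a single $\Omega$-colored component (picking up the $\calD^{-1}$ of Lemma~\ref{L:surgery_axioms} with $j=2$), slides a $V_0$-edge over it to make it computable, and then removes that component via relative modularity; this is where the extra factor $\eta\,\rmd(V_0)$ is absorbed.

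For fullness, the $G$-coloring on $Y$ is \emph{not} pinned down by the boundary data. After reducing to a single coupon one only reaches colorings $(T_f,\omega_h)$ with $f\in\Hom_{\calC}(V,V'\otimes V_0\otimes\sigma(-k)\otimes V_0^*)$ --- a larger target than your $V'\otimes\sigma(-k)$ --- together with a free parameter $h\in G$. Two further steps are needed: the compatibility form $\psi$ absorbs the $h$-dependence, giving $[Y,T_f,\omega_h]=\psi(h,-k)\cdot[Y,T_f,\omega_0]$, and relative modularity (the skein equivalence of Figure~\ref{F:punctured_3-cylinder_lemma_4}) shows that a general $f$ yields the same class as one of the specific shape $s_{V',-k}\circ f^k$. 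Your reduction ``by skein equivalence plus stabilization'' alone accomplishes neither. The same unaccounted $h$-parameter affects your domination argument: cabling the points of an admissible $(P,\vartheta)$ yields $\bbD^2_{W,h}$ for some $h$, not $\bbD^2_W=\bbD^2_{W,0}$; the paper supplies the missing isomorphism $\bbD^2_{W,h}\cong\bbD^2_{W,0}$ via an explicit cylinder cobordism $(\bbD^2\times\bbI)_{W,h}$.
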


\begin{proof} 
 We start by proving $\bbF_g$ is actually a $\PGr$-graded linear functor. First of all, we need to show that
 \[
  \left[ \id_{\bbD^2_V}^0 \right] = \calD^{-1} \rmd(V_0) \cdot \left[ \bbY_{\id_V^0} \right]
 \]
 for every object $V \in \bbProj(\calC_g)$. This follows directly from Lemma \ref{L:3-discs_composition}. Next, we need to show that
 \[
  \calD^{-2} \rmd(V_0)^2 \cdot \left[ \bbY_{f'^{k'}} \right] \diamond \left[ \bbY_{f^k} \right] = 
  \calD^{-1} \rmd(V_0) \cdot \left[ \bbY_{f'^{k'} \diamond f^k} \right]
 \]
 for every degree $k$ morphism $f^k \in \bbHom_{\calC}(V,V')$ and every degree $k'$ morphism $\smash{f'^{k'} \in \bbHom_{\calC}(V',V'')}$.
 This follows directly from the skein equivalence of Figure \ref{F:punctured_3-cylinder_lemma_1}, which gives a graphical representation of the equality we need to check.
 \begin{figure}[b]\label{F:punctured_3-cylinder_lemma_1}
  \centering
  \includegraphics{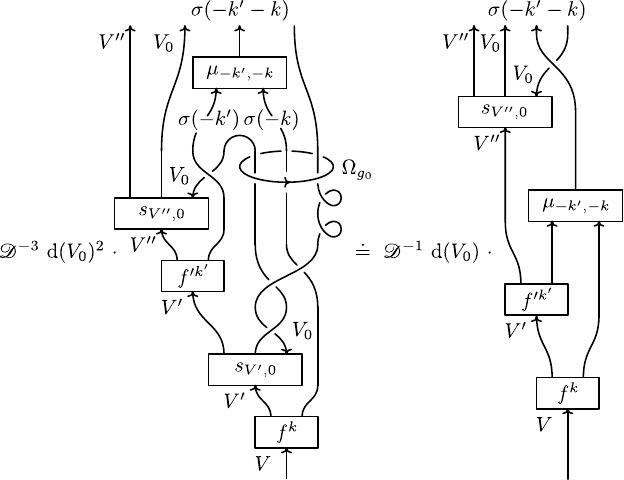}
  \caption{Skein equivalence witnessing the functoriality of $\bbF_g$.} 
 \end{figure}
 Remark that the $\calC$-colored ribbon graph represented in the left-hand part contains a surgey component accounting for the additional $3$-pant cobordism intervening in the composition in $\bbA^Z_{\calC}(\bbS^1_g)$, and that we have to slide a $V_0$-colored edge over it in order to make the surgery presentation computable. Remark also that the extra coefficient $\calD^{-1}$ on the left comes from Lemma \ref{L:surgery_axioms} with index $j = 2$. To prove the skein equivalence, we use twice the compatibility condition between the $G$-structure and the $\PGr$-action of $\calC$ in order to turn every undercrossing of the $\sigma(-k)$-colored edge with the rest of the graph into an overcrossing. Next, the relative monoidality condition allows us to remove the surgery component, and the price to pay is a coefficient $\calD^{-2} \rmd(V_0)$. Then, thanks to isotopy and to the definition of the section $s_{V',0}$, we can conclude. 
 Now, we can move on to prove $\bbF_g$ is a $\PGr$-Morita equivalence. Once again, the strategy is to use Proposition \ref{P:gr_Morita_equivalence}. First, we need to show the family of objects $\{ \bbD^2_V \mid V \in \bbProj(\calC_g) \}$
 dominates $\bbA^Z_{\calC}(\bbS^1_g)$. To do so, let us consider, for every $V \in \bbProj(\calC_g)$ and every $h \in G$, the $1$-morphism $\bbD^2_{V,h} : \varnothing \rightarrow \bbS^1_g$ of $\bfCob_{\calC}$ given by $\left( D^2,P_V,\vartheta_{V,h},\{ 0 \} \right)$,
 where $\vartheta_{V,h}$ is the only compatible $G$-coloring of $(D^2,P_V)$ satisfying $\langle \vartheta_{V,h},\gamma \rangle = h$ for the relative homology class $\gamma$ represented in Figure \ref{F:2-disc}. Then, thanks to Lemmas \ref{L:skein_equivalence} and \ref{L:Morita_reduction}, we know $\{ \bbD^2_{V,h} \mid V \in \bbProj(\calC_g), h \in G \}$ is a dominating set of objects for $\bbA^Z_{\calC}(\bbS^2_g)$. However, this set is redundant, as the objects $\bbD^2_{V,h}$ and $\bbD^2_{V,0}$ are isomorphic in $\bbA^Z_{\calC}(\bbS^2_g)$ for every $V \in \bbProj(\calC_g)$ and every $h \in G$. This can be seen by considering the invertible degree $0$ morphism of $[(\bbD^2 \times \bbI)_{V,h} \disjun \bbD^3_0] \in \Hom_{\bbA^Z_{\calC}(\calC_g)}(\bbD^2_{V,h},\bbD^2_{V,0})$, where the $2$-morphism $(\bbD^2 \times \bbI)_{V,h} : \bbD^2_{V,h} \Rightarrow \bbD^2_{V,0}$ of $\bfadCob_{\calC}$ is given by $(D^2 \times I,P_V \times I,\omega_{V,h},0)$
 for the $(\calC,G)$-coloring $(P_V \times I,\omega_{V,h})$ of $D^2 \times I$ represented in Figure \ref{F:punctured_3-cylinder_lemma_2}.
 \begin{figure}[t]\label{F:punctured_3-cylinder_lemma_2}
 	\centering
 	\includegraphics{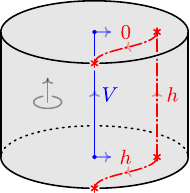}
 	\caption{The $2$-morphism $(\bbD^2 \times \bbI)_{V,h}$ of $\bfadCob_{\calC}$.}
 \end{figure}
 Therefore, we just need to show that $\bbF_g$ is faithful and full. Let us start with fullness, so let us fix some $k \in \PGr$ and some objects $V ,V' \in \bbProj(\calC_g)$. Every degree $k$ morphism of $\smash{\Hom_{\bbA^Z_{\calC}(\bbS^1_g)} ( \bbD^2_V,\bbD^2_{V'} )}$ is the image of a vector of $\smash{\adSk(Y;\bbD^2_V,\bbD^2_{V'} \disjun \bbS^2_{-k})}$ thanks to Lemma \ref{L:connection_lemma}. Furthermore, up to isotopy and skein equivalence, we can restrict to admissible $(\calC,G)$-colorings of the form $(T_f,\omega_h)$ like the one represented in Figure \ref{F:punctured_3-cylinder_lemma_3} 
 \begin{figure}[b]\label{F:punctured_3-cylinder_lemma_3}
  \centering
  \includegraphics{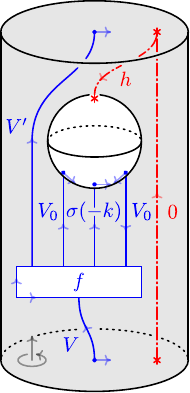}
  \caption{$(\calC,G)$-Coloring of $Y$ of the form $(T_f,\omega_h)$.}
 \end{figure}
  with 
 \[
  f \in \Hom_{\calC}(V,V' \otimes V_0 \otimes \sigma(-k) \otimes V_0^*), \quad
  h \in G.
 \]
 Indeed, remark that every $G$-coloring of $(Y,T_f)$ has to vanish on the relative homology class $B \times I$ joining the base points of the two copies of $D^2$, which is a sum of relative homology classes contained in the boundary whose evaluations are prescribed to be zero. Now, to determine which $(\calC,G)$-colorings correspond to trivial morphisms, we can appeal to Lemma \ref{L:triviality_lemma}, so let us specify $\smash{\overline{D^2}}$ as a $2$-di\-men\-sion\-al cobordism from $S^1$ to $\varnothing$, let us specify $D^3$ as a $3$-di\-men\-sion\-al cobordism from $\varnothing$ to $D^2 \cup_{S^1} \overline{D^2}$, and let us specify $S^2 \times I$ as a $3$-di\-men\-sion\-al cobordism from $(D^2 \cup_{S^1} \overline{D^2}) \sqcup \overline{S^2}$ to $\varnothing$. Now the triviality of the morphism $[Y,T_f,\omega_h,0]$ can be tested by considering all $V'' \in \calC_g$ and all
 \begin{gather*}
  (T,\omega) \in \adSk \left( D^3;\id_{\varnothing},\overline{\bbD^2_{V''}} \circ \bbD^2_V \right), \\
  (T',\omega') \in \adSk \left( S^2 \times I;\left( \overline{\bbD^2_{V''}} \circ \bbD^2_{V'} \right) \disjun \bbS^2_{-k},\id_{\varnothing} \right),
 \end{gather*}
 and by computing the Costantino-Geer-Patureau invariant $\CGP_{\calC}$ of the resulting closed $2$-mor\-phism of $\bfadCob_{\calC}$. We can choose a surgery presentation composed of a single unknot with framing zero, whose computability can always be forced by performing generic or projective stabilization outside of $(T_f,\omega_h)$. Therefore, up to isotopy, skein equivalence, and multiplication by invertible scalars, we only need to compute the projective trace of the morphism $F_{\calC}(T_{f,f',h+h'})$ for all
 \[
  f' \in \Hom_{\calC} \left( V' \otimes V_0 \otimes \sigma(-k) \otimes V_0^*,V \right), \quad
  h' \in \{ -h \} + (G \smallsetminus X),
 \]
 where the $\calC$-colored ribbon graph $T_{f,f',h + h'}$ is represented in the left-hand part of Figure \ref{F:punctured_3-cylinder_lemma_4}. 
 \begin{figure}[b]\label{F:punctured_3-cylinder_lemma_4}
  \centering
  \includegraphics{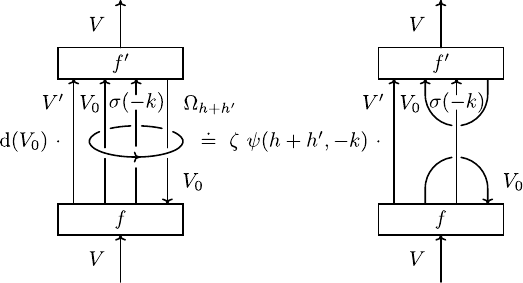}
  \caption{Skein equivalence following from the compatibility between the $G$-structure and the $\PGr$-action, and from the relative modularity of $\calC$.}
 \end{figure}
 This means that, if we consider
 \begin{gather*}
  e_{V',-k} := \id_{V'} \otimes \left( ( \rev_{V_0} \otimes \id_{\sigma(-k)} ) \circ ( \id_{V_0} \otimes c_{\sigma(-k),V_0^*} ) \right), \\ 
  s_{V',-k} := ( \id_{V'} \otimes \id_{V_0} \otimes c_{\sigma(-k),V_0^*}^{-1} ) \circ ( s_{V',0} \otimes \id_{\sigma(-k)} ), 
 \end{gather*}
 which satisfy $e_{V',-k} \circ s_{V',-k} = \id_{V' \otimes \sigma(-k)}$,
 we get
 \begin{align*}
  \left[ Y,T_f,\omega_h,0 \right] 
  &= \psi(h,-k) \cdot \left[ Y,T_{s_{V',-k} \circ e_{V',-k} \circ f},\omega_0,0 \right] \\
  &= \psi(h,-k) \cdot \bbY_{e_{V',-k} \circ f}.
 \end{align*}
 Therefore, we only need to show that $\bbF_g$ is faithful. This follows from the non-degeneracy of the projective trace $\rmt$: indeed, if we consider some non-trivial degree $k$ morphism $f^k \in \bbHom_{\calC}(V,V')$, then, thanks to Proposition \ref{P:non-degeneracy_of_trace}, there exists a morphism $f' \in \Hom_{\calC}(V' \otimes \sigma(-k),V)$ satisfying $\rmt_V(f' \circ f^k) \neq 0$.
 This means that, if we choose a retraction $r_{V',0} \in \Hom_{\calC}(V' \otimes V_0 \otimes V_0^*,V')$ of the monomorphism $\id_{V'} \otimes \lcoev_{V_0}$, whose existence is ensured by the injectivity of $V'$, and if we consider
 \begin{gather*}
  r_{V',-k} := ( r_{V',0} \otimes \id_{\sigma(-k)} ) \circ ( \id_{V'} \otimes \id_{V_0} \otimes c_{\sigma(-k),V_0^*} ), \\
  c_{V',-k} := \id_{V'} \otimes \left( ( \id_{V_0} \otimes c_{\sigma(-k),V_0^*}^{-1} ) \circ ( \lcoev_{V_0} \otimes \id_{\sigma(-k)} ) \right),
 \end{gather*}
 which satisfy $r_{V',-k} \circ c_{V',-k} = \id_{V' \otimes \sigma(-k)}$,
 we get
 \[
  \rmt_{V} \left( F_{\calC} \left( T_{s_{V',-k} \circ f^k,f' \circ r_{V',-k},g_0} \right) \right) = \zeta \psi(g_0,-k) \rmt_V \left(f' \circ f^k \right) \neq 0. \qedhere
 \]
\end{proof}

\begin{remark}\label{R:independence_of_section}
 The proof of the fullness of $\bbF_g$, and in particular the argument involving Figure \ref{F:punctured_3-cylinder_lemma_3}, shows that in fact, for every $f^k \in \bbHom_{\calC}(V,V')$, the definition of $\bbY_{f^k}$ is independent of the choice of the section $s_{V',0}$ of $\id_{V'} \otimes \rev_{V_0}$. This property will be used in the following. 
\end{remark}

\section{2-Discs}\label{S:2-discs}

We move on to describe universal $\PGr$-graded linear functors 
induced by generating $1$-mor\-phisms of $\bfadCob_{\calC}$ in terms of various structures over $\PGr$-graded subcategories of projective objects of $\calC$. In this section we focus on $2$-discs, which come in two flavors. The first family of universal $\PGr$-graded linear functors we are going to consider is quite trivial, and can be described in terms of constant functors.
Indeed, for every $g \in G$ and every object $V \in \Proj(\calC_g)$, we have a constant $\PGr$-graded linear functor from $\Bbbk$ to $\bbProj(\calC_g)$, which we still denote $V$, which fits into a commutative diagram of $\PGr$-graded linear functors of the form
\begin{center}
 \begin{tikzpicture}[descr/.style={fill=white}]
  \node (P0) at ({atan(-0.5)}:{3.25/(2*sin(atan(0.5)))}) {$\bbA^Z_{\calC}(\bbS^1_g)$};
  \node (P1) at (90:{3.25/2}) {$\bbProj(\calC_g)$};
  \node (P2) at ({atan(-0.5) + 2*90}:{3.25/(2*sin(atan(0.5)))}) {$\Bbbk$};
  \node (P3) at ({atan(0.5) + 2*90}:{3.25/(2*sin(atan(0.5)))}) {$\bbA^Z_{\calC}(\varnothing)$};
  \draw
  (P1) edge[->] node[right,xshift=10pt] {$\bbF_g$} (P0)
  (P2) edge[->] node[above,yshift=5pt] {$V$} (P1)
  (P2) edge[->] node[left,xshift=-5pt] {$\bfepsilon$} (P3)
  (P3) edge[->] node[below,yshift=-5pt] {$\bbA^Z_{\calC} \left( \bbD^2_V \right)$} (P0);
 \end{tikzpicture}
\end{center}

Next, we consider a family of universal $\PGr$-graded linear functors which can be described in terms of Hom functors. 
First of all, for every $g \in G$ and every object $V \in \calC_g$, not necessarily projective, let
\[
 \bbHom_{\calC}(V, \cdot) : \bbProj(\calC_g) \rightarrow \bbVect_{\Bbbk}^{\PGr}
\]
be the $\PGr$-graded linear functor sending every object $V'$ of $\bbProj(\calC_g)$ to the $\PGr$-graded vector space $\bbHom_{\calC}(V,V')$, and every degree $k'$ morphism $f'^{k'}$ of $\bbHom_{\calC}(V',V'')$ to the $\PGr$-graded linear map $\bbHom_{\calC}(V,f'^{k'}) : \bbHom_{\calC}(V,V') \rightarrow R^{-k'}\left(\bbHom_{\calC}(V,V'')\right)$ sending every degree $k$ vector $f^k$ of $\bbHom_{\calC}(V,V')$ to the degree $k + k'$ vector $f'^{k'} \diamond f^k$ of $\bbHom_{\calC}(V,V'')$. Next, let us consider a $3$-dimensional cobordism $X$ from $\varnothing$ to $(D^2 \cup_{S^1} \overline{D^2}) \sqcup S^2$ whose support is given by $D^3 \subset \R^3$ minus an open ball of radius $\frac{1}{4}$ and center $(0,0,\frac 13)$.
Then, for every object $V' \in \bbProj(\calC_g)$, we consider the $\PGr$-graded linear map
\[
 \bbeta_{V,V'} : \bbHom_{\calC}(V,V') \rightarrow \bbV^Z_{\calC} \left( \overline{\bbD^2_V} \circ \bbD^2_{V'} \right)
\]
sending every degree $k$ vector $f^k$ of $\bbHom_{\calC}(V,V')$ to the degree $k$ vector $[\bbX_{f^k}]$ of $\bbV^Z_{\calC}(\overline{\bbD^2_V} \circ \bbD^2_{V'})$, where the $2$-morphism
\[ 
 \bbX_{f^k} :\id_{\varnothing} \Rightarrow \left( \overline{\bbD^2_V} \circ \bbD^2_{V'}  \right) \disjun \bbS^2_{-k}
\]
of $\bfadCob_{\calC}$ is given by $( X,T_{f^k},\omega_{f^k},0 )$ for the $(\calC,G)$-coloring $(T_{f^k},\omega_{f^k})$ of $X$ represented in Figure \ref{F:punctured_3-disc}.

\begin{figure}[hbt]\label{F:punctured_3-disc}
 \centering
 \includegraphics{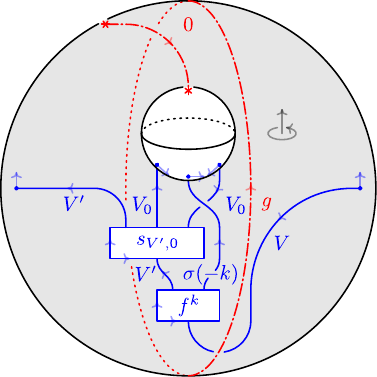}
 \caption{The $2$-morphism $\bbX_{f^k}$ of $\bfadCob_{\calC}$.}
\end{figure}

\begin{proposition}\label{P:counit}
 For every $g \in G$, and for every object $V \in \calC_g$,
 the $\PGr$-graded linear maps $\bbeta_{V,V'}$ define a $\PGr$-graded natural 
 isomorphism
 \begin{center}
  \begin{tikzpicture}[descr/.style={fill=white}]
   \node (P0) at ({atan(-0.5)}:{3.25/(2*sin(atan(0.5)))}) {$\bbA^Z_{\calC}(\varnothing)$};
   \node (P1) at ({atan(0.5)}:{3.25/(2*sin(atan(0.5)))}) {$\bbVect_{\Bbbk}^{\PGr}$};
   \node (P2) at ({atan(-0.5) + 2*90}:{3.25/(2*sin(atan(0.5)))}) {$\bbProj(\calC_g)$};
   \node (P3) at (3*90:{3.25/2}) {$\bbA^Z_{\calC}(\bbS^1_g)$};
   \node[above] at (0,0) {$\Downarrow$};
   \node[below] at (0,0) {$\bbeta_{V}$};
   \draw
   (P0) edge[->] node[right,xshift=5pt] {$\gltqft$} (P1)
   (P2) edge[->] node[above,yshift=5pt] {$\bbHom_{\calC}(V, \cdot )$} (P1)
   (P2) edge[->] node[left,xshift=-10pt] {$\bbF_{g}$} (P3)
   (P3) edge[->] node[below,yshift=-5pt] {$\bbA^Z_{\calC} \left( \overline{\bbD^2_V} \right)$} (P0);
  \end{tikzpicture}
 \end{center}
\end{proposition}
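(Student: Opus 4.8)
\textbf{Proof proposal for Proposition \ref{P:counit}.}

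The plan is to verify the two defining properties of a $\PGr$-graded natural isomorphism: naturality of the family $\{\bbeta_{V,V'}\}_{V' \in \bbProj(\calC_g)}$ with respect to morphisms in $\bbProj(\calC_g)$, and the fact that each component $\bbeta_{V,V'}$ is a $\PGr$-graded linear isomorphism. I would begin by unwinding the two paths around the square on objects: along $\gltqft \circ \bbA_{\calC}(\overline{\bbD^2_V}) \circ \bbF_g$ the object $V'$ is sent to $\bbV_{\calC}(\overline{\bbD^2_V} \circ \bbD^2_{V'})$, while along $\bbHom_{\calC}(V,\cdot)$ it is sent to $\bbHom_{\calC}(V,V')$; the map $\bbeta_{V,V'}$ between them is given explicitly by the $(\calC,G)$-coloring $(T_{f^k},\omega_{f^k})$ of the punctured $3$-disc $X$ of Figure \ref{F:punctured_3-disc}. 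For naturality, I would take $f^k \in \bbHom_{\calC}(V,V')$ and a degree $k'$ morphism $f'^{k'} \in \bbHom_{\calC}(V',V'')$ and compare $\bbeta_{V,V''}(f'^{k'} \diamond f^k)$ with $\gltqft(\bbF_g(f'^{k'})) \diamond \bbeta_{V,V'}(f^k)$, composing along $\overline{\bbD^2_V}$. Both sides are represented by gluings of the punctured $3$-disc $X$ to a copy of the punctured $3$-cylinder $Y$ from Figure \ref{F:punctured_3-cylinder}, together with a $3$-pant cobordism accounting for the grading composition $\diamond$; the equality of the resulting closed decorations follows from a skein equivalence of exactly the same type as the one used in the proof of Proposition \ref{P:univ_lin_cat} (Figure \ref{F:punctured_3-cylinder_lemma_1}), namely: use the compatibility condition between the $G$-structure and the $\PGr$-action to convert undercrossings of the $\sigma(-k)$-colored edge into overcrossings, use the relative modularity condition of Definition \ref{D:relative_modular_category} together with Proposition \ref{P:non-degeneracy_of_relative_modular_categories} to remove the extra surgery component (paying the coefficient $\eta \calD^{-1} \rmd(V_0)$, which cancels the $\calD^{-1}$ coming from Lemma \ref{L:surgery_axioms} at index $j=2$), and finally Remark \ref{R:independence_of_section} to eliminate the dependence on the chosen section $s_{V',0}$. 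One must also check that $\bbeta_{V,V'}$ respects identities and the grading, which is immediate from Lemma \ref{L:3-discs_composition} and the definitions.

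For the isomorphism claim, I would argue degree by degree: for fixed $k \in \PGr$, the map
\[
 \bbeta_{V,V'}^k : \Hom_{\calC}(V, V' \otimes \sigma(-k)) \rightarrow \rmV_{\calC}\left( \left( \overline{\bbD^2_V} \circ \bbD^2_{V'} \right) \disjun \bbS^2_{-k} \right)
\]
is surjective by Lemma \ref{L:connection_lemma_TQFT} applied to the connected punctured $3$-disc $X$, after noting (just as in the proof of Proposition \ref{P:univ_lin_cat}, around Figure \ref{F:punctured_3-cylinder_lemma_3}) that up to isotopy and skein equivalence every vector in the target is represented by a coloring of the form $(T_f,\omega_h)$, and that every admissible $G$-coloring of $(X,T_f)$ must vanish on the relative homology class joining the two base points, which forces the combinatorial data to lie in $\Hom_{\calC}(V,V'\otimes\sigma(-k))$; absorbing the invertible scalar $\psi(h,-k)$ one reduces to $h=0$. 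Injectivity is then exactly the non-degeneracy of the projective trace $\rmt$ from Proposition \ref{P:non-degeneracy_of_trace}: given a nonzero $f^k$, one pairs $[\bbX_{f^k}]$ against a suitable element of $\bbV'_{\calC}(\overline{\bbD^2_V} \circ \bbD^2_{V'})$ built, using a retraction $r_{V',0}$ of $\id_{V'} \otimes \lcoev_{V_0}$ (which exists since $V'$ is injective) and a morphism $f'$ with $\rmt_V(f' \circ f^k) \neq 0$, so that the Costantino-Geer-Patureau invariant of the closed $2$-morphism obtained computes to $\zeta \psi(g_0,-k)\,\rmt_V(f' \circ f^k) \neq 0$ by the relative modularity condition, exactly as in the last display of the proof of Proposition \ref{P:univ_lin_cat}.

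The main obstacle, I expect, will be the naturality square rather than the isomorphism part: one has to keep careful track of how the $3$-pant cobordisms implementing the graded composition $\diamond$ on the universal side interact with the punctured-cylinder cobordisms $Y$ implementing $\bbF_g$ on morphisms, and of the accumulated normalization factors $\eta \rmd(V_0)$ attached to $\bbF_g(f'^{k'})$ and to $\bbeta$, making sure the skein manipulations (overcrossing conversion, surgery-component removal via relative modularity, section-independence) combine so that the scalars match on both sides. Once the bookkeeping of coefficients is set up correctly — and it is essentially forced by Lemmas \ref{L:skein_equivalence}, \ref{L:surgery_axioms}, \ref{L:3-discs_composition} and Remark \ref{R:independence_of_section} — the proof reduces to a single diagrammatic identity in $\Rib_{\calC}^G$, so most of the work is the same picture-chasing already carried out in Proposition \ref{P:univ_lin_cat}.
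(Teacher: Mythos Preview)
Your proposal is correct and follows essentially the same approach as the paper, which explicitly says the proof is obtained by ``adapting the proof of Proposition \ref{P:univ_lin_cat}'': naturality via the skein equivalence of Figure \ref{F:punctured_3-cylinder_lemma_1}, surjectivity via Lemma \ref{L:connection_lemma_TQFT}, and injectivity via Proposition \ref{P:non-degeneracy_of_trace} are exactly the three ingredients used there.

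One point to clean up: your claim that ``every admissible $G$-coloring of $(X,T_f)$ must vanish on the relative homology class joining the two base points, which forces the combinatorial data to lie in $\Hom_{\calC}(V,V'\otimes\sigma(-k))$'' is transplanted from the cylinder $Y$ in Proposition \ref{P:univ_lin_cat} and does not hold for the punctured $3$-disc $X$. For $X$ there is a genuinely free parameter $h \in G$ (the paper's Figure \ref{F:punctured_3-disc_lemma_1}), and the general coloring has $f \in \Hom_{\calC}(V,V' \otimes V_0 \otimes \sigma(-k) \otimes V_0^*)$, not directly in $\Hom_{\calC}(V,V'\otimes\sigma(-k))$. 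The reduction to the image of $\bbeta_{V,V'}$ comes not from a cohomological constraint but from the triviality test of Lemma \ref{L:triviality_lemma_TQFT} and the relative modularity computation, yielding $[X,T_f,\omega_h,0] = \psi(h,-k) \cdot [\bbX_{e_{V',-k} \circ f}]$ with $e_{V',-k}$ as in the proof of Proposition \ref{P:univ_lin_cat}. You already invoke the $\psi(h,-k)$ absorption and the reference to the earlier proof, so this is a matter of correcting the stated justification rather than the underlying argument.
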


\begin{proof}
 The result is established by showing that the $\PGr$-graded linear maps
 \[
  \bbeta_{V,V'} : \bbHom_{\calC}(V,V') \rightarrow \bbV^Z_{\calC} \left( \overline{\bbD^2_V} \circ \bbD^2_{V'} \right)
 \]
 are natural with respect to degree $k'$ morphism $f'^{k'} \in \bbHom_{\calC}(V',V'')$, and that they are invertible for all objects $V' \in \bbProj(\calC_g)$. All these properties are checked by adapting the proof of Proposition \ref{P:univ_lin_cat}. First of all, naturality of $\bbeta_V$ is proved just like functoriality of $\bbF_g$. Indeed, we need to show that
 \[
  \calD^{-1} \rmd(V_0) \cdot \bbV^Z_{\calC} \left( \left[ \id_{\overline{\bbD^2_V}} \circ \bbY_{f'^{k'}} \right] \right)
  \left( \left[ \bbX_{f^k} \right] \right) \\
  = \left[ \bbX_{f'^{k'} \diamond f^k} \right]
 \]
 for every degree $k$ morphism $f^k \in \bbHom_{\calC}(V,V')$ and every degree $k'$ morphism $f'^{k'} \in \bbHom_{\calC}(V',V'')$. This follows directly from the skein equivalence of Figure \ref{F:punctured_3-cylinder_lemma_1}. Next, surjectivity and injectivity of $\bbeta_{V,V'}$ are shown just like fullness and faitfhulness of $\bbF_g$. Indeed, for what concerns surjectivity, every degree $k$ vector of $\smash{\bbV^Z_{\calC}(\overline{\bbD^2_V} \circ \bbD^2_{V'})}$ is the image of a vector of $\smash{\adSk(X;\id_{\varnothing},(\overline{\bbD^2_V} \circ \bbD^2_{V'}) \disjun \bbS^2_{-k})}$ thanks to Lemma \ref{L:connection_lemma_TQFT}. Furthermore, up to isotopy and skein equivalence, we can restrict to admissible $(\calC,G)$-colorings of the form $(T_f,\omega_h)$ like the one represented in Figure \ref{F:punctured_3-disc_lemma_1} 
 \begin{figure}[b]\label{F:punctured_3-disc_lemma_1}
  \centering
  \includegraphics{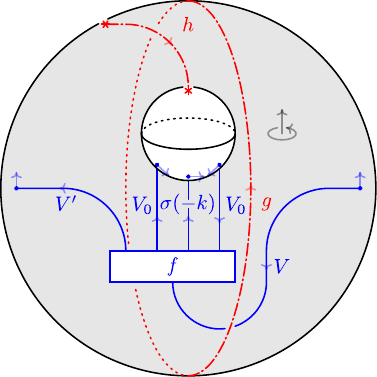}
  \caption{$(\calC,G)$-Coloring of $X$ of the form $(T_f,\omega_h)$.}
 \end{figure}
  with 
 \[
  f \in \Hom_{\calC}(V,V' \otimes V_0 \otimes \sigma(-k) \otimes V_0^*), \quad
  h \in G.
 \]
 To determine which $(\calC,G)$-colorings correspond to trivial morphisms, we can appeal to Lemma \ref{L:triviality_lemma_TQFT}, so let us specify $S^2 \times I$ as a $3$-di\-men\-sion\-al cobordism from $(D^2 \cup_{S^1} \overline{D^2}) \sqcup S^2$ to $\varnothing$. Now the triviality of the degree $k$ vector $[X,T_f,\omega_h,0]$ can be tested by considering all
 \[
  (T',\omega') \in \adSk \left( S^2 \times I; \left( \overline{\bbD^2_V} \circ \bbD^2_{V'} \right) \disjun \bbS^2_{-k},\id_{\varnothing} \right),
 \]
 and by computing the Costantino-Geer-Patureau invariant $\CGP_{\calC}$ of the resulting closed $2$-mor\-phism of $\bfadCob_{\calC}$. We can choose a surgery presentation composed of a single unknot with framing zero, whose computability can always be forced by performing generic or projective stabilization outside of $(T_f,\omega_h)$. Therefore, up to isotopy, skein equivalence, and multiplication by invertible scalars, we only need to compute the projective trace of the morphism $F_{\calC}(T_{f,f',h+h'})$ for all
 \[
  f' \in \Hom_{\calC} \left( V' \otimes V_0 \otimes \sigma(-k) \otimes V_0^*,V \right), \quad
  h' \in \{ -h \} + (G \smallsetminus X),
 \]
 where the $\calC$-colored ribbon graph $T_{f,f',h + h'}$ is represented in the left-hand part of Figure \ref{F:punctured_3-cylinder_lemma_3}. This means that, using the notation introduced during the proof of Proposition \ref{P:univ_lin_cat}, we get 
 \begin{align*}
  \left[ X,T_f,\omega_h,0 \right] 
  &= \psi(h,-k) \cdot \left[ X,T_{s_{V',-k} \circ e_{V',-k} \circ f},\omega_0,0 \right] \\
  &= \psi(h,-k) \cdot \bbX_{e_{V',-k} \circ f}.
 \end{align*}
 Therefore, we only need to show injectivity, which follows from the non-de\-gen\-er\-a\-cy of the projective trace $\rmt$: indeed, if we consider some non-trivial degree $k$ morphism $f^k \in \bbHom_{\calC}(V,V')$, then, thanks to Proposition \ref{P:non-degeneracy_of_trace}, there exists a morphism ${f' \in \Hom_{\calC}(V' \otimes \sigma(-k),V)}$ satisfying $\rmt_V(f' \circ f^k) \neq 0$.
 This means that, using again the notation introduced during the proof of Proposition \ref{P:univ_lin_cat}, we get
 \[
  \rmt_{V} \left( F_{\calC} \left( T_{s_{V',-k} \circ f^k,f' \circ r_{V',-k},g_0} \right) \right) = \zeta \psi(g_0,-k) \rmt_V \left(f' \circ f^k \right) \neq 0. \qedhere
 \]
\end{proof}

\section{2-Pants}\label{S:2-pants}

In this section we focus on a new family of $1$-morphisms of $\bfadCob_{\calC}$. We define the \textit{$2$-pant cobordism $P^2$} as the $2$-dimensional cobordism from $S^1 \sqcup S^1$ to $S^1$ whose support is given by $D^2$ minus two open balls of radius $\frac{1}{4}$ and center $(-\frac{1}{2},0,0)$ and $(+\frac{1}{2},0,0)$ respectively, and whose incoming and outgoing horizontal boundary identifications are induced by $\id_{S^1}$ through rescaling and translation. Then, let us consider $g,g' \in G$.

\begin{definition}\label{D:2-pants}
 The \textit{$(g,g')$-colored $2$-pant} $\bbP^2_{g,g'} : \bbS^1_g \disjun \bbS^1_{g'} \rightarrow \bbS^1_{g + g'}$ is the $1$-mor\-phism of $\bfadCob_{\calC}$ given by 
 \[
  \left( P^2,\varnothing,\omega_{g,g'},H_1(P^2;\R) \right)
 \]
 where $\omega_{g,g'}$ is the only $G$-coloring of $(P^2,\varnothing)$ which extends $\xi_g \sqcup \xi_{g'}$ and $\xi_{g + g'}$, and which vanishes on all relative homology classes contained in the southern hemisphere, as represented in Figure \ref{F:2-pant}. Similarly, the \textit{dual $(g,g')$-colored $2$-pant} $\smash{\overline{\bbP^2_{g,g'}} : \bbS^1_{g + g'} \rightarrow \bbS^1_g \disjun \bbS^1_{g'}}$ is the $1$-mor\-phism of $\bfadCob_{\calC}$ given by 
 \[
  \left( \overline{P^2},\varnothing,\omega_{g,g'},H_1(P^2;\R) \right).
 \]
\end{definition}

\begin{figure}[htb]\label{F:2-pant}
 \centering
 \includegraphics{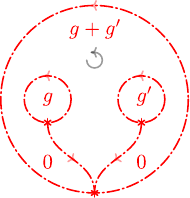}
 \caption{The $(g,g')$-colored $2$-pant $\bbP^2_{g,g'}$.}
\end{figure}

The first family of universal $\PGr$-graded linear functors we are going to consider can be described in terms of tensor product functors. First, for all $g,g' \in G$, let
\[
 \bbnabla_{g,g'} : \bbProj(\calC_g) \sqtimes \bbProj(\calC_{g'}) \rightarrow \bbProj(\calC_{g+g'})
\]
be the $\PGr$-graded linear functor sending every object $(V,V')$ of 
$\bbProj(\calC_g) \sqtimes \bbProj(\calC_{g'})$ to the object
$V \otimes V'$ of $\bbProj(\calC_{g+g'})$, and every degree $k + k'$ morphism
$f^k \otimes f'^{k'}$ of $\bbHom_{\calC}(V,V'') \otimes \bbHom_{\calC}(V',V''')$ to the degree $k + k'$ morphism
\[
 \left( \id_{V''} \otimes \left( ( \id_{V'''} \otimes \mu_{-k,-k'}) \circ (c_{\sigma(-k),V'''} \otimes \id_{\sigma(-k')}) \right) \right) \circ ( f^k \otimes f'^{k'} )
\]
of $\bbHom_{\calC}(V \otimes V',V'' \otimes V''')$. 
Next, for every object $(V,V') \in \bbProj(\calC_g) \sqtimes \bbProj(\calC_{g'})$, we consider the degree $0$ morphism
\[
 \bbeta^0_{g,g',(V,V')} \in \Hom_{\bbA^Z_{\calC}(\bbS^1_{g+g'})} \left( \bbD^2_{V \otimes V'},\bbP^2_{g,g'} \circ \left( \bbD^2_V \disjun \bbD^2_{V'} \right) \right),
\]
given by $[(\bbD^2 \times \bbI)_{(V,V')} \disjun \bbD^3_0]$, where the $2$-morphism
\[
 (\bbD^2 \times \bbI)_{(V,V')} : \bbD^2_{V \otimes V'} \Rightarrow \bbP^2_{g,g'} \circ \left( \bbD^2_V \disjun \bbD^2_{V'}, \right)
\]
of $\bfadCob_{\calC}$ is given by $(D^2 \times I,T_{(V,V')},\omega_{(V,V')},0)$ for the $\calC$-colored ribbon graph $T_{(V,V')} \subset D^2 \times I$ represented in Figure \ref{F:2-pant_lemma_1}, and for the unique compatible $G$-coloring $\omega_{(V,V')}$ of $(D^2 \times I,T_{(V,V')})$.

\begin{figure}[hbt]\label{F:2-pant_lemma_1}
	\centering
	\includegraphics{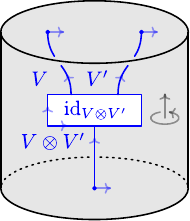}
	\caption{The $2$-morphism $(\bbD^2 \times \bbI)_{(V,V')}$ of $\bfadCob_{\calC}$.}
\end{figure}


\begin{proposition}\label{P:product}
 For all $g,g' \in G$ the degree $0$ morphisms $\bbeta^0_{g,g',(V,V')}$ define a $\PGr$-graded natural isomorphism
 \begin{center}
  \begin{tikzpicture}[descr/.style={fill=white}]
   \node (P0) at ({atan(-0.5)}:{3.25/(2*sin(atan(0.5)))}) {$\bbA^Z_{\calC}(\bbS^1_{g+g'})$};
   \node (P1) at (90:{3.25/2}) {$\bbProj(\calC_{g+g'})$};
   \node (P2) at ({atan(-0.5) + 2*90}:{3.25/(2*sin(atan(0.5)))}) {$\bbProj(\calC_g) \sqtimes \bbProj(\calC_{g'})$};
   \node (P3) at (3*90:{3.25/2}) {$\bbA^Z_{\calC}(\bbS^1_g \disjun \bbS^1_{g'})$};
   \node[above] at (0,0) {$\Downarrow$};
   \node[below] at (0,0) {$\bbeta_{g,g'}$};
   \draw
   (P1) edge[->] node[right,xshift=10pt] {$\bbF_{g+g'}$} (P0)
   (P2) edge[->] node[above,yshift=5pt] {$\bbnabla_{g,g'}$} (P1)
   (P2) edge[->] node[left,xshift=-10pt] {$\bfmu_{\bbS^1_g,\bbS^1_{g'}} \circ 
   (\bbF_{g} \sqtimes \bbF_{g'})$} (P3)
   (P3) edge[->] node[below,yshift=-5pt] {$\bbA^Z_{\calC} \left( \bbP^2_{g,g'} \right)$} (P0);
  \end{tikzpicture}
 \end{center}
\end{proposition}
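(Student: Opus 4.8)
The proof follows the same three-step template already established in Propositions \ref{P:univ_lin_cat} and \ref{P:counit}: first check that the claimed data are well-defined, then establish naturality of $\bbeta_{g,g'}$ in the two variables, then show that every component $\bbeta^0_{g,g',(V,V')}$ is invertible. Since $\bbF_{g+g'}$ and $\bfmu_{\bbS^1_g,\bbS^1_{g'}} \circ (\bbF_g \sqtimes \bbF_{g'})$ are both $\PGr$-Morita equivalences by Propositions \ref{P:univ_lin_cat} and \ref{P:mu}, and since the diagram at the level of objects commutes up to isomorphism because $\bbA_{\calC}(\bbP^2_{g,g'})(\bbD^2_V \disjun \bbD^2_{V'}) = \bbP^2_{g,g'} \circ (\bbD^2_V \disjun \bbD^2_{V'})$ while $\bbF_{g+g'}(\bbnabla_{g,g'}(V,V')) = \bbD^2_{V \otimes V'}$, the content of the proposition is really the construction of the $\PGr$-graded natural isomorphism filling the square.

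\textbf{Well-definedness.} First I would verify that $\bbnabla_{g,g'}$ is a genuine $\PGr$-graded linear functor: preservation of identities follows from the definition of the coherence data of $\sigma$ (in particular $\mu_{0,0} \circ (\varepsilon \otimes \varepsilon) = \varepsilon$), and compatibility with composition amounts to a computation with the hexagon-type identity relating $\mu_{-k,-k'}$, $\mu_{-\ell,-\ell'}$, and the braiding, exactly of the kind appearing in Lemma \ref{L:twisted_3-pant_lemma} and in the proof of Proposition \ref{P:mu}; the twisting factor $\gamma$ cancels because the functor $\bbnabla_{g,g'}$ is defined on the \emph{non-twisted} source $\bbProj(\calC_g) \sqtimes \bbProj(\calC_{g'})$. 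I would then check that $\bbeta^0_{g,g',(V,V')}$ is indeed a degree $0$ morphism with the stated source and target, and that it is invertible as a morphism of $\bbA_{\calC}(\bbS^1_{g+g'})$: invertibility follows by exhibiting the inverse $2$-morphism $\overline{(\bbD^2 \times \bbI)_{(V,V')}}$ supported on $\overline{D^2 \times I}$ and applying Lemma \ref{L:skein_equivalence}, since the two composite cobordisms $(D^2 \times I) \cup (\overline{D^2 \times I})$ and $D^2 \times I$ are isomorphic and the relevant skein relation is just an isotopy in the $3$-disc together with the zig-zag identities for $\lev,\lcoev$ applied to the $V_0$-colored strands.

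\textbf{Naturality.} Naturality of $\bbeta_{g,g'}$ with respect to a degree $k + k'$ morphism $f^k \otimes f'^{k'}$ in $\bbHom_{\calC}(V,V'') \otimes \bbHom_{\calC}(V',V''')$ is the main computational step, but it is once again controlled by the skein equivalence of Figure \ref{F:punctured_3-cylinder_lemma_1} together with the compatibility condition between the $G$-structure and the $\PGr$-action (to slide the $\sigma(-k)$-colored edge past the $V'''$-colored strand, producing the braiding $c_{\sigma(-k),V'''}$ that appears in the definition of $\bbnabla_{g,g'}$ on morphisms) and the relative modularity condition (to remove the extra surgery component coming from the $3$-pant intervening in the composition inside $\bbA_{\calC}(\bbS^1_{g+g'})$, contributing the compensating factor $\eta\calD^{-1}\rmd(V_0)$ just as in the proof of Proposition \ref{P:univ_lin_cat}). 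Concretely I would reduce both sides to evaluations of the Reshetikhin-Turaev functor $F_{\calC}$ against $\calC$-colored ribbon graphs in $D^2 \times I$ and check they are skein equivalent; the bookkeeping of the scalars $\eta$, $\rmd(V_0)$, $\calD$, and $\psi$ is routine and parallels what was done for $\bbF_g$.

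\textbf{Main obstacle.} The genuine difficulty, as in Proposition \ref{P:mu}, will be organizing the surgery-and-sliding argument cleanly: the composite $\bbA_{\calC}(\bbP^2_{g,g'}) \circ \bfmu_{\bbS^1_g,\bbS^1_{g'}} \circ (\bbF_g \sqtimes \bbF_{g'})$ involves \emph{two} auxiliary $3$-pant cobordisms (one from $\bfmu$, one from composing with $\bbD^2$) plus the braiding $\cobbr_{\bbS^2_{-k},\bbSigma}$, so the ribbon graph one has to analyze is substantially more entangled than in the one-variable case, and one must keep careful track of which undercrossings get converted to overcrossings via the compatibility condition before the relative modularity relation can be applied. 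Once the graphs are untangled the verification collapses to the associativity of $\mu$ and the zig-zag identities, but laying out the isotopies in the right order is where the care is needed. After naturality and pointwise invertibility are established, the conclusion that $\bbeta_{g,g'}$ is a $\PGr$-graded natural isomorphism is immediate, completing the proof.
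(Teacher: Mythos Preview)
Your proposal is correct and follows essentially the same approach as the paper: establish naturality via a skein equivalence argument that uses the compatibility condition to slide $\sigma$-colored strands and then relative modularity to remove the surgery component (with the factor $\eta\calD^{-1}\rmd(V_0)$), and observe that invertibility of each component is immediate from the construction. The only refinement the paper adds that you do not mention is invoking Remark~\ref{R:independence_of_section} to choose the section $s_{V''\otimes V''',0} = (\id_{V''}\otimes c_{V_0\otimes V_0^*,V'''})\circ(s_{V'',0}\otimes\id_{V'''})$, which makes the two ribbon graphs line up directly and collapses your ``main obstacle'' to a single surgery component rather than two.
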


\begin{proof}
 The result is established by showing that the degree $0$ morphisms
 \[
  \bbeta^0_{g,g',(V,V')} \in \Hom_{\bbA^Z_{\calC}(\bbS^1_{g+g'})} \left( \bbD^2_{V \otimes V'},\bbP^2_{g,g'} \circ \left( \bbD^2_V \disjun \bbD^2_{V'}, \right) \right)
 \]
 are natural with respect to degree $k$ morphisms $f^k \in \bbHom_{\calC}(V,V'')$ and degree $k'$ morphisms $\smash{f'^{k'} \in \bbHom_{\calC}(V',V''')}$, and that they are invertible for all objects $V \in \bbProj(\calC_g)$ and $V' \in \bbProj(\calC_{g'})$. For what concerns naturality, we need to show that
 \begin{align*}
  &\calD^{-2} \rmd(V_0)^2 \cdot \left( \bbA^Z_{\calC}(\bbP^2_{g,g'}) \right) \left( \bfmu_{\bbS^1_g,\bbS^1_{g'}} \left( \left[ \bbY_{f^k} \right] \otimes \left[ \bbY_{f'^{k'}} \right] \right) \right) \diamond \left[ (\bbD^2 \times \bbI)_{(V,V')} \disjun \bbD^3_0 \right] \\
  &\hspace{\parindent} = \calD^{-1} \rmd(V_0) \cdot \left[ (\bbD^2 \times \bbI)_{(V'',V''')} \disjun \bbD^3_0 \right] \diamond \left[ \bbY_{\bbnabla_{g,g'}(f^k \otimes f'^{k'})} \right]
 \end{align*}
 for every degree $k + k'$ morphism $f^k \otimes f'^{k'} \in \bbHom_{\calC}(V,V'') \otimes \bbHom_{\calC}(V',V''')$. This follows directly from the skein equivalence of Figure \ref{F:2-pant_lemma_2}, which gives a graphical representation of the equality we need to check.
 \begin{figure}[t]\label{F:2-pant_lemma_2}
  \centering
  \includegraphics{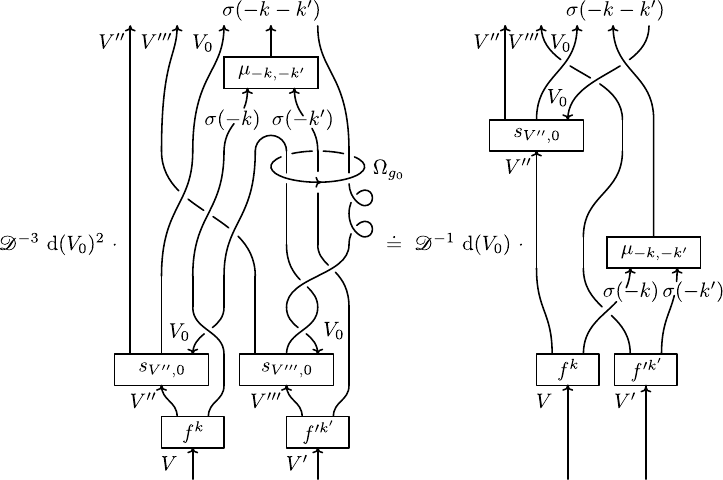}
  \caption{Skein equivalence witnessing the naturality of $\bbeta_{g,g'}$.} 
 \end{figure}
 Remark that we are using Remark \ref{R:independence_of_section} in order to assume that 
 \[
  s_{V'' \otimes V''',0} = ( \id_{V''} \otimes c_{V_0 \otimes V_0^*,V'''} ) \circ ( s_{V'',0} \otimes \id_{V'''} ).
 \]
 The skein equivalence is proved exactly like the one represented in Figure \ref{F:punctured_3-cylinder_lemma_1}: we use twice the compatibility condition between the $G$-structure and the $\PGr$-action of $\calC$ in order to turn every undercrossing of the $\sigma(-k')$-colored edge with the rest of the graph into an overcrossing. Next, the relative monoidality condition allows us to remove the surgery component, and the price to pay is a coefficient $\calD^{-2} \rmd(V_0)$. Then, thanks to isotopy and to the definition of the section $s_{V''',0}$, we can conclude. Invertibility is clear from the definition.
\end{proof}

We move on to study the dual $2$-pant cobordism, which gives rise to a family of universal $\PGr$-graded linear functors which can be described in terms of coends of internal Hom functors. 
This time, we have to distinguish two cases: either one of the two colors is generic, or both of them are critical. The two situations can be treated similarly, although the second one is considerably more complicated, as it forces us to deal with the non-semisimple part of $\calC$, which we control less effectively. We begin by studying the generic case.
First of all, for every $g \in G$ and every generic $g' \in G \smallsetminus X$, let
\[
 \bbDelta_{g,g'} : \bbProj(\calC_{g+g'}) \rightarrow \bbProj(\calC_g) \csqtimes \bbProj(\calC_{g'})
\]
be the $\PGr$-graded linear functor sending every object $V$ of $\bbProj(\calC_{g+g'})$ to the object 
\[ 
 \bigoplus_{i \in \rmI_{g'}} (V \otimes V_i^*,V_i) 
\]
of $\bbProj(\calC_g) \csqtimes \bbProj(\calC_{g'})$, and every degree $k$ morphism $f^k$ of $\bbHom_{\calC}(V,V')$ to the degree $k$ morphism
\[
 \left( \delta_{ij} \cdot (\bbnabla_{g+g',-g'}(f^k \otimes \id_{V_i^*}^0) \otimes \id^0_{V_i}) \right)_{(i,j) \in \rmI_{g'}^2}
\]
of
\[
 \bbHom_{\calC \csqtimes \calC} \left( \bigoplus_{i \in \rmI_{g'}} (V \otimes V_i^*,V_i),\bigoplus_{i \in \rmI_{g'}} (V' \otimes V_i^*,V_i) \right).
\]
Next, we consider a $3$-di\-men\-sion\-al cobordism with corners $W$ from $D^2 \sqcup D^2$ to $\smash{D^2 \cup_{S^1} \overline{P^2}}$ whose support is given by $I \times D^2 \subset \R^3$ minus two open balls of radius $\frac{1}{4}$ and centers $(0,0,0)$ and $(1,0,0)$.
Then, for every object $V \in \bbProj(\calC_{g+g'})$ and every $i \in \rmI_{g'}$, we consider the degree $0$ morphism
\[
 \bbeta^0_{g,g',V,i} \in \Hom_{\bbA^Z_{\calC}(\bbS^1_g \disjun \bbS^1_{g'})} \left( \bbD^2_{V \otimes V_i^*} \disjun \bbD^2_{V_i}, \overline{\bbP^2_{g,g'}} \circ \bbD^2_V \right)
\]
given by $[\bbW_{V,i} \disjun \bbD^3_0]$, where the $2$-morphism
\[
 \bbW_{V,i} : \bbD^2_{V \otimes V_i^*} \disjun \bbD^2_{V_i} \Rightarrow \overline{\bbP^2_{g,g'}} \circ \bbD^2_V
\]
 of $\bfadCob_{\calC}$ is given by $(W,T_{V,i},\omega_{V,i},0)$ for the $\calC$-colored ribbon graph $T_{V,i} \subset W$ represented in Figure \ref{F:coproduct_generic}, and for the unique compatible $G$-coloring $\omega_{V,i}$ of $(W,T_{V,i})$.

\begin{figure}[hbt]\label{F:coproduct_generic}
 \centering
 \includegraphics{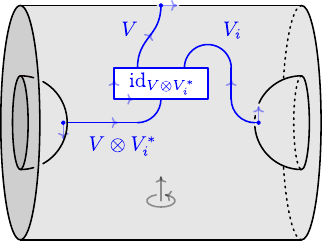}
 \caption{The $2$-morphism $\bbW_{V,i}$ of $\bfadCob_{\calC}$.} 
\end{figure}


\begin{proposition}\label{P:coproduct_generic}
 For every $g \in G$ and every generic $g' \in G \smallsetminus X$, the degree $0$ morphisms $\bbeta^0_{g,g',V,i}$ define a $\PGr$-graded natural isomorphism
 \begin{center}
  \begin{tikzpicture}[descr/.style={fill=white}]
   \node (P0) at ({atan(-0.5)}:{3.25/(2*sin(atan(0.5)))}) {$\hat{\bbA}^Z_{\calC}(\bbS^1_g \disjun \bbS^1_{g'})$};
   \node (P1) at (90:{3.25/2}) {$\bbProj(\calC_g) \csqtimes \bbProj(\calC_{g'})$};
   \node (P2) at ({atan(-0.5) + 2*90}:{3.25/(2*sin(atan(0.5)))}) {$\bbProj(\calC_{g+g'})$};
   \node (P3) at (3*90:{3.25/2}) {$\bbA^Z_{\calC}(\bbS^1_{g+g'})$};
   \node[above] at (0,0) {$\Downarrow$};
   \node[below] at (0,0) {$\bbeta_{g,g'}$};
   \draw
   (P1) edge[->] node[right,xshift=10pt] {$\hat{\bfmu}_{\bbS^1_g,\bbS^1_{g'}} \circ (\bbF_g \csqtimes \bbF_{g'})$} (P0)
   (P2) edge[->] node[above,yshift=5pt] {$\bbDelta_{g,g'}$} (P1)
   (P2) edge[->] node[left,xshift=-10pt] {$\bbF_{g+g'}$} (P3)
   (P3) edge[->] node[below,yshift=-5pt] {$\bbA^Z_{\calC} \left( \overline{\bbP^2_{g,g'}} \right)$} (P0);
  \end{tikzpicture}
 \end{center}
\end{proposition}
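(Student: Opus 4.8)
The plan is to follow verbatim the pattern established in the proofs of Propositions \ref{P:univ_lin_cat}, \ref{P:counit}, and \ref{P:product}: show that the degree $0$ morphisms $\bbeta^0_{g,g',V,i}$ assemble into a $\PGr$-graded natural transformation, and then prove that each component is an isomorphism in $\hat{\bbA}_{\calC}(\bbS^1_g \disjun \bbS^1_{g'})$. First I would check naturality with respect to an arbitrary degree $k$ morphism $f^k \in \bbHom_{\calC}(V,V')$, which amounts to a skein equivalence inside a $3$-disc analogous to those of Figures \ref{F:punctured_3-cylinder_lemma_1} and \ref{F:2-pant_lemma_2}: one slides the $\sigma(-k)$-colored edge, uses twice the compatibility condition between the $G$-structure and the $\PGr$-action of $\calC$ to turn undercrossings into overcrossings, uses the relative modularity condition to remove the auxiliary surgery component (at the cost of the usual coefficient $\eta \calD^{-1} \rmd(V_0)$), and finally appeals to isotopy, to Remark \ref{R:independence_of_section}, and to the definitions of the various sections $s_{V,0}$. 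The key point making the naturality square commute is that $\bbDelta_{g,g'}$ sends $f^k$ to the diagonal matrix $(\delta_{ij}\cdot(\bbnabla_{g+g',-g'}(f^k \otimes \id^0_{V_i^*}) \otimes \id^0_{V_i}))_{(i,j)}$, which matches the way $\bbW_{V,i}$ is built by capping the outgoing $\overline{P^2}$ leg with a $V_i$-colored edge.

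Next I would prove that $\bbeta_{g,g'}$ is an isomorphism at every object $V \in \bbProj(\calC_{g+g'})$. Since $\calC_{g'}$ is semisimple with $\Theta(\calC_{g'}) \otimes \sigma(\PGr)$ a completely reduced dominating set, every object $V$ of $\bbProj(\calC_{g+g'})$ admits, by Remark \ref{R:epic_evaluations}, a section of $\id_V \otimes \rev_{V_i}$, so the evaluation and coevaluation morphisms of the $V_i$ exhibit $\hat{\bbA}_{\calC}(\bbS^1_g \disjun \bbS^1_{g'})(\bbD^2_{V\otimes V_i^*}\disjun\bbD^2_{V_i},\,\overline{\bbP^2_{g,g'}}\circ\bbD^2_V)$ as built out of morphism spaces of $\calC$ with one generic index. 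Concretely, I would first use Lemma \ref{L:connection_lemma} applied to the cobordism with corners $W$ to show every degree $k$ morphism of the target Hom-space is represented by some $(\calC,G)$-coloring of $W$ of a normal form $(T_f,\omega_h)$, with $f \in \Hom_{\calC}(V_0 \otimes \sigma(-k) \otimes V_0^* \text{-padded spaces})$ and $h \in G$; then apply Lemma \ref{L:triviality_lemma} with the usual test cobordisms $\overline{D^2}$, $D^3$, $S^2 \times I$ to reduce triviality to the non-degeneracy of the projective trace $\rmt$ (Proposition \ref{P:non-degeneracy_of_trace}) together with the skein equivalence coming from relative modularity, exactly as in the proof of Proposition \ref{P:univ_lin_cat}. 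Surjectivity of $\bbeta_{g,g',V,i}$ follows from that normal form together with a choice of section $s_{V',-k}$ as in Proposition \ref{P:univ_lin_cat}; injectivity follows from the fact that, for a nonzero $f^k \in \bbHom_{\calC}(V\otimes V_i^*,\cdot)$, one can pair it, via a retraction $r_{V',0}$ supplied by injectivity of projective objects, against a morphism with nonzero projective trace. The direct sum over $i \in \rmI_{g'}$ on the algebraic side is matched by the dominating family $\{\bbD^2_{V_i} \mid i \in \rmI_{g'}\}$ of $\bbProj(\calC_{g'})$ via Lemma \ref{L:Morita_reduction}.

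Finally I would note that, because the target $2$-category is $\coCat_{\Bbbk}^{\PGr}$ rather than $\bfCat_{\Bbbk}^{\PGr}$, the coherence diagram involves $\hat{\bfmu}_{\bbS^1_g,\bbS^1_{g'}}$ and $\hat{\bbA}_{\calC}$, so the naturality and invertibility just established must be transported through the completion of Proposition \ref{P:gr_co_strict_2-funct} and Remark \ref{R:2-gr-completion}; this is routine since completion preserves $\PGr$-Morita equivalences and natural isomorphisms. The main obstacle I expect is not the structure of the argument, which is by now mechanical, but the bookkeeping in the normal-form step: isolating the correct presentation of $(\calC,G)$-colorings of the twice-punctured cobordism $W$ and verifying that, after the skein moves dictated by the compatibility condition and relative modularity, the residual data is exactly $\bigoplus_{i \in \rmI_{g'}} \Hom_{\calC}(V \otimes V_i^*, V' \otimes V_i^*)$-type morphisms — in other words, checking that no spurious summands or index mismatches arise when the $\overline{P^2}$-leg is resolved against the dominating family of $\calC_{g'}$. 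I would carry out that verification by drawing the analogue of Figures \ref{F:coproduct_generic} and \ref{F:punctured_3-cylinder_lemma_4}, reducing to the computation $\dim_{\Bbbk}\Hom_{\calC}(V\otimes V_i^*\otimes V_j, \cdot) = \delta_{ij}\cdot(\text{generic multiplicity})$ forced by the semisimplicity of $\calC_{g'}$.
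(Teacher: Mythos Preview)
Your proposal has a genuine conceptual gap in the invertibility step. You are importing the ``surjective and injective'' template from Propositions \ref{P:univ_lin_cat} and \ref{P:counit}, but those results concern, respectively, a functor being fully faithful and a $\PGr$-graded linear map between vector spaces being bijective. Here the component of $\bbeta_{g,g'}$ at $V$ is a \emph{morphism}
\[
 \left( \left[ \bbW_{V,j} \disjun \bbD^3_0 \right] \right)_{(i,j) \in \{1\} \times \rmI_{g'}} \in \Hom_{\hat{\bbA}_{\calC}(\bbS^1_g \disjun \bbS^1_{g'})}\!\left( \bigoplus_{i \in \rmI_{g'}} \bbD^2_{V \otimes V_i^*} \disjun \bbD^2_{V_i},\ \overline{\bbP^2_{g,g'}} \circ \bbD^2_V \right)
\]
inside a category; it makes no sense to show it is ``surjective'' or ``injective''. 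Using Lemmas \ref{L:connection_lemma} and \ref{L:triviality_lemma} to describe the ambient Hom-space does not by itself produce an inverse, and a Yoneda argument would require checking bijectivity on \emph{all} Hom-functors, which you do not propose. The paper instead writes down an explicit candidate inverse
\[
 \eta \cdot \left( \rmd(V_i) \cdot \left[ \overline{\bbW_{V,i}} \disjun \bbD^3_0 \right] \right)_{(i,j) \in \rmI_{g'} \times \{1\}},
\]
where $\overline{\bbW_{V,i}}$ is obtained from $\bbW_{V,i}$ by reversing edge and coupon orientations, and then checks both compositions. One composition gives $\delta_{ij}$ times the identity via Lemmas \ref{L:surgery_axioms} and \ref{L:2-sphere}; the other produces a cobordism of type $(\bbI \times \bbS^1 \times \bbI)_{g',V}$ carrying an $\Omega_{g'}$-colored knot, and it is precisely the relative modularity condition (with $\zeta = \eta^{-1}\calD$) together with the section $s_{V,0}$ that collapses this to the identity. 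This last step---relative modularity applied to the nontrivial composition---is the substantive content you are missing.

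Your naturality argument is also overengineered: the paper observes it follows directly from Lemma \ref{L:unitarity_of_3-discs} (the unit property of $\bbD^3_0$), since the $2$-morphisms $\bbW_{V,i}$ involve no surgery components at all. No skein moves through compatibility or relative modularity are needed there.
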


\begin{proof}
 The result is established by showing that the degree $0$ morphisms
 \[
  \left( \bbeta^0_{g,g',V,j} \right)_{(i,j) \in \{ 1 \} \times \rmI_{\mathrlap{g'}}} \in \Hom_{\hat{\bbA}^Z_{\calC}(\bbS^1_g \disjun \bbS^1_{g'})} \left( \bigoplus_{i \in \rmI_{g'}} \bbD^2_{V \otimes V_j^*} \disjun \bbD^2_{V_j}, \overline{\bbP^2_{g,g'}} \circ \bbD^2_V \right)
 \]
 are natural with respect to degree $k$ morphisms $f^k \in \bbHom_{\calC}(V,V')$, and that they are invertible for all objects $V \in \bbProj(\calC_{g + g'})$. For what concerns naturality, we need to show that
 \begin{align*}
  &\calD^{-2} \rmd(V_0)^2 \cdot \left[ \bbW_{V',j} \disjun \bbD^3_0 \right] \diamond \bfmu_{\bbS^1_g,\bbS^1_{g'}} \left( \left[ \bbY_{\bbnabla_{g+g',-g'}(f^k \otimes \id^0_{V_j^*})} \right] \otimes \left[ \bbY_{\id^0_{V_j}} \right] \right) \\
  &\hspace{\parindent} = \calD^{-1} \rmd(V_0) \cdot \left[ \id_{\overline{\bbP^2_{g,g'}}} \circ \bbY_{f^k} \right] \diamond \left[ \bbW_{V,j} \disjun \bbD^3_0 \right]
 \end{align*}
 for every $j \in \rmI_{g'}$ and every degree $k$ morphism $f^k \in \bbHom_{\calC}(V,V')$.
 This follows directly from Lemma \ref{L:unitarity_of_3-discs}. Next, for what concerns invertibility, we claim that, for every object $V \in \Proj(\calC_{g+g'})$, the inverse of the degree $0$ morphism
 \[
  \left( \left[ \bbW_{V,j} \disjun \bbD^3_0 \right] \right)_{(i,j) \in \{ 1 \} \times \rmI_{g'}}
 \]
 of 
 \[
  \Hom_{\hat{\bbA}^Z_{\calC}(\bbS^1_g \disjun \bbS^1_{g'})} \left( \bigoplus_{i \in \rmI_{g'}} \bbD^2_{V \otimes V_i^*} \disjun \bbD^2_{V_i}, \overline{\bbP^2_{g,g'}} \circ \bbD^2_V \right)
 \]
 is given by the degree $0$ morphism 
 \[
  \calD^{-1} \cdot
  \left( \rmd(V_i) \cdot \left[ \overline{\bbW_{V,i}} \disjun \bbD^3_0 \right] \right)_{(i,j) \in \rmI_{g'} \times \{ 1 \}}
 \]
 of 
 \[
  \Hom_{\hat{\bbA}^Z_{\calC}(\bbS^1_g \disjun \bbS^1_{g'})} \left( 
  \overline{\bbP^2_{g,g'}} \circ \bbD^2_V,
  \bigoplus_{i \in \rmI_{g'}} \bbD^2_{V \otimes V_i^*} \disjun \bbD^2_{V_i} \right),
 \] 
 where the $2$-morphism
\[
 \overline{\bbW_{V,i}} : \overline{\bbP^2_{g,g'}} \circ \bbD^2_V \Rightarrow \bbD^2_{V \otimes V_i^*} \disjun \bbD^2_{V_i}
\]
 of $\bfadCob_{\calC}$ is given by $(\overline{W},\overline{T_{V,i}},\omega_{V,i},0)$ for the $\calC$-colored ribbon graph $\overline{T_{V,i}} \subset \overline{W}$ obtained from $T_{V,i}$ by reversing the orientation of all edges and vertical boundaries of coupons.
 Indeed, on the one hand the equality
 \[
  \calD^{-1} \rmd(V_i) \cdot \left[ \overline{\bbW_{V,i}} \disjun \bbD^3_0 \right] \diamond \left[ \bbW_{V,j} \disjun \bbD^3_0 \right] 
  = \delta_{ij} \cdot \left[ \id_{\bbD^2_{V \otimes V_i^*}} \disjun \id_{\bbD^2_{V_i}} \disjun \bbD^3_0 \right]
 \]
 follows directly from Lemmas \ref{L:surgery_axioms} and \ref{L:2-sphere}. 
 \begin{figure}[t]\label{F:coproduct_generic_composition}
  \centering
  \includegraphics{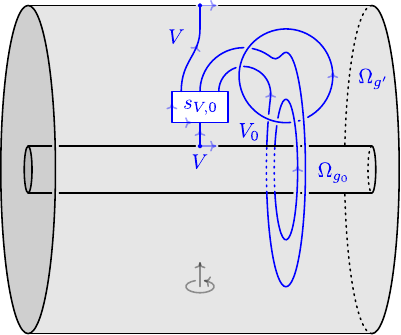}
  \caption{The $2$-mor\-phism $( \bbI \times \bbS^1 \times \bbI )_{g',V}$ of $\bfadCob_{\calC}$.}
 \end{figure}
 On the other hand, we have
 \[
 \sum_{h \in \rmI_{g'}} \rmd(V_h) \cdot \left[ \bbW_{V,h} \disjun \bbD^3_0 \right] \diamond \left[ \overline{\bbW_{V,h}} \disjun \bbD^3_0 \right] = \calD^{-1} \cdot \left[  ( \bbI \times \bbS^1 \times \bbI )_{g',V} \disjun \bbD^3_0 \right],
 \]
 where the $2$-morphism
\[
 ( \bbI \times \bbS^1 \times \bbI )_{g',V} : \overline{\bbP^2_{g,g'}} \circ \bbD^2_V \Rightarrow \overline{\bbP^2_{g,g'}} \circ \bbD^2_V
\]
 of $\bfadCob_{\calC}$ is given by $(I \times S^1 \times I,T_{g',V},\omega_{g',V},0)$ for the $\calC$-colored ribbon graph $T_{g',V} \subset I \times S^1 \times I$ represented in Figure \ref{F:coproduct_generic_composition}, and for the unique compatible $G$-coloring $\omega_{g',V}$ of $(I \times S^1 \times I,T_{g',V})$.
 Then, thanks to the relative modularity condition for $\zeta = \calD^2$, and thanks to the definition of the section $s_{V,0}$, we get
 \[
  \calD^{-1} \cdot \sum_{h \in \rmI_{g'}} \rmd(V_h) \cdot \left[ \bbW_{V,h} \disjun \bbD^3_0 \right] \diamond \left[ \overline{\bbW_{V,h}} \disjun \bbD^3_0 \right] =  \left[ \id^0_{\overline{\bbP^2_{g,g'}} \circ \bbD^2_V} \right]. \qedhere
 \]
\end{proof}

We move on to study the critical case. The strategy is the same as before, but this time the description is more complicated, because dominating sets for homogeneous subcategories of projective objects of critical degree are not completely reduced. This means we need some preparation, so let us start by fixing some notation. For all critical $x,x' \in X$, for every object $V \in \bbProj(\calC_{x+x'})$, and for every $(i,j) \in \rmI_{g_{x'} + x'}^2$, we denote with $T_{Vij}$ the $\calC$-colored ribbon graph represented in Figure \ref{F:uccidetemi}, and we set
\[
 p^0_{Vij} := \calD^{-2} \rmd(V_i) \cdot F_{\calC} \left( T_{Vij} \right) \otimes \varepsilon.
\]
Next, let us establish a key technical result for our characterization.

\begin{figure}[hbt]\label{F:uccidetemi}
 \centering
 \includegraphics{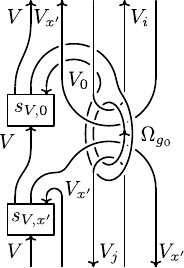}
 \caption{The $\calC$-colored ribbon graph $T_{Vij}$.}
\end{figure}

\begin{lemma}\label{L:uccidetemi}
 The degree $0$ morphisms 
 \[ 
  p^0_{Vij} \in \bbHom_{\calC}(V \otimes V_{x'} \otimes V_j^* \otimes V_j \otimes V_{x'}^*,V \otimes V_{x'} \otimes V_i^* \otimes V_i \otimes V_{x'}^*)
 \]
 satisfy
 \[
   \sum_{h \in \rmI_{g_{x'} + x'}} p^0_{Vih} \diamond p^0_{Vhj} = p^0_{Vij},
 \]
 and there exist some integer $N_{ij}$, some degree $k_n$ morphism 
 \[
  p^{k_n}_{Vijn} \in \bbHom_{\calC}(V \otimes V_{x'} \otimes V_j^*,V \otimes V_{x'} \otimes V_i^*)
 \]
 and some degree $-k_n$ morphism
 \[
  p'^{-k_n}_{ijn} \in \bbHom_{\calC}(V_j \otimes V_{x'}^*,V_i \otimes V_{x'}^*)
 \]
 for some $k_n \in \PGr$ and for every integer $1 \leqslant n \leqslant N_{ij}$, which satisfy
 \[
  p^0_{Vij} = \sum_{n=1}^{N_{ij}} \bbnabla_{x,x'}(p^{k_n}_{Vijn} \otimes p'^{-k_n}_{ijn}).
 \]
\end{lemma}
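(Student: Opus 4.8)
The plan is to exploit the description of $\Proj(\calC_x)$ provided by Remark \ref{R:critical_fusion}, which tells us that every object of $\Proj(\calC_x)$ is a retract of objects of the form $V_i \otimes \sigma(k) \otimes V_x^*$ with $V_i \in \Theta(\calC_{x + g_x})$. Here the relevant datum is the pair $(x,x')$ together with the choice of generic shift $g_{x'}$ and the simple projective $V_{x'} = V_{i_{x'}} \in \Theta(\calC_{g_{x'}})$ fixed in Remark \ref{R:critical_fusion}, so that $\Proj(\calC_{x'})$ is dominated by $\{ V_i \otimes \sigma(k) \otimes V^*_{x'} \mid i \in \rmI_{g_{x'} + x'}, k \in \PGr \}$. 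First I would interpret $p^0_{Vij}$ as a composition of $\calC$-colored ribbon morphisms read off from Figure \ref{F:uccidetemi}: the coefficient $\eta \calD^{-1} \rmd(V_i)$ is exactly the scalar produced by a positive and a negative stabilization along a $V_i$-colored edge, so $F_{\calC}(T_{Vij})$ can be rewritten, up to this scalar, by first performing a stabilization that introduces an $\Omega_{g_{x'} + x'}$-colored Kirby knot encircling the $V_i^*$- and $V_j^*$-colored edges, and then applying the relative modularity condition of Definition \ref{D:relative_modular_category} to collapse the Kirby knot into a projector onto the $V_i$-component.

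The \textbf{idempotency identity} $\sum_{h} p^0_{Vih} \diamond p^0_{Vhj} = p^0_{Vij}$ is then a skein computation: composing two copies of $T_{Vih}$ and $T_{Vhj}$ produces two nested $V_h$-colored loops, and summing over $h \in \rmI_{g_{x'} + x'}$ with weights $\rmd(V_h)$ assembles an $\Omega_{g_{x'} + x'}$-colored Kirby knot (absorbing one factor of $\eta\calD^{-1}$ via Proposition \ref{P:non-degeneracy_of_relative_modular_categories} and $\Delta_-\Delta_+ = |\PGr/\PGr_+|\zeta$). Applying the relative modularity condition once more — in the form used in the proof of Proposition \ref{P:non-degeneracy_of_relative_modular_categories}, where $T_{ijk}$ is skein-equivalent to $\zeta\delta_{ij}$ times a pair of inverse twists — yields precisely one copy of $T_{Vij}$ with the correct overall coefficient $\eta\calD^{-1}\rmd(V_i)$. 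The bookkeeping of the $\varepsilon$ factor and the degree-$0$ condition is routine: all the maps involved are homogeneous of index $0$ in the $G$-structure, since $V \otimes V_{x'} \otimes V_j^* \in \calC_{x + g_{x'}}$ which is generic, and the $\PGr$-degree is $0$ because no $\sigma(k)$ with $k \neq 0$ appears.

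For the \textbf{decomposition} $p^0_{Vij} = \sum_{n} \bbnabla_{x,x'}(p^{k_n}_{Vijn} \otimes p'^{-k_n}_{ijn})$, the key point is that $V \otimes V_{x'} \otimes V_j^*$ has generic index $x + g_{x'} \in G \smallsetminus X$, hence lies in the semisimple category $\calC_{x + g_{x'}}$, which is dominated by the completely reduced set $\Theta(\calC_{x + g_{x'}}) \otimes \sigma(\PGr)$. Therefore $\id_{V \otimes V_{x'} \otimes V_j^*}$ decomposes as a finite sum $\sum_n r_n \circ s_n$ with $s_n \in \Hom_{\calC}(V \otimes V_{x'} \otimes V_j^*, V_{i_n} \otimes \sigma(k_n))$ and $r_n$ its retraction, with $V_{i_n} \in \Theta(\calC_{x+g_{x'}})$. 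Inserting this resolution of the identity into the graphical description of $p^0_{Vij}$ at the point where the $V \otimes V_{x'} \otimes V_j^*$-colored edge passes through, and using that the $V_j \otimes V_{x'}^*$-colored part of the graph (the half that crosses over to the other tensor factor, cf. the structure of $\bbnabla_{x,x'}$ involving $c_{\sigma(-k),-}$ and $\mu_{-k,-k'}$) is linear in that edge, splits $F_{\calC}(T_{Vij})$ into a sum of tensor products, each term of which is manifestly in the image of $\bbnabla_{x,x'}$ after absorbing the braiding and the coherence morphisms $\mu_{k_n,-k_n}$, $\varepsilon$ into $p^{k_n}_{Vijn}$ and $p'^{-k_n}_{ijn}$. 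The main obstacle will be setting up the graphical conventions precisely enough that the insertion of the resolution of the identity and the subsequent regrouping into $\bbnabla_{x,x'}$-terms are unambiguous — in particular tracking which strand of Figure \ref{F:uccidetemi} is the $\sigma(k_n)$-colored one so that the compatibility condition of Definition \ref{D:relative_pre-modular_category} can be used to move it past the $V_0$- and $V_{x'}$-colored edges, exactly as in the proofs of Lemmas \ref{L:3-pants_composition_1} and \ref{L:twisted_3-pant_lemma}.
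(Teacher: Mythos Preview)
Your argument for the idempotency relation is essentially the paper's: stacking $T_{Vih}$ on $T_{Vhj}$ and summing over $h$ with weights $\rmd(V_h)$ assembles the $\Omega_{g_{x'}+x'}$-colored knot of the composite graph $T'_{Vij}$, and the relative modularity condition (together with the defining property of the section $s_{V,x'}$) collapses it back to a single copy of $T_{Vij}$ with the correct coefficient.

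For the decomposition, however, there is a genuine index error. Recall that $V \in \Proj(\calC_{x+x'})$, $V_{x'} \in \Theta(\calC_{g_{x'}})$, and $V_j \in \Theta(\calC_{g_{x'}+x'})$, so
\[
 V \otimes V_{x'} \otimes V_j^* \in \calC_{(x+x') + g_{x'} - (g_{x'}+x')} = \calC_x,
\]
which is \emph{critical}, not $\calC_{x+g_{x'}}$. Generic semisimplicity therefore does not apply, and inserting a resolution of the identity on this object does not by itself break the graph into two tensor factors.

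The paper instead inserts a resolution on the ``connecting'' object $V_j \otimes V_{x'}^* \otimes V_{x'} \otimes V_i^*$, which has index $0$; Remark~\ref{R:critical_fusion} (with the choice $g_0$, $V_0$) gives a decomposition through objects $V_{h_n} \otimes \sigma(k_n) \otimes V_0^*$. The point of this location is that in $T_{Vij}$ there is an $\Omega_{g_0}$-colored knot encircling precisely those strands, and once they factor through a simple of $\Theta(\calC_{g_0}) \otimes \sigma(\PGr)$ and a $V_0^*$-strand, the relative modularity condition applied to this $\Omega_{g_0}$-knot cuts the graph apart, leaving a term in $\Hom_{\calC}(V \otimes V_{x'} \otimes V_j^*, V \otimes V_{x'} \otimes V_i^*)$ linked to a term in $\Hom_{\calC}(V_j \otimes V_{x'}^*, V_i \otimes V_{x'}^*)$ only by a $\sigma(k_n)$-colored strand, which is exactly the form $\bbnabla_{x,x'}(p^{k_n}_{Vijn} \otimes p'^{-k_n}_{ijn})$. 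Your outline misses this role of the $\Omega_{g_0}$-knot entirely; without it, even a correct domination of $\Proj(\calC_x)$ would not obviously yield the required tensor splitting.
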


\begin{proof}
 If we denote with $T'_{Vij}$ the $\calC$-colored ribbon graph represented in Figure \ref{F:uccidetemi_composition}, then we have
 \[
  \sum_{h \in \rmI_{g_{x'} + x'}} p^0_{Vih} \diamond p^0_{Vhj} = \calD^{-4} \rmd(V_i) \cdot F_{\calC} \left( T'_{Vij} \right) \otimes \varepsilon.
 \]
 \begin{figure}[t]\label{F:uccidetemi_composition}
 \centering
 \includegraphics{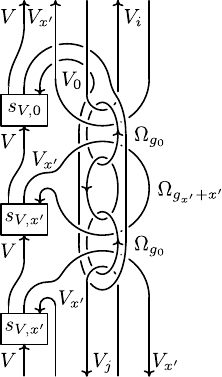}
 \caption{The $\calC$-colored ribbon graph $T'_{Vij}$.}
\end{figure}
 The relative modularity condition applied to the $\Omega_{g_{x'} + x'}$-colored component, together with the definition of the section $s_{V,x'}$ of $\id_V \otimes \rev_{V_{x'}}$, gives the desired equality. Furthermore, the fact that $p^0_{Vij}$ belongs to the image of $\bbnabla_{x,x'}$ follows directly from Remark \ref{R:critical_fusion}, which allows us to decompose 
 \[
  \id_{V_j} \otimes \id_{V_{x'}^*} \otimes \id_{V_{x'}} \otimes \id_{V_i^*} = \sum_{n=1}^{N_{ij}} r_n \circ s_n
 \]
 for some morphisms 
 \begin{align*}
  s_n \in \Hom_{\calC}(V_j \otimes V_{x'}^* \otimes V_{x'} \otimes V_i^*,V_{h_n} \otimes \sigma(k_n) \otimes V_0^*), \\
  r_n \in \Hom_{\calC}(V_{h_n} \otimes \sigma(k_n) \otimes V_0^*,V_j \otimes V_{x'}^* \otimes V_{x'} \otimes V_i^*).
 \end{align*}
  Then, the relative modularity condition applied to the $\Omega_{g_0}$-colored knot of Figure \ref{F:uccidetemi} gives the desired property.
\end{proof}

For all critical indices $x,x'$ in $X$ let
\[
 \bbDelta_{x,x'} : \bbProj(\calC_{x+x'}) \rightarrow \bbProj(\calC_x) \csqtimes \bbProj(\calC_{x'})
\]
be the $\PGr$-graded linear functor sending every object $V$ of $\bbProj(\calC_{x+x'})$ to the object
\[
 \im \left( \underline{p^0_{Vij}} \right)_{(i,j) \in \rmI_{g_{x'} + x'}^2}
\]
of $\bbProj(\calC_x) \csqtimes \bbProj(\calC_{x'})$ given by 
\[
 \underline{p^0_{Vij}} := \sum_{n=1}^{N_{Vij}} p^{k_n}_{Vijn} \otimes p'^{-k_n}_{ijn}
\]
and every degree $k$ morphism $f^k$ of $\bbHom_{\calC}(V,V')$ to the degree $k$ morphism 
\[
 \left( \underline{f^k_{ij}} \right)_{(i,j) \in \rmI_{g_{x'} + x'}^2} 
\]
of
\[
 \bbHom_{\calC \csqtimes \calC} \left( \im \left( \underline{p^0_{Vij}} \right)_{(i,j) \in \rmI_{g_{x'} + x'}^2},\im \left( \underline{p^0_{V'ij}} \right)_{(i,j) \in \rmI_{g_{x'} + x'}^2} \right)
\]
given by
\[
 \underline{f^k_{ij}} := \sum_{h \in \rmI_{g_{x'} + x'}}\underline{p^0_{V'ih}} \diamond \left( \bbnabla_{x+x',-x'} \left( f^k \otimes \id^0_{V_{x'} \otimes V_h^*} \right) \otimes \id^0_{V_h \otimes V_{x'}^*} \right) \diamond \underline{p^0_{Vhj}}
\]
Next, for every object $V \in \bbProj(\calC_{x+x'})$ and every $j \in \rmI_{g_{x'} + x'}$, we consider the degree $0$ morphism
\[
 \bbeta^0_{x,x',V,j} \in \Hom_{\bbA^Z_{\calC}(\bbS^1_x \disjun \bbS^1_{x'})} \left( \bbD^2_{V \otimes V_{x'} \otimes V_j^*} \disjun \bbD^2_{V_j \otimes V_{x'}^*}, \overline{\bbP^2_{x,x'}} \circ \bbD^2_V \right)
\]
given by
\[
 \calD^{-2} \rmd(V_0)^2 \cdot \sum_{i \in \rmI_{g_{x'} + x'}} \left[ \bbW_{x',V,i} \disjun \bbD^3_0 \right] \diamond \bfmu_{\bbS^1_x,\bbS^1_{x'}} \left( \left[ \bbY_{\underline{p^0_{Vij}}} \right] \right)
\]
where 
\[
 \calD^{-2} \rmd(V_0)^2 \cdot \left[ \bbY_{\underline{p^0_{Vij}}} \right] := \left( \bbF_x \sqtimes \bbF_{x'} \right) \left( \underline{p^0_{Vij}} \right),
\]
and where the $2$-morphism
\[
 \bbW_{x',V,i} : \bbD^2_{V \otimes V_{x'} \otimes V_i^*} \disjun \bbD^2_{V_i \otimes V_{x'}^*} \Rightarrow \overline{\bbP^2_{x,x'}} \circ \bbD^2_V
\]
of $\bfadCob_{\calC}$ is given by $(W,T_{x' \! ,V,i},\omega_{x' \! ,V,i},0)$ for the $\calC$-colored ribbon graph ${T_{x' \! ,V,i} \subset W}$ represented in Figure \ref{F:coproduct_critical}, where
\[
 s_{x',V,i} := \left( ( \id_V \otimes \id_{V_{x'}} \otimes \lev_{V_{x'}} ) \circ ( s_{V,x'} \otimes \id_{V_{x'}} ) \right) \otimes \id_{V_i^*},
\]
and for the unique compatible $G$-coloring $\omega_{x',V,i}$ of $(W,T_{x',V,i})$.

\begin{figure}[hbt]\label{F:coproduct_critical}
 \centering
 \includegraphics{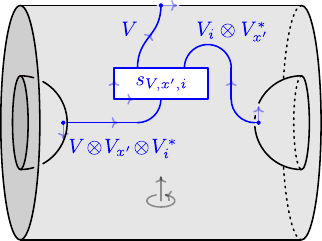}
 \caption{The $2$-morphism $\bbW_{x',V,i}$ of $\bfadCob_{\calC}$.} 
\end{figure}

\begin{proposition}\label{P:coproduct_critical}
 For all $x,x' \in X$ the degree $0$ morphisms $\bbeta^0_{x,x',V,i}$ define a $\PGr$-graded natural isomorphism
 \begin{center}
  \begin{tikzpicture}[descr/.style={fill=white}]
   \node (P0) at ({atan(-0.5)}:{3.25/(2*sin(atan(0.5)))}) {$\hat{\bbA}^Z_{\calC}(\bbS^1_x \disjun \bbS^1_{x'})$};
   \node (P1) at (90:{3.25/2}) {$\bbProj(\calC_x) \csqtimes \bbProj(\calC_{x'})$};
   \node (P2) at ({atan(-0.5) + 2*90}:{3.25/(2*sin(atan(0.5)))}) {$\bbProj(\calC_{x+x'})$};
   \node (P3) at (3*90:{3.25/2}) {$\bbA^Z_{\calC}(\bbS^1_{x+x'})$};
   \node[above] at (0,0) {$\Downarrow$};
   \node[below] at (0,0) {$\bbeta_{x,x'}$};
   \draw
   (P1) edge[->] node[right,xshift=10pt] {$\hat{\bfmu}_{\bbS^1_x,\bbS^1_{x'}} \circ (\bbF_x \csqtimes \bbF_{x'})$} (P0)
   (P2) edge[->] node[above,yshift=5pt] {$\bbDelta_{x,x'}$} (P1)
   (P2) edge[->] node[left,xshift=-10pt] {$\bbF_{x+x'}$} (P3)
   (P3) edge[->] node[below,yshift=-5pt] {$\bbA^Z_{\calC} \left( \overline{\bbP^2_{x,x'}} \right)$} (P0);
  \end{tikzpicture}
 \end{center}
\end{proposition}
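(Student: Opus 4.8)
The plan is to follow the exact same strategy used to prove Proposition~\ref{P:coproduct_generic}, since the only essential difference is that the dominating set $\Theta(\calC_{g_{x'}+x'}) \otimes \sigma(\PGr) \otimes \{V_{x'}^*\}$ for $\Proj(\calC_{x'})$ supplied by Remark~\ref{R:critical_fusion} is not completely reduced; all the combinatorial bookkeeping for this is already packaged into Lemma~\ref{L:uccidetemi}, which establishes that the family $(p^0_{Vij})$ is a complete system of idempotents summing to the identity and that each $p^0_{Vij}$ factors through $\bbnabla_{x,x'}$. Thus the target object $\im(\underline{p^0_{Vij}})_{(i,j)}$ is a well-defined object of $\bbProj(\calC_x) \csqtimes \bbProj(\calC_{x'})$, and $\bbDelta_{x,x'}$ is a genuine $\PGr$-graded linear functor — I would first record this, verifying $\bbDelta_{x,x'}(\id^0_V) = \id^0_{\bbDelta_{x,x'}(V)}$ and functoriality using the idempotent relation $\sum_h p^0_{Vih} \diamond p^0_{Vhj} = p^0_{Vij}$ of Lemma~\ref{L:uccidetemi} and the functoriality of $\bbnabla_{x+x',-x'}$.

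Next I would verify naturality of the $\bbeta^0_{x,x',V,j}$ with respect to degree $k$ morphisms $f^k \in \bbHom_{\calC}(V,V')$: this reduces, exactly as in Proposition~\ref{P:coproduct_generic}, to an application of Lemma~\ref{L:unitarity_of_3-discs}, once one rewrites $[\bbW_{x',V,i} \disjun \bbD^3_0] \diamond \bfmu_{\bbS^1_x,\bbS^1_{x'}}(\cdot)$ by sliding the $V_i$-colored and $V_{x'}$-colored edges through the gluing region; the $p^0$-idempotents are inserted on both sides so the relation $\sum_h p^0_{V'ih} \diamond \bbnabla_{x+x',-x'}(f^k \otimes \id^0) \diamond p^0_{Vhj} = \underline{f^k_{ij}}$ is precisely the definition of the action of $\bbDelta_{x,x'}$ on morphisms. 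For invertibility, I would exhibit the candidate inverse
\[
 \eta \cdot \left( \rmd(V_i) \cdot \left[ \overline{\bbW_{x',V,i}} \disjun \bbD^3_0 \right] \right)_{(i,j)}
\]
as in the generic case, and check the two composition identities. One composition, with the $\Omega$-colored surgery component untouched, collapses via Lemmas~\ref{L:surgery_axioms} and~\ref{L:2-sphere} together with the idempotent relation of Lemma~\ref{L:uccidetemi} to $\delta_{ij} \cdot \operatorname{id}$ on the image object; the other composition produces an $\Omega_{g_{x'}+x'}$-colored circle encircling the relevant edges, and the relative modularity condition of Definition~\ref{D:relative_modular_category} (with $\zeta = \eta^{-1}\calD$ via Proposition~\ref{P:non-degeneracy_of_relative_modular_categories}), combined with the definition of the sections $s_{V,x'}$ and $s_{x',V,i}$, turns it into the identity $2$-morphism on $\overline{\bbP^2_{x,x'}} \circ \bbD^2_V$, again after projecting by the $p^0$-idempotents. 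Finally I would invoke Proposition~\ref{P:gr_Morita_equivalence}: the relevant objects dominate the target $\PGr$-graded linear categories by Lemmas~\ref{L:skein_equivalence}, \ref{L:Morita_reduction}, and Remark~\ref{R:critical_fusion}, and we have just produced a natural isomorphism relating the two composites, so $\bbeta_{x,x'}$ is the desired $\PGr$-graded natural isomorphism.

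\textbf{Main obstacle.} The hard part will be handling the idempotents $p^0_{Vij}$ carefully throughout: because the dominating set is not completely reduced, the target object of $\bbDelta_{x,x'}$ is genuinely an image (a retract) rather than a direct sum of the $(V \otimes V_{x'} \otimes V_j^*, V_j \otimes V_{x'}^*)$, and every skein-theoretic manipulation must be checked to be compatible with cutting down by $\underline{p^0_{Vij}}$ on both source and target. In particular, proving that the composition $\sum_h \rmd(V_h) [\bbW_{x',V,h}] \diamond [\overline{\bbW_{x',V,h}}]$ equals the identity requires not just the relative modularity condition but also that the resulting morphism, once the $\Omega$-circle is absorbed, already lies in the image of the relevant idempotent — this is exactly what the factorization $p^0_{Vij} = \sum_n \bbnabla_{x,x'}(p^{k_n}_{Vijn} \otimes p'^{-k_n}_{ijn})$ of Lemma~\ref{L:uccidetemi} is designed to supply, so the bulk of the work is really in correctly threading that lemma through the argument rather than in any new topological input.
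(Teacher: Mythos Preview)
Your overall strategy matches the paper's, but there are two concrete gaps in the invertibility argument.

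First, your candidate inverse is wrong as written. You copied the generic-case formula $\eta \cdot (\rmd(V_i) \cdot [\overline{\bbW_{x',V,i}} \disjun \bbD^3_0])_{(i,j)}$, but in the critical case the paper's inverse is
\[
 \eta^3 \rmd(V_0)^2 \cdot \left( \sum_{h} \rmd(V_h) \cdot \bfmu_{\bbS^1_x,\bbS^1_{x'}}\!\left( \left[ \bbY_{\underline{p^0_{Vih}}} \right] \right) \diamond \left[ \overline{\bbW_{x',V,h}} \disjun \bbD^3_0 \right] \right)_{(i,j)},
\]
with the idempotent projection built in on the target side. Without that projection, your morphism is not a priori a morphism into $\im((\underline{p^0_{Vij}}))$ in the Karoubi completion: a morphism $f$ into $\im(p)$ must satisfy $p \circ f = f$, and $[\overline{\bbW_{x',V,i}}]$ alone does not obviously have this property. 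Relatedly, the coefficient must absorb the two factors of $\eta\,\rmd(V_0)$ coming from $\bbF_x \sqtimes \bbF_{x'}$ applied to $\underline{p^0_{Vih}}$.

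Second, and more conceptually, your claim that one composition ``collapses \ldots\ to $\delta_{ij}\cdot\operatorname{id}$'' is precisely what fails in the critical case. The paper computes
\[
 \eta\,\rmd(V_i)\cdot \left[\overline{\bbW_{x',V,i}}\disjun\bbD^3_0\right] \diamond \left[\bbW_{x',V,j}\disjun\bbD^3_0\right]
 = \eta^2\rmd(V_0)^2 \cdot \bfmu_{\bbS^1_x,\bbS^1_{x'}}\!\left(\left[\bbY_{\underline{p^0_{Vij}}}\right]\right),
\]
which is the full idempotent matrix $(p^0_{Vij})$, not a Kronecker delta. That matrix \emph{is} the identity of the image object, but only in the Karoubi-envelope sense; at the level of entries it is genuinely non-diagonal because the dominating set is not completely reduced. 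This is exactly the distinction between the critical and generic cases, and it is why both $\bbeta$ and its inverse must carry the $\underline{p^0}$-projection explicitly.

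Two minor points: the other composition requires applying relative modularity \emph{twice} (two $\Omega$-colored circles appear in the resulting graph, one of index $g_0$ and one of index $g_{x'}+x'$), not once; and the final invocation of Proposition~\ref{P:gr_Morita_equivalence} is unnecessary, since the statement only asks for a natural isomorphism, not a Morita equivalence.
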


\begin{proof}
 The result is established by showing that the degree $0$ morphisms
 \[
  \left( \bbeta^0_{x,x',V,j} \right)_{(i,j) \in \{ 1 \} \times \rmI_{g_{x'} + x'}}
 \]
 of
 \[
  \Hom_{\hat{\bbA}^Z_{\calC}(\bbS^1_x \disjun \bbS^1_{x'})} \left( \im \left( \calD^{-2} \rmd(V_0)^2 \cdot \left( \bfmu_{\bbS^1_x,\bbS^1_{x'}} \left( \left[ \bbY_{\underline{p^0_{Vij}}} \right] \right) \right)_{(i,j) \in \rmI^2_{g_{x'} + x'}} \right), \overline{\bbP^2_{x,x'}} \circ \bbD^2_V \right)
 \]
 are natural with respect to degree $k$ morphisms $f^k \in \bbHom_{\calC}(V,V')$, and that they are invertible for all objects $V \in \bbProj(\calC_{x + x'})$.
 For what concerns naturality, we need to show that
 \begin{align*}
  &\calD^{-2} \rmd(V_0)^2 \cdot \sum_{i \in \rmI_{g_{x'} + x'}} \left[ \bbW_{x',V',i} \disjun \bbD^3_0 \right] \diamond \bfmu_{\bbS^1_x,\bbS^1_{x'}} \left( \left[ \bbY_{\underline{f^k_{ij}}} \right] \right) \\
  &\hspace{\parindent} = \calD^{-3} \rmd(V_0)^3 \cdot \sum_{i \in \rmI_{g_{x'} + x'}} \left[ \id_{\overline{\bbP^2_{x,x'}}} \circ \bbY_{f^k} \right] \diamond \left[ \bbW_{x',V,i} \disjun \bbD^3_0 \right] \diamond \bfmu_{\bbS^1_x,\bbS^1_{x'}} \left( \left[ \bbY_{\underline{p^0_{Vij}}} \right] \right)
 \end{align*}
 for every $j \in \rmI_{g'}$ and every degree $k$ morphism $f^k \in \bbHom_{\calC}(V,V')$, where
 \[
  \calD^{-2} \rmd(V_0)^2 \cdot \left[ \bbY_{\underline{f^k_{ij}}} \right] := \left( \bbF_x \sqtimes \bbF_{x'} \right) \left( \underline{f^k_{ij}} \right).
 \]
 This follows directly from Lemma \ref{L:unitarity_of_3-discs}. Next, for what concerns invertibility, we claim that, for every object $V \in \Proj(\calC_{x + x'})$, the inverse of the degree $0$ morphism
 \[
  \calD^{-2} \rmd(V_0)^2 \cdot \left( \sum_{h \in \rmI_{g_{x'} + x'}} \left[ \bbW_{x',V,h} \disjun \bbD^3_0 \right] \diamond \bfmu_{\bbS^1_x,\bbS^1_{x'}} \left( \left[ \bbY_{\underline{p^0_{Vhj}}} \right] \right) \right)_{(i,j) \in \{ 1 \} \times \rmI_{g_{x'} + x'}}
 \]
 of 
 \[
  \Hom_{\hat{\bbA}^Z_{\calC}(\bbS^1_x \disjun \bbS^1_{x'})} \left( \im \left( \calD^{-2} \rmd(V_0)^2 \cdot \left( \bfmu_{\bbS^1_x,\bbS^1_{x'}} \left( \left[ \bbY_{\underline{p^0_{Vij}}} \right] \right) \right)_{(i,j) \in \rmI^2_{g_{x'} + x'}} \right), \overline{\bbP^2_{x,x'}} \circ \bbD^2_V \right)
 \]
 is given by the degree $0$ morphism 
 \[
  \calD^{-3} \rmd(V_0)^2 \cdot
  \left( \sum_{h \in \rmI_{g_{x'} + x'}} \rmd(V_h) \cdot \bfmu_{\bbS^1_x,\bbS^1_{x'}} \left( \left[ \bbY_{\underline{p^0_{Vih}}} \right] \right) \diamond \left[ \overline{\bbW_{x',V,h}} \disjun \bbD^3_0 \right] \right)_{(i,j) \in \rmI_{g_{x'} + x'} \times \{ 1 \}}
 \]
 of 
 \[
  \Hom_{\hat{\bbA}^Z_{\calC}(\bbS^1_x \disjun \bbS^1_{x'})} \left( \overline{\bbP^2_{x,x'}} \circ \bbD^2_V, \im \left( \calD^{-3} \rmd(V_0)^2 \cdot \left( \bfmu_{\bbS^1_x,\bbS^1_{x'}} \left( \left[ \bbY_{\underline{p^0_{Vij}}} \right] \right) \right)_{(i,j) \in \rmI^2_{g_{x'} + x'}} \right) \right),
 \] 
 where the $2$-morphism
\[
 \overline{\bbW_{x',V,i}} : \overline{\bbP^2_{x,x'}} \circ \bbD^2_V \Rightarrow \bbD^2_{V \otimes V_{x'} \otimes V_i^*} \disjun \bbD^2_{V_i \otimes V_{x'}^*}
\]
 of $\bfadCob_{\calC}$ is given by $(\overline{W},\overline{T_{x\mathrlap{'},V,i}},\omega_{x\mathrlap{'},V,i},0)$ for the $\calC$-colored ribbon graph ${\overline{T_{x\mathrlap{'},V,i}} \subset \overline{W}}$ obtained from $T_{x\mathrlap{'},V,i}$ by reversing the orientation of all edges and vertical boundaries of coupons, and by changing the color of the latter to $\id_{V \otimes V_{x'} \otimes V_i^*}$. Indeed, on the one hand the equality
 \[
  \calD^{-1} \rmd(V_i) \cdot \left[ \overline{\bbW_{x',V,i}} \disjun \bbD^3_0 \right] \diamond \left[ \bbW_{x',V,j} \disjun \bbD^3_0 \right] 
  = \calD^{-2} \rmd(V_0)^2  \cdot \bfmu_{\bbS^1_x,\bbS^1_{x'}} \left( \left[ \bbY_{\underline{p^0_{Vij}}} \right] \right)
 \]
 follows directly from Lemmas \ref{L:surgery_axioms}, \ref{L:2-sphere}, and \ref{L:uccidetemi}. On the other hand, we have
 \begin{align*}
  &\calD^{-2} \rmd(V_0)^2 \cdot \! \sum_{h,h' \in \rmI_{g_{x'} + x'}} \! \rmd(V_h) \cdot \left[ \bbW_{x',V,h'} \disjun \bbD^3_0 \right] \diamond \bfmu_{\bbS^1_x,\bbS^1_{x'}} \left( \left[ \bbY_{\underline{p^0_{Vh'h}}} \right] \right) \diamond \left[ \overline{\bbW_{x',V,h}} \disjun \bbD^3_0 \right] \\
  &\hspace{\parindent} = \calD^{-3} \cdot \left[  ( \bbI \times \bbS^1 \times \bbI )_{x',V} \disjun \bbD^3_0 \right],
 \end{align*}
 where the $2$-morphism
\[
 ( \bbI \times \bbS^1 \times \bbI )_{x',V} : \overline{\bbP^2_{x,x'}} \circ \bbD^2_V \Rightarrow \overline{\bbP^2_{x,x'}} \circ \bbD^2_V
\]
 of $\bfadCob_{\calC}$ is given by $(I \times S^1 \times I,T_{x',V},\omega_{x',V},0)$ for the $\calC$-colored ribbon graph $T_{x',V} \subset I \times S^1 \times I$ represented in Figure \ref{F:coproduct_critical_composition}, and for the unique compatible $G$-coloring $\omega_{x',V}$ of $(I \times S^1 \times I,T_{x',V})$.
 \begin{figure}[bt]\label{F:coproduct_critical_composition}
  \centering
  \includegraphics{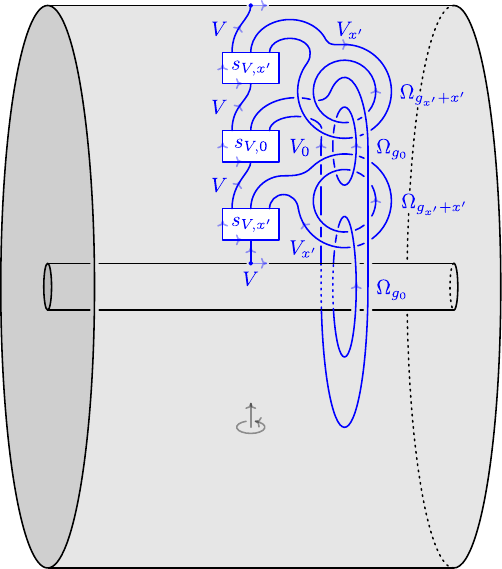}
  \caption{The $2$-mor\-phism $( \bbI \times \bbS^1 \times \bbI )_{x',V}$ of $\bfadCob_{\calC}$.}
 \end{figure}
 Then, using twice the relative modularity condition with $\zeta = \calD^2$, we get
 \begin{align*}
  &\calD^{-3} \rmd(V_0)^2 \cdot \! \sum_{h,h' \in \rmI_{g_{x'} + x'}} \! \rmd(V_h) \cdot \left[ \bbW_{x',V,h'} \disjun \bbD^3_0 \right] \diamond \bfmu_{\bbS^1_x,\bbS^1_{x'}} \left( \left[ \bbY_{\underline{p^0_{Vh'h}}} \right] \right) \diamond \left[ \overline{\bbW_{x',V,h}} \disjun \bbD^3_0 \right] \\
  &\hspace{\parindent} = \left[ \id^0_{\overline{\bbP^2_{x,x'}} \circ \bbD^2_V} \right].
 \end{align*}
\end{proof}

\section{2-Cylinders}\label{S:2-cylinders}

In this section we focus on a very simple family of $1$-morphisms of $\bfadCob_{\calC}$. In order to introduce it, let us consider $g,h \in G$.

\begin{definition}\label{D:2-cylinder}
 The \textit{$(g,h)$-colored $2$-cylinder} $(\bbI \times \bbS^1)_{g,h} : \bbS^1_g \rightarrow \bbS^1_g$ is the $1$-mor\-phism of $\bfadCob_{\calC}$ given by 
 \[
  \left( I \times S^1,\varnothing,\vartheta_{g,h},H_1(I \times S^1;\R) \right)
 \]
 where $\vartheta_{g,h}$ is the unique $G$-coloring of $(I \times S^1,\varnothing)$ extending $\xi_g$ which satisfies $\langle \vartheta_{g,h}, I \times A \rangle = h$ for the relative homology class $I \times A$ joining the base points of the two copies of $S^1$, as represented in Figure \ref{F:2-cylinder}.
\end{definition}

\begin{figure}[hbt]\label{F:2-cylinder}
 \centering
 \includegraphics{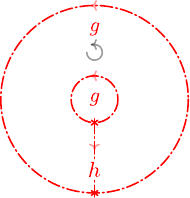}
 \caption{The $1$-mor\-phism $(\bbI \times \bbS^1)_{g,h}$ of $\bfadCob_{\calC}$.}
\end{figure}

The family of universal $\PGr$-graded linear functors we are going to discuss now can be described in terms of the bilinear map which governs the compatibility between the $G$-structure and the $\PGr$-action. First, for all $g,h \in G$, let
\[
 \bbI_{g,h} : \bbProj(\calC_g) \rightarrow \bbProj(\calC_g)
\]
be the $\PGr$-graded linear functor given by the identity on objects of $\bbProj(\calC_g)$, and by a rescaling map of a factor $\psi(h,-k)$ on degree $k$ morphisms of $\bbProj(\calC_g)$. Next, for all $g,h \in G$ and every object $V$ of $\bbProj(\calC_g)$, we consider the degree $0$ morphism
\[
 \bbeta_{g,h,V}^0 \in \Hom_{\bbA^Z_{\calC}(\bbS^1_g)} \left( \bbD^2_V, \left( \bbI \times \bbS^1 \right)_{g,h} \circ \bbD^2_V \right)
\] 
given by $[ ( \bbD^2 \times \bbI)_{V,h} ) \disjun \bbD^3_0 ]$, where the $2$-mor\-phism 
\[
 \left( \bbD^2 \times \bbI \right)_{V,h} : \bbD^2_V \Rightarrow \left( \bbI \times \bbS^1 \right)_{g,h} \circ \bbD^2_V
\] 
of $\bfadCob_{\calC}$ is given by $(D^2 \times I,P_V \times I,\omega_{V,h},0)$ for the $(\calC,G)$-coloring $P_V \times I,\omega_{V,h}$ of $D^2 \times I$ represented in Figure \ref{F:punctured_3-cylinder_lemma_1}.

\begin{proposition}\label{P:2-cylinder_isomorphism}
 For all $g,h \in G$ the degree $0$ morphisms $\bbeta_{g,h,V}^0$ define a $\PGr$-graded natural isomorphism
 \begin{center}
 	\begin{tikzpicture}[descr/.style={fill=white}]
 	\node (P0) at ({atan(-0.5)}:{3.25/(2*sin(atan(0.5)))}) {$\bbA^Z_{\calC}(\bbS^1_g)$};
 	\node (P1) at (90:{3.25/2}) {$\bbProj(\calC_g)$};
 	\node (P2) at ({atan(-0.5) + 2*90}:{3.25/(2*sin(atan(0.5)))}) {$\bbProj(\calC_g)$};
 	\node (P3) at (3*90:{3.25/2}) {$\bbA^Z_{\calC}(\bbS^1_g)$};
 	\node[above] at (0,0) {$\Downarrow$};
 	\node[below] at (0,0) {$\bbeta_{g,h}$};
 	\draw
 	(P1) edge[->] node[right,xshift=10pt] {$\bbF_g$} (P0)
 	(P2) edge[->] node[above,yshift=5pt] {$\bbI_{g,h}$} (P1)
 	(P2) edge[->] node[left,xshift=-10pt] {$\bbF_g$} (P3)
 	(P3) edge[->] node[below,yshift=-5pt] {$\bbA^Z_{\calC} \left( \left( \bbI \times \bbS^1 \right)_{g,h} \right)$} (P0);
 	\end{tikzpicture}
 \end{center}
\end{proposition}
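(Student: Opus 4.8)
The statement I want to prove is that the degree-$0$ morphisms $\bbeta_{g,h,V}^0$ assemble into a $\PGr$-graded natural isomorphism filling the prescribed square. Following the template already established for the analogous propositions in this chapter (Propositions \ref{P:univ_lin_cat}, \ref{P:counit}, \ref{P:product}, \ref{P:coproduct_generic}, \ref{P:coproduct_critical}), the proof splits into two tasks: first, checking naturality of $\bbeta_{g,h}$ with respect to degree-$k$ morphisms $f^k \in \bbHom_{\calC}(V,V')$; and second, checking that each component $\bbeta_{g,h,V}^0$ is an isomorphism in $\bbA_{\calC}(\bbS^1_g)$. Since everything here is considerably simpler than in the $2$-pant case --- the $2$-cylinder carries no $\calC$-colored ribbon graph and its $G$-coloring only records the evaluation $h$ along the arc $I \times A$ --- I expect both tasks to be essentially bookkeeping once set up correctly.

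\textbf{Naturality.} For naturality I must show that for every $f^k \in \bbHom_{\calC}(V,V')$ we have
\[
 \bbA_{\calC}\!\left( \left( \bbI \times \bbS^1 \right)_{g,h} \right)\!\left( \eta \rmd(V_0) \cdot [ \bbY_{f^k} ] \right) \diamond \bbeta_{g,h,V}^0 = \bbeta_{g,h,V'}^0 \diamond \left( \psi(h,-k) \cdot \eta \rmd(V_0) \cdot [ \bbY_{f^k} ] \right),
\]
where the right-hand side uses that $\bbI_{g,h}$ acts on degree-$k$ morphisms by rescaling by $\psi(h,-k)$. The key point is that when we glue the punctured cylinder $\bbY_{f^k}$ onto $(\bbD^2 \times \bbI)_{V,h}$ (or onto $(\bbD^2 \times \bbI)_{V',h}$), the relative homology class joining the two base points picks up exactly the evaluation $h$, and the $\sigma(-k)$-colored edge of $\bbY_{f^k}$ threads a meridian whose linking with the homology class carrying the label $h$ produces, via the compatibility condition of Definition \ref{D:relative_pre-modular_category}, the factor $\psi(h,-k)$. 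Concretely, this is precisely the computation already carried out at the end of the proof of Proposition \ref{P:univ_lin_cat}, where the identity $[Y,T_f,\omega_h,0] = \psi(h,-k) \cdot \bbX_{e_{V',-k} \circ f}$ was established; here I simply invoke that skein manipulation, together with the compatibility between the $G$-structure and the $\PGr$-action of $\calC$, and conclude by isotopy. So the naturality step reduces to citing the same mechanism already used twice in this chapter.

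\textbf{Invertibility.} For invertibility I observe that the inverse of $\bbeta_{g,h,V}^0$ is given by $[ (\bbD^2 \times \bbI)_{V,-h} \disjun \bbD^3_0 ]$, the class of the reversed $2$-cylinder, up to the scalar $\psi$-factors and the coefficient $\eta \rmd(V_0)$ coming from Lemma \ref{L:3-discs_composition}. The composites in both orders are computed by gluing $(\bbD^2 \times \bbI)_{V,h}$ to $(\bbD^2 \times \bbI)_{V,-h}$ along a copy of $D^2$; since $(I \times S^1)_{g,h} \circ (I \times S^1)_{g,-h} \cong (I \times S^1)_{g,0} \cong \id_{\bbS^1_g}$ as $1$-morphisms of $\bfadCob_{\calC}$ and the two punctured $3$-cylinders glue to a trivial cobordism with corners, Lemma \ref{L:skein_equivalence} together with Lemma \ref{L:3-discs_composition} gives that each composite equals $[\id^0_{\bbD^2_V}]$. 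Alternatively, one can simply note that $\bbI_{g,h}$ is manifestly an isomorphism of $\PGr$-graded linear categories (its inverse being $\bbI_{g,-h}$) and that $\bbF_g$ is a $\PGr$-Morita equivalence by Proposition \ref{P:univ_lin_cat}, so the natural transformation $\bbeta_{g,h}$, once shown to exist and be natural, is automatically a natural isomorphism because $\bbA_{\calC}((\bbI \times \bbS^1)_{g,h}) \circ \bbF_g$ and $\bbF_g \circ \bbI_{g,h}$ are then both fully faithful functors out of the same domain agreeing on a dominating set. \textbf{Main obstacle.} The only genuinely delicate point is verifying that the $\psi$-rescaling appearing in the definition of $\bbI_{g,h}$ is exactly the factor produced by the gluing, i.e. getting the sign and the argument of $\psi$ right (here $\psi(h,-k)$, with the minus sign coming from the $\bbS^2_{-k}$ convention in the grading); this is a matter of carefully tracking orientations of meridians, but it is the same tracking already performed in the proof of Proposition \ref{P:univ_lin_cat}, so no new difficulty arises.
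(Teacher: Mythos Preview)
Your proposal is correct and follows essentially the same approach as the paper. Both proofs split into naturality and invertibility, and both reduce naturality to the skein computation (Figure \ref{F:punctured_3-cylinder_lemma_4}) showing that shifting the $G$-coloring along the arc $I \times A$ by $h$ produces exactly the factor $\psi(h,-k)$ on degree-$k$ morphisms.

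Two minor points of comparison. For naturality, the paper is slightly more explicit: it invokes Lemma \ref{L:triviality_lemma} to reduce the desired identity of morphisms in $\bbA_{\calC}(\bbS^1_g)$ to a comparison of $\CGP_{\calC}$-invariants, which in turn becomes the trace identity $\rmt_V(F_{\calC}(T_{f,f',h+h'})) = \psi(h,-k)\, \rmt_V(F_{\calC}(T_{f,f',h'}))$ for generic $h'$; you instead cite the mechanism from Proposition \ref{P:univ_lin_cat} directly, which is fine since the ingredients are identical. (Small typo: the relation you quote should involve $\bbY$, not $\bbX$.) For invertibility, the paper simply says it ``follows directly from the definition'': the cylinder $(\bbD^2 \times \bbI)_{V,h}$ is already an invertible $2$-morphism of $\bfadCob_{\calC}$ with inverse $(\bbD^2 \times \bbI)_{V,-h}$, so neither Lemma \ref{L:3-discs_composition} nor any $\psi$-factor is needed, and your alternative argument via $\PGr$-Morita equivalence, while valid, is unnecessary.
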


\begin{proof}
 The result is established by showing that the degree $0$ morphisms
 \[
  \bbeta^0_{g,h,V} \in \Hom_{\bbA^Z_{\calC}(\bbS^1_g)} \left( \bbD^2_V,\left( \bbI \times \bbS^1 \right)_{g,h} \circ \bbD^2_V \right)
 \]
 are natural with respect to degree $k$ morphisms $f^k \in \bbHom_{\calC}(V,V')$, and that they are invertible for all objects $V \in \bbProj(\calC_g)$. For what concerns naturality, we need to show that
 \[
  \left[ \id_{\left( \bbI \times \bbS^1 \right)_{g,h}} \circ \bbY_{f^k} \right] \diamond \left[ (\bbD^2 \times \bbI)_{V,h} \disjun \bbD^3_0 \right] = \psi(h,-k) \cdot \left[ \left( \bbD^2 \times \bbI \right)_{V',h} \disjun \bbD^3_0 \right] \diamond \left[ \bbY_{f^k} \right]
 \]
 for every degree $k$ morphism $f^k \in \bbHom_{\calC}(V,V')$. In order to do this, we can appeal to Lemma \ref{L:triviality_lemma}, so let us specify $\smash{\overline{D^2}}$ as a $2$-di\-men\-sion\-al cobordism from $S^1$ to $\varnothing$, let us specify $D^3$ as a $3$-di\-men\-sion\-al cobordism from $\varnothing$ to $D^2 \cup_{S^1} \overline{D^2}$, and let us specify $S^2 \times I$ as a $3$-di\-men\-sion\-al cobordism from $(D^2 \cup_{S^1} (I \times S^1) \cup_{S^1} \overline{D^2}) \sqcup \overline{S^2}$ to $\varnothing$. Now the identity can be tested by considering all $V'' \in \calC_g$ and all
 \begin{gather*}
 	(T,\omega) \in \adSk \left( D^3;\id_{\varnothing},\overline{\bbD^2_{V''}} \circ \bbD^2_V \right), \\
 	(T',\omega') \in \adSk \left( S^2 \times I; \left( \overline{\bbD^2_{V''}} \circ \left( \bbI \times \bbS^1 \right)_{g,h} \circ \bbD^2_{V'} \right) \disjun \bbS^2_{-k},\id_{\varnothing} \right),
 \end{gather*}
 and by computing the Costantino-Geer-Patureau invariant $\CGP_{\calC}$ of the resulting closed $2$-mor\-phisms of $\bfadCob_{\calC}$. We can choose a surgery presentation composed of a single unknot with framing zero, whose computability can always be forced by performing generic or projective stabilization. Therefore, up to isotopy, skein equivalence, and multiplication by invertible scalars, we only need to check that 
 \[
  \rmt_V \left( F_{\calC}(T_{f,f',h+h'}) \right) = \psi(h,-k) \rmt_V \left( F_{\calC}(T_{f,f',h'}) \right)
 \]
 for all
 \[
  f' \in \Hom_{\calC} \left( V' \otimes V_0 \otimes \sigma(-k) \otimes V_0^*,V \right), \quad
 h' \in \left( \{ -h \} + (G \smallsetminus X) \right) \cap (G \smallsetminus X),
 \]
 where the $\calC$-colored ribbon graph $T_{f,f',h + h'}$ is represented in the left-hand part of Figure \ref{F:punctured_3-cylinder_lemma_4}. That same skein equivalence gives the identity we are looking for. Therefore, we only need to check invertibility, which follows directly from the definition.
\end{proof}

We are missing one last family of generating $1$-morphisms of $\bfadCob_{\calC}$ from our description: indeed,
for every generic $g \in G \smallsetminus X$, the $g$-colored $1$-sphere $\bbS^1_g$ is a dualizable object of $\bfadCob_{\calC}$, whose dual object is given by the $-g$-colored $1$-sphere $\bbS^2_{-g}$, with right evaluation and coevaluation given by the $(g,0)$-colored $2$-cylinders $\overline{\bbD^2} \circ \bbP^2_{g,-g} : \varnothing \rightarrow \bbS^1_g \disjun \bbS^1_{-g}$ and $\overline{\bbP^2_{-g,g}} \circ \bbD^2 : \varnothing \rightarrow \bbS^1_{-g} \disjun \bbS^1_g$. However, the latter
cannot be obtained from the previous $1$-morphisms, because $\bbD^2$ is not admissible. Fortunately, no additional work is required in order to figure out the universal $\PGr$-graded linear functor $\bbA^Z_{\calC}(\overline{\bbP^2_{-g,g}} \circ \bbD^2)$. Indeed, for every generic $g \in G \smallsetminus X$, let 
\[
 \bbDelta_{-g,g}(\one) : \Bbbk \rightarrow \bbProj(\calC_{-g}) \csqtimes \bbProj(\calC_g)
\]
be the $\PGr$-graded linear functor sending the unique object of $\Bbbk$ to the object
\[
 \bigoplus_{i \in \rmI_g} (V_i^*,V_i) 
\]
of $\bbProj(\calC_{-g}) \csqtimes \bbProj(\calC_g)$. Then, for every $i \in \rmI_g$, we consider the degree $0$ morphism
\[
\bbeta^0_{-g,g,i} \in \Hom_{\bbA^Z_{\calC}(\bbS^1_{-g} \disjun \bbS^1_g)} \left( \bbD^2_{V_i^*} \disjun \bbD^2_{V_i}, \overline{\bbP^2_{-g,g}} \circ \bbD^2 \right)
\]
given by $[\bbW_i \disjun \bbD^3_0]$, where the $2$-morphism
\[
\bbW_i : \bbD^2_{V_i^*} \disjun \bbD^2_{V_i} \Rightarrow \overline{\bbP^2_{-g,g}} \circ \bbD^2
\]
of $\bfadCob_{\calC}$ is given by $(W,T_i,\omega_i,0)$ for the $\calC$-colored ribbon graph $T_i \subset W$ represented in Figure \ref{F:counit_generic}, and for the unique compatible $G$-coloring $\omega_i$ of $(W,T_i)$.

\begin{figure}[hbt]\label{F:counit_generic}
  \centering
  \includegraphics{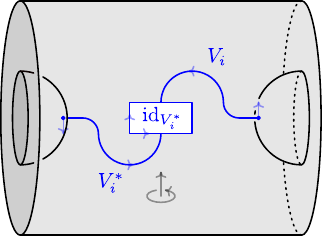}
  \caption{The $2$-morphism $\bbW_i$ of $\bfadCob_{\calC}$.} 
\end{figure}

Then, for every generic $g \in G \smallsetminus X$ we have a $\PGr$-graded natural isomorphism
\begin{center}
	\begin{tikzpicture}[descr/.style={fill=white}]
	\node (P0) at ({atan(-0.5)}:{3.25/(2*sin(atan(0.5)))}) {$\hat{\bbA}^Z_{\calC}(\bbS^1_{-g} \disjun \bbS^1_g)$};
	\node (P1) at (90:{3.25/2}) {$\bbProj(\calC_{-g}) \csqtimes \bbProj(\calC_g)$};
	\node (P2) at ({atan(-0.5) + 2*90}:{3.25/(2*sin(atan(0.5)))}) {$\Bbbk$};
	\node (P3) at ({atan(0.5) + 2*90}:{3.25/(2*sin(atan(0.5)))}) {$\bbA^Z_{\calC}(\varnothing)$};
	\node[above] at (0,0) {$\Downarrow$};
	\node[below] at (0,0) {$\bbeta_{-g,g}$};
	\draw
	(P1) edge[->] node[right,xshift=10pt] {$\hat{\bfmu}_{\bbS^1_{-g},\bbS^1_g} \circ (\bbF_{-g} \csqtimes \bbF_g)$} (P0)
	(P2) edge[->] node[above,yshift=5pt] {$\bbDelta_{-g,g}(\one)$} (P1)
	(P2) edge[->] node[left,xshift=-10pt] {$\bfepsilon$} (P3)
	(P3) edge[->] node[below,yshift=-5pt] {$\bbA^Z_{\calC} \left( \overline{\bbP^2_{-g,g}} \circ \bbD^2 \right)$} (P0);
	\end{tikzpicture}
\end{center}
Remark that the proof of this claim can be adapted directly from the proof of Proposition \ref{P:coproduct_generic}. The inverse of the degree $0$ morphism
\[
 \left( \left[ \bbW_j \disjun \bbD^3_0 \right] \right)_{(i,j) \in \{ 1 \} \times \rmI_g}
\]
of 
\[
\Hom_{\hat{\bbA}^Z_{\calC}(\bbS^1_{-g} \disjun \bbS^1_g)} \left( \bigoplus_{i \in \rmI_g} \bbD^2_{V_i^*} \disjun \bbD^2_{V_i}, \overline{\bbP^2_{-g,g}} \circ \bbD^2 \right)
\]
is given by the degree $0$ morphism 
\[
 \calD^{-1} \cdot
\left( \rmd(V_i) \cdot \left[ \overline{\bbW_i} \disjun \bbD^3_0 \right] \right)_{(i,j) \in \rmI_g \times \{ 1 \}}
\]
of 
\[
\Hom_{\hat{\bbA}^Z_{\calC}(\bbS^1_{-g} \disjun \bbS^1_g)} \left( 
\overline{\bbP^2_{-g,g}} \circ \bbD^2,
\bigoplus_{i \in \rmI_g} \bbD^2_{V_i^*} \disjun \bbD^2_{V_i} \right),
\] 
where the $2$-morphism
\[
\overline{\bbW_i} : \overline{\bbP^2_{-g,g}} \circ \bbD^2 \Rightarrow \bbD^2_{V_i^*} \disjun \bbD^2_{V_i}
\]
of $\bfadCob_{\calC}$ is given by $(\overline{W},\overline{T_i},\omega_i,0)$ for the $\calC$-colored ribbon graph $\overline{T_i} \subset \overline{W}$ obtained from $T_i$ by reversing the orientation of all edges and vertical boundaries of coupons. However this time, in order to obtain a computable surgery presentation of $\overline{W} \cup_{D^2 \sqcup D^2} W$ inside $I \times S^1 \times I$ with respect to the $(\calC,G)$-coloring 
\[
 \left( \overline{T_i} \cup_{P_{V_i^*} \sqcup P_{V_i}} T_i, \omega_i \cup_{\vartheta_{V_i^*} \sqcup \vartheta_{V_i}} \omega_i \right),
\] 
we first have to perform a generic stabilization of index $g$ along the generic curve $\{ \frac 12 \} \times S^1 \times \{ \frac 12 \}$, then we have to perfrom a projective stabilization of index $g_0$ along one of the resulting projective components, and only then we can slide a $V_0$-colored edge over the surgery knot. The rest of the proof is unchanged.

\section{Examples of computations}

In this section we provide a combinatorial description of universal $\PGr$-graded vector spaces associated with families of closed surfaces.

\begin{remark}
 The genus $0$ case follows immediately from Section \ref{S:2-discs}: indeed, suppose $\bbSigma = (\varSigma,P,0,\{ 0 \})$ is a closed connected $1$-morphism of $\bfadCob_{\calC}$ which admits a positive diffeomorphism $f : S^2 \rightarrow \varSigma$, and suppose $P \subset \varSigma$ is composed of a single positive point of color $F_{\calC}(P) \in \Proj(\calC_0)$. Then Proposition \ref{P:counit} implies
 \[
  \bbV^Z_{\calC}(\bbSigma) \cong \bbHom_{\calC}(\one,F_{\calC}(P)).
 \]
\end{remark}

We move on to the genus $1$ case, so let us begin by fixing our terminology. We say a cohomology class $\vartheta \in H_1(S^1 \times S^1;G)$ is \textit{generic} if the meridian $\{ (1,0) \} \times S^1$ is a generic curve with respect to $\vartheta$, which means its homology class $m$ satisfies $\langle \vartheta,m \rangle \in G \smallsetminus X$. A \textit{generic surface of genus $1$} is a closed connected $1$-morphism $\bbSigma = (\varSigma,\varnothing,\vartheta,\calL)$ of $\bfadCob_{\calC}$ which admits a \textit{generic identification} $f : S^1 \times S^1 \rightarrow \varSigma$, which is a positive diffeomorphism such that $f^*(\vartheta)$ is generic. This condition is equivalent to the existence of a generic curve $\gamma \subset \varSigma$.

\begin{proposition}
 If $\bbSigma = (\varSigma,\varnothing,\vartheta,\calL)$ is a generic surface of genus $1$, and if $f : S^1 \times S^1 \rightarrow \varSigma$ is a generic identification, then 
 \[
  \dim_{\Bbbk} \left( \bbV^Z_{\calC}(\bbSigma) \right) = \left| \rmI_{\langle f^*(\vartheta),m \rangle} \right|.
 \]
\end{proposition}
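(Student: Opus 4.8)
The plan is to realize a generic genus-$1$ surface $\bbSigma$ as a closed $1$-morphism built from the generating $1$-morphisms studied in Chapter \ref{Ch:characterization_of_image}, and then to transport the computation through the natural isomorphisms of Propositions \ref{P:univ_lin_cat}, \ref{P:counit}, and \ref{P:coproduct_generic}. Concretely, let $g := \langle f^*(\vartheta),m \rangle \in G \smallsetminus X$ be the generic index read off along the meridian. Using the generic identification $f$, I would decompose $\varSigma \cong S^1 \times S^1$ as the horizontal composition of a $2$-pant and a dual $2$-pant, capped off appropriately: the torus is obtained by gluing a once-punctured torus to a disc, and the once-punctured torus is itself $\overline{\bbP^2} \circ \bbP^2$ up to diffeomorphism. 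More precisely, since $g$ is generic, I would write
\[
 \bbSigma \cong \overline{\bbD^2} \circ \overline{\bbP^2_{g,-g}} \circ \bbP^2_{g,-g} \circ \bbD^2
\]
as a closed $1$-morphism of $\bfadCob_{\calC}$, carrying the $G$-coloring induced by $\vartheta$ along the gluings (here the $-g$-colored handle is admissible precisely because $g$, hence $-g$, is generic, so the relevant $2$-pants and $2$-discs all lie in $\bfadCob_{\calC}$). This is the only place where genericity of $\vartheta$ enters in an essential way.

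\textbf{Key steps.} First I would verify the diffeomorphism above, tracking base points and the Lagrangian $\calL$, and check that every intermediate $1$-morphism is admissible; since $\CGP_{\calC}$ and hence $\bbV_{\calC}$ depend only on isomorphism classes, and $\bbV_{\calC}$ is a symmetric monoidal functor (Theorem \ref{T:graded_TQFT}), this reduces $\bbV_{\calC}(\bbSigma)$ to the composite of the $\PGr$-graded linear maps associated with the four pieces. Second, applying $\gltqft$ and the chain of natural isomorphisms $\bbeta$ from Section \ref{S:2-discs}, Section \ref{S:2-pants}, and Section \ref{S:1-spheres}, I would identify $\bbV_{\calC}(\bbSigma)$ with a $\PGr$-graded vector space computed entirely inside $R^{\sigma}(\calC)$. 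Tracing through: $\bbA_{\calC}(\bbD^2_V)$ corresponds to a constant functor, $\bbA_{\calC}(\bbP^2_{g,-g})$ to the tensor product functor $\bbnabla_{g,-g}$, $\bbA_{\calC}(\overline{\bbP^2_{g,-g}})$ to the coproduct functor $\bbDelta_{g,-g}$ of Proposition \ref{P:coproduct_generic}, and $\bbA_{\calC}(\overline{\bbD^2_V})$ to the internal $\bbHom$ functor of Proposition \ref{P:counit}. Third, assembling these, $\bbV_{\calC}(\bbSigma)$ becomes the $\PGr$-graded vector space
\[
 \bigoplus_{i \in \rmI_g} \bbHom_{\calC}\bigl( \one, V_i \otimes V_i^* \bigr)
\]
up to $\PGr$-graded isomorphism, where the direct sum over $\rmI_g$ is exactly the one produced by $\bbDelta_{g,-g}$ applied to the tensor unit. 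Fourth, I would compute this: for each simple $V_i$ with epic evaluation one has $\dim_{\Bbbk} \Hom_{\calC}(\one, V_i \otimes V_i^*) = 1$ (by simplicity of $V_i$ together with the non-degeneracy properties of the projective trace, Proposition \ref{P:non-degeneracy_of_trace}), and the $\PGr$-grading contributes no further dimensions because $\sigma$ is a free realization. Hence $\dim_{\Bbbk}(\bbV_{\calC}(\bbSigma)) = |\rmI_g|$, which is $|\rmI_{\langle f^*(\vartheta),m\rangle}|$ as claimed.

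\textbf{Main obstacle.} The routine part is the manipulation of cobordisms and the bookkeeping of base points, Lagrangians, and signature defects in the decomposition; I expect the genuinely delicate step to be the identification in step three, namely verifying that the composite of $\bbnabla_{g,-g}$ followed by $\bbDelta_{g,-g}$ followed by the capping functors really produces $\bigoplus_{i \in \rmI_g}$ of one-dimensional spaces and not something with extra multiplicity or hidden $\PGr$-degree shifts. This requires carefully composing the natural isomorphisms $\bbeta^0$ of the four Propositions and checking that the indexing set $\rmI_g$ appearing in $\bbDelta_{g,-g}$ survives unchanged — in particular that the coend/direct-sum structure of $\bbDelta_{g,-g}(\one) = \bigoplus_{i \in \rmI_g}(V_i^*, V_i)$ pairs with $\bbnabla$ to give $\Hom_{\calC}(V_i \otimes V_i^*, V_j \otimes V_j^*) \cong \Bbbk \delta_{ij}$ diagonally. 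An alternative, and perhaps cleaner, route would be to compute $\bbV_{\calC}(\bbSigma)$ directly via Lemma \ref{L:connection_lemma_TQFT} and Lemma \ref{L:triviality_lemma_TQFT}, representing every vector by an admissible decoration of a solid torus and reducing to a computation of projective traces of $\calC$-colored ribbon graphs in $S^3$ with one surgery component colored by the Kirby color $\Omega_g$; the relative modularity condition then collapses the pairing matrix to a diagonal one indexed by $\rmI_g$, giving the dimension count. I would likely present the surface-decomposition argument as the main proof and remark on the direct computation as a sanity check.
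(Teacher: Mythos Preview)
Your approach is essentially the same as the paper's: decompose the torus into generating $1$-morphisms and transport the computation through the characterization propositions. However, there is a genuine gap. The cohomology class $\vartheta$ is determined by \emph{two} values, $g := \langle f^*(\vartheta), m \rangle$ on the meridian and $h := \langle f^*(\vartheta), \ell \rangle$ on the longitude, and your decomposition
\[
  \overline{\bbD^2} \circ \overline{\bbP^2_{g,-g}} \circ \bbP^2_{g,-g} \circ \bbD^2
\]
only realizes the torus with $h = 0$, because the standard pants $\bbP^2_{g,-g}$ carry a specific $G$-coloring (vanishing on the southern-hemisphere classes, Definition~\ref{D:2-pants}). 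For arbitrary $h$ your claimed isomorphism of decorated $1$-morphisms is simply false. The paper repairs this by inserting a cylinder $(\bbI \times \bbS^1)_{-g,-h}$ on one of the two circle factors, and invokes Proposition~\ref{P:2-cylinder_isomorphism} to handle it; you never cite that proposition.

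The gap is easy to close once identified: Proposition~\ref{P:2-cylinder_isomorphism} shows that the cylinder functor $\bbI_{g,h}$ is the identity on objects (it only rescales degree-$k$ morphisms by $\psi(h,-k)$), so inserting it does not change the object $\bigoplus_{i \in \rmI_g} \bbHom_{\calC}(\one, V_i^* \otimes V_i)$ you are computing, and the dimension is indeed independent of $h$. But you must say this; otherwise your argument only covers the special case $h = 0$. Note that the analogous computation for \emph{critical} genus-$1$ surfaces in the paper does depend on $h$, so this is not a step you can skip on general grounds.
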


\begin{proof}
 Let us set
 \[
  g := \langle f^*(\vartheta),m \rangle \in G \smallsetminus X, \quad
  h := \langle f^*(\vartheta),\ell \rangle \in G,
 \]
 where $\ell$ denotes the homology class of the longitude $S^1 \times \{ (0,1) \}$. Then, we can decompose $\bbSigma$, up to isomorphism, as
 \[
  \overline{\bbD^2} \circ \overline{\bbP^2_{-g,g}} \circ \left( (\bbI \times \bbS^1)_{-g,-h} \disjun \id_{\bbS^1_g} \right) \circ \bbP^2_{-g,g} \circ \bbD^2.
 \]
 Therefore, thanks to Propositions \ref{P:counit}, \ref{P:product}, \ref{P:coproduct_generic}, and \ref{P:2-cylinder_isomorphism}, $\bbV^Z_{\calC}(\bbSigma)$ is isomorphic to
 \[
  \bigoplus_{i \in \rmI_g} \bbHom_{\calC} \left( \one,V_i^* \otimes V_i \right).
 \]
 This allows us to conclude.
%
%
\end{proof}

We say a $(\calC,G)$-coloring $(P,\vartheta)$ of $S^1 \times S^1$ is \textit{critical} if the $\calC$-colored ribbon set $P \subset S^1 \times S^1$ is composed of a single positive point of color $F_{\calC}(P) \in \Proj(\calC_0)$, and if the cohomology class $\vartheta \in H^1(S^1 \times S^1 \smallsetminus P;G)$ admits no generic curve. A \textit{critical surface of genus $1$} is a closed connected $1$-morphism $\bbSigma = (\varSigma,P,\vartheta,\calL)$ of $\bfadCob_{\calC}$ which admits a \textit{critical identification} $f : S^1 \times S^1 \rightarrow \varSigma$, which is a positive diffeomorphism such that $(f^{-1}(P),f^*(\vartheta))$ is critical. 
For all critical $x,y \in X$, for every object $V \in \Proj(\calC_0)$, and for every $(i,j) \in \rmI_{g_x + x}^2$, we denote with $T_{Vijy}$ the $\calC$-colored ribbon graph represented in Figure \ref{F:critical_torus}, and we set
\[
p^0_{Vijy} := \calD^{-2} \rmd(V_i) \cdot F_{\calC} \left( T_{Vijy} \right) \otimes \varepsilon.
\]

\begin{figure}[hbt]\label{F:critical_torus}
 \centering
 \includegraphics{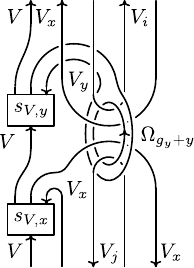}
 \caption{The $\calC$-colored ribbon graph $T_{Vijy}$.}
\end{figure}


\begin{proposition}
 If $\bbSigma = (\varSigma,P,\vartheta,\calL)$ is a critical surface of genus $1$, and if $f : S^1 \times S^1 \rightarrow \varSigma$ is a critical identification, then 
 \[
  \bbV^Z_{\calC}(\bbSigma) \cong \bbHom_{\hat{\calC}} \left( \one, \im \left( \left( p^0_{F_{\calC}(P) ij \langle f^*(\vartheta),\ell \rangle} \right)_{(i,j) \in \rmI_{g_{\langle f^*(\vartheta),m \rangle} + \langle f^*(\vartheta),m \rangle}^2}  \right) \right).
 \]
\end{proposition}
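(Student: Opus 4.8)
The strategy is to mimic the genus $1$ generic computation, but now using the critical $2$-pant characterization of Proposition \ref{P:coproduct_critical} in place of the generic one. First I would fix a critical identification $f : S^1 \times S^1 \rightarrow \varSigma$, set $m' := \langle f^*(\vartheta), m \rangle$ and $\ell' := \langle f^*(\vartheta), \ell \rangle$, and write $U := F_{\calC}(P) \in \Proj(\calC_0)$. Since $(f^{-1}(P), f^*(\vartheta))$ is critical, the cohomology class admits no generic curve, so $m'$ must be critical; pick a critical $x \in X$ with $x + x = $ (some critical index witnessed by the decomposition), more precisely I would choose critical $x,y \in X$ with $x + y = 0$ realizing $m'$ along the meridian and $\ell'$ along the longitude. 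Then, exactly as in the generic case, I would decompose $\bbSigma$ up to isomorphism as a composition of generating $1$-morphisms: a $2$-disc $\bbD^2_U$ feeding into a $2$-pant $\bbP^2_{x,y}$, then a $(x,h)$-colored $2$-cylinder twisting along the longitude, then the dual $2$-pant $\overline{\bbP^2_{x,y}}$, and finally a dual $2$-disc $\overline{\bbD^2_U}$. Care is needed here because $U$ has critical index $0$, so the "$2$-disc with a puncture of color $U$" version of Proposition \ref{P:counit} must be invoked with a genuinely non-simple $U$, but this is allowed since Proposition \ref{P:counit} is stated for arbitrary $V \in \calC_g$.

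Next I would feed this decomposition through the chain of $\PGr$-graded natural isomorphisms: Proposition \ref{P:counit} for the dual $2$-disc, Proposition \ref{P:coproduct_critical} for the dual $2$-pant (this is the new ingredient compared to the generic case), Proposition \ref{P:2-cylinder_isomorphism} for the $2$-cylinder, Proposition \ref{P:product} for the $2$-pant, and again Proposition \ref{P:counit} composed with the $2$-disc. Pasting these together as in the proof of the genus $1$ generic proposition, one obtains that $\bbV_{\calC}(\bbSigma)$ is $\PGr$-graded isomorphic to
\[
 \bbHom_{\calC \csqtimes \calC} \left( \bigl( \one, \one \bigr), \bbDelta_{x,y}(U) \right)
\]
after absorbing the $2$-cylinder twist, which only rescales morphisms and does not change the isomorphism type of the $\PGr$-graded vector space of $\Hom$'s into it. Recalling the definition of $\bbDelta_{x,y}$ from Section \ref{S:2-pants}, the object $\bbDelta_{x,y}(U)$ is precisely
\[
 \im \left( \underline{p^0_{Uij}} \right)_{(i,j) \in \rmI_{g_{y} + y}^2}
\]
inside $\bbProj(\calC_x) \csqtimes \bbProj(\calC_y)$, and the ribbon graph $T_{Vijy}$ of Figure \ref{F:critical_torus} is exactly the one obtained by gluing the defining graph $T_{Vij}$ of Figure \ref{F:uccidetemi} to the longitudinal twist prescribed by $\ell'$; so $p^0_{U i j \ell'}$ coincides with the relevant idempotent-like family, and $\im(\underline{p^0_{Uij}})$ matches $\im((p^0_{F_{\calC}(P) ij \langle f^*(\vartheta),\ell\rangle})_{(i,j)})$. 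Applying $\Hom_{\calC \csqtimes \calC}((\one,\one), -)$, which on $\csqtimes$ of graded linear categories amounts to $\bbHom_{\hat{\calC}}(\one, -)$ after passing to the internal Hom description, yields the claimed formula.

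\textbf{The main obstacle.} The delicate point is keeping careful track of how the longitudinal $2$-cylinder $(\bbI \times \bbS^1)_{x,h}$ interacts with the critical $2$-pant, and in particular verifying that the twist it induces is exactly the one encoded in the extra longitudinal strand of $T_{Vijy}$ as opposed to $T_{Vij}$. In the generic case this was transparent because everything factored through completely reduced dominating sets; here, because $\bbProj(\calC_x)$ and $\bbProj(\calC_y)$ are not completely reduced, one must argue at the level of the idempotent families $\underline{p^0_{Vij}}$ and check compatibility of the $\psi$-rescaling from Proposition \ref{P:2-cylinder_isomorphism} with the decomposition of $p^0$ through $\bbnabla_{x,y}$ provided by Lemma \ref{L:uccidetemi}. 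A secondary subtlety is confirming the index bookkeeping: that the two critical indices $x,y$ extracted from $m'$ and $\ell'$ genuinely satisfy the hypotheses of Proposition \ref{P:coproduct_critical}, and that $g_{m'} + m'$ is the correct index set appearing in the final formula. Both subtleties are routine given the machinery already in place, but they are where the bulk of the verification lies.
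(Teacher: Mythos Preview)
Your overall strategy matches the paper's: decompose the critical torus into generating pieces and chain together Propositions~\ref{P:counit}, \ref{P:product}, \ref{P:coproduct_critical}, and \ref{P:2-cylinder_isomorphism}. However, several details in your setup are off and would lead you to the wrong answer if executed as written.

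First, the surface carries a \emph{single} puncture, so only one end cap is $\bbD^2_U$; the other must be the undecorated $\overline{\bbD^2}$. Using $\overline{\bbD^2_U}$ on both ends would produce $\bbHom_{\hat\calC}(U,-)$ instead of $\bbHom_{\hat\calC}(\one,-)$. Second, your index bookkeeping is tangled: there is no independent choice of ``$x,y$ with $x+y=0$''. Set $x:=\langle f^*(\vartheta),m\rangle$ and $y:=\langle f^*(\vartheta),\ell\rangle$; the pants is $\bbP^2_{-x,x}$, and the cylinder $(\bbI\times\bbS^1)_{x,y}$ sits on the $\bbS^1_x$ factor. The paper's decomposition is
\[
 \overline{\bbD^2}\circ\overline{\bbP^2_{-x,x}}\circ\bigl(\id_{\bbS^1_{-x}}\disjun(\bbI\times\bbS^1)_{x,y}\bigr)\circ\bbP^2_{-x,x}\circ\bbD^2_V.
\]
Third, and most importantly, your remark that the cylinder ``does not change the isomorphism type'' is wrong and cuts against the point: the $\psi$-rescaling of $\bbI_{x,y}$ acting on the tensor decomposition from Lemma~\ref{L:uccidetemi} is exactly what turns the idempotent family $\underline{p^0_{Vij}}$ (built from $T_{Vij}$) into the family $p^0_{Vijy}$ (built from $T_{Vijy}$). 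Concretely, if $p^0_{Vij0}=\sum_n\bbnabla_{-x,x}(p^{k_n}_{Vij0n}\otimes p'^{-k_n}_{ij0n})$, then $p^0_{Vijy}=\sum_n\psi(y,k_n)\cdot\bbnabla_{-x,x}(p^{k_n}_{Vij0n}\otimes p'^{-k_n}_{ij0n})$, and the images of these two idempotent families are generally not isomorphic. You correctly flag this interaction as the main obstacle later, but your earlier dismissal of it is the one place where the argument would actually break.
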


\begin{proof}
 Let us set
 \[
  V := F_{\calC}(P) \in \Proj(\calC_0), \quad
  x := \langle f^*(\vartheta),m \rangle \in X, \quad
  y := \langle f^*(\vartheta),\ell \rangle \in X.
 \]
 Then, we can decompose $\bbSigma$, up to isomorphism, as
 \[
  \overline{\bbD^2} \circ \overline{\bbP^2_{-x,x}} \circ \left( \id_{\bbS^1_{-x}} \disjun (\bbI \times \bbS^1)_{x,y} \right) \circ \bbP^2_{-x,x} \circ \bbD^2_V.
 \]
 But now remark that, if 
 \[
 p^0_{Vij0} = \sum_{n=1}^{N_{ij}} \bbnabla_{-x,x}(p^{k_n}_{Vij0n} \otimes p'^{-k_n}_{ij0n}),
 \]
 then
 \begin{align*}
  p^0_{Vijy} &= \sum_{n=1}^{N_{ij}} \psi(y,k_n) \cdot \bbnabla_{-x,x}(p^{k_n}_{Vij0n} \otimes p'^{-k_n}_{ij0n})
 \end{align*}
 for every $(i,j) \in \rmI_{g_x + x}^2$. Therefore, thanks to Propositions \ref{P:counit}, \ref{P:product}, \ref{P:coproduct_generic}, and \ref{P:2-cylinder_isomorphism}, we can conclude.
\end{proof}

Next, we discuss surfaces of genus $n > 1$, although we restrict to the generic case for our study. First of all, let us start by fixing our notation. If $\calG$ is an oriented trivalent graph, whose set of vertices is denoted $\calV$, and whose set of edges is denoted $\calE$, then, for every vertex $v \in \calV$, we denote with $\calE_v$ the set of germs of edges of $\calE$ which are incident to $v$, and we denote with $\varepsilon_v : \calE_v \rightarrow \{ +,- \}$ the function associating the sign $+$ with incoming germs of edges, and the sign $-$ with outgoing ones.
If $\calG$ is an ordered trivalent graph, and if $V : \calE \rightarrow \calC$ is a $\calC$-coloring of $\calE$, then we set
\[
 \bbH_v (V) := \bbHom_{\calC} \left( \one, \bigotimes_{e \in \calE_v} V(e)^{\varepsilon_v(e)} \right)
\]
for every $v \in \calV$, where $V(e)^+ := V(e)$ and $V^-(e) := V(e)^*$ for every $e \in \calE_v$. For our description, we will focus on a particular family of graphs. For every integer $n > 1$, we consider the oriented planar trivalent graph $\calG_n \subset \R^2 \times \{ 0 \} \subset \R^3$ represented in Figure \ref{F:trivalent_graph}. 

\begin{figure}[hbt]\label{F:trivalent_graph}
	\centering
	\includegraphics{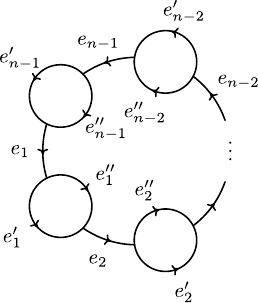}
	\caption{Trivalent graph of genus $n$.} 
\end{figure}

Let $\varSigma_n$ be the closed surface of genus $n$ obtained as the boundary of a standard tubular neighborhood $H_n$ of $\calG_n$ in $\R^3$. 
Let $m_i$ denote 
a positive meridian of the edge $e_i$, let $m'_i$ denote 
a positive meridian of the edge $e'_i$, and let $m''_i$ denote 
a positive meridian of the edge $e''_i$ for all integers $1 \leqslant i < n$. We use the same notation for the corresponding homology classes, with the exception of the meridians $m_i$, which all determine the same homology class $m_0$. We say a cohomology class $\vartheta_n \in H^1(\varSigma_n;G)$ is \textit{generic} if 
\[
 \langle \vartheta_n,m_0 \rangle, \langle \vartheta_n,m'_i \rangle, \langle \vartheta_n,m''_i \rangle \in G \smallsetminus X
\]
for all integers $1 \leqslant i < n$. A \textit{generic surface of genus $n$} is a closed connected $1$-morphism $\bbSigma = (\varSigma,\varnothing,\vartheta,\calL)$ of $\bfadCob_{\calC}$ which admits a \textit{generic identification} ${f : \varSigma_n \rightarrow \varSigma}$, which is a positive diffeomorphism such that $f^*(\vartheta)$ is generic. 
Then, let us give a sufficient condition for the genericity of surfaces of genus $n$ which requires some additional terminology. We say a generic $g \in G \smallsetminus X$ is \textit{doubly generic} if $\langle g \rangle \not\subset (g_1 + X) \cup (g_2 + X)$ for all $g_1,g_2 \in G$, where $\langle g \rangle$ denotes the cyclic subgroup of $G$ generated by $g$. Then, if $\varSigma$ is a $2$-dimensional cobordism, and if $\vartheta \in H^1(\varSigma;G)$ is a cohomology class, we say an embedded closed oriented curve $\gamma \subset \varSigma$ is \textit{doubly generic} with respect to $\vartheta$ if $\langle \vartheta,\gamma \rangle \in G \smallsetminus X$ is doubly generic.

\begin{lemma}\label{L:generic_spine}
 If $\bbSigma = (\varSigma,\varnothing,\vartheta,\calL)$ is a closed connected $1$-morphism of $\bfadCob_{\calC}$ of genus $n > 1$ which admits a doubly generic curve $\gamma \subset \varSigma$, then $\bbSigma$ is generic.
\end{lemma}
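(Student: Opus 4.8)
The statement asserts that on a closed connected surface $\varSigma$ of genus $n>1$, the mere existence of \emph{one} doubly generic curve $\gamma$ (with respect to the fixed cohomology class $\vartheta$) is enough to produce a generic identification $f : \varSigma_n \to \varSigma$, i.e.\ an identification under which the pulled-back class $f^*(\vartheta)$ evaluates generically on the whole standard system of meridians $m_0, m'_i, m''_i$ appearing in Figure \ref{F:trivalent_graph}. The natural strategy is to build a spine of $\varSigma$ modeled on $\calG_n$ that has $\gamma$ as one of its ``cross'' edges, and then to use the doubly-generic hypothesis to arrange that \emph{all} the meridians of the spine receive generic values. First I would recall that $\varSigma \cong \partial H$ for some handlebody $H$ of genus $n$, and that $H$ deformation retracts onto an embedded trivalent spine $\calG$; since all genus-$n$ trivalent graphs of the relevant shape are equivalent up to ambient homeomorphism of the handlebody, I may as well take $\calG$ to be (ambiently isotopic to) $\calG_n$, with the freedom to choose which edge of $\calG_n$ plays the role of which edge of the abstract graph.

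\textbf{Key steps.} (1) Isotope so that $\gamma$ is a meridian of a chosen ``rung'' edge $e_1$ of the spine $\calG_n$; this is possible because any non-separating simple closed curve on $\varSigma$ that bounds a disk in $H$ is a spine-meridian up to isotopy, and a non-separating $\gamma$ can be completed to such a configuration (if $\gamma$ happens to be separating one instead arranges it as a meridian of one of the $e'_i$ or $e''_i$ loops and argues symmetrically). So without loss of generality $\langle f^*(\vartheta), m_0\rangle = \langle \vartheta, \gamma\rangle =: g$ is doubly generic. (2) Now I must show the remaining meridians $m'_i, m''_i$ can also be made generic, possibly after a further change of identification. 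The point is that changing $f$ by a homeomorphism of $\varSigma_n$ extending an automorphism of $H_n$ acts on $H^1(\varSigma_n;G) = \Hom(H_1(\varSigma_n;\Z),G)$ through the image of the handlebody mapping class group in $\mathrm{Sp}(2n,\Z)$; in particular, Dehn twists along the $m'_i, m''_i$ and handle slides let us add integer multiples of $\langle f^*(\vartheta), m_0 \rangle = g$, and of the already-fixed generic values, to the values on the other meridians. (3) Using the double genericity of $g$ — namely $\langle g\rangle \not\subset (g_1+X)\cup(g_2+X)$ for all $g_1,g_2$ — I can, one edge at a time, choose the twist parameter so that $\langle f^*(\vartheta), m'_i\rangle$ avoids the ``bad coset'' $X$: the values reachable by twisting are $\langle f^*(\vartheta), m'_i\rangle + \Z g$, an entire coset of $\langle g\rangle$, and by hypothesis this coset is not contained in any single translate $g_i + X$, hence meets $G\smallsetminus X$. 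The same argument, done sequentially and compatibly (twists along disjoint meridians commute and don't disturb each other's values beyond controlled additions), makes every $m'_i$ and $m''_i$ generic simultaneously. That produces the desired generic identification.

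\textbf{Main obstacle.} The delicate point is step (2)–(3): making precise the claim that the relevant twists and handle slides act on the meridian-values by adding multiples of the \emph{same} element $g$ (coming from the doubly-generic curve), and that these adjustments can be performed \emph{independently} on each of the $2(n-1)$ remaining meridians without wrecking each other or the already-achieved genericity of $m_0$. This requires a careful bookkeeping of the mapping-class-group action on $H^1(\varSigma_n;G)$ relative to the chosen basis of meridians — essentially identifying, for each $i$, an explicit mapping class supported near $e'_i$ (resp.\ $e''_i$) whose effect on cohomology is precisely $m'_i \mapsto m'_i$ on the pairing with most cycles but shifts the relevant evaluation by $m_0$. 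Once that linear-algebra/topology dictionary is set up, the double genericity of $g$ is exactly what is needed (and is why the plain genericity of $\gamma$ would not suffice), and the conclusion follows by finitely many independent coset-avoidance choices. I expect the write-up to lean on a normal-form result for such surfaces analogous to the standard handle decomposition, plus the observation that $\langle g \rangle$ is infinite or large enough that a coset of it escapes any two translates of the small set $X$.
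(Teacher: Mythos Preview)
Your overall strategy is correct and is exactly what the paper does: place $\gamma$ as the meridian $m_0$, then modify the identification by a handlebody automorphism to make the remaining meridians generic. However, there is a genuine gap in your step (3), and it is precisely the point where the ``doubly'' in ``doubly generic'' enters.

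You speak of ``$2(n-1)$ remaining meridians'' $m'_i, m''_i$ to be adjusted \emph{independently}, each by a single-translate avoidance argument: the coset $\langle f^*(\vartheta), m'_i\rangle + \Z g$ is not contained in \emph{one} translate of $X$, hence meets $G \smallsetminus X$. But the meridians of a genus-$n$ handlebody span a Lagrangian of rank $n$, not $2n-1$: the classes $m_0, m'_1, \ldots, m'_{n-1}$ already form a basis, and each $m''_i$ is a linear combination of $m_0$ and $m'_i$ (from the trivalent graph, the three meridians at each vertex sum to zero, so $m''_i = m_0 - m'_i$ up to sign). Thus you have only $n-1$ twist parameters $k_i$, and for each $i$ the single choice of $k_i$ must make \emph{both} $\langle f^*(\vartheta), m'_i + k_i m_0 \rangle$ and $\langle f^*(\vartheta), m_0 - (m'_i + k_i m_0) \rangle$ land in $G \smallsetminus X$. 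This is a simultaneous avoidance of \emph{two} translates of $X$ by the coset $\Z g$, and this is exactly the content of the doubly-generic hypothesis $\langle g \rangle \not\subset (g_1 + X) \cup (g_2 + X)$.

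The paper's proof makes this explicit: it shows by contradiction that for each $i$ there exists $k_i$ with both $\langle f^*(\vartheta), m'_i + k_i m_0 \rangle$ and $\langle f^*(\vartheta), m'_i + (k_i-1) m_0 \rangle$ generic (the two consecutive values correspond to $m'_i$ and $m''_i$ after the basis change), and then invokes transitivity of the mapping class group on integral Lagrangian bases to realize the map $m_0 \mapsto m_0$, $m'_i \mapsto m'_i + k_i m_0$ by an automorphism of $\varSigma_n$. Your argument, as written, would only require ordinary genericity of $g$ and would not establish the claim; once you recognize the dependence $m''_i = m_0 - m'_i$, the correct two-translate argument falls out immediately.
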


\begin{proof}
 The argument can be directly adapted from the proof of Proposition 6.5 of \cite{BCGP16}. Indeed, let us consider a positive diffeomorphism $f : \varSigma_n \rightarrow \varSigma$ sending the embedded closed oriented curve $m_1$ to $\gamma$, and 
 let us consider the integral basis 
 \[
  \{ m_0,m'_1,\ldots,m'_{n-1} \} \subset H_1(\varSigma_n;\Z)
 \]
 of $\ker(i_{\partial H_n *}) \subset H_1(\varSigma_n;\R)$ for the inclusion $i_{\partial H_n} : \partial H_n \hookrightarrow H_n$. Then, we claim that, for every integer $1 \leqslant i < n$, there exists some $k_i \in \Z$ such that 
 \[
  \langle f^*(\vartheta),m'_i + k_i \cdot m_0 \rangle, \langle f^*(\vartheta),m'_i + (k_i-1) \cdot m_0 \rangle \in G \smallsetminus X.
 \]
 This can be proven by contradiction. Indeed, suppose there exists some integer $1 \leqslant i < n$ such that either $\langle f^*(\vartheta),m'_i + k \cdot m_0 \rangle \in X$ or $\langle f^*(\vartheta),m'_i + (k-1) \cdot m_0 \rangle \in X$ for every $k \in \Z$. Then, since
 \begin{align*}
  k \langle f^*(\vartheta),m_0 \rangle &= - \langle f^*(\vartheta),m'_i \rangle + \langle f^*(\vartheta),m'_i + k \cdot m_0 \rangle \\
  & = - \langle f^*(\vartheta),m'_i - m_0 \rangle + \langle f^*(\vartheta),m'_i + (k-1) \cdot m_0 \rangle,
 \end{align*} 
 this means 
 \[
  k \langle f^*(\vartheta),m_0 \rangle \in (- \langle f^*(\vartheta),m'_i \rangle + X) \cup (- \langle f^*(\vartheta),m'_i - m_0 \rangle + X) 
 \]
 for every $k \in \Z$. This is a contradiction, because $\langle f^*(\vartheta),m_0 \rangle$ is doubly generic. Then, let us consider the integral basis 
 \[
  \{ m_0,m'_1 + k_1 \cdot m_0,\ldots,m'_{n-1} + k_{n-1} \cdot m_0 \} \subset H_1(\varSigma_n;\Z)
 \]
 of $\ker(i_{\partial H_n *}) \subset H_1(\varSigma_n;\R)$.
 It is clear from the definition that
 \[
  \langle f^*(\vartheta),m'_i + k_i \cdot m_0 \rangle, \langle f^*(\vartheta),m_0 - (m'_i + k_i \cdot m_0) \rangle \in G \smallsetminus X
 \]
 for every integer $1 \leqslant i < n$. Now let $f' : \varSigma_n \rightarrow \varSigma_n$ be an automorphism of $\varSigma_n$ satisfying $f'_*(m_0) = m_0$ and $f'_*(m'_i) = m'_i + k_i \cdot m_0$ for every integer $1 \leqslant i < n$, which exists because the mapping class group of $\varSigma_n$ acts transitively on the set of integral bases of any Lagrangian subspace of $H_1(\varSigma_n;\R)$. Then, we can set $(f \circ f')^*(\vartheta)$ is generic.
\end{proof}

If $\calG_n$ is generic with respect to a cohomology class $\vartheta_n \in H^1(\varSigma_n;\R)$, then we define the set $\rmCol(\calE_n,\vartheta_n)$ of fundamental $\vartheta_n$-compatible $\calC$-colorings of $\calE_n$ to be
\[
 \left\{ V : \calE_n \rightarrow \calC \Biggm| 
 \begin{array}{l}
  V(e_i) \in \Theta(\calC_{\langle \vartheta_n,m_i \rangle}), \\
  V(e'_i) \in \Theta(\calC_{\langle \vartheta_n,m'_i \rangle}), \\
  V(e''_i) \in \Theta(\calC_{\langle \vartheta_n,m''_i \rangle})
 \end{array} \Forall 1 \leqslant i < n \right\}.
\]

\begin{proposition}
 If $\bbSigma = (\varSigma,\varnothing,\vartheta,\calL)$ is a generic surface of genus $n > 1$, and if $f : \varSigma_n \rightarrow \varSigma$ is a generic identification, then 
 \[
  \bbV^Z_{\calC}(\bbSigma) \cong \bigoplus_{V \in \rmCol(\calE_n,f^*(\vartheta))} \bigotimes_{v \in \calV_n} \bbH_v(V).
  \]
\end{proposition}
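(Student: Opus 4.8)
The plan is to decompose a generic surface $\bbSigma$ of genus $n$ into a composition of tensor products of the generating $1$-morphisms studied in the previous sections, in a way that mirrors the trivalent graph $\calG_n$, and then read off the universal $\PGr$-graded vector space by repeatedly applying the characterization results already established. First I would fix a generic identification $f : \varSigma_n \rightarrow \varSigma$, so that $\bbV_{\calC}(\bbSigma) \cong \bbV_{\calC}(\bbSigma_n^{f^*(\vartheta)})$ where $\bbSigma_n^{f^*(\vartheta)}$ denotes the closed $1$-morphism $(\varSigma_n,\varnothing,f^*(\vartheta),\calL_n)$ for a suitable Lagrangian; since both sides are computed by $\CGP_{\calC}$, the genericity of $f^*(\vartheta)$ is exactly what guarantees that all the decompositions below involve only admissible pieces, and the independence from $\calL$ follows from the fact that $\bbV_{\calC}$ forgets Lagrangians on closed surfaces.

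Next I would carry out the explicit decomposition: the surface $\varSigma_n$, being the boundary of a tubular neighborhood of $\calG_n$, is cut by the meridian spheres $m_0, m'_1, \dots, m'_{n-1}$ into a union of pairs of pants, one for each vertex of $\calG_n$, glued along circles corresponding to the edges. Up to isomorphism in $\bfadCob_{\calC}$, this exhibits $\bbSigma_n^{f^*(\vartheta)}$ as a composite built from $V$-colored $2$-discs $\bbD^2_V$ (capping off), dual $2$-discs $\overline{\bbD^2_V}$, $(g,g')$-colored $2$-pants $\bbP^2_{g,g'}$, dual $2$-pants $\overline{\bbP^2_{g,g'}}$, and $(g,h)$-colored $2$-cylinders $(\bbI \times \bbS^1)_{g,h}$, where the colors $g$ are the evaluations of $f^*(\vartheta)$ against the relevant meridians. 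This is the combinatorial heart of the argument and is a direct generalization of the genus $1$ decomposition used in the two preceding propositions; one has to be careful to orient edges consistently with $\varepsilon_v$ and to keep track of which meridians force generic versus potentially critical colorings — but by hypothesis every $m_0, m'_i, m''_i$ is generic, so every dual $2$-pant appearing is of the generic type handled by Proposition \ref{P:coproduct_generic}.

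Then I would feed this decomposition through the characterization diagrams. Applying Propositions \ref{P:counit}, \ref{P:product}, \ref{P:coproduct_generic}, and \ref{P:2-cylinder_isomorphism} together with the monoidality $2$-transformation $\bfmu$ of Theorem \ref{T:symmetric_monoidality}, the functor $\bbA_{\calC}$ sends $\bbD^2_V$ to a constant, $\bbP^2_{g,g'}$ to the tensor product functor $\bbnabla_{g,g'}$, $\overline{\bbP^2_{g,g'}}$ to the coproduct functor $\bbDelta_{g,g'}$ (which on objects splits $V$ into $\bigoplus_{i} (V \otimes V_i^*, V_i)$), $(\bbI \times \bbS^1)_{g,h}$ to the rescaling functor $\bbI_{g,h}$, and $\overline{\bbD^2_V}$ to $\bbHom_{\calC}(V,\cdot)$. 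Composing these along the graph, each internal edge $e$ contributes a direct sum over $\Theta(\calC_{\langle f^*(\vartheta),m_e \rangle})$ of its simple objects (this is where $\rmCol(\calE_n,f^*(\vartheta))$ arises, as the set of compatible assignments of such simples to all edges), and each vertex $v$ contributes a factor $\bbHom_{\calC}(\one, \bigotimes_{e \in \calE_v} V(e)^{\varepsilon_v(e)}) = \bbH_v(V)$ coming from the internal-Hom/coend structure of $\bbDelta$ and the final $\bbHom_{\calC}(\one,\cdot)$ from the capping disc. Since the generating pieces are strictly monoidal-compatible and the diagrams are natural isomorphisms, assembling them yields $\bbV_{\calC}(\bbSigma) \cong \bigoplus_{V \in \rmCol(\calE_n,f^*(\vartheta))} \bigotimes_{v \in \calV_n} \bbH_v(V)$.

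The main obstacle I anticipate is bookkeeping rather than conceptual: one must verify that the particular composite of $2$-discs, $2$-pants, dual $2$-pants, and $2$-cylinders one writes down really is isomorphic to $\bbSigma_n^{f^*(\vartheta)}$ in $\bfadCob_{\calC}$ (including matching the $G$-colorings on all the gluing circles and the joining arcs between base points, which is where the $2$-cylinders $(\bbI \times \bbS^1)_{g,h}$ absorb the remaining cohomological data), and that the coherence isomorphisms $\bfmu$, the associativity of $\bbnabla$, and the naturality squares all paste together without introducing spurious scalars or degree shifts. A secondary point to check is that genericity of $f^*(\vartheta)$ — which by Lemma \ref{L:generic_spine} can in practice be arranged from a single doubly generic curve — indeed holds simultaneously for all the meridians $m_0, m'_i, m''_i$ involved in the chosen spine, so that no critical $2$-pants (which would require the more delicate Proposition \ref{P:coproduct_critical}) are needed. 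Once these compatibility checks are in place, the isomorphism of $\PGr$-graded vector spaces is immediate from the stated propositions.
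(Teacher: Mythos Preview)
Your proposal is correct and follows essentially the same approach as the paper: decompose the generic surface along the meridians of the trivalent graph $\calG_n$ into generating $1$-morphisms (discs, pants, dual pants, cylinders), then apply Propositions \ref{P:counit}, \ref{P:product}, \ref{P:coproduct_generic}, and \ref{P:2-cylinder_isomorphism} to read off the answer. The paper writes the decomposition as a specific linear composite involving auxiliary $1$-morphisms $\bbJ_{g'_i,h'_i}$ (each adding one handle), but this is exactly the pants-decomposition you describe, packaged slightly differently; the key bookkeeping issues you flag (matching $G$-colorings on gluing circles, absorbing the longitudinal evaluations into $2$-cylinders, ensuring all dual pants are generic) are precisely what the paper's terse ``we can decompose $\bbSigma$, up to isomorphism, as\ldots'' is hiding.
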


\begin{proof}
 Let us set
 \[
  g_0 := \langle f^*(\vartheta),m_0 \rangle, \quad
  h_0 := \langle f^*(\vartheta),\ell_0 \rangle, \quad
  g'_i := \langle f^*(\vartheta),m'_i \rangle, \quad
  h'_i := \langle f^*(\vartheta),\ell'_i \rangle,
 \]
 where $\ell_0$ denotes the homology class determined by the cycle 
 \[
  \sum_{i=1}^{n-1} e_i + e'_i,
 \]
 and where $\ell'_i$ denotes the homology class determined by the cycle $e'_i - e''_i$ using the blackboard framing for every integer $1 \leqslant i < n$. Then, we can decompose $\bbSigma$, up to isomorphism, as
 \[
  \overline{\bbD^2} \circ \overline{\bbP^2_{-g_0,g_0}} \circ \left( (\bbI \times \bbS^1)_{-g_0,-h_0} \disjun \left( \bbJ_{g'_{n-1},h'_{n-1}} \circ \ldots \circ \bbJ_{g'_1,h'_1} \right) \right) \circ \bbP^2_{-g_0,g_0} \circ \bbD^2
 \]
 for the $1$-morphisms $\bbJ_{g'_i,h'_i} : \bbS^1_{g_0} \rightarrow \bbS^1_{g_0}$ of $\bfadCob_{\calC}$ represented Figure \ref{F:genus} for every integer $1 \leqslant i < n$.
 \begin{figure}[tb]\label{F:genus}
  \centering
  \includegraphics{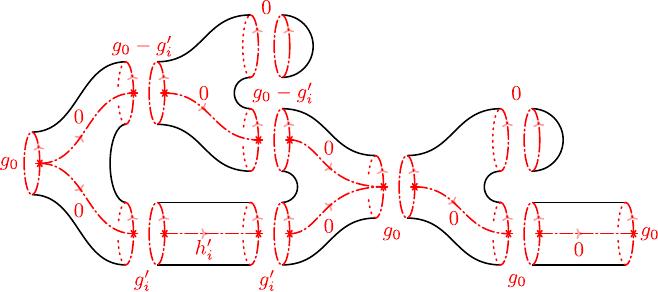}
  \caption{The $1$-morphism $\bbJ_{g'_i,h'_i}$ of $\bfadCob_{\calC}$.} 
 \end{figure}
 Therefore, thanks to Propositions \ref{P:counit}, \ref{P:product}, \ref{P:coproduct_generic}, and \ref{P:2-cylinder_isomorphism}, we can conclude.
\end{proof}

\appendix
%
%
%

\chapter{Unrolled quantum groups}\label{A:unrolled_quantum_groups}

In this appendix we construct the main families of examples of relative modular categories, which come from finite-dimensional weight representations of unrolled quantum groups at roots of unity. More precisely, we specialize the deformation parameter ${q}$ to ${e^{\frac{2 \pi i}{r}}}$ for some integer ${r}$ called the \textit{level} of the theory. We distinguish two cases: when ${r}$ is even, the representation theory is slightly more complicated, and explicit examples have been constructed so far only for the Lie algebra ${\sltwo}$. The resulting quantum invariants and TQFTs are very interesting, as it is this particular family that contains the abelian Reidemeister torsion. When ${r}$ is odd, we have a relative modular category associated with every simple Lie algebra ${\frakg}$. Many properties of these quantum groups are collected in \cite{CGP14}, \cite{CGP15}, \cite{BCGP16}, \cite{D15}, and \cite{GP13}.

\section{Even roots of unity}

For this section we will focus on the Lie algebra ${\sltwo}$. If ${q}$ is a formal parameter, then for every integer ${k \geqslant \ell \geqslant 0}$ we set
\[
 \{ k \} := q^k - q^{-k}, \quad [k] := \frac{\{ k \}}{\{ 1 \}}, \quad [k]! := [k][k-1]\cdots[1], \quad \sqbinom{k}{\ell} := \frac{[k]!}{[\ell]![k-\ell]!}.
\]
Let ${\calU_q \sltwo}$ denote the \textit{quantum group of ${\sltwo}$}, which is the ${\C(q)}$-algebra with generators ${K,K^{-1},E,F}$ and relations
\begin{gather*}
 K K^{-1} = K^{-1} K = 1, \quad K E K^{-1} = q^2 \cdot E, \quad K F K^{-1} = q^{-2} \cdot F, \\
 [E,F] = \frac{K - K^{-1}}{q - q^{-1}}.
\end{gather*}
Then ${\calU_q \sltwo}$ can be made into a Hopf algebra by setting
\begin{align*}
 \Delta(K) &= K \otimes K, & \varepsilon(K) &= 1, & S(K) &= K^{- 1}, \\
 \Delta(E) &= E \otimes K + 1 \otimes E, & \varepsilon(E) &= 0, & S(E) &= -E K^{-1}, \\
 \Delta(F) &= F \otimes 1 + K^{-1} \otimes F, & \varepsilon(F) &= 0, & S(F) &= - K F.
\end{align*}

Let us fix an even integer ${r \geqslant 4}$ with the further condition that ${r \not\equiv 0}$ modulo ${8}$, and let us specialize ${q}$ to ${e^{\frac{2 \pi i}{r}}}$. The \textit{unrolled quantum group of ${\sltwo}$} is the ${\C}$-algebra ${U^H_q \sltwo}$ obtained from ${\calU_q \sltwo}$ by adding the generator ${H}$ and the relations
\[
 [H,K] = 0, \quad [H,E] = 2 \cdot E, \quad [H,F] = - 2 \cdot F, \quad E^{\frac{r}{2}} = F^{\frac{r}{2}} = 0.
\]
Then ${U^H_q \sltwo}$ can be made into a Hopf algebra by setting
\[
 \Delta(H) =  H \otimes 1 + 1 \otimes H, \quad \varepsilon(H) = 0, \quad S(H) = -H,
\]
and we denote with ${U^H_q \frakh}$, with ${U^H_q \frakn_+}$, and with ${U^H_q \frakn_-}$ the subalgebras of ${U^H_q \sltwo}$ generated by ${H}$, by ${E}$, and by ${F}$ respectively.
Remark that ${U^H_q \sltwo}$ is a pivotal Hopf algebra, with pivotal element ${K^{\frac{r}{2} + 1}}$.

Let us move on to discuss the representation theory of ${U^H_q \sltwo}$. For every ${z \in \C}$ let us introduce the notation
\[
 q^z := e^{\frac{z 2 \pi i}{r}}, \quad \{ z \} := q^z - q^{-z}.
\]
A finite-dimensional ${U^H_q \sltwo}$-module ${V}$ is a \textit{weight module} if it is a semisimple ${U^H_q \frakh}$-module and if for every ${\lambda \in \C}$ and every ${v \in V}$ we have
\[
 \rho_V(H)(v) = \lambda \cdot v \quad \Rightarrow \quad \rho_V(K)(v) = q^{\lambda} \cdot v.
\]
If we denote with ${\calC}$ the full subcategory of the linear category of finite-dimensional ${U^H_q \sltwo}$-modules whose objects are weight modules, then ${\calC}$ can be made into a ribbon linear category as follows: if ${V}$ and ${V'}$ are objects of $\calC$ their braiding morphism is given by 
\[
 \begin{array}{rccc}
  c_{V,V'} : & V \otimes V' & \rightarrow & V' \otimes V \\
  & v \otimes v' & \mapsto & \tau_{V,V'}(R_{0,V,V'}((\rho_V \otimes \rho_{V'})(\Theta)(v \otimes v')))
 \end{array}
\]
for the linear maps ${R_{0,V,V'} : V \otimes V' \rightarrow V \otimes V'}$ and ${\tau_{V,V'} : V \otimes V' \rightarrow V' \otimes V}$ determined by
\[
 R_{0,V,V'}(v \otimes v') := q^{\frac{\lambda \lambda'}{2}} \cdot v \otimes v', \quad \tau_{V,V'}(v \otimes v') := v' \otimes v
\]
for all ${v \in V}$, ${v' \in V'}$ satisfying
\[
 \rho_V(H)(v) = \lambda \cdot v, \quad \rho_{V'}(H)(v') = \lambda' \cdot v',
\]
and for the element $\Theta \in U^H_q \frakn_+ \otimes U^H_q \frakn_-$ given by
\[
 \Theta := \sum_{b=0}^{\frac{r}{2}-1} \frac{\{ 1 \}^{b}}{[b]!} q^{\frac{b(b-1)}{2}} \cdot E^b \otimes F^b.
\]

If we set ${G := \C / 2 \Z}$, then ${\calC}$ supports the structure of a ${G}$-category: indeed, for every ${\gamma \in \C}$ we can define the homogeneous subcategory ${\calC_{[\gamma]}}$ to be the full subcategory of ${\calC}$ with objects given by weight modules whose weights are all of the form ${\gamma + 2k}$ for some ${k \in \Z}$. Furthermore, if we set 
\[
 \bar{r} := \frac{r}{\gcd \left( 2, \frac{r}{2} \right)},
\]
then we can define ${\PGr := \bar{r}\Z}$, and we have a free realization ${\sigma : \PGr \rightarrow \calC_{[0]}}$ mapping every ${k \in \PGr}$ to the object ${\sigma(k) \in \calC_{[0]}}$ given by the vector space ${\C}$ with ${U^H_q \sltwo}$-action specified by
\[
 \rho_{\sigma(k)}(H)(1) := k, \quad \rho_{\sigma(k)}(E)(1) := 0, \quad \rho_{\sigma(k)}(F)(1) := 0.
\]
Now the bilinear map 
\[
 \begin{array}{rccc}
  \psi : & G \times \PGr & \rightarrow & \C^* \\
  & ([\gamma],k) & \mapsto & q^{\gamma k}
 \end{array}
\]
satisfies ${c_{\sigma(k),V} \circ c_{V,\sigma(k)} = \psi([\gamma],k) \cdot \id_{V \otimes \sigma(k)}}$ for every ${\gamma \in \C}$, every ${V \in \calC_{[\gamma]}}$, and every ${k \in \PGr}$. If we consider the critical set ${X := \Z / 2 \Z}$ then, as proved in Theorem 5.2 of \cite{CGP15}, the category ${\calC_{[\gamma]}}$ is semisimple for every ${[\gamma] \in G \smallsetminus X}$. Therefore, the last ingredient we are missing is a projective trace. In order to define it, let us introduce typical ${U^H_q \sltwo}$-modules. First of all, we say a vector ${v_+}$ of a weight ${U^H_q\sltwo}$-module ${V}$ is a \textit{highest weight vector} if ${\rho_V(E)(v_+) = 0}$. Analogously, we say a vector ${v_-}$ of ${V}$ is a \textit{lowest weight vector} if ${\rho_V(F)(v_-) = 0}$. Then for every ${\lambda \in \C}$ there exists a simple weight ${U^H_q\sltwo}$-module ${V_{\lambda}}$ featuring a heighest weight vector of weight ${\lambda}$. Remark that this highest weight notation for simple ${U^H_q\sltwo}$-modules agrees with the one used in \cite{GPT09} and in \cite{GP13}, but differs from the middle weight notation used in \cite{CGP14} and in \cite{CGP15}. This module is unique up to isomorphism, and every simple ${U^H_q\sltwo}$-module is of this form, see Lemma 5.3 of \cite{CGP15}. Every such module also has a lowest weight vector, and it is called \textit{typical} if its lowest weight is given by ${\lambda - 2(r - 1)}$. If we consider the set ${\ddot{\C} := (\C \smallsetminus \Z) \cup (\{ r-1 \} + rZ)}$ then ${V_{\gamma}}$ is typical if and only if ${\gamma \in \ddot{\C}}$. Remark that if ${\gamma \in \C}$ satisfies ${[\gamma] \not\in X}$, then ${V_{\gamma}}$ is typical. Furthermore, thanks to Theorem 5.2 of \cite{CGP15} and Corollary 31 of \cite{GPT09}, every typical ${U^H_q\sltwo}$-module is projective and ambidextrous. Then, by combining Theorem 3.3.2 of \cite{GKP11} with Lemma 17 of \cite{GPV13}, there exists a non-zero trace on the ideal ${\Proj(\calC)}$ of projective objects of ${\calC}$ which is unique up to scalar. Here is the normalization we choose: for every ${\gamma \in \ddot{\C}}$ we set the projective dimension of ${V_{\gamma}}$ to be
\[
 \rmd(V_{\gamma}) := \frac{r \left\{ \gamma - (r - 1) \right\}}{\left\{ r \left( \gamma - (r - 1) \right) \right\}}.
\]

\begin{proposition}
 ${\calC}$ is a modular ${G}$-category relative to ${(\PGr,X)}$.
\end{proposition}

A proof of the relative pre-modularity of ${\calC}$ is provided by Section 6.3 of \cite{CGP14}, while the relative modularity condition is checked in Lemma A.4 of \cite{BCGP16}.

\section{Odd roots of unity}

Let ${\frakg}$ be a simple Lie algebra of rank ${n}$ and dimension ${2N + n}$, let ${B}$ be its Killing form, let ${\frakh}$ be a Cartan subalgebra of ${\frakg}$, let ${\Phi}$ be the corresponding root system, let ${\Phi_+}$ be a choice of a set of positive roots of ${\frakg}$, and let ${\{ \alpha_1, \ldots, \alpha_n \}}$ be an ordering of its set of simple roots. Let ${A = ( a_{ij} )_{1 \leqslant i,j \leqslant n}}$ be the corresponding Cartan matrix, which is the integral matrix given by
\[
 a_{ij} := \frac{2 B^*(\alpha_i,\alpha_j)}{B^*(\alpha_i,\alpha_i)},
\]
where ${B^*}$ is the symmetric bilinear form on ${\frakh^*}$ determined by the restriction of ${B}$ to ${\frakh}$ under the isomorphism which identifies a vector ${H \in \frakh}$ with the linear form ${B(H,\cdot) \in \frakh^*}$, and let ${\{ H_1,\ldots, H_n \}}$ be the basis of ${\frakh}$ determined by ${\alpha_j (H_i) = a_{ij}}$ for all indices ${1 \leqslant i,j \leqslant n}$. For every ${\alpha \in \Phi_+}$ we set
\[
 d_{\alpha} := \frac{B^*(\alpha,\alpha)}{\min \{ B^*(\alpha_i, \alpha_i) \mid 1 \leqslant i \leqslant n \}}
\]
and for every integer ${1 \leqslant i \leqslant n}$ we use the short notation ${d_i := d_{\alpha_i}}$. We denote with ${\langle \cdot,\cdot \rangle}$ the symmetric bilinear form on ${\frakh^*}$ determined by ${\langle \alpha_i,\alpha_j \rangle = d_i a_{ij}}$ for all indices ${1 \leqslant i,j \leqslant n}$, and we denote with ${\lambda_1,\ldots,\lambda_n}$ the corresponding fundamental dominant weights, which are the vectors of ${\frakh^*}$ determined by the condition ${\langle \lambda_i,\alpha_j \rangle = d_i \delta_{ij}}$ for all indices ${1 \leqslant i,j \leqslant n}$. We denote with ${\Lambda_R}$ the root lattice, which is the subgroup of ${\frakh^*}$ generated by simple roots, and we denote with ${\Lambda_W}$ the weight lattice, which is the subgroup of ${\frakh^*}$ generated by fundamental dominant weights. If ${q}$ is a formal parameter, then for every ${\alpha \in \Phi_+}$ we set ${q_{\alpha} := q^{d_{\alpha}}}$, for all integer ${k \geqslant \ell \geqslant 0}$ we set
\begin{gather*}
 \{ k \}_{\alpha} := q_{\alpha}^k - q_{\alpha}^{-k}, \quad [k]_{\alpha} := \frac{\{ k \}_{\alpha}}{\{ 1 \}_{\alpha}},
 \quad [k]_{\alpha}! := [k]_{\alpha}[k-1]_{\alpha}\cdots[1]_{\alpha}, \\
 \sqbinom{k}{\ell}_{\alpha} := \frac{[k]_{\alpha}!}{[\ell]_{\alpha}![k-\ell]_{\alpha}!},
\end{gather*}
and for every integer ${1 \leqslant i \leqslant n}$ we use the short notation 
\[
 q_i := q_{\alpha_i}, \quad \{ k \}_i := \{ k \}_{\alpha_i}, \quad [k]_i := [k]_{\alpha_i},
 \quad [k]_i! := [k]_{\alpha_i}!, \quad
 \sqbinom{k}{\ell}_i := \sqbinom{k}{\ell}_{\alpha_i}.
\]
Let ${\calU_q \frakg}$ denote the \textit{quantum group of ${\frakg}$}, which is the ${\C(q)}$-algebra with generators
\[
 \{ K_i,K_i^{-1},E_i,F_i \mid 1 \leqslant i \leqslant n \}
\]
and relations
\begin{gather*}
 K_i K_i^{-1} = K_i^{-1} K_i = 1, \quad [K_i,K_j] = 0, \\
 K_i E_{j} K_i^{-1} = q_i^{a_{ij}} \cdot E_{j}, \quad
 K_i F_{j} K_i^{-1} = q_i^{- a_{ij}} \cdot F_{j}, \\
 [E_i,F_j] = \delta_{ij} \cdot \frac{K_i - K_i^{-1}}{q_i-q_i^{-1}}
\end{gather*}
for all indices ${1 \leqslant i,j \leqslant n}$ and
\begin{gather*}
 \displaystyle \sum_{k=0}^{1 - a_{ij}} (-1)^{k} \sqbinom{1-a_{ij}}{k}_i \cdot E_{i}^{k} E_{j} E_{i}^{1-a_{ij}-k} = 0, \\
 \displaystyle \sum_{k=0}^{1 - a_{ij}} (-1)^{k} \sqbinom{1-a_{ij}}{k}_i \cdot F_{i}^{k} F_{j} F_{i}^{1-a_{ij}-k} = 0\phantom{,}
\end{gather*}
for all indices ${1 \leqslant i,j \leqslant n}$ satisfying ${i \neq j}$. Then ${\calU_q \frakg}$ can be made into a Hopf algebra by setting
\begin{align*}
 \Delta(K_i) &= K_i \otimes K_i, & \varepsilon(K_i) &= 1, & S(K_i) &= K_i^{- 1}, \\
 \Delta(E_i) &= E_i \otimes K_i + 1 \otimes E_i, & \varepsilon(E_i) &= 0, & S(E_i) &= -E_i K_i^{-1}, \\
 \Delta(F_i) &= F_i \otimes 1 + K_i^{-1} \otimes F_i, & \varepsilon(F_i) &= 0, & S(F_i) &= - K_i F_i
\end{align*}
for every integer ${1 \leqslant i \leqslant n}$. Let ${W}$ be the Weyl group of ${\frakg}$ associated with ${\frakh}$, which is the subgroup of ${\GL(\frakh^*)}$ generated by reflections
\[
 \begin{array}{rccc}
  s_i : & \frakh^* & \rightarrow & \frakh^* \\
  & \alpha_j & \mapsto & \alpha_j - a_{ij} \cdot \alpha_i
 \end{array}
\]
for every integer ${1 \leqslant i \leqslant n}$, and let ${w_0 \in W}$ be the unique element corresponding to a word of maximal length in the generators. The choice of a decomposition ${w_0 = s_{i_1} \circ \cdots \circ s_{i_N}}$ determines a total order on the set of positive roots
\[
 \Phi_+ = \{ \alpha_{i_1},s_{i_1}(\alpha_{i_2}),\ldots,(s_{i_1} \circ \cdots \circ s_{i_{N-1}})(\alpha_{i_N}) \}. 
\]
For every integer ${1 \leqslant i \leqslant n}$ let us consider the automorphism ${T_i}$ of ${\calU_q \frakg}$ determined by 
\begin{align*}
 T_i(K_j) &:= K_j K_i^{-a_{ij}}, \\
 T_i(E_j) &:= 
 \begin{cases}
  - F_iK_i \phantom{- K_i^{-1}E_i} \hspace{150pt} & i = j, \\
  \displaystyle \sum_{k=0}^{-a_{ij}} (-1)^{k} \frac{q_i^{a_{ij} + k}}{[k]_i![-a_{ij}-k]_i!} \cdot E_i^{k} E_j E_i^{-a_{ij}-k} & i \neq j,
 \end{cases} \\
 T_i(F_j) &:= 
 \begin{cases}
  - K_i^{-1}E_i \phantom{- F_iK_i} \hspace{150pt} & i = j, \\
  \displaystyle \sum_{k=0}^{-a_{ij}} (-1)^{-a_{ij}-k} \frac{q_i^{k}}{[k]_i![-a_{ij}-k]_i!} \cdot F_i^{k} F_j F_i^{-a_{ij}-k} & i \neq j.
 \end{cases}
\end{align*}
Then for every integer ${1 \leqslant k \leqslant N}$ we set 
\[
 \beta_k := (s_{i_1} \circ \cdots \circ s_{i_{k-1}})(\alpha_{i_k}) \in \Phi_+
\]
and
\[
 E_{\beta_k} := (T_{i_1} \circ \cdots \circ T_{i_{k-1}})(E_{i_k}) \in \calU_q \frakg, \quad 
 F_{\beta_k} := (T_{i_1} \circ \cdots \circ T_{i_{k-1}})(F_{i_k}) \in \calU_q \frakg.
\]

Let us fix an odd integer ${r \geqslant 3}$ with the further condition that ${r \not\equiv 0}$ modulo ${3}$ if ${\frakg = \frakg_2}$, and let us specialize ${q}$ to ${e^{\frac{2 \pi i}{r}}}$. The \textit{unrolled quantum group of ${\frakg}$} is the ${\C}$-algebra ${U^H_q \frakg}$ obtained from ${\calU_q \frakg}$ by adding generators
\[
 \{ H_i \mid 1 \leqslant i \leqslant n \}
\]
and relations
\[
 [H_i,H_j] = [H_i,K_j] = 0, \quad [H_i,E_j] = a_{ij} \cdot E_j, \quad [H_i,F_j] = - a_{ij} \cdot F_j, \quad E_{\alpha}^r = F_{\alpha}^r = 0
\]
for all indices ${1 \leqslant i,j \leqslant n}$ and all positive roots ${\alpha \in \Phi_+}$. Then ${U^H_q \frakg}$ can be made into a Hopf algebra by setting
\[
 \Delta(H_i) =  H_i \otimes 1 + 1 \otimes H_i, \quad \varepsilon(H_i) = 0, \quad S(H_i) = -H_i
\]
for every integer ${1 \leqslant i \leqslant n}$, and we denote with ${U^H_q \frakh}$, with ${U^H_q \frakn_+}$, and with ${U^H_q \frakn_-}$ the subalgebras of ${U^H_q \frakg}$ generated by ${\{ H_i \mid 1 \leqslant i \leqslant n \}}$, by ${\{ E_i \mid 1 \leqslant i \leqslant n \}}$, and by ${\{ F_i \mid 1 \leqslant i \leqslant n \}}$ respectively. Remark that ${U^H_q \frakg}$ is a pivotal Hopf algebra, with pivotal element ${K_{2 \cdot \rho}^{1-r}}$ determined by
\[
 \rho := \frac 12 \cdot \sum_{k=1}^N \beta_k \in \Lambda_W,
\]
where for every
\[
 \lambda = \sum_{i=1}^n \ell_i \cdot \alpha_i \in \Lambda_R
\]
we use the notation
\[
 K_{\lambda} := \prod_{i=1}^n K_i^{\ell_i}.
\]

Let us move on to discuss the representation theory of ${U^H_q \frakg}$. For every ${z \in \C}$ let us introduce the notation
\[
 q^z := e^{\frac{z 2 \pi i}{r}}, \quad \{ z \} := q^z - q^{-z}.
\]
A finite-dimensional ${U^H_q \frakg}$-module ${V}$ is a \textit{weight module} if it is a semisimple ${U^H_q \frakh}$-module and if for every weight ${\lambda \in \frakh^*}$ and every vector ${v \in V}$ we have
\[
 \rho_V(H_i)(v) = \lambda(H_i) \cdot v \quad \Forall 1 \leqslant i \leqslant n \quad \Rightarrow \quad \rho_V(K_i)(v) = q_i^{\lambda(H_i)} \cdot v \quad \Forall 1 \leqslant i \leqslant n,
\]
where we are identifying ${\frakh}$ with the corresponding linear subspace of ${U^H_q \frakh}$ in the obvious way. If we denote with $\calC$ the full subcategory of the linear category of finite-dimensional $U^H_q \frakg$-modules whose objects are weight modules, then ${\calC}$ can be made into a ribbon linear category as follows: first of all, a pivotal element is given by $K_{2 \cdot \rho}^{1-r}$, where the choice of the exponent is explained in Remark 4 of \cite{GP18}. Furthermore, if $V$ and $V'$ are objects of $\calC$, their braiding morphism is given by 
\[
 \begin{array}{rccc}
  c_{V,V'} : & V \otimes V' & \rightarrow & V' \otimes V \\
  & v \otimes v' & \mapsto & \tau_{V,V'}(R_{0,V,V'}((\rho_V \otimes \rho_{V'})(\Theta)(v \otimes v')))
 \end{array}
\]
for the linear maps ${R_{0,V,V'} : V \otimes V' \rightarrow V \otimes V'}$ and ${\tau_{V,V'} : V \otimes V' \rightarrow V' \otimes V}$ determined by
\[
 R_{0,V,V'}(v \otimes v') := q^{\langle \lambda,\lambda' \rangle} \cdot v \otimes v', \quad \tau_{V,V'}(v \otimes v') := v' \otimes v
\]
for all ${v \in V}$, ${v' \in V'}$ satisfying
\[
 \rho_V(H_i)(v) = \lambda(H_i) \cdot v, \quad \rho_{V'}(H_i)(v') = \lambda'(H_i) \cdot v'
\]
for every integer ${1 \leqslant i \leqslant n}$, and for the element $\Theta \in U^H_q \frakn_+ \otimes U^H_q \frakn_-$ given by
\[
 \Theta := \sum_{b_1,\ldots,b_N=0}^{r-1} 
 \left( \prod_{k=1}^N \frac{\{ 1 \}_{\beta_k}^{b_k}}{[b_k]_{\beta_k}!} q_{\beta_k}^{\frac{b_k(b_k-1)}{2}} \right) \cdot 
 \left( \prod_{k=1}^N E_{\beta_k}^{b_k} \right) \otimes 
 \left( \prod_{k=1}^N F_{\beta_k}^{b_k} \right).
\]
Then Theorem 4 of \cite{GP18} shows that $\calC$ is a ribbon category.

If we set ${G := \frakh^* / \Lambda_R}$, then ${\calC}$ supports the structure of a ${G}$-category: indeed, for every ${\gamma \in \frakh^*}$ we can define the homogeneous subcategory ${\calC_{[\gamma]}}$ to be the full subcategory of ${\calC}$ with objects given by modules whose weights are all of the form ${\gamma + \lambda}$ for some ${\lambda \in \Lambda_R}$. Furthermore, if we set ${\PGr := \Lambda_R \cap (r \cdot \Lambda_W)}$, then we have a free realization ${\sigma : \PGr \rightarrow \calC_{[0]}}$ mapping every ${\kappa \in \PGr}$ to the object ${\sigma(\kappa) \in \calC_{[0]}}$ given by the vector space ${\C}$ with ${U^H_q \frakg}$-action specified by
\[
 \rho_{\sigma(\kappa)}(H_i)(1) := \kappa(H_i), \quad \rho_{\sigma(\kappa)}(E_i)(1) := 0, \quad \rho_{\sigma(\kappa)}(F_i)(1) := 0
\]
for every integer ${1 \leqslant i \leqslant n}$. 
Now the bilinear map 
\[
 \begin{array}{rccc}
  \psi : & G \times \PGr & \rightarrow & \C^* \\
  & ([\gamma],\kappa) & \mapsto & q^{2\langle \gamma,\kappa \rangle}
 \end{array}
\]
satisfies ${c_{\sigma(\kappa),V} \circ c_{V,\sigma(\kappa)} = \psi([\gamma],\kappa) \cdot \id_{V \otimes \sigma(\kappa)}}$ for every ${\gamma \in \frakh^*}$, every ${V \in \calC_{[\gamma]}}$, and every ${\kappa \in \PGr}$. If we consider the critical set
\[
 X := \{ [\xi] \in \frakh^* / \Lambda_R \mid \Exists \alpha \in \Phi_+ : 2 \langle \alpha,\xi \rangle \in \Z \}
\]
then, as explained in Section 7 of \cite{CGP14}, the category ${\calC_{[\gamma]}}$ is semisimple for every ${[\gamma] \in G \smallsetminus X}$. Therefore, the last ingredient we are missing is a projective trace. In order to define it, let us introduce typical ${U^H_q\frakg}$-modules. First of all, we say a vector ${v_+}$ of a ${U^H_q\frakg}$-module ${V}$ is a \textit{highest weight vector} if ${\rho_V(E_i)(v_+) = 0}$ for every integer ${1 \leqslant i \leqslant n}$. Analogously, we say a vector ${v_-}$ of ${V}$ is a \textit{lowest weight vector} if ${\rho_V(F_i)(v_-) = 0}$ for every integer ${1 \leqslant i \leqslant n}$. Then for every weight ${\lambda \in \frakh^*}$ there exists a simple finite-dimensional weight $U^H_q\frakg$-module ${V_{\lambda}}$ featuring a heighest weight vector of weight ${\lambda}$. This module is unique up to isomorphism, and every simple ${U^H_q\frakg}$-module is of this form, see Proposition 33 of \cite{GP13}. Every such module also has a lowest weight vector, and it is called \textit{typical} if its lowest weight is given by ${\lambda - 2(r-1) \cdot \rho}$. If we consider the set 
\[
 \ddot{\frakh}^* := \{ \gamma \in \frakh^* \mid 2 \langle \alpha,\gamma + \rho \rangle + m \langle \alpha,\alpha \rangle \not\in r \Z \ \Forall \alpha \in \Phi_+, \ \Forall 1 \leqslant m \leqslant r-1 \}
\]
then, thanks to Proposition 34 of \cite{GP13}, ${V_{\gamma}}$ is typical if and only if ${\gamma \in \ddot{\frakh}^*}$. Remark that if ${\gamma \in \frakh^*}$ satisfies ${2 \langle \alpha,\gamma \rangle \not\in \Z}$ for every ${\alpha \in \Phi}$, then ${\gamma \in \ddot{\frakh}^*}$. This means that if ${\gamma \in \frakh^*}$ satisfies ${[\gamma] \not\in X}$, then ${V_{\gamma}}$ is typical. Furthermore, thanks to Lemma 7.1 of \cite{CGP14} and Theorem 38 of \cite{GP13}, every typical $U^H_q\frakg$-module is projective and ambidextrous. Then, by combining Theorem 3.3.2 of \cite{GKP11} with Lemma 17 of \cite{GPV13}, there exists a non-zero trace on the ideal $\Proj(\calC)$ of projective objects of $\calC$ which is unique up to scalar. Here is the normalization we choose: for every ${\gamma \in \ddot{\frakh}^*}$ we set the projective dimension of ${V_{\gamma}}$ to be
\[
 \rmd(V_{\gamma}) := \prod_{\alpha \in \Phi_+} \frac{r \{ \langle \gamma - (r-1) \cdot \rho,\alpha \rangle \}}{\{ r\langle \gamma - (r-1) \cdot \rho,\alpha \rangle \}}.
\]
Remark that this normalization is ${r^N}$ times the one given in \cite{GP13}.

\begin{proposition}
 ${\calC}$ is a modular ${G}$-category relative to ${(\PGr,X)}$.
\end{proposition}

A proof of the relative pre-modularity of ${\calC}$ is provided by Theorem 7.2 of \cite{CGP14}, while the relative modularity condition is checked in \cite{DGP18}.


%
%
%

\chapter{Manifolds and cobordisms with corners}\label{A:cobordisms_with_corners}

In this appendix we fix our notation for manifolds and cobordisms with corners. We recall we are assuming all manifolds in this memoir are oriented, except when explicitly stated otherwise, so our terminology will reflect this assumption. In order to induce an orientation on the boundary, we will use the ``outward normal first'' convention.

\section{Manifolds with corners}\label{S:manifolds_with_corners}

This section is just an oriented version of Section 1 of \cite{J68} and of Section 2 of \cite{L00}. Consider the manifolds with boundary 
\[
 \R_+ := \{ x \in \R \mid x \geq 0 \}, \quad\R_- := \{ x \in \R \mid x \leq 0 \}
\]
oriented as submanifolds of ${\R}$. We say a map ${f}$ from ${V \subset \R_-^{m_-} \times \R_+^{m_+}}$ to ${\R^n}$ is \textit{smooth} if there exists an open neighborhood ${W}$ of ${V}$ in ${\R^{m_- + m_+}}$ together with a smooth map ${g : W \rightarrow \R^n}$ such that ${\restr{g}{V} = f}$. Let us fix now an ${m}$-\-di\-men\-sion\-al topological manifold with boundary ${X}$. A \textit{chart at ${x \in X}$} is a pair ${(U,\varphi)}$ where ${U}$ is an open subset of ${X}$ containing ${x}$, and where ${\varphi : U \rightarrow \R_-^{m_-} \times \R_+^{m_+}}$ is a homeomorphism onto its image with ${m_- + m_+ = m}$. Two charts ${(U,\varphi)}$ and ${(U',\varphi')}$ are \textit{compatible} if the transition function
\[
 \restr{\varphi' \circ \varphi^{-1}}{\varphi(U \cap U')} : \varphi(U \cap U') \rightarrow \varphi'(U \cap U')
\]
is a diffeomorphism, and they are \textit{co-oriented} if the Jacobian determinant of the transition function is positive. An \textit{atlas with corners} for ${X}$ is a collection of compatible co-oriented charts containing a chart at ${x}$ for every ${x \in X}$. A \textit{smooth structure with corners} on a topological manifold with boundary is an atlas with corners ${\calA}$ which is maximal under inclusion. Given an atlas with corners for ${X}$, there exists a unique maximal smooth structure with corners containing it.

\begin{definition}
 A \textit{manifold with corners} is a topological manifold with boundary ${X}$ equipped with a smooth structure with corners. 
\end{definition}

If ${X}$ is a manifold with corners and if ${\calA}$ is its smooth structure with corners, then we denote with ${\overline{X}}$ the \textit{opposite manifold with corners}, which is obtained from ${X}$ by replacing ${\calA}$ with the smooth structure with corners composed of all charts outside of ${\calA}$ which are compatible with charts in ${\calA}$. Other notations we will sometimes use for ${X}$ and ${\overline{X}}$ are ${+X}$ and ${-X}$, or ${(-1)X}$ and ${(+1)X}$. A map ${f : X \rightarrow Y}$ between manifolds with corners is \textit{smooth at ${x \in X}$} if it is smooth when read in any pair of charts at ${x}$ and at ${f(x)}$ respectively. A \textit{smooth map} ${f : X \rightarrow Y}$ between manifolds with corners is a map which is smooth at ${x}$ for every ${x \in X}$. The  \textit{tangent space at ${x \in X}$} is the vector space ${T_xX}$ of derivations on the algebra ${\calC^{\infty}(x)}$ of germs of smooth real-valued functions at ${x}$, and the \textit{differential} ${d_xf : T_xX \rightarrow T_{f(x)}Y}$ at ${x \in X}$ of a smooth map ${f : X \rightarrow Y}$ between manifolds with corners is defined using the algebra homomorphism ${f^* : \calC^{\infty}(f(x)) \rightarrow \calC^{\infty}(x)}$. A smooth map ${f : X \rightarrow Y}$ between manifolds with corners is an \textit{immersion} if ${d_xf}$ is injective for every ${x \in X}$, and it is and \textit{embedding} if it is also a homeomorphism onto its image. If ${X}$ and ${Y}$ are manifolds with corners of the same dimension, then we say an embedding ${f : X \hookrightarrow Y}$ is \textit{positive} if it preserves the orientations, and we say it is \textit{negative} otherwise. A \textit{submanifold with corners} of a manifold with corners ${X}$ is a topological submanifold ${Y}$ of ${X}$ together with a smooth structure with corners making the inclusion ${i : Y \hookrightarrow X}$ into a smooth embedding. The \textit{index ${\ind(x)}$} of a point ${x \in X}$ is the number of vanishing coordinates of ${\varphi(x)}$ for any chart ${(U,\varphi)}$ at ${x}$. A \textit{connected face} of a manifold with corners ${X}$ is the closure of a connected component of the set ${\{ x \in X \mid \ind(x) = 1 \}}$.

\begin{definition}
 A manifold with corners ${X}$ is a \textit{manifold with faces} if every point ${x \in X}$ belongs to exactly ${\ind(x)}$ distinct connected faces of ${X}$.
\end{definition}

A \textit{face} of a manifold with faces ${X}$ is a disjoint union of connected faces. It can be made into a manifold with faces itself by equipping it with the orientation induced using the boundary convention. Remark that, if ${X}$ and ${Y}$ are manifolds with faces, then so is ${X \times Y}$.

\begin{definition}
 An \textit{$m$-di\-men\-sion\-al ${\two}$-man\-i\-fold} is an ${m}$-di\-men\-sion\-al manifold with faces ${X}$ together with a decomposition of its boundary into a union of faces
 \[
  \partial X = (-1)^{m-1} \left( \overline{\partial^{\rmh}_- X} \cup \partial^{\rmh}_+ X \right) \cup \overline{\partial^{\rmv}_- X} \cup \partial^{\rmv}_+ X
 \]
 such that:
 \begin{enumerate}
  \item ${\partial^{\rmh}_- X \cap \partial^{\rmh}_+ X = \partial^{\rmv}_- X \cap \partial^{\rmv}_+ X = \varnothing}$;
  \item ${\partial^2_{\varepsilon_{\rmh} \varepsilon_{\rmv}} X := \partial^{\rmh}_{\varepsilon_{\rmh}} X \cap \partial^{\rmv}_{\varepsilon_{\rmv}} X}$ is a possibly empty face of both ${\partial^{\rmh}_{\varepsilon_{\rmh}} X}$ and ${\partial^{\rmv}_{\varepsilon_{\rmv}} X}$, which we orient as a face of ${\varepsilon_{\rmv} \partial^{\rmh}_{\varepsilon_{\rmh}} X}$ for all signs ${\varepsilon_{\rmh},\varepsilon_{\rmv} \in \{ +,- \}}$.
 \end{enumerate}
 We call ${\partial^{\rmh}_- X}$ the \textit{incoming horizontal boundary}, we call ${\partial^{\rmh}_+ X}$ the \textit{outgoing horizontal boundary}, we call ${\partial^{\rmv}_- X}$ the \textit{incoming vertical boundary}, and we call ${\partial^{\rmv}_+ X}$ the \textit{outgoing vertical boundary} of ${X}$.
\end{definition}

See Figure \ref{F:two-manifold_example} for an example of a ${\two}$-man\-i\-fold structure on the 2-di\-men\-sion\-al manifold with faces ${I \times I}$ with
\begin{gather*}
 \partial^{\rmh}_- (I \times I) := I \times \{ 0 \}, \quad \partial^{\rmh}_+ (I \times I) := I \times \{ 1 \}, \\
 \partial^{\rmv}_- (I \times I) := \{ 0 \} \times I, \quad \partial^{\rmv}_+ (I \times I) := \{ 1 \} \times I.
\end{gather*}

\begin{figure}[htb]\label{F:two-manifold_example}
 \centering
 \includegraphics{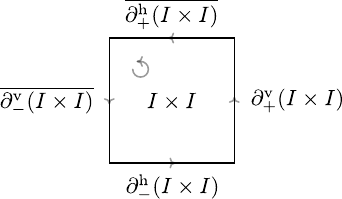}
 \caption{${\two}$-Manifold structure on ${I \times I}$.}
\end{figure}

An \textit{isomorphism of ${\two}$-man\-i\-folds} is a positive diffeomorphism of manifolds with faces which preserves horizontal and vertical boundaries. Remark that if ${Y}$ is manifold with boundary, then every decomposition ${\partial Y = \overline{\partial_- Y} \cup \partial_+ Y}$ arising from a partition of the set of connected components of ${\partial Y}$ determines multiple ${\two}$-man\-i\-fold structures on ${Y}$. We say ${Y}$ is a \textit{horizontal manifold with boundary} when we interpret all of its boundary as vertical, thus taking its horizontal boundary to be empty, and we say ${Y}$ is a \textit{vertical manifold with boundary} when we do the opposite. Furthermore, remark that every closed manifold admits a unique ${\two}$-man\-i\-fold structure. Now if ${X}$ is a ${\two}$-man\-i\-fold, then ${\partial^{\rmh}_{\varepsilon_{\rmh}} X}$ is naturally a horizontal manifold with boundary, ${\partial^{\rmv}_{\varepsilon_{\rmv}} X}$ is naturally a vertical manifold with boundary, and ${\partial^2_{\varepsilon_{\rmh}\varepsilon_{\rmv}} X}$ is a closed manifold for all signs ${\varepsilon_{\rmh},\varepsilon_{\rmv} \in \{ +,- \}}$. Remark also that if ${X}$ is a horizontal ${n}$-di\-men\-sion\-al manifold with boundary, and if ${Y}$ is a vertical ${m-n}$-di\-men\-sion\-al manifold with boundary, then the product ${X \times Y}$ is naturally an ${m}$-di\-men\-sion\-al ${\two}$-man\-i\-fold with
\[
 \partial^{\rmh}_{\varepsilon_{\rmh}} (X \times Y) := (-1)^n \left( X \times \partial_{\varepsilon_{\rmh}} Y \right), \quad 
 \partial^{\rmv}_{\varepsilon_{\rmv}} (X \times Y) := \partial_{\varepsilon_{\rmh}} X \times Y.
\]

\section{Collars}\label{S:collars}

In this section we discuss collars of boundary identifications. The reason we need to do this is the following: while the operation of gluing two manifolds with faces along a face produces a well-defined topological manifold, if we want this manifold to carry a canonical smooth structure with corners we need to specify collars. If we do not, we have infinitely many smooth structures with corners on the glued manifold extending the original ones, and they are all diffeomorphic, although unfortunately diffeomorphisms between them are non-canonical. We point out that the reason this problem does not emerge when constructing categories of cobordisms for 2+1-TQFTs is that morphisms in this context are given by diffeomorphism classes of cobordisms. In the 2-categorical  setting of 1+1+1-ETQFTs, however, things are more complicated: while 2-morphisms are still given by diffeomorphism classes of cobordisms with corners, we need to use 1-morphisms given by honest cobordisms, and not just diffeomorphism classes, or else we lose the ability of composing 2-morphisms. This can be taken care of simply by equipping cobordisms with collars for their boundary identifications, which is precisely what we set out to do now. We will give definitions directly in the setting of ${\two}$-man\-i\-folds in order to treat coherently horizontal and vertical gluing operations. Just like in \cite{L00}, we can give a categorical description of structure of a ${\two}$-man\-i\-fold. Indeed, let ${\two}$ denote the two-fold cartesian product of the category associated with the poset ${\{ 0,1 \}}$, and let ${\Man}$ denote the category whose objects are manifold with corners, and whose morphisms are embeddings. Then every ${\two}$-man\-i\-fold ${X}$ determines a family of functors ${X_{\varepsilon_{\rmh} \varepsilon_{\rmv}} : \two \rightarrow \Man}$, one for every choice of signs ${\varepsilon_{\rmh},\varepsilon_{\rmv} \in \{ +,- \}}$, which map every object ${a = (a_{\rmh},a_{\rmv})}$ of ${\two}$ to the manifold with faces
\[
 X_{\varepsilon_{\rmh} \varepsilon_{\rmv}}(a) := 
 \begin{cases}
  \partial^2_{\varepsilon_{\rmh} \varepsilon_{\rmv}} X & (a_{\rmh},a_{\rmv}) = (0,0), \\
  \partial^{\rmh}_{\varepsilon_{\rmh}} X & (a_{\rmh},a_{\rmv}) = (0,1), \\
  \partial^{\rmv}_{\varepsilon_{\rmv}} X & (a_{\rmh},a_{\rmv}) = (1,0), \\
  X & (a_{\rmh},a_{\rmv}) = (1,1), \\
 \end{cases}
\]
and which map every morphism ${a \leqslant b : a \rightarrow b}$ of ${\two}$ to the corresponding inclusion 
\[
 X_{\varepsilon_{\rmh} \varepsilon_{\rmv}}(a \leqslant b) : X_{\varepsilon_{\rmh} \varepsilon_{\rmv}}(a) \hookrightarrow X_{\varepsilon_{\rmh} \varepsilon_{\rmv}}(b).
\]
Then, if we consider the involution ${*}$ on the set ${\{ +,- \}}$ determined by ${+^* := -}$, the following is just an oriented version of Lemma 2.1.6 of \cite{L00}.

\begin{lemma}\label{L:existence_of_collars}
 For every ${\two}$-man\-i\-fold ${X}$ and for all signs ${\varepsilon_{\rmh},\varepsilon_{\rmv} \in \{ +,- \}}$ there exists a functor ${C_{\varepsilon_{\rmh} \varepsilon_{\rmv}} : \two \rightarrow \Man}$ mapping every object ${a = (a_{\rmh},a_{\rmv})}$ of ${\two}$ to the manifold with faces 
 \[
  \R_{\varepsilon_{\rmv}^*}^{1-a_{\rmv}} \times X_{\varepsilon_{\rmh} \varepsilon_{\rmv}}(a) \times \R_{\varepsilon_{\rmh}^*}^{1-a_{\rmh}},
 \]
 and mapping every morphism ${a \leqslant b : a \rightarrow b}$ of ${\two}$ to a positive embedding
 \[
  C_{\varepsilon_{\rmh} \varepsilon_{\rmv}}(a \leqslant b) : \R_{\varepsilon_{\rmv}^*}^{1-a_{\rmv}} \times X_{\varepsilon_{\rmh} \varepsilon_{\rmv}}(a) \times \R_{\varepsilon_{\rmh}^*}^{1-a_{\rmh}} \hookrightarrow \R_{\varepsilon_{\rmv}^*}^{1-b_{\rmv}} \times X_{\varepsilon_{\rmh} \varepsilon_{\rmv}}(b) \times \R_{\varepsilon_{\rmh}^*}^{1-b_{\rmh}}
 \]
 whose restriction to ${\R_{\varepsilon_{\rmv}^*}^{1-b_{\rmv}} \times X_{\varepsilon_{\rmh} \varepsilon_{\rmv}}(a) \times \R_{\varepsilon_{\rmh}^*}^{1-b_{\rmh}}}$ coincides with the inclusion 
 \[
  {\id_{\R_{\varepsilon_{\rmv}^*}^{1-b_{\rmv}}}} \times X_{\varepsilon_{\rmh} \varepsilon_{\rmv}}(a \leqslant b) \times {\id_{\R_{\varepsilon_{\rmh}^*}^{1-b_{\rmh}}}}.
 \]
\end{lemma}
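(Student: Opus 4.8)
The plan is to reduce the statement to Lemma 2.1.6 of \cite{L00}, whose proof constructs, for an unoriented $\two$-man\-i\-fold, a functor $C_{\varepsilon_{\rmh}\varepsilon_{\rmv}} : \two \rightarrow \Man$ of exactly the shape prescribed here; what remains is to check that the embeddings it produces can be chosen orientation-preserving, and that the stated restriction condition is automatic. First I would recall the construction. By the collar neighbourhood theorem for manifolds with faces (Section 1 of \cite{J68}), the closed face $\partial^2_{\varepsilon_{\rmh}\varepsilon_{\rmv}} X$ admits a collar inside $\partial^{\rmh}_{\varepsilon_{\rmh}} X$ and one inside $\partial^{\rmv}_{\varepsilon_{\rmv}} X$; one first fixes a collar of $\partial^2_{\varepsilon_{\rmh}\varepsilon_{\rmv}} X$ in $X$ (a corner collar with two normal directions), then chooses collars of $\partial^{\rmh}_{\varepsilon_{\rmh}} X$ and of $\partial^{\rmv}_{\varepsilon_{\rmv}} X$ in $X$, and finally, using uniqueness of collars up to isotopy, corrects these choices so that all four fit together strictly. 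This strict compatibility is precisely the statement that the square of embeddings indexed by the two generating morphisms of $\two$ commutes on the nose, i.e. the functoriality of $C_{\varepsilon_{\rmh}\varepsilon_{\rmv}}$, and the restriction to $\R_{\varepsilon_{\rmv}^*}^{1-b_{\rmv}} \times X_{\varepsilon_{\rmh}\varepsilon_{\rmv}}(a) \times \R_{\varepsilon_{\rmh}^*}^{1-b_{\rmh}}$ being $ {\id} \times X_{\varepsilon_{\rmh}\varepsilon_{\rmv}}(a \leqslant b) \times {\id}$ holds by the very definition of a collar embedding on that subspace.

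Second I would deal with the orientations. Each manifold $\R_{\varepsilon_{\rmv}^*}^{1-a_{\rmv}} \times X_{\varepsilon_{\rmh}\varepsilon_{\rmv}}(a) \times \R_{\varepsilon_{\rmh}^*}^{1-a_{\rmh}}$ carries its product orientation, with $X_{\varepsilon_{\rmh}\varepsilon_{\rmv}}(a)$ oriented according to the conventions fixed in the definition of a $\two$-man\-i\-fold and the half-lines oriented as submanifolds of $\R$. Since the ``outward normal first'' boundary convention places the (inward-pointing) collar coordinate, which runs along $\R_{\varepsilon^*}$, in the first slot, a collar inclusion $\R_{\varepsilon_{\rmh}^*} \times \partial^{\rmh}_{\varepsilon_{\rmh}} X \hookrightarrow X$ is orientation-preserving, and likewise along the vertical boundary. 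Iterating the argument once over the corner, and using that $\partial^2_{\varepsilon_{\rmh}\varepsilon_{\rmv}} X$ is oriented as a face of $\varepsilon_{\rmv}\partial^{\rmh}_{\varepsilon_{\rmh}} X$ — which is exactly the sign bookkeeping built into the $\two$-man\-i\-fold axioms — shows that the corner collar inclusion is positive as well. Hence $C_{\varepsilon_{\rmh}\varepsilon_{\rmv}}(a \leqslant b)$ is a positive embedding for each generating morphism $a \leqslant b$ of $\two$, and therefore for every morphism by composing the two generators.

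The main obstacle I anticipate is not the orientation check, which is routine once the conventions are spelled out, but the strict (rather than up-to-isotopy) compatibility of the corner collar with the two boundary collars: one must arrange the choices so that on the overlap the boundary collars genuinely restrict to the prescribed corner collar. This is handled exactly as in \cite{L00}, by collaring the corner first and then constructing the higher collars so as to extend it, adjusting with an isotopy supported near the corner; I would simply cite that argument and note that the adjustment can be taken to respect orientations since it is isotopic to the identity. With these three points in place the lemma follows.
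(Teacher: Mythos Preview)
Your proposal is correct and matches the paper's approach exactly: the paper does not give a proof at all, but simply introduces the lemma with the sentence ``the following is just an oriented version of Lemma 2.1.6 of \cite{L00}''. Your write-up is thus a fleshed-out version of that one-line citation, and the orientation bookkeeping you sketch is precisely the extra content needed to pass from Laures' unoriented statement to the oriented one.
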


If ${X}$ is a ${\two}$-man\-i\-fold, then a \textit{compatible system of boundary identifications} around ${\partial^2_{\varepsilon_{\rmh} \varepsilon_{\rmv}} X}$ is a functor ${Y_{\varepsilon_{\rmh} \varepsilon_{\rmv}} : \two \rightarrow \Man}$ together with a positive natural isomorphism ${f_{\varepsilon_{\rmh} \varepsilon_{\rmv}} : Y_{\varepsilon_{\rmh} \varepsilon_{\rmv}} \Rightarrow X_{\varepsilon_{\rmh} \varepsilon_{\rmv}}}$, where positive means the diffeomorphism ${f_{\varepsilon_{\rmh} \varepsilon_{\rmv}}(a)}$ is positive for every object ${a}$ of ${\two}$. If ${(Y_{\varepsilon_{\rmh} \varepsilon_{\rmv}},f_{\varepsilon_{\rmh} \varepsilon_{\rmv}})}$ is a compatible system of boundary identifications around ${\partial^2_{\varepsilon_{\rmh} \varepsilon_{\rmv}} X}$, then a \textit{compatible system of collars} for ${(Y_{\varepsilon_{\rmh} \varepsilon_{\rmv}},f_{\varepsilon_{\rmh} \varepsilon_{\rmv}})}$ is a functor ${F_{\varepsilon_{\rmh} \varepsilon_{\rmv}} : \two \rightarrow \Man}$ mapping every object ${a = (a_{\rmh},a_{\rmv})}$ of ${\two}$ to the manifold with faces 
 \[
  \R_{\varepsilon_{\rmv}^*}^{1-a_{\rmv}} \times Y_{\varepsilon_{\rmh} \varepsilon_{\rmv}}(a) \times \R_{\varepsilon_{\rmh}^*}^{1-a_{\rmh}},
 \]
 and mapping every morphism ${a \leqslant b : a \rightarrow b}$ of ${\two}$ to a positive embedding
 \[
  F_{\varepsilon_{\rmh} \varepsilon_{\rmv}}(a \leqslant b) : \R_{\varepsilon_{\rmv}^*}^{1-a_{\rmv}} \times Y_{\varepsilon_{\rmh} \varepsilon_{\rmv}}(a) \times \R_{\varepsilon_{\rmh}^*}^{1-a_{\rmh}} \hookrightarrow \R_{\varepsilon_{\rmv}^*}^{1-b_{\rmv}} \times Y_{\varepsilon_{\rmh} \varepsilon_{\rmv}}(b) \times \R_{\varepsilon_{\rmh}^*}^{1-b_{\rmh}}
 \]
 whose restriction to ${\R_{\varepsilon_{\rmv}^*}^{1-b_{\rmv}} \times Y_{\varepsilon_{\rmh} \varepsilon_{\rmv}}(a) \times \R_{\varepsilon_{\rmh}^*}^{1-b_{\rmh}}}$ coincides with the embedding
 \[
  {\id_{\R_{\varepsilon_{\rmv}^*}^{1-b_{\rmv}}}} \times Y_{\varepsilon_{\rmh} \varepsilon_{\rmv}}(a \leqslant b) \times {\id_{\R_{\varepsilon_{\rmh}^*}^{1-b_{\rmh}}}}.
 \]
We say the embedding 
\[
 F_{\varepsilon_{\rmh} \varepsilon_{\rmv}}((a_{\rmh},a_{\rmv}) \leqslant (1,1)) : \R_{\varepsilon_{\rmv}^*}^{1-a_{\rmv}} \times Y_{\varepsilon_{\rmh} \varepsilon_{\rmv}}(a_{\rmh},a_{\rmv}) \times \R_{\varepsilon_{\rmh}^*}^{1-a_{\rmh}} \hookrightarrow Y_{\varepsilon_{\rmh} \varepsilon_{\rmv}}(1,1)
\]
is a \textit{collar} for the \textit{boundary identification} 
\[
 (f_{\varepsilon_{\rmh} \varepsilon_{\rmv}})_{(a_{\rmh},a_{\rmv})} : Y_{\varepsilon_{\rmh} \varepsilon_{\rmv}}(a_{\rmh},a_{\rmv}) \hookrightarrow X_{\varepsilon_{\rmh} \varepsilon_{\rmv}}(a_{\rmh},a_{\rmv}).
\]
Remark that every compatible system of boundary identifications admits a compatible system of collars. Indeed, if ${(Y_{\varepsilon_{\rmh} \varepsilon_{\rmv}},f_{\varepsilon_{\rmh} \varepsilon_{\rmv}})}$ is a compatible system of boundary identifications around ${\partial^2_{\varepsilon_{\rmh} \varepsilon_{\rmv}} X}$, and if ${C_{\varepsilon_{\rmh} \varepsilon_{\rmv}}}$ is the functor given by Lemma \ref{L:existence_of_collars}, then for every morphism ${a \leqslant b : a \rightarrow b}$ of ${\two}$ we can define the embedding ${F_{\varepsilon_{\rmh} \varepsilon_{\rmv}}(a \leqslant b)}$ as
\[
 \left( {\id_{\R_{\varepsilon_{\rmv}^*}^{1-b_{\rmv}}}} \times f^{-1}_{\varepsilon_{\rmh} \varepsilon_{\rmv}}(b) \times {\id_{\R_{\varepsilon_{\rmh}^*}^{1-b_{\rmh}}}} \right) \circ 
 C_{\varepsilon_{\rmh} \varepsilon_{\rmv}}(a \leqslant b) \circ 
 \left( {\id_{\R_{\varepsilon_{\rmv}^*}^{1-a_{\rmv}}}} \times f_{\varepsilon_{\rmh} \varepsilon_{\rmv}}(a) \times {\id_{\R_{\varepsilon_{\rmh}^*}^{1-a_{\rmh}}}} \right).
\]

\section{Gluing}\label{S:gluing}

We are now ready to define horizontal and vertical gluing operations. Let ${X}$ and ${X'}$ be ${m}$-di\-men\-sion\-al ${\two}$-man\-i\-folds, let ${Y^{\rmv}}$ be an ${m-1}$-di\-men\-sion\-al vertical manifold with boundary, let ${Y^{\rmh}}$ be an ${m-1}$-di\-men\-sion\-al horizontal manifold with boundary, let 
\[
 f^{\rmv} : Y^{\rmv} \rightarrow \partial^{\rmv}_+ X, \quad 
 {f'}^{\rmv} : Y^{\rmv} \rightarrow \partial^{\rmv}_- X', \quad
 f^{\rmh} : Y^{\rmh} \rightarrow \partial^{\rmh}_+ X, \quad 
 {f'}^{\rmh} : Y^{\rmh} \rightarrow \partial^{\rmv}_- X 
\]
be boundary identifications, and let 
\begin{gather*}
 F^{\rmv} : \R_- \times Y^{\rmv} \hookrightarrow X, \quad 
 {F'}^{\rmv} : \R_+ \times Y^{\rmv} \hookrightarrow X', \\
 F^{\rmh} : Y^{\rmh} \times \R_- \hookrightarrow X, \quad 
 {F'}^{\rmh} : Y^{\rmh} \times \R_+ \hookrightarrow X' 
\end{gather*}
be collars for ${f^{\rmv}}$, ${{f'}^{\rmv}}$, ${f^{\rmh}}$, and ${{f'}^{\rmh}}$. Then we denote with ${X \cup_{Y^{\rmv}} X'}$ and with ${X \cup_{Y^{\rmh}} X'}$ the ${m}$-di\-men\-sion\-al ${\two}$-man\-i\-folds obtained by gluing horizontally ${X}$ to ${X'}$ along ${Y^{\rmv}}$ using the gluing data ${(f^{\rmv},{f'}^{\rmv},F^{\rmv},{F'}^{\rmv})}$, and by gluing vertically ${X}$ to ${X'}$ along ${Y^{\rmh}}$ using the gluing data ${(f^{\rmh},{f'}^{\rmh},F^{\rmh},{F'}^{\rmh})}$. More precisely, ${X \cup_{Y^{\rmv}} X'}$ and ${X \cup_{Y^{\rmh}} X'}$ are obtained, as topological manifolds, from the disjoint union
\[
 X \sqcup X' := (\{ -1 \} \times X) \cup (\{ +1 \} \times X')
\]
by identifying ${\{ -1 \} \times \partial^{\rmv}_+ X}$ to ${\{ +1 \} \times \partial^{\rmv}_- X'}$ using ${{f'}^{\rmv} \circ (f^{\rmv})^{-1}}$, and by identifying ${\{ -1 \} \times \partial^{\rmh}_+ X}$ to ${\{ +1 \} \times \partial^{\rmh}_- X'}$ using ${{f'}^{\rmh} \circ (f^{\rmh})^{-1}}$. These quotient spaces come with natural topological embeddings 
\begin{gather*}
 i_{X}^{\rmv} : X \hookrightarrow X \cup_{Y^{\rmv}} X', \quad i_{X'}^{\rmv} : X' \hookrightarrow X \cup_{Y^{\rmv}} X', \\
 i_{X}^{\rmh} : X \hookrightarrow X \cup_{Y^{\rmh}} X', \quad i_{X'}^{\rmh} : X' \hookrightarrow X \cup_{Y^{\rmh}} X'.
\end{gather*}
Their smooth structures are the only ones making the restrictions 
\begin{gather*}
 \restr{i_{X}^{\rmv}}{X \smallsetminus \partial^{\rmv}_+ X} : X \smallsetminus \partial^{\rmv}_+ X \hookrightarrow X \cup_{Y^{\rmv}} X', \quad
 \restr{i_{X'}^{\rmv}}{X' \smallsetminus \partial^{\rmv}_- X'} : X' \smallsetminus \partial^{\rmv}_- X' \hookrightarrow X \cup_{Y^{\rmv}} X', \\
 \restr{i_{X}^{\rmh}}{X \smallsetminus \partial^{\rmh}_+ X} : X \smallsetminus \partial^{\rmh}_+ X \hookrightarrow X \cup_{Y^{\rmh}} X', \quad
 \restr{i_{X'}^{\rmh}}{X' \smallsetminus \partial^{\rmh}_- X'} : X' \smallsetminus \partial^{\rmh}_- X' \hookrightarrow X \cup_{Y^{\rmh}} X'
\end{gather*}
and the maps
\begin{gather*}
 \begin{array}{rccc}
  F^{\rmv} \cup_{Y^{\rmv}} {F'}^{\rmv} : & \R \times Y^{\rmv} & \rightarrow & X \cup_{Y^{\rmv}} X' \\
  & (t,y^{\rmv}) & \rightarrow & \begin{cases}
                                  [-1,F^{\rmv}(t,y^{\rmv})] & t \leq 0 \\
                                  [+1,{F'}^{\rmv}(t,y^{\rmv})] & t \geq 0
                                 \end{cases}
 \end{array} \\
 \begin{array}{rccc}
  F^{\rmh} \cup_{Y^{\rmh}} {F'}^{\rmh} : & Y^{\rmh} \times \R & \rightarrow & X \cup_{Y^{\rmh}} X' \\
  & (y^{\rmh},t) & \rightarrow & \begin{cases}
                                  [-1,F^{\rmh}(y^{\rmh},t)] & t \leq 0 \\
                                  [+1,{F'}^{\rmh}(y^{\rmh},t)] & t \geq 0
                                 \end{cases}
 \end{array}
\end{gather*}
into smooth embeddings. The boundary of ${X \cup_{Y^{\rmv}} X'}$ is decomposed as
\begin{gather*}
 \partial^{\rmh}_{\varepsilon_{\rmh}} (X \cup_{Y^{\rmv}} X') := \partial^{\rmh}_{\varepsilon_{\rmh}} X \cup_{\partial_{\varepsilon_{\rmh}} Y^{\rmv}} \partial^{\rmh}_{\varepsilon_{\rmh}} X', \\
 \partial^{\rmv}_- (X \cup_{Y^{\rmv}} X') := i_{X}^{\rmv}(\partial^{\rmv}_- X), \quad 
 \partial^{\rmv}_+ (X \cup_{Y^{\rmv}} X') := i_{X'}^{\rmv}(\partial^{\rmv}_+ X')
\end{gather*}
with ${\varepsilon_{\rmh} \in \{ +,- \}}$, and the boundary of ${X \cup_{Y^{\rmh}} X'}$ is decomposed as
\begin{gather*}
 \partial^{\rmh}_- (X \cup_{Y^{\rmh}} X') := i_{X}^{\rmh}(\partial^{\rmh}_- X), \quad 
 \partial^{\rmh}_+ (X \cup_{Y^{\rmh}} X') := i_{X'}^{\rmh}(\partial^{\rmh}_+ X'), \\
 \partial^{\rmv}_{\varepsilon_{\rmv}} (X \cup_{Y^{\rmh}} X') := \partial^{\rmv}_{\varepsilon_{\rmv}} X \cup_{\partial_{\varepsilon_{\rmv}} Y^{\rmh}} \partial^{\rmv}_{\varepsilon_{\rmv}} X'
\end{gather*}
with ${\varepsilon_{\rmv} \in \{ +,- \}}$. In the following, the embeddings ${i^{\rmv}_X}$, ${i^{\rmv}_{X'}}$, ${i^{\rmh}_X}$, and ${i^{\rmh}_X}$ will be always suppressed from the notation. Furthermore, whenever we will have ${\two}$-man\-i\-folds ${X''}$ and ${X'''}$, and isomorphisms ${g : X \rightarrow X''}$ and ${g' : X' \rightarrow X'''}$, then we will denote with ${X'' \cup_{Y^{\rmv}} X'''}$ and with ${X'' \cup_{Y^{\rmh}} X'''}$ the ${\two}$-man\-i\-folds determined by the gluing datas ${(g \circ f^{\rmv},g \circ f'^{\rmv},g \circ F'^{\rmv},g' \circ F'^{\rmv})}$ and ${(g \circ f^{\rmh},g \circ f'^{\rmh},g \circ F'^{\rmh},g' \circ F'^{\rmh})}$, and we will denote with 
\[
 g \cup_{Y^{\rmv}} g' : X \cup_{Y^{\rmv}} X' \rightarrow X'' \cup_{Y^{\rmv}} X''', \quad g \cup_{Y^{\rmh}} g' : X \cup_{Y^{\rmh}} X' \rightarrow X'' \cup_{Y^{\rmh}} X'''
\]
the isomorphisms induced by ${g}$ and ${g'}$.

\section{Cobordisms with corners}\label{S:cobordisms}

This section provides a reference for the notation we use for cobordisms. All the manifolds considered here are equipped with a smooth structure with possibly empty corners, and we recall that, according to our conventions, this means an orientation has been fixed. We also recall that \textit{horizontal} and \textit{vertical} manifolds with boundary are manifolds with boundary equipped with a ${\two}$-man\-i\-fold structure, which in this case simply amounts to a partition of the set of connected components of the boundary, leading to a distinction between incoming and outgoing boundary.

\begin{definition}\label{D:cobordism}
 If ${\varGamma}$ and ${\varGamma'}$ are ${d-2}$-di\-men\-sion\-al closed manifolds, then a \textit{${d-1}$-di\-men\-sion\-al cobordism ${\varSigma}$ from ${\varGamma}$ to ${\varGamma'}$} is a 5-tuple 
 \[
  (\varSigma,f_{\varSigma_-},f_{\varSigma_+},F_{\varSigma_-},F_{\varSigma_+})
 \]
 where: 
 \begin{enumerate}
  \item ${\varSigma}$ is a ${d-1}$-di\-men\-sion\-al compact horizontal manifold with boundary, called the \textit{support};
  \item ${f_{\varSigma_-} : \varGamma \rightarrow \partial_- \varSigma}$ and ${f_{\varSigma_+} : \varGamma' \rightarrow \partial_+ \varSigma}$ are positive diffeomorphisms called the \textit{incoming} and the \textit{outgoing boundary identification} respectively;
  \item ${F_{\varSigma_-} : \R_+ \times \varGamma \hookrightarrow \varSigma}$ and ${F_{\varSigma_+} : \R_- \times \varGamma' \hookrightarrow \varSigma}$ are collars for ${f_{\varSigma_+}}$ and ${f_{\varSigma_-}}$ respectively.
 \end{enumerate}
\end{definition}

We always suppress boundary identifications and collars from the notation. Two cobordisms ${\varSigma}$ and ${\varSigma'}$ from ${\varGamma}$ to ${\varGamma'}$ are \textit{isomorphic} if there exists a positive diffeomorphism ${f : \varSigma \rightarrow \varSigma'}$ such that:
\begin{enumerate}
 \item ${f \circ f_{\varSigma_-} = f_{\varSigma'_-}}$ and ${f \circ f_{\varSigma_+} = f_{\varSigma'_+}}$;
 \item ${f \circ F_{\varSigma_-}}$ agrees with ${F_{\varSigma'_-}}$ in a neighborhood of ${\{ 0 \} \times \varGamma}$;
 \item ${f \circ F_{\varSigma_+}}$ agrees with ${F_{\varSigma'_+}}$ in a neighborhood of ${\{ 0 \} \times \varGamma'}$.
\end{enumerate}
Such a diffeomorphism ${f}$ is called an \textit{isomorphism of cobordisms}.

\begin{remark}\label{R:trivial_cobordism}
 Every ${d-2}$-di\-men\-sion\-al closed manifold ${\varGamma}$ determines a ${d-1}$-di\-men\-sion\-al cobordism ${I \times \varGamma}$ from ${\varGamma}$ to itself called the \textit{trivial cobordism on ${\varGamma}$}, which is given by
 \[
  (I \times \varGamma,(0,\id_{\varGamma}),(1,\id_{\varGamma}),F_{I_-} \times {\id_{\varGamma}},F_{I_+} \times {\id_{\varGamma}}),
 \]
 where ${(0,\id_{\varGamma}) : \varGamma \rightarrow \{ 0 \} \times \varGamma}$ and ${(1,\id_{\varGamma}) : \varGamma \rightarrow \{ 1 \} \times \varGamma}$ are the obvious diffeomorphisms induced by $\id_{\varGamma}$, and where ${F_{I_-} : \R_+ \hookrightarrow I}$ and ${F_{I_+} : \R_- \hookrightarrow I}$ are some embeddings we fix here once and for all satisfying:
 \begin{enumerate}
  \item ${F_{I_-}(t_+) = t_+}$ for every ${t_+ \leq \frac{1}{3}}$;
  \item ${F_{I_+}(t_-) = 1 + t_-}$ for every ${t_- \geq -\frac{1}{3}}$.
 \end{enumerate}
\end{remark}


Figure \ref{F:trivial_cobordism_over_circle} represents the trivial cobordism ${I \times S^1}$ on the circle ${S^1}$, together with its boundary identifications $f_{(I \times S^1)_-} = (0,\id_{S^1})$ and $f_{(I \times S^1)_+} = (1,\id_{S^1})$.

\begin{figure}[htb]\label{F:trivial_cobordism_over_circle}
 \centering
 \includegraphics{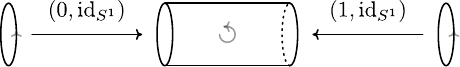}
 \caption{Boundary identifications for the cobordism ${I \times S^1}$.}
\end{figure}

\begin{definition}\label{D:cobordism_with_corners}
 If ${\varSigma}$ and ${\varSigma'}$ are ${d-1}$-di\-men\-sion\-al cobordisms from ${\varGamma}$ to ${\varGamma'}$, then a \textit{$d$-di\-men\-sion\-al cobordism with corners ${M}$ from ${\varSigma}$ to ${\varSigma'}$} is a 9-tuple 
 \[
  (M,f_{M^{\rmh}_-},f_{M^{\rmh}_+},f_{M^{\rmv}_-},f_{M^{\rmv}_+},F_{M^{\rmh}_-},F_{M^{\rmh}_+},F_{M^{\rmv}_-},F_{M^{\rmv}_+}),
 \]
 where:
 \begin{enumerate}
  \item ${M}$ is a ${d}$-di\-men\-sion\-al compact ${\two}$-man\-i\-fold, called the \textit{support};
  \item $f_{M^{\rmh}_-} : \varSigma \rightarrow \partial^{\rmh}_- M$ and $f_{M^{\rmh}_+} : \varSigma' \rightarrow \partial^{\rmh}_+ M$ are positive diffeomorphisms called the \textit{incoming} and the \textit{outgoing horizontal boundary identification}, and $f_{M^{\rmv}_-} : \varGamma \times I \rightarrow \partial^{\rmv}_- M$ and $f_{M^{\rmv}_+} : \varGamma' \times I \rightarrow \partial^{\rmv}_+ M$ are positive diffeomorphisms called the \textit{incoming} and the \textit{outgoing vertical boundary identification} respectively;
  \item ${\smash{F_{M^{\rmh}_-} : \varSigma \times \R_+ \hookrightarrow M}}$, ${\smash{\ \ F_{M^{\rmh}_+} : \varSigma' \times \R_- \hookrightarrow M}}$, ${\smash{\ \ F_{M^{\rmv}_-} : \R_- \times \varGamma \times I \hookrightarrow M}}$, and ${\smash{F_{M^{\rmv}_+} : \R_+ \times \varGamma' \times I \hookrightarrow M}}$ are collars for ${f_{M^{\rmh}_-}}$, ${f_{M^{\rmh}_+}}$, ${f_{M^{\rmv}_-}}$, and ${f_{M^{\rmv}_+}}$ respectively.
 \end{enumerate}
 These data satisfy the following conditions, using the notation of Remark \ref{R:trivial_cobordism}:
 \begin{enumerate}
  \item The assignment
   \begin{gather*}
    Y_{--}(0,0) := \varGamma, \quad Y_{--}(0,1) := \varSigma, \quad Y_{--}(1,0) := \varGamma \times I, \quad Y_{--}(1,1) := M \\
    Y_{--}((0,0) \leqslant (0,1)) := f_{\varSigma_-}, \quad 
    Y_{--}((0,0) \leqslant (1,0)) := (\id_{\varGamma},0), \\
    \left( f_{--} \right)_{(0,1)} := f_{M^{\rmh}_-}, \quad
    \left( f_{--} \right)_{(1,0)} := f_{M^{\rmv}_-}, \quad 
    \left( f_{--} \right)_{(1,1)} := \id_M, 
   \end{gather*}
   can be completed to a compatible system of boundary identifications ${(Y_{--},f_{--})}$ around ${\partial^2_{--} M}$, and the assignment
   \begin{gather*}
    F_{--}((0,0) \leqslant (0,1)) := F_{\varSigma_-} \times {\id_{\R_+}}, \quad 
    F_{--}((0,0) \leqslant (1,0)) := {\id_{\R_+}} \times {\id_{\varGamma}} \times F_{I_-}, \\
    F_{--}((0,1) \leqslant (1,1)) := F_{M^{\rmh}_-}, \quad 
    F_{--}((1,0) \leqslant (1,1)) := F_{M^{\rmv}_-}
   \end{gather*}
   can be completed to a compatible system of collars ${F_{--}}$ for ${(Y_{--},f_{--})}$;
  \item The assignment
   \begin{gather*}
    Y_{-+}(0,0) := \varGamma', \quad Y_{-+}(0,1) := \varSigma, \quad Y_{-+}(1,0) := \varGamma' \times I, \quad Y_{-+}(1,1) := M \\
    Y_{-+}((0,0) \leqslant (0,1)) := f_{\varSigma_+}, \quad 
    Y_{-+}((0,0) \leqslant (1,0)) := (\id_{\varGamma'},0), \\
    \left( f_{-+} \right)_{(0,1)} := f_{M^{\rmh}_-}, \quad
    \left( f_{-+} \right)_{(1,0)} := f_{M^{\rmv}_+}, \quad 
    \left( f_{-+} \right)_{(1,1)} := \id_M, 
   \end{gather*}
   can be completed to a compatible system of boundary identifications ${(Y_{-+},f_{-+})}$ around ${\partial^2_{-+} M}$, and the assignment
   \begin{gather*}
    F_{-+}((0,0) \leqslant (0,1)) := F_{\varSigma_+} \times {\id_{\R_+}}, \quad 
    F_{-+}((0,0) \leqslant (1,0)) := {\id_{\R_-}} \times {\id_{\varGamma'}} \times F_{I_-}, \\
    F_{-+}((0,1) \leqslant (1,1)) := F_{M^{\rmh}_-}, \quad 
    F_{-+}((1,0) \leqslant (1,1)) := F_{M^{\rmv}_+}
   \end{gather*}
   can be completed to a compatible system of collars ${F_{-+}}$ for ${(Y_{-+},f_{-+})}$;
  \item The assignment
   \begin{gather*}
    Y_{+-}(0,0) := \varGamma, \quad Y_{+-}(0,1) := \varSigma', \quad Y_{+-}(1,0) := \varGamma \times I, \quad Y_{+-}(1,1) := M \\
    Y_{+-}((0,0) \leqslant (0,1)) := f_{\varSigma'_-}, \quad 
    Y_{+-}((0,0) \leqslant (1,0)) := (\id_{\varGamma},1), \\
    \left( f_{+-} \right)_{(0,1)} := f_{M^{\rmh}_+}, \quad
    \left( f_{+-} \right)_{(1,0)} := f_{M^{\rmv}_-}, \quad 
    \left( f_{+-} \right)_{(1,1)} := \id_M, 
   \end{gather*}
   can be completed to a compatible system of boundary identifications ${(Y_{+-},f_{+-})}$ around ${\partial^2_{+-} M}$, and the assignment
   \begin{gather*}
    F_{+-}((0,0) \leqslant (0,1)) := F_{\varSigma'_-} \times {\id_{\R_-}}, \quad 
    F_{+-}((0,0) \leqslant (1,0)) := {\id_{\R_+}} \times {\id_{\varGamma}} \times F_{I_+}, \\
    F_{+-}((0,1) \leqslant (1,1)) := F_{M^{\rmh}_+}, \quad 
    F_{+-}((1,0) \leqslant (1,1)) := F_{M^{\rmv}_-}
   \end{gather*}
   can be completed to a compatible system of collars ${F_{+-}}$ for ${(Y_{+-},f_{+-})}$;
  \item The assignment
   \begin{gather*}
    Y_{++}(0,0) := \varGamma', \quad Y_{++}(0,1) := \varSigma', \quad Y_{++}(1,0) := \varGamma' \times I, \quad Y_{++}(1,1) := M \\
    Y_{++}((0,0) \leqslant (0,1)) := f_{\varSigma'_+}, \quad 
    Y_{++}((0,0) \leqslant (1,0)) := (\id_{\varGamma'},1), \\
    \left( f_{++} \right)_{(0,1)} := f_{M^{\rmh}_+}, \quad
    \left( f_{++} \right)_{(1,0)} := f_{M^{\rmv}_+}, \quad 
    \left( f_{++} \right)_{(1,1)} := \id_M, 
   \end{gather*}
   can be completed to a compatible system of boundary identifications ${(Y_{++},f_{++})}$ around ${\partial^2_{++} M}$, and the assignment
   \begin{gather*}
    F_{++}((0,0) \leqslant (0,1)) := F_{\varSigma'_+} \times {\id_{\R_-}}, \quad 
    F_{++}((0,0) \leqslant (1,0)) := {\id_{\R_-}} \times {\id_{\varGamma'}} \times F_{I_+}, \\
    F_{++}((0,1) \leqslant (1,1)) := F_{M^{\rmh}_+}, \quad 
    F_{++}((1,0) \leqslant (1,1)) := F_{M^{\rmv}_+}
   \end{gather*}
   can be completed to a compatible system of collars ${F_{++}}$ for ${(Y_{++},f_{++})}$.
 \end{enumerate}
\end{definition}

As for cobordisms, we always suppress boundary identifications and collars from the notation. Two cobordisms with corners ${M}$ and ${M'}$ from ${\varSigma}$ to ${\varSigma'}$ are \textit{isomorphic} if there exists an isomorphism of oriented ${\two}$-man\-i\-folds ${f : M \rightarrow M'}$ such that:
\begin{enumerate}
 \item ${f \circ f_{M^{\rmh}_-} = f_{M'^{\rmh}_-}}$ and ${f \circ f_{M^{\rmh}_+} = f_{M'^{\rmh}_+}}$;
 \item ${f \circ f_{M^{\rmv}_-} = f_{M'^{\rmv}_-}}$ and ${f \circ f_{M^{\rmv}_+} = f_{M'^{\rmv}_+}}$;
 \item ${f \circ F_{M_{--}}}$ and ${f \circ F_{M_{+-}}}$ agree with ${F_{M'_{--}}}$ and ${F_{M'_{+-}}}$ in a neighborhood of ${\{ 0 \} \times \varGamma \times \{ 0 \}}$;
 \item ${f \circ F_{M_{-+}}}$ and ${f \circ F_{M_{++}}}$ agree with ${F_{M'_{+-}}}$ and ${F_{M'_{++}}}$ in a neighborhood of ${\{ 0 \} \times \varGamma' \times \{ 0 \}}$.
\end{enumerate}
Such an ${f}$ is called an \textit{isomorphism of cobordisms with corners}.

\begin{remark}\label{R:trivial_cobordism_with_corners}
 Every ${d-1}$-di\-men\-sion\-al cobordism ${\varSigma}$ from ${\varGamma}$ to ${\varGamma'}$ determines a ${d}$-di\-men\-sion\-al cobordism with corners ${\varSigma \times I}$ from ${\varSigma}$ to itself called the \textit{trivial cobordism with corners on ${\varSigma}$}, which is given by
 \begin{gather*}
  (\varSigma \times I,(\id_{\varSigma},0),(\id_{\varSigma},1),f_{\varSigma_-} \times {\id_I},f_{\varSigma_+} \times {\id_I}, \\
  {\id_{\varSigma}} \times F_{I_-},{\id_{\varSigma}} \times F_{I_+},F_{\varSigma_-} \times {\id_I},F_{\varSigma_+} \times {\id_I}),
 \end{gather*}
 where again we are using the notation of Remark \ref{R:trivial_cobordism}.
\end{remark}

Figure \ref{F:trivial_cobordism_with_corners_over_annulus} represents the trivial cobordism with corners ${I \times S^1 \times I}$ on the annulus ${I \times S^1}$, which is itself the trivial cobordism on the circle ${S^1}$, together with its boundary identifications $f_{(I \times S^1 \times I)^{\rmh}_-} = (\id_{I \times S_1},0)$, $f_{(I \times S^1 \times I)^{\rmh}_+} = (\id_{I \times S_1},1)$, $f_{(I \times S^1 \times I)^{\rmv}_-} = (0,\id_{S_1 \times I})$, and $f_{(I \times S^1 \times I)^{\rmv}_+} = (1,\id_{S_1 \times I})$.

\begin{figure}[htb]\label{F:trivial_cobordism_with_corners_over_annulus}
 \centering
 \includegraphics{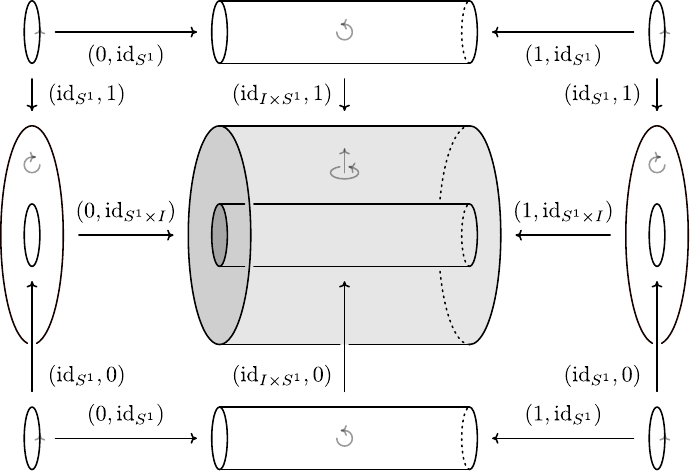}
 \caption{Boundary identifications for the cobordism with corners ${I \times S^1 \times I}$.}
\end{figure}

\begin{definition}\label{D:gluing}
 If ${\varSigma}$ is a ${d-1}$-di\-men\-sion\-al cobordism from ${\varGamma}$ to ${\varGamma'}$, and if ${\varSigma'}$ is a ${d-1}$-di\-men\-sion\-al cobordism from ${\varGamma'}$ to ${\varGamma''}$, then the \textit{gluing of ${\varSigma}$ to ${\varSigma'}$ along ${\varGamma}$} is the ${d-1}$-di\-men\-sion\-al cobordism ${\varSigma \cup_{\varGamma'} \varSigma'}$ from ${\varGamma}$ to ${\varGamma''}$ given by
 \[
  (\varSigma \cup_{\varGamma'} \varSigma',f_{\varSigma_-},f_{\varSigma'_+},F_{\varSigma_-},F_{\varSigma'_+}) 
 \]
 with gluing data ${(f_{\varSigma_+},f_{\varSigma'_-},F_{\varSigma_+},F_{\varSigma'_-})}$.
\end{definition}


\begin{definition}\label{D:horizontal_gluing}
 If ${\varSigma}$ and ${\varSigma''}$ are ${d-1}$-di\-men\-sion\-al cobordisms from ${\varGamma}$ to ${\varGamma'}$, if ${\varSigma'}$ and ${\varSigma'''}$ are ${d-1}$-di\-men\-sion\-al cobordisms from ${\varGamma'}$ to ${\varGamma''}$, if ${M}$ is a ${d}$-di\-men\-sion\-al cobordism with corners from ${\varSigma}$ to ${\varSigma''}$, and if 
 ${M'}$ is a ${d}$-di\-men\-sion\-al cobordism with corners from ${\varSigma'}$ to ${\varSigma'''}$, then the \textit{horizontal gluing of ${M}$ to ${M'}$ along ${\varGamma' \times I}$} is the ${d}$-di\-men\-sion\-al cobordism with corners ${M \cup_{\varGamma' \times I} M'}$ from ${\varSigma \cup_{\varGamma'} \varSigma'}$ to ${\varSigma'' \cup_{\varGamma'} \varSigma'''}$ given by
 \begin{gather*}
  (M \cup_{\varGamma' \times I} M',f_{M^{\rmh}_-} \cup_{\varGamma'} f_{M'^{\rmh}_-}, f_{M^{\rmh}_+} \cup_{\varGamma'} f_{M'^{\rmh}_+}, f_{M^{\rmv}_-}, f_{M'^{\rmv}_+}, \\
  F_{M^{\rmh}_-} \cup_{\varGamma' \times \R_+} F_{M'^{\rmh}_-}, F_{M^{\rmh}_+} \cup_{\varGamma' \times \R_-} F_{M'^{\rmh}_+}, F_{M^{\rmv}_-}, F_{M'^{\rmv}_+})
 \end{gather*}
 with gluing data ${(f_{M^{\rmv}_+},f_{M'^{\rmv}_-},F_{M^{\rmv}_+},F_{M'^{\rmv}_-})}$.
\end{definition}

In order to define vertical gluings, we need to make a choice. Indeed, vertical boundary identifications are more rigid than horizontal ones, because the form of their sources is fixed. This means we need to specify how to identify the gluing of two copies of a trivial vertical cobordism with a single copy. To this end, let us fix here once and for all a diffeomorphism ${u : I \rightarrow I}$ satisfying:
\begin{enumerate}
 \item ${u(t) = \frac{t}{2}}$ for all ${t \leq \frac{1}{4}}$;
 \item ${u(t) = \frac{t+1}{2}}$ for all ${t \geq \frac{3}{4}}$; 
 \item ${u(\frac 12) = \frac 12}$.
\end{enumerate}
Then, if ${X}$ is a horizontal manifold with boundary, and if ${(X \times I) \cup_X (X \times I)}$ is the ${\two}$-man\-i\-fold otained by gluing vertically ${(X \times I)}$ to itself along ${X}$ using the gluing data ${((\id_X,1),(\id_X,0),F_{I_+},F_{I_-})}$, we denote with ${u_X}$ the isomorphism
\[
 \begin{array}{rccc}
  u_X : & X \times I & \rightarrow & (X \times I) \cup_X (X \times I) \\
  & (x,t) & \mapsto & \begin{cases}
                       [-1,(x,2u(t))] & t \leq \frac 12, \\
                       [+1,(x,2u(t)-1)] & t \geq \frac 12.
                      \end{cases}
 \end{array}
\]

\begin{definition}\label{D:vertical_gluing}
 If ${\varSigma}$, ${\varSigma'}$, and ${\varSigma''}$ are ${d-1}$-di\-men\-sion\-al cobordisms from ${\varGamma}$ to ${\varGamma'}$, if ${M}$ is a ${d}$-di\-men\-sion\-al cobordism with corners from ${\varSigma}$ to ${\varSigma'}$, and if ${M'}$ is a ${d}$-di\-men\-sion\-al cobordism with corners from ${\varSigma'}$ to ${\varSigma''}$, then the \textit{vertical gluing of ${M}$ to ${M'}$ along ${\varSigma'}$} the ${d}$-di\-men\-sion\-al cobordism with corners ${M \cup_{\varSigma'} M'}$ from ${\varSigma}$ to ${\varSigma''}$ given by
 \begin{gather*}
  (M \cup_{\varSigma'} M',f_{M^{\rmh}_-},f_{M'^{\rmh}_+},
  (f_{M^{\rmv}_-} \cup_{\varGamma} f_{M'^{\rmv}_-}) \circ u_{\varGamma},
  (f_{M^{\rmv}_+} \cup_{\varGamma'} f_{M'^{\rmv}_+}) \circ u_{\varGamma'}, \\
  F_{M^{\rmh}_-},F_{M'^{\rmh}_+},
  (F_{M^{\rmv}_-} \cup_{\R_+ \times \varGamma} F_{M'^{\rmv}_-}) \circ u_{\R_+ \times \varGamma},
  (F_{M^{\rmv}_+} \cup_{\R_- \times \varGamma'} F_{M'^{\rmv}_+}) \circ u_{\R_- \times \varGamma'})
 \end{gather*}
 with gluing data ${(f_{M^{\rmh}_+}, f_{M'^{\rmh}_-}, F_{M^{\rmh}_+}, F_{M'^{\rmh}_-})}$.
\end{definition}

The operation of disjoint union extends very naturally to cobordisms and cobordisms with corners, and the reader can easily figure out how to obtain boundary identifications and collars from those of components. However, we point out a small detail which is essentially negligible until the introduction of gradings: when ${X}$ and ${X'}$ are topological spaces, then their disjoint unions 
\begin{gather*}
 X \sqcup X' = (\{ -1 \} \times X) \cup (\{ +1 \} \times X'), \\
 X' \sqcup X = (\{ -1 \} \times X') \cup (\{ +1 \} \times X)\phantom{,}
\end{gather*}
are actually given by different spaces, unless ${X = X'}$. Of course, we have a natural map 
\[
 \begin{array}{rccc}
  \tau_{X,X'} : & X \sqcup X' & \rightarrow & X' \sqcup X \\
  & (-1,x) & \mapsto & (+1,x), \\
  & (+1,x') & \mapsto & (-1,x')
 \end{array}
\]
which realizes a homeomorphism, but the associated cobordisms reversing the order of disjoint unions do play a role, although a minor one, in our construction.

\begin{definition}\label{D:flip_cobordism}
 If ${\varGamma}$ and ${\varGamma'}$ are ${d-2}$-di\-men\-sion\-al closed manifolds, then we denote with ${I \ttimes (\varGamma \sqcup \varGamma')}$ the ${d-1}$-di\-men\-sion\-al cobordism from ${\varGamma \sqcup \varGamma'}$ to ${\varGamma' \sqcup \varGamma}$ given by
 \[
  (I \times (\varGamma \sqcup \varGamma'),(0,\id_{\varGamma \sqcup \varGamma'}),(1,\tau_{\varGamma',\varGamma}),F_{I_-} \times {\id_{\varGamma \sqcup \varGamma'}},F_{I_+} \times \tau_{\varGamma',\varGamma}).
 \]
\end{definition}

Figure \ref{F:flip_cobordism} represents schematically the cobordism ${I \ttimes (\varGamma \sqcup \varGamma')}$ together with its boundary identifications.

\begin{figure}[htb]\label{F:flip_cobordism}
 \centering
 \includegraphics{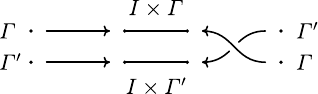}
 \caption{Schematic representation of the cobordism ${I \ttimes (\varGamma \sqcup \varGamma')}$.}
\end{figure}

Analogously, we have cobordisms with corners reversing the order of disjoint unions.

\begin{definition}\label{D:flip_cobordism_with_corners}
 If $\varSigma$ is a $d-1$-di\-men\-sion\-al cobordism from $\varGamma$ to $\varGamma''$, and if $\varSigma'$ is a $d-1$-di\-men\-sion\-al cobordism from $\varGamma'$ to $\varGamma'''$, then we denote with $(\varSigma \sqcup \varSigma') \ttimes I$ the $d$-di\-men\-sion\-al cobordism from $\varSigma \tsqcup \varSigma'$ to $\varSigma' \tsqcup \varSigma$ given by 
 \begin{gather*}
  ((\varSigma \sqcup \varSigma') \times I, (\id_{\varSigma \sqcup \varSigma'},0), (\tau_{\varSigma',\varSigma},1), f_{(\varSigma \sqcup \varSigma')_-} \times {\id_I}, ( f_{(\varSigma \sqcup \varSigma')_+} \circ \tau_{\varGamma''',\varGamma''} ) \times {\id_I}, \\  
  (\id_{\varSigma \sqcup \varSigma'},F_{I_-}), (\tau_{\varSigma',\varSigma},F_{I_+}), (F_{(\varSigma \sqcup \varSigma')_-} \times {\id_I}, ( F_{(\varSigma \sqcup \varSigma')_+} \circ \tau_{\varGamma''',\varGamma''} ) \times {\id_I} ),
 \end{gather*}
 where $\varSigma \tsqcup \varSigma'$ denotes the $d-1$-di\-men\-sion\-al cobordism from $\varGamma \sqcup \varGamma'$ to $\varGamma''' \sqcup \varGamma''$ given by
 \[
  (\varSigma \sqcup \varSigma',f_{(\varSigma \sqcup \varSigma')_-}, f_{(\varSigma \sqcup \varSigma')_+} \circ \tau_{\varGamma''',\varGamma''},F_{(\varSigma \sqcup \varSigma')_-},F_{(\varSigma \sqcup \varSigma')_+} \circ \tau_{\varGamma''',\varGamma''}),
 \]
 and where $\varSigma' \tsqcup \varSigma$ denotes the $d-1$-di\-men\-sion\-al cobordism from $\varGamma \sqcup \varGamma'$ to $\varGamma''' \sqcup \varGamma''$ given by
 \[
  (\varSigma' \sqcup \varSigma,f_{(\varSigma' \sqcup \varSigma)_-} \circ \tau_{\varGamma,\varGamma'}, f_{(\varSigma' \sqcup \varSigma)_+},F_{(\varSigma' \sqcup \varSigma)_-} \circ \tau_{\varGamma,\varGamma'},F_{(\varSigma' \sqcup \varSigma)_+}).
 \]
\end{definition}

Figure \ref{F:flip_cobordism_with_corners} represents schematically the cobordism with corners ${(\varSigma \sqcup \varSigma') \ttimes I}$ together with its boundary identifications.

\begin{figure}[htb]\label{F:flip_cobordism_with_corners}
 \centering
 \includegraphics{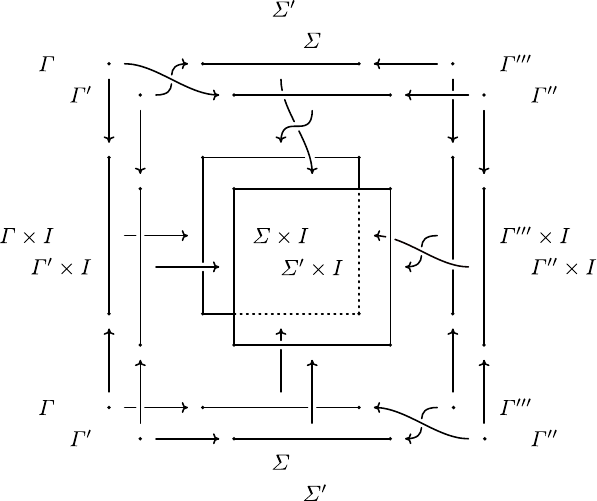}
 \caption{Schematic representation of the cobordism with corners ${(\varSigma \sqcup \varSigma') \ttimes I}$.}
\end{figure}

\chapter{Signature defects}\label{A:Maslov}

In this appendix we discuss Lagrangian subspaces and Maslov indices in the case of surfaces with boundary. This is done in order to equip cobordisms with decorations whose role is to fix the framing anomaly associated with quantum invariants of Witten-Reshetikhin-Turaev type, which is related to signature defects studied by Wall in \cite{W69}. Throughout this chapter, the term manifold will stand for compact oriented topological manifold, and coefficients for homology and cohomology groups will always be real.

\section{Intersection pairings}

Let us consider an ${n}$-\-di\-men\-sion\-al manifold with boundary ${X}$. For every integer ${0 \leqslant k \leqslant n}$, the \textit{relative intersection pairing} ${\pitchfork_X^{\partial} : H_k(X) \times H_{n-k}(X,\partial X) \rightarrow \R}$ is defined by the formula
\[
 a \pitchfork_X^{\partial} b := \langle D_X^{-1}(b),a \rangle
\]
for all relative homology classes ${a \in H_k(X)}$ and ${b \in H_{n-k}(X,\partial X)}$, where
\[
 \begin{array}{rccc}
  D_X : & H^k(X) & \rightarrow & H_{n-k}(X,\partial X) \\
  & \varphi & \mapsto & [X] \cappr \varphi
 \end{array}
\]
is the Poincaré duality isomorphism, and where ${[X] \in H_n(X, \partial X)}$ is the fundamental class of ${X}$. Remark that, since ${D_X}$ is an isomorphism, and since the universal coefficient pairing is non-degenerate, relative intersection pairings are always non-degenerate. For every integer ${0 \leqslant k \leqslant n}$, the \textit{absolute intersection pairing} ${\pitchfork_X : H_k(X) \times H_{n-k}(X) \rightarrow \R}$ is defined by the formula
\[
 a \pitchfork_X b := a \pitchfork_X^{\partial} q_{\partial X *}(b)
\]
for all homology classes ${a \in H_k(X)}$ and ${b \in H_{n-k}(X)}$, where 
\[
 q_{\partial X *} : H_{n-k}(X) \rightarrow H_{n-k}(X,\partial X)
\]
is induced by inclusion. Remark that, since ${\pitchfork_X^{\partial}}$ is non-degenerate, we immediately have
\[
 H_k(X)^{\perp} = \ker(q_{\partial X *}) = \im(i_{\partial X *}) \subset H_{n-k}(X),
\]
where 
\[
 i_{\partial X *} : H_{n-k}(\partial X ) \rightarrow H_{n-k}(X)
\]
is induced by inclusion. Now let us fix for the rest of this section an ${n}$-\-di\-men\-sion\-al manifold with boundary ${X}$, and let us establish a few technical results about intersection pairings which will be used later in order to discuss Lagrangian subspaces.

\begin{lemma}\label{L:(anti)symmetry}
 The absolute intersection pairing ${\pitchfork_X}$ satisfies
 \[
  a \pitchfork_X b = (-1)^{k(n-k)} b \pitchfork_X a
 \]
 for all homology classes ${a \in H_k(X)}$ and ${b \in H_{n-k}(X)}$.
\end{lemma}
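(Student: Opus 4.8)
The plan is to translate the statement into the graded commutativity of the cup product via Poincar\'e--Lefschetz duality. Write $n$ for $\dim X$ and fix $a \in H_k(X)$ and $b \in H_{n-k}(X)$. Since capping with the fundamental class $[X] \in H_n(X,\partial X)$ defines isomorphisms $H^k(X,\partial X) \cong H_{n-k}(X)$ and $H^k(X) \cong H_{n-k}(X,\partial X)$, there exist unique classes $\psi_a \in H^{n-k}(X,\partial X)$ and $\psi_b \in H^k(X,\partial X)$ with $[X] \cappr \psi_a = a$ and $[X] \cappr \psi_b = b$. The first step would be to record the naturality identity $q_{\partial X *}([X] \cappr \psi) = [X] \cappr j^*(\psi)$ for $\psi \in H^*(X,\partial X)$, where $j^* \colon H^*(X,\partial X) \to H^*(X)$ is the canonical map; equivalently $q_{\partial X *} \circ D_X' = D_X \circ j^*$ for the two duality isomorphisms. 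Applying this to $\psi_a$ and $\psi_b$ gives $D_X^{-1}(q_{\partial X *}(a)) = j^*(\psi_a)$ and $D_X^{-1}(q_{\partial X *}(b)) = j^*(\psi_b)$, so that $a \pitchfork_X b = \langle j^*(\psi_b),a\rangle$ and $b \pitchfork_X a = \langle j^*(\psi_a),b\rangle$ by the definitions of $\pitchfork_X$ and $\pitchfork_X^\partial$.

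Next I would invoke the cap--cup adjunction $\langle \alpha, \mu \cappr \beta\rangle = \langle \alpha \cuppr \beta, \mu\rangle$, together with the fact that the relative cup products factor through $j^*$ (so that $j^*(\psi_b) \cuppr \psi_a = \psi_b \cuppr \psi_a$ and $j^*(\psi_a) \cuppr \psi_b = \psi_a \cuppr \psi_b$ inside $H^n(X,\partial X)$), to rewrite both pairings using the classes $\psi_a$ and $a = [X] \cappr \psi_a$, $b = [X] \cappr \psi_b$:
\[
 a \pitchfork_X b = \langle j^*(\psi_b), [X] \cappr \psi_a\rangle = \langle \psi_b \cuppr \psi_a, [X]\rangle, \qquad
 b \pitchfork_X a = \langle j^*(\psi_a), [X] \cappr \psi_b\rangle = \langle \psi_a \cuppr \psi_b, [X]\rangle.
\]
Finally, graded commutativity of the cup product on $H^*(X,\partial X)$ gives $\psi_b \cuppr \psi_a = (-1)^{k(n-k)} \psi_a \cuppr \psi_b$ (degrees $k$ and $n-k$), and evaluating against $[X]$ yields $a \pitchfork_X b = (-1)^{k(n-k)} (b \pitchfork_X a)$, as claimed.

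The only real care needed is bookkeeping of the standard sign conventions: the precise form of the cap--cup adjunction, the naturality square relating $D_X$, $D_X'$, $j^*$ and $q_{\partial X *}$, and the compatibility of the various relative cup products. Since the target sign $(-1)^{k(n-k)}$ is itself invariant under $k \leftrightarrow n-k$, there is no genuine conceptual obstruction, and one could alternatively cite the corresponding statement for the intersection form of the pair $(X,\partial X)$ from a standard reference; I expect verifying these conventions, rather than any substantive point, to be the fiddliest part of writing this out.
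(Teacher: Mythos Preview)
Your proposal is correct and follows essentially the same route as the paper: both arguments pass to Poincar\'e--Lefschetz duals, use the naturality square relating the two duality isomorphisms via $q_{\partial X*}$ and $q_{\partial X}^* = j^*$, and then invoke the cap--cup adjunction to reduce the identity to graded commutativity of the cup product evaluated on $[X]$. Your version is arguably a little cleaner in that you name both duality maps explicitly and take both duals $\psi_a,\psi_b$ in $H^*(X,\partial X)$, whereas the paper's write-up leaves some of this implicit; but there is no substantive difference in strategy.
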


\begin{proof}
 Naturality of relative cap products gives 
 \[
  [X] \cappr q_{\partial X}^*(\varphi) = q_{\partial X*}([X] \cappr \varphi)
 \]
 for every relative cohomology class ${\varphi \in H^k(X,\partial X)}$. This means
 \begin{align*}
  a \pitchfork_X b 
  &= \langle D_X^{-1}(a),q_{\partial X*}(b) \rangle \\
  &= \langle q_{\partial X}^*(D_X^{-1}(a)),b \rangle \\
  &= \langle D_X^{-1}(q_{\partial X}^*(a)),b \rangle \\
  &= \langle D_X^{-1}(b) \cuppr D_X^{-1}(q_{\partial X}^*(a)),[X] \rangle \\
  &= (-1)^{k(n-k)} \langle D_X^{-1}(q_{\partial X}^*(a)) \cuppr D_X^{-1}(b),[X] \rangle \\
  &= (-1)^{k(n-k)} \langle D_X^{-1}(b),q_{\partial X*}(a) \rangle \\
  &= (-1)^{k(n-k)} b \pitchfork_X a
 \end{align*}
 for all homology classes ${a \in H_k(X)}$ and ${b \in H_{n-k}(X)}$.
\end{proof}

We move on to study how relative intersection pairings behave under boundary homomorphisms coming from long exact sequences in homology.

\begin{lemma}\label{L:boundary}
 If $i_{\partial X *} : H_k(\partial X) \rightarrow H_k(X)$ is induced by inclusion, and if $\partial_* : H_{n-k}(X,\partial X) \rightarrow H_{n-k-1}(\partial X)$ comes from the long exact sequence of the pair $(X,\partial X)$, then
 \[
  i_{\partial X *}(a) \pitchfork_X^{\partial} b = (-1)^k a \pitchfork_{\partial X} \partial_* (b)  
 \]
 for all relative homology classes ${a \in H_k(\partial X)}$ and ${b \in H_{n-k}(X,\partial X)}$.
\end{lemma}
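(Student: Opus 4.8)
The plan is to prove this by the same duality bookkeeping used in Lemma A.3, tracking how the Poincaré–Lefschetz isomorphisms for $X$ and for $\partial X$ interact with the connecting homomorphism $\partial_*$ and the inclusion $i_{\partial X}$. The key algebraic fact I will invoke is the compatibility of cap products with the long exact sequence: for the closed $(n-1)$-manifold $\partial X$, Poincaré duality reads $D_{\partial X}:H^{k-1}(\partial X)\xrightarrow{\sim}H_{n-k-1}(\partial X)$, $\varphi\mapsto [\partial X]\cappr\varphi$, and $[\partial X]$ is the image of $[X]\in H_n(X,\partial X)$ under $\partial_*:H_n(X,\partial X)\to H_{n-1}(\partial X)$. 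The standard identity relating these is
\[
 \partial_*\bigl([X]\cappr \varphi\bigr) = (-1)^k\,[\partial X]\cappr (i_{\partial X}^*\varphi)
\]
for $\varphi\in H^k(X)$; equivalently, in terms of the duality maps, $\partial_*\circ D_X = (-1)^k\, D_{\partial X}\circ i_{\partial X}^*$ after the degree shift. I would either quote this from a reference on Poincaré–Lefschetz duality or derive it from the naturality of the cap product and the definition of the connecting map on chains.

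Granting that identity, the computation is a short chain of equalities. Starting from the left-hand side, unwind the definition of $\pitchfork_X^\partial$:
\[
 i_{\partial X *}(a) \pitchfork_X^{\partial} b = \langle D_X^{-1}(b), i_{\partial X *}(a) \rangle.
\]
Write $\varphi := D_X^{-1}(b)\in H^k(X)$, so $b = [X]\cappr\varphi$. By adjointness of $i_{\partial X}^*$ and $i_{\partial X *}$ under the universal coefficient pairing, $\langle \varphi, i_{\partial X *}(a)\rangle = \langle i_{\partial X}^*(\varphi), a\rangle$, with $i_{\partial X}^*(\varphi)\in H^k(\partial X)$. Now I need to relate $i_{\partial X}^*(\varphi)$ to $\partial_*(b)$ via $D_{\partial X}$: applying the boxed identity gives $\partial_*(b) = \partial_*([X]\cappr\varphi) = (-1)^k [\partial X]\cappr(i_{\partial X}^*\varphi) = (-1)^k D_{\partial X}(i_{\partial X}^*\varphi)$, hence $i_{\partial X}^*(\varphi) = (-1)^k D_{\partial X}^{-1}(\partial_*(b))$. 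Therefore
\[
 i_{\partial X *}(a) \pitchfork_X^{\partial} b = (-1)^k \langle D_{\partial X}^{-1}(\partial_*(b)), a\rangle = (-1)^k\, a \pitchfork_{\partial X} \partial_*(b),
\]
which is exactly the claim.

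The main obstacle is pinning down the correct sign and the precise form of the cap-product/connecting-map compatibility, since different conventions for the orientation of $\partial X$ (here the "outward normal first" convention fixed in Appendix A), for the connecting homomorphism, and for the cap product each contribute signs; I would verify the $(-1)^k$ by checking a model case (for instance $X = D^2$ or $X = S^1\times I$ in low degree) rather than trusting a black-box citation. Everything else—adjointness of induced maps under the evaluation pairing, and the definitions of $\pitchfork_X^\partial$ and $\pitchfork_{\partial X}$—is routine and already set up in the preceding part of the appendix.
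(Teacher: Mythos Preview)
Your proposal is correct and follows essentially the same route as the paper: both reduce the statement to the identity $\partial_*\circ D_X = (-1)^k\, D_{\partial X}\circ i_{\partial X}^*$ and then conclude by adjointness of $i_{\partial X}^*$ and $i_{\partial X *}$. The only difference is that the paper actually carries out an explicit chain-level verification of that cap-product/connecting-map identity (expanding $\partial_*([X]\cappr\varphi)$ and $[\partial X]\cappr i_{\partial X}^*(\varphi)$ on a simplex and comparing), whereas you propose to cite it or check it in a model case; given that you flagged the sign convention as the main obstacle, supplying the direct computation as the paper does would make your argument self-contained.
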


\begin{proof}
 If we adopt the notation
 \[
  [X] \cappr \varphi = \varphi \left( \restr{[X]}{[0,\ldots,k]} \right) \cdot \restr{[X]}{[k,\ldots,n]},
 \]
 then we have
 \[
  \partial_*([X] \cappr \varphi) 
  = \sum_{i=0}^{n-k} (-1)^i \varphi \left( \restr{[X]}{[0,\ldots,k]} \right) \cdot \restr{[X]}{[k,\ldots,k \hat{+} i,\ldots,n]}.
 \]
 for every relative cohomology class ${\varphi \in H^k(X,A)}$. But since the fundamental class of ${\partial X}$ satisfies ${[\partial X] = \partial_*[X]}$, we have
 \begin{align*}
  [\partial X] \cappr i_{\partial X}^*(\varphi) 
  &= \sum_{i=0}^k (-1)^i \varphi \left( \restr{[X]}{[0,\ldots,\hat{i},\ldots,k+1]} \right) \cdot \restr{[X]}{[k+1,\ldots,n]} \\
  &+ \sum_{i=k+1}^n (-1)^i \varphi \left( \restr{[X]}{[0,\ldots,k]} \right) \cdot \restr{[X]}{[k,\ldots,\hat{i},\ldots,n]} \\
  &= (-1)^k \varphi \left( \restr{[X]}{[0,\ldots,k]} \right) \cdot \restr{[X]}{[k+1,\ldots,n]} \\
  &+ \sum_{i=k+1}^n (-1)^i \varphi \left( \restr{[X]}{[0,\ldots,k]} \right) \cdot \restr{[X]}{[k,\ldots,\hat{i},\ldots,n]} \\
  &= (-1)^k \sum_{i=0}^{n-k} (-1)^i \varphi \left( \restr{[X]}{[0,\ldots,k]} \right) \cdot \restr{[X]}{[k,\ldots,k \hat{+} i,\ldots,n]}
 \end{align*}
 where the second equality follows from
 \[
   \sum_{i=0}^{k+1} (-1)^i \varphi \left( \restr{[X]}{[0,\ldots,\hat{i},\ldots,k+1]} \right) = 0.
 \]
 In other words, the diagram
 \begin{center}
  \begin{tikzpicture}[descr/.style={fill=white}]
   \node (P0) at (135:{1.5*sqrt(2)}) {$H^k(X)$};
   \node (P1) at (45:{1.5*sqrt(2)}) {$H^k(\partial X)$};
   \node (P2) at (-135:{1.5*sqrt(2)}) {$H_{n-k}(X,\partial X)$};
   \node (P3) at (-45:{1.5*sqrt(2)}) {$H_{n-k-1}(\partial X)$};
   \draw
   (P0) edge[->] node[above] {$i_{\partial X}^*$} (P1)
   (P1) edge[->] node[right] {$(-1)^k \cdot D_{\partial X}$} (P3)
   (P0) edge[->] node[left] {$D_X$} (P2)
   (P2) edge[->] node[below] {$\partial_*$} (P3);
  \end{tikzpicture}
 \end{center}
 is commutative. This gives
 \begin{align*}
  i_{\partial X *}(a) \pitchfork_X^{\partial} b 
  &= \langle D_X^{-1}(b),i_{\partial X *}(a) \rangle \\
  &= \langle i_{\partial X}^*(D_X^{-1}(b)),a \rangle \\
  &= (-1)^k \langle D_{\partial X}^{-1}(\partial_*(b)),a \rangle \\
  &= (-1)^k a \pitchfork_{\partial X} \partial_*(b)
 \end{align*}
 for all relative homology classes ${a \in H_k(\partial X)}$ and ${b \in H_{n-k}(X,\partial X)}$.
\end{proof}

Now let us fix for the rest of this section a closed separating ${n-1}$-\-di\-men\-sion\-al submanifold ${Y_0}$ of ${X}$ disjoint from ${\partial X}$. Then ${Y_0}$ induces a decomposition ${X = X_- \cup X_+}$ for some codimension ${0}$ submanifolds ${X_-}$ and ${X_+}$ of ${X}$ having boundaries ${\partial X_- = \overline{Y_-} \cup Y_0}$ and ${\partial X_+ = \overline{Y_0} \cup Y_+}$ respectively, where ${Y_-}$ and ${Y_+}$ satisfy ${\overline{Y_-} \cup Y_+ = \partial X}$ and ${\overline{Y_-} \cap Y_+ = \varnothing}$. Remark we do not exclude the case ${\partial X = \varnothing}$, and so ${Y_-}$ and ${Y_+}$ may be empty, even simultaneously.

\begin{lemma}\label{L:decompositions}
 If 
 \begin{gather*}
  i_{X_{\pm} *} : H_k(X_{\pm}) \rightarrow H_k(X), \\
  q_{X_{\mp} *} : H_{n-k}(X,\partial X) \rightarrow H_{n-k}(X,X_{\mp} \cup \partial X), \\
  e_{X_{\mp} *} : H_{n-k}(X_{\pm},\partial X_{\pm}) \rightarrow H_{n-k}(X,X_{\mp} \cup \partial X)
 \end{gather*}
 are induced by inclusion, then 
 \[
  i_{X_{\pm} *}(a) \pitchfork_X^{\partial} b = a \pitchfork_{X_{\pm}}^{\partial} e_{X_{\mp} *}^{-1}(q_{X_{\mp} *}(b))  
 \]
 for all relative homology classes ${a \in H_k(X_{\pm})}$ and ${b \in H_{n-k}(X,\partial X)}$.
\end{lemma}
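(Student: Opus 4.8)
The plan is to reduce the claimed identity to a single compatibility statement between Poincaré--Lefschetz duality maps and restriction, exactly in the spirit of the proof of Lemma \ref{L:boundary}. First I would record the standing observation that $e_{X_{\mp} *}$ is an isomorphism, so that the right-hand side of the formula is well defined: this is excision, since excising the interior of $X_{\mp}$ from the pair $(X, X_{\mp} \cup \partial X)$ identifies it with $(X_{\pm}, \partial X_{\pm})$, using that $X_{\pm} \cap (X_{\mp} \cup \partial X) = Y_0 \cup Y_{\pm} = \partial X_{\pm}$.

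Next I would unfold both sides of the asserted equality using the definition $a \pitchfork_X^{\partial} b = \langle D_X^{-1}(b),a \rangle$, together with its analogue in $X_{\pm}$ and the naturality of the Kronecker pairing, $\langle i_{X_{\pm}}^*(\varphi), a \rangle = \langle \varphi, i_{X_{\pm} *}(a) \rangle$. This collapses the statement to the single identity of cohomology classes
\[
 i_{X_{\pm}}^*\bigl(D_X^{-1}(b)\bigr) = D_{X_{\pm}}^{-1}\bigl(e_{X_{\mp} *}^{-1}(q_{X_{\mp} *}(b))\bigr),
\]
equivalently, writing $\varphi := D_X^{-1}(b) \in H^k(X)$,
\[
 e_{X_{\mp} *}\bigl([X_{\pm}] \cappr i_{X_{\pm}}^*(\varphi)\bigr) = q_{X_{\mp} *}\bigl([X] \cappr \varphi\bigr).
\]

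This last equation I would prove by applying naturality of the relative cap product twice. On the one hand, $q_{X_{\mp} *}$ is induced by $\id_X$, viewed as a map of pairs $(X,\partial X) \to (X, X_{\mp} \cup \partial X)$, so $q_{X_{\mp} *}([X] \cappr \varphi) = q_{X_{\mp} *}([X]) \cappr \varphi$. On the other hand, $e_{X_{\mp} *}$ is induced by the inclusion of pairs $(X_{\pm}, \partial X_{\pm}) \hookrightarrow (X, X_{\mp} \cup \partial X)$, whose underlying space map is $i_{X_{\pm}}$, so $e_{X_{\mp} *}([X_{\pm}] \cappr i_{X_{\pm}}^*(\varphi)) = e_{X_{\mp} *}([X_{\pm}]) \cappr \varphi$. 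Hence the identity follows once one knows $e_{X_{\mp} *}([X_{\pm}]) = q_{X_{\mp} *}([X])$, i.e.\ that the excision isomorphism carries the fundamental class of $X_{\pm}$ to the image of the fundamental class of $X$ in $H_n(X, X_{\mp} \cup \partial X)$. I expect this compatibility of fundamental classes --- which is really the statement that the orientation of $X_{\pm}$ is the one restricted from $X$, and which rests on the uniqueness of fundamental classes of manifolds with boundary --- to be the only point requiring genuine care; the rest is routine diagram chasing. Finally, the ``$-$'' case is obtained from the ``$+$'' case by interchanging the roles of $X_-$ and $X_+$, so no separate argument is needed.
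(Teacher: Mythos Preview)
Your proof is correct and follows essentially the same approach as the paper: both reduce the identity, via naturality of the Kronecker pairing, to the compatibility $e_{X_{\mp} *}\bigl([X_{\pm}] \cappr i_{X_{\pm}}^*(\varphi)\bigr) = q_{X_{\mp} *}\bigl([X] \cappr \varphi\bigr)$, and then prove this using naturality of the relative cap product together with the relation $e_{X_{\mp} *}([X_{\pm}]) = q_{X_{\mp} *}([X])$ between fundamental classes. Your version is slightly more explicit in spelling out the excision argument that makes $e_{X_{\mp} *}$ invertible, which the paper leaves implicit.
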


\begin{proof}
 Naturality of relative cap products gives 
 \[
  e_{X_{\mp} *}([X_{\pm}] \cappr i_{X_{\pm}}^*(\varphi)) = e_{X_{\mp} *}([X_{\pm}]) \cappr \varphi, \quad q_{X_{\mp} *}([X] \cappr \varphi) = q_{X_{\mp} *}([X]) \cappr \varphi
  \]
  for every cohomology class ${\varphi \in H^k(X)}$. But since the fundamental class of ${X_{\pm}}$ satisfies ${[X_{\pm}] = e^{-1}_{X_{\mp} *}(q_{X_{\mp} *}([X]))}$, we have
 \begin{align*}
  [X_{\pm}] \cappr i_{X_{\pm}}^*(\varphi) &= e_{X_{\mp} *}^{-1}(e_{X_{\mp} *}([X_{\pm}]) \cappr \varphi) \\
  &= e_{X_{\mp} *}^{-1}(q_{X_{\mp} *}([X]) \cappr \varphi) \\
  &= e_{X_{\mp} *}^{-1}(q_{X_{\mp} *}([X] \cappr \varphi)),
 \end{align*} 
 which implies
 \begin{align*}
  i_{X_{\pm} *}(a) \pitchfork_X^{\partial} b 
  &= \langle D_X^{-1}(b),i_{X_{\pm} *}(a) \rangle \\
  &= \langle i_{X_{\pm}}^*(D_X^{-1}(b)),a \rangle \\
  &= \langle D_{X_{\pm}}^{-1}(e_{X_{\mp} *}^{-1}(q_{X_{\mp} *}(b))),a \rangle \\
  &= a \pitchfork_{X_{\pm}}^{\partial} e_{X_{\mp} *}^{-1}(q_{X_{\mp} *}(b))
 \end{align*} 
 for all relative homology classes ${a \in H_k(X_{\pm})}$ and ${b \in H_{n-k}(X,\partial X)}$.
\end{proof}

\begin{remark}\label{R:isometry}
 If $i_{X_{\pm}} : X_{\pm} \hookrightarrow X$ denotes inclusion, then Lemma \ref{L:decompositions} immediately implies that ${i_{X_{\pm} *}}$ preserves absolute intersection pairings. Indeed, if $q_{\partial X} : (X,\varnothing) \hookrightarrow (X,\partial X)$ and $q_{\partial X_{\pm}} : (X_{\pm},\varnothing) \hookrightarrow (X_{\pm},\partial X_{\pm})$ denote inclusions, then
 \begin{align*}
  i_{X_{\pm} *}(a) \pitchfork_X i_{X_{\pm} *}(b)
  &= i_{X_{\pm} *}(a) \pitchfork_X^{\partial} q_{\partial X *}(i_{X_{\pm} *}(b)) \\
  &= a \pitchfork_{X_{\pm}}^{\partial} q_{\partial X_{\pm} *}(b) \\
  &= a \pitchfork_{X_{\pm}} b,
 \end{align*}
 for all homology classes ${a \in H_k(X_{\pm})}$ and ${b \in H_{n-k}(X_{\pm})}$.
\end{remark}

Now, we will characterize some annihilators for the absolute intersection pairing ${\pitchfork_X}$ which are related to the decomposition ${X = X_- \cup X_+}$. Let ${i_{X_{\pm}} : X_{\pm} \hookrightarrow X}$, ${i_{Y_0} : Y_0 \hookrightarrow X}$, and ${i_{Y_{\pm}} : Y_{\pm} \hookrightarrow X}$ denote inclusions.

\begin{lemma}\label{L:annihilators}
 The absolute intersection pairing ${\pitchfork_X}$ satisfies
 \begin{gather*}
  \im(i_{X_{\pm *}})^{\perp} = \im(i_{X_{\mp} *}) + \im(i_{Y_{\pm} *}), \\
  \left( \im(i_{X_{- *}}) + \im(i_{X_{+ *}}) \right)^{\perp} = \im(i_{Y_0 *}) + \im(i_{Y_- *}) + \im(i_{Y_+ *}).
 \end{gather*}
\end{lemma}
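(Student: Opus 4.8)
```latex
\begin{proof}[Proof plan]
The plan is to deduce both equalities from the results already established about relative intersection pairings, namely Lemmas \ref{L:(anti)symmetry}, \ref{L:boundary}, and \ref{L:decompositions} (together with Remark \ref{R:isometry}), combined with the long exact sequences of the pairs involved. The key point is that both statements are genuinely about the \emph{absolute} intersection pairing $\pitchfork_X$, whose left kernel against $H_{n-k}(X)$ is always $\im(i_{\partial X *})$; so one should first rephrase each claimed annihilator as the preimage under $q_{\partial X *} : H_{n-k}(X) \to H_{n-k}(X,\partial X)$ of an appropriate subspace of the relative homology, using non-degeneracy of $\pitchfork_X^\partial$. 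The inclusion ``$\supseteq$'' in each case will be the easy direction, proved by a direct computation showing the relevant intersection numbers vanish; the inclusion ``$\subseteq$'' is the substantive one and will come from a dimension count.

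First I would handle the first equality, $\im(i_{X_\pm *})^\perp = \im(i_{X_\mp *}) + \im(i_{Y_\pm *})$. For ``$\supseteq$'': a class in $\im(i_{X_\mp *})$ pairs trivially with $\im(i_{X_\pm *})$ because, by Lemma \ref{L:decompositions}, the pairing $i_{X_\pm *}(a) \pitchfork_X^\partial b$ only sees the image of $b$ in $H_{n-k}(X, X_\mp \cup \partial X)$, which kills classes pushed forward from $X_\mp$; and a class in $\im(i_{Y_\pm *})$ pairs trivially because it factors through $i_{\partial X *}$ (as $Y_\pm \subset \partial X$), hence lies in $H_k(X)^\perp \subseteq \im(i_{X_\pm *})^\perp$. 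For ``$\subseteq$'': I would use Lemma \ref{L:decompositions} to identify $\im(i_{X_\pm *})^\perp$ inside $H_{n-k}(X)$ with $q_{\partial X *}^{-1}$ of the kernel of $H_{n-k}(X,\partial X) \to H_{n-k}(X_\pm, \partial X_\pm)^\vee$-type map; more directly, $i_{X_\pm *}(a) \pitchfork_X b = 0$ for all $a$ forces $e_{X_\mp *}^{-1}(q_{X_\mp *}(b))$ to be zero by non-degeneracy of $\pitchfork_{X_\pm}^\partial$, i.e.\ $b$ maps to $0$ in $H_{n-k}(X, X_\mp \cup \partial X)$. Then the long exact sequence of the triple, or excision $H_{n-k}(X, X_\mp \cup \partial X) \cong H_{n-k}(X_\pm, \partial X_\pm)$ together with the Mayer--Vietoris / pair sequences relating $H_{n-k}(X)$, $H_{n-k}(X_\mp)$, $H_{n-k}(\partial X)$, and $H_{n-k}(X, X_\mp \cup \partial X)$, shows every such $b$ is a sum of a class from $X_\mp$ and a class from $\partial X$; and the $\partial X$-part can be arranged to come from $Y_\pm$ since $Y_\mp \subset \partial X_\mp$ already contributes to $\im(i_{X_\mp *})$ after a correction. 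This bookkeeping with the three exact sequences is where care is needed.

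Next I would treat the second equality, $(\im(i_{X_- *}) + \im(i_{X_+ *}))^\perp = \im(i_{Y_0 *}) + \im(i_{Y_- *}) + \im(i_{Y_+ *})$. The inclusion ``$\subseteq$'' follows by intersecting the two annihilators already computed: $(\im(i_{X_-*}) + \im(i_{X_+*}))^\perp = \im(i_{X_-*})^\perp \cap \im(i_{X_+*})^\perp = (\im(i_{X_+*}) + \im(i_{Y_-*})) \cap (\im(i_{X_-*}) + \im(i_{Y_+*}))$, and one checks that any class lying in both of these is supported on $Y_0 \cup Y_- \cup Y_+$ — this again uses a Mayer--Vietoris argument for $X = X_- \cup X_+$ glued along $Y_0$, since a class in $\im(i_{X_+*}) \cap \im(i_{X_-*})$ is precisely one represented on the intersection $X_- \cap X_+ = Y_0$. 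For ``$\supseteq$'': classes from $Y_-$ and $Y_+$ lie in $\im(i_{\partial X*}) = H_k(X)^\perp$, hence annihilate everything, and classes from $Y_0$ annihilate $\im(i_{X_\pm *})$ by Lemma \ref{L:boundary} (pushing forward from $Y_0 = \partial X_\pm \setminus \partial X$ into $X_\pm$ and using that $\partial_*$ of a relative class lands in $H_*(\partial X_\pm)$, which pairs with $Y_0$-classes inside the boundary, giving zero after inclusion into $X$). The main obstacle throughout is keeping the various long exact sequences of pairs and triples, and the excision isomorphisms, correctly aligned so that the surjectivity/exactness statements genuinely yield the claimed sums rather than merely containments up to boundary corrections; once the maps are set up this is a standard but somewhat intricate diagram chase, and the intersection-pairing lemmas above do all the real geometric work.
\end{proof}
```
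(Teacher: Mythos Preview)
Your approach is essentially the same as the paper's: for the first equality you use Lemma~\ref{L:decompositions} and non-degeneracy of $\pitchfork_{X_\pm}^\partial$ to identify $\im(i_{X_\pm *})^\perp$ with the kernel of $H_{n-k}(X) \to H_{n-k}(X, X_\mp \cup \partial X)$, then read off that kernel from the long exact sequence of the pair $(X, X_\mp \cup Y_\pm)$ (noting $X_\mp \cup \partial X = X_\mp \cup Y_\pm$ since $Y_\mp \subset X_\mp$); for the second equality you intersect the two annihilators just computed and simplify via the modular law and Mayer--Vietoris, exactly as the paper does.

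Two streamlining remarks. First, you do not need to split either equality into two containments: the paper's chain of identifications gives equality at every step, so the separate ``$\supseteq$'' arguments are redundant. Second, your invocation of Lemma~\ref{L:boundary} to show $\im(i_{Y_0 *})$ annihilates $\im(i_{X_\pm *})$ is unnecessary---since $Y_0 \subset X_\mp$, one has $\im(i_{Y_0 *}) \subset \im(i_{X_\mp *}) \subset \im(i_{X_\pm *})^\perp$ directly from the first equality.
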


\begin{proof}
 Lemma \ref{L:decompositions} gives
 \begin{align*}
  i_{X_{\pm}*}(a) \pitchfork_X b 
  &= i_{X_{\pm}*}(a) \pitchfork_X^{\partial} q_{\partial X *}(b) \\ 
  &= a \pitchfork_{X_{\pm}}^{\partial} e_{X_{\mp} *}^{-1}(q_{X_{\mp} *}(q_{\partial X *}(b)))
 \end{align*}
 for all homology classes ${a \in H_k(X_{\pm})}$ and ${b \in H_{n-k}(X)}$, where 
 \begin{gather*}
  q_{\partial X *} : H_{n-k}(X) \rightarrow H_{n-k}(X,\partial X), \\
  q_{X_{\mp} *} : H_{n-k}(X,\partial X) \rightarrow H_{n-k}(X,X_{\mp} \cup \partial X), \\
  e_{X_{\mp} *} : H_{n-k}(X_{\pm},\partial X_{\pm}) \rightarrow H_{n-k}(X,X_{\mp} \cup \partial X)
 \end{gather*}
 are induced by inclusion. Therefore, since ${\pitchfork_{X_{\pm}}^{\partial}}$ is non-degenerate, we have 
 \begin{align*}
  \im(i_{X_{\pm *}})^{\perp} 
  &= \ker(e_{X_{\mp} *}^{-1} \circ q_{X_{\mp} *} \circ q_{\partial X *}) \\
  &= \ker(q_{X_{\mp} *} \circ q_{\partial X *}) \\
  &= \im(i_{X_{\mp} *}) + \im(i_{Y_{\pm} *}),
 \end{align*}
 where the last equality follows from the long exact sequence of the pair $(X,X_{\mp} \cup Y_{\pm})$. This gives the first annihilator, and immediately determines also the second one, since
 \begin{align*}
  \left( \im(i_{X_{- *}}) + \im(i_{X_{+ *}}) \right)^{\perp}
  &= \im(i_{X_{- *}})^{\perp} \cap \im(i_{X_{+ *}})^{\perp} \\
  &= \left( \im(i_{X_+ *}) + \im(i_{Y_- *}) \right) \cap \left( \im(i_{X_- *}) + \im(i_{Y_+ *}) \right) \\
  &= \left( \left( \im(i_{X_+ *}) + \im(i_{Y_- *}) \right) \cap \im(i_{X_- *}) \right) + \im(i_{Y_+ *}) \\
  &= \left( \im(i_{X_+ *}) \cap \im(i_{X_- *}) \right) + \im(i_{Y_- *}) + \im(i_{Y_+ *}) \\
  &= \im(i_{Y_0 *}) + \im(i_{Y_- *}) + \im(i_{Y_+ *}),
 \end{align*}
 where the third equality follows from the inclusion ${\im(i_{Y_+ *}) \subset \im(i_{X_+ *})}$, where the fourth equality follows from the inclusion ${\im(i_{Y_- *}) \subset \im(i_{X_- *})}$, and where the fifth equality follows from the Mayer-Vietoris sequence associated with ${X_+}$ and ${X_-}$.
\end{proof}

\section{Lagrangian subspaces}

In this section we introduce and study Lagragian subspaces of the middle homology of a ${4m+2}$-\-di\-men\-sion\-al manifold with boundary. Let us fix a finite-dimensional real vector space ${H}$ equipped with an antisymmetric bilinear form ${\omega}$. A subspace ${A \subset H}$ is \textit{isotropic} if it satisfies ${A \subset A^{\perp}}$.

\begin{definition}\label{D:Lagrangian}
 An isotropic subspace ${\calL}$ of ${H}$ is \textit{Lagrangian} if ${\calL = \calL^{\perp}}$.
\end{definition}

When ${\omega}$ is non-degenerate, ${H}$ is called a \textit{symplectic space}. For instance, if ${\varSigma}$ is a closed ${4m+2}$-\-di\-men\-sion\-al manifold, then its middle homology ${H_{2m+1}(\varSigma)}$ is a symplectic space once equipped with the intersection pairing ${\pitchfork_{\varSigma}}$.

\begin{proposition}\label{P:Lagrangian}
 If ${M}$ is a ${4m+3}$-\-di\-men\-sion\-al manifold, and if 
 \[
  i_{\partial M *} : H_{2m+1}(\partial M) \rightarrow H_{2m+1}(M)
 \]
 is induced by inclusion, then ${\ker(i_{\partial M *})}$ is a Lagrangian subspace of the symplectic space ${H_{2m+1}(\partial M)}$.
\end{proposition}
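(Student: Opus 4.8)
The statement asserts two things about $\calL := \ker(i_{\partial M *}) \subset H_{2m+1}(\partial M)$: that it is isotropic, i.e.\ $\calL \subset \calL^{\perp}$, and that it is coisotropic, i.e.\ $\calL^{\perp} \subset \calL$. Here $H := H_{2m+1}(\partial M)$, which is symplectic because $\partial M$ is a closed $(4m+2)$-manifold and $2m+1$ is odd, so the intersection pairing $\pitchfork_{\partial M}$ is antisymmetric and non-degenerate. Both inclusions will follow from Lemma \ref{L:boundary}, applied with $X = M$, $n = 4m+3$, and $k = 2m+1$, together with the observation that $n - k = 2m+2$ so that $(-1)^k = -1$ and $H_{n-k-1}(\partial M) = H_{2m+1}(\partial M) = H$. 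The one extra input needed is exactness of the long exact sequence of the pair $(M, \partial M)$: the image of $\partial_* : H_{2m+2}(M,\partial M) \to H_{2m+1}(\partial M)$ equals $\ker(i_{\partial M *}) = \calL$, and $\ker(\partial_*)$ equals the image of $q_{\partial M *} : H_{2m+2}(M) \to H_{2m+2}(M,\partial M)$.

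First I would prove isotropy. Take $a, a' \in \calL$. By exactness, $a = i_{\partial M *}(b)$ — wait, more precisely $a$ lies in the kernel of $i_{\partial M *}$, so we need $a = \partial_*(b)$ for some $b \in H_{2m+2}(M,\partial M)$; similarly $a'$ is a boundary. Actually the cleanest route: since $a' \in \calL = \im(\partial_*)$, write $a' = \partial_*(b')$ with $b' \in H_{2m+2}(M,\partial M)$. Then Lemma \ref{L:boundary} gives
\[
 i_{\partial M *}(a) \pitchfork_M^{\partial} b' = (-1)^{2m+1} \, a \pitchfork_{\partial M} \partial_*(b') = - \, a \pitchfork_{\partial M} a'.
\]
But $a \in \calL$ means $i_{\partial M *}(a) = 0$, so the left side vanishes and hence $a \pitchfork_{\partial M} a' = 0$. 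Since $a, a'$ were arbitrary in $\calL$, this shows $\calL \subset \calL^{\perp}$.

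Next I would prove $\calL^{\perp} \subset \calL$. Let $a \in \calL^{\perp}$, so $a \pitchfork_{\partial M} a' = 0$ for every $a' \in \calL$. Using the identity from Lemma \ref{L:boundary} in the form $a \pitchfork_{\partial M} \partial_*(b') = - \, i_{\partial M *}(a) \pitchfork_M^{\partial} b'$, and noting that as $b'$ ranges over $H_{2m+2}(M,\partial M)$ the elements $\partial_*(b')$ range over all of $\calL$, the hypothesis $a \in \calL^{\perp}$ says $i_{\partial M *}(a) \pitchfork_M^{\partial} b' = 0$ for all $b' \in H_{2m+2}(M,\partial M)$. Since the relative intersection pairing $\pitchfork_M^{\partial} : H_{2m+1}(M) \times H_{2m+2}(M,\partial M) \to \R$ is non-degenerate (as recalled right after its definition, because $D_M$ is an isomorphism and the universal-coefficient pairing is non-degenerate), we conclude $i_{\partial M *}(a) = 0$, i.e.\ $a \in \calL$. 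This completes both inclusions, hence $\calL = \calL^{\perp}$.

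I do not anticipate a serious obstacle here; the proof is a short diagram chase once Lemma \ref{L:boundary} and the non-degeneracy of $\pitchfork_M^{\partial}$ are in hand. The only point requiring a little care is bookkeeping the sign and the degree shift so that the relevant group is genuinely $H_{2m+1}(\partial M)$ (this is exactly where $\dim \partial M \equiv 2 \pmod 4$ enters, guaranteeing antisymmetry of $\pitchfork_{\partial M}$ so that ``isotropic + maximal'' is the right notion of Lagrangian), and confirming via exactness that $\im(\partial_*) = \ker(i_{\partial M *})$ so that testing against $\partial_*(b')$ is the same as testing against all of $\calL$.
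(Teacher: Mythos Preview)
Your proposal is correct and matches the paper's approach essentially line for line: the paper also uses exactness to identify $\ker(i_{\partial M *}) = \im(\partial_*)$, invokes Lemma \ref{L:boundary} to get $i_{\partial M *}(a) \pitchfork_M^{\partial} b = - a \pitchfork_{\partial M} \partial_*(b)$, and then uses non-degeneracy of $\pitchfork_M^{\partial}$ to conclude. The only difference is cosmetic: the paper packages both inclusions into the single chain $\ker(i_{\partial M *})^{\perp} = \im(\partial_*)^{\perp} = \ker(i_{\partial M *})$, whereas you prove $\calL \subset \calL^{\perp}$ and $\calL^{\perp} \subset \calL$ separately.
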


\begin{proof}
 If ${\partial_* : H_{2m+2}(M,\partial M) \rightarrow H_{2m+1}(\partial M)}$ comes from the long exact sequence of the pair ${(M,\partial M)}$, then ${\ker(i_{\partial M *}) = \im(\partial_*)}$. But now Lemma \ref{L:boundary} gives the equality
 \[
  i_{\partial M *}(a) \pitchfork_M^{\partial} b = - a \pitchfork_{\partial M} \partial_*(b)
 \]
 for all relative homology classes ${a \in H_{2m+1}(\partial M)}$ and ${b \in H_{2m+2}(M,\partial M)}$. This means ${\ker(i_{\partial M *})^{\perp} = \im (\partial_*)^{\perp} = \ker(i_{\partial M *})}$, where the second equality follows from the non-degeneracy of ${\pitchfork_M^{\partial}}$.
\end{proof}

Now let us fix a ${4m+2}$-\-di\-men\-sion\-al manifold ${\varSigma}$, and let ${\varGamma_0}$ be a closed separating ${4m+1}$-\-di\-men\-sion\-al submanifold of ${\varSigma}$ disjoint from ${\partial \varSigma}$. Then ${\varGamma_0}$ induces a decomposition ${\varSigma = \varSigma_- \cup \varSigma_+}$ for some codimension ${0}$ submanifolds ${\varSigma_-}$ and ${\varSigma_+}$ of ${\varSigma}$ having boundaries ${\partial \varSigma_- = \overline{\varGamma_-} \cup \varGamma_0}$ and ${\partial \varSigma_+ = \overline{\varGamma_0} \cup \varGamma_+}$ respectively, where ${\varGamma_-}$ and ${\varGamma_+}$ satisfy ${\overline{\varGamma_-} \cup \varGamma_+ = \partial \varSigma}$ and ${\overline{\varGamma_-} \cap \varGamma_+ = \varnothing}$. Let ${i_{\varSigma_{\pm} *} : H_{2m+1}(\varSigma_{\pm}) \rightarrow H_{2m+1}(\varSigma)}$ be induced by inclusion.

\begin{proposition}\label{P:sum_of_Lagrangians}
 If $\calL_- \subset H_{2m+1}(\varSigma_-)$ and $\calL_+ \subset H_{2m+1}(\varSigma_+)$ are Lagrangian subspaces, then $\calL := i_{\varSigma_- *}(\calL_-) + i_{\varSigma_+ *}(\calL_+) \subset H_{2m+1}(\varSigma)$ is a Lagrangian subspace.
\end{proposition}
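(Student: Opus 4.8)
The plan is to verify Definition~\ref{D:Lagrangian} directly: that $\calL$ is isotropic and that $\calL^{\perp}\subseteq\calL$, the annihilator being taken throughout with respect to the absolute intersection pairing $\pitchfork_{\varSigma}$ on $H:=H_{2m+1}(\varSigma)$. This pairing is antisymmetric by Lemma~\ref{L:(anti)symmetry} (as $(2m+1)^2$ is odd), but in general degenerate, with radical $H^{\perp}=\im(i_{\partial\varSigma *})$ by the identity $H_k(X)^{\perp}=\im(i_{\partial X *})$ established earlier. Write $A_{\pm}:=i_{\varSigma_{\pm} *}(\calL_{\pm})$, so that $\calL=A_-+A_+$. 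The one preliminary fact I would record first is that any Lagrangian contains the radical of its form; applied to $\calL_{\pm}$ together with $\mathrm{rad}(\pitchfork_{\varSigma_{\pm}})=\im(i_{\partial\varSigma_{\pm} *})$, and since $\varGamma_0,\varGamma_-\subseteq\partial\varSigma_-$ while $\varGamma_+\subseteq\partial\varSigma_+$, this forces $\im(i_{\varGamma_0 *}),\im(i_{\varGamma_- *})\subseteq A_-$ and $\im(i_{\varGamma_+ *})\subseteq A_+$, where the inclusions are now into $\varSigma$. (This is also the place where the cases $\varGamma_{\pm}=\varnothing$ are harmless.)

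For isotropy I would decompose $x\pitchfork_{\varSigma}y$ for $x,y\in\calL$ into diagonal and cross terms. The diagonal terms vanish because, by Remark~\ref{R:isometry}, $i_{\varSigma_{\pm} *}$ preserves absolute intersection pairings, so $A_{\pm}$ inherits isotropy from the Lagrangian $\calL_{\pm}$. The cross terms vanish because Lemma~\ref{L:annihilators} gives $\im(i_{\varSigma_- *})^{\perp}=\im(i_{\varSigma_+ *})+\im(i_{\varGamma_- *})$; in particular $\im(i_{\varSigma_+ *})\subseteq\im(i_{\varSigma_- *})^{\perp}$, so $A_+$ is orthogonal to $A_-$. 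Hence $\calL$ is isotropic.

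The substantive direction is $\calL^{\perp}\subseteq\calL$, and this is where most of the work sits. My plan is to introduce the excision/restriction maps $r_{\pm}\colon H\to H_{2m+1}(\varSigma_{\pm},\partial\varSigma_{\pm})$ arising from the excision isomorphisms $H_{2m+1}(\varSigma,\varSigma_{\mp}\cup\varGamma_{\pm})\cong H_{2m+1}(\varSigma_{\pm},\partial\varSigma_{\pm})$, and to rephrase orthogonality through them. Combining Lemma~\ref{L:(anti)symmetry} with Lemma~\ref{L:decompositions} yields $i_{\varSigma_- *}(a)\pitchfork_{\varSigma}x=-\,a\pitchfork_{\varSigma_-}^{\partial}r_-(x)$ for $a\in H_{2m+1}(\varSigma_-)$, so $x\perp A_-$ precisely when $r_-(x)$ annihilates $\calL_-$ under the non-degenerate pairing $\pitchfork_{\varSigma_-}^{\partial}$; a double-annihilator argument, using that $\calL_-$ is Lagrangian (so $\calL_-=(q_{\partial\varSigma_- *}(\calL_-))^{\perp}$ inside $H_{2m+1}(\varSigma_-)$), identifies that annihilator with $q_{\partial\varSigma_- *}(\calL_-)$. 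Since $r_-\circ i_{\varSigma_- *}=q_{\partial\varSigma_- *}$ and $r_-\circ i_{\varSigma_+ *}=0$ (the latter because $\varSigma_+\subseteq\varSigma_+\cup\varGamma_-$ is collapsed), one gets $r_-(\calL)=q_{\partial\varSigma_- *}(\calL_-)$, hence $x\perp A_-$ iff $r_-(x)\in r_-(\calL)$; symmetrically for $A_+$. Therefore $\calL^{\perp}=\calL+\bigl(\ker r_-\cap\ker r_+\bigr)$, and it remains to see that $\ker r_-\cap\ker r_+\subseteq\calL$. Here I would compute $\ker r_{\mp}=\im(i_{\varSigma_{\pm} *})+\im(i_{\varGamma_{\mp} *})$ from the long exact sequence of the pair (using $\varSigma_{\pm}\cup\varGamma_{\mp}=\varSigma_{\pm}\sqcup\varGamma_{\mp}$), then apply the modular law twice together with the Mayer--Vietoris identity $\im(i_{\varSigma_- *})\cap\im(i_{\varSigma_+ *})=\im(i_{\varGamma_0 *})$ to conclude $\ker r_-\cap\ker r_+=\im(i_{\varGamma_0 *})+\im(i_{\varGamma_- *})+\im(i_{\varGamma_+ *})$ --- which, incidentally, Lemma~\ref{L:annihilators} also identifies with $\bigl(\im(i_{\varSigma_- *})+\im(i_{\varSigma_+ *})\bigr)^{\perp}$. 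By the preliminary fact, each of these three summands lies in $\calL$, so $\calL^{\perp}\subseteq\calL$, and together with isotropy this gives $\calL=\calL^{\perp}$.

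I expect the main obstacle to be bookkeeping rather than conceptual: threading the several excision isomorphisms through the correct ambient pairs and checking the compatibilities $r_{\pm}\circ i_{\varSigma_{\mp} *}=0$, $r_{\pm}\circ i_{\varSigma_{\pm} *}=q_{\partial\varSigma_{\pm} *}$, along with the Mayer--Vietoris and long-exact-sequence identities. All the genuine input is already packaged in Lemma~\ref{L:decompositions}, Lemma~\ref{L:annihilators}, and Remark~\ref{R:isometry}; the risk lies in the geometric indexing, since $\partial\varSigma_-=\overline{\varGamma_-}\cup\varGamma_0$ and $\partial\varSigma_+=\overline{\varGamma_0}\cup\varGamma_+$ must be matched up correctly under the excisions and the decomposition $\varSigma=\varSigma_-\cup\varSigma_+$.
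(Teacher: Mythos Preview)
Your proof is correct and rests on the same ingredients as the paper's: the observation that Lagrangians contain the radical (hence $\im(i_{\varGamma_0 *}),\im(i_{\varGamma_{\pm} *})\subset\calL$), Remark~\ref{R:isometry}, Lemma~\ref{L:annihilators}, the modular law, and the Mayer--Vietoris identity $\im(i_{\varSigma_- *})\cap\im(i_{\varSigma_+ *})=\im(i_{\varGamma_0 *})$. The organization differs: the paper never names the restriction maps $r_{\pm}$, and instead computes each single annihilator directly as $i_{\varSigma_{\pm} *}(\calL_{\pm})^{\perp}=i_{\varSigma_{\pm} *}(\calL_{\pm})+\im(i_{\varSigma_{\mp} *})$ by first trapping it inside $\im(i_{\varSigma_- *})+\im(i_{\varSigma_+ *})$ (via the second annihilator of Lemma~\ref{L:annihilators}) and then using Remark~\ref{R:isometry} to identify $i_{\varSigma_{\pm} *}(\calL_{\pm})^{\perp}\cap\im(i_{\varSigma_{\pm} *})$ with $i_{\varSigma_{\pm} *}(\calL_{\pm}^{\perp})=i_{\varSigma_{\pm} *}(\calL_{\pm})$; intersecting the two then gives $\calL^{\perp}=\calL$ by the same modular-law/Mayer--Vietoris manipulation you perform on $\ker r_-\cap\ker r_+$. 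Your route through the non-degenerate relative pairing makes the duality more explicit and yields the clean intermediate formula $\calL^{\perp}=\calL+(\ker r_-\cap\ker r_+)$, at the cost of the bookkeeping you flag; the paper's route is slightly shorter because it stays in absolute homology throughout, but the $r_{\pm}$ are already implicitly present in the proof of Lemma~\ref{L:annihilators}.
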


\begin{proof}
 Let 
 \[
  i_{\varGamma_0 *} : H_{2m+1}(\varGamma_0) \rightarrow H_{2m+1}(\varSigma), \quad
  i_{\varGamma_{\pm} *} : H_{2m+1}(\varGamma_{\pm}) \rightarrow H_{2m+1}(\varSigma)
 \]
 be induced by inclusion. We start by remarking that 
 \[
  \calL^{\perp} = i_{\varSigma_- *}(\calL_-)^{\perp} \cap i_{\varSigma_+ *}(\calL_+)^{\perp},
 \] 
 and that
 \begin{align*}
  i_{\varSigma_{\pm} *}(\calL_{\pm})^{\perp} 
  &= i_{\varSigma_{\pm} *}(\calL_{\pm})^{\perp \perp \perp} \\
  &= \left( i_{\varSigma_{\pm} *}(\calL_{\pm}) + \im(i_{\varGamma_- *}) + \im(i_{\varGamma_+ *}) \right)^{\perp} \\
  &= \left( i_{\varSigma_{\pm} *}(\calL_{\pm}) + \im(i_{\varGamma_{\mp} *}) \right)^{\perp},
 \end{align*}
 where the second equality follows from the inclusion
 \[
  \im(i_{\varGamma_0 *}) + \im(i_{\varGamma_{\pm} *}) = i_{\varSigma_{\pm} *} \left( H_{2m+1}(\varSigma_{\pm})^{\perp} \right) \subset i_{\varSigma_{\pm} *} \left( \calL_{\pm}^{\perp} \right) = i_{\varSigma_{\pm} *} \left( \calL_{\pm} \right).
 \]
 This implies
 \[ 
  \im( i_{\varGamma_0 *} ) + \im( i_{\varGamma_- *} ) + \im( i_{\varGamma_+ *} ) \subset i_{\varSigma_{\pm} *}(\calL_{\pm}) + \im(i_{\varGamma_{\mp} *}),
 \]
 which means 
 \[
  i_{\varSigma_{\pm} *}(\calL_{\pm})^{\perp} = \left( i_{\varSigma_{\pm} *}(\calL_{\pm}) + \im(i_{\varGamma_{\mp} *}) \right)^{\perp} \subset \left( \im( i_{\varGamma_0 *} ) + \im( i_{\varGamma_- *} ) + \im( i_{\varGamma_+ *} ) \right)^{\perp},
 \]
 and the second annihilator in Lemma \ref{L:annihilators} gives
 \begin{align*}
  \left( \im( i_{\varGamma_0 *} ) + \im( i_{\varGamma_- *} ) + \im( i_{\varGamma_+ *} ) \right)^{\perp} 
  &= \left( \im(i_{\varSigma_- *}) + \im (i_{\varSigma_+ *}) \right)^{\perp \perp} \\
  &= \im(i_{\varSigma_- *}) + \im (i_{\varSigma_+ *}) + \im( i_{\varGamma_- *} ) + \im( i_{\varGamma_+ *} ) \\
  &= \im(i_{\varSigma_- *}) + \im(i_{\varSigma_+ *}).
 \end{align*}
 Thus
 \begin{align*}
  i_{\varSigma_{\pm} *}(\calL_{\pm})^{\perp}
  &= i_{\varSigma_{\pm} *}(\calL_{\pm})^{\perp} \cap \left( \im(i_{\varSigma_{\pm} *}) + \im(i_{\varSigma_{\mp} *}) \right) \\
  &= \left( i_{\varSigma_{\pm} *}(\calL_{\pm})^{\perp} \cap \im(i_{\varSigma_{\pm} *}) \right) + \im(i_{\varSigma_{\mp} *}) \\
  &= i_{\varSigma_{\pm} *} \left( \calL_{\pm}^{\perp} \right) + \im(i_{\varSigma_{\mp} *}) \\
  &= i_{\varSigma_{\pm} *} \left( \calL_{\pm} \right) + \im(i_{\varSigma_{\mp} *}),
 \end{align*}
 where the second equality follows from the inclusion 
 \[
  \im(i_{\varSigma_{\mp} *}) \subset \im(i_{\varSigma_{\pm} *})^{\perp} \subset i_{\varSigma_{\pm} *}(\calL_{\pm})^{\perp} 
 \]
 given by the first annihilator in Lemma \ref{L:annihilators}, and where the third equality follows from Remark \ref{R:isometry}. Therefore
 \begin{align*}
  \calL^{\perp}
  &= \left( i_{\varSigma_- *}(\calL_-) + \im \left( i_{\varSigma_+ *} \right) \right) \cap \left( i_{\varSigma_+ *}(\calL_+) + \im \left( i_{\varSigma_- *} \right) \right) \\
  &= i_{\varSigma_- *}(\calL_-) + \left( \im \left( i_{\varSigma_+ *} \right) \cap \left( i_{\varSigma_+ *}(\calL_+) + \im \left( i_{\varSigma_- *} \right) \right) \right) \\
  &= i_{\varSigma_- *}(\calL_-) + i_{\varSigma_+ *}(\calL_+) + \left( \im \left( i_{\varSigma_+ *} \right) \cap \im \left( i_{\varSigma_- *} \right) \right) \\
  &= i_{\varSigma_- *}(\calL_-) + i_{\varSigma_+ *}(\calL_+) + \im \left( i_{\varGamma_0 *} \right) \\
  &= i_{\varSigma_- *}(\calL_-) + i_{\varSigma_+ *}(\calL_+) \\
  &= \calL,
 \end{align*}
 where the second equality follows from the inclusion ${i_{\varSigma_- *}(\calL_-) \subset \im \left( i_{\varSigma_- *} \right)}$, where the third equality follows from the inclusion ${i_{\varSigma_+ *}(\calL_+) \subset \im \left( i_{\varSigma_+ *} \right)}$, and where the fourth equality follows from the Mayer-Vietoris sequence associated with ${\varSigma_-}$ and ${\varSigma_+}$.
\end{proof}

Let ${H}$ be a symplectic space, let ${\omega}$ be its non-degenerate antisymmetric bilinear form, and let ${A}$ be an isotropic subspace of ${H}$. Then the space
\[
 H | A := \left. A^{\perp} \right/ A,
\]
equipped with the antisymmetric bilinear form ${\omega | A}$ induced by ${\omega}$, is a symplectic space too. If ${B}$ is a subspace of ${H}$, then we say the subspace
\[
 B | A :=  \left. \left( (B + A) \cap A^{\perp} \right) \right/ A
\]
of ${H | A}$ is obtained from ${B}$ by \textit{contraction along $A$}.

\begin{lemma}\label{L:contraction}
 If ${\calL}$ is a Lagrangian subspace of a symplectic space ${H}$, then for every isotropic subspace ${A \subset H}$ the contraction ${\calL | A}$ is a Lagrangian subspace of ${H | A}$. 
\end{lemma}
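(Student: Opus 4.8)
The plan is to verify the two defining properties of a Lagrangian subspace for $\calL | A$ inside the symplectic space $H | A$: that it is isotropic for $\omega | A$, and that it coincides with its own $(\omega | A)$-orthogonal complement. Throughout I will use the standard facts about the non-degenerate form $\omega$ on the finite-dimensional space $H$, namely $B^{\perp\perp} = B$ and $(B_1 + B_2)^{\perp} = B_1^{\perp} \cap B_2^{\perp}$ for all subspaces, together with the hypothesis $\calL^{\perp} = \calL$ and the inclusion $A \subseteq A^{\perp}$ coming from isotropy of $A$. First I would note that $\calL | A$ is genuinely a subspace of $H | A = A^{\perp}/A$, since $A \subseteq (\calL + A) \cap A^{\perp}$.

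For isotropy, I would take two elements $x = \ell_1 + a_1$ and $y = \ell_2 + a_2$ of $(\calL + A) \cap A^{\perp}$, with $\ell_i \in \calL$ and $a_i \in A$, and expand $\omega(x,y)$ by bilinearity into $\omega(\ell_1,\ell_2) + \omega(\ell_1,a_2) + \omega(a_1,\ell_2) + \omega(a_1,a_2)$. The first term vanishes because $\calL$ is isotropic and the last because $A$ is isotropic; the two cross terms vanish after using $x,y \in A^{\perp}$, e.g. $0 = \omega(x,a_2) = \omega(\ell_1,a_2) + \omega(a_1,a_2) = \omega(\ell_1,a_2)$ and symmetrically for $\omega(a_1,\ell_2)$. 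Hence $(\omega|A)([x],[y]) = \omega(x,y) = 0$, so $\calL | A \subseteq (\calL|A)^{\perp}$.

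The real content is the reverse inclusion $(\calL|A)^{\perp} \subseteq \calL | A$. Here I would take $[z] \in (\calL|A)^{\perp}$ represented by some $z \in A^{\perp}$, so that $\omega(z,x) = 0$ for every $x \in (\calL + A) \cap A^{\perp}$. Since $\calL \cap A^{\perp} \subseteq (\calL + A) \cap A^{\perp}$, in particular $\omega(z,w) = 0$ for all $w \in \calL \cap A^{\perp} = \calL^{\perp} \cap A^{\perp} = (\calL + A)^{\perp}$; hence $z \in (\calL + A)^{\perp\perp} = \calL + A$. Combined with $z \in A^{\perp}$ this gives $z \in (\calL + A) \cap A^{\perp}$, i.e. $[z] \in \calL | A$, as required. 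Alternatively one can argue by dimension: using $\dim B^{\perp} = \dim H - \dim B$ together with $(\calL + A)^{\perp} = \calL \cap A^{\perp}$ and $(\calL + A^{\perp})^{\perp} = \calL \cap A$, a short computation gives $\dim\big((\calL + A) \cap A^{\perp}\big) = \tfrac{1}{2}\dim H$ and hence $\dim(\calL|A) = \tfrac{1}{2}\dim(H|A)$, which with isotropy forces $\calL | A$ to be Lagrangian.

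I do not anticipate a serious obstacle here: the only subtlety is the bookkeeping with orthogonal complements, and specifically remembering to exploit membership in $A^{\perp}$ twice — once to kill the cross terms in the isotropy computation, and once to deduce $z \in \calL + A$ in the coisotropy argument. Everything else is formal manipulation valid in any finite-dimensional symplectic vector space, so the write-up should be short.
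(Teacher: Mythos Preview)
Your proposal is correct and follows essentially the same route as the paper: isotropy is checked by expanding $\omega$ on elements of $(\calL+A)\cap A^{\perp}$ and using membership in $A^{\perp}$ to kill the cross terms, and the reverse inclusion is obtained from the identity $(\calL+A)^{\perp}=\calL^{\perp}\cap A^{\perp}=\calL\cap A^{\perp}$ together with $B^{\perp\perp}=B$. The paper phrases the coisotropy step at the level of subspaces (computing $\bigl((\calL+A)\cap A^{\perp}\bigr)^{\perp}=(\calL\cap A^{\perp})+A\subset \calL+A$), while you do the equivalent element-wise argument; your optional dimension count is extra but not needed.
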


\begin{proof}
 For all ${x,x',a,a' \in H}$ we have
 \[
  \omega(x+a,x'+a') = \omega(x,x') + \omega(x+a,a') + \omega(a,x'+a') - \omega(a,a').
 \]
 If ${x,x' \in \calL}$, if ${a,a' \in A}$, and if ${x+a, x'+a' \in A^{\perp}}$, then all terms on the right-hand side of the equality vanish. Therefore we have ${\calL | A \subset (\calL | A)^{\perp}}$. For the opposite inclusion, we have
 \[ 
  \left( (\calL + A) \cap A^{\perp} \right)^{\perp} 
  = (\calL + A)^{\perp} + A^{\perp \perp}
  = (\calL^{\perp} \cap A^{\perp}) + A 
  \subset \calL^{\perp} + A 
  = \calL + A 
 \]
 Therefore
 \[
  (\calL | A)^{\perp} = \left. \left( \left( (\calL + A) \cap A^{\perp} \right)^{\perp} \cap A^{\perp} \right) \right/ A \subset \left. \left( (\calL + A) \cap A^{\perp} \right) \right/ A = \calL | A. \qedhere
 \]
\end{proof}

Now let us fix a ${4m+3}$-\-di\-men\-sion\-al manifold ${M}$, and let ${\varGamma}$ be a closed separating ${4m+1}$-\-di\-men\-sion\-al submanifold of ${\partial M}$. Then ${\varGamma}$ induces a decomposition ${\partial M = \varSigma_- \cup \varSigma_+}$ for some codimension ${0}$ submanifolds ${\varSigma_-}$ and ${\varSigma_+}$ of ${\partial M}$ having boundaries ${\partial \varSigma_- = \varGamma}$ and ${\partial \varSigma_+ = \overline{\varGamma}}$ respectively. Let 
\[
 i_{\varSigma_{\pm} *} : H_{2m+1}(\varSigma_{\pm}) \rightarrow H_{2m+1}(\partial M), \quad 
 i_{\partial M *} : H_{2m+1}(\partial M) \rightarrow H_{2m+1}(M),
\]
be induced by inclusion.

\begin{proposition}\label{P:Lagrangian_relations}
 If ${\calL_-}$ is a Lagrangian subspace of ${H_{2m+1}(\varSigma_-)}$, then
 \[
  \calL_+ := \{ x_+ \in H_{2m+1}(\varSigma_+) \mid i_{\partial M *}(i_{\varSigma_+ *}(x_+)) \in i_{\partial M *}(i_{\varSigma_-*}(\calL_-)) \}
 \] 
 is a Lagrangian subspace of ${H_{2m+1}(\varSigma_+)}$.
\end{proposition}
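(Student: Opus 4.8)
The plan is to push the whole problem into the closed $4m+2$-manifold $\partial M$, where the intersection form is symplectic, and to exhibit $\calL_+$ as a contraction of the Lagrangian $\ker(i_{\partial M *})$ coming from Proposition \ref{P:Lagrangian}. Write $H := H_{2m+1}(\partial M)$ with its symplectic form $\omega := \pitchfork_{\partial M}$, set $K := \ker(i_{\partial M *})$ (Lagrangian in $H$ by Proposition \ref{P:Lagrangian}), and put $V_\pm := \im(i_{\varSigma_\pm *}) \subset H$ and $C := V_- \cap V_+$. The splitting $\partial M = \varSigma_- \cup_\varGamma \varSigma_+$ is of the type treated in Lemma \ref{L:annihilators} with empty outer boundary, so that lemma yields $V_\pm^\perp = V_\mp$ and $(V_- + V_+)^\perp = C$, while Remark \ref{R:isometry} gives that $i_{\varSigma_\pm *}$ pulls $\omega$ back to $\pitchfork_{\varSigma_\pm}$.

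First I would carry out the Mayer--Vietoris bookkeeping needed to control kernels and radicals. The sequence $H_{2m+1}(\varGamma) \to H_{2m+1}(\varSigma_-) \oplus H_{2m+1}(\varSigma_+) \to H$ identifies $C = V_- \cap V_+$ with the common image of $H_{2m+1}(\varGamma)$ in $H$; since, by the identification of radicals recalled after the definition of the absolute intersection pairing, the radical of $\pitchfork_{\varSigma_\pm}$ is $R_\pm = \im(i_{\partial \varSigma_\pm *})$, this gives $C = i_{\varSigma_- *}(R_-) = i_{\varSigma_+ *}(R_+)$, and moreover $\ker(i_{\varSigma_+ *}) \subset R_+ = i_{\varSigma_+ *}^{-1}(C)$. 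Then, because $\calL_-$ is isotropic, $A_- := i_{\varSigma_- *}(\calL_-)$ is isotropic in $H$; because a radical lies in every Lagrangian, $R_- \subset \calL_-$ forces $C \subset A_-$; and since $A_-^\perp \cap V_- = i_{\varSigma_- *}(\calL_-^\perp) = A_-$, non-degeneracy of $\omega$ gives $A_-^\perp = A_- + V_+$.

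Next I would invoke Lemma \ref{L:contraction} to conclude that $K | A_-$ is a Lagrangian subspace of $H | A_- = A_-^\perp / A_- = (A_- + V_+)/A_-$, and identify the latter with the symplectic reduction $V_+/C$ via the surjection $v \mapsto v + A_-$, whose kernel $V_+ \cap A_-$ equals $C$ because $C \subset A_- \subset V_-$. A short computation then shows that, under this identification, $K | A_-$ corresponds to $W/C$, where $W := (A_- + K) \cap V_+$ (using $v \in V_+ \subset A_-^\perp$ so that $v + A_- \in K|A_-$ iff $v \in A_- + K$). Hence $W$ is a Lagrangian subspace of the degenerate space $(V_+, \omega|_{V_+})$, i.e. $W^\perp \cap V_+ = W$. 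Finally, since $\calL_+ = i_{\varSigma_+ *}^{-1}(A_- + K) = i_{\varSigma_+ *}^{-1}(W)$, and $i_{\varSigma_+ *}$ surjects onto $V_+$ and pulls $\omega|_{V_+}$ back to $\pitchfork_{\varSigma_+}$, one computes $\calL_+^\perp = i_{\varSigma_+ *}^{-1}(W^\perp \cap V_+) = i_{\varSigma_+ *}^{-1}(W) = \calL_+$, so $\calL_+$ is Lagrangian.

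I expect the main obstacle to be the homological step: pinning down $\ker(i_{\varSigma_\pm *})$ and the radicals $R_\pm$, the equality $C = V_- \cap V_+ = $ (image of $H_{2m+1}(\varGamma)$ in $H$) via Mayer--Vietoris, and the clean verification that contraction along $A_-$ of $K$ matches $W/C$ under $H | A_- \cong V_+/C$. Everything symplectic-linear after that (isotropy of $A_-$, the identity $A_-^\perp = A_- + V_+$, and the closing perp computation) should be routine given Lemmas \ref{L:annihilators} and \ref{L:contraction} and Remark \ref{R:isometry}.
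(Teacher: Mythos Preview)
Your proposal is correct and follows essentially the same route as the paper: both push the problem into the symplectic space $H_{2m+1}(\partial M)$, contract the Lagrangian $\ker(i_{\partial M*})$ along the isotropic $A_- = i_{\varSigma_-*}(\calL_-)$ via Lemma~\ref{L:contraction}, and then identify the resulting reduced space with the symplectic reduction of $H_{2m+1}(\varSigma_+)$ by its radical. The only cosmetic difference is that the paper identifies $H|A_-$ directly with $H_{2m+1}(\varSigma_+)/\im(i_{\varGamma_+ *})$ and matches $K|A_-$ with $\calL_+/\im(i_{\varGamma_+ *})$, whereas you pass through the image side $V_+/C$ first and pull back along $i_{\varSigma_+*}$ at the end; these two identifications are related by the isomorphism $H_{2m+1}(\varSigma_+)/R_+ \xrightarrow{\sim} V_+/C$ induced by $i_{\varSigma_+*}$, so the arguments coincide.
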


\begin{proof}
 Thanks to Proposition \ref{P:Lagrangian}, we know ${\ker(i_{\partial M *})}$ is a Lagrangian subspace of the symplectic space ${H_{2m+1}(\partial M)}$. Moreover, thanks to Remark \ref{R:isometry}, we also know ${i_{\varSigma_- *}}$ preserves absolute intesection pairings, so ${i_{\varSigma_- *}(\calL_-)}$ is an isotropic subspace of ${H_{2m+1}(\partial M)}$. This means we can define the contraction ${\ker(i_{\partial M *}) | i_{\varSigma_- *}(\calL_-)}$ which, thanks to Lemma \ref{L:contraction}, is a Lagrangian subspace of ${H_{2m+1}(\partial M) | i_{\varSigma_- *}(\calL_-)}$. Now let ${i_{\varGamma_{\pm} *} : H_{2m+1}(\varGamma) \rightarrow H_{2m+1}(\varSigma_{\pm})}$ be induced by inclusion. We claim that ${H_{2m+1}(\partial M) | i_{\varSigma_- *}(\calL_-)}$ is isomorphic, as a symplectic space, to ${H_{2m+1}(\varSigma_+) / \im(i_{\varGamma_+ *})}$, and that, under this isomorphism, the contraction ${\ker (i_{\partial M *}) | i_{\varSigma_- *}(\calL_-)}$ corresponds to ${\calL_+ / \im(i_{\varGamma_+ *})}$. Remark that this claim clearly allows us to conclude, so let us prove it. We saw during the proof of Proposition \ref{P:sum_of_Lagrangians} that we have
 \[
  i_{\varSigma_- *}(\calL_-)^{\perp} = i_{\varSigma_- *}(\calL_-) + \im(i_{\varSigma_+ *}).
 \]
 This yields isomorphisms of symplectic spaces
 \begin{align*}
  H_{2m+1}(\partial M) | i_{\varSigma_- *}(\calL_-) 
  &\cong \left. \im(i_{\varSigma_+ *}) \right/ \left( i_{\varSigma_- *}(\calL_-) \cap \im(i_{\varSigma_+ *}) \right) \\
  &= \left. \im(i_{\varSigma_+ *}) \right/ \im(i_{\varSigma_+ *} \circ i_{\varGamma_+ *}) \\
  &\cong \left. H_{2m+1}(\varSigma_+) \right/ \im(i_{\varGamma_+ *}),
 \end{align*}
 where the second one follows from Remark \ref{R:isometry}. Furthermore, we have
 \begin{align*}
  \left( \ker(i_{\partial M *}) + i_{\varSigma_- *}(\calL_-) \right) 
  &\cap i_{\varSigma_- *}(\calL_-)^{\perp} \\
  &=  \left( \ker(i_{\partial M *}) + i_{\varSigma_- *}(\calL_-) \right) \cap \left( i_{\varSigma_- *}(\calL_-) + \im(i_{\varSigma_+ *}) \right) \\
  &= \left( \ker(i_{\partial M *}) \cap \left( i_{\varSigma_- *}(\calL_-) + \im(i_{\varSigma_+ *}) \right) \right) + i_{\varSigma_- *}(\calL_-).
 \end{align*}
 But now, by the very definition of ${\calL_*}$, we have
 \begin{align*}
  \ker(i_{\partial M *}) \cap \left( i_{\varSigma_- *}(\calL_-) + \im(i_{\varSigma_+ *}) \right) = i_{\varSigma_- *}(\calL_-) + i_{\varSigma_+ *}(\calL_+).
 \end{align*}
 This means
 \begin{align*}
  \ker(i_{\partial M *}) | i_{\varSigma_- *}(\calL_-) 
  &\cong \left. i_{\varSigma_+ *}(\calL_+)) \right/ \left( i_{\varSigma_- *}(\calL_-) \cap i_{\varSigma_+ *}(\calL_+) \right) \\
  &= \left. i_{\varSigma_+ *}(\calL_+) \right/ \im(i_{\varSigma_+ *} \circ i_{\varGamma_+ *}) \\
  &\cong \left. \calL_+ \right/ \im(i_{\varGamma_+ *}),
 \end{align*}
 where again the second isomorphism follows from Remark \ref{R:isometry}. This proves our claim.
\end{proof}

\section{Maslov indices}\label{S:Maslov}

In this section we define Maslov indices of Lagrangian subspaces, which control the behaviour of the signature of ${4m}$-\-di\-men\-sion\-al manifolds under the operation of gluing along codimension ${0}$ submanifolds of their boundary. Let us fix for this section a finite-dimensional real vector space ${H}$ equipped with an antisymmetric bilinear form ${\omega}$, and let ${\calL_1}$, ${\calL_2}$, and ${\calL_3}$ be three Lagrangian subspaces of ${H}$. Every element of the vector space
\[
 W(\calL_1,\calL_2,\calL_3) := \frac{\calL_1 \cap (\calL_2 + \calL_3)}{(\calL_1 \cap \calL_2) + (\calL_1 \cap \calL_3)}
\]
is represented by some ${a_1 \in \calL_1}$ which is equal to a sum ${a_2 + a_3}$ for ${a_2 \in \calL_2}$ and ${a_3 \in \calL_3}$. Now consider the bilinear form
\[
 \begin{array}{rccc}
  \langle \cdot,\cdot \rangle_{\calL_1,\calL_2,\calL_3} : & W(\calL_1,\calL_2,\calL_3) \times W(\calL_1,\calL_2,\calL_3) & \rightarrow & \R \\
  & ([a_1],[b_2 + b_3]) & \mapsto & \omega(a_1,b_2)
 \end{array}
\]
where ${a_1 \in \calL_1}$, ${b_2 \in \calL_2}$, and ${b_3 \in \calL_3}$. Remark that this map is indeed well-defined: on one hand, if ${a_1 = a_2 + a_3}$ with ${a_i \in \calL_1 \cap \calL_i}$, and if ${b_1 = b_2 + b_3}$ with ${b_i \in \calL_i}$ for every integer ${1 \leqslant i \leqslant 3}$, then
\[
 \omega(a_1,b_2) = \omega(a_2 + a_3,b_2) = \omega(a_3,b_2) = \omega(a_3,b_1 - b_3) = \omega(a_3,b_1) = 0.
\]
On the other hand, if $a_1 = a_2 + a_3$ with $a_i \in \calL_i$, and if $b_1 = b_2 + b_3$ with $b_i \in \calL_1 \cap \calL_i$ for every integer $1 \leqslant i \leqslant 3$, then we immediately have ${\omega(a_1,b_2) = 0}$. Moreover, the bilinear form $\langle \cdot,\cdot \rangle_{\calL_1,\calL_2,\calL_3}$ is symmetric: indeed, if $a_1 = a_2 + a_3$ with $a_i \in \calL_i$, and if $b_1 = b_2 + b_3$ with $b_i \in \calL_i$ for every integer $1 \leqslant i \leqslant 3$, then we have
\[
 0 = \omega(a_1,b_1) = \omega(a_3,b_2) + \omega(a_2,b_3) = \omega(a_3,b_2) - \omega(b_3,a_2),
\]
which implies
\[
 \omega(a_1,b_2) = \omega(a_3,b_2) = \omega(b_3,a_2) = \omega(b_1,a_2).
\]
The symmetric form ${\langle \cdot,\cdot \rangle_{\calL_1,\calL_2,\calL_3}}$ is called the \textit{Maslov form associated with the Lagrangian subspaces ${\calL_1}$, ${\calL_2}$, and ${\calL_3}$ of $H$}.

\begin{definition}\label{D:Maslov}
 If ${\calL_1}$, ${\calL_2}$, and ${\calL_3}$ are Lagrangian subspaces of ${H}$, their \textit{Maslov index} ${\mu(\calL_1,\calL_2,\calL_3)}$ is the signature of the Maslov form ${\langle \cdot,\cdot \rangle_{\calL_1,\calL_2,\calL_3}}$.
\end{definition}

\begin{lemma}
 The Maslov index satisfies
 \[
  \mu(\calL_{\sigma(1)},\calL_{\sigma(2)},\calL_{\sigma(3)}) = \sgn(\sigma) \mu(\calL_1,\calL_2,\calL_3)
 \]
 for all Lagrangian subspaces ${\calL_1}$, ${\calL_2}$, and ${\calL_3}$ of ${H}$, and for every permutation ${\sigma}$ in ${\frakS_3}$.
\end{lemma}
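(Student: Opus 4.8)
The plan is to reduce to the two adjacent transpositions $s_1 = (12)$ and $s_2 = (23)$, which generate $\frakS_3$, and to prove the statement in the sharper form that transposing two entries of the triple $(\calL_1,\calL_2,\calL_3)$ comes from an isomorphism between the corresponding spaces $W(\cdot,\cdot,\cdot)$ that carries one Maslov form to minus the other. Since $\sgn$ is a homomorphism and the signature of a symmetric bilinear form is a congruence invariant that reverses sign under negation of the form, composing such isomorphisms along any word of length $k$ in $s_1,s_2$ will scale the Maslov form by $(-1)^k$, whose parity equals $\sgn$ of the resulting permutation; this gives the identity for all of $\frakS_3$ at once.

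For $s_2 = (23)$ this is almost immediate: the vector space $W(\calL_1,\calL_2,\calL_3)$ literally coincides with $W(\calL_1,\calL_3,\calL_2)$, because $\calL_2+\calL_3 = \calL_3+\calL_2$ and intersection is symmetric, so I only have to compare the two Maslov forms on this common space. Writing a representative $b_1 = b_2 + b_3$ with $b_2\in\calL_2$ and $b_3\in\calL_3$, the form $\langle\cdot,\cdot\rangle_{\calL_1,\calL_2,\calL_3}$ evaluates to $\omega(a_1,b_2)$ while $\langle\cdot,\cdot\rangle_{\calL_1,\calL_3,\calL_2}$ evaluates to $\omega(a_1,b_3)$, and isotropy of $\calL_1$ gives $\omega(a_1,b_2)+\omega(a_1,b_3)=\omega(a_1,b_1)=0$, so the two forms are opposite.

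For $s_1 = (12)$ I would introduce the map $\phi\colon W(\calL_1,\calL_2,\calL_3)\to W(\calL_2,\calL_1,\calL_3)$ sending the class of $a_1$ to the class of $a_2$, where $a_1 = a_2 + a_3$ with $a_2\in\calL_2$ and $a_3\in\calL_3$; note $a_2 = a_1 - a_3 \in \calL_2\cap(\calL_1+\calL_3)$, so this lands in the correct quotient. The routine verifications are that $\phi$ is well defined (two decompositions of $a_1$ differ by an element of $\calL_2\cap\calL_3$), that it annihilates $(\calL_1\cap\calL_2)+(\calL_1\cap\calL_3)$ and is injective (a short kernel computation symmetric to the surjectivity argument), and that the analogously defined map in the opposite direction is its inverse, so $\phi$ is an isomorphism. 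The form comparison is the one delicate point: with $b_1 = b_2 + b_3$ as above, the decomposition of $b_2$ needed to evaluate $\langle\cdot,\cdot\rangle_{\calL_2,\calL_1,\calL_3}$ is $b_2 = b_1 + (-b_3)$ with $b_1\in\calL_1$ and $-b_3\in\calL_3$, so $\langle\phi[a_1],\phi[b_1]\rangle_{\calL_2,\calL_1,\calL_3} = \omega(a_2,b_1)$; combining the antisymmetry of $\omega$ with the symmetry of the Maslov form already established in the text, namely $\omega(a_1,b_2)=\omega(b_1,a_2)$, gives $\omega(a_2,b_1) = -\omega(b_1,a_2) = -\langle[a_1],[b_1]\rangle_{\calL_1,\calL_2,\calL_3}$, as desired.

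Finally I would assemble the pieces: any $\sigma\in\frakS_3$ is a product $s_{i_1}\cdots s_{i_k}$ of the generators $s_1,s_2$, and composing the corresponding isomorphisms produces an isomorphism from $W(\calL_1,\calL_2,\calL_3)$ onto $W(\calL_{\sigma(1)},\calL_{\sigma(2)},\calL_{\sigma(3)})$ intertwining the Maslov forms up to the scalar $(-1)^k$; passing to signatures and using $\sgn(\sigma)=(-1)^k$ yields $\mu(\calL_{\sigma(1)},\calL_{\sigma(2)},\calL_{\sigma(3)})=\sgn(\sigma)\,\mu(\calL_1,\calL_2,\calL_3)$. The main obstacle is the purely linear-algebraic bookkeeping in the $(12)$ case — keeping track of which decomposition to use when transporting the Maslov form along $\phi$, and checking that $\phi$ is an isomorphism — but no conceptual difficulty arises beyond what is already present in the symmetry argument given in the excerpt.
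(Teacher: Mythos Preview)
Your proof is correct and follows essentially the same approach as the paper: both treat the transposition $(23)$ by observing the two Maslov forms on the common space differ by a sign via $\omega(a_1,b_2)+\omega(a_1,b_3)=0$, and both handle $(12)$ via the isomorphism $[a_1]\mapsto[a_2]$ together with the identity $\omega(a_2,b_1)=-\omega(a_1,b_2)$ (which is exactly antisymmetry of $\omega$ plus the Maslov-form symmetry $\omega(a_1,b_2)=\omega(b_1,a_2)$). The paper leaves the extension to arbitrary $\sigma\in\frakS_3$ implicit, whereas you spell it out, but there is no substantive difference.
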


\begin{proof}
 We have
\[
 \langle [a_1],[b_3 + b_2] \rangle_{\calL_1,\calL_3,\calL_2} = \omega(a_1,b_3) = - \omega(a_1,b_2) = - \langle [a_1],[b_2 + b_3] \rangle_{\calL_1,\calL_2,\calL_3}
\]
for all $[a_1] = [a_2 + a_3],[b_1] = [b_2 + b_3] \in W(\calL_1,\calL_2,\calL_3)$. This accounts for the equality $\mu(\calL_1,\calL_3,\calL_2) = - \mu(\calL_1,\calL_2,\calL_3)$. Furthermore, the space $W(\calL_1,\calL_2,\calL_3)$ is isomorphic to $W(\calL_2,\calL_1,\calL_3)$, with every $[a_1] = [a_2 + a_3] \in W(\calL_1,\calL_2,\calL_3)$ corresponding to $[a_2] = [a_1 - a_3] \in W(\calL_2,\calL_1,\calL_3)$. Therefore, we have
\[
 \langle [a_2],[b_1 - b_3] \rangle_{\calL_2,\calL_1,\calL_3} = \omega(a_2,b_1) = - \omega(a_1,b_2) = - \langle [a_1],[b_2 + b_3] \rangle_{\calL_1,\calL_2,\calL_3}
\]
for all $[a_1] = [a_2 + a_3],[b_1] = [b_2 + b_3] \in W(\calL_1,\calL_2,\calL_3)$. This accounts for the equality $\mu(\calL_2,\calL_1,\calL_3) = - \mu(\calL_1,\calL_2,\calL_3)$.
\end{proof}

A small remark on conventions: our definition of the space ${W(\calL_1,\calL_2,\calL_3)}$ agrees with the one of \cite{W69}, while the one given in \cite{T94} corresponds to our ${W(\calL_3,\calL_1,\calL_2)}$. However, our symmetric form ${\langle \cdot,\cdot \rangle_{\calL_1,\calL_2,\calL_3}}$ is the opposite of the one given in \cite{W69}, while it agrees with the one given in \cite{T94}. Therefore, our definition of ${\mu(\calL_1,\calL_2,\calL_3)}$ turns out to coincide with the one of \cite{T94}, while if we wanted to match our formulas with the ones coming from \cite{W69}, we would need to place a minus sign in front of every occurence of the Maslov index.

Now, to put Maslov indices into perspective, let us quote Wall's famous non-additivity result, whose proof can be found in \cite{W69}. The signature ${\sigma(W)}$ of a ${4m}$-\-di\-men\-sion\-al manifold ${W}$ is the signature of its middle absolute intersection pairing ${\pitchfork_W : H_{2m}(W) \times H_{2m}(W) \rightarrow \R}$, which is a symmetric bilinear form on ${H_{2m}(W)}$. Now let us fix a ${4m}$-\-di\-men\-sion\-al manifold ${W}$, and let ${M_0}$ be a separating ${4m-1}$-\-di\-men\-sion\-al submanifold of ${W}$ with boundary ${\partial M_0 = M_0 \cap \partial W}$. Then ${M_0}$ induces a decomposition ${W = W_- \cup W_+}$ for some codimension ${0}$ submanifolds ${W_-}$ and ${W_+}$ of ${W}$ having boundaries ${\partial W_- = \overline{M_-} \cup M_0}$ and ${\partial W_+ = \overline{M_0} \cup M_+}$ respectively, where ${M_-}$ and ${M_+}$ satisfy ${\partial W = \overline{M_-} \cup M_+}$ and ${\overline{M_-} \cap M_+ = \partial M_- = \partial M_0 = \partial M_+ = \varSigma}$.

\begin{theorem}\label{T:Wall}
 Let
 \[
  i_{\varSigma_0 *} : H_{2m-1}(\varSigma) \rightarrow H_{2m-1}(M_0), \quad
  i_{\varSigma_{\pm} *} : H_{2m-1}(\varSigma) \rightarrow H_{2m-1}(M_{\pm})
 \]
 be induced by inclusion. Then
 \[
  \sigma(W) = \sigma(W_-) + \sigma(W_+) + \mu(\calL_-,\calL_0,\calL_+),
 \]
 where ${\calL_0 := \ker(i_{\varSigma_0 *})}$, and ${\calL_{\pm} := \ker(i_{\varSigma_{\pm} *})}$.
\end{theorem}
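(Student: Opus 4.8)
The plan is to run Wall's original chain-level argument in the setting fixed just before the statement. Write $V := H_{2m-1}(\varSigma)$; by Lemma \ref{L:(anti)symmetry} it carries the antisymmetric intersection pairing $\pitchfork_{\varSigma}$, and since $\varSigma$ is closed this pairing is non-degenerate, so $V$ is a symplectic space; moreover, by Proposition \ref{P:Lagrangian}, the subspaces $\calL_0 = \ker(i_{\varSigma_0 *})$, $\calL_- = \ker(i_{\varSigma_- *})$, and $\calL_+ = \ker(i_{\varSigma_+ *})$ are genuine Lagrangian subspaces of $V$. The goal is to express the middle intersection form $\pitchfork_W$ on $H_{2m}(W)$, up to its radical, as an orthogonal direct sum of a block-diagonal part carrying signature $\sigma(W_-) + \sigma(W_+)$ and a residual part whose associated symmetric bilinear form is the Maslov form $\langle \cdot,\cdot\rangle_{\calL_-,\calL_0,\calL_+}$ of Definition \ref{D:Maslov}; additivity of the signature under orthogonal direct sum then yields the stated identity, the sign matching our convention precisely because our Maslov index is normalized as in \cite{T94}.

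First I would assemble the relevant exact sequences: the Mayer--Vietoris sequence of the decomposition $W = W_- \cup_{M_0} W_+$, together with the long exact sequences of the pairs $(W,\partial W)$, $(W_{\pm},\partial W_{\pm})$, $(M_0,\varSigma)$ and $(M_{\pm},\varSigma)$. Combined with Poincaré--Lefschetz duality (used exactly as in the intersection-pairing computations of the previous sections), these identify the radical of $\pitchfork_{W_{\pm}}$ with $\ker\bigl(H_{2m}(W_{\pm}) \to H_{2m}(W_{\pm},\partial W_{\pm})\bigr)$, and let one single out a subspace $R \subseteq H_{2m}(W)$ spanned by the images of $H_{2m}(W_-)$ and $H_{2m}(W_+)$, on which $\pitchfork_W$ splits orthogonally: a $2m$-cycle carried by $W_-$ and one carried by $W_+$ can be made disjoint after an isotopy pushing one of them off the collar of $M_0$, so that, once the radicals are quotiented out, $\restr{\pitchfork_W}{R}$ has signature $\sigma(W_-) + \sigma(W_+)$. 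This is the ``Novikov additivity'' part of the statement.

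The heart of the argument is the analysis of a complement to $R$. Given $a_- \in \calL_- \cap (\calL_0 + \calL_+)$, write $a_- = a_0 + a_+$ with $a_0 \in \calL_0$ and $a_+ \in \calL_+$, pick a $(2m-1)$-cycle $z$ in $\varSigma$ representing $a_-$, and use the definitions $\calL_- = \ker(i_{\varSigma_- *})$, $\calL_0 = \ker(i_{\varSigma_0 *})$, $\calL_+ = \ker(i_{\varSigma_+ *})$ to choose $2m$-chains in $M_-$, $M_0$ and $M_+$ bounding representatives of $a_-$, $-a_0$, $-a_+$ respectively; collaring $\varSigma$ inside $W$ and pushing these three chains slightly into $W_-$, into $M_0$, and into $W_+$, one glues them into a closed $2m$-cycle $\widehat{a}_-$ of $W$. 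The assignment $a_- \mapsto [\widehat{a}_-]$ is then checked to descend to an isomorphism $W(\calL_-,\calL_0,\calL_+) \cong H_{2m}(W)/R$, and a self-intersection computation for $\widehat{a}_-$ --- localizing all intersection points near $\varSigma$ and converting an intersection number in $W$ into one in $\varSigma$ via Lemma \ref{L:boundary} --- produces $\langle [\widehat{a}_-],[\widehat{b}_-]\rangle = \omega(a_-,b_0)$, i.e. the Maslov form. Verifying that this map is simultaneously a well-defined isomorphism and an isometry, while keeping track of all the orientation and collar conventions (this is where the sign normalization at the end of Section \ref{S:Maslov} enters), is the \emph{main obstacle}; the rest is bookkeeping with the exact sequences and duality already developed in this appendix.

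Assembling the two pieces, $\pitchfork_W$ is the orthogonal direct sum, modulo a radical that does not affect the signature, of $\restr{\pitchfork_W}{R}$ and the Maslov form of $\calL_-,\calL_0,\calL_+$, whence $\sigma(W) = \sigma(W_-) + \sigma(W_+) + \mu(\calL_-,\calL_0,\calL_+)$.
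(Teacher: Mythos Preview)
The paper does not prove this theorem at all: it is introduced with ``let us quote Wall's famous non-additivity result, whose proof can be found in \cite{W69}'' and stated without proof. So there is no paper proof to compare against; the result is treated as an imported black box.

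Your outline is essentially a sketch of Wall's original argument from \cite{W69}: decompose $H_{2m}(W)$ via Mayer--Vietoris into the images of $H_{2m}(W_\pm)$ plus a complementary piece, identify the complementary piece with $W(\calL_-,\calL_0,\calL_+)$ via the gluing-of-chains construction, and compute self-intersections to recover the Maslov form. This is the right shape, and you correctly flag the delicate part (the isometry/isomorphism verification with all the sign conventions). If you wanted to actually carry this out in the paper's framework you would need to do real work at the step ``$a_- \mapsto [\widehat{a}_-]$ descends to an isomorphism $W(\calL_-,\calL_0,\calL_+) \cong H_{2m}(W)/R$'': surjectivity requires showing every class in $H_{2m}(W)$ not coming from $W_\pm$ arises this way, and injectivity requires controlling the ambiguity in the choices of bounding chains modulo $R$; both rely on a careful diagram chase through the Mayer--Vietoris sequence and the long exact sequences of the pairs $(W_\pm,\partial W_\pm)$, which you gesture at but do not execute. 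Since the paper is content to cite \cite{W69}, this level of detail is already more than what the paper provides.
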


A final remark: the manifestation of the Maslov index in the composition of 2-morphisms of the 2-category ${\bfCob_{\calC}}$ of decorated cobordisms of dimension 1+1+1 is indeed related to the non-additivity of the signature of 4-manifolds. Indeed, Lagrangian subspaces and signature defects are the shadow of certain manifolds whose boundaries are related to morphisms of ${\bfCob_{\calC}}$, and which leave no trace on objects. The idea is roughly the following: every object ${\bbGamma}$ of ${\bfCob_{\calC}}$ secretly carries along a collection of topological discs whose boundary is identified with ${\varGamma}$. We remember nothing about these discs simply because there is essentially a unique way of identifying a topological circle to a connected 1-manifold preserving orientations. Analogously, every 1-morphism ${\bbSigma : \bbGamma \rightarrow \bbGamma'}$ of ${\bfCob_{\calC}}$ secretly carries along a collection of topological handlebodies whose boundary is identified with the closed surface ${\hat{\varSigma}}$ obtained from ${\varSigma}$ by gluing in all the discs bounding ${\varGamma}$ and ${\varGamma'}$. The only thing we remember about these handlebodies is the Lagrangians they leave in the first homology of ${\varSigma}$. Lastly, every 2-morphism ${\bbM : \bbSigma \Rightarrow \bbSigma'}$ of ${\bfCob_{\calC}}$ secretly carries along a topological 4-manifold whose boundary is identified with the closed 3-manifold ${\hat{M}}$ obtained from ${M}$ by gluing in all the handlebodies bounding the closed surfaces ${\hat{\varSigma}}$ and ${\hat{\varSigma}'}$. The only thing we remember about these 4-manifolds is their signature.

The reader might wonder then why there is a minus sign in front of the Maslov index in Definition \ref{D:vertical_composition}, while the correction term in Theorem \ref{T:Wall} appears with a plus sign. The question is related to orientations: when we compose vertically 2-morphisms ${\bbM : \bbSigma \Rightarrow \bbSigma'}$ and ${\bbM' : \bbSigma' \Rightarrow \bbSigma''}$ of ${\bfCob_{\calC}}$, let us see who plays who with respect to the notation of Theorem \cite{W69}. First of all, we assign the roles for ${W_-}$ and ${W_+}$, which are played by the bounding 4-manifolds attached to ${M}$ and to ${M'}$ respectively. Now, remark that ${W_-}$ is supposed to induce the opposite orientation on ${M_-}$. This means that ${\overline{M_-}}$ has to be played by ${\hat{M}}$, while ${\overline{M_0}}$ is played by the bounding collection of handlebodies attached to ${\varSigma'}$, and ${M_+}$ is played by ${\hat{M}'}$. Lastly, since ${\varSigma}$ needs to be oriented as the boundary of ${M_-}$, ${M_0}$, and ${M_+}$ at the same time, we need to set ${\overline{\varSigma} = \hat{\varSigma}'}$. Summing up everything, when we compose vertically 2-morphisms ${\bbM : \bbSigma \Rightarrow \bbSigma'}$ and ${\bbM' : \bbSigma' \Rightarrow \bbSigma''}$ of ${\bfCob_{\calC}}$, we look at Lagrangian subspaces of the first homology of ${\overline{\varSigma'}}$, and the orientation reversal operation reverses intersection pairings too. This explains the minus sign in front of the Maslov index in Definition \ref{D:vertical_composition}.


%
%
%

\chapter{Symmetric monoidal 2-categories}\label{A:symmetric_monoidal}

This appendix collects definitions of higher categorical structures related to symmetric monoidal $2$-cat\-e\-go\-ries, which are all gathered here for convenience. We also recall an important coherence result for symmetric monoidal $2$-cat\-e\-go\-ries, due to Schommer-Pries, which allows us to forget about most of their coherence data.

\section{2-Categories}

We begin by fixing our terminology and notation for $2$-cat\-e\-go\-ries. The concept was introduced for the first time in \cite{B67} under the name bicategory. Other references are provided by \cite{KS74} and \cite{G74}, or by the more recent and concise \cite{L98}, although the main source for all the material presented in this chapter is \cite{S11}.

\begin{definition}\label{D:2-category}
 A \textit{$2$-cat\-e\-go\-ry} $\bcalC$ (sometimes called a \textit{weak $2$-cat\-e\-go\-ry}, or a \textit{bicategory}) is given by:
 \begin{enumerate}
  \item A class $\bcalC$, whose elements are called \textit{objects} (our notation is abusive and confuses $2$-cat\-e\-go\-ries with their classes of objects);
  \item A category $\bcalC(x,y)$ for all $x,y \in \bcalC$, whose objects are called \textit{$1$-mor\-phisms}, whose morphisms are called \textit{$2$-mor\-phisms}, and whose composition is called \textit{vertical composition} (we represent 1-mor\-phisms as arrows $f : x \rightarrow y$, 2-mor\-phisms as double-struck arrows $\alpha : f \Rightarrow g$, and we denote vertical composition of 2-mor\-phisms $\alpha : f \Rightarrow g$ and $\beta : g \Rightarrow h$ with $\beta \ast \alpha : f \Rightarrow h$);
  \item A distinguished 1-mor\-phism $\id_x \in \bcalC(x,x)$ for every $x \in \bcalC$, called \textit{identity};
  \item A functor $C_{x,y,z} : \bcalC(y,z) \times \bcalC(x,y) \rightarrow \bcalC(x,z)$ for all $x,y,z \in \bcalC$, called \textit{horizontal composition functor} (we denote horizontal composition of 1-mor\-phisms $f : x \rightarrow y$ and $g : y \rightarrow z$ with $g \circ f : x \rightarrow z$, and of 2-mor\-phisms $\alpha : f \Rightarrow h$ and $\beta : g \Rightarrow i$ with $\beta \circ \alpha : g \circ f \Rightarrow i \circ h$);
  \item Natural isomorphisms
   \begin{gather*}
    \hspace{2\parindent} 
    \lambda_{x,y} : C_{x,y,y} \circ (\id_y, \id_{\bcalC(x,y)}) \Rightarrow \id_{\bcalC(x,y)}, \\
    \hspace{2\parindent} 
    \rho_{x,y} : C_{x,x,y} \circ (\id_{\bcalC(x,y)},\id_x) \Rightarrow \id_{\bcalC(x,y)}
   \end{gather*}
   for all $x,y \in \bcalC$, called \textit{left} and \textit{right unitor} respectively (we denote their components with $\lambda_f : \id_y \circ f \Rightarrow f$ and with $\rho_f : f \circ \id_x \Rightarrow f$ for every $1$-mor\-phism $f : x \rightarrow y$ of $\bcalC$);
  \item A natural isomorphism
   \[
    \hspace{2\parindent} 
    \alpha_{x,y,z,a} : C_{x,y,a} \circ (C_{y,z,a} \times \id_{\bcalC(x,y)}) \Rightarrow C_{x,z,a} \circ (\id_{\bcalC(z,a)} \times C_{x,y,z})
   \]
   for all $x,y,z,a \in \bcalC$, called \textit{associator} (we denote its components with $\alpha_{h,g,f} : (h \circ g) \circ f \Rightarrow h \circ (g \circ f)$ for all $1$-mor\-phisms $f : x \rightarrow y$, $g : y \rightarrow z$, and $h : z \rightarrow a$ of $\bcalC$).
 \end{enumerate}
 These data satisfy the following conditions:
 \begin{enumerate}
  \item The diagram of $2$-mor\-phisms of $\bcalC$
   \begin{center}
    \begin{tikzpicture}[descr/.style={fill=white}]
     \node (P0) at (90:{3.5/sqrt(3)}) {$g \circ ({\id_y} \circ f)$};
     \node (P1) at (90+120:{3.5/sqrt(3)}) {$(g \circ {\id_y}) \circ f$};
     \node (P2) at (90+2*120:{3.5/sqrt(3)}) {$g \circ f$};
     \draw
     (P1) edge[-implies,double equal sign distance] node[left,xshift=-5pt] {$\alpha_{g,\id_y,f}$} (P0)
     (P1) edge[-implies,double equal sign distance] node[below,yshift=-5pt] {$\rho_g \circ \id_f$} (P2)
     (P0) edge[-implies,double equal sign distance] node[right,xshift=5pt] {$\id_g \circ \lambda_f$} node[right,xshift=10pt] {$\phantom{\alpha_{g,\id_y,f}}$} (P2);
    \end{tikzpicture}
   \end{center}
   is commutative for all $1$-mor\-phisms $f : x \rightarrow y$ and $g : y \rightarrow z$ of $\bcalC$.
  \item The diagram of $2$-mor\-phisms of $\bcalC$
   \begin{center}
    \begin{tikzpicture}[descr/.style={fill=white}]
     \node (P0) at (-90:{3.5/sqrt((1-cos(72))^2+sin(72)^2)}) {$(i \circ h) \circ (g \circ f)$};
     \node (P1) at (-90+72:{3.5/sqrt((1-cos(72))^2+sin(72)^2)}) {$i \circ (h \circ (g \circ f))$};
     \node (P2) at (-90+2*72:{3.5/sqrt((1-cos(72))^2+sin(72)^2)}) {$i \circ ((h \circ g) \circ f)$};
     \node (P3) at (-90+3*72:{3.5/sqrt((1-cos(72))^2+sin(72)^2)}) {$(i \circ (h \circ g)) \circ f$};
     \node (P4) at (-90+4*72:{3.5/sqrt((1-cos(72))^2+sin(72)^2)}) {$((i \circ h) \circ g) \circ f$};
     \draw
     (P4) edge[-implies,double equal sign distance] node[left,xshift=-5pt,yshift=-5pt] {$\alpha_{i \circ h,g,f}$} (P0)
     (P0) edge[-implies,double equal sign distance] node[right,xshift=5pt,yshift=-5pt] {$\alpha_{i,h,g \circ f}$} (P1)
     (P2) edge[-implies,double equal sign distance] node[right,xshift=5pt] {$\id_i \circ \alpha_{h,g,f}$} node[right,xshift=5pt] {$\phantom{\alpha_{i,h,g} \circ \id_f}$} (P1)
     (P3) edge[-implies,double equal sign distance] node[above,yshift=5pt] {$\alpha_{i,h \circ g,f}$} (P2)
     (P4) edge[-implies,double equal sign distance] node[left,xshift=-5pt] {$\alpha_{i,h,g} \circ \id_f$} node[left,xshift=-5pt] {$\phantom{\id_i \circ \alpha_{h,g,f}}$} (P3);
    \end{tikzpicture}
   \end{center}
   is commutative for all $1$-mor\-phisms $f : x \rightarrow y$, $g : y \rightarrow z$, $h : z \rightarrow a$, and $i : a \rightarrow b$ of $\bcalC$.
 \end{enumerate}
\end{definition}


We move on to introduce $2$-func\-tors, which we can interpret as $1$-mor\-phisms between $2$-cat\-e\-go\-ries.

\begin{definition}\label{D:2-functor}
 A \textit{$2$-func\-tor} $\bfF : \bcalC \rightarrow \bcalC'$ between a pair of $2$-categories $\bcalC$, $\bcalC'$ is given by:
 \begin{enumerate}
  \item A function $\bfF : \bcalC \rightarrow \bcalC'$ (again, by abuse of notation, we confuse $2$-func\-tors with their functions on classes of objects);
  \item A functor $\bfF_{x,y} : \bcalC(x,y) \rightarrow \bcalC'(\bfF(x),\bfF(y))$ for all $x,y \in \bcalC$ (we denote its images with $\bfF(f) : \bfF(x) \rightarrow \bfF(y)$ and with $\bfF(\alpha) : \bfF(f) \Rightarrow \bfF(g)$ for all $1$-mor\-phisms $f,g : x \rightarrow y$ and every $2$-mor\-phism $\alpha : f \Rightarrow g$ of $\bcalC$);
  \item A distinguished invertible $2$-mor\-phism $\bfF_x : \id_{\bfF(x)} \Rightarrow \bfF(\id_x)$ of $\bcalC'$ for every $x \in \bcalC$;
  \item A natural isomorphism 
   \[
    \hspace{2\parindent} 
    \bfF_{x,y,z} : C_{\bfF(x),\bfF(y),\bfF(z)} \circ (\bfF_{y,z} \times \bfF_{x,y}) \Rightarrow \bfF_{x,z} \circ C_{x,y,z}
   \]
   for all $x,y,z \in \bcalC$ (we denote with $\bfF_{g,f} : \bfF(g) \circ \bfF(f) \Rightarrow \bfF(g \circ f)$ its compontents for all $1$-mor\-phisms $f : x \rightarrow y$ and $g : y \rightarrow z$ of $\bcalC$).
 \end{enumerate} 
 These data satisfy the following conditions:
 \begin{enumerate}
  \item The diagrams of $2$-mor\-phisms of $\bcalC'$ 
   \begin{center}
    \begin{tikzpicture}[descr/.style={fill=white}]
     \node (P0) at (45:{3.1/sqrt(2)}) {$\bfF({\id_y} \circ f)$};
     \node (P1) at (45+90:{3.1/sqrt(2)}) {$\bfF(\id_y) \circ \bfF(f)$};
     \node (P2) at (45+2*90:{3.1/sqrt(2)}) {${\id_{\bfF(y)}} \circ \bfF(f)$};
     \node (P3) at (45+3*90:{3.1/sqrt(2)}) {$\bfF(f)$};
     \draw
     (P1) edge[-implies,double equal sign distance] node[above,yshift=5pt] {$\bfF_{\id_y,f}$} (P0)
     (P2) edge[-implies,double equal sign distance] node[left,xshift=-5pt] {$\bfF_y \circ \id_{\bfF(f)}$} node[left,xshift=-5pt] {$\phantom{\bfF(\lambda_f)}$} (P1)
     (P2) edge[-implies,double equal sign distance] node[below,yshift=-5pt] {$\lambda_{\bfF(f)}$} (P3)
     (P0) edge[-implies,double equal sign distance] node[right,xshift=5pt] {$\bfF(\lambda_f)$} node[right,xshift=5pt] {$\phantom{\bfF_y \circ \id_{\bfF(f)}}$} (P3);
    \end{tikzpicture}
    \begin{tikzpicture}[descr/.style={fill=white}]
     \node (P0) at (45:{3.1/sqrt(2)}) {$\bfF(f \circ {\id_x})$};
     \node (P1) at (45+90:{3.1/sqrt(2)}) {$\bfF(f) \circ \bfF(\id_x)$};
     \node (P2) at (45+2*90:{3.1/sqrt(2)}) {$\bfF(f) \circ {\id_{\bfF(x)}}$};
     \node (P3) at (45+3*90:{3.1/sqrt(2)}) {$\bfF(f)$};
     \draw
     (P1) edge[-implies,double equal sign distance] node[above,yshift=5pt] {$\bfF_{f,\id_x}$} (P0)
     (P2) edge[-implies,double equal sign distance] node[left,xshift=-5pt] {$\id_{\bfF(f)} \circ \bfF_x$} node[left,xshift=-5pt] {$\phantom{\bfF(\rho_f)}$} (P1)
     (P2) edge[-implies,double equal sign distance] node[below,yshift=-5pt] {$\rho_{\bfF(f)}$} (P3)
     (P0) edge[-implies,double equal sign distance] node[right,xshift=5pt] {$\bfF(\rho_f)$} node[right,xshift=5pt] {$\phantom{\id_{\bfF(f)} \circ \bfF_x}$} (P3);
    \end{tikzpicture}
   \end{center}
   are commutative for every $1$-mor\-phism $f : x \rightarrow y$ of $\bcalC$; 
  \item The diagram of $2$-mor\-phisms of $\bcalC'$
   \begin{center}
    \begin{tikzpicture}[descr/.style={fill=white}]
     \node (P0) at (180:3.1) {$(\bfF(h) \circ \bfF(g)) \circ \bfF(f)$};
     \node[left,xshift=25pt] (P1) at (180-60:3.1) {$\bfF(h) \circ (\bfF(g) \circ \bfF(f))$};
     \node[right,xshift=-25pt] (P2) at (180-2*60:3.1) {$\bfF(h) \circ \bfF(g \circ f)$};
     \node (P3) at (180-3*60:3.1) {$\bfF(h \circ (g \circ f))$};
     \node[right,xshift=-25pt] (P4) at (180-4*60:3.1) {$\bfF((h \circ g) \circ f)$};
     \node[left,xshift=25pt] (P5) at (180-5*60:3.1) {$\bfF(h \circ g) \circ \bfF(f)$};
     \node (PH1) at (180-60:3.1) {$\hspace{50pt} \vphantom{\bfF(h) \circ (\bfF(g) \circ \bfF(f))}$};
     \node (PH2) at (180-2*60:3.1) {$\hspace{50pt} \vphantom{\bfF(h) \circ \bfF(g \circ f)}$};
     \node (PH4) at (180-4*60:3.1) {$\hspace{50pt} \vphantom{\bfF((h \circ g) \circ f)}$};
     \node (PH5) at (180-5*60:3.1) {$\hspace{50pt} \vphantom{\bfF(h \circ g) \circ \bfF(f)}$};
     \draw
     (PH2) edge[-implies,double equal sign distance] node[right,xshift=5pt] {$\bfF_{h,g \circ f}$} (P3)
     (P0) edge[-implies,double equal sign distance] node[left,xshift=-5pt] {$\alpha_{\bfF(h),\bfF(g),\bfF(f)}$} (PH1) 
     (PH1) edge[-implies,double equal sign distance] node[above,yshift=5pt] {$\id_{\bfF(h)} \circ \bfF_{g,f}$} (PH2)
     (P0) edge[-implies,double equal sign distance] node[left,xshift=-5pt] {$\bfF_{h,g} \circ \id_{\bfF(f)}$} node[left,xshift=-5pt] {$\phantom{\bfF(\alpha_{h,g,f})}$} (PH5)
     (PH5) edge[-implies,double equal sign distance] node[below,yshift=-5pt] {$\bfF_{h \circ g,f}$} (PH4)
     (PH4) edge[-implies,double equal sign distance] node[right,xshift=5pt] {$\bfF(\alpha_{h,g,f})$} node[right,xshift=5pt] {$\phantom{\bfF_{h,g} \circ \id_{\bfF(f)}}$} (P3);
    \end{tikzpicture}
   \end{center}
   is commutative for all $1$-mor\-phisms $f : x \rightarrow y$, $g : y \rightarrow z$, and $h : z \rightarrow a$ of $\bcalC$.
 \end{enumerate}
\end{definition}

Next, let us introduce $2$-mor\-phisms between $2$-func\-tors, which are called $2$-trans\-for\-ma\-tions.

\begin{definition}\label{D:2-transformation}
 A \textit{$2$-trans\-for\-ma\-tion} $\bfsigma : \bfF \Rightarrow \bfF'$ between a pair of $2$-func\-tors $\bfF,\bfF' : \bcalC \to \bcalC'$ is given by:
 \begin{enumerate}
  \item A 1-mor\-phism $\bfsigma_x : \bfF(x) \rightarrow \bfF'(x)$ of $\bcalC'$ for every $x \in \bcalC$, called \textit{component};
  \item A natural isomorphism 
   \[
    \hspace{2\parindent} 
    \bfsigma_{x,y} : (\bfsigma_x)^* \circ \bfF'_{x,y} \Rightarrow (\bfsigma_y)_* \circ \bfF_{x,y}
   \]
   for all $x,y \in \bcalC$ (we denote its components with $\bfsigma_f : \bfF'(f) \circ \bfsigma_x \Rightarrow \bfsigma_y \circ \bfF(f)$ for every $1$-mor\-phism $f : x \rightarrow y$ of $\bcalC$).
 \end{enumerate}
 These data satisfy the following conditions:
 \begin{enumerate}
  \item The diagram of $2$-mor\-phisms of $\bcalC'$
   \begin{center}
    \begin{tikzpicture}[descr/.style={fill=white}]
     \node (P0) at (-90:{2.7/sqrt((1-cos(72))^2+sin(72)^2)}) {$\bfF'(\id_x) \circ \bfsigma_x$};
     \node (P1) at (-90+72:{2.7/sqrt((1-cos(72))^2+sin(72)^2)}) {$\bfsigma_x \circ \bfF(\id_x)$};
     \node (P2) at (-90+2*72:{2.7/sqrt((1-cos(72))^2+sin(72)^2)}) {$\bfsigma_x \circ \id_{\bfF(x)}$};
     \node (P3) at (-90+3*72:{2.7/sqrt((1-cos(72))^2+sin(72)^2)}) {$\bfsigma_x$};
     \node (P4) at (-90+4*72:{2.7/sqrt((1-cos(72))^2+sin(72)^2)}) {$\id_{\bfF'(x)} \circ \bfsigma_x$};
     \draw
     (P4) edge[-implies,double equal sign distance] node[left,xshift=-5pt,yshift=-5pt] {$\bfF'_x \circ \id_{\bfsigma_x}$} (P0)
     (P0) edge[-implies,double equal sign distance] node[right,xshift=5pt,yshift=-5pt] {$\bfsigma_{\id_x}$} (P1)
     (P4) edge[-implies,double equal sign distance] node[left,xshift=-5pt] {$\lambda_{\bfsigma_x}$} node[left,xshift=-5pt] {$\phantom{\id_{\bfsigma_x} \circ \bfF_x}$} (P3)
     (P3) edge[-implies,double equal sign distance] node[above,yshift=5pt] {$\rho^{-1}_{\bfsigma_x}$} (P2)
     (P2) edge[-implies,double equal sign distance] node[right,xshift=5pt] {$\id_{\bfsigma_x} \circ \bfF_x$} (P1);
    \end{tikzpicture}
   \end{center}
   is commutative for every $x \in \bcalC$. 
  \item The diagram of $2$-mor\-phisms of $\bcalC'$
   \begin{center}
    \begin{tikzpicture}[descr/.style={fill=white}]
     \node (P0) at (0:{2.7/(2*sin(22.5))}) {$\bfsigma_z \circ (\bfF(g) \circ \bfF(f))$};
     \node (P1) at (45:{2.7/(2*sin(22.5))}) {$(\bfsigma_z \circ \bfF(g)) \circ \bfF(f)$};
     \node (P2) at (2*45:{2.7/(2*sin(22.5))}) {$(\bfF'(g) \circ \bfsigma_y) \circ \bfF(f)$};
     \node (P3) at (3*45:{2.7/(2*sin(22.5))}) {$\bfF'(g) \circ (\bfsigma_y \circ \bfF(f))$};
     \node (P4) at (4*45:{2.7/(2*sin(22.5))}) {$\bfF'(g) \circ (\bfF'(f) \circ \bfsigma_x)$};
     \node (P5) at (5*45:{2.7/(2*sin(22.5))}) {$(\bfF'(g) \circ \bfF'(f)) \circ \bfsigma_x$};
     \node (P6) at (6*45:{2.7/(2*sin(22.5))}) {$\bfF'(g \circ f) \circ \bfsigma_x$};
     \node (P7) at (7*45:{2.7/(2*sin(22.5))}) {$\bfsigma_z \circ \bfF(g \circ f)$};
     \draw
     (P5) edge[-implies,double equal sign distance] node[left,xshift=-5pt] {$\alpha_{\bfF'(g),\bfF'(f),\bfsigma_x}$} (P4)
     (P4) edge[-implies,double equal sign distance] node[left,xshift=-5pt] {$\id_{\bfF'(g)} \circ \bfsigma_f$} node[left,xshift=-5pt] {$\phantom{\alpha_{\bfsigma_z,\bfF(g),\bfF(f)}}$} (P3)
     (P3) edge[-implies,double equal sign distance] node[left,xshift=-10pt,yshift=5pt] {$\alpha_{\bfF'(g),\bfsigma_y,\bfF(f)}^{-1}$} (P2)
     (P2) edge[-implies,double equal sign distance] node[right,xshift=10pt,yshift=5pt] {$\bfsigma_g \circ \id_{\bfF(f)}$} (P1)
     (P1) edge[-implies,double equal sign distance] node[right,xshift=5pt] {$\alpha_{\bfsigma_z,\bfF(g),\bfF(f)}$} (P0)
     (P0) edge[-implies,double equal sign distance] node[right,xshift=5pt] {$\id_{\bfsigma_z} \circ \bfF_{g,f}$} node[right,xshift=5pt] {$\phantom{\alpha_{\bfF'(g),\bfF'(f),\bfsigma_x}}$} (P7)
     (P5) edge[-implies,double equal sign distance] node[left,xshift=-10pt,yshift=-5pt] {$\bfF'_{g,f} \circ \id_{\bfsigma_x}$} (P6)
     (P6) edge[-implies,double equal sign distance] node[right,xshift=10pt,yshift=-5pt] {$\bfsigma_{g \circ f}$} (P7);
    \end{tikzpicture}
   \end{center}
   is commutative for all $1$-mor\-phisms $f : x \rightarrow y$ and $g : y \rightarrow z$ of $\bcalC$.
 \end{enumerate}
\end{definition}

Now, we introduce what can be thought of as a notion of $3$-mor\-phism between $2$-trans\-for\-ma\-tions: $2$-mod\-i\-fi\-ca\-tions.

\begin{definition}\label{D:2-modification}
 A \textit{$2$-mod\-i\-fi\-ca\-tion} $\bfGamma : \bfsigma \Rrightarrow \bfsigma'$ between a pair of 2-trans\-for\-ma\-tions $\bfsigma,\bfsigma' : \bfF \Rightarrow \bfF'$ between $2$-func\-tors $\bfF,\bfF' : \bcalC \rightarrow \bcalC'$ is given by a 2-mor\-phism $\bfGamma_x : \bfsigma_x \Rightarrow \bfsigma'_x$ of $\bcalC'$ for every $x \in \bcalC$, called \textit{component}, such that the diagram of 2-mor\-phisms of $\bcalC'$
 \begin{center}
  \begin{tikzpicture}[descr/.style={fill=white}]
   \node (P0) at (0:{3.0/sqrt(2)}) {$\bfsigma'_y \circ \bfF(f)$};
   \node (P1) at (90:{3.0/sqrt(2)}) {$\bfF'(f) \circ \bfsigma'_x$};
   \node (P2) at (2*90:{3.0/sqrt(2)}) {$\bfF'(f) \circ \bfsigma_x$};
   \node (P3) at (3*90:{3.0/sqrt(2)}) {$\bfsigma_y \circ \bfF(f)$};
   \node (PH0) at (0:{3.0/sqrt(2)}) {$\phantom{\bfF'(f) \circ \bfsigma_x}$};
   \node (PH2) at (2*90:{3.0/sqrt(2)}) {$\phantom{\bfsigma'_y \circ \bfF(f)}$};
   \draw
   (P1) edge[-implies,double equal sign distance] node[right,xshift=5pt,yshift=5pt] {$\bfsigma'_f$} node[right,xshift=5pt,yshift=5pt] {$\phantom{\id_{\bfF'(f)} \circ \bfGamma_x}$} (P0)
   (P2) edge[-implies,double equal sign distance] node[left,xshift=-5pt,yshift=5pt] {$ \hspace{2\parindent}\id_{\bfF'(f)} \circ \bfGamma_x$} (P1)
   (P2) edge[-implies,double equal sign distance] node[left,xshift=-5pt,yshift=-5pt] {$\bfsigma_f$} node[left,xshift=-5pt,yshift=-5pt] {$\phantom{\bfGamma_y \circ \id_{\bfF(f)}}$} (P3)
   (P3) edge[-implies,double equal sign distance] node[right,xshift=5pt,yshift=-5pt] {$\bfGamma_y \circ \id_{\bfF(f)}$} (P0);
  \end{tikzpicture}
 \end{center}
 is commutative for every $1$-mor\-phism $f : x \rightarrow y$ of $\bcalC$.
\end{definition}

The simplest examples of all these morphisms are provided by identities. Indeed, remark that the identity $2$-func\-tor associated with a $2$-cat\-e\-go\-ry $\bcalC$ can be readily defined using only identity functions, functors, and 2-mor\-phisms of $\bcalC$. Analogously, the identity $2$-trans\-for\-ma\-tion associated with a  $2$-func\-tor $\bfF : \bcalC \rightarrow \bcalC'$ can be easily defined using identity 1-mor\-phisms and left and right unitors in $\bcalC'$, while the identity $2$-mod\-i\-fi\-ca\-tion associated with a $2$-trans\-for\-ma\-tion $\bfsigma : \bfF \Rightarrow \bfF'$ between $2$-func\-tors $\bfF, \bfF' : \bcalC \rightarrow \bcalC'$ only requires the use of identity 2-mor\-phisms in $\bcalC'$.
Next, all these morphisms can be composed in various ways. First of all, we define composition of $2$-func\-tors.

\begin{definition}\label{D:comp_of_2-functors}
 The \textit{composition $\bfF' \circ \bfF : \bcalC \rightarrow \bcalC''$ of $2$-func\-tors $\bfF : \bcalC \rightarrow \bcalC'$ and $\bfF' : \bcalC' \rightarrow \bcalC''$} between $2$-cat\-e\-go\-ries $\bcalC$, $\bcalC'$, $\bcalC''$ is obtained by specifying:
 \begin{enumerate}
  \item $\bfF' \circ \bfF : \bcalC \rightarrow \bcalC''$ between classes of objects;
  \item $\bfF'_{\bfF(x),\bfF(y)} \circ \bfF_{x,y}$ as $(\bfF' \circ \bfF)_{x,y} : \bcalC(x,y) \rightarrow \bcalC''(\bfF'(\bfF(x)),\bfF'(\bfF(y)))$ for all $x,y \in \bcalC$;
  \item $\bfF'(\bfF_x) \circ \bfF'_{\bfF(x)}$ as $(\bfF' \circ \bfF)_x : \id_{\bfF'(\bfF(x))} \Rightarrow \bfF'(\bfF(\id_x))$ for every $x \in \bcalC$;
  \item $\bfF'(\bfF_{g,f}) \circ \bfF'_{\bfF(g),\bfF(f)}$ as $(\bfF' \circ \bfF)_{g,f} : \bfF'(\bfF(g)) \circ \bfF'(\bfF(f)) \Rightarrow \bfF'(\bfF(g \circ f))$ for all $1$-mor\-phisms $f : x \rightarrow y$ and $g : y \rightarrow z$ of $\bcalC$;
 \end{enumerate}
\end{definition}

Then, we have two operations, called \textit{left} and \textit{right whiskering}, which mix $2$-func\-tors with $2$-trans\-for\-ma\-tions and $2$-mod\-i\-fi\-ca\-tions.

\begin{definition}\label{D:left_whisker_2-transf}
 The \textit{left whiskering $\bfF'' \triangleright \bfsigma : \bfF'' \circ \bfF \Rightarrow \bfF'' \circ \bfF'$ of a $2$-trans\-for\-ma\-tion $\bfsigma : \bfF \Rightarrow \bfF'$ with a $2$-func\-tor $\bfF'' : \bcalC' \to \bcalC''$} for $2$-func\-tors $\bfF,\bfF' : \bcalC \to \bcalC'$ is obtained by specifying:
 \begin{enumerate}
  \item $\bfF''(\bfsigma_x)$ as the component $(\bfF'' \triangleright \bfsigma)_x : \bfF''(\bfF(x)) \rightarrow \bfF''(\bfF'(x))$ for every $x \in \bcalC$;
  \item $\bfF''^{-1}_{\bfsigma_y,\bfF(f)} \ast \bfF''(\bfsigma_f) \ast \bfF''_{\bfF'(f),\bfsigma_x}$ as 
   \[
    \hspace{2\parindent} 
    (\bfF'' \triangleright \bfsigma)_f : \bfF''(\bfF'(f)) \circ \bfF''(\bfsigma_x) \Rightarrow \bfF''(\bfsigma_y) \circ \bfF''(\bfF(f))
   \]
   for every $1$-mor\-phism $f : x \rightarrow y$ of $\bcalC$.
 \end{enumerate}
\end{definition}

\begin{definition}\label{D:right_whisker_2-transf}
 The \textit{right whiskering $\bfsigma \triangleleft \bfF : \bfF' \circ \bfF \Rightarrow \bfF'' \circ \bfF$ of a $2$-trans\-for\-ma\-tion $\bfsigma : \bfF' \Rightarrow \bfF''$ with a $2$-func\-tor $\bfF : \bcalC \to \bcalC'$} for $2$-func\-tors $\bfF',\bfF'' : \bcalC' \to \bcalC''$ is obtained by specifying:
 \begin{enumerate}
  \item $\bfsigma_{\bfF(x)}$ as the component $(\bfsigma \triangleleft \bfF)_x : \bfF'(\bfF(x)) \rightarrow \bfF''(\bfF(x))$ for every $x \in \bcalC$;
  \item $\bfsigma_{\bfF(f)}$ as $(\bfsigma \triangleleft \bfF)_f : \bfF''(\bfF(f)) \circ \bfsigma_{\bfF(x)} \Rightarrow \bfsigma_{\bfF(y)} \circ \bfF'(\bfF(f))$ for every $1$-mor\-phism $f : x \rightarrow y$ of $\bcalC$.
 \end{enumerate}
\end{definition}

\begin{definition}\label{D:left_whisker_2-modif}
 The \textit{left whiskering $\bfF'' \triangleright \bfGamma : \bfF'' \triangleright \bfsigma \Rrightarrow \bfF'' \triangleright \bfsigma'$ of a $2$-mod\-i\-fi\-ca\-tion $\bfGamma : \bfsigma \Rightarrow \bfsigma'$ with a $2$-func\-tor $\bfF'' : \bcalC' \to \bcalC''$} for $2$-trans\-for\-ma\-tions ${\bfsigma,\bfsigma' : \bfF \Rightarrow \bfF'}$ between $2$-func\-tors $\bfF,\bfF' : \bcalC \to \bcalC'$ is obtained by specifying $\bfF''(\bfGamma_x)$ as the component $(\bfF'' \triangleright \bfGamma)_x : \bfF''(\bfsigma_x) \Rightarrow \bfF''(\bfsigma'_x)$ for every $x \in \bcalC$.
\end{definition}

\begin{definition}\label{D:right_whisker_2-modif}
 The \textit{right whiskering $\bfGamma \triangleleft \bfF : \bfsigma \triangleleft \bfF \Rrightarrow \bfsigma' \triangleleft \bfF$ of a $2$-mod\-i\-fi\-ca\-tion $\bfGamma : \bfsigma \Rightarrow \bfsigma'$ with a $2$-func\-tor $\bfF : \bcalC \to \bcalC'$} for $2$-trans\-for\-ma\-tions ${\bfsigma,\bfsigma' : \bfF' \Rightarrow \bfF''}$ between $2$-func\-tors $\bfF',\bfF'' : \bcalC' \to \bcalC''$ is obtained by specifying $\bfGamma_{\bfF(x)}$ as the component $(\bfGamma \triangleleft \bfF)_x : \bfsigma_{\bfF(x)} \Rightarrow \bfsigma'_{\bfF(x)}$ for every $x \in \bcalC$.
\end{definition}

Next, we define horizontal composition of $2$-trans\-for\-ma\-tions, which involves associators of the target $2$-cat\-e\-go\-ry.

\begin{definition}\label{D:hor_comp_of_2-transf}
 The \textit{horizontal composition $\bfsigma' \circ \bfsigma : \bfF \Rightarrow \bfF''$ of $2$-trans\-for\-ma\-tions $\bfsigma : \bfF \Rightarrow \bfF'$ and $\bfsigma' : \bfF' \Rightarrow \bfF''$} between $2$-func\-tors $\bfF,\bfF',\bfF'' : \bcalC \to \bcalC'$ is obtained by specifying:
 \begin{enumerate}
  \item $\bfsigma'_x \circ \bfsigma_x$ as the component $(\bfsigma' \circ \bfsigma)_x : \bfF(x) \rightarrow \bfF''(x)$ for every $x \in \bcalC$;
  \item $\alpha^{-1}_{\bfsigma'_y,\bfsigma_y,\bfF(f)} \ast \left( \id_{\bfsigma'_y} \circ \bfsigma_f \right) \ast \alpha_{\bfsigma'_y,\bfF'(f),\bfsigma_x} \ast \left( \bfsigma'_f \circ \id_{\bfsigma_x} \right) \ast \alpha^{-1}_{\bfF''(f),\bfsigma'_x,\bfsigma_x}$ as
   \[
    \hspace{2\parindent} 
    (\bfsigma' \circ \bfsigma)_f : \bfF''(f) \circ (\bfsigma'_x \circ \bfsigma_x) \Rightarrow (\bfsigma'_y \circ \bfsigma_y) \circ \bfF(f)
   \]
   for every $1$-mor\-phism $f : x \rightarrow y$ of $\bcalC$.
 \end{enumerate}
\end{definition}

Then, we define horizontal and vertical composition of $2$-trans\-for\-ma\-tions simply using horizontal and vertical composition of 2-mor\-phisms in the target $2$-cat\-e\-go\-ry.

\begin{definition}\label{D:hor_comp_of_2-modif}
 The \textit{horizontal composition $\bfGamma' \circ \bfGamma : \bfsigma' \circ \bfsigma \Rrightarrow \bfsigma''' \circ \bfsigma''$ of $2$-mod\-i\-fi\-ca\-tions $\bfGamma : \bfsigma \Rrightarrow \bfsigma''$ and $\bfGamma' : \bfsigma' \Rrightarrow \bfsigma'''$} between $2$-trans\-for\-ma\-tions $\bfsigma,\bfsigma'' : \bfF \Rightarrow \bfF'$ and $\bfsigma',\bfsigma''' : \bfF' \Rightarrow \bfF''$ between $2$-func\-tors $\bfF,\bfF',\bfF'' : \bcalC \to \bcalC'$ is obtained by specifying $\bfGamma'_x \circ \bfGamma_x$ as the component $(\bfGamma' \circ \bfGamma)_x : \bfsigma'_x \circ \bfsigma_x \Rightarrow \bfsigma'''_x \circ \bfsigma''_x$ for every $x \in \bcalC$.
\end{definition}

\begin{definition}\label{D:vert_comp_of_2-modif}
 The \textit{vertical composition $\bfGamma' \ast \bfGamma : \bfsigma \Rrightarrow \bfsigma''$ of $2$-mod\-i\-fi\-ca\-tions $\bfGamma : \bfsigma \Rrightarrow \bfsigma'$ and $\bfGamma' : \bfsigma' \Rrightarrow \bfsigma''$} between $2$-trans\-for\-ma\-tions $\bfsigma,\bfsigma',\bfsigma'' : \bfF \Rightarrow \bfF'$ between $2$-func\-tors $\bfF, : \bcalC \to \bcalC'$ is obtained by specifying $\bfGamma'_x \ast \bfGamma_x$ as the component $(\bfGamma' \ast \bfGamma)_x : \bfsigma_x \Rightarrow \bfsigma''_x$ for every $x \in \bcalC$.
\end{definition}

Now, as explained in Definition A.9 of \cite{S11}, $2$-func\-tors from $\bcalC$ to $\bcalC'$, together with $2$-trans\-for\-ma\-tions and $2$-mod\-i\-fi\-ca\-tions between them, can be arranged into a $2$-cat\-e\-go\-ry, which we denote $\twoCat(\bcalC,\bcalC')$. 
Remark that our notation hints to the fact that there exists a notion of $3$-cat\-e\-go\-ry, and that $2$-cat\-e\-go\-ries, together with $2$-func\-tors, $2$-trans\-for\-ma\-tions, and $2$-mod\-i\-fi\-ca\-tions between them, provide an example, see \cite{GPS95}. We move on to recall an important general result, belonging to a family of statements known as \textit{coherence theorems}, which essentially tells us we have the right to assume triviality of coherence data for $2$-cat\-e\-go\-ries. In order to explain what we mean precisely, we first need to give a few more definitions. We say two objects $x$ and $y$ of a $2$-cat\-e\-go\-ry $\bcalC$ are \textit{equivalent} if there exist $1$-mor\-phisms $f : x \rightarrow y$ and $g : y \rightarrow x$ together with invertible 2-mor\-phisms of the form $\eta : \id_x \Rightarrow g \circ f$ and $\varepsilon : f \circ g \Rightarrow \id_{y}$, in which case both $f$ and $g$ are called \textit{equivalences}. If moreover $\eta$ and $\varepsilon$ satisfy $({\id_g} \circ \varepsilon) \ast (\eta \circ {\id_g}) = \id_g$ and $(\varepsilon \circ {\id_f}) \ast ({\id_f} \circ \eta) = \id_f$, then we say 
$f$ and $g$ are \textit{adjoint equivalences}, with \textit{unit} $\eta$ and \textit{counit} $\varepsilon$. Remark that every equivalence $f : x \rightarrow y$ between objects $x$ and $y$ of $\bcalC$ admits an adjoint equivalence, as proved in Proposition A.27 of \cite{S11}. This notion of equivalence is said to be \textit{internal}, because it applies to objects of a $2$-cat\-e\-go\-ry. However, there is also an \textit{external} notion of equivalence, which applies directly to $2$-cat\-e\-go\-ries.
We say two $2$-cat\-e\-go\-ries $\bcalC$ and $\bcalC'$ are \textit{equivalent} if there exist $2$-func\-tors $\bfF : \bcalC \rightarrow \bcalC'$ and $\bfF' : \bcalC' \rightarrow \bcalC$ such that $\bfF' \circ \bfF$ is equivalent to $\id_{\bcalC}$ in $\twoCat(\bcalC,\bcalC)$, and such that $\bfF \circ \bfF'$ is equivalent to $\id_{\bcalC'}$ in $\twoCat(\bcalC',\bcalC')$, in which case both $\bfF$ and $\bfF'$ are called \textit{equivalences}. Our goal now is to make sure we can replace any $2$-cat\-e\-go\-ry with a nicer one, so let us say what a nice $2$-cat\-e\-go\-ry is.

\begin{definition}\label{D:strict_2-cat}
 A $2$-cat\-e\-go\-ry $\bcalC$ is \textit{strict} if $\lambda_{x,y}$, $\rho_{x,y}$, and $\alpha_{x,y,z}$ are identity natural transformations for all $x,y,z \in \bcalC$.
\end{definition}

The following result is a direct consequence of MacLane's famous original coherence theorem from Chapter 7 of \cite{M78}. See for instance Corollary A.18 of \cite{S11} for a discussion, but see also the short paper \cite{L98} for a very concise proof.

\begin{theorem}\label{T:coherence_for_2-cat}
 Every $2$-cat\-e\-go\-ry is equivalent to a strict $2$-cat\-e\-go\-ry.
\end{theorem}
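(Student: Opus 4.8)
The plan is to realize the asserted external equivalence by an explicit strictification: given a $2$-category $\bcalC$, I would build a strict $2$-category $\bcalC^{\mathrm{st}}$ whose $1$-morphisms are formal composable strings of $1$-morphisms of $\bcalC$, so that horizontal composition becomes concatenation and is strict on the nose. Concretely, $\bcalC^{\mathrm{st}}$ has the same objects as $\bcalC$; a $1$-morphism $x \to y$ is a finite composable string $\underline{f} = (f_1, \dots, f_n)$ of $1$-morphisms of $\bcalC$ from $x$ to $y$, the empty string serving as the strict identity, and horizontal composition of $1$-morphisms is concatenation. Each string $\underline{f}$ has an evaluation $|\underline{f}| : x \to y$ in $\bcalC$, a fixed iterated composite (say, left-bracketed) of its entries, with $|()| := \id_x$. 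A $2$-morphism $\underline{f} \Rightarrow \underline{g}$ of $\bcalC^{\mathrm{st}}$ is \emph{defined} to be a $2$-morphism $|\underline{f}| \Rightarrow |\underline{g}|$ of $\bcalC$, with vertical composition inherited from $\bcalC$; for the horizontal composition of $2$-morphisms I would conjugate the horizontal composite formed in $\bcalC$ by the canonical comparison isomorphisms $|\underline{g} \cdot \underline{f}| \cong |\underline{g}| \circ |\underline{f}|$ assembled from associators and unitors of $\bcalC$.

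The first main step is to check that this really is a strict $2$-category. Here MacLane's coherence theorem (Chapter 7 of \cite{M78}) does all the work: applied to the ``monoidal-category-like'' structure of the $1$-morphisms of $\bcalC$ under horizontal composition, it guarantees that the comparison isomorphisms above are unique, hence that the conjugated horizontal composition of $2$-morphisms is well defined, associative, and compatible with vertical composition through the interchange law. Once $\bcalC^{\mathrm{st}}$ is in hand, I would define a $2$-functor $\bfG : \bcalC^{\mathrm{st}} \to \bcalC$ which is the identity on objects, sends $\underline{f} \mapsto |\underline{f}|$ on $1$-morphisms and is the identity on $2$-morphisms, with coherence datum $\bfG_{\underline{g},\underline{f}} : |\underline{g}| \circ |\underline{f}| \Rightarrow |\underline{g} \cdot \underline{f}|$ the inverse comparison isomorphism and $\bfG_x$ the identity; the $2$-functor axioms of Definition \ref{D:2-functor} are again instances of coherence. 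This $\bfG$ is the identity on objects, hence essentially surjective, and each functor $\bfG_{x,y} : \bcalC^{\mathrm{st}}(x,y) \rightarrow \bcalC(x,y)$ is surjective on objects (the length-one string $(h)$ maps to $h$) and fully faithful, because by construction $\Hom_{\bcalC^{\mathrm{st}}(x,y)}(\underline{f},\underline{g}) = \Hom_{\bcalC(x,y)}(|\underline{f}|,|\underline{g}|)$. So $\bfG$ is a biequivalence: essentially surjective and a local equivalence.

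The last step is to upgrade this into an equivalence in the external sense recalled before the statement, namely to produce a $2$-functor $\bfG' : \bcalC \to \bcalC^{\mathrm{st}}$ with $\bfG \circ \bfG'$ equivalent to $\id_{\bcalC}$ in $\twoCat(\bcalC,\bcalC)$ and $\bfG' \circ \bfG$ equivalent to $\id_{\bcalC^{\mathrm{st}}}$ in $\twoCat(\bcalC^{\mathrm{st}},\bcalC^{\mathrm{st}})$. This is the standard fact that a biequivalence admits a pseudo-inverse: set $\bfG'$ to be the identity on objects and send each $1$-morphism $h : x \to y$ of $\bcalC$ to the string $(h)$; using that each $\bfG_{x,y}$ is an equivalence together with the existence of adjoint equivalences (Proposition A.27 of \cite{S11}), one assembles these choices into a $2$-functor and builds the required invertible $2$-transformations and $2$-modifications exactly as in the $1$-categorical construction of a quasi-inverse. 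Since $\bcalC^{\mathrm{st}}$ is strict, this proves the theorem.

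I expect the first step to be the main obstacle: constructing $\bcalC^{\mathrm{st}}$ and verifying that $\bfG$ is a $2$-functor force one to invoke MacLane coherence repeatedly and to control sizeable pasting diagrams of associators and unitors, and organizing this cleanly — rather than by brute-force computation — is where the real care lies; the third step, turning a biequivalence into an honest pair of mutually quasi-inverse $2$-functors, is routine but still needs the adjoint-equivalence lemma. A lighter alternative to the first two steps, which I would at least mention, is to embed $\bcalC$ via the Yoneda $2$-functor $x \mapsto \bcalC(-,x)$ into $\twoCat(\bcalC^{\op},\bfCat)$, which is strict because $\bfCat$ is, to show this embedding is a local equivalence (the bicategorical Yoneda lemma, where the coherence bookkeeping then hides), and to take $\bcalC^{\mathrm{st}}$ to be the full sub-$2$-category spanned by the representables; the third step above is unchanged.
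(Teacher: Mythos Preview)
Your proposal is correct and in fact goes well beyond what the paper does: the paper gives no proof at all, merely citing MacLane's coherence theorem \cite{M78}, Corollary A.18 of \cite{S11}, and the short note \cite{L98} for the result. Your string-of-composables strictification is precisely the construction in \cite{L98}, and your alternative via the bicategorical Yoneda embedding is the one underlying the treatment in \cite{S11}, so both routes you sketch are exactly those the paper defers to.
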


This is why, outside of this chapter, we always assume unitors and associators of $2$-cat\-e\-go\-ries to be identities. More precisely, whenever we consider a $2$-cat\-e\-go\-ry, we secretly replace it with a strict version of itself without changing our notation. Remark that, since monoidal categories can be realized as $2$-cat\-e\-go\-ries with a single object, Theorem \ref{T:coherence_for_2-cat} can in particular be seen as a coherence result for monoidal categories. We also have a similar notion for $2$-func\-tors.

\begin{definition}\label{D:strict_2-funct}
 A $2$-func\-tor $\bfF : \bcalC \rightarrow \bcalC'$ is \textit{strict} if $\bfF_x$ and $\bfF_{g,f}$ are identity $2$-mor\-phisms of $\bcalC'$ for every $x \in \bcalC$ and for all $1$-mor\-phisms $f : x \rightarrow y$ and $g : y \rightarrow z$ of $\bcalC$.
\end{definition}

Then, as explained in Section A.2 of \cite{S11}, we also have a coherence result for $2$-func\-tors, although we do not make use of it in our construction. Loosely speaking, we have a functor which associates with every $2$-cat\-e\-go\-ry $\bcalC$ an equivalent strict $2$-cat\-e\-go\-ry, called the \textit{strictification} of $\bcalC$, and which associates with every $2$-func\-tor $\bfF$ between $2$-cat\-e\-go\-ries $\bcalC$ and $\bcalC'$ a strict $2$-func\-tor, called the \textit{strictification} of $\bfF$, between the strictifications of $\bcalC$ and $\bcalC'$. This means we can always assume coherence data of $2$-func\-tors to be trivial.

\section{Symmetric monoidal structures}

In this section we introduce symmetric monoidal $2$-cat\-e\-go\-ries. Their definition, at least in the form given here made, its first appearance in \cite{S16}, but see also Section 2.1 of \cite{S11} for a full account of the history behind the concept. 

\begin{definition}\label{D:symmetric_monoidal_2-category}
 A \textit{symmetric monoidal $2$-cat\-e\-go\-ry} is a $2$-cat\-e\-go\-ry $\bcalC$ together with:
 \begin{enumerate}
  \item A distinguished object $\one \in \bcalC$ called \textit{tensor unit};
  \item A $2$-func\-tor $\otimes : \bcalC \times \bcalC \rightarrow \bcalC$ called \textit{tensor product};
  \item Equivalence $2$-trans\-for\-ma\-tions
   \begin{gather*}
    \hspace{2\parindent} 
    \bflambda : \otimes \circ (\one,\id_{\bcalC}) \Rightarrow \id_{\bcalC}, \\
    \hspace{2\parindent} 
    \bfrho : \otimes \circ (\id_{\bcalC},\one) \Rightarrow \id_{\bcalC}
   \end{gather*}
   called \textit{left} and \textit{right unitor} respectively, with specified 
   adjoint equivalence $2$-trans\-for\-ma\-tions $\bflambda^*$ and $\bfrho^*$, where the $2$-func\-tors $(\one,\id_{\bcalC}) : \bcalC \rightarrow \bcalC \times \bcalC$ and $(\id_{\bcalC},\one) : \bcalC \rightarrow \bcalC \times \bcalC$ are induced by $\id_{\bcalC}$;
  \item An equivalence $2$-trans\-for\-ma\-tion 
   \[
    \hspace{2\parindent}     
    \bfalpha : \otimes \circ (\otimes \times \id_{\bcalC}) \Rightarrow \otimes \circ (\id_{\bcalC} \times \otimes)
   \]
   called \textit{associator}, with a specified adjoint equivalence $2$-trans\-for\-ma\-tion $\bfalpha^*$;
  \item An equivalence $2$-trans\-for\-ma\-tion 
   \[
    \hspace{2\parindent}
    \twobr : \otimes \Rightarrow \otimes \circ \bfT
   \]
   called \textit{braiding}, with a specified adjoint equivalence $2$-trans\-for\-ma\-tions $\twobr^*$, where the $2$-func\-tor $\bfT : \bcalC \times \bcalC \rightarrow \bcalC \times \bcalC$ transposes the two copies of $\bcalC$;
  \item Invertible $2$-mod\-i\-fi\-ca\-tions $\bfLambda$, $\bfP$, $\bfM$, $\bfPi$, $\bfA$, $\bfB$, and $\bfSigma$, whose components have the form
   \begin{center}
    \begin{tikzpicture}[descr/.style={fill=white}]
     \node (P0) at (90:{2.5/sqrt(3)}) {$\one \otimes (x \otimes y)$};
     \node (P1) at (90+120:{2.5/sqrt(3)}) {$(\one \otimes x) \otimes y$};
     \node (P2) at (90+2*120:{2.5/sqrt(3)}) {$x \otimes y$};
     \node (PH) at (90+2*120:{2.5/sqrt(3)}) {$\phantom{(\one \otimes x) \otimes y}$};
     \node[above] at (0,0) {$\Downarrow$};
     \node[below] at (0,0) {$\bfLambda_{x,y}$};
     \draw
     (P1) edge[->] node[left,xshift=-5pt] {$\bfalpha_{\one,x,y}$} (P0)
     (P0) edge[->] node[right,xshift=5pt] {$\bflambda_{x \otimes y}$} (P2)
     (P1) edge[->] node[below,yshift=-5pt] {$\bflambda_{x} \otimes \id_y \vphantom{\bfrho_{x \otimes y}}$} (P2);
    \end{tikzpicture}
    \begin{tikzpicture}[descr/.style={fill=white}]
     \node (P0) at (90:{2.5/sqrt(3)}) {$x \otimes (y \otimes \one)$};
     \node (P1) at (90+120:{2.5/sqrt(3)}) {$(x \otimes y) \otimes \one$};
     \node (P2) at (90+2*120:{2.5/sqrt(3)}) {$x \otimes y$};
     \node (PH) at (90+2*120:{2.5/sqrt(3)}) {$\phantom{(x \otimes y) \otimes \one}$};
     \node[above] at (0,0) {$\Downarrow$};
     \node[below] at (0,0) {$\bfP_{x,y}$};
     \draw
     (P1) edge[->] node[left,xshift=-5pt] {$\bfalpha_{x,y,\one}$} (P0)
     (P0) edge[->] node[right,xshift=5pt] {$\id_x \otimes \bfrho_{y}$} (P2)
     (P1) edge[->] node[below,yshift=-5pt] {$\bfrho_{x \otimes y} \vphantom{\bflambda_{x} \otimes \id_y}$} (P2);
    \end{tikzpicture}
    \begin{tikzpicture}[descr/.style={fill=white}]
     \node (P0) at (90:{2.5/sqrt(3)}) {$x \otimes (\one \otimes y)$};
     \node (P1) at (90+120:{2.5/sqrt(3)}) {$(x \otimes \one) \otimes y$};
     \node (P2) at (90+2*120:{2.5/sqrt(3)}) {$x \otimes y$};
     \node (PH) at (90+2*120:{2.5/sqrt(3)}) {$\phantom{(x \otimes \one) \otimes y}$};
     \node[above] at (0,0) {$\Downarrow$};
     \node[below] at (0,0) {$\bfM_{x,y}$};
     \draw
     (P1) edge[->] node[left,xshift=-5pt] {$\bfalpha_{x,\one,y}$} (P0)
     (P0) edge[->] node[right,xshift=5pt] {$\id_x \otimes \bflambda_{y}$} (P2)
     (P1) edge[->] node[below,yshift=-5pt] {$\bfrho_{x} \otimes \id_y$} (P2);
    \end{tikzpicture}
   \end{center}
   \begin{center}
    \begin{tikzpicture}[descr/.style={fill=white}]
     \node (P0) at (-90:{2.5/sqrt((1-cos(72))^2+sin(72)^2)}) {$(x \otimes y) \otimes (z \otimes a)$};
     \node (P1) at (-90+72:{2.5/sqrt((1-cos(72))^2+sin(72)^2)}) {$x \otimes (y \otimes (z \otimes a))$};
     \node[right,xshift=-25pt] (P2) at (-90+2*72:{2.5/sqrt((1-cos(72))^2+sin(72)^2)}) {$x \otimes ((y \otimes z) \otimes a)$};
     \node[left,xshift=25pt] (P3) at (-90+3*72:{2.5/sqrt((1-cos(72))^2+sin(72)^2)}) {$(x \otimes (y \otimes z)) \otimes a$};
     \node (P4) at (-90+4*72:{2.5/sqrt((1-cos(72))^2+sin(72)^2)}) {$((x \otimes y) \otimes z) \otimes a$};
     \node (PH2) at (-90+2*72:{2.5/sqrt((1-cos(72))^2+sin(72)^2)}) {$\hspace{50pt} \vphantom{x \otimes ((y \otimes z) \otimes a)}$};
     \node (PH3) at (-90+3*72:{2.5/sqrt((1-cos(72))^2+sin(72)^2)}) {$\hspace{50pt} \vphantom{(x \otimes (y \otimes z)) \otimes a}$};
     \node[above] at (0,0) {$\Downarrow$};
     \node[below] at (0,0) {$\bfPi_{x,y,z,a}$};
     \draw
     (P4) edge[->] node[left,xshift=-5pt,yshift=-5pt] {$\bfalpha_{x \otimes y,z,a}$} (P0)
     (P0) edge[->] node[right,xshift=5pt,yshift=-5pt] {$\bfalpha_{x,y,z \otimes a}$} (P1)
     (PH2) edge[->] node[right,xshift=5pt] {$\id_x \otimes \bfalpha_{y,z,a}$} node[right,xshift=5pt] {$\phantom{\bfalpha_{x,y,z} \otimes \id_a}$} (P1)
     (PH3) edge[->] node[above,yshift=5pt] {$\bfalpha_{x,y \otimes z,a}$} (PH2)
     (P4) edge[->] node[left,xshift=-5pt] {$\bfalpha_{x,y,z} \otimes \id_a$} node[left,xshift=-5pt] {$\phantom{\id_x \otimes \bfalpha_{y,z,a}}$} (PH3);
   \end{tikzpicture}
  \end{center}
  \begin{center}
    \begin{tikzpicture}[descr/.style={fill=white}]
     \node (P0) at (0:2.5) {$y \otimes (z \otimes x)$};
     \node (P1) at (60:2.5) {$(y \otimes z) \otimes x$};
     \node (P2) at (2*60:2.5) {$x \otimes (y \otimes z)$};
     \node (P3) at (3*60:2.5) {$(x \otimes y) \otimes z$};
     \node (P4) at (4*60:2.5) {$(y \otimes x) \otimes z$};
     \node (P5) at (5*60:2.5) {$y \otimes (x \otimes z)$};
     \node at (0,0) {$\Downarrow$};
     \node[below,yshift=-5pt] at (0,0) {$\bfA_{x,y,z}$};
     \draw
     (P3) edge[->] node[left,xshift=-5pt] {$\bfalpha_{x,y,z}$} (P2)
     (P2) edge[->] node[above,yshift=5pt] {$\twobr_{x,y \otimes z}$} (P1)
     (P1) edge[->] node[right,xshift=5pt] {$\bfalpha_{y,z,x}$} (P0)
     (P3) edge[->] node[left,xshift=-5pt] {$\twobr_{x,y} \otimes \id_z$} (P4)
     (P4) edge[->] node[below,yshift=-5pt] {$\bfalpha_{y,x,z}$} (P5)
     (P5) edge[->] node[right,xshift=5pt] {$\id_y \otimes \twobr_{x,z}$} (P0);
    \end{tikzpicture}
   \end{center}
   \begin{center}
    \begin{tikzpicture}[descr/.style={fill=white}]
     \node (P0) at (0:2.5) {$(z \otimes x) \otimes y$};
     \node (P1) at (60:2.5) {$z \otimes (x \otimes y)$};
     \node (P2) at (2*60:2.5) {$(x \otimes y) \otimes z$};
     \node (P3) at (3*60:2.5) {$x \otimes (y \otimes z)$};
     \node (P4) at (4*60:2.5) {$x \otimes (z \otimes y)$};
     \node (P5) at (5*60:2.5) {$(x \otimes z) \otimes y$};
     \node[above] at (0,0) {$\Downarrow$};
     \node[below] at (0,0) {$\bfB_{x,y,z}$};
     \draw
     (P3) edge[->] node[left,xshift=-5pt] {$\bfalpha^*_{x,y,z}$} (P2)
     (P2) edge[->] node[above,yshift=5pt] {$\twobr_{x \otimes y,z}$} (P1)
     (P1) edge[->] node[right,xshift=5pt] {$\bfalpha^*_{z,x,y}$} (P0)
     (P3) edge[->] node[left,xshift=-5pt] {$\id_x \otimes \twobr_{y,z}$} (P4)
     (P4) edge[->] node[below,yshift=-5pt] {$\bfalpha^*_{y,z,x}$} (P5)
     (P5) edge[->] node[right,xshift=5pt] {$\twobr_{x,z} \otimes \id_y$} (P0);
    \end{tikzpicture}
   \end{center}
   \begin{center}
    \begin{tikzpicture}[descr/.style={fill=white}]
     \node (P0) at (90:{2.5/sqrt(3)}) {$y \otimes x$};
     \node (P1) at (90+120:{2.5/sqrt(3)}) {$x \otimes y$};
     \node (P2) at (90+2*120:{2.5/sqrt(3)}) {$x \otimes y$};
     \node[above] at (0,0) {$\Downarrow$};
     \node[below] at (0,0) {$\bfSigma_{x,y}$};
     \draw
     (P1) edge[->] node[left,xshift=-5pt] {$\twobr_{x,y}$} (P0)
     (P0) edge[->] node[right,xshift=5pt] {$\twobr_{y,x}$} (P2)
     (P1) edge[->] node[below,yshift=-5pt] {$\id_{x \otimes y}$} (P2);
    \end{tikzpicture}
   \end{center}
  for all $x,y,z,a \in \bcalC$;
 \end{enumerate}
 These data satisfy ten conditions in the form of commuting polytopes of 2-mod\-i\-fi\-ca\-tions, which can be obtained from Definitions 4.4 through 4.8 of \cite{S16}, as explained in Definition 2.3 of \cite{S11}.
\end{definition}

Remark that Definitions 4.4 through 4.8 of \cite{S16} actually give a total of twelve relations, but, as pointed out in Definition C.1 of \cite{S11}, two of them are redundant. Remark also that all sources and targets for the $2$-mod\-i\-fi\-ca\-tions $\bfLambda$, $\bfP$, $\bfM$, $\bfPi$, $\bfA$, $\bfB$, and $\bfSigma$ can be explicitly written in terms of tensor unit, tensor product, left and right unitor, associator, and braiding, by perfoming compositions, whiskerings, and cartesian products, although this takes up some space, which is the reason why we did not do it. Also, see Section 2.2 of \cite{S11} for an explanation of how to make sense of the commuting polytopes of 2-mod\-i\-fi\-ca\-tions of \cite{S16}. Now, let us introduce symmetric monoidal $2$-func\-tors between symmetric monoidal $2$-cat\-e\-go\-ries.

\begin{definition}\label{D:symmetric_monoidal_2-functor}
 A \textit{symmetric monoidal $2$-func\-tor} is a $2$-func\-tor $\bfF : \bcalC \rightarrow \bcalC'$ between symmetric monoidal $2$-cat\-e\-go\-ries $\bcalC$ and $\bcalC'$ together with:
 \begin{enumerate}
  \item An equivalence 1-mor\-phism $\bfiota : \one' \rightarrow \bfF(\one)$ of $\bcalC'$ with a specified adjoint equivalence 1-mor\-phism $\bfiota^*$ of $\bcalC'$;
  \item An equivalence $2$-trans\-for\-ma\-tion 
   \[
    \hspace{2\parindent} 
    \bfchi : \otimes' \circ (\bfF \times \bfF) \Rightarrow \bfF \circ \otimes
   \]
   with a specified adjoint equivalence $2$-trans\-for\-ma\-tion $\bfchi^*$;
 \item Invertible $2$-mod\-i\-fi\-ca\-tions $\bfGamma$, $\bfDelta$, $\bfOmega$, and $\bfTheta$ whose components have the form
  \begin{center}
   \begin{tikzpicture}[descr/.style={fill=white}]
    \node (P0) at (45:{2.7/sqrt(2)}) {$\bfF(\one \otimes x)$};
    \node (P1) at (45+90:{2.7/sqrt(2)}) {$\bfF(\one) \otimes' \bfF(x)$};
    \node (P2) at (45+2*90:{2.7/sqrt(2)}) {$\one' \otimes' \bfF(x)$};
    \node (P3) at (45+3*90:{2.7/sqrt(2)}) {$\bfF(x)$};
    \node[above] at (0,0) {$\Downarrow$};
    \node[below] at (0,0) {$\bfGamma_x$};
    \draw
    (P1) edge[->] node[above,yshift=5pt] {$\bfchi_{\one,x}$} (P0)
    (P2) edge[->] node[left,xshift=-5pt] {$\bfiota \otimes' \id_{\bfF(x)}$} node[left,xshift=-5pt] {$\phantom{\bfF(\bflambda_x)}$} (P1)
    (P2) edge[->] node[below,yshift=-5pt] {$\bflambda'_{\bfF(x)}$} (P3)
    (P0) edge[->] node[right,xshift=5pt] {$\bfF(\bflambda_x)$}node[right,xshift=5pt] {$\phantom{\bfiota \otimes' \id_{\bfF(x)}}$} (P3);
   \end{tikzpicture}
  \end{center}
  \begin{center}
   \begin{tikzpicture}[descr/.style={fill=white}]
    \node (P0) at (45:{2.7/sqrt(2)}) {$\bfF(x \otimes \one)$};
    \node (P1) at (45+90:{2.7/sqrt(2)}) {$\bfF(x) \otimes' \bfF(\one)$};
    \node (P2) at (45+2*90:{2.7/sqrt(2)}) {$\bfF(x) \otimes' \one'$};
    \node (P3) at (45+3*90:{2.7/sqrt(2)}) {$\bfF(x)$};
    \node[above] at (0,0) {$\Downarrow$};
    \node[below] at (0,0) {$\bfDelta_{x}$};
    \draw
    (P1) edge[->] node[above,yshift=5pt] {$\bfchi_{x,\one}$} (P0)
    (P2) edge[->] node[left,xshift=-5pt] {$\id_{\bfF(x)} \otimes' \bfiota$} node[left,xshift=-5pt] {$\phantom{\bfF(\bfrho_x)}$} (P1)
    (P2) edge[->] node[below,yshift=-5pt] {$\bfrho'_{\bfF(x)}$} (P3)
    (P0) edge[->] node[right,xshift=5pt] {$\bfF(\bfrho_x)$}node[right,xshift=5pt] {$\phantom{\id_{\bfF(x)} \otimes' \bfiota}$} (P3);
   \end{tikzpicture}
  \end{center}
  \begin{center}
   \begin{tikzpicture}[descr/.style={fill=white}]
    \node (P0) at (0:2.7) {$\bfF(x \otimes (y \otimes z))$};
    \node[right,xshift=-25pt] (P1) at (60:2.7) {$\bfF(x) \otimes' \bfF(y \otimes z)$};
    \node[left,xshift=25pt] (P2) at (2*60:2.7) {$\bfF(x) \otimes' (\bfF(y) \otimes' \bfF(z))$};
    \node (P3) at (3*60:2.7) {$(\bfF(x) \otimes' \bfF(y)) \otimes' \bfF(z)$};
    \node[left,xshift=25pt] (P4) at (4*60:2.7) {$\bfF(x \otimes y) \otimes' \bfF(z)$};
    \node[right,xshift=-25pt] (P5) at (5*60:2.7) {$\bfF((x \otimes y) \otimes z)$};
    \node (PH0) at (0:2.7) {$\phantom{(\bfF(x) \otimes' \bfF(y)) \otimes' \bfF(z)}$};
    \node (PH1) at (60:2.7) {$\hspace{50pt} \vphantom{\bfF(x) \otimes' \bfF(y \otimes z)}$};
    \node (PH2) at (2*60:2.7) {$\hspace{50pt} \vphantom{\bfF(x) \otimes' (\bfF(y) \otimes' \bfF(z))}$};
    \node (PH4) at (4*60:2.7) {$\hspace{50pt} \vphantom{\bfF(x \otimes y) \otimes' \bfF(z)}$};
    \node (PH5) at (5*60:2.7) {$\hspace{50pt} \vphantom{\bfF((x \otimes y) \otimes z)}$};
    \node[above] at (0,0) {$\Downarrow$};
    \node[below] at (0,0) {$\bfOmega_{x,y,z}$};
    \draw
    (P3) edge[->] node[left,xshift=-5pt] {$\bfalpha'_{\bfF(x),\bfF(y),\bfF(z)}$} (PH2)
    (PH2) edge[->] node[above,yshift=5pt] {$\id_{\bfF(x)} \otimes' \bfchi_{y,z}$} (PH1)
    (PH1) edge[->] node[right,xshift=5pt] {$\bfchi_{x,y \otimes z}$} node[right,xshift=5pt] {$\phantom{\bfalpha'_{\bfF(x),\bfF(y),\bfF(z)}}$} (P0)
    (P3) edge[->] node[left,xshift=-5pt] {$\bfchi_{x,y} \otimes' \id_{\bfF(z)}$} node[left,xshift=-5pt] {$\phantom{\bfF(\bfalpha_{x,y,z})}$} (PH4)
    (PH4) edge[->] node[below,yshift=-5pt] {$\bfchi_{x \otimes y,z}$} (PH5)
    (PH5) edge[->] node[right,xshift=5pt] {$\bfF(\bfalpha_{x,y,z})$} (P0);
   \end{tikzpicture}
  \end{center}
  \begin{center}
   \begin{tikzpicture}[descr/.style={fill=white}]
    \node (P0) at (0:{2.7/sqrt(2)}) {$\bfF(y \otimes x)$};
    \node (P1) at (90:{2.7/sqrt(2)}) {$\bfF(x \otimes y)$};
    \node (P2) at (2*90:{2.7/sqrt(2)}) {$\bfF(x) \otimes' \bfF(y)$};
    \node (P3) at (3*90:{2.7/sqrt(2)}) {$\bfF(y) \otimes' \bfF(x)$};
    \node (PH0) at (0:{2.7/sqrt(2)}) {$\phantom{\bfF(x) \otimes' \bfF(y)}$};
    \node[above] at (0,0) {$\Downarrow$};
    \node[below] at (0,0) {$\bfTheta_{x,y}$};
    \draw
    (P1) edge[->] node[right,xshift=5pt,yshift=5pt] {$\bfF(\twobr_{x,y})$} (P0)
    (P2) edge[->] node[left,xshift=-5pt,yshift=5pt] {$\bfchi_{x,y}$} node[left,xshift=-5pt,yshift=5pt] {$\phantom{\bfF(\twobr_{x,y})}$} (P1)
    (P2) edge[->] node[left,xshift=-5pt,yshift=-5pt] {$\twobr'_{\bfF(x),\bfF(y)}$} (P3)
    (P3) edge[->] node[right,xshift=5pt,yshift=-5pt] {$\bfchi_{y,x}$} (P0);
   \end{tikzpicture}
  \end{center} 
  for all $x,y,z \in \bcalC$.
 \end{enumerate}
 These data satisfy five conditions in the form of commuting polytopes of 2-mod\-i\-fi\-ca\-tions, which can be obtained from Equations (HTA1) and (HTA2) of \cite{GPS95}, and (BHA1), (BHA2), and (SHA1) of \cite{M00}, as explained in Definition 2.5 of \cite{S11}.
\end{definition}

Next, we define symmetric monoidal $2$-trans\-for\-ma\-tions between symmetric monoidal $2$-func\-tors.

\begin{definition}\label{D:symmetric_monoidal_2-transformation}
 A \textit{symmetric monoidal $2$-trans\-for\-ma\-tion} is a $2$-trans\-for\-ma\-tion $\bfsigma : \bfF \Rightarrow \bfF'$ between symmetric monoidal $2$-func\-tors $\bfF,\bfF' : \bcalC \to \bcalC'$ together with:
 \begin{enumerate}
  \item An invertible 2-mor\-phism $\varphi : \bfsigma_{\one} \circ \bfiota \Rightarrow \bfiota'$;
  \item An invertible 2-mod\-i\-fi\-ca\-tion $\bfXi$ whose components have the form
   \begin{center}
    \begin{tikzpicture}[descr/.style={fill=white}]
     \node (P0) at (-90:{2.7/sqrt((1-cos(72))^2+sin(72)^2)}) {$\bfF(x \otimes y)$};
     \node (P1) at (-90+72:{2.7/sqrt((1-cos(72))^2+sin(72)^2)}) {$\bfF'(x \otimes y)$};
     \node[right,xshift=-25pt] (P2) at (-90+2*72:{2.7/sqrt((1-cos(72))^2+sin(72)^2)}) {$\bfF'(x) \otimes' \bfF'(y)$};
     \node[left,xshift=25pt] (P3) at (-90+3*72:{2.7/sqrt((1-cos(72))^2+sin(72)^2)}) {$\bfF'(x) \otimes' \bfF(y)$};
     \node (P4) at (-90+4*72:{2.7/sqrt((1-cos(72))^2+sin(72)^2)}) {$\bfF(x) \otimes' \bfF(y)$};
     \node (PH2) at (-90+2*72:{2.7/sqrt((1-cos(72))^2+sin(72)^2)}) {$\hspace{50pt} \vphantom{\bfF'(x) \otimes' \bfF'(y)}$};
     \node (PH3) at (-90+3*72:{2.7/sqrt((1-cos(72))^2+sin(72)^2)}) {$\hspace{50pt} \vphantom{\bfF'(x) \otimes' \bfF(y)}$};
     \node[above] at (0,0) {$\Downarrow$};
     \node[below] at (0,0) {$\bfXi_{x,y}$};
     \draw
     (P4) edge[->] node[left,xshift=-5pt,yshift=-5pt] {$\bfchi_{x,y}$} (P0)
     (P0) edge[->] node[right,xshift=5pt,yshift=-5pt] {$\bfsigma_{x \otimes y}$} (P1)
     (PH2) edge[->] node[right,xshift=5pt] {$\bfchi'_{x,y}$} node[right,xshift=5pt] {$\phantom{\bfsigma_x \otimes' \id_{\bfF(y)}}$} (P1)
     (PH3) edge[->] node[above,yshift=5pt] {$\id_{\bfF'(x)} \otimes' \bfsigma_y$} (PH2)
     (P4) edge[->] node[left,xshift=-5pt] {$\bfsigma_x \otimes' \id_{\bfF(y)}$} (PH3);
   \end{tikzpicture}
  \end{center}
  for all $x,y \in \bcalC$.
 \end{enumerate}
 These data satisfy four conditions in the form of commuting polytopes of 2-mod\-i\-fi\-ca\-tions, which can be obtained from Equations (MBTA1), (MBTA2), and (MBTA3) of \cite{S11}, and (BTA1) of \cite{M00}, as explained in Definition 2.7 of \cite{S11}.
\end{definition}

We also have symmetric monoidal $2$-mod\-i\-fi\-ca\-tions between symmetric monoidal $2$-trans\-for\-ma\-tions.

\begin{definition}\label{D:symmetric_monoidal_2-modification}
 A \textit{symmetric monoidal $2$-mod\-i\-fi\-ca\-tion} is a $2$-mod\-i\-fi\-ca\-tion $\bfGamma : \bfsigma \Rrightarrow \bfsigma'$ between symmetric monoidal $2$-trans\-for\-ma\-tions $\bfsigma,\bfsigma' : \bfF \Rightarrow \bfF'$ for symmetric monoidal $2$-func\-tors $\bfF,\bfF' : \bcalC \to \bcalC'$ satisfying two condititions  in the form of commuting polytopes of 2-mod\-i\-fi\-ca\-tions, which can be found in Equations (BMBM1) and (BMBM2) of \cite{S11}, as explained in Definition 2.8 of \cite{S11}.
\end{definition}


Just like before, we can easily define identities for symmetric monoidal $2$-func\-tors, for symmetric monoidal $2$-trans\-for\-ma\-tions, and for symmetric monoidal $2$-mod\-i\-fi\-ca\-tions. Next, we can define composition of symmetric monoidal $2$-func\-tors as explained in Definition 2.12 of \cite{S11}, and we can define whiskering of symmetric monoidal $2$-func\-tors with symmetric monoidal $2$-trans\-for\-ma\-tions and symmetric monoidal $2$-mod\-i\-fi\-ca\-tions as explained in Definition 2.13 of \cite{S11}. Furthermore, we can define horizontal composition of symmetric monoidal $2$-trans\-for\-ma\-tions as explained in Definition 2.9 of \cite{S11}, and we can define horizontal and vertical composition of symmetric monoidal $2$-mod\-i\-fi\-ca\-tions as explained in Definitions 2.10 and 2.11 of \cite{S11}. In particular, for every pair of symmetric monoidal $2$-cat\-e\-go\-ries $\bcalC$ and $\bcalC'$, symmetric monoidal $2$-func\-tors from $\bcalC$ to $\bcalC'$, together with symmetric monoidal $2$-trans\-for\-ma\-tions and symmetric monoidal $2$-mod\-i\-fi\-ca\-tions between them, form a $2$-cat\-e\-go\-ry, which we denote $\smtwoCat(\bcalC,\bcalC')$. This construction is needed in order to state an important coherence theorem for symmetric monoidal $2$-cat\-e\-go\-ries due to Schommer-Pries. Indeed, the definitions given in this section can look quite scary. For instance, we would happily forget about the coherence data for symmetric monoidal $2$-cat\-e\-go\-ries, if we could, just like we did for standard $2$-cat\-e\-go\-ries. Luckily enough, for the most part we can. In order to state this result, we need an external notion of equivalence for symmetric monoidal $2$-cat\-e\-go\-ries. We say two symmetric monoidal $2$-cat\-e\-go\-ries $\bcalC$ and $\bcalC'$ are \textit{equivalent} if there exist symmetric monoidal $2$-func\-tors $\bfF : \bcalC \rightarrow \bcalC'$ and $\bfF' : \bcalC' \rightarrow \bcalC$ such that $\bfF' \circ \bfF$ is equivalent to $\id_{\bcalC}$ in $\smtwoCat(\bcalC,\bcalC')$, and such that $\bfF \circ \bfF'$ is equivalent to $\id_{\bcalC'}$ in $\smtwoCat(\bcalC',\bcalC')$, in which case both $\bfF$ and $\bfF'$ are called \textit{equivalences}.
Then, we need to say what a nice symmetric monoidal $2$-cat\-e\-go\-ry is. The next definition corresponds to Definition 2.28 of \cite{S11}.

\begin{definition}\label{D:quasi-strict}
 A symmetric monoidal $2$-cat\-e\-go\-ry $\bcalC$ is \textit{quasi-strict} if:
 \begin{enumerate}
  \item $\bcalC$ is strict;
  \item $\bflambda$, $\bfrho$, $\bfalpha$, $\bflambda^*$, $\bfrho^*$, and $\bfalpha^*$ are identity $2$-trans\-for\-ma\-tions;
  \item $\bfLambda$, $\bfP$, $\bfM$, $\bfPi$, $\bfA$, $\bfB$, and $\bfSigma$ are identity $2$-mod\-i\-fi\-ca\-tions;
  \item $\twobr_{x,y} : x \otimes y \rightarrow y \otimes x$ is an identity $1$-mor\-phism of $\bcalC$ for all $x, y \in \bcalC$ satisfying either $x = \one$ or $y = \one$;
  \item $\otimes_{(h,i),(f,g)} : (h \otimes i) \circ (f \otimes g) \Rightarrow (h \circ f) \otimes (i \circ g)$ is an identity $2$-mor\-phism of $\bcalC$ for all $1$-mor\-phisms $f : x \rightarrow z$, $g : y \rightarrow a$, $h : z \rightarrow b$, and $i : a \rightarrow c$ of $\bcalC$ satisfying either $g = \twobr_{y_1,y_2}$ or $h = \twobr_{z_1,z_2}$;
  \item $\twobr_{f,g} : (g \otimes f) \circ \twobr_{x,y} \Rightarrow \twobr_{z,a} \circ (f \otimes g)$ is an identity $2$-mor\-phism of $\bcalC$ for all $1$-mor\-phisms $f : x \rightarrow z$ and $g : y \rightarrow a$ of $\bcalC$ satisfying either $f = \id_x$ or $g = \id_y$.
 \end{enumerate}
\end{definition}

Remark that if $\bcalC$ is quasi-strict, then its tensor product $\otimes$ is \textit{cubical}, meaning that $\otimes_{(x,y)} : \id_{x \otimes y} \Rightarrow \id_x \otimes \id_y$ is an identity $2$-mor\-phism of $\bcalC$ for all $x,y \in \bcalC$, and that $\otimes_{(h,i),(f,g)} : (h \otimes i) \circ (f \otimes g) \Rightarrow (h \circ f) \otimes (i \circ g)$ is an identity $2$-mor\-phism of $\bcalC$ for all $1$-mor\-phisms $f : x \rightarrow z$, $g : y \rightarrow a$, $h : z \rightarrow b$, and $i : a \rightarrow c$ of $\bcalC$ satisfying either $g = \id_y$ or $h = \id_z$. Then, let us state a result which is contained in Theorem 2.96 of \cite{S11}.

\begin{theorem}\label{T:coherence_for_sym_mon_2-cat}
 Every symmetric monoidal $2$-cat\-e\-go\-ry is equivalent to a quasi-strict symmetric monoidal $2$-cat\-e\-go\-ry.
\end{theorem}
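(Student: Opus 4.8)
The plan is to deduce the statement from the strictification theorem of Schommer-Pries, namely Theorem 2.96 of \cite{S11}, which asserts precisely that every symmetric monoidal $2$-category admits an equivalence, in the external sense recalled just above, to a quasi-strict one. I would organize the argument around the successive elimination of coherence data, in the same spirit as the passage from a bicategory to a strict $2$-category in Theorem \ref{T:coherence_for_2-cat}, but carried out one layer of structure at a time.

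First I would apply Theorem \ref{T:coherence_for_2-cat} to replace the underlying $2$-category $\bcalC$ by an equivalent strict one and transport the monoidal structure along the strictifying $2$-functors, using a transport-of-structure (doctrinal adjunction) argument; this yields a symmetric monoidal structure on a strict $2$-category whose unitors $\bflambda, \bfrho$ and associator $\bfalpha$ are still only equivalence $2$-transformations. The next step is to rigidify the tensor product $2$-functor $\otimes$ so that it becomes cubical: this is the core "Gray monoid" reduction, and it is exactly where the interchange $2$-morphisms $\otimes_{(h,i),(f,g)}$ are forced to be identities on the nose for the relevant families of arrows. Once $\otimes$ is cubical one can absorb $\bflambda$, $\bfrho$, $\bfalpha$ together with their chosen adjoint inverses into identities, and then a diagram chase through the ten defining polytopes of a symmetric monoidal $2$-category shows that the $2$-modifications $\bfLambda$, $\bfP$, $\bfM$, $\bfPi$, $\bfA$, $\bfB$, $\bfSigma$ may also be normalized to identities. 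The remaining conditions in the definition of quasi-strictness — that $\twobr_{x,y}$ is an identity whenever $x$ or $y$ is the tensor unit, and the vanishing of the $2$-cells $\twobr_{f,g}$ when $f$ or $g$ is an identity — then follow from the axioms governing $\bfSigma$ together with naturality of $\twobr$.

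At each stage the replacement must be performed together with explicit symmetric monoidal $2$-functors back and forth, and one must check that these assemble into an equivalence inside $\smtwoCat(\bcalC,\bcalC')$; this bookkeeping, rather than any single conceptual difficulty, is what makes a self-contained proof long. The hard part will be the rigidification of $\otimes$ to a cubical functor while keeping all higher coherences consistent: this requires the full machinery of Schommer-Pries's thesis, in particular the analysis of $3$-computads and the free construction of symmetric monoidal $2$-categories on them. For this reason I would not reproduce the argument and would instead simply cite Theorem 2.96 of \cite{S11} for the complete proof.
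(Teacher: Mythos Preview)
Your proposal is correct and matches the paper's approach: the paper does not give a proof at all but simply attributes the result to Theorem 2.96 of \cite{S11}, exactly as you conclude. Your outline of the successive strictification steps is additional exposition that the paper omits, but the bottom line is the same citation.
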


This explains why, outside of this chapter, we always assume unitors, associators, and all structural $2$-modifications of symmetric monoidal $2$-cat\-e\-go\-ries to be identities. Again, what we are actually doing is, whenever we consider a symmetric monoidal $2$-cat\-e\-go\-ry, to secretly replace it with a quasi-strict version of itself without changing our notation. For completion, we mention there exists a coherence result for symmetric monoidal $2$-func\-tors too, also due to Schommer-Pries, which is given by Theorem 2.97 of \cite{S11}, although we do not use it here.


%
%
%

\chapter{Complete linear and graded linear categories}\label{A:co_lin_&_gr_lin_cat}

In this appendix we introduce our target $2$-cat\-e\-go\-ries for ETQFTs and for $\PGr$-graded ETQFTs: the symmetric monoidal $2$-cat\-e\-go\-ry $\coCat_{\Bbbk}$ of complete linear categories, and the symmetric monoidal $2$-cat\-e\-go\-ry $\coCat_{\Bbbk}^{\PGr}$ of complete $\PGr$-graded linear categories. We also introduce non-complete analogues, denoted $\bfCat_{\Bbbk}$ and $\bfCat_{\Bbbk}^{\PGr}$ respectively, and we spend some time explaining the relations between all the different flavors of these constructions. The goal is to allow us to work in the simpler non-complete settings, while still specifying the complete versions of these symmetric monoidal $2$-cat\-e\-go\-ries as targets for ETQFTs and $\PGr$-graded ETQFTs.

\section{Linear and graded linear categories}\label{S:lin_&_gr_lin_cat}

We begin this section by defining a family of symmetric monoidal $2$-cat\-e\-go\-ries which will provide the basis for all the following constructions. To this end, let us fix a monoidal category $\calK$, and let us remark that $\calK$-en\-riched categories, together with $\calK$-en\-riched functors and $\calK$-en\-riched natural transformations between them, can be arranged into a $2$-cat\-e\-go\-ry $\bfCat_{\calK}$ whose horizontal and vertical composition are given by horizontal composition of $\calK$-en\-riched functors, and by horizontal and vertical composition of $\calK$-en\-riched natural transformations, as explained in Section 1.2 of \cite{K82}. Now Section 1.3 of \cite{K82} provides the notion of unit $\calK$-en\-riched category and, if $\calK$ is also equipped with a symmetric braiding $c$, Section 1.4 of \cite{K82} provides the notion of $\calK$-en\-riched tensor product of $\calK$-en\-riched categories. Furthermore, it is easy to see that the braiding $c$ on $\calK$ induces a $c$-braiding of $\calK$-en\-riched categories.

\begin{definition}\label{D:sym_mon_2-cat_of_rich_cat}
 The \textit{symmetric monoidal $2$-cat\-e\-go\-ry of $\calK$-en\-riched categories} is the symmetric monoidal $2$-cat\-e\-go\-ry obtained from the $2$-cat\-e\-go\-ry $\bfCat_{\calK}$ by specifying the unit $\calK$-en\-riched category as tensor unit, the $\calK$-en\-riched tensor product as tensor product, and the $c$-braiding of $\calK$-en\-riched categories as braiding.
\end{definition}

For what concerns enriched category theory, we will need nothing more than the very basic notions we just recalled in the previous definition. Indeed, we will specialize the symmetric monoidal category $\calK$ only in two of the simplest possible ways. Here is the first.

\begin{definition}\label{D:sym_mon_2-cat_of_lin_cat}
 The \textit{symmetric monoidal $2$-cat\-e\-go\-ry of linear categories} is the symmetric monoidal $2$-cat\-e\-go\-ry $\bfCat_{\Bbbk}$ obtained from Definition \ref{D:sym_mon_2-cat_of_rich_cat} by taking $\calK$ to be the monoidal category $\Vect_{\Bbbk}$ equipped with the trivial braiding $\tau$.
\end{definition}

A $\Vect_{\Bbbk}$-enriched category $A$ is referred to as a \textit{linear category}, a $\Vect_{\Bbbk}$-enriched functor $F : A \rightarrow A'$ between linear categories $A$ and $A'$ is referred to as a \textit{linear functor}, while a $\Vect_{\Bbbk}$-enriched natural transformation $\eta : F \Rightarrow F'$ between linear functors $F,F' : A \rightarrow A'$ is nothing more than an ordinary natural transformation. The tensor unit of $\bfCat_{\Bbbk}$ is denoted $\Bbbk$, the tensor product of $\bfCat_{\Bbbk}$ is denoted $\sqtimes$, and the braiding of $\bfCat_{\Bbbk}$ is denoted $\trbr$.

\begin{remark}\label{R:comp_tens_prod_lin_cat}
 Let us spell out some of the consequences of having specified the trivial braiding $\tau$ on $\Vect_{\Bbbk}$. First of all, this determines standard compositions in tensor products of linear categories. Indeed, for all linear categories $A$ and $A'$, and for all 
 \[
  f \otimes f' \in \Hom_A(x,y) \otimes \Hom_{A'}(x',y'), \quad g \otimes g' \in \Hom_A(y,z) \otimes \Hom_{A'}(y',z'),
 \]
 we have
 \[
  (g \otimes g') \circ (f \otimes f') = (g \circ f) \otimes (g' \circ f').
 \]
 Furthermore, this makes $\sqtimes$ into a strict $2$-func\-tor on the nose. Indeed, for all linear functors $F : A \rightarrow A''$, $F' : A' \rightarrow A'''$, $F'' : A'' \rightarrow A^{\fourth}$, and $F''' : A''' \rightarrow A^{\fifth}$, and for every morphism
 \[
  f \otimes f' \in \Hom_A(x,y) \otimes \Hom_{A'}(x',y')
 \]
 we have
 \[
  (F'' \sqtimes F''') \left( (F \sqtimes F')(f \otimes f') \right) = 
  (F'' \circ F)(f) \otimes (F''' \circ F')(f').
 \]
\end{remark}

Now, let us move on the the second specialization of $\calK$ we will consider, which is just a very mild generalization of the first one. Let us begin by fixing an abelian group $\PGr$, and let us denote with $\PGr$ also the discrete category over $\PGr$. Furthermore, if $\Z^*$ denotes the multiplicative group $\{ +1,-1 \}$ of invertible integers, then let us fix a bilinear map $\gamma : \PGr \times \PGr \rightarrow \Z^*$, which will be used in order to define potentially non-trivial symmetric braidings. Let us start with some preliminary definitions: a \textit{$\PGr$-graded vector space $\bbV$} is a functor from $\PGr$ to $\Vect_{\Bbbk}$, and for every $k \in \PGr$ the \textit{space $\bbV^k$ of degree $k$ vectors} of $\bbV$ is the vector space $\bbV(k)$. A \textit{$\PGr$-graded linear map $\bbf : \bbV \rightarrow \bbV'$} between $\PGr$-graded vector spaces $\bbV$ and $\bbV'$ is a natural transformation from $\bbV$ to $\bbV'$, and for every $k \in \PGr$ the \textit{degree $k$ component $\bbf^k : \bbV^k \rightarrow \bbV'^k$} of $\bbf : \bbV \rightarrow \bbV'$ is the linear map $\bbf_k : \bbV(k) \rightarrow \bbV'(k)$. The \textit{tensor unit $\Bbbk$} of $\PGr$-graded vector spaces is the $\PGr$-graded vector space defined by
\[
 \Bbbk^k := \begin{cases}
            \Bbbk & k = 0 \\
            \{ 0 \} & k \neq 0.
           \end{cases}
\]
for all $k \in \PGr$. The \textit{tensor product $\bbV \otimes \bbV'$} of $\PGr$-graded vector spaces $\bbV$ and $\bbV'$ is the $\PGr$-graded vector space defined by
\[
 (\bbV \otimes \bbV')^k := \bigoplus_{k' \in \PGr} \bbV^{k - k'} \otimes \bbV'^{k'}.
\]
for all $k \in \PGr$. The \textit{$\gamma$-braiding morphism $c^{\gamma}_{\bbV,\bbV'} : \bbV \otimes \bbV' \rightarrow \bbV' \otimes \bbV$} of $\PGr$-graded vector spaces $\bbV$ and $\bbV'$ is the $\PGr$-graded linear map given by 
\[
  (c^{\gamma}_{\bbV,\bbV'})^k := \bigoplus_{k' \in \PGr} \gamma(k-k',k') \cdot \tau_{\bbV^{k - k'},\bbV'^{k'}}
\]
for all $k \in \PGr$, where $\tau$ is the trivial braiding of vector spaces. Now, we have everything in place in order to define the monoidal category $\Vect_{\Bbbk}^{\PGr}$ of $\PGr$-graded vector spaces, which is the monoidal category whose objects are $\PGr$-graded vector spaces, whose morphisms are $\PGr$-graded linear maps, and whose monoidal structure is given by the tensor unit and by the tensor product we just introduced. Remark that $\Vect_{\Bbbk}^{\PGr}$ can be made into a symmetric monoidal category by equipping it with the $\gamma$-braiding $c^{\gamma}$.

\begin{definition}\label{D:sym_mon_2-cat_of_gr_lin_cat}
 The \textit{symmetric monoidal $2$-cat\-e\-go\-ry of $\PGr$-graded linear categories} is the symmetric monoidal $2$-cat\-e\-go\-ry $\bfCat_{\Bbbk}^{\PGr}$ obtained from Definition \ref{D:sym_mon_2-cat_of_rich_cat} by taking $\calK$ to be the monoidal category $\Vect_{\Bbbk}^{\PGr}$ equipped with the $\gamma$-braiding $c^{\gamma}$.
\end{definition}

A $\Vect_{\Bbbk}^{\PGr}$-enriched category $\bbA$ is referred to as a \textit{$\PGr$-graded linear category}, a $\Vect_{\Bbbk}^{\PGr}$-enriched functor $\bbF : \bbA \rightarrow \bbA'$ between $\PGr$-graded linear categories $\bbA$ and $\bbA'$ is referred to as a \textit{$\PGr$-graded linear functor}, and a $\Vect_{\Bbbk}^{\PGr}$-enriched natural transformation $\bbeta : \bbF \Rightarrow \bbF'$ between $\PGr$-graded linear functors $\bbF,\bbF' : \bbA \rightarrow \bbA'$ is referred to as a \textit{$\PGr$-graded natural transformation}. If $\bbA$ is a $\PGr$-graded linear category, then for every $k \in \PGr$ and for all $x,y \in \bbA$ the \textit{space of degree $k$ morphisms $f^k$ from $x$ to $y$} is the vector space $\Hom_{\bbA}^k(x,y)$ of degree $k$ vectors of $\Hom_{\bbA}(x,y)$, and the notation $f^k \in \Hom_{\bbA}(x,y)$ stands for $f^k \in \Hom_{\bbA}^k(x,y)$. If ${\bbF : \bbA \rightarrow \bbA'}$ is a $\PGr$-graded linear functor, then for every $k \in \PGr$ the image under $\bbF$ of a degree $k$ morphism $f^k \in \Hom_{\bbA}(x,y)$ is denoted $\bbF(f^k) \in \Hom_{\bbA'}(\bbF(x),\bbF(y))$. If ${\bbeta : \bbF \Rightarrow \bbF'}$ is a $\PGr$-graded natural transformation between $\PGr$-graded linear functors ${\bbF,\bbF' : \bbA \rightarrow \bbA'}$, then the component of $\bbeta$ associated with an object $x \in \bbA$ is denoted $\bbeta^0_x \in \Hom_{\bbA'}(\bbF(x),\bbF'(x))$. The tensor unit of $\bfCat_{\Bbbk}^{\PGr}$ is denoted $\Bbbk$, the tensor product of $\bfCat_{\Bbbk}^{\PGr}$ is denoted $\sqtimes$, and the braiding of $\bfCat_{\Bbbk}^{\PGr}$ is denoted $\gmmbr$.

\begin{remark}\label{R:comp_tens_prod_gr_lin_cat}
 Let us spell out some of the consequences of having specified the $\gamma$-braiding $c^{\gamma}$ on $\Vect_{\Bbbk}^{\PGr}$. First of all, this affects compositions in tensor products of $\PGr$-graded linear categories. Indeed, for all $\PGr$-graded linear categories $\bbA$ and $\bbA'$, and for all 
 \[
  f^k \otimes f'^{k'} \in \Hom_{\bbA}(x,y) \otimes \Hom_{\bbA'}(x',y'), \quad
  g^{\ell} \otimes g'^{\ell'} \in \Hom_{\bbA}(y,z) \otimes \Hom_{\bbA'}(y',z'),
 \]
 we have
 \[
  (g^{\ell} \otimes g'^{\ell'}) \circ (f^k \otimes f'^{k'}) = \gamma(k,\ell') \cdot (g^{\ell} \circ f^k) \otimes (g'^{\ell'} \circ f'^{k'}).
 \]
 Furthermore, this prevents $\sqtimes$ from being a strict $2$-func\-tor in general. Indeed, for all $\PGr$-graded linear functors $\bbF : \bbA \rightarrow \bbA''$, $\bbF' : \bbA' \rightarrow \bbA'''$, $\bbF'' : \bbA'' \rightarrow \bbA^{\fourth}$, and $\bbF''' : \bbA''' \rightarrow \bbA^{\fifth}$, and for every 
 \[
  f^k \otimes f'^{k'} \in \Hom_{\bbA}(x,y) \otimes \Hom_{\bbA'}(x',y')
 \]
 we have
 \[
  (\bbF'' \sqtimes \bbF''') \left( (\bbF \sqtimes \bbF')(f^k \otimes f'^{k'}) \right) = 
  \gamma(k,k') \cdot (\bbF'' \circ \bbF)(f^k) \otimes (\bbF''' \circ \bbF')(f'^{k'}).
 \]
\end{remark}

Linear categories and $\PGr$-graded linear categories are of course very closely related. First of all, every $\PGr$-graded linear category naturally determines a linear category.

\begin{definition}\label{D:underlying_linear_category}
 The \textit{underlying linear category} of a $\PGr$-graded linear category $\bbA$ is the linear category $A$ whose set of objects is given by $\bbA$, and whose space of morphisms from $x \in \bbA$ to $y \in \bbA$ is given by $\Hom_{\bbA}^0(x,y)$. 
\end{definition}

Conversely, every linear category naturally determines a $\PGr$-graded linear category, once provided with an additional piece of structure: an \textit{action of the abelian group $\PGr$ on a linear category $A$} is a monoidal functor $R : \PGr \rightarrow \End_{\Bbbk}(A)$, where $\End_{\Bbbk}(A)$ denotes the category of linear endofunctors of $A$, which is a monoidal category with tensor unit given by the identity functor of $A$, and with tensor product given by composition. If $R$ is an action of $\PGr$ on $A$, then we denote with $R^k$ the linear endofunctor $R(k) \in \End_{\Bbbk}(A)$ for every $k \in \PGr$.

\begin{definition}\label{D:graded_extension}
 The \textit{$\PGr$-graded extension} of a linear category $A$ with respect to an action $R$ of $\PGr$ on $A$ is the $\PGr$-graded linear category $R(A)$ whose objects are $x \in A$, whose degree $k$ morphisms from $x$ to $y$ are $f^k \in \Hom_A(x,R^{-k}(y))$, whose identities are coherence morphisms
 \[
  \id_x^0 := \varepsilon_x \in \Hom_A(x,R^{0}(x)),
 \]
 for every $x \in A$, and whose compositions are
 \[
  g^{\ell} \diamond f^k := (\mu_{-\ell,-k})_z \circ R^{-k}(g^{\ell}) \circ f^k \in \Hom_A(x,R^{- k - \ell}(z))
 \]
 for all $f^k \in \Hom_A(x,R^{-k}(y))$ and $g^{\ell} \in \Hom_A(y,R^{-\ell}(z))$.
\end{definition}

If $R$ is an action of $\PGr$ on $A$, and if $x$ and $y$ are objects of $A$, then the notation $\bbHom_A(x,y)$ stands for $\Hom_{R(A)}(x,y)$.

\begin{example}\label{E:gr_cat_of_gr_vector_spaces}
 An action $R$ of $\PGr$ on $\Vect_{\Bbbk}^{\PGr}$ is determined by the right translation endofunctors $R^k := R_{-k}^* \in \End_{\Bbbk}(\Vect_{\Bbbk}^{\PGr})$ mapping every $\PGr$-graded vector space $\bbV$ to $\bbV \circ R_{-k}$, where $R_{-k} : \PGr \rightarrow \PGr$ is the functor defined by $R_{-k} (\ell) := \ell - k$. We denote with $\bbVect_{\Bbbk}^{\PGr}$ the $\PGr$-graded extension $R(\Vect_{\Bbbk}^{\PGr})$.
\end{example}

\section{Complete linear categories}\label{S:co_lin_cat}

In this section we introduce our target symmetric monoidal $2$-category for ETQFTs. In order to do this, let us give some preliminary definitions. We say a linear category is \textit{complete}\footnote{What we call a complete linear category is usually called Cauchy complete, see for instance \cite{BDSV15}. We adopt a loose terminology because this is the only kind of completion we consider.} if it is closed under finite direct sums, and if all its idempotents are split. When a linear category is not complete, there is a standard procedure, called \textit{completion}, which produces a complete linear category out of it. The idea is very natural, and simply consists in adding by hand direct sums of objects, and subobjects corresponding to images of idempotent morphisms. We do this in multiple steps.

First, we define the \textit{additive completion $\Mat(A)$ of a linear category $A$} as the linear category whose objects are
\[
 \bigoplus_{i \in I} x_i := \left( \begin{array}{c}
                                    \vdots \\ x_i \\ \vdots
                                   \end{array} \right)
\]
with $x_i \in A$ for every $i$ in some finite ordered set $I$, and whose morphisms from $\smash{\displaystyle \bigoplus_{i \in I} x_i}$ to $\smash{\displaystyle \bigoplus_{i \in J} y_i}$ are
\[
 \left( f_{ij} \right)_{(i,j) \in J \times I} := 
 \left( \begin{array}{ccc}
         \ddots & \vdots & \bdots \\
         \cdots & f_{ij} & \cdots \\
         \bdots & \vdots & \ddots
        \end{array} \right)
\]
with $f_{ij} \in \Hom_{A}(x_j,y_i)$ for every $(i,j) \in J \times I$. The identity of an object $\smash{\displaystyle \bigoplus_{i \in I} x_i \in \Mat(A)}$ is the morphism
\[
 \left( \delta_{ij} \cdot \id_{x_i} \right)_{(i,j) \in I^2} \in \End_{\Mat(A)} \left( \bigoplus_{i \in I} x_i \right),
\] 
and the composition of a morphism
\[
 \left( f_{ij} \right)_{(i,j) \in J \times I} 
 \in \Hom_{\Mat(A)} \left( \bigoplus_{i \in I} x_i, \bigoplus_{i \in J} y_i \right)
\]
with a morphism
\[
 \left( g_{ij} \right)_{(i,j) \in K \times J} 
 \in \Hom_{\Mat(A)} \left( \bigoplus_{i \in J} y_i, \bigoplus_{i \in K} z_i \right)
\]
is the morphism
\[
 \left( \sum_{h \in J} g_{ih} \circ f_{hj} \right)_{(i,j) \in K \times I} 
 \in \Hom_{\Mat(A)} \left( \bigoplus_{i \in I} x_i, \bigoplus_{i \in K} z_i \right).
\]
Remark that every object of $A$ can be naturally interpreted as an object of $\Mat(A)$ given by a direct sum with a single summand.

Next, we define the \textit{idempotent completion $\Kar(A)$ of a linear category $A$} as the linear category whose objects are $\im(p) := (x,p)$ with $x \in A$ and $p \in \End_{A}(x)$ satisfying $p \circ p = p$, and whose morphisms from $\im(p)$ to $\im(q)$ are $f \in \Hom_{A}(x,y)$ satisfying $f \circ p = f = q \circ f$. The identity of an object $\im(p) \in \Kar(A)$ is the idempotent morphism $p \in \End_{\Kar(A)}(\im(p))$, and composition is directly inherited from composition in $A$. Remark again that every object of $A$ can be naturally interpreted as an object of $\Kar(A)$ by confusing it with the image of its identity.

\begin{definition}\label{D:completion}
 The \textit{completion $\hat{A}$ of a linear category $A$} is the linear category $\Kar(\Mat(A))$.
\end{definition}

Every object of the completion $\hat{A}$ of a linear category $A$ has the form
\[
 \im \left( \left( p_{ij} \right)_{(i,j) \in I^2} \right) :=
 \left( \left( \begin{array}{c}
                \vdots \\ x_i \\ \vdots
               \end{array} \right),
 \left( \begin{array}{ccc}
         \ddots & \vdots & \bdots \\
         \cdots & p_{ij} & \cdots \\
         \bdots & \vdots & \ddots
        \end{array} \right) \right)
\]
for some finite ordered set $I$, for some family $\{ x_i \in A \mid i \in I \}$, and for some family $\{ p_{ij} \in \Hom_{A}(x_j,x_i) \mid (i,j) \in I^2 \}$
satisfying
\[
 \sum_{h \in I} p_{ih} \circ p_{hj} = p_{ij}
\]
for every $(i,j) \in I^2$. Analogously, every morphism of the completion $\hat{A}$ from $\im ((p_{ij})_{(i,j) \in I^2})$ to $\im ((q_{ij})_{(i,j) \in J^2})$ has the form
\[
 \left( f_{ij} \right)_{(i,j) \in J \times I} :=
 \left( \begin{array}{ccc}
         \ddots & \vdots & \bdots \\
         \cdots & f_{ij} & \cdots \\
         \bdots & \vdots & \ddots
        \end{array} \right)
\] 
for some family $\{ f_{ij} \in \Hom_{A}(x_j,y_i) \mid (i,j) \in J \times I \}$ satisfying
\[
 \sum_{h \in I} f_{ih} \circ p_{hj} = f_{ij} =
 \sum_{h \in J} q_{ih} \circ f_{hj}
\]
for every $(i,j) \in J \times I$.

Every linear functor $F : A \rightarrow A'$ extends to a linear functor $\hat{F} : \hat{A} \rightarrow \hat{A}'$, called the \textit{completion of $F$}, which sends every object 
\[
 \im \left( \left( p_{ij} \right)_{(i,j) \in I^2} \right)
\]
of $\hat{A}$ to the object 
\[
 \im \left( \left( F \left( p_{ij} \right) \right)_{(i,j) \in I^2} \right)
\]
of $\hat{A}'$, and wich sends every morphism 
\[
 \left( f_{ij} \right)_{(i,j) \in J \times I}
\]
of $\Hom_{\hat{A}} ( \im ( ( p_{ij} )_{(i,j) \in I^2} ),\im ( ( q_{ij} )_{(i,j) \in I^2} ) )$ to the morphism 
\[
 \left( F \left( f_{ij} \right) \right)_{(i,j) \in J \times I}
\]
of $\Hom_{\hat{A}'} ( \im ( ( F(p_{ij} ) )_{(i,j) \in I^2} ),\im ( ( F(q_{ij} ) )_{(i,j) \in J^2} ) )$. Similarly, every natural transformation $\eta : F \Rightarrow F'$ extends to a natural transformation $\hat{\eta} : \hat{F} \Rightarrow \hat{F}'$, called the \textit{completion of $\eta$}, which associates with every object 
\[
 \im \left( \left( p_{ij} \right)_{(i,j) \in I^2} \right)
\]
of $\hat{A}$ the morphism
\[
 \left( \sum_{h \in I} F' \left( p_{ih} \right) \circ \eta_{x_h} \circ F \left( p_{hj} \right) \right)_{(i,j) \in I^2}
\]
of $\Hom_{\hat{A}'} ( \im ( ( F ( p_{ij} ) )_{(i,j) \in I^2} ), \im ( ( F'(p_{ij} ) )_{(i,j) \in I^2} ) )$.

\begin{definition}\label{D:2-cat_of_co_lin_cat}
 The \textit{$2$-cat\-e\-go\-ry of complete linear categories} is the full sub-$2$-cat\-e\-go\-ry $\coCat_{\Bbbk}$ of $\bfCat_{\Bbbk}$ whose objects are complete linear categories.
\end{definition}

\begin{proposition}\label{P:co_strict_2-funct}
 Completion defines a strict $2$-func\-tor $\cmpl : \bfCat_{\Bbbk} \rightarrow \coCat_{\Bbbk}$.
\end{proposition}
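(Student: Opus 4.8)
The statement asserts that the assignment sending a linear category $A$ to its completion $\hat{A} = \Kar(\Mat(A))$, a linear functor $F$ to $\hat{F}$, and a natural transformation $\eta$ to $\hat{\eta}$, assembles into a \emph{strict} $2$-func\-tor $\cmpl : \bfCat_{\Bbbk} \rightarrow \coCat_{\Bbbk}$. By Definition \ref{D:2-functor} and Definition \ref{D:strict_2-funct}, producing a strict $2$-func\-tor amounts to: (i) checking that the target of each constructed object is an object of $\coCat_{\Bbbk}$, i.e. that $\hat{A}$ is genuinely a complete linear category; (ii) checking that $\hat{F}$ is a well-defined linear functor and $\hat{\eta}$ a well-defined (linear) natural transformation, with sources and targets as claimed; (iii) checking functoriality of the assignment $\hat{(\cdot)}$ on each hom-category $\bfCat_{\Bbbk}(A,A')$, namely $\widehat{\eta' \ast \eta} = \hat{\eta'} \ast \hat{\eta}$ and $\widehat{\id_F} = \id_{\hat F}$; and (iv) checking the strict compatibility with horizontal composition and identities, i.e. $\widehat{G \circ F} = \hat{G} \circ \hat{F}$, $\widehat{\id_A} = \id_{\hat A}$, and the analogous equalities for whiskered natural transformations, so that the coherence $2$-mor\-phisms $\cmpl_A$ and $\cmpl_{G,F}$ may be taken to be identities (which is exactly what strictness requires).

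\textbf{Key steps in order.} First I would verify (i): $\Mat(A)$ is closed under finite direct sums by concatenating index sets, and $\Kar(\Mat(A))$ splits all idempotents because an idempotent endomorphism of $\im(p)$ in $\Kar(\Mat(A))$ is in particular an idempotent of $\Mat(A)$ compatible with $p$, hence has a splitting object $\im(p')$; one also checks that $\hat A$ remains closed under finite direct sums, which is immediate since $\Mat(A)$ already is and $\Kar$ preserves this. So $\hat{A}$ is an object of $\coCat_{\Bbbk}$. Next, (ii): the formulas given just before Definition \ref{D:2-cat_of_co_lin_cat} for $\hat F$ on objects and morphisms, and for $\hat \eta$ on objects, are literal unpackings; I would record that $\hat F(f_{ij})$ respects the idempotent conditions $\sum_h F(f_{ih}) \circ F(p_{hj}) = F(f_{ij}) = \sum_h F(q_{ih}) \circ F(f_{hj})$ (apply $F$ to the corresponding relations in $A$ and use that $F$ is linear, hence additive on hom-spaces), that $\hat F$ preserves identities and composition (again termwise, using additivity of $F$), and that $\hat\eta$ is natural — this last point is the naturality square $\hat{F'}(g) \ast \hat\eta_x = \hat\eta_y \ast \hat F(g)$, which after expanding both sides into matrices of morphisms in $A$ reduces, summand by summand, to the naturality of $\eta$ together with the defining idempotent identities. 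Then (iii): $\widehat{\eta' \ast \eta}$ has $(i,j)$-entry $\sum_h F''(p_{ih}) \circ (\eta' \ast \eta)_{x_h} \circ F(p_{hj})$, and expanding $(\eta'\ast\eta)_{x_h} = \eta'_{x_h} \ast \eta_{x_h}$ and inserting the idempotent $\sum_{h'} F'(p_{hh'})\circ F'(p_{h'h})$ in the middle (legal by the idempotent relation) matches it with the $(i,j)$-entry of $\hat{\eta'} \ast \hat\eta$ computed by matrix multiplication; the identity case $\widehat{\id_F} = \id_{\hat F}$ is direct. Finally (iv): for horizontal composition, $\widehat{G \circ F}$ and $\hat G \circ \hat F$ agree on objects because $(G\circ F)(p_{ij}) = G(F(p_{ij}))$ termwise, and likewise on morphisms; and by Remark \ref{R:comp_tens_prod_lin_cat} the ambient $2$-cat\-e\-go\-ry $\bfCat_{\Bbbk}$ has strictly associative (matrix-style) composition with no coherence twists, so there is no obstruction to the equality holding on the nose. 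The whiskering compatibilities $\widehat{G \triangleright \eta}$ versus $\hat G \triangleright \hat\eta$ and $\widehat{\eta \triangleleft F}$ versus $\hat\eta \triangleleft \hat F$ follow from the same termwise computation. Hence all coherence $2$-mor\-phisms of $\cmpl$ can be taken to be identities, and $\cmpl$ is strict.

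\textbf{Main obstacle.} None of the steps is deep, but the one demanding care is (iii), and more generally every place where one must insert or remove the idempotents $(p_{ij})$ inside composites: the entries of morphisms in $\hat A$ are only defined \emph{up to} absorption by the source and target idempotents, so every equality of matrices must be checked to be independent of these insertions, and the bookkeeping of which index set ranges over which sum is easy to get wrong. Concretely, the verification that $\hat\eta$ is natural and that $\cmpl$ respects vertical composition both hinge on the single algebraic move ``$f_{ij} = \sum_h f_{ih}\circ p_{hj} = \sum_h q_{ih}\circ f_{hj}$'', applied repeatedly; once this is isolated as a lemma about $\Kar(\Mat(-))$, the rest is routine. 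The statement about strictness of $\sqtimes$ and the relationship with $\coCat_{\Bbbk}$ being a \emph{full} sub-$2$-category (Definition \ref{D:2-cat_of_co_lin_cat}) guarantees that restricting the codomain to complete linear categories introduces no further conditions on $\hat F$ and $\hat\eta$ beyond being well-defined, so no extra work is needed there.
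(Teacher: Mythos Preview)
Your proposal is correct and follows essentially the same approach as the paper. The paper's proof is terser: it takes the completeness of $\hat A$ and the well-definedness of $\hat F$, $\hat\eta$ as already established by the constructions preceding the proposition, and focuses directly on your points (iii) and (iv), with the explicit computation for $\cmpl(\eta')\ast\cmpl(\eta)=\cmpl(\eta'\ast\eta)$ being exactly the idempotent-insertion argument you describe.
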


\begin{proof}
 The proof is straight-forward. First of all, for every linear functor $F : A \rightarrow A'$, we have $\id_{\cmpl(F)} = \cmpl(\id_F)$. Next, for all natural transformations $\eta' : F' \Rightarrow F''$ and $\eta : F \Rightarrow F'$ between linear functors $F,F',F'' : A \rightarrow A'$, we have $\cmpl(\eta') \ast \cmpl(\eta) = \cmpl(\eta' \ast \eta)$,
 because for every object $\im ( ( p_{ij} )_{(i,j) \in I^2} )$ of $\hat{A}$, and for every $(i,j) \in I^2$, we have
 \begin{align*}
  &\sum_{\mathclap{h,h',h'' \in I}} F'' \left( p_{ih''} \right) \circ \eta'_{x_{h''}} \circ F' \left( p_{h''h'} \right) \circ F' \left( p_{h'h} \right) \circ \eta_{x_h} \circ F \left( p_{hj} \right) \\
  &\hspace{\parindent} = \sum_{\mathclap{h,h'' \in I}} F'' \left( p_{ih''} \right) \circ \eta'_{x_{h''}} \circ F' \left( p_{h''h} \right) \circ \eta_{x_h} \circ F \left( p_{hj} \right) \\
  &\hspace{\parindent} = \sum_{\mathclap{h,h'' \in I}} F'' \left( p_{ih''} \right) \circ F'' \left( p_{h''h} \right) \circ \eta'_{x_h} \circ \eta_{x_h} \circ F \left( p_{hj} \right) \\
  &\hspace{\parindent} = \sum_{h \in I} F'' \left( p_{ih} \right) \circ \eta'_{x_h} \circ \eta_{x_h} \circ F \left( p_{hj} \right).
 \end{align*}
 This proves $\cmpl_{A,A'}$ is a functor for all linear categories $A$ and $A'$. Now, for every linear category $A$, we have $\id_{\cmpl(A)} = \cmpl(\id_A)$, and for all linear functors $F' : A' \rightarrow A''$ and $F : A \rightarrow A'$, we have $\cmpl(F') \circ \cmpl(F) = \cmpl(F' \circ F)$.
 This means we can set $\cmpl_A$ and $\cmpl_{F',F}$ to be identity natural tranformations for every linear category $A$ and for all linear functors $F' : A' \rightarrow A''$ and $F : A \rightarrow A'$.
\end{proof}


\begin{remark}\label{R:2-completion}
 If $\bcalC$ is a $2$-category, then every $2$-func\-tor $\bfF : \bcalC \rightarrow \bfCat_{\Bbbk}$ induces a $2$-func\-tor $\hat{\bfF} : \bcalC \rightarrow \coCat_{\Bbbk}$, called the \textit{completion} of $\bfF$, which is defined as the composition $\cmpl \circ \bfF$. Similarly, every $2$-trans\-for\-ma\-tion $\bfsigma : \bfF \Rightarrow \bfF'$ between $2$-func\-tors $\bfF,\bfF' : \bcalC \rightarrow \bfCat_{\Bbbk}$ induces a $2$-trans\-for\-ma\-tion $\hat{\bfsigma} : \hat{\bfF} \Rightarrow \hat{\bfF}'$, called the \textit{completion} of $\bfsigma$, which is defined as the left whiskering $\cmpl \triangleright \bfsigma$. Analogously, every $2$-mod\-i\-fi\-ca\-tion $\bfGamma : \bfsigma \Rrightarrow \bfsigma'$ between $2$-trans\-for\-ma\-tions $\bfsigma,\bfsigma' : \bfF \Rightarrow \bfF'$ between $2$-func\-tors $\bfF,\bfF' : \bcalC \rightarrow \bfCat_{\Bbbk}$ induces a $2$-mod\-i\-fi\-ca\-tion $\hat{\bfGamma} : \hat{\bfsigma} \Rrightarrow \hat{\bfsigma}'$, called the \textit{completion} of $\bfGamma$, which is defined as the left whiskering $\cmpl \triangleright \bfGamma$.
\end{remark}

We are now ready to define the target for our candidate ETQFT.

\begin{definition}\label{D:sym_mon_2-cat_of_co_lin_cat}
 The \textit{symmetric monoidal $2$-cat\-e\-go\-ry of complete linear categories} is the symmetric monoidal $2$-cat\-e\-go\-ry obtained from Definition \ref{D:2-cat_of_co_lin_cat} by specifying as tensor unit the completion $\hat{\Bbbk}$ of the tensor unit $\Bbbk$ of $\bfCat_{\Bbbk}$, by specifying as tensor product the completion $\csqtimes$ of the tensor product $\sqtimes$ of $\bfCat_{\Bbbk}$, and by specifying as braiding the completion $\hat{\trbr}$ of the braiding $\trbr$ of $\bfCat_{\Bbbk}$.
\end{definition}

Remark that, although $\coCat_{\Bbbk}$ is a full sub-$2$-cat\-e\-go\-ry of $\bfCat_{\Bbbk}$, it is not a symmetric monoidal sub-$2$-cat\-e\-go\-ry, because the two symmetric monoidal structures do not agree.

\begin{proposition}\label{P:completion}
 The strict $2$-functor $\cmpl : \bfCat_{\Bbbk} \rightarrow \coCat_{\Bbbk}$ is symmetric mo\-n\-oi\-dal.
\end{proposition}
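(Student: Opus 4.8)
The statement to prove is that the strict $2$-functor $\cmpl : \bfCat_{\Bbbk} \rightarrow \coCat_{\Bbbk}$ is symmetric monoidal, where the source carries the symmetric monoidal structure $(\Bbbk,\sqtimes,\trbr)$ and the target carries $(\hat{\Bbbk},\csqtimes,\hat{\trbr})$. Since $\cmpl$ is \emph{strict} as a $2$-functor (Proposition \ref{P:co_strict_2-funct}), and since the target structure is defined to be the completion of the source structure (Definition \ref{D:sym_mon_2-cat_of_co_lin_cat}), most of the coherence data will be assembled from identity $2$-transformations and identity $2$-modifications, so the bulk of the work is bookkeeping rather than genuine computation. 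First I would supply the data required by Definition \ref{D:symmetric_monoidal_2-functor}: the equivalence $1$-morphism $\bfiota : \hat{\Bbbk} \rightarrow \cmpl(\Bbbk) = \hat{\Bbbk}$, for which I take the identity linear functor (hence a trivial adjoint equivalence $\bfiota^*$), and the equivalence $2$-transformation $\bfchi : \csqtimes \circ (\cmpl \times \cmpl) \Rightarrow \cmpl \circ \sqtimes$. The key observation here is that for complete linear categories $\hat{A}$ and $\hat{A}'$, there is a canonical linear equivalence $\hat{A} \csqtimes \hat{A}' \simeq \widehat{A \sqtimes A'}$; this is where a small amount of actual content enters, since one must check that completing after taking the naive enriched tensor product agrees, up to equivalence, with first completing and then completing the tensor product. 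I would construct $\bfchi$ componentwise from these equivalences and verify naturality using Remark \ref{R:comp_tens_prod_lin_cat}, which records that $\sqtimes$ (and hence $\csqtimes$) composes coordinatewise because the braiding on $\Vect_{\Bbbk}$ is trivial.

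\textbf{The coherence modifications.} Next I would produce the invertible $2$-modifications $\bfGamma$, $\bfDelta$, $\bfOmega$, and $\bfTheta$ of Definition \ref{D:symmetric_monoidal_2-functor}. Because the unitors, associator, and braiding of both $\bfCat_{\Bbbk}$ and $\coCat_{\Bbbk}$ descend from the trivial braiding on $\Vect_{\Bbbk}$, and because $\cmpl$ is strict, each of the pentagon/triangle-type diagrams these modifications must fill commutes on the nose once one unwinds the definitions of $\csqtimes$, $\hat{\Bbbk}$, and $\hat{\trbr}$ in terms of $\sqtimes$, $\Bbbk$, $\trbr$. Concretely, $\bfGamma_A : \bfchi_{\Bbbk,A} \circ (\bfiota \csqtimes \id) \Rightarrow \ldots \Rightarrow \hat{\bflambda}_{\hat{A}}$ becomes an identity after noting that $\bfiota$ is the identity functor and that the completion of the left unitor is the left unitor of $\coCat_{\Bbbk}$ by construction; similarly for $\bfDelta$ (right unitor), $\bfOmega$ (compatibility of $\bfchi$ with associators), and $\bfTheta$ (compatibility of $\bfchi$ with braidings). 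In each case I would state that the relevant diagram commutes strictly and hence the modification may be taken to be the identity, and then remark that the five coherence conditions of Definition \ref{D:symmetric_monoidal_2-functor} — obtained from (HTA1), (HTA2), (BHA1), (BHA2), (SHA1) — reduce to equalities between composites of identity $2$-modifications, hence hold trivially.

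\textbf{The main obstacle.} The one nontrivial point, which I would isolate as a lemma, is the construction and coherence of $\bfchi$: one must exhibit, for complete linear categories $B$ and $B'$, a linear equivalence $\nu_{B,B'} : B \csqtimes B' \rightarrow \widehat{B \sqtimes B'}$ — or rather, since $\csqtimes$ is itself \emph{defined} as $\widehat{\ \cdot \sqtimes \cdot\ }$ applied after forgetting completeness, what is really needed is the comparison between $\widehat{\hat{A} \sqtimes \hat{A}'}$ and $\widehat{A \sqtimes A'}$ for arbitrary linear categories $A$, $A'$. The heart of this is the fact that $\Kar$ and $\Mat$ interact well with $\sqtimes$: an idempotent in $\hat{A} \sqtimes \hat{A}'$ is built from idempotents in $A$ and $A'$ (up to conjugation), so the obvious functor $\widehat{A \sqtimes A'} \rightarrow \widehat{\hat{A} \sqtimes \hat{A}'}$ is essentially surjective, and it is fully faithful because morphism spaces on both sides are described by the same matrix-of-morphism-spaces formula. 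I expect this essential-surjectivity/full-faithfulness verification — keeping track of the matrix indices and the splitting data through two nested completions — to be the most delicate part; everything downstream is formal, resting on strictness of $\cmpl$ and on the fact that the symmetric monoidal structure on $\coCat_{\Bbbk}$ was \emph{defined} by completion so that all structural $2$-cells are images under $\cmpl$ of the corresponding structural $2$-cells on $\bfCat_{\Bbbk}$, which $\cmpl$ preserves strictly.
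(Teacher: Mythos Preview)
Your proposal is essentially correct and follows the same overall architecture as the paper's proof: identify $\bfiota$ as the identity, build $\bfchi$ as the comparison between $\widehat{\hat{A} \sqtimes \hat{A}'}$ and $\widehat{A \sqtimes A'}$, and then argue that all remaining coherence modifications $\bfGamma$, $\bfDelta$, $\bfOmega$, $\bfTheta$ can be taken to be identities, so that the five axioms hold trivially.

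The one difference worth flagging is in how you propose to construct $\bfchi$. You suggest exhibiting the obvious functor $\widehat{A \sqtimes A'} \rightarrow \widehat{\hat{A} \sqtimes \hat{A}'}$ induced by the inclusion $A \sqtimes A' \hookrightarrow \hat{A} \sqtimes \hat{A}'$, showing it is fully faithful and essentially surjective, and then taking $\bfchi$ to be a chosen quasi-inverse. The paper instead writes down $\bfchi_{A,A'}$ directly as an explicit ``flattening'' functor: an object of $\widehat{\hat{A} \sqtimes \hat{A}'}$ is an idempotent matrix whose entries are tensors of morphisms between idempotent matrices, and one re-indexes over the product of all inner index sets to obtain a single idempotent matrix over $A \sqtimes A'$. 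This is of course exactly the inverse you would write down, but constructing it explicitly buys something: the strict equalities $\cmpl(F \sqtimes F') \circ \bfchi_{A,A'} = \bfchi_{A'',A'''} \circ (\cmpl(F) \csqtimes \cmpl(F'))$, $\bfchi_{\Bbbk,A} = \id$, $\bfchi_{A,A' \sqtimes A''} \circ (\id \csqtimes \bfchi_{A',A''}) = \bfchi_{A \sqtimes A',A''} \circ (\bfchi_{A,A'} \csqtimes \id)$, and $\bfchi_{A',A} \circ \hat{\trbr} = \cmpl(\trbr) \circ \bfchi_{A,A'}$ are then literal equalities of functors, not merely natural isomorphisms. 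That is what makes your claim ``each of the diagrams commutes on the nose, so the modifications are identities'' actually true. If $\bfchi$ were only specified as an arbitrary quasi-inverse, those squares would commute only up to isomorphism, the modifications would no longer be identities, and you would have to verify (HTA1), (HTA2), (BHA1), (BHA2), (SHA1) by hand. So your sketch is right, but to make the ``everything downstream is formal'' step go through as cleanly as you want, you should plan to write the flattening functor explicitly rather than invoke an abstract inverse.
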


The proof of this result is elementary but rather long, so we postpone it to Section \ref{S:proofs}. 
Moving on, let us make an important remark: although in this memoir we use $\coCat_{\Bbbk}$ as target for ETQFTs, we actually spend most of our time working inside $\bfCat_{\Bbbk}$, because this makes arguments easier. However, this means the notion of equivalence inside $\bfCat_{\Bbbk}$ is too strong for our purposes. Indeed, we have to consider as equivalent all linear categories whose completions are equivalent.

\begin{definition}\label{D:Morita_equivalence_for_lin_cat}
 Two linear categories $A$ and $A'$ are \textit{Morita equivalent} if their completions $\hat{A}$ and $\hat{A}'$ are equivalent, and a linear functor $F : A \rightarrow A'$ is a \textit{Morita equivalence} if its completion $\hat{F} : \hat{A} \rightarrow \hat{A}'$ is an equivalence.
\end{definition}

Now we give a very useful Morita equivalence criterion. We recall that we say a set ${D = \{ x_i \in A \mid i \in \rmI \}}$ of objects of a linear category $A$ is a \textit{dominating set} if for every $x \in A$ there exist $x_{i_1}, \ldots, x_{i_m} \in D$ and $r_j \in \Hom_{A}(x_{i_j},x)$ and ${s_j \in \Hom_{A}(x,x_{i_j})}$ for every integer $1 \leqslant j \leqslant m$ satisfying
\[
 \id_x = \sum_{j=1}^m r_j \circ s_j,
\]
in which case we also say \textit{$D$ dominates $A$}. We also recall that a linear functor $F : A \rightarrow A'$ is \textit{fully faithful} if, for all objects $x,y \in A$, the induced map from $H_A(x,y)$ to $H_{A'}(F(x),F(y))$ is a linear isomorphism, and that it is \textit{essentially surjective} if every object $x' \in A'$ is isomorphic to $F(x)$ for some object $x \in A$. Then, as explained in Section 1.11 of \cite{K82}, every linear functor which is fully faithful and essentially surjective defines an equivalence in $\bfCat_{\Bbbk}$.

\begin{proposition}\label{P:Morita_equivalence}
 Let $F : A \rightarrow A'$ be a fully faithful linear functor. If the set of objects $F(A)$ dominates $A'$, then $F$ is a Morita equivalence.
\end{proposition}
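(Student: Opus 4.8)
The strategy is to factor $F$ through an intermediate linear category that is manifestly a completion, and to identify a full subcategory of $\hat{A}'$ which contains a dominating set and on which $F$ restricts to a fully faithful functor. Concretely, since $F$ is fully faithful, it induces a fully faithful functor $\hat{F} : \hat{A} \rightarrow \hat{A}'$, and we want to show $\hat{F}$ is essentially surjective, which together with full faithfulness gives an equivalence in $\bfCat_{\Bbbk}$ by Section 1.11 of \cite{K82}, hence a Morita equivalence of $A$ and $A'$ in the sense of Definition \ref{D:Morita_equivalence_for_lin_cat}.

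First I would record that a fully faithful linear functor $F : A \rightarrow A'$ extends to a fully faithful linear functor $\hat{F} : \hat{A} \rightarrow \hat{A}'$: full faithfulness of $\hat{F}$ on objects of the form $\im((p_{ij})_{(i,j) \in I^2})$ follows directly from the explicit description of morphism spaces in the completion, since the defining equations $\sum_h f_{ih} \circ p_{hj} = f_{ij} = \sum_h q_{ih} \circ f_{hj}$ are preserved and reflected by the linear isomorphisms $\Hom_A(x_j,y_i) \rightarrow \Hom_{A'}(F(x_j),F(y_i))$. Next I would show that $\hat{F}$ is essentially surjective. Let $x'$ be an arbitrary object of $\hat{A}'$, so $x' = \im((q_{ij})_{(i,j) \in J^2})$ for some finite ordered set $J$, some family $\{ x'_i \in A' \mid i \in J \}$, and some idempotent matrix $(q_{ij})$ of morphisms of $A'$. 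Since $F(A)$ dominates $A'$, each $x'_i$ is a retract of a finite direct sum of objects of $F(A)$; equivalently, $x'_i$ is a retract of $F(y_i)$ for some $y_i \in \Mat(A)$, via morphisms $r_i \in \Hom_{A'}(F(y_i),x'_i)$ and $s_i \in \Hom_{A'}(x'_i,F(y_i))$ with $r_i \circ s_i = \id_{x'_i}$. Here I am using that $\Mat(A)$ is closed under finite direct sums and that $F$ extends to $\Mat(A) \to \Mat(A')$, together with the fact that $F$ being fully faithful implies the extension $\Mat(A) \to \Mat(A')$ is fully faithful as well.

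Then the family of morphisms $\tilde q_{ij} := s_i \circ q_{ij} \circ r_j \in \Hom_{A'}(F(y_j),F(y_i))$ satisfies $\sum_h \tilde q_{ih} \circ \tilde q_{hj} = s_i \circ q_{ij} \circ r_j = \tilde q_{ij}$, using $r_h \circ s_h = \id_{x'_h}$ and idempotency of $(q_{ij})$; moreover, because the matrix extension of $F$ is full, each $\tilde q_{ij}$ is of the form $F(\bar q_{ij})$ for a unique $\bar q_{ij} \in \Hom_{\Mat(A)}(y_j,y_i)$, and faithfulness of $F$ forces $\sum_h \bar q_{ih} \circ \bar q_{hj} = \bar q_{ij}$. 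Hence $\im((\bar q_{ij})_{(i,j) \in J^2})$ is an object of $\hat{A}$, and the morphisms $(s_i \circ q_{ij} \circ r_j)$ and $(r_i \circ q_{ij} \circ s_j)$ define mutually inverse isomorphisms in $\hat{A}'$ between $\hat{F}(\im((\bar q_{ij})))$ and $\im((q_{ij})) = x'$, the verification being a routine computation with the idempotent relations. This shows $\hat{F}$ is essentially surjective.

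The main obstacle is purely bookkeeping: one has to be careful that "$F(A)$ dominates $A'$'' gives domination of $A'$ by \emph{objects} of $A'$ of the form $F(x)$, and to promote this to the statement that every object of $\Mat(A')$, and then every object of $\hat{A}'$, is a retract of something in the image of the matrix-and-Karoubi extension of $F$ — this requires threading the retraction data through the two completion steps $\Mat$ and $\Kar$ and checking at each stage that full faithfulness is preserved. Once this is done, the conclusion that $\hat{F}$ is an equivalence, and therefore that $F$ is a Morita equivalence, is immediate from Section 1.11 of \cite{K82} and Definition \ref{D:Morita_equivalence_for_lin_cat}.
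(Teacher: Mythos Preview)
Your argument is correct and follows the same strategy as the paper: show $\hat F$ is fully faithful and essentially surjective, then invoke Section~1.11 of \cite{K82}. The difference is one of economy. The paper checks essential surjectivity only on objects $x' \in A'$ viewed inside $\hat{A}'$: writing $\id_{x'} = \sum_{j} r'_j \circ s'_j$ with $r'_j : F(x_j) \to x'$ and $s'_j : x' \to F(x_j)$, it forms the idempotent $(s'_i \circ r'_j)_{(i,j)}$ directly over the objects $F(x_j)$, exhibits $x' \cong \im\big((s'_i \circ r'_j)\big)$ via the column $(s'_i)$ and the row $(r'_j)$, and then pulls the idempotent back along $F$ since each entry lies in $\Hom_{A'}(F(x_j),F(x_i))$. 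This already lands in $\hat A$, with no $\Mat(\Mat(A))$ flattening required; it suffices because the essential image of a fully faithful functor out of a complete category is closed under direct sums and retracts, so containing $A'$ forces it to contain all of $\hat{A}'$. Your version handles a general $\im((q_{ij})) \in \hat{A}'$ directly, which is fine but buys you the extra indexing layer you flag at the end. One small slip: the maps you name, $(s_i \circ q_{ij} \circ r_j)$ and $(r_i \circ q_{ij} \circ s_j)$, are not morphisms between $\im((\tilde q_{ij}))$ and $\im((q_{ij}))$ at all (the first is just $\tilde q_{ij}$ itself). The mutually inverse isomorphisms you want are $(s_i \circ q_{ij})_{ij}$ and $(q_{ij} \circ r_j)_{ij}$.
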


\begin{proof}
 Let us consider an object $x' \in A'$ and a decomposition
 \[
  \id_{x'} = \sum_{j=1}^m r'_j \circ s'_j
 \]
 with $r'_j \in \Hom_{A'}(F(x_j),x')$, with $s'_j \in \Hom_{A'}(x',F(x_j))$, and with $x_j \in A$ for every $j \in I := \{ 1, \ldots, m \}$. Then $\im ( ( s'_i \circ r'_j )_{(i,j) \in I^2} )$ is an object of $\hat{A}'$, because 
 \[
  \sum_{h=1}^m s'_i \circ r'_h \circ s'_h \circ r'_j = s'_i \circ r'_j.
 \]
 Furthermore, $x'$ is isomorphic to $\im ( ( s'_i \circ r'_j )_{(i,j) \in I^2} )$ as objects of $\hat{A}'$ through
 \begin{gather*}
  \left( s'_i \right)_{(i,j) \in I \times \{ 1 \}} \in \Hom_{\hat{A}'} \left( x, \im \left( \left( s'_i \circ r'_j \right)_{(i,j) \in I^2} \right) \right),
  \\
  \left( r'_j \right)_{(i,j) \in \{ 1 \} \times I} \in \Hom_{\hat{A}'} \left( \im \left( \left( s'_i \circ r'_j \right)_{(i,j) \in I^2} \right), x \right).
 \end{gather*}
 Indeed we have equalities 
 \begin{gather*}
  \left( s'_i \circ r'_j \right)_{(i,j) \in I^2} \circ \left( s'_i \right)_{(i,j) \in I \times \{ 1 \}} = 
  \left( s'_i \right)_{(i,j) \in I \times \{ 1 \}}, \\
  \left( r'_j \right)_{(i,j) \in \{ 1 \} \times I} \circ \left( s'_i \circ r'_j \right)_{(i,j) \in I^2} = 
  \left( r'_j \right)_{(i,j) \in \{ 1 \} \times I}, \\
  \left( r'_j \right)_{(i,j) \in \{ 1 \} \times I} \circ \left( s'_i \right)_{(i,j) \in I \times \{ 1 \}} =
  \id_x, \\
  \left( s'_i \right)_{(i,j) \in I \times \{ 1 \}} \circ \left( r'_j \right)_{(i,j) \in \{ 1 \} \times I} = 
  \left( s'_i \circ r'_j \right)_{(i,j) \in I^2}.
 \end{gather*}
 This means $x'$ is isomorphic, as an object of $\hat{A}$, to the image under the functor $\hat{F}$ of the object
 \[
  \im \left( \left( F^{-1} \left( s'_i \circ r'_j \right) \right)_{(i,j) \in I^2} \right) \in \hat{A}. \qedhere
 \]
\end{proof}

The notion of Morita equivalence given in Definition \ref{D:Morita_equivalence_for_lin_cat} applies to objects of the $2$-category $\bfCat_{\Bbbk}$, but it can actually be generalized. For our construction, we need to extend it to $2$-functors with target $\bfCat_{\Bbbk}$. In order to do this, let us consider a $2$-category $\bcalC$.


\begin{definition}\label{D:Morita_equivalence_for_2-funct}
 Two $2$-func\-tors $\bfF, \bfF' : \bcalC \rightarrow \bfCat_{\Bbbk}$ are \textit{Morita equivalent} if their completions $\hat{\bfF}, \hat{\bfF}' : \bcalC \rightarrow \coCat_{\Bbbk}$ are equivalent, and a $2$-transformation $\bfsigma : \bfF \Rightarrow \bfF'$ between $2$-functors $\bfF, \bfF' : \bcalC \rightarrow \bfCat_{\Bbbk}$ is a \textit{Morita equivalence} if its completion $\hat{\bfsigma} : \hat{\bfF} \Rightarrow \hat{\bfF}'$ is an equivalence $2$-transformation.
\end{definition}

\section{Complete graded linear categories}\label{S:co_gr_lin_cat}

In this section we introduce our target symmetric monoidal $2$-category for $\PGr$-graded ETQFTs. We do this by providing graded analogues for all the constructions and results of Section \ref{S:co_lin_cat}, with the only difference that this time we allow non-trivial symmetric braidings. Let us start with some preliminary definitions. We say an object $x$ of a $\PGr$-graded linear category $\bbA$ is the \textit{$k$-suspension of an object $y$ of $\bbA$} for some $k \in \PGr$ if there exists a degree $k$ invertible morphism $f^k \in \Hom_{\bbA}(x,y)$, and we say it is the \textit{direct sum in $\bbA$ of a finite family of objects $\{ x_i \in \bbA \mid i \in I \}$} if it is the direct sum of $\{ x_i \in \bbA \mid i \in I \}$ in the underlying linear category $A$ of $\bbA$. We say a degree $0$ morphism of a $\PGr$-graded linear category $\bbA$ is an \textit{idempotent of $\bbA$} if it is an idempotent morphism of the underlying linear category $A$ of $\bbA$. Then, we say a $\PGr$-graded linear category $\bbA$ is \textit{complete} if it is closed under suspensions and finite direct sums, and if all its idempotents are split. When a $\PGr$-graded linear category is not complete, there is a standard procedure, called \textit{completion}, which produces a complete $\PGr$-graded linear category out of it. The idea is again very natural, and simply consists in adding by hand suspensions and direct sums of objects, as well as subobjects corresponding to images of idempotent morphisms. We do this in multiple steps.

First, we define the \textit{suspension completion $\Deg(\bbA)$ of a $\PGr$-graded linear category $\bbA$} as the $\PGr$-graded linear category whose objects are $S^{\ell}(x) := (x,\ell)$ with $x \in \bbA$ and $\ell \in \PGr$, and whose degree $k$ morphisms from $S^{\ell}(x)$ to $S^m(y)$ are ${f^{k + \ell - m} \in \Hom_{\bbA}(x,y)}$. Identities and compositions are directly inherited from $\bbA$. Remark that every object of $\bbA$ can be naturally interpreted as an object of $\Deg(\bbA)$ by confusing it with its $0$ suspension.

Next, we define the \textit{additive completion $\Mat(\bbA)$ of a $\PGr$-graded linear category $\bbA$} as the $\PGr$-graded linear category whose objects are
\[
 \bigoplus_{i \in I} x_i := \left( \begin{array}{c}
                                    \vdots \\ x_i \\ \vdots
                                   \end{array} \right)
\]
with $x_i \in \bbA$ for every $i$ in some finite ordered set $I$, and whose degree $k$ morphisms~from $\smash{\displaystyle \bigoplus_{i \in I} x_i}$ to $\smash{\displaystyle \bigoplus_{i \in J} y_i}$ are
\[
 \left( f^k_{ij} \right)_{(i,j) \in J \times I} := 
 \left( \begin{array}{ccc}
         \ddots & \vdots & \bdots \\
         \cdots & f^k_{ij} & \cdots \\
         \bdots & \vdots & \ddots
        \end{array} \right)
\]
with $f^k_{ij} \in \Hom_{\bbA}(x_j,y_i)$ for every $(i,j) \in J \times I$. The identity of an object $\smash{\displaystyle \bigoplus_{i \in I} x_i \in \Mat(\bbA)}$ is the degree $0$ morphism
\[
 \left( \delta_{ij} \cdot \id^0_{x_i} \right)_{(i,j) \in I^2} \in \End_{\Mat(\bbA)} \left( \bigoplus_{i \in I} x_i \right),
\] 
and the composition of a degree $k$ morphism
\[
 \left( f^k_{ij} \right)_{(i,j) \in J \times I} 
 \in \Hom_{\Mat(\bbA)} \left( \bigoplus_{i \in I} x_i, \bigoplus_{i \in J} y_i \right)
\]
with a degree $\ell$ morphism
\[
 \left( g^{\ell}_{ij} \right)_{(i,j) \in K \times J} 
 \in \Hom_{\Mat(\bbA)} \left( \bigoplus_{i \in J} y_i, \bigoplus_{i \in K} z_i \right)
\]
is the degree $k + \ell$ morphism
\[
 \left( \sum_{h \in J} g^{\ell}_{ih} \circ f^k_{hj} \right)_{(i,j) \in K \times I} 
 \in \Hom_{\Mat(\bbA)} \left( \bigoplus_{i \in I} x_i, \bigoplus_{i \in K} z_i \right).
\]
Remark that every object of $\bbA$ can be naturally interpreted as an object of $\Mat(\bbA)$ given by a direct sum with a single summand.

Lastly, we define the \textit{idempotent completion $\Kar(\bbA)$ of a $\PGr$-graded linear category $\bbA$} as the $\PGr$-graded linear category whose objects are $\im(p^0) := (x,p^0)$ with $x \in \bbA$ and $p^0 \in \End_{\bbA}(x)$ satisfying ${p^0 \circ p^0 = p^0}$, and whose degree $k$ morphisms from $\im(p^0)$ to $\im(q^0)$ are $f^k \in \Hom_{\bbA}(x,y)$ satisfying $f^k \circ p^0 = f^k = q^0 \circ f^k$. The identity of an object $\im(p^0) \in \Kar(\bbA)$ is the idempotent degree $0$ morphism $p^0 \in \End_{\bbA}(\im(p^0))$, and composition is directly inherited from composition in $\bbA$. Remark again that every object of $\bbA$ can be naturally interpreted as an object of $\Kar(\bbA)$ by confusing it with the image of its identity.

\begin{definition}\label{D:graded_completion}
 The \textit{completion $\hat{\bbA}$ of a $\PGr$-graded linear category $\bbA$} is the $\PGr$-graded linear category $\Kar(\Mat(\Deg(\bbA)))$.
\end{definition}

Every object of the completion $\hat{\bbA}$ of a $\PGr$-graded linear category $\bbA$ has the form
\[
 \im \left( \left( p^{\ell_j - \ell_i}_{ij} \right)_{(i,j) \in I^2} \right) :=
 \left( \left( \begin{array}{c}
                \vdots \\ (x_i,\ell_i) \\ \vdots
               \end{array} \right),
 \left( \begin{array}{ccc}
         \ddots & \vdots & \bdots \\
         \cdots & p^{\ell_j - \ell_i}_{ij} & \cdots \\
         \bdots & \vdots & \ddots
        \end{array} \right) \right)
\]
for some finite ordered set $I$, for some family $\{ \ell_i \in \PGr \mid i \in I \}$, for some family $\{ x_i \in \bbA \mid i \in I \}$, and for some family $\{ p^{\ell_j - \ell_i}_{ij} \in \Hom_{\bbA}(x_j,x_i) \mid (i,j) \in I^2 \}$ satisfying
\[
 \sum_{h \in I} p^{\ell_h - \ell_i}_{ih} \circ p^{\ell_j - \ell_h}_{hj} = p^{\ell_j - \ell_i}_{ij}
\]
for every $(i,j) \in I^2$. Analogously, every degree $k$ morphism of the completion $\hat{\bbA}$ from $\im ( ( p^{\ell_j - \ell_i}_{ij} )_{(i,j) \in I^2} )$ to $\im ( ( q^{m_j - m_i}_{ij} )_{(i,j) \in J^2} )$ has the form
\[
 \left( f^{k + \ell_j - m_i}_{ij} \right)_{(i,j) \in J \times I} :=
 \left( \begin{array}{ccc}
         \ddots & \vdots & \bdots \\
         \cdots & f^{k + \ell_j - m_i}_{ij} & \cdots \\
         \bdots & \vdots & \ddots
        \end{array} \right)
\] 
for some family $\{ f^{k + \ell_j - m_i}_{ij} \in \Hom_{\bbA}(x_j,y_i) \mid (i,j) \in J \times I \}$ satisfying
\[
 \sum_{h \in I} f^{k + \ell_h - m_i}_{ih} \circ p^{\ell_j - \ell_h}_{hj} = f^{k + \ell_j - m_i}_{ij} =
 \sum_{h \in J} q^{m_h - m_i}_{ih} \circ f^{k + \ell_j - m_h}_{hj}
\]
for every $(i,j) \in J \times I$.

Every $\PGr$-graded linear functor $\bbF : \bbA \rightarrow \bbA'$ extends to a $\PGr$-graded linear functor $\hat{\bbF} : \hat{\bbA} \rightarrow \hat{\bbA}'$, called the \textit{completion of $\bbF$}, which sends every object 
\[
 \im \left( \left( p^{\ell_j - \ell_i}_{ij} \right)_{(i,j) \in I^2} \right)
\]
of $\hat{\bbA}$ to the object 
\[
 \im \left( \left( \bbF \left( p^{\ell_j - \ell_i}_{ij} \right) \right)_{(i,j) \in I^2} \right)
\]
of $\hat{\bbA}'$, and wich sends every degree $k$ morphism 
\[
 \left( f^{k + \ell_j - m_i}_{ij} \right)_{(i,j) \in J \times I} 
\]
of $\Hom_{\hat{\bbA}} ( \im ( ( p^{\ell_j - \ell_i}_{ij} )_{(i,j) \in I^2} ),\im ( ( q^{m_j - m_i}_{ij} )_{(i,j) \in I^2} ) )$ to the degree $k$ morphism 
\[
 \left( \bbF \left( f^{k + \ell_j - m_i}_{ij} \right) \right)_{(i,j) \in J \times I}
\]
of $\Hom_{\hat{\bbA}'} ( \im ( ( \bbF ( p^{\ell_j - \ell_i}_{ij} ) )_{(i,j) \in I^2} ),\im ( ( \bbF ( q^{m_j - m_i}_{ij} ) )_{(i,j) \in J^2} ) )$. Similarly, every $\PGr$-grad\-ed natural transformation $\bbeta : \bbF \Rightarrow \bbF'$ extends to a $\PGr$-grad\-ed natural transformation $\hat{\bbeta} : \hat{\bbF} \Rightarrow \hat{\bbF}'$, called the \textit{completion of $\bbeta$}, which associates with every object 
\[
 \im \left( \left( p^{\ell_j - \ell_i}_{ij} \right)_{(i,j) \in I^2} \right)
\]
of $\hat{\bbA}$ the degree $0$ morphism
\[
 \left( \sum_{h \in I} \bbF' \left( p^{\ell_h - \ell_i}_{ih} \right) \circ \bbeta^0_{x_h} \circ \bbF \left( p^{\ell_j - \ell_h}_{hj} \right) \right)_{(i,j) \in I^2}
\]
of $\Hom_{\hat{\bbA}'} ( \im ( ( \bbF ( p^{\ell_j - \ell_i}_{ij} ) )_{(i,j) \in I^2} ), \im ( ( \bbF' ( p^{\ell_j - \ell_i}_{ij} ) )_{(i,j) \in I^2} ) )$.

\begin{definition}\label{D:2-cat_of_co_gr_lin_cat}
 The \textit{$2$-cat\-e\-go\-ry of complete $\PGr$-graded linear categories} is the full sub-$2$-cat\-e\-go\-ry $\coCat_{\Bbbk}^{\PGr}$ of $\bfCat_{\Bbbk}^{\PGr}$ whose objects are complete $\PGr$-graded linear categories.
\end{definition}

\begin{proposition}\label{P:gr_co_strict_2-funct}
 Completion defines a strict $2$-func\-tor $\grcmpl : \bfCat_{\Bbbk}^{\PGr} \rightarrow \coCat_{\Bbbk}^{\PGr}$.
\end{proposition}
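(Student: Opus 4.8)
The plan is to follow, essentially line for line, the argument given for Proposition \ref{P:co_strict_2-funct}, while keeping track of the one extra datum that distinguishes the graded setting: the degree shifts $\ell_i \in \PGr$ introduced by the suspension completion $\Deg$. First I would check that $\grcmpl$ really does land in $\coCat_{\Bbbk}^{\PGr}$, i.e.\ that for every $\PGr$-graded linear category $\bbA$ the completion $\hat{\bbA} = \Kar(\Mat(\Deg(\bbA)))$ is complete in the sense of Section \ref{S:co_gr_lin_cat}: closure under suspensions is provided by $\Deg$, closure under finite direct sums by $\Mat$, and splitting of idempotents by $\Kar$, and since these three operations commute with one another up to the evident isomorphisms, a single application of each suffices to close up under all of them simultaneously.

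Next I would verify that $\grcmpl_{\bbA,\bbA'}$ is a functor for all $\PGr$-graded linear categories $\bbA$ and $\bbA'$. From the entrywise formula for $\hat{\bbF}$ one reads off at once that $\id_{\grcmpl(\bbF)} = \grcmpl(\id_{\bbF})$ for every $\PGr$-graded linear functor $\bbF : \bbA \rightarrow \bbA'$, since $\hat{\bbF}$ acts entrywise on matrices of homogeneous morphisms and on the idempotents cutting out images. For $\PGr$-graded natural transformations $\bbeta : \bbF \Rightarrow \bbF'$ and $\bbeta' : \bbF' \Rightarrow \bbF''$ between $\PGr$-graded linear functors $\bbF,\bbF',\bbF'' : \bbA \rightarrow \bbA'$, the equality $\grcmpl(\bbeta') \ast \grcmpl(\bbeta) = \grcmpl(\bbeta' \ast \bbeta)$ is obtained by the same telescoping identity used in the proof of Proposition \ref{P:co_strict_2-funct}: evaluated on an object $\im( ( p^{\ell_j - \ell_i}_{ij} )_{(i,j) \in I^2} )$ of $\hat{\bbA}$, the $(i,j)$ entry of the left-hand side is a triple sum over $h,h',h'' \in I$ of composites of the degree $0$ morphisms $\bbF''( p_{\cdot\cdot} )$, $(\bbeta')^0_{x_\cdot}$, $\bbF'( p_{\cdot\cdot} )$, $\bbeta^0_{x_\cdot}$, $\bbF( p_{\cdot\cdot} )$, which collapses --- using that $( p^{\ell_j - \ell_i}_{ij} )_{(i,j) \in I^2}$ is an idempotent matrix and the naturality square of $\bbeta'$ --- to $\sum_{h \in I} \bbF''( p^{\ell_h - \ell_i}_{ih} ) \circ (\bbeta')^0_{x_h} \circ \bbeta^0_{x_h} \circ \bbF( p^{\ell_j - \ell_h}_{hj} )$, precisely the $(i,j)$ entry of $\grcmpl(\bbeta' \ast \bbeta)$. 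Since every morphism occurring is homogeneous of degree $0$, the degree bookkeeping here is automatic; in particular the braiding $\gamma$ on $\Vect_{\Bbbk}^{\PGr}$ never enters (it would only matter for the monoidal refinement of this statement, the graded analogue of Proposition \ref{P:completion}).

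Finally I would note that completion is strictly functorial on objects and on $1$-morphisms: $\id_{\grcmpl(\bbA)} = \grcmpl(\id_{\bbA})$ for every $\PGr$-graded linear category $\bbA$, and $\grcmpl(\bbF') \circ \grcmpl(\bbF) = \grcmpl(\bbF' \circ \bbF)$ for composable $\PGr$-graded linear functors, both being immediate from the entrywise description of $\hat{\bbF}$ together with the strict functoriality of $\Deg$, $\Mat$, and $\Kar$. Consequently one may take the structural $2$-morphisms $\grcmpl_{\bbA}$ and $\grcmpl_{\bbF',\bbF}$ to be identity $\PGr$-graded natural transformations, whereupon the two coherence diagrams in Definition \ref{D:2-functor} become trivially commutative. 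I expect the only point requiring genuine (though still routine) care to be the first step: checking that, in the graded setting, interleaving the suspension completion with the additive and idempotent completions really does produce a category closed under all three operations at once --- that is, that suspensions of finite direct sums of images of idempotents are again objects of the same shape --- since this is the one place where the graded proof genuinely diverges from its ungraded counterpart.
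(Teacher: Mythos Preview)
Your proposal is correct and follows essentially the same approach as the paper's proof: both verify functoriality of $\grcmpl_{\bbA,\bbA'}$ via the identical telescoping computation on the idempotent matrix $(p_{ij}^{\ell_j-\ell_i})$, then observe strict preservation of identities and composites of $1$-morphisms so that the coherence data may be taken to be identities. The paper omits your preliminary step of checking that $\hat{\bbA}$ is genuinely complete (it treats this as implicit in the construction), and it does not pause to remark that $\gamma$ plays no role, but otherwise the arguments coincide.
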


\begin{proof}
 The proof is straight-forward. First of all, for every $\PGr$-graded linear functor $\bbF : \bbA \rightarrow \bbA'$, we have $\id_{\grcmpl(\bbF)} = \grcmpl(\id_{\bbF})$. Next, for all $\PGr$-graded natural transformations $\bbeta' : \bbF' \Rightarrow \bbF''$ and $\bbeta : \bbF \Rightarrow \bbF'$ between $\PGr$-graded linear functors $\bbF,\bbF',\bbF'' : \bbA \rightarrow \bbA'$, we have $\grcmpl(\bbeta') \ast \grcmpl(\bbeta) = \grcmpl(\bbeta' \ast \bbeta)$,
 because for every object $\im ( ( p^{\ell_j - \ell_i}_{ij} )_{(i,j) \in I^2} )$ of $\hat{\bbA}$, and for every $(i,j) \in I^2$, we have
 \begin{align*}
  &\sum_{\mathclap{h,h',h'' \in I}} \bbF'' \left( p^{\ell_{h''} - \ell_i}_{ih''} \right) \circ \bbeta'^0_{x_{h''}} \circ \bbF' \left( p^{\ell_{h'} - \ell_{h''}}_{h''h'} \right) \circ \bbF' \left( p^{\ell_h - \ell_{h'}}_{h'h} \right) \circ \bbeta^0_{x_h} \circ \bbF \left( p^{\ell_j - \ell_h}_{hj} \right) \\
  &\hspace{\parindent} = \sum_{\mathclap{h,h'' \in I}} \bbF'' \left( p^{\ell_{h''} - \ell_i}_{ih''} \right) \circ \bbeta'^0_{x_{h''}} \circ \bbF' \left( p^{\ell_h - \ell_{h''}}_{h''h} \right) \circ \bbeta^0_{x_h} \circ \bbF \left( p^{\ell_j - \ell_h}_{hj} \right) \\
  &\hspace{\parindent} = \sum_{\mathclap{h,h'' \in I}} \bbF'' \left( p^{\ell_{h''} - \ell_i}_{ih''} \right) \circ \bbF'' \left( p^{\ell_h - \ell_{h''}}_{h''h} \right) \circ \bbeta'^0_{x_h} \circ \bbeta^0_{x_h} \circ \bbF \left( p^{\ell_j - \ell_h}_{hj} \right) \\
  &\hspace{\parindent} = \sum_{h \in I} \bbF'' \left( p^{\ell_h - \ell_i}_{ih} \right) \circ \bbeta'^0_{x_h} \circ \bbeta^0_{x_h} \circ \bbF \left( p^{\ell_j - \ell_h}_{hj} \right).
 \end{align*}
 This proves $\grcmpl_{\bbA,\bbA'}$ is a functor for all $\PGr$-graded linear categories $\bbA$ and $\bbA'$. Now, for every $\PGr$-graded linear category $\bbA$, we have $\id_{\grcmpl(\bbA)} = \grcmpl(\id_{\bbA})$, and for all $\PGr$-graded linear functors $\bbF' : \bbA' \rightarrow \bbA''$ and $\bbF : \bbA \rightarrow \bbA'$, we have $\grcmpl(\bbF') \circ \grcmpl(\bbF) = \grcmpl(\bbF' \circ \bbF)$.
 This means we can set $\grcmpl_{\bbA}$ and $\grcmpl_{\bbF',\bbF}$ to be identity natural tranformations for every $\PGr$-graded linear category $\bbA$ and for all $\PGr$-graded linear functors $\bbF' : \bbA' \rightarrow \bbA''$ and $\bbF : \bbA \rightarrow \bbA'$.
\end{proof}

\begin{remark}\label{R:2-gr-completion}
 If $\bcalC$ is a $2$-category, then every $2$-func\-tor $\bfF : \bcalC \rightarrow \smash{\bfCat_{\Bbbk}^{\PGr}}$ induces a $2$-func\-tor $\hat{\bfF} : \bcalC \rightarrow \smash{\coCat_{\Bbbk}^{\PGr}}$, called the \textit{completion} of $\bfF$, which is defined as the composition $\grcmpl \circ \bfF$. Similarly, every $2$-trans\-for\-ma\-tion $\bfsigma : \bfF \Rightarrow \bfF'$ between $2$-func\-tors $\bfF,\bfF' : \bcalC \rightarrow \smash{\bfCat_{\Bbbk}^{\PGr}}$ induces a $2$-trans\-for\-ma\-tion $\hat{\bfsigma} : \hat{\bfF} \Rightarrow \hat{\bfF}'$, called the \textit{completion} of $\bfsigma$, which is defined as the left whiskering $\grcmpl \triangleright \bfsigma$. Analogously, every $2$-mod\-i\-fi\-ca\-tion $\bfGamma : \bfsigma \Rrightarrow \bfsigma'$ between $2$-trans\-for\-ma\-tions $\bfsigma,\bfsigma' : \bfF \Rightarrow \bfF'$ between $2$-func\-tors $\bfF,\bfF' : \bcalC \rightarrow \smash{\bfCat_{\Bbbk}^{\PGr}}$ induces a $2$-mod\-i\-fi\-ca\-tion $\hat{\bfGamma} : \hat{\bfsigma} \Rrightarrow \hat{\bfsigma}'$, called the \textit{completion} of $\bfGamma$, which is defined as the left whiskering $\grcmpl \triangleright \bfGamma$.
\end{remark}

We are now ready to define the target for our graded ETQFT.

\begin{definition}\label{D:sym_mon_2-cat_of_co_gr_lin_cat}
 The \textit{symmetric monoidal $2$-cat\-e\-go\-ry of complete $\PGr$-graded linear categories} is the symmetric monoidal $2$-cat\-e\-go\-ry obtained from Definition \ref{D:2-cat_of_co_gr_lin_cat} by specifying as tensor unit the completion $\hat{\Bbbk}$ of the tensor unit $\Bbbk$ of $\bfCat_{\Bbbk}^{\PGr}$, by specifying as tensor product the completion $\csqtimes$ of the tensor product $\sqtimes$ of $\bfCat_{\Bbbk}^{\PGr}$, and by specifying as braiding the completion $\hat{\bfc}^{\gamma}$ of the braiding $\gmmbr$ of $\bfCat_{\Bbbk}$.
\end{definition}

Remark that, although $\coCat_{\Bbbk}^{\PGr}$ is a full sub-$2$-cat\-e\-go\-ry of $\bfCat_{\Bbbk}^{\PGr}$, it is not a symmetric monoidal sub-$2$-cat\-e\-go\-ry, because the two symmetric monoidal structures do not agree.

\begin{proposition}\label{P:gr_completion}
 The strict $2$-functor $\grcmpl : \bfCat_{\Bbbk}^{\PGr} \rightarrow \coCat_{\Bbbk}^{\PGr}$ is symmetric monoidal.
\end{proposition}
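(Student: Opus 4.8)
The proof of Proposition~\ref{P:gr_completion} is the exact graded analogue of Proposition~\ref{P:completion}, and the plan is to mirror that argument step by step, keeping track of the $\gamma$-braiding wherever it intervenes. Recall that by Proposition~\ref{P:gr_co_strict_2-funct} we already know $\grcmpl : \bfCat_{\Bbbk}^{\PGr} \rightarrow \coCat_{\Bbbk}^{\PGr}$ is a strict $2$-functor, so all of its structural $2$-morphisms $\grcmpl_{\bbA}$ and $\grcmpl_{\bbF',\bbF}$ are identities. What we need to supply is the symmetric monoidal structure: the equivalence $1$-morphism $\bfiota : \hat{\Bbbk} \to \grcmpl(\Bbbk) = \hat{\Bbbk}$, the equivalence $2$-transformation $\bfchi : \csqtimes \circ (\grcmpl \times \grcmpl) \Rightarrow \grcmpl \circ \sqtimes$, and the invertible $2$-modifications $\bfGamma$, $\bfDelta$, $\bfOmega$, $\bfTheta$, followed by a verification of the five coherence polytopes.

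First I would take $\bfiota$ to be the identity $1$-morphism of $\hat{\Bbbk}$, which is legitimate since $\grcmpl$ sends the tensor unit $\Bbbk$ of $\bfCat_{\Bbbk}^{\PGr}$ to its completion $\hat{\Bbbk}$, the tensor unit of $\coCat_{\Bbbk}^{\PGr}$. Next I would construct the component $\bfchi_{\bbA,\bbA'} : \widehat{\bbA} \,\csqtimes\, \widehat{\bbA'} \rightarrow \widehat{\bbA \sqtimes \bbA'}$ of the $2$-transformation $\bfchi$: on objects it sends a formal matrix of images of idempotents in $\widehat{\bbA} \csqtimes \widehat{\bbA'}$ to the corresponding matrix in $\widehat{\bbA \sqtimes \bbA'}$, using the canonical identification of a direct sum of suspended tensor products with a tensor product of direct sums of suspensions; on morphisms it does the same, and here one must invoke Remark~\ref{R:comp_tens_prod_gr_lin_cat} to see that the $\gamma$-signs appearing in compositions inside $\widehat{\bbA} \csqtimes \widehat{\bbA'}$ match exactly those inside $\widehat{\bbA \sqtimes \bbA'}$, so that $\bfchi_{\bbA,\bbA'}$ is indeed a $\PGr$-graded linear functor. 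One checks $\bfchi_{\bbA,\bbA'}$ is fully faithful and essentially surjective (every object of $\widehat{\bbA \sqtimes \bbA'}$ is a retract of a direct sum of suspensions of objects $x \otimes x'$ with $x \in \bbA$, $x' \in \bbA'$), hence an equivalence, and one records the compatibility $2$-cells making $\bfchi$ a $2$-transformation; its adjoint equivalence $\bfchi^*$ is obtained as in Proposition~A.27 of \cite{S11}. The $2$-modifications $\bfGamma$, $\bfDelta$, $\bfOmega$, $\bfTheta$ have components which are canonical isomorphisms of $\PGr$-graded linear categories induced by associativity, unitality, and (in the case of $\bfTheta$) the symmetry of the underlying disjoint-union-style identifications of formal matrices; each is an identity once one has chosen the bookkeeping conventions for reindexing matrices consistently, exactly as in the non-graded case.

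The coherence axioms — Equations (HTA1), (HTA2) of \cite{GPS95} and (BHA1), (BHA2), (SHA1) of \cite{M00}, as listed in Definition~\ref{D:symmetric_monoidal_2-functor} — then reduce to equalities between reindexing isomorphisms of formal matrices, which hold because the reindexings are all built from the same associativity, unitality, and symmetry data of finite ordered index sets and of $\PGr$; this is purely formal. The one place where the graded setting genuinely differs from the ungraded one is the braiding axiom $\bfTheta$ together with (SHA1): here I would need to check that the $\gamma$-signs produced when commuting the two factors of a tensor product of matrices of graded morphisms are compatible with those produced by $\bfc^{\gamma}$ on the completed categories. This is the step I expect to be the main obstacle, though it is ultimately a matter of unwinding Remarks~\ref{R:comp_tens_prod_gr_lin_cat} and the definition of $c^{\gamma}$ on $\Vect_{\Bbbk}^{\PGr}$: since $\bfc^{\gamma}$ on $\coCat_{\Bbbk}^{\PGr}$ is defined as the completion $\hat{\bfc}^{\gamma}$ of the braiding $\gmmbr$ on $\bfCat_{\Bbbk}^{\PGr}$, and $\grcmpl$ is strict, the identity $\grcmpl \triangleright \gmmbr$ already provides the required coherence, and bilinearity of $\gamma$ guarantees the sign bookkeeping closes up. Everything else is a line-by-line transcription of the (omitted) proof of Proposition~\ref{P:completion}, which is postponed to Section~\ref{S:proofs} and which I would similarly relegate there.
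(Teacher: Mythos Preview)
Your proposal is correct and follows essentially the same approach as the paper: take $\bfiota$ to be the identity, construct $\bfchi_{\bbA,\bbA'}$ explicitly as the canonical reindexing functor, and then observe that in the quasi-strict setting all the required equalities for $\bfGamma$, $\bfDelta$, $\bfOmega$, $\bfTheta$ hold on the nose, so these $2$-modifications can be taken to be identities. The paper is simply more explicit in writing out the matrix formulas and, in particular, records the strict equality $\bfchi_{\bbA',\bbA} \circ \hat{\bfc}^{\gamma}_{\grcmpl(\bbA),\grcmpl(\bbA')} = \grcmpl(\gmmbr_{\bbA,\bbA'}) \circ \bfchi_{\bbA,\bbA'}$ directly rather than treating the $\gamma$-sign bookkeeping as a potential obstacle.
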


The proof of this result is elementary but rather long, so we postpone it to Section \ref{S:proofs}. 
Moving on, let us make an important remark: although in this memoir we use $\coCat_{\Bbbk}^{\PGr}$ as target for $\PGr$-graded ETQFTs, we actually spend most of our time working inside $\bfCat_{\Bbbk}^{\PGr}$, because this makes arguments easier. However, this means the notion of equivalence inside $\bfCat_{\Bbbk}^{\PGr}$ is too strong for our purposes. Indeed, we have to consider as equivalent all $\PGr$-graded linear categories whose completions are equivalent.

\begin{definition}\label{D:gr_Morita_equivalence_for_gr_lin_cat}
 Two $\PGr$-graded linear categories $\bbA$ and $\bbA'$ are \textit{$\PGr$-Morita equivalent} if their completions $\hat{\bbA}$ and $\hat{\bbA}'$ are equivalent, and a $\PGr$-graded linear functor $\bbF : \bbA \rightarrow \bbA'$ is a \textit{$\PGr$-Morita equivalence} if its completion $\hat{\bbF} : \hat{\bbA} \rightarrow \hat{\bbA}'$ is an equivalence.
\end{definition}

Now we give a very useful $\PGr$-Morita equivalence criterion. We say a set ${D = \{ x_i \in A \mid i \in \rmI \}}$ of objects of a $\PGr$-graded linear category $\bbA$ is a \textit{dominating set} if for every $x \in \bbA$ there exist $x_{i_1}, \ldots, x_{i_m} \in D$ and $r^{k_j}_j \in \Hom_{\bbA}(x_{i_j},x)$ and $s^{-k_j}_j \in \Hom_{\bbA}(x,x_{i_j})$ for every integer $1 \leqslant j \leqslant m$ satisfying
\[
 \id^0_x = \sum_{j=1}^m r^{k_j}_j \circ s^{-k_j}_j,
\]
in which case we also say \textit{$D$ dominates $\bbA$}. We also recall that a $\PGr$-graded linear functor $\bbF : \bbA \rightarrow \bbA'$ is \textit{fully faithful} if, for all objects $x,y \in \bbA$, the induced map from $\Hom_{\bbA}(x,y)$ to $\Hom_{\bbA'}(\bbF(x),\bbF(y))$ is a $\PGr$-graded linear isomorphism, and that it is \textit{essentially surjective} if every object $x' \in \bbA'$ is isomorphic, in the underlying linear category $A'$, to $\bbF(x)$ for some object $x \in \bbA$. Then, as explained in Section 1.11 of \cite{K82}, every linear functor which is fully faithful and essentially surjective defines an equivalence in $\bfCat_{\Bbbk}^{\PGr}$.

\begin{proposition}\label{P:gr_Morita_equivalence}
 Let $\bbF : \bbA \rightarrow \bbA'$ be a fully faithful $\PGr$-graded linear functor. If the set of objects $\bbF(\bbA)$ dominates $\bbA'$, then $\bbF$ is a $\PGr$-Morita equivalence.
\end{proposition}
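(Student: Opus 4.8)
The statement to be proved, Proposition \ref{P:gr_Morita_equivalence}, is the $\PGr$-graded analogue of Proposition \ref{P:Morita_equivalence}, and the plan is to mimic that proof almost verbatim, inserting degrees wherever needed. The key point is that a $\PGr$-graded linear functor $\bbF$ which is fully faithful and essentially surjective is an equivalence in $\bfCat_{\Bbbk}^{\PGr}$, hence its completion $\hat{\bbF}$ is an equivalence in $\coCat_{\Bbbk}^{\PGr}$; so it suffices to prove that $\hat{\bbF}$ is essentially surjective (fully faithfulness of $\hat{\bbF}$ being immediate from fully faithfulness of $\bbF$ together with the explicit description of morphisms in $\hat{\bbA}$ and $\hat{\bbA}'$). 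To this end, fix an object $x' \in \bbA'$. Since $\bbF(\bbA)$ dominates $\bbA'$, we may write
\[
 \id^0_{x'} = \sum_{j=1}^m r^{k_j}_j \circ s^{-k_j}_j
\]
for some $x_j \in \bbA$, some $r^{k_j}_j \in \Hom_{\bbA'}(\bbF(x_j),x')$ and $s^{-k_j}_j \in \Hom_{\bbA'}(x',\bbF(x_j))$, and some $k_j \in \PGr$, where $1 \leqslant j \leqslant m$.

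\textbf{Constructing the idempotent.} First I would check that, setting $I := \{ 1, \ldots, m \}$,
\[
 \im \left( \left( s^{-k_i}_i \circ r^{k_j}_j \right)_{(i,j) \in I^2} \right)
\]
is a well-defined object of $\hat{\bbA}'$, where the summand indexed by $i$ carries suspension degree $k_i$, so that $s^{-k_i}_i \circ r^{k_j}_j$ has degree $k_j - k_i$ as required. Idempotency of the matrix follows from
\[
 \sum_{h=1}^m s^{-k_i}_i \circ r^{k_h}_h \circ s^{-k_h}_h \circ r^{k_j}_j = s^{-k_i}_i \circ \id^0_{x'} \circ r^{k_j}_j = s^{-k_i}_i \circ r^{k_j}_j,
\]
using the domination identity. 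One has to be a little careful here about the sign factors $\gamma(\cdot,\cdot)$ which intervene in composition inside a $\PGr$-graded tensor category, cf. Remark \ref{R:comp_tens_prod_gr_lin_cat}; however inside a single $\PGr$-graded linear category (as opposed to a $\sqtimes$-product) composition is strictly associative, so no $\gamma$ factors appear and the computation is formally identical to the ungraded one.

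\textbf{Exhibiting the isomorphism and concluding.} Next I would verify that $x'$, viewed as the $0$-suspension of itself in $\hat{\bbA}'$, is isomorphic to $\im ( ( s^{-k_i}_i \circ r^{k_j}_j )_{(i,j) \in I^2} )$ via the mutually inverse degree $0$ morphisms $( s^{-k_i}_i )_{(i,j) \in I \times \{ 1 \}}$ and $( r^{k_j}_j )_{(i,j) \in \{ 1 \} \times I}$, the four required identities being exactly the ones displayed in the proof of Proposition \ref{P:Morita_equivalence} with degrees attached. Since $\bbF$ is fully faithful, each $s^{-k_i}_i \circ r^{k_j}_j \in \Hom_{\bbA'}(\bbF(x_j),\bbF(x_i))$ is the image under $\bbF$ of a unique morphism of the same degree in $\bbA$, and the idempotent condition transports back because $\bbF$ is faithful; hence
\[
 \im \left( \left( \bbF^{-1}\!\left( s^{-k_i}_i \circ r^{k_j}_j \right) \right)_{(i,j) \in I^2} \right)
\]
is a well-defined object of $\hat{\bbA}$ whose image under $\hat{\bbF}$ is isomorphic to $x'$. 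This shows $\hat{\bbF}$ is essentially surjective, completing the proof. I expect no serious obstacle: the only thing that requires attention is bookkeeping of suspension degrees and checking that no spurious $\gamma$-sign enters, which it does not since all the compositions above take place within a single $\PGr$-graded linear category rather than in a $\sqtimes$-tensor product.
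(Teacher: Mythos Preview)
Your proposal is correct and follows essentially the same route as the paper's proof: both build the idempotent matrix $(s^{-k_i}_i \circ r^{k_j}_j)_{(i,j)}$ from the domination data, exhibit the explicit mutually inverse morphisms between $x'$ and its image, and then pull the idempotent back through the fully faithful $\bbF$. Your additional remarks about suspension-degree bookkeeping and the absence of $\gamma$-signs are accurate and go slightly beyond what the paper spells out.
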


\begin{proof}
 Let us consider an object $x' \in \bbA'$ and a decomposition
 \[
  \id^0_{x'} = \sum_{j=1}^m r'^{k_j}_j \circ s'^{-k_j}_j
 \]
 with $r'^{k_j}_j \in \Hom_{\bbA'}(\bbF(x_j),x')$, with $s'^{-k_j}_j \in \Hom_{\bbA'}(x',\bbF(x_j))$, and with $x_j \in \bbA$ for every $j \in I:= \{ 1, \ldots, m \}$. Then $\im ( ( s'^{-k_i}_i \circ r'^{k_j}_j )_{(i,j) \in I^2} )$ is an object of $\hat{\bbA}'$, because 
 \[
  \sum_{h=1}^m s'^{-k_i}_i \circ r'^{k_h}_h \circ s'^{-k_h}_h \circ r'^{k_j}_j = s'^{-k_i}_i \circ r'^{k_j}_j.
 \]
 Furthermore, $x'$ is isomorphic to $\im ( ( s'^{-k_i}_i \circ r'^{k_j}_j )_{(i,j) \in I^2} )$ as objects of $\hat{\bbA}'$ through
 \begin{gather*}
  \left( s'^{-k_i}_i \right)_{(i,j) \in I \times \{ 1 \}} \in \Hom_{\hat{\bbA}'} \left( x, \im \left( \left( s'^{-k_i}_i \circ r'^{k_j}_j \right)_{(i,j) \in I^2} \right) \right),
  \\
  \left( r'^{k_j}_j \right)_{(i,j) \in \{ 1 \} \times I} \in \Hom_{\hat{\bbA}'} \left( \im \left( \left( s'^{-k_i}_i \circ r'^{k_j}_j \right)_{(i,j) \in I^2} \right), x \right).
 \end{gather*}
 Indeed we have equalities 
 \begin{gather*}
  \left( s'^{-k_i}_i \circ r'^{k_j}_j \right)_{(i,j) \in I^2} \circ \left( s'^{-k_i}_i \right)_{(i,j) \in I \times \{ 1 \}} = 
  \left( s'^{-k_i}_i \right)_{(i,j) \in I \times \{ 1 \}}, \\
  \left( r'^{k_j}_j \right)_{(i,j) \in \{ 1 \} \times I} \circ \left( s'^{-k_i}_i \circ r'^{k_j}_j \right)_{(i,j) \in I^2} = 
  \left( r'^{k_j}_j \right)_{(i,j) \in \{ 1 \} \times I}, \\
  \left( r'^{k_j}_j \right)_{(i,j) \in \{ 1 \} \times I} \circ \left( s'^{-k_i}_i \right)_{(i,j) \in I \times \{ 1 \}} =
  \id^0_x, \\
  \left( s'^{-k_i}_i \right)_{(i,j) \in I \times \{ 1 \}} \circ \left( r'^{k_j}_j \right)_{(i,j) \in \{ 1 \} \times I} = 
  \left( s'^{-k_i}_i \circ r'^{k_j}_j \right)_{(i,j) \in I^2}.
 \end{gather*}
 This means $x'$ is isomorphic, as an object of $\hat{\bbA}$, to the image under the functor $\hat{\bbF}$ of the object
 \[
  \im \left( \left( \bbF^{-1} \left( s'^{-k_i}_i \circ r'^{k_j}_j \right) \right)_{(i,j) \in I^2} \right) \in \hat{\bbA}. \qedhere
 \]
\end{proof}

The notion of $\PGr$-Morita equivalence given in Definition \ref{D:gr_Morita_equivalence_for_gr_lin_cat} applies to objects of the $2$-category $\bfCat_{\Bbbk}^{\PGr}$, but it can actually be generalized. For our construction, we need to extended it to $2$-functors with target $\bfCat_{\Bbbk}^{\PGr}$. In order to do this, let us consider a $2$-category $\bcalC$.

\begin{definition}\label{D:gr_Morita_equivalence_for_2-funct}
 Two $2$-func\-tors $\bfF, \bfF' : \bcalC \rightarrow \bfCat_{\Bbbk}^{\PGr}$ are \textit{$\PGr$-Morita equivalent} if their completions $\hat{\bfF}, \hat{\bfF}' : \bcalC \rightarrow \coCat_{\Bbbk}^{\PGr}$ are equivalent, and a $2$-transformation $\bfsigma : \bfF \Rightarrow \bfF'$ between $2$-functors $\bfF, \bfF' : \bcalC \rightarrow \bfCat_{\Bbbk}^{\PGr}$ is a \textit{$\PGr$-Morita equivalence} if its completion $\hat{\bfsigma} : \hat{\bfF} \Rightarrow \hat{\bfF}'$ is an equivalence $2$-transformation.
\end{definition}

\section{Proofs}\label{S:proofs}


In this section we prove the results we announced in the previous two. Both proofs essentially boil down to some standard multilinear algebra. In particular, at their core lies the following remark: for all $\bfm,\bfn \in \N$, $M := (m_1,\ldots,m_{\bfm}) \in \N^{\bfm}$, and $n := (n_1,\ldots,n_{\bfn}) \in \N^{\bfn}$, if we set $m := m_1 + \ldots + m_{\bfm}$ and $n := n_1 + \ldots + n_{\bfn}$, then we have a natural linear isomorphism
\[
 f_{M,N} : \Bbbk^{M \times N} \rightarrow \Bbbk^{m \times n},
\]
where $\Bbbk^{M \times N}$ denotes the vector space of $\bfm \times \bfn$-matrices whose $(\bfi,\bfj)$-th entry is an $m_{\bfm} \times n_{\bfn}$-matrix with coefficients in $\Bbbk$, and where $\Bbbk_{m \times n}$ denotes the vector space of $m \times n$-matrices with coefficients in $\Bbbk$. Furthermore, these natural linear isomorphisms preserve matrix product, meaning we have
\[
 f_{L,M}(Y)f_{M,N}(X) = f_{L,N}(Y X)
\]
for all $Y \in \Bbbk^{L \times M}$ and $X \in \Bbbk^{M \times N}$.

\begin{proof}[Proof of Proposition \ref{P:completion}]
 First of all, by definition, $\cmpl(\Bbbk)$ is the tensor unit of $\coCat_{\Bbbk}$, so we can set $\bfiota$ to be the identity linear functor. Now, for all linear categories $A$ and $A'$, let us explain how to construct a linear functor
 \[
  \bfchi_{A,A'} : \cmpl(A) \csqtimes \cmpl(A') \rightarrow \cmpl(A \sqtimes A').
 \]
 First of all, every object of the completion $\cmpl(A) \csqtimes \cmpl(A')$ has the form
 \[
  \im \left( \left( \underline{\bfp_{\bfi \bfj}} \right)_{(\bfi,\bfj) \in \bfI^2} \right) :=
  \left( \left( \begin{array}{c}
                \vdots \\ \underline{\bfx_{\bfi}} \\ \vdots
                \end{array} \right),
  \left( \begin{array}{ccc}
          \ddots & \vdots & \bdots \\
          \cdots & \underline{\bfp_{\bfi \bfj}} & \cdots \\
          \bdots & \vdots & \ddots
         \end{array} \right) \right)
 \]
 for some finite ordered set $\bfI$, for some family
 \[ 
  \left\{ \underline{\bfx_{\bfi}} \in \cmpl(A) \sqtimes \cmpl(A') \bigm| \bfi \in \bfI \right\}
 \]
 of the form
 \[
  \underline{\bfx_{\bfi}} := \left( \im \left( \left( p_{\bfi ij} \right)_{(i,j) \in I^2_{\bfi} } \right),\im \left( \left( p'_{\bfi i'j'} \right)_{(i',j') \in I'^2_{\bfi}} \right) \right),
 \]
 where
 \[
  \im \left( \left( p_{\bfi ij} \right)_{(i,j) \in I^2_{\bfi} } \right) := 
  \left( \left( \begin{array}{c}
                 \vdots \\ x_{\bfi i} \\ \vdots
                \end{array} \right),
  \left( \begin{array}{ccc}
          \ddots & \vdots & \bdots \\
          \cdots & p_{\bfi ij} & \cdots \\
          \bdots & \vdots & \ddots
         \end{array} \right) \right)
 \]
 is an object of $\cmpl(A)$ and
 \[
  \im \left( \left( p'_{\bfi i'j'} \right)_{(i',j') \in I'^2_{\bfi} } \right) := 
  \left( \left( \begin{array}{c}
                 \vdots \\ x'_{\bfi i'} \\ \vdots
                \end{array} \right),
  \left( \begin{array}{ccc}
          \ddots & \vdots & \bdots \\
          \cdots & p'_{\bfi i'j'} & \cdots \\
          \bdots & \vdots & \ddots
         \end{array} \right) \right)
 \]
 is an object of $\cmpl(A')$ for every $\bfi \in \bfI$, and for some family
 \[
  \left\{ \underline{\bfp_{\bfi \bfj}} \in \Hom_{\cmpl(A) \sqtimes \cmpl(A')} \left( \underline{\bfx_{\bfj}},\underline{\bfx_{\bfi}} \right) \Bigm| (\bfi,\bfj) \in \bfI^2 \right\}
 \]
 of the form
 \[
  \underline{\bfp_{\bfi \bfj}} := \sum_{\bfn = 1}^{n_{\bfp_{\bfi \bfj}}} \left( p_{\bfi \bfj \bfn ij} \right)_{(i,j) \in I_{\bfi} \times I_{\bfj}} \otimes \left( p'_{\bfi \bfj \bfn i'j'} \right)_{(i',j') \in I'_{\bfi} \times I'_{\bfj}}
 \]
 satisfying
 \[
  \sum_{\bfh \in \bfI} \underline{\bfp_{\bfi \bfh}} \circ \underline{\bfp_{\bfh \bfj}} = \underline{\bfp_{\bfi \bfj}}
 \]
 for every $(\bfi,\bfj) \in \bfI^2$, where
 \[
  \left( p_{\bfi \bfj \bfn ij} \right)_{(i,j) \in I_{\bfi} \times I_{\bfj}} := 
  \left( \begin{array}{ccc}
          \ddots & \vdots & \bdots \\
          \cdots & p_{\bfi \bfj \bfn ij} & \cdots \\
          \bdots & \vdots & \ddots
         \end{array} \right)
 \]
 is a morphism of 
 \[
  \Hom_{\cmpl(A)} \left( \im \left( \left( p_{\bfj ij} \right)_{(i,j) \in I_{\bfj}^2 } \right),\im \left( \left( p_{\bfi ij} \right)_{(i,j) \in I_{\bfi}^2 } \right) \right)
 \]
 and 
 \[
  \left( p'_{\bfi \bfj \bfn i'j'} \right)_{(i',j') \in I'_{\bfi} \times I'_{\bfj}} := 
  \left( \begin{array}{ccc}
          \ddots & \vdots & \bdots \\
          \cdots & p'_{\bfi \bfj \bfn i'j'} & \cdots \\
          \bdots & \vdots & \ddots
         \end{array} \right)
 \]
 is a morphism of 
 \[ 
  \Hom_{\cmpl(A')} \left( \im \left( \left( p'_{\bfj i'j'} \right)_{(i',j') \in I'^2_{\bfj} } \right),\im \left( \left( p'_{\bfi i'j'} \right)_{(i',j') \in I'^2_{\bfi} } \right) \right)
 \]
 for every $(\bfi,\bfj) \in \bfI^2$ and $1 \leqslant \bfn \leqslant n_{\bfp_{\bfi \bfj}}$. 
 Analogously, every morphism of the completion $\cmpl(A) \csqtimes \cmpl(A')$ from $\im ((\underline{\bfp_{\bfi \bfj}})_{(\bfi, \bfj) \in \bfI^2})$ to $\im ((\underline{\bfq_{\bfi \bfj}})_{(\bfi,\bfj) \in \bfJ^2})$ has the form
 \[
  \left( \underline{\bff_{\bfi \bfj}} \right)_{(\bfi,\bfj) \in \bfJ \times \bfI} :=
  \left( \begin{array}{ccc}
          \ddots & \vdots & \bdots \\
          \cdots & \underline{\bff_{\bfi \bfj}} & \cdots \\
          \bdots & \vdots & \ddots
         \end{array} \right)
 \] 
 for some family
 \[
  \left\{ \underline{\bff_{\bfi \bfj}} \in \Hom_{\cmpl(A) \sqtimes \cmpl(A')} \left( \underline{\bfx_{\bfj}},\underline{\bfy_{\bfi}} \right) \Bigm| (\bfi,\bfj) \in \bfJ \times \bfI \right\}
 \]
 of the form
 \[
  \underline{\bff_{\bfi \bfj}} := \sum_{\bfn = 1}^{n_{\bff_{\bfi \bfj}}} \left( f_{\bfi \bfj \bfn ij} \right)_{(i,j) \in J_{\bfi} \times I_{\bfj}} \otimes \left( f'_{\bfi \bfj \bfn i'j'} \right)_{(i',j') \in J'_{\bfi} \times I'_{\bfj}}
 \]
 satisfying
 \[
  \sum_{\bfh \in \bfI} \underline{\bff_{\bfi \bfh}} \circ \underline{\bfp_{\bfh \bfj}} = \underline{\bff_{\bfi \bfj}} = \sum_{\bfh \in \bfJ} \underline{\bfq_{\bfi \bfh}} \circ \underline{\bff_{\bfh \bfj}}
 \]
 for every $(\bfi,\bfj) \in \bfJ \times \bfI$, where
 \[
  \left( f_{\bfi \bfj \bfn ij} \right)_{(i,j) \in J_{\bfi} \times I_{\bfj}} :=
  \left( \begin{array}{ccc}
          \ddots & \vdots & \bdots \\
          \cdots & f_{\bfi \bfj \bfn ij} & \cdots \\
          \bdots & \vdots & \ddots
         \end{array} \right)
 \]
 is a morphism of 
 \[
  \Hom_{\cmpl(A)} \left( \im \left( \left( p_{\bfj ij} \right)_{(i,j) \in I_{\bfj}^2 } \right),\im \left( \left( q_{\bfi ij} \right)_{(i,j) \in J_{\bfi}^2 } \right) \right)
 \]
 and 
 \[
  \left( f'_{\bfi \bfj \bfn i'j'} \right)_{(i',j') \in J'_{\bfi} \times I'_{\bfj}} := 
  \left( \begin{array}{ccc}
          \ddots & \vdots & \bdots \\
          \cdots & f'_{\bfi \bfj \bfn i'j'} & \cdots \\
          \bdots & \vdots & \ddots
         \end{array} \right)
 \]
 is a morphism of 
 \[ 
  \Hom_{\cmpl(A')} \left( \im \left( \left( p'_{\bfj i'j'} \right)_{(i',j') \in I'^2_{\bfj} } \right),\im \left( \left( q'_{\bfi i'j'} \right)_{(i',j') \in J'^2_{\bfi} } \right) \right)
 \]
 for every $(\bfi,\bfj) \in \bfJ \times \bfI$ and $1 \leqslant \bfn \leqslant n_{\bff_{\bfi \bfj}}$. 
 Then, we define the image under $\bfchi_{A,A'}$ of an object
 \[
  \im \left( \left( \underline{\bfp_{\bfi \bfj}} \right)_{(\bfi,\bfj) \in \bfI^2} \right)
 \]
 of $\cmpl(A) \csqtimes \cmpl(A')$ to be the object
 \[
  \im \left( \left( \underline{p_{ij}} \right)_{(i,j) \in I^2} \right) :=  
  \left( \left( \begin{array}{c}
                 \vdots \\ \underline{x_i} \\ \vdots
                \end{array} \right),
  \left( \begin{array}{ccc}
          \ddots & \vdots & \bdots \\
          \cdots & \underline{p_{ij}} & \cdots \\
          \bdots & \vdots & \ddots
         \end{array} \right) \right)
 \]
 of $\cmpl(A \sqtimes A')$ given by the finite ordered set
 \[
  I := \bigcup_{\bfi \in \bfI} \left( \{ \bfi \} \times I_{\bfi} \times I'_{\bfi} \right)
 \]
 of indices, by the objects
 \[
  \underline{x_{(\bfi,i,i')}} := \left( x_{\bfi i},x'_{\bfi i'} \right)
 \]
 of $A \sqtimes A'$, and by the morphisms
 \[
  \underline{p_{(\bfi,i,i')(\bfj,j,j')}} := \sum_{\bfn=1}^{n_{\bfp_{\bfi \bfj}}} p_{\bfi \bfj \bfn ij} \otimes p'_{\bfi \bfj \bfn i'j'}
 \]
 of $\Hom_{A \sqtimes A'} ( \underline{x_{(\bfj,j,j')}},\underline{x_{(\bfi,i,i')}} )$, and we define the image under $\bfchi_{A,A'}$ of a morphism
 \[
  \left( \underline{\bff_{\bfi \bfj}} \right)_{(\bfi,\bfj) \in \bfJ \times \bfI}
 \]
 of $\cmpl(A) \csqtimes \cmpl(A')$ to be the morphism
 \[
  \left( \underline{f_{ij}} \right)_{(i,j) \in J \times I} :=  
  \left( \begin{array}{ccc}
          \ddots & \vdots & \bdots \\
          \cdots & \underline{f_{ij}} & \cdots \\
          \bdots & \vdots & \ddots
         \end{array} \right)
 \]
 of $\cmpl(A \sqtimes A')$ given by the morphisms
 \[
  \underline{f_{(\bfi,i,i')(\bfj,j,j')}} := \sum_{\bfn=1}^{n_{\bff_{\bfi \bfj}}} f_{\bfi \bfj \bfn ij} \otimes f'_{\bfi \bfj \bfn i'j'}
 \]
 of $\Hom_{\cmpl(A \sqtimes A')} ( \underline{x_{(\bfj,j,j')}},\underline{y_{(\bfi,i,i')}} )$. It can be checked that this assignment actually defines an essentially surjective fully faithful linear functor. Now, remark that for all linear functors $F : A \rightarrow A''$ and $F' : A' \rightarrow A'''$ we have
 \[
  \cmpl(F \sqtimes F') \circ \bfchi_{A,A'} = \bfchi_{A'',A'''} \circ \left( \cmpl(F) \csqtimes \cmpl(F') \right),
 \]
 so we can set $\bfchi_{F,F'}$ to be the identity natural transformation. Then, we only need to specify 2-modifications for the symmetric monoidal structure of $\cmpl$. In the quasi-strict versions of $\bfCat_{\Bbbk}$ and $\coCat_{\Bbbk}$ we have, for all linear categories $A$, $A'$, and $A''$, the equalities
 \begin{gather*}
  \bfchi_{\Bbbk,A} = \id_{\cmpl(A)}, \quad \bfchi_{A,\Bbbk} = \id_{\cmpl(A)}, \\
  \bfchi_{A,A' \sqtimes A''} \circ \left( \id_{\cmpl(A)} \csqtimes \bfchi_{A',A''} \right) = \bfchi_{A \sqtimes A',A''} \circ \left( \bfchi_{A,A'} \csqtimes \id_{\cmpl(A'')} \right), \\
  \bfchi_{A',A} \circ \hat{\trbr}_{\cmpl(A),\cmpl(A')} = \cmpl(\trbr_{A,A'}) \circ \bfchi_{A,A'},
 \end{gather*}
 so we can set $\bfGamma_A$, $\bfDelta_A$, $\bfOmega_{A,A',A''}$, and $\bfTheta_{A,A'}$ to be identity natural transformations.
\end{proof}

The proof of the analogue graded result is exactly the same, but with gradings. Let us give it for completeness.

\begin{proof}[Proof of Proposition \ref{P:gr_completion}]
 First of all, by definition, $\grcmpl(\Bbbk)$ is the tensor unit of $\coCat_{\Bbbk}^{\PGr}$, so we can set $\bfiota$ to be the identity $\PGr$-graded linear functor. Now, for all $\PGr$-graded linear categories $\bbA$ and $\bbA'$, let us explain how to construct a $\PGr$-graded linear functor
 \[
  \bfchi_{\bbA,\bbA'} : \grcmpl(\bbA) \csqtimes \grcmpl(\bbA') \rightarrow \grcmpl(\bbA \sqtimes \bbA').
 \]
 First of all, every object of the completion $\grcmpl(\bbA) \csqtimes \grcmpl(\bbA')$ has the form
 \[
  \im \left( \left( \underline{\bfp_{\bfi \bfj}}^{\bfl_{\bfj} - \bfl_{\bfi}} \right)_{(\bfi,\bfj) \in \bfI^2} \right) :=
  \left( \left( \begin{array}{c}
                \vdots \\ \left( \underline{\bfx_{\bfi}},\bfl_{\bfi} \right) \\ \vdots
                \end{array} \right),
  \left( \begin{array}{ccc}
          \ddots & \vdots & \bdots \\
          \cdots & \underline{\bfp_{\bfi \bfj}}^{\bfl_{\bfj} - \bfl_{\bfi}} & \cdots \\
          \bdots & \vdots & \ddots
         \end{array} \right) \right)
 \]
 for some finite ordered set $\bfI$, for some family $\{ \bfl_{\bfi} \in Z \mid \bfi \in \bfI \}$, for some family
 \[ 
  \left\{ \underline{\bfx_{\bfi}} \in \grcmpl(\bbA) \sqtimes \grcmpl(\bbA') \bigm| \bfi \in \bfI \right\}
 \]
 of the form
 \[
  \underline{\bfx_{\bfi}} := \left( \im \left( \left( p^{\ell_{\bfi j} - \ell_{\bfi i}}_{\bfi ij} \right)_{(i,j) \in I^2_{\bfi} } \right),\im \left( \left( p'^{\ell'_{\bfi j'} - \ell'_{\bfi i'}}_{\bfi i'j'} \right)_{(i',j') \in I'^2_{\bfi}} \right) \right),
 \]
 where
 \[
  \im \left( \left( p^{\ell_{\bfi j} - \ell_{\bfi i}}_{\bfi ij} \right)_{(i,j) \in I^2_{\bfi} } \right) := 
  \left( \left( \begin{array}{c}
                 \vdots \\ \left( x_{\bfi i}, \ell_{\bfi i} \right) \\ \vdots
                \end{array} \right),
  \left( \begin{array}{ccc}
          \ddots & \vdots & \bdots \\
          \cdots & p^{\ell_{\bfi j} - \ell_{\bfi i}}_{\bfi ij} & \cdots \\
          \bdots & \vdots & \ddots
         \end{array} \right) \right)
 \]
 is an object of $\grcmpl(\bbA)$ and
 \[
  \im \left( \left( p'^{\ell'_{\bfi j'} - \ell'_{\bfi i'}}_{\bfi i'j'} \right)_{(i',j') \in I'^2_{\bfi} } \right) := 
  \left( \left( \begin{array}{c}
                 \vdots \\ \left( x'_{\bfi i'}, \ell'_{\bfi i'} \right) \\ \vdots
                \end{array} \right),
  \left( \begin{array}{ccc}
          \ddots & \vdots & \bdots \\
          \cdots & p'^{\ell'_{\bfi j'} - \ell'_{\bfi i'}}_{\bfi i'j'} & \cdots \\
          \bdots & \vdots & \ddots
         \end{array} \right) \right)
 \]
 is an object of $\grcmpl(\bbA')$ for every $\bfi \in \bfI$, and for some family
 \[
  \left\{ \underline{\bfp_{\bfi \bfj}}^{\bfl_{\bfj} - \bfl_{\bfi}} \in \Hom_{\grcmpl(\bbA) \sqtimes \grcmpl(\bbA')} \left( \underline{\bfx_{\bfj}},\underline{\bfx_{\bfi}} \right) \Bigm| (\bfi,\bfj) \in \bfI^2 \right\}
 \]
 of the form
 \[
  \underline{\bfp_{\bfi \bfj}}^{\bfl_{\bfj} - \bfl_{\bfi}} := \sum_{\bfn = 1}^{n_{\bfp_{\bfi \bfj}}} \left( p^{\bfl_{\bfj} - \bfl_{\bfi} - \bfl'_{\bfn} + \ell_{\bfj j} - \ell_{\bfi i}}_{\bfi \bfj \bfn ij} \right)_{(i,j) \in I_{\bfi} \times I_{\bfj}} \otimes \left( p'^{\bfl'_{\bfn} + \ell'_{\bfj j'} - \ell'_{\bfi i'}}_{\bfi \bfj \bfn i'j'} \right)_{(i',j') \in I'_{\bfi} \times I'_{\bfj}}
 \]
 satisfying
 \[
  \sum_{\bfh \in \bfI} \underline{\bfp_{\bfi \bfh}}^{\bfl_{\bfh} - \bfl_{\bfi}} \circ \underline{\bfp_{\bfh \bfj}}^{\bfl_{\bfj} - \bfl_{\bfh}} = \underline{\bfp_{\bfi \bfj}}^{\bfl_{\bfj} - \bfl_{\bfi}}
 \]
 for every $(\bfi,\bfj) \in \bfI^2$, where
 \[
  \left( p^{\bfl_{\bfj} - \bfl_{\bfi} - \bfl'_{\bfn} + \ell_{\bfj j} - \ell_{\bfi i}}_{\bfi \bfj \bfn ij} \right)_{(i,j) \in I_{\bfi} \times I_{\bfj}} := 
  \left( \begin{array}{ccc}
          \ddots & \vdots & \bdots \\
          \cdots & p^{\bfl_{\bfj} - \bfl_{\bfi} - \bfl'_{\bfn} + \ell_{\bfj j} - \ell_{\bfi i}}_{\bfi \bfj \bfn ij} & \cdots \\
          \bdots & \vdots & \ddots
         \end{array} \right)
 \]
 is a degree $\bfl_{\bfj} - \bfl_{\bfi} - \bfl'_{\bfn} + \ell_{j \bfj} - \ell_{i \bfi}$ morphism of 
 \[
  \Hom_{\grcmpl(\bbA)} \left( \im \left( \left( p^{\ell_{\bfj j} - \ell_{\bfj i}}_{\bfj ij} \right)_{(i,j) \in I_{\bfj}^2 } \right),\im \left( \left( p^{\ell_{\bfi j} - \ell_{\bfi i}}_{\bfi ij} \right)_{(i,j) \in I_{\bfi}^2 } \right) \right)
 \]
 and 
 \[
  \left( p'^{\bfl'_{\bfn} + \ell'_{\bfj j'} - \ell'_{\bfi i'}}_{\bfi \bfj \bfn i'j'} \right)_{(i',j') \in I'_{\bfi} \times I'_{\bfj}} := 
  \left( \begin{array}{ccc}
          \ddots & \vdots & \bdots \\
          \cdots & p'^{\bfl'_{\bfn} + \ell'_{\bfj j'} - \ell'_{\bfi i'}}_{\bfi \bfj \bfn i'j'} & \cdots \\
          \bdots & \vdots & \ddots
         \end{array} \right)
 \]
 is a degree $\bfl'_{\bfn} + \ell'_{\bfj j'} - \ell'_{\bfi i'}$ morphism of 
 \[ 
  \Hom_{\grcmpl(\bbA')} \left( \im \left( \left( p'^{\ell'_{\bfj j'} - \ell'_{\bfj i'}}_{i'j' \bfj} \right)_{(i',j') \in I'^2_{\bfj} } \right),\im \left( \left( p'^{\ell'_{\bfi j'} - \ell'_{\bfi i'}}_{i'j' \bfi} \right)_{(i',j') \in I'^2_{\bfi} } \right) \right)
 \]
 for every $(\bfi,\bfj) \in \bfI^2$ and $1 \leqslant \bfn \leqslant n_{\bfp_{\bfi \bfj}}$. 
 Analogously, every degree $\bfk$ morphism of the completion $\grcmpl(\bbA) \csqtimes \grcmpl(\bbA')$ from $\im ((\underline{\bfp_{\bfi \bfj}}^{\bfl_{\bfj} - \bfl_{\bfi}})_{(\bfi,\bfj) \in \bfI^2})$ to $\im ((\underline{\bfq_{\bfi \bfj}}^{\bfm_{\bfj} - \bfm_{\bfi}})_{(\bfi,\bfj) \in \bfJ^2})$ has the form
 \[
  \left( \underline{\bff_{\bfi \bfj}}^{\bfk + \bfl_{\bfj} - \bfm_{\bfi}} \right)_{(\bfi,\bfj) \in \bfJ \times \bfI} :=
  \left( \begin{array}{ccc}
          \ddots & \vdots & \bdots \\
          \cdots & \underline{\bff_{\bfi \bfj}}^{\bfk + \bfl_{\bfj} - \bfm_{\bfi}} & \cdots \\
          \bdots & \vdots & \ddots
         \end{array} \right)
 \] 
 for some family
 \[
  \left\{ \underline{\bff_{\bfi \bfj}}^{\bfk + \bfl_{\bfj} - \bfm_{\bfi}} \in \Hom_{\grcmpl(\bbA) \sqtimes \grcmpl(\bbA')} \left( \underline{\bfx_{\bfj}},\underline{\bfy_{\bfi}} \right) \Bigm| (\bfi,\bfj) \in \bfJ \times \bfI \right\}
 \]
 of the form
 \[
  \underline{\bff_{\bfi \bfj}}^{\bfk + \bfl_{\bfj} - \bfm_{\bfi}} := \sum_{\bfn = 1}^{n_{\bff_{\bfi \bfj}}} \left( f^{\bfk + \bfl_{\bfj} - \bfm_{\bfi} - \bfk'_{\bfn} + \ell_{\bfj j} - m_{\bfi i}}_{\bfi \bfj \bfn ij} \right)_{(i,j) \in J_{\bfi} \times I_{\bfj}} \otimes \left( f'^{\bfk'_{\bfn} + \ell'_{\bfj j'} - m'_{\bfi i'}}_{\bfi \bfj \bfn i'j'} \right)_{(i',j') \in J'_{\bfi} \times I'_{\bfj}}
 \]
 satisfying
 \[
  \sum_{\bfh \in \bfI} \underline{\bff_{\bfi \bfh}}^{\bfk + \bfl_{\bfh} - \bfm_{\bfi}} \circ \underline{\bfp_{\bfh \bfj}}^{\bfl_{\bfj} - \bfl_{\bfh}} = \underline{\bff_{\bfi \bfj}}^{\bfk + \bfl_{\bfj} - \bfm_{\bfi}} = \sum_{\bfh \in \bfJ} \underline{\bfq_{\bfi \bfh}}^{\bfm_{\bfh} - \bfm_{\bfi}} \circ \underline{\bff_{\bfh \bfj}}^{\bfk + \bfl_{\bfj} - \bfm_{\bfh}}
 \]
 for every $(\bfi,\bfj) \in \bfJ \times \bfI$, where
 \[
  \left( f^{\bfk + \bfl_{\bfj} - \bfm_{\bfi} - \bfk'_{\bfn} + \ell_{\bfj j} - m_{\bfi i}}_{\bfi \bfj \bfn ij} \right)_{(i,j) \in J_{\bfi} \times I_{\bfj}} :=
  \left( \begin{array}{ccc}
          \ddots & \vdots & \bdots \\
          \cdots & f^{\bfk + \bfl_{\bfj} - \bfm_{\bfi} - \bfk'_{\bfn} + \ell_{\bfj j} - m_{\bfi i}}_{\bfi \bfj \bfn ij} & \cdots \\
          \bdots & \vdots & \ddots
         \end{array} \right)
 \]
 is a degree $\bfk + \bfl_{\bfj} - \bfm_{\bfi} - \bfk'_{\bfn}$ morphism of 
 \[
  \Hom_{\grcmpl(\bbA)} \left( \im \left( \left( p^{\ell_{\bfj j} - \ell_{\bfj i}}_{\bfj ij} \right)_{(i,j) \in I_{\bfj}^2 } \right),\im \left( \left( q^{m_{\bfi j} - m_{\bfi i}}_{\bfi ij} \right)_{(i,j) \in J_{\bfi}^2 } \right) \right)
 \]
 and 
 \[
  \left( f'^{\bfk'_{\bfn} + \ell'_{\bfj j'} - m'_{\bfi i'}}_{\bfi \bfj \bfn i'j'} \right)_{(i',j') \in J'_{\bfi} \times I'_{\bfj}} := 
  \left( \begin{array}{ccc}
          \ddots & \vdots & \bdots \\
          \cdots & f'^{\bfk'_{\bfn} + \ell'_{\bfj j'} - m'_{\bfi i'}}_{\bfi \bfj \bfn i'j'} & \cdots \\
          \bdots & \vdots & \ddots
         \end{array} \right)
 \]
 is a degree $\bfk'_{\bfn}$ morphism of 
 \[ 
  \Hom_{\cmpl(A')} \left( \im \left( \left( p'^{\ell'_{\bfj j'} - \ell'_{\bfj i'}}_{\bfj i'j'} \right)_{(i',j') \in I'^2_{\bfj} } \right),\im \left( \left( q'^{m'_{\bfi j'} - m'_{\bfi i'}}_{\bfi i'j'} \right)_{(i',j') \in J'^2_{\bfi} } \right) \right)
 \]
 for every $(\bfi,\bfj) \in \bfJ \times \bfI$ and $1 \leqslant \bfn \leqslant n_{\bff_{\bfi \bfj}}$.
 Then, we define the image under $\bfchi_{\bbA,\bbA'}$ of an object
 \[
  \im \left( \left( \underline{\bfp_{\bfi \bfj}}^{\bfl_{\bfj} - \bfl_{\bfi}} \right)_{(\bfi,\bfj) \in \bfI^2} \right)
 \]
 of $\grcmpl(\bbA) \csqtimes \grcmpl(\bbA')$ to be the object
 \[
  \im \left( \left( \underline{p_{ij}}^{\ell_j - \ell_i} \right)_{(i,j) \in I^2} \right) :=  
  \left( \left( \begin{array}{c}
                 \vdots \\ \left( \underline{x_i}, \ell_i \right) \\ \vdots
                \end{array} \right),
  \left( \begin{array}{ccc}
          \ddots & \vdots & \bdots \\
          \cdots & \underline{p_{ij}}^{\ell_j - \ell_i} & \cdots \\
          \bdots & \vdots & \ddots
         \end{array} \right) \right)
 \]
 of $\grcmpl(\bbA \sqtimes \bbA')$ given by the finite ordered set
 \[
  I := \bigcup_{\bfi \in \bfI} \left( \{ \bfi \} \times I_{\bfi} \times I'_{\bfi} \right)
 \]
 of indices, by the elements
 \[
  \ell_{(\bfi,i,i')} := \bfl_{\bfi} + \ell_{\bfi i} + \ell'_{\bfi i'}
 \]
 of $Z$, by the objects
 \[
  \underline{x_{(\bfi,i,i')}} := \left( x_{\bfi i},x'_{\bfi i'} \right)
 \]
 of $\bbA \sqtimes \bbA'$, and by the degree $\bfl_{\bfj} - \bfl_{\bfi} + \ell_{\bfj j} - \ell_{\bfi i} + \ell'_{\bfj j'} - \ell'_{\bfi i'}$ morphisms
 \[
  \underline{p_{(\bfi,i,i')(\bfj,j,j')}}^{\ell_{(\bfj,j,j')} - \ell_{(\bfi,i,i')}} := \sum_{\bfn=1}^{n_{\bfp_{\bfi \bfj}}} p^{\bfl_{\bfj} - \bfl_{\bfi} - \bfl'_{\bfn} + \ell_{\bfj j} - \ell_{\bfi i}}_{\bfi \bfj \bfn ij} \otimes p'^{\bfl'_{\bfn} + \ell'_{\bfj j'} - \ell'_{\bfi i'}}_{\bfi \bfj \bfn i'j'}
 \]
 of $\Hom_{\bbA \sqtimes \bbA'} ( \underline{x_{(\bfj,j,j')}},\underline{x_{(\bfi,i,i')}} )$, and we define the image under $\bfchi_{\bbA,\bbA'}$ of a degree $\bfk$ morphism
 \[
  \left( \underline{\bff_{\bfi \bfj}}^{\bfk + \bfl_{\bfj} - \bfm_{\bfi}} \right)_{(\bfi,\bfj) \in \bfJ \times \bfI}
 \]
 of $\grcmpl(\bbA) \csqtimes \grcmpl(\bbA')$ to be the degree $k$ morphism
 \[
  \left( \underline{f_{ij}}^{k + \ell_j - m_i} \right)_{(i,j) \in J \times I} :=  
  \left( \begin{array}{ccc}
          \ddots & \vdots & \bdots \\
          \cdots & \underline{f_{ij}}^{k + \ell_j - m_i} & \cdots \\
          \bdots & \vdots & \ddots
         \end{array} \right)
 \]
 of $\grcmpl(\bbA \sqtimes \bbA')$ given by the element 
 \[
  k := \bfk
 \]
 of $Z$, and by the degree $\bfk + \bfl_{\bfj} - \bfm_{\bfi} + \ell_{\bfj j} - m_{\bfi i} + \ell'_{\bfj j'} - m'_{\bfi i'}$ morphisms
 \[
  \underline{f_{(\bfi,i,i')(\bfj,j,j')}}^{k + \ell_{(\bfj,j,j')} - m_{(\bfi,i,i')}} := \sum_{\bfn=1}^{n_{\bff_{\bfi \bfj}}} f^{\bfk + \bfl_{\bfj} - \bfm_{\bfi} - \bfk'_{\bfn} + \ell_{\bfj j} - m_{\bfi i}}_{\bfi \bfj \bfn ij} \otimes f'^{\bfk'_{\bfn} + \ell'_{\bfj j'} - m'_{\bfi i'}}_{\bfi \bfj \bfn i'j'}
 \]
 of $\Hom_{\grcmpl(\bbA \sqtimes \bbA')} ( \underline{x_{(\bfj,j,j')}},\underline{y_{(\bfi,i,i')}} )$. It can be checked that this assignment actually defines an essentially surjective fully faithful $\PGr$-graded linear functor. Now, remark that for all $\PGr$-graded linear functors $\bbF : \bbA \rightarrow \bbA''$ and $\bbF' : \bbA' \rightarrow \bbA'''$ we have
 \[
  \grcmpl(\bbF \sqtimes \bbF') \circ \bfchi_{\bbA,\bbA'} = \bfchi_{\bbA'',\bbA'''} \circ \left( \grcmpl(\bbF) \csqtimes \grcmpl(\bbF') \right),
 \]
 so we can set $\bfchi_{\bbF,\bbF'}$ to be the identity $\PGr$-graded natural transformation. Then, we only need to specify 2-modifications for the symmetric monoidal structure of $\grcmpl$. In the quasi-strict versions of $\bfCat_{\Bbbk}^{\PGr}$ and $\coCat_{\Bbbk}^{\PGr}$ we have, for all $\PGr$-graded linear categories $\bbA$, $\bbA'$, and $\bbA''$, the equalities
 \begin{gather*}
  \bfchi_{\Bbbk,\bbA} = \id_{\grcmpl(\bbA)}, \quad \bfchi_{\bbA,\Bbbk} = \id_{\grcmpl(\bbA)}, \\
  \bfchi_{\bbA,\bbA' \sqtimes \bbA''} \circ \left( \id_{\grcmpl(\bbA)} \csqtimes \bfchi_{\bbA',\bbA''} \right) = \bfchi_{\bbA \sqtimes \bbA',\bbA''} \circ \left( \bfchi_{\bbA,\bbA'} \csqtimes \id_{\grcmpl(\bbA'')} \right), \\
  \bfchi_{\bbA',\bbA} \circ \hat{\bfc}^{\gamma}_{\grcmpl(\bbA),\grcmpl(\bbA')} = \grcmpl(\gmmbr_{\bbA,\bbA'}) \circ \bfchi_{\bbA,\bbA'},
 \end{gather*}
 so we can set $\bfGamma_{\bbA}$, $\bfDelta_{\bbA}$, $\bfOmega_{\bbA,\bbA',\bbA''}$, and $\bfTheta_{\bbA,\bbA'}$ to be identity $\PGr$-graded natural transformations.
\end{proof}


\backmatter
\bibliographystyle{amsalpha}

\bibliographystyle{amsalpha}


\printindex

\end{document}